\ifx \firsttimeheader \undefined
\documentclass[a4paper,fontsize=12pt]{scrbook}

\KOMAoptions{BCOR=1cm}
\addtolength{\oddsidemargin}{.6cm}
\addtolength{\evensidemargin}{-.6cm}

\usepackage{import}
\usepackage[utf8x]{inputenc}
\usepackage{amsmath}
\usepackage{graphicx}
\usepackage{index}
\usepackage{mathdots}
\usepackage{enumitem}
\usepackage{stmaryrd}
\usepackage{multirow}
\usepackage{tabularx}
\usepackage[small,balance]{diagrams}
\newarrow{ToFro}<--->

\newindex{xsyms}{sdx}{snd}{Index of Symbols}

\newcommand{\screenornot}{}

\DeclareFontFamily{U}{matha}{}
\DeclareFontShape{U}{matha}{m}{n}{ <5> <6> <7> <8> <9> <10> <12> gen * matha <11> matha10}{}
\DeclareSymbolFont{matha}{U}{matha}{m}{n}
\DeclareMathSymbol{\hash}{\mathop}{matha}{"23}

\usepackage{empheq}
\numberwithin{equation}{section}

\usepackage{amsthm}
\theoremstyle{plain}
\newtheorem{thm}{Theorem}[section]
\newtheorem*{maintheorem}{Main Theorem}
\newtheorem*{ranktheorem}{Rank Theorem}
\newtheorem{lem}[thm]{Lemma}
\newtheorem{cor}[thm]{Corollary}
\newtheorem{prop}[thm]{Proposition}
\newtheorem{obs}[thm]{Observation}
\newtheorem{reminder}[thm]{Reminder}
\newtheorem{fact}[thm]{Fact}
\newtheorem*{ncor}{Corollary}

\newtheorem{apthm}{Theorem}[chapter]
\newtheorem{aplem}[apthm]{Lemma}
\newtheorem{apcor}[apthm]{Corollary}
\newtheorem{approp}[apthm]{Proposition}

\newtheorem{apfact}[apthm]{Fact}

\newenvironment{xrefthm}[1]{
 \def\thexref{\ref{#1}}
 \begin{thexrefthm}
}{
 \end{thexrefthm}
}
\newtheorem*{thexrefthm}{Theorem~\thexref}

\theoremstyle{definition}

\newtheorem{exmpl}[thm]{Example}

\theoremstyle{remark}
\newtheorem{rem}[thm]{Remark}
\newtheorem{aprem}[apthm]{Remark}

\usepackage{amsfonts}
\newcommand{\C}{\mathbb{C}}
\newcommand{\R}{\mathbb{R}}
\newcommand{\Q}{\mathbb{Q}}
\newcommand{\Z}{\mathbb{Z}}
\newcommand{\F}{\mathbb{F}}
\newcommand{\E}{\mathbb{E}}
\renewcommand{\H}{\mathbb{H}}
\renewcommand{\S}{\mathbb{S}}
\newcommand{\N}{\mathbb{N}}
\newcommand{\A}{\mathbb{A}}

\newcommand{\nin}{\notin}
\newcommand{\union}{\cup}
\newcommand{\Union}{\bigcup}
\newcommand{\Dunion}{\coprod}

\newcommand{\intersect}{\cap}
\newcommand{\Intersect}{\bigcap}
\newcommand{\into}{\hookrightarrow}

\newcommand{\defeq}{\mathrel{\mathop{:}}=}
\newcommand{\eqdef}{=\mathrel{\mathop{:}}}

\newcommand{\op}{\mathrel{\operatorname{op}}}
\newcommand{\opm}[1]{\operatorname{op}_{#1}}

\newcommand{\conv}{\operatorname{conv}}

\newcommand{\typ}{\operatorname{typ}}
\newcommand{\relint}{\operatorname{int}}
\newcommand{\gen}[1]{\left\langle #1\right\rangle}
\usepackage{buxsym}
\renewcommand{\Join}{\bigast}
\newcommand{\Subdiv}[1]{\mathring{#1}}
\newcommand{\Link}{\operatorname{lk}}
\newcommand{\Descending}{^{\downarrow}}
\newcommand{\direction}{\rightslice}
\usepackage{amssymb}
\newcommand{\horizontal}{\mathrel{\multimap}}
\newcommand{\abs}[1]{\left\lvert#1\right\rvert}
\newcommand{\PlaceOf}[1]{[#1]}
\newcommand{\GL}{\operatorname{GL}}
\newcommand{\SL}{\operatorname{SL}}
\newcommand{\Sp}{\operatorname{Sp}}
\newcommand{\SO}{\operatorname{SO}}
\renewcommand{\equiv}{\Longleftrightarrow}

\usepackage{color}

\usepackage[final,hyperindex,pageanchor=true,plainpages=false,pdftex,pdfpagelabels,hypertexnames=true]{hyperref}
\hypersetup{pdftitle={Finiteness Properties of Chevalley Groups over the Ring of (Laurent) Polynomials over a Finite Field}}
\hypersetup{pdfauthor={Stefan Witzel}}

\usepackage{stmaryrd}
\usepackage{xargs}
\usepackage{ifthen}
\newcommand{\ifnonempty}[2]{
  \ifthenelse{\equal{#1}{}}{}{#2}
}
\newcommand{\optionalsubsuper}[3]{
\ifthenelse{\equal{#2}{}}{
  \ifthenelse{\equal{#3}{}}{
    #1}{
    #1^{#3}}
  }{
  \ifthenelse{\equal{#3}{}}{
    #1_{#2}}{
    #1_{#2}^{#3}}}}
\newcommand{\optionalsub}[2]{
\ifthenelse{\equal{#2}{}}{
  #1}{
  #1_{#2}}
}
\newcommand{\optionaltwosubs}[3]{
\ifthenelse{\equal{#2}{}}{
  \ifthenelse{\equal{#3}{}}{
    #1}{
    #1_{#3}}
  }{
  \ifthenelse{\equal{#3}{}}{
    #1_{#2}}{
    #1_{#2,#3}}}}
\newcommand{\Group}{G}

\newcommand{\BigGroup}{E}
\newcommand{\AltGroup}{H}
\newcommand{\Another}[1]{{#1}'}

\newcommand{\Space}{X}
\newcommand{\AltSpace}{Y}
\newcommand{\Skeleton}[2]{{#1}^{(#2)}}

\newcommand{\Set}{A}
\newcommand{\ModelSpace}[1]{M_{#1}}
\newcommand{\ModelDiameter}[1]{D_{#1}}
\newcommand{\SpherPoint}{p}
\newcommand{\Point}{x}
\newcommand{\AltPoint}{y}
\newcommand{\YetAltPoint}{z}
\newcommand{\PosPoint}{\Point_+}
\newcommand{\NegPoint}{\Point_-}
\newcommand{\APosPoint}{\AltPoint_+}
\newcommand{\ANegPoint}{\AltPoint_-}
\newcommand{\TheNegPoint}{{a_-}}
\newcommand{\ThePosPoint}{{a_+}}
\newcommand{\PointAtInfty}{\xi}

\newcommand{\Vector}{v}
\newcommand{\AltVector}{w}
\newcommand{\InftyPoint}{\xi}
\newcommandx{\Disk}[1][1={}]{\optionalsubsuper{D}{}{#1}}
\newcommandx{\Sphere}[1][1={}]{\optionalsubsuper{S}{}{#1}}
\newcommand{\FP}{F\mkern-3mu{}P}

\newcommand{\scp}[2]{\langle #1 \mid #2 \rangle}
\newcommand{\scpb}[2]{( #1 \mid #2 )}
\newcommand{\Path}{\gamma}
\newcommand{\Cell}{\sigma}
\newcommand{\PosCell}{\Cell_+}
\newcommand{\NegCell}{\Cell_-}
\newcommand{\AltCell}{\tau}
\newcommand{\BigCell}{\tau}
\newcommand{\PosBigCell}{\BigCell_+}
\newcommand{\NegBigCell}{\BigCell_-}
\newcommand{\Ray}{\rho}
\newcommand{\Busemann}{\beta}
\newcommand{\Infty}[1]{#1^\infty}
\newcommand{\GlobalField}{k}
\newcommand{\LocalField}{K}
\newcommand{\ClosedField}{K}
\newcommand{\Haar}{\mu}
\newcommand{\Field}{k}
\newcommand{\ExtensionField}{K}
\newcommand{\Datum}{\mathcal{D}}
\newcommand{\Matrix}{A}
\newcommand{\Ring}{A}
\newcommand{\KMalgOf}[1]{L(#1)}
\newcommand{\LieAlg}{\mathfrak{g}}
\newcommand{\Units}[1]{#1^\times}
\newcommand{\HaarModule}{\operatorname{mod}}

\newcommand{\Valuation}{v}
\newcommand{\Place}{s}
\newcommand{\DiscreteValuation}{\nu}
\newcommandx{\Integers}[1][1={}]{\optionalsubsuper{\mathcal{O}}{#1}{}}
\newcommand{\MaxIdeal}{\mathfrak{m}}

\newcommand{\GroupScheme}{\mathbf{G}}
\newcommand{\Torus}{\mathbf{T}}
\newcommand{\Root}{\alpha}
\newcommand{\Apartment}{\Sigma}
\newcommand{\SApartment}{\Sigma}
\newcommand{\EApartment}{\tilde{\Sigma}}
\newcommand{\SBuilding}{\Delta}
\newcommand{\TwinApartment}{\Apartment}
\newcommand{\PosApartment}{\Apartment_+}
\newcommand{\NegApartment}{\Apartment_-}
\newcommand{\PNApartments}{(\PosApartment,\NegApartment)}
\newcommand{\AltPNApartments}{(\PosApartment',\NegApartment')}
\newcommand{\EBuilding}{X}
\newcommand{\Building}{X}
\newcommand{\PosBuilding}{\Building_+}
\newcommand{\NegBuilding}{\Building_-}
\newcommand{\PNBuildings}{(\Building_+,\Building_-)}
\newcommand{\Isom}{\operatorname{Isom}}
\newcommand{\Wall}{H}
\newcommand{\PosWall}{\Wall_+}
\newcommand{\NegWall}{\Wall_-}
\newcommand{\Weyl}{W}
\newcommand{\EWeyl}{\tilde{W}}
\newcommand{\InftyEWeyl}{\Weyl}
\newcommand{\weyl}{w}
\newcommand{\InftyChamber}{C}
\newcommand{\ModelChamber}{\Chamber_{\mathrm{mod}}}
\newcommand{\ModelProjection}{\pi}
\newcommand{\Chamber}{c}
\newcommand{\AltChamber}{d}
\newcommand{\YetAltChamber}{e}
\newcommand{\PosChamber}{\Chamber_+}
\newcommand{\NegChamber}{\Chamber_-}
\newcommand{\Vertex}{v}
\newcommand{\AltVertex}{w}
\newcommand{\PosVertex}{\Vertex_+}
\newcommand{\NegVertex}{\Vertex_-}

\newcommand{\Types}{I}
\newcommand{\AltTypes}{J}
\newcommand{\LeftMod}{\setminus}
\newcommand{\Type}{i}
\newcommand{\AltType}{j}
\newcommand{\CoxeterDiagram}{\Gamma}
\newcommand{\WeylDistance}{\delta}
\newcommand{\PosWeylDistance}{\WeylDistance_+}
\newcommand{\NegWeylDistance}{\WeylDistance_-}
\newcommand{\WeylCoDistance}{\WeylDistance^*}
\newcommand{\Retraction}[2]{\rho_{#1,#2}} 
\newcommand{\WeylProjection}{\operatorname{pr}}
\newcommand{\CoordinateProjection}{\operatorname{pr}}
\newcommandx{\ClosestPointProjection}[1][1={}]{\optionalsub{\operatorname{pr}}{#1}}
\newcommandx{\EuclProjection}[1][1={}]{\optionalsub{\operatorname{pr}}{#1}}
\newcommandx{\SpherProjection}[1][1={}]{\optionalsub{\operatorname{pr}}{#1}}
\newcommand{\ApartmentSystem}{\mathcal{A}}
\newcommand{\Sector}{S}
\newcommand{\Vertices}{\operatorname{vt}}
\newcommand{\Facets}{\operatorname{ft}}
\newcommand{\EuclDistance}{d}
\newcommand{\SpherDistance}{d}
\newcommandx{\EuclCoDistance}[2][1={},2={}]{\optionaltwosubs{\EuclDistance^*}{#1}{#2}}
\newcommandx{\SpherCoDistance}[2][1={},2={}]{\optionaltwosubs{\SpherDistance^*}{#1}{#2}}
\newcommandx{\TwinRay}[4][3={},4={}]{\optionaltwosubs{[#1,#2)}{#3}{#4}}
\newcommand{\TwinRayMap}[2]{\Ray_{#1}^{#2}}
\newcommand{\Dimension}{n}
\newcommand{\Hor}{^{\textup{hor}}}
\newcommand{\Ver}{^{\textup{ver}}}
\newcommand{\Min}{^{\textup{min}}}
\newcommand{\OpenHemi}{^{>\pi/2}}
\newcommand{\ClosedHemi}{^{\ge\pi/2}}
\newcommand{\Equator}{^{=\pi/2}}
\newcommand{\CofacePart}{_{\delta}}
\newcommand{\FacePart}{_{\partial}}
\newcommand{\ApproxHeight}{h'}
\newcommand{\Height}{h}
\newcommand{\Morse}{f}
\newcommand{\Gradient}{\nabla}
\newcommandx{\Zonotope}[1][1={}]{Z\ifthenelse{\equal{#1}{}}{}{(#1)}}
\newcommand{\NormalCone}[1]{N(#1)}
\newcommand{\Directions}{D}
\newcommand{\Direction}{z}
\newcommand{\ConvexSet}{C}
\newcommand{\FaceLattice}{\mathcal{F}}
\newcommand{\Up}{\nearrow}
\newcommand{\Down}{\searrow}
\newcommand{\Roof}[1]{\hat{#1}}
\newcommand{\Depth}{\operatorname{dp}}
\newcommand{\OneSpace}{\Space}
\newcommand{\TwoSpace}{\Space}
\newcommand{\DummyArg}{-}
\newcommand{\EIdent}{\iota}
\newcommand{\PosEIdent}{\EIdent_+}
\newcommand{\NegEIdent}{\EIdent_-}
\newcommand{\Difference}{\pi}
\newcommand{\Plus}{\hash}
\newcommand{\ZeroLevel}{\TwoSpace_0}
\newcommand{\Star}{\operatorname{st}}
\newcommand{\CAT}[1]{\textup{CAT}\textup{(}$\mathrm{#1}$\textup{)}}

\newcommandx{\Cay}[3][3={}]{\operatorname{Cay}(#1,#2)}

\newcommand{\UpSet}{L^\uparrow}
\newcommand{\UpConvex}{{\tilde{A}}}
\newcommand{\UpOpen}{{\tilde{B}}}
\newcommand{\UpLink}{B}
\newcommand{\UpFat}{U}

\newcommand{\Mod}{\operatorname{Mod}}
\newcommand{\Out}{\operatorname{Out}}

\newcommand{\headerlevel}[1]{
\ifx \footeronlevel \undefined
\def\footeronlevel{#1}
\else\fi}
\newcommand{\footerlevel}[1]{
\def\currentlevel{#1}}

\begin{document}
\def\firsttimeheader{\true}
\else\fi
\renewcommand{\screenornot}{_screen}

\headerlevel{1}

\frontmatter

\headerlevel{2}

\title{Finiteness Properties of Chevalley Groups over the Ring of (Laurent) Polynomials over a Finite Field}
\author{Stefan Witzel}

\date{}

\cleardoublepage 

\maketitle

\cleardoublepage
\thispagestyle{empty}
\vspace*{10.5pc}
\begin{center}
\LARGE
Für meine Eltern
\end{center}

\footerlevel{2}


\cleardoublepage
\phantomsection
\pdfbookmark{Contents}{pdfbook:toc}
\tableofcontents

\headerlevel{2}

\chapter*{Introduction}

\label{chap:intro}

In these notes we determine finiteness properties of two classes of groups whose most prominent representatives are the groups
\[
\SL_n(\F_q[t]) \quad \quad \text{and} \quad\quad \SL_n(\F_q[t,t^{-1}])\text{ ,}
\]
the special linear groups over the ring of polynomials, respectively Laurent polynomials, over a field with $q$ elements.

The finiteness properties we are interested in generalize the notions of being finitely generated and of being finitely presented. A group $\Group$ is generated by a subset $S$ if and only if the Cayley graph $\Cay{\Group}{S}[1]$ is connected. And $S$ is finite if and only if the quotient $\Group \backslash \Cay{\Group}{S}[1]$ is compact. That is, $\Group$ is finitely generated if and only if it admits a connected Cayley graph that has compact quotient modulo $\Group$.

Similarly, consider a set $R$ of relations in $\Group$, that is, words in the letters $S \union S^{-1}$ which describe the neutral element in $\Group$. The Cayley $2$-complex $\Cay{\Group}{S,R}[2]$ is obtained from the Cayley graph by gluing in a $2$-cell for every edge loop that is labeled by an element of $R$. The Cayley $2$-complex is $1$-connected (that is, connected and simply connected) if and only if $\gen{S \mid R}$ is a presentation of $\Group$. And it has a compact quotient modulo $\Group$ if both, $S$ and $R$, are finite. That is, $\Group$ is finitely presented, if and only if $\Group$ admits a $1$-connected, cocompact Cayley $2$-complex.

Since $\Group$ is described up to isomorphism by a presentation, this is how far the classical interest goes. From the
topological point of view, on the other hand, one can go on and ask whether it is possible to glue in $3$-cells along
``identities'' $I$ in such a way that the resulting complex $\Cay{\Group}{S,R,I}[3]$ is $2$-connected and cocompact.

Wall \cite{wall65, wall66} developed this topological point of view and introduced the following notion: a group $\Group$ is of type $F_n$ if it has a classifying space $\Space$ (that is, a contractible CW-complex on which $\Group$ acts freely) such that the quotient $\Group \backslash \Skeleton{\Space}{n}$ of the $n$-skeleton is compact modulo $\Group$. It is not hard to see that, indeed, a group is of type $F_1$ if and only if it is finitely generated, and is of type $F_2$ if and only if it is finitely presented. We say that a group is of type $F_\infty$ if it is of type $F_n$ for all $n$. This property is strictly weaker than that of being of type $F$, namely having a cocompact classifying space. In fact, a group that has torsion elements cannot be of type $F$. But if it is virtually of type $F$, that is, if it contains a finite index subgroup that is of type $F$, then it is still of type $F_\infty$.

In the decades following Wall's articles some effort has been put, on the one hand, in determining what finiteness properties certain interesting groups have, and on the other hand, in better understanding what the properties $F_n$ mean by producing separating examples.
We mention just some of the results not directly related to the present notes. Statements about arithmetic and related groups will be mentioned further below. It will be convenient to introduce the finiteness length of a group $\Group$ defined as
\[
\phi(\Group) \defeq \sup \{n \in \N \mid \Group\text{ is of type } F_n\}\text{ .}
\]
Finitely generated groups that are not finitely presented have been known since Neumann's article \cite{neumann37}. The first group known to be of type $F_2$ but not of type $F_3$ was constructed by Stallings \cite{stallings63}. Stallings's example is the case $n=3$ of the following construction due to Bieri: let $L^n$ be the direct product of $n$ free groups on two generators and let $K_n$ be the kernel of the homomorphism $L^n \to \Z$ that maps each of the canonical generators to $1$. In \cite{bieri76} Bieri showed that $\phi(K_n) = n-1$.
Abels and Brown \cite{abebro87} proved that the groups $\GroupScheme_n$ of upper triangular $n$-by-$n$ matrices with extremal diagonal entries equal to $1$ satisfy $\phi(\GroupScheme_n(\Z[\frac{1}{p}])) = n-1$ for any prime $p$. In \cite{brown87} Brown proved that Thompson's groups and some of their generalizations are of type $F_\infty$. For the group $\mathbf{B}_n$ of upper triangular matrices and a ring $\Integers[S]$ of $S$-integers of a global function field, Bux \cite{bux04} showed that $\phi(\mathbf{B}_n(\Integers[S])) = \abs{S}-1$.

The general pattern of proof to determine the finiteness properties of a group $\Group$ is the same in many cases: first one produces a contractible CW-complex $\Space$ on which $\Group$ acts with ``good'' (typically finite) stabilizers. This action will typically not be cocompact. One then constructs a filtration $(\Space_i)_i$ of $\Space$ by cocompact subcomplexes $\Space_i$ such that the inclusions $\Space_i \into \Space_j$, $i \le j$, preserve $n$-connectedness for some fixed $n$. Now $\Space_0$ would be the $(n+1)$-skeleton of a classifying space if the stabilizers were trivial instead of only ``good''. In this situation there is a famous criterion due to Brown \cite{brown87} stating not only that ``good'' stabilizers are good enough to conclude that $\Group$ is of type $F_{n}$, but also that the group is not of type $F_{n+1}$ provided the filtration does not preserve $n+1$-connectedness in an essential way.

In some cases an appropriate space $\Space$ for $\Group$ to act on has been known long before people were interested in higher finiteness properties. Thus Raghunathan \cite{raghunathan68} showed that arithmetic subgroups of semisimple algebraic groups over number fields, like $\SL_n(\Z)$, are virtually of type $F$. To this end he considered the action of the arithmetic group on the symmetric space $\Space$ of its ambient Lie group and constructed a Morse function on the quotient $\Group \backslash \Space$ with compact sublevel sets. It is noteworthy, that this proof fits into the general pattern described above. In fact, the filtrations mentioned before are often, and in these notes in particular, obtained by (a discrete version of) Morse theory. This reduces the problem to understanding certain local data, the descending links.

There are two classes of groups that are closely related to arithmetic groups: mapping class groups $\Mod(S_g)$ of closed surfaces and outer automorphism groups $\Out(F_n)$ of finitely generated free groups. The space for $\Mod(S_g)$ to act on is Teich\-mül\-ler space, likewise a very classical object. A proof that Teich\-mül\-ler space admits an invariant contractible cocompact subspace, and therefore $\Mod(S_g)$ is virtually of type $F$, can be found in \cite{ivanov91}. The right space to consider for $\Out(F_n)$ is outer space \cite{culvog86}. Unlike in the two previous cases, the space was not known before Culler and Vogtman constructed it to establish that $\Out(F_n)$ is virtually of type $F$. The cited proofs for mapping class groups and outer automorphism groups of free groups are very similar in spirit to the one for arithmetic groups and fit again into our general pattern. An alternative to exhibiting a highly connected cocompact subspace of the original space is to construct a cocompact partial 
compactification on which the group still acts properly discontinuously. This has been done by Borel and Serre \cite{borser73} for arithmetic groups and by Harvey \cite{harvey79} for mapping class groups.

A number theoretic generalization of arithmetic groups are $S$-arithmetic groups. To define them, we consider a number
field $\GlobalField$ and its set of places $T$, that is, a maximal set of inequivalent valuations. Let $T_\infty$ denote
the subset of Archimedean valuations, such as the usual absolute value. For an element $\alpha \in \GlobalField$ the
condition that $\Valuation(\alpha) \ge 0$ for all non-Archimedean places $\Valuation$ describes the ring of integers of
$\GlobalField$. If, instead, one imposes this condition for all but a finite set $S$ of non-Archimedean places, one
obtains the ring of $S$-integers $\Integers[S]$. Accordingly, $S$-arithmetic groups are matrix groups of $S$-integers.

The field $\GlobalField$ admits a completion $\GlobalField_\Valuation$ with respect to every valuation $\Valuation \in T$. An $S$-arithmetic group $\GroupScheme(\Integers[S])$ is a discrete subgroup of the locally compact group $\prod_{\Valuation \in T_\infty \union S} \GroupScheme(\GlobalField_\Valuation)$. For instance, the group $\SL_n(\Z[\frac{1}{2}])$ is a discrete subgroup of the group $\SL_n(\R) \times \SL_n(\Q_2)$.

If $\GroupScheme$ is a reductive $\GlobalField$-group, then $\GroupScheme(\Integers[S])$ acts properly discontinuously on the product of the spaces associated to the locally compact groups $\GroupScheme(\GlobalField_\Valuation)$, $\Valuation \in T_\infty \union S$. For the Archimedean valuations, this is again a symmetric space. For the non-Archimedean valuations the naturally associated space is a Bruhat--Tits building, that is, a locally compact cell complex with a piecewise Euclidean metric.

The action of an $S$-arithmetic subgroup of a reductive algebraic group over a number field described above has been used by Borel and Serre \cite[Théo\-rème~6.2]{borser76} to show that these groups are virtually of type $F$. Their proof is again by constructing a properly discontinuous action on a partial compactification rather than finding a highly connected subspace.

There is the notion of a global function field which parallels that of a number field. A global function field is a finite extension of a field of the form $\F_p(t)$ where $\F_p$ is the finite field with $p$ elements and $t$ is transcendental over $\F_p$. There is no strong formal ressemblance between number fields and global function fields, but it has turned out that they share many properties. In particular, the theory of places, completions and $S$-integers can be developed analogously for global function fields, with the exception that there are no Archimedean valuations.

Finiteness properties of $S$-arithmetic subgroups of semisimple groups over global function fields differ fundamentally from the analogous properties in the number field case we have seen above. This is apparent already from the first result in this class: Nagao \cite{nagao59} showed that the groups $\SL_2(\F_q[t])$ are not finitely generated. As another example Stuhler \cite{stuhler80} showed that $\SL_2(\F_q[t,t^{-1}])$ is finitely generated but not finitely presented.

In these notes we prove the following generalization of these two theorems. It concerns almost simple $\F_q$-groups of rank $n$. Examples are Chevalley groups such as $\SL_{n+1}$, $\Sp_{2n}$, $\SO_{2n+1}$, and $\SO_{2n}$.

\begin{maintheorem}
Let $\GroupScheme$ be a connected, noncommutative, almost simple $\F_q$-group of $\F_q$-rank $\Dimension \ge 1$. Then $\GroupScheme(\F_q[t])$ is of type $F_{n-1}$ but not of type $F_n$ and $\GroupScheme(\F_q[t,t^{-1}])$ is of type $F_{2n-1}$ but not of type $F_{2n}$.
\end{maintheorem}

The second part of the Main~Theorem, which is proved in Chapter~\ref{chap:two_places} as Theorem~\ref{thm:two_places_arithmetic}, is new for $n \ge 2$. The first part is proved in Chapter~\ref{chap:one_place} as Theorem~\ref{thm:one_place_arithmetic}. In the case where $q$ is large compared to $n$, it was shown before by Abels and Abramenko \cite{abeabr93} for $\GroupScheme = \SL_{n+1}$ and by Abramenko \cite{abramenko96} for $\GroupScheme$ a classical group.

The proof of the Main~Theorem is very specific to the groups in question. We first collect the general properties. The rings $\F_q[t]$ and $\F_q[t,t^{-1}]$ are rings of $S$-integers in $\F_q(t)$ where $S = \{\Valuation_0\}$ contains one place in the first case and $S = \{\Valuation_0,\Valuation_\infty\}$ contains two places in the second case. So the groups are $S$-arithmetic groups. As in the number field case, an $S$-arithmetic group $\GroupScheme(\Integers[S])$ is discrete as a subgroup of the locally compact group $\prod_{\Valuation \in S} \GroupScheme(\GlobalField_\Valuation)$ (recall that there are no Archimedean valuations). Since $\GroupScheme$ is almost simple, there is a Bruhat--Tits building $\Building_\Valuation$ associated to each of the factors $\GroupScheme(\GlobalField_\Valuation)$. Therefore the group $\GroupScheme(\Integers[S])$ acts properly discontinuously on the building $\Building \defeq \prod_{\Valuation \in S} \Building_\Valuation$. The action is not cocompact and the task, according 
to our general strategy, is therefore to construct a cocompact filtration which preserves high connectedness.

This can and has been done using Harder's reduction theory \cite{harder67, harder68, harder69}. However, since this is quite involved in higher rank, the results obtained in this way were restricted in one of two ways: Either they only held for global rank $1$ such as those by Stuhler \cite{stuhler80} and Bux--Wortman \cite{buxwor08}. Or they did not determine the full finiteness properties such as those by Behr \cite{behr98}.

What makes the groups of the Main Theorem so special, is that the group $\GroupScheme(\F_q[t,t^{-1}])$ happens to also be a Kac--Moody group. In terms of spaces this means that the two buildings $\Building_0$ and $\Building_\infty$ that the group acts on form a twin building. That is, there is a codistance between $\Building_0$ and $\Building_\infty$ measuring in some sense the distance between cells in the two buildings, and this codistance is preserved by $\GroupScheme(\F_q[t,t^{-1}])$. In fact one can define two kinds of codistance: one is a combinatorial codistance between the cells of $\Building_0$ and of $\Building_\infty$ and the other is a metric codistance between the points of $\Building_0$ and of $\Building_\infty$. The group $\GroupScheme(\F_q[t])$ is a stabilizer in $\GroupScheme(\F_q[t,t^{-1}])$ of a cell in $\Building_0$.

In \cite{abramenko96} Abramenko used the combinatorial codistance to define a Morse function on $\Building_\infty$ and partially obtain the first case of the Main Theorem as described above. To ensure that the filtration preserves connectedness properties, Abramenko had to study certain combinatorially described subcomplexes of spherical buildings, which arose as descending links.

In our proof we use the metric codistance in a similar way to Abramenko's use of the combinatorial codistance. The descending links that occur in our filtration are metrically described subcomplexes of spherical buildings. The connectedness properties of these have already been established by Schulz \cite{schulz}.

Since our proof makes heavy use of the piecewise Euclidean metric on the buildings $\Building_0$ and $\Building_\infty$ it is restricted to affine Kac--Moody groups. Abramenko's combinatorial proof on the other hand, making no reference to the metric structure of the twin building, generalizes to hyperbolic Kac--Moody groups.

That being said, we want to point that, even though the proof also uses the twin building structure, it is possible to generalize it to an arbitrary set of places $S$. The codistance function is then replaced by a Morse function constructed using Harder's reduction theory. However, since the theory of twin buildings is easier to handle than reduction theory, the proof of the general case is substantially more involved. On the other hand, in proving the Main~Theorem one already encounters (and has to solve) all problems of the general case that are not due to the use of reduction theory. One technique developed for this purpose is the flattening of level sets that is introduced in Sections~\ref{sec:zonotopes} and \ref{sec:height}. Another technique is the use of the depth function as a secondary height function in the flattened regions. It was introduced in \cite{buxwor08} and is generalized to reducible buildings in Section~\ref{sec:moves}. The proof to be presented here therefore allows us to study the 
geometry of the problem without having to struggle with the difficulties of reduction theory. The understanding of these isolated problems has enabled Kai-Uwe Bux, Ralf Köhl, and the author to prove the following result in \cite{buxgrawit10b}, which was posed as a question in \cite[p.~197]{brown89}:

\begin{ranktheorem}
Let $\GlobalField$ be a global function field. Let $\GroupScheme$ be a connected, noncommutative, almost simple $\GlobalField$-isotropic $\GlobalField$-group. Let $N \defeq \sum_{\Place \in S} \operatorname{rank}_{\GlobalField_\Place} \GroupScheme$ be the sum over the local ranks at places $\Place \in S$ of $\GroupScheme$. Then $\GroupScheme(\Integers[S])$ is of type $F_{N-1}$ but not of type $F_N$.
\end{ranktheorem}

The negative statement of the Rank Theorem was known before by work of Bux and Wortman \cite{buxwor07} and an
alternative proof of it has recently been given by Gandini \cite{gandini+11}. If $\GroupScheme$ has $\GlobalField$-rank
$0$, then the group in the theorem is virtually of type $F$, as was shown by Serre \cite[Théorème~4~(b)]{serre71}.

Note that if $\GroupScheme$ is a Chevalley group, then the rank $\Dimension$ is independent of the field. Thus, in this case the Rank Theorem can be expressed by the relation
\[
\phi(\GroupScheme(\Integers[S])) = \abs{S} \cdot \Dimension - 1 \text{ .}
\]

In Appendix~\ref{chap:adding_places} we show that the finiteness length of an almost simple $S$-arithmetic group can only grow as $S$ gets larger (a fact that was already used in \cite{abramenko96}). Though this is clear in presence of the Rank Theorem, it allows one to deduce finiteness properties (though not the full finiteness length) of some groups even without it. For example, the following is a consequence of our Main Theorem:

\begin{ncor}
Let $\GroupScheme$ be a connected, noncommutative, almost simple $\F_q$-group of $\F_q$-rank $\Dimension \ge 1$. Let $S$ be a finite set of places of $\F_q(t)$ and let $\Group \defeq \GroupScheme(\Integers[S])$. If $S$ contains $\Valuation_0$ or $\Valuation_\infty$, then $\Group$ is of type $F_{\Dimension-1}$. If $S$ contains $\Valuation_0$ and $\Valuation_\infty$, then $\Group$ is of type $F_{2\Dimension-1}$.
\end{ncor}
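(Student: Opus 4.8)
The plan is to deduce this from the Main~Theorem together with the monotonicity of the finiteness length proved in Appendix~\ref{chap:adding_places}. That appendix shows that for almost simple $S$-arithmetic groups the finiteness length is monotone in $S$: if $S' \subseteq S$ are finite sets of places of $\F_q(t)$, then $\phi(\GroupScheme(\Integers[S'])) \le \phi(\GroupScheme(\Integers[S]))$, so enlarging $S$ can only raise the finiteness length. Hence it suffices to find, inside $S$, a subset $S'$ for which $\GroupScheme(\Integers[S'])$ is already covered by the Main~Theorem.

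Suppose first that $\Valuation_0 \in S$. Then $S' \defeq \{\Valuation_0\} \subseteq S$ and $\Integers[S'] = \F_q[t]$, so by the Main~Theorem $\GroupScheme(\Integers[S'])$ is of type $F_{\Dimension-1}$; monotonicity then gives that $\Group = \GroupScheme(\Integers[S])$ is of type $F_{\Dimension-1}$. If $\Valuation_\infty \in S$, the same works with $S' \defeq \{\Valuation_\infty\}$: here $\Integers[S'] = \F_q[t^{-1}]$, which after renaming the uniformizer is again a polynomial ring in one variable over $\F_q$, so $\GroupScheme(\Integers[S'])$ is of type $F_{\Dimension-1}$ by the Main~Theorem (equivalently, the $\F_q$-automorphism $t \mapsto t^{-1}$ of $\F_q(t)$ interchanges $\Valuation_0$ and $\Valuation_\infty$ and induces an isomorphism $\GroupScheme(\Integers[\{\Valuation_\infty\}]) \cong \GroupScheme(\F_q[t])$). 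Finally, if both $\Valuation_0,\Valuation_\infty \in S$, take $S' \defeq \{\Valuation_0,\Valuation_\infty\}$, so that $\Integers[S'] = \F_q[t,t^{-1}]$; the Main~Theorem says $\GroupScheme(\Integers[S'])$ is of type $F_{2\Dimension-1}$, and monotonicity again passes this to $\Group$.

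I do not expect a genuine obstacle here: the corollary is a formal consequence of the Main~Theorem and the appendix. The only points needing a word of care are the identification of $\Integers[\{\Valuation_0\}]$ with $\F_q[t]$ and of $\Integers[\{\Valuation_0,\Valuation_\infty\}]$ with $\F_q[t,t^{-1}]$ (this is precisely the description of those rings as rings of $S$-integers recalled in the introduction) and the symmetry reducing the case $\Valuation_\infty \in S$ to the case $\Valuation_0 \in S$. Note that only the positive half of the Main~Theorem is used; the corollary asserts nothing about $\Group$ failing to be of type $F_m$ for larger $m$, which is consistent with the fact that, absent the Rank~Theorem, further places in $S$ could in principle raise the finiteness length.
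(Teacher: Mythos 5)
Your proof is correct and is exactly the route the paper intends: the corollary is stated in the introduction immediately after the remark that Appendix~\ref{chap:adding_places} (Theorem~\ref{thm:increasing_places_preserves_topfin}) gives monotonicity of finiteness length under enlarging $S$, and the paper leaves the deduction from the Main~Theorem implicit. You correctly supply the two ingredients (the Main~Theorem for $S' \in \{\{\Valuation_0\},\{\Valuation_\infty\},\{\Valuation_0,\Valuation_\infty\}\}$, plus Theorem~\ref{thm:increasing_places_preserves_topfin} for $S' \subseteq S$) and correctly flag the one small care point, namely identifying $\Integers[\{\Valuation_0\}]$ resp.\ $\Integers[\{\Valuation_\infty\}]$ with a polynomial ring via the automorphism $t \mapsto t^{-1}$ of $\F_q(t)$ that swaps the two places; this takes care of whichever of the two conventions for labelling $\Valuation_0$ and $\Valuation_\infty$ is in force (the introduction and Section~\ref{sec:number_theory} use opposite conventions, but by the symmetry you invoke the conclusion is unaffected).
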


These notes are essentially the author's Ph.D. thesis \cite{witzel11} and based on the unpublished notes \cite{buxgrawit10} and \cite{witzel10}. Therefore Chapter~\ref{chap:one_place} contains many ideas of Kai-Uwe Bux and Ralf Köhl. Also, Appendix~\ref{chap:affine_kac-moody_groups} is mostly due to them.

\subsection*{Acknowledgements}

I am indebted to Kai-Uwe Bux and Ralf Köhl who have taught me a lot of what is used in these notes and have encouraged me while I wrote it. I would also like to thank Jan Essert, David Ghatei, Sven Herrmann, Karl Heinrich Hofmann, Michael Joswig, Alexander Kartzow, Andreas Mars, Petra Schwer, Julia Sponsel, and Markus-Ludwig Wermer for helpful discussions and comments on the manuscript. I gratefully acknowledge the support I received from the DFG and the Studienstiftung des deut\-schen Volkes and the hospitality of the University of Virginia, the SFB 701 of the Universität Bielefeld, and the Mathematisches Forschungsinstitut Oberwolfach.

\footerlevel{2}

\mainmatter

\headerlevel{2}

\chapter{Basic Definitions and Properties}

In this first chapter we introduce notions and statements that will be needed later on and that are more or less generally known. The focus is on developing the necessary ideas in their natural context, proofs are generally omitted. For the reader who is interested in more details, an effort has been made to give plenty of references. Where less appropriate references are known to the author, the exposition is more detailed.

An exception to this is Section~\ref{sec:buildings} on buildings: there are many excellent books on the topic but our point of view is none of the classical ones so we give a crash course developing our terminology along the way. For this reason even experts may want to skim through Section~\ref{sec:buildings}. In Section~\ref{sec:metric_spaces} on metric spaces some definitions are slightly modified and non-standard notation is introduced. Apart from that the reader who feels familiar with some of the topics is encouraged to skip them and refer back to them as needed.

\headerlevel{3}

\section{Metric Spaces}
\label{sec:metric_spaces}

In this section we introduce what we need to know about metric spaces, in particular about those which have bounded curvature in the sense of the \CAT{\kappa} inequality. We also define cell complexes in a way that will be convenient later. The canonical and almost exhaustive reference for the topics mentioned here is \cite{brihae} from which most of the definitions are taken. Other books include \cite{ballmann95} and \cite{papadopoulos05}.

\subsection*{Geodesics}

Let $\Space$ be a metric space. A \emph{geodesic} in $\Space$ is an isometric embedding $\Path \colon [a,b] \to \Space$ from a compact real interval into $\Space$; its image is a \emph{geodesic segment}. The geodesic \emph{issues at $\Path(a)$} and \emph{joins $\Path(a)$ to $\Path(b)$}. A \emph{geodesic ray} is an isometric embedding $\Ray \colon [a,\infty) \to \Space$ and is likewise said to \emph{issue at $\Ray(a)$}. Sometimes the image of $\Ray$ is also called a \emph{geodesic ray}.

A metric space is said to be \emph{geodesic} if for any two of its points there is a geodesic that joins them. It is \emph{($D$-)uniquely geodesic} if for any two points (of distance $<D$) there is a unique geodesic that joins them.

If $\Point, \AltPoint$ are two points of distance $<D$ in a $D$-uniquely geodesic space, then we write $[\Point,\AltPoint]$ for the geodesic segment that joins $\Point$ to $\AltPoint$.

A subset $\Set$ of a geodesic metric space is \emph{($D$-)convex} if for any two of its points (of distance $<D$) there is a geodesic that joins them and the image of every such geodesic is contained in $\Set$.

If $\Path \colon [0,a] \to \Space$ and $\Path' \colon [0,a'] \to \Space$ are two geodesics that issue at the same point, one can define the angle $\angle_{\Path(0)}(\Path,\Path')$ between them (see \cite[Definition~1.12]{brihae}). If $\Space$ is hyperbolic, Euclidean, or spherical space, this is the usual angle. If $\Space$ is $D$-uniquely geodesic and $\Point, \AltPoint, \YetAltPoint \in \Space$ are three points with $d(\Point,\AltPoint), d(\Point,\YetAltPoint) < D$, we write $\angle_{\Point}(\AltPoint,\YetAltPoint)$ to denote the angle between the unique geodesics from $\Point$ to $\AltPoint$ and from $\Point$ to $\YetAltPoint$.

\subsection*{Products and Joins}

The \emph{direct product} $\prod_{i=1}^n = X_1 \times \cdots \times X_n$ of a finite number of metric spaces $(X_i,d_i)_{1 \le i \le n}$ is the set-theoretic direct product equipped with the metric $d$ given by 
\[
d\big((\Point_1,\ldots,\Point_n),(\AltPoint_1,\ldots,\AltPoint_n)\big) = \big(d_1(\Point_1,\AltPoint_1)^2 + \cdots + d_n(\Point_n,\AltPoint_n)^2\big)^{1/2} \text{ .}
\]

The \emph{spherical join} $\Space_1 * \Space_2$\index[xsyms]{star@$\Space_1*\Space_2$} of two metric spaces $(\Space_1,d_1)$ and $(\Space_2,d_2)$ of diameter at most $\pi$ is defined as follows: as a set, it is the quotient $([0,\pi/2] \times \Space_1 \times \Space_2) / \sim$ where $(\theta,\Point_1,\Point_2) \sim (\theta',\Point_1',\Point_2')$ if either $\theta = \theta' = 0$ and $\Point_1 = \Point_1'$, or $\theta = \theta' = \pi/2$ and $\Point_2 = \Point_2'$, or $\theta = \theta' \nin \{0,\pi/2\}$ and $\Point_1 = \Point_1'$ and $\Point_2 = \Point_2'$. The class of $(\theta,\Point_1,\Point_2)$ is denoted $\cos \theta \Point_1 + \sin \theta \Point_2$ and, in particular, by $\Point_1$ or $\Point_2$ if $\theta$ is $0$ or $\pi/2$.

The metric $d$ on $\Space_1 * \Space_2$ is defined by the condition that for two points $\Point = \cos \theta \Point_1 + \sin \theta \Point_2$ and $\Point' = \cos \theta' \Point_1 + \sin \theta' \Point_2$ the distance $d(\Point,\Point')$ be at most $\pi$ and that
\begin{equation}
\label{eq:spherical_join}
\cos d(\Point,\Point') = \cos \theta \cos \theta' \cos d_1(\Point_1,\Point_1') + \sin \theta \sin \theta' \cos d_2(\Point_2,\Point_2') \text{ .}
\end{equation}
The maps $\Space_i \to \Space_1 * \Space_2, \Point_i \mapsto \Point_i$ are isometric embeddings and so we usually regard $\Space_1$ and $\Space_2$ as subspaces of $\Space_1 * \Space_2$. For three metric spaces $\Space_1$, $\Space_2$ and $\Space_3$ of diameter at most $\pi$, the joins $(\Space_1 * \Space_2) * \Space_3$ and $\Space_1 * (\Space_2 * \Space_3)$ are naturally isometric so there is a spherical join $\Join_{i = 1}^n \Space_i = \Space_1 * \cdots * \Space_n$\index[xsyms]{star@$\Join_i\Space_i$} for any finite number $n$ of metric spaces $\Space_i$ of diameter at most $\pi$.

\subsection*{Model Spaces}

We introduce the model spaces for positive, zero, and negative curvature, see \cite[Chapter~I.2]{brihae} or \cite{ratcliffe} for details. First let $\R^n$ be equipped with the standard Euclidean scalar product $\scp{\DummyArg}{\DummyArg}$. The set $\R^n$ together with the metric induced by $\scp{\DummyArg}{\DummyArg}$ is the \emph{$n$-dimensional Euclidean space} and as usual denoted by $\E^n$.

The \emph{$n$-dimensional sphere} (or \emph{spherical $n$-space}) $\S^n$ is the unit sphere in $\R^{n+1}$ equipped with the angular metric. That is, the metric $d_{\S^n}$ is given by $\cos d_{\S^n}(\Vector,\AltVector) = \scp{\Vector}{\AltVector}$.

Now let $\scpb{\DummyArg}{\DummyArg}$ be the Lorentzian scalar product on $\R^{n+1}$ that for the standard basis vectors $(e_i)_{1 \le i \le n+1}$ takes the values
\[
\scpb{e_i}{e_j} =
\left\{ 
\begin{array}{ll}
0& \text{ if }i \ne j\\
1& \text{ if }1 \le i = j \le n\\
-1& \text{ if } i = j = n+1\text{ .}
\end{array}
\right.
\]
The sphere of radius $\mathrm{i}$ with respect to this scalar product, i.e., the set
\[
\{\Vector \in \R^{n+1} \mid \scpb{\Vector}{\Vector} = -1\} \text{ ,}
\]
has two components. The component consisting of vectors with positive last component is denoted by $\H^n$ and equipped with the metric $d_{\H^n}$ which is defined by $\cosh d_{\H^n}(\Vector,\AltVector) = \scpb{\Vector}{\AltVector}$. The metric space $\H^n$ is the \emph{$n$-dimensional hyperbolic space}.

The $n$-sphere, Euclidean $n$-space, and hyperbolic $n$-space are the model spaces $\ModelSpace{\kappa}^n$ for curvature $\kappa = 1$, $0$ and $-1$ respectively. We obtain model spaces for all other curvatures by scaling the metrics of spherical and hyperbolic space: for $\kappa>0$ the model space $\ModelSpace{\kappa}^n$ is $\S^n$ equipped with the metric $d_\kappa \defeq 1/\sqrt{\kappa} d_{\S^n}$; and for $\kappa < 0$ the model space $\ModelSpace{\kappa}^n$ is $\H^n$ equipped with the metric $d_\kappa \defeq 1/\sqrt{-\kappa} d_{\H^n}$. For every $\kappa$ we let $\ModelDiameter{\kappa}$ denote the diameter of $\ModelSpace{\kappa}^n$ (which is independent of the dimension). Concretely this means that $D_\kappa = \infty$ for $\kappa \le 0$ and $D_\kappa = \pi/\sqrt{\kappa}$ for $\kappa > 0$. Each model space $\ModelSpace{\kappa}^n$ is geodesic and $D_\kappa$-uniquely geodesic.

By a \emph{hyperplane} in $\ModelSpace{\kappa}^n$ we mean an isometrically embedded $\ModelSpace{\kappa}^{n-1}$. The complement of a hyperplane has two connected components and we call the closure of one of them a \emph{halfspace} (in case $\kappa > 0$ also \emph{hemisphere}). A \emph{subspace} of a model space is an intersection of hyperplanes and is itself isometric to a model space (or empty).

\subsection*{\CAT{\kappa}-Spaces}

A \CAT{\kappa}-space is a metric space that is curved at most as much as $\ModelSpace{\kappa}^2$. The curvature is compared by comparing triangles. To make this precise, we define a \emph{geodesic triangle} to be the union of three geodesic segments $[p,q]$, $[q,r]$, and $[r,p]$ (which need not be the unique geodesics joining these points), called its \emph{edges}, and we call $p$, $q$, and $r$ its \emph{vertices}. If $\Delta$ is the triangle just described, a \emph{comparison triangle} $\bar{\Delta}$ for $\Delta$ is a geodesic triangle $[\bar{p},\bar{q}] \union [\bar{q},\bar{r}] \union [\bar{r},\bar{p}]$ in a model space $\ModelSpace{\kappa}^2$ such that $d(p,q) = d(\bar{p},\bar{q})$, $d(q,r) = d(\bar{q},\bar{r})$, $d(r,p) = d(\bar{r},\bar{p})$. If $\Point$ is a point of $\Delta$, say $\Point \in [p,q]$, then its comparison point $\bar{\Point} \in [\bar{p},\bar{q}]$ is characterized by $d(p,x) = d(\bar{p},\bar{x})$ so that also $d(q,x) = d(\bar{q},\bar{x})$.

Let $\kappa$ be a real number. A geodesic triangle $\Delta$

is said to satisfy the \emph{\CAT{\kappa} inequality} if
\[
d(\Point,\AltPoint) \le d(\bar{\Point},\bar{\AltPoint})
\]
for any two points $\Point,\AltPoint \in \Delta$ and their comparison points $\bar{\Point},\bar{\AltPoint} \in \bar{\Delta}$ in any comparison triangle $\bar{\Delta} \subseteq \ModelSpace{\kappa}$. The space $\Space$ is called a \CAT{\kappa} space if every triangle of perimeter $<2 D_\kappa$ satisfies the \CAT{\kappa} inequality (note that the condition on the perimeter is void if $\kappa \le 0$).

\begin{lem}
\label{lem:spherical_projection}
Let $\Space$ be a \CAT{\kappa}-space and let $\ConvexSet$ be a $D_\kappa$-convex subset. If $\Point \in \Space$ satisfies $d(\Point,\ConvexSet) < D_\kappa/2$, then there is a unique point $\ClosestPointProjection[\ConvexSet] \Point$ in $\ConvexSet$ that is closest to $\Point$. Moreover, the angle $\angle_{\ClosestPointProjection[\ConvexSet] \Point}(\Point,\AltPoint)$ is at least $\pi/2$ for every $\AltPoint \in \ConvexSet$.
\end{lem}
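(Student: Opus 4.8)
The plan is to reduce the statement to the corresponding facts about $\ModelSpace{\kappa}^2$, exactly as one always does when working with \CAT{\kappa}-spaces. First I would establish \emph{existence} of a closest point. Since $d(\Point,\ConvexSet) < D_\kappa/2$, I can pick a sequence $(\YetAltPoint_n)$ in $\ConvexSet$ with $d(\Point,\YetAltPoint_n) \to d(\Point,\ConvexSet)$, and all these points lie well inside the radius $D_\kappa/2$, so all triangles in play have perimeter $< 2D_\kappa$ and the \CAT{\kappa} inequality applies. Comparing the triangle with vertices $\Point,\YetAltPoint_m,\YetAltPoint_n$ to a comparison triangle in $\ModelSpace{\kappa}^2$ and looking at the midpoint $\AltPoint$ of $[\YetAltPoint_m,\YetAltPoint_n]$ (which lies in $\ConvexSet$ by $D_\kappa$-convexity, provided $d(\YetAltPoint_m,\YetAltPoint_n) < D_\kappa$, which holds once $m,n$ are large), the \CAT{\kappa} comparison bounds $d(\Point,\AltPoint)$ by the distance to the corresponding midpoint in the model triangle; a short estimate in $\ModelSpace{\kappa}^2$ then forces $d(\YetAltPoint_m,\YetAltPoint_n) \to 0$, so $(\YetAltPoint_n)$ is Cauchy. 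The limit lies in $\ConvexSet$ (convex subsets here are closed, or one intersects with a closed ball first) and realizes the distance; call it $\ClosestPointProjection[\ConvexSet]\Point$.

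For \emph{uniqueness} I would use the same midpoint trick once more: if $\YetAltPoint$ and $\YetAltPoint'$ both realize the distance $r \defeq d(\Point,\ConvexSet) < D_\kappa/2$, their midpoint $\AltPoint \in \ConvexSet$ satisfies, by \CAT{\kappa} comparison against an isoceles triangle in $\ModelSpace{\kappa}^2$ with two sides of length $r$, the strict inequality $d(\Point,\AltPoint) < r$ unless $\YetAltPoint = \YetAltPoint'$ — contradicting minimality. (The strictness of this estimate in the model space, for perimeter $< 2D_\kappa$, is where one uses that $\ModelSpace{\kappa}^2$ is $D_\kappa$-uniquely geodesic; this is routine planar trigonometry.)

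For the \emph{angle bound}, fix $\AltPoint \in \ConvexSet$ and suppose $\angle_{\ClosestPointProjection[\ConvexSet]\Point}(\Point,\AltPoint) < \pi/2$. Set $p \defeq \ClosestPointProjection[\ConvexSet]\Point$. For small $t>0$ let $\AltPoint_t$ be the point on the geodesic $[p,\AltPoint]$ at distance $t$ from $p$; it lies in $\ConvexSet$ by $D_\kappa$-convexity, and for $t$ small the triangle $p,\Point,\AltPoint_t$ has perimeter $< 2D_\kappa$. Passing to a comparison triangle $\bar p,\bar\Point,\bar\AltPoint_t$ in $\ModelSpace{\kappa}^2$, the comparison angle at $\bar p$ is at least the Alexandrov angle $\angle_p(\Point,\AltPoint_t) = \angle_p(\Point,\AltPoint) < \pi/2$; a first-variation computation in the model plane then gives $d(\bar\Point,\bar\AltPoint_t) < d(\bar p,\bar\Point)$ for $t$ small enough, and \CAT{\kappa} yields $d(\Point,\AltPoint_t) \le d(\bar\Point,\bar\AltPoint_t) < d(\Point,p) = r$, contradicting that $p$ is the closest point of $\ConvexSet$. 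Hence the angle is $\ge \pi/2$.

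The main obstacle is purely the planar bookkeeping in $\ModelSpace{\kappa}^2$: one must check that each triangle that arises genuinely has perimeter $< 2D_\kappa$ so that the \CAT{\kappa} hypothesis is available (this is exactly why the hypothesis $d(\Point,\ConvexSet) < D_\kappa/2$ rather than $< D_\kappa$ is needed — it keeps $\Point$, $p$, and nearby points of $\ConvexSet$ within a set of diameter $< D_\kappa$), and that the relevant inequalities in the model plane are \emph{strict}. Both are elementary trigonometric facts about $\S^2$, $\E^2$, and $\H^2$, which I would either cite from \cite[Chapter~I.2]{brihae} or verify by the law of cosines; no new idea beyond the standard comparison-geometry toolkit is required.
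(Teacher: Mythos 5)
Your proof is correct and follows exactly the standard Bridson--Haefliger argument; the paper itself does not give details but simply cites Proposition~II.2.4(1) and Exercise~II.2.6(1) of \cite{brihae}, which is the midpoint/comparison-triangle method you spell out. Nothing differs in substance — you have just written out what the paper refers to.
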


\begin{proof}
This is proven like Proposition~II.2.4~(1) in \cite{brihae}, see also Exercise~II.2.6~(1).
\end{proof}

\subsection*{Polyhedral Complexes}

An intersection of a finite (not necessarily non-zero) number of halfspaces in some $\ModelSpace{\kappa}^n$ is called an $\ModelSpace{\kappa}$-\emph{polyhedron} and if it has diameter $< D_\kappa$ it is called an $\ModelSpace{\kappa}$-\emph{polytope}. If $H^+$ is a halfspace that contains an $\ModelSpace{\kappa}$-polyhedron $\BigCell$, then the intersection $\Cell$ of the bounding hyperplane $H$ with $\BigCell$ is a \emph{face} of $\BigCell$ and $\BigCell$ is a \emph{coface} of $\Cell$. By definition $\BigCell$ is a face (and coface) of itself. The \emph{dimension} $\dim \BigCell$ of $\BigCell$ is the dimension of the minimal subspace that contains it. The \emph{(relative) interior} $\relint \BigCell$ is the interior as a subset of that space, it consists of the points of $\BigCell$ that are not points of a proper face. The \emph{codimension} of $\Cell$ in $\BigCell$ is $\dim \BigCell - \dim \Cell$. A face of codimension $1$ is a \emph{facet}.

An \emph{$\ModelSpace{\kappa}$-polyhedral complex} consists of $\ModelSpace{\kappa}$-polyhedra that are glued together along their faces. Formally, let $(\BigCell_\alpha)_\alpha$ be a family of $\ModelSpace{\kappa}$-polyhedra. Let $\AltSpace = \Dunion_\alpha \BigCell_\alpha$ be their disjoint union and $p \colon \AltSpace \to \Space$ be the quotient map modulo an equivalence relation. Then $\Space$ is an $\ModelSpace{\kappa}$-polyhedral complex if
\begin{enumerate}[label=(PC\arabic{*}), ref=PC\arabic{*},leftmargin=*]
\item for every $\alpha$ the map $p$ restricted to $\BigCell_\alpha$ is injective, and\label{item:injective}
\item for any two indices $\alpha,\beta$, if the images of the interiors of $\BigCell_{\alpha}$ and of $\BigCell_{\beta}$ under $p$ meet, then they coincide and the map $p|_{\relint \BigCell_\alpha}^{-1} \circ p|_{\relint \BigCell_\beta}$ is an isometry.
\end{enumerate}
An $\ModelSpace{\kappa}$-polyhedral complex is equipped with a quotient pseudo-metric. Martin Bridson \cite{bridson91} has shown that this pseudo-metric is a metric if only finitely many shapes of polyhedra occur, see \cite[Section~I.7]{brihae} for details. In the complexes we consider this will always be the case (in fact there will mostly be just one shape per complex).

When we speak of an $\ModelSpace{\kappa}$-polyhedral complex, we always mean the metric space together with the way it was constructed. This allows us to call the image of a face $\Cell$ of some $\BigCell_\alpha$ under $p$ a \emph{cell} (an \emph{$i$-cell} if $\Cell$ is $i$-dimensional), and to call a union of cells a \emph{subcomplex}. We write $\Cell \le \Cell'$ to express that $\Cell$ is a face of $\Cell'$ and $\Cell \lneq \Cell'$ if it is a proper face. The \emph{(relative) interior} of a cell $p(\BigCell_\alpha)$ is the image under $p$ of the relative interior of $\BigCell_\alpha$. The \emph{carrier} of a point $\Point$ of $\Space$ is the unique minimal cell that contains it; equivalently it is the unique cell that contains $\Point$ in its relative interior.

By a morphism of $\ModelSpace{\kappa}$-polyhedral complexes we mean a map that isometrically takes cells onto cells. Consequently, an isomorphism is an isometry that preserves the cell structure.

\begin{rem}
Our definition of $\ModelSpace{\kappa}$-polyhedral complexes differs from the definition in \cite{brihae} in two points: we allow the cells to be arbitrary polyhedra while in \cite{brihae} they are required to be polytopes. Since any polyhedron can be decomposed into polytopes, this does not affect the class of metric spaces that the definition describes, but only the class of possible cell structures on them. For example, a sphere composed of two hemispheres is included in our definition.

On the other hand our definition requires the gluing maps to be injective on cells which the definition in \cite{brihae} does not. Again this does not restrict the spaces one obtains: if an $\ModelSpace{\kappa}$-polyhedral complex does not satisfy this condition, one can pass to an appropriate subdivision which does. The main reasons to make this assumption here are, that it makes the complexes easier to visualize because the cells are actual polyhedra, and that the complexes we will be interested in satisfy it.
\end{rem}

If we do not want to emphasize the model space, we just speak of a \emph{polyhedron}, a \emph{polytope}, or a \emph{polyhedral complex} respectively.

If $\Space$ is a polyhedral complex and $\Set$ is a subset, the subcomplex \emph{supported by $\Set$} is the subcomplex consisting of all cells that are contained in $\Set$. If $\Cell_1$ and $\Cell_2$ are cells of $\Space$ that are contained in a common coface, then the minimal cell that contains them is denoted $\Cell_1 \vee \Cell_2$ and called the \emph{join of $\Cell_1$ and $\Cell_2$}.

By a \emph{simplicial complex}, we mean a polyhedral complex whose cells are simplices and whose face lattice is that of an abstract simplicial complex (see \cite[Chapter~3]{spanier} for an introduction to simplicial complexes). That is, each of its cells is a simplex (no two faces of which are identified by \eqref{item:injective}), and if two simplices have the same proper faces, then they coincide.

The \emph{flag complex of a poset $(P,\le)$} is an abstract simplicial complex that has $P$ as its set of vertices and whose simplices are finite flags, that is, finite totally ordered subsets of $P$. A simplicial complex is a \emph{flag complex} if the corresponding abstract simplicial complex is the flag complex of some poset. This is equivalent to satisfying the ``no triangles condition'': if $v_1,\ldots,v_n$ are vertices any two of which are joined by an edge, then there is a simplex that has $v_1,\ldots,v_n$ as vertices.

The \emph{barycentric subdivision} $\Subdiv{\Space}$ of a polyhedral complex $\Space$ is obtained by replacing each cell by its barycentric subdivision. This is always a flag complex, namely the flag complex of the poset of nonempty cells of $\Space$.

If $\Space$ is a simplicial complex and $V$ is a set of vertices, the \emph{full subcomplex of $V$} is the subcomplex of simplices in $\Space$ all of whose vertices lie in $V$. A subcomplex of $\Space$ is \emph{full} if it is the full subcomplex of a set of vertices, i.e., if a simplex is contained in it whenever all of its vertices are.

\subsection*{Links}

Let $\Space$ be a polyhedral complex and let $\Point \in \Space$ be a point. On the set of geodesics that issue at $\Point$ we consider the equivalence relation $\sim$ where $\Path \sim \Path'$ if and only if $\Path$ and $\Path'$ coincide on an initial interval; formally this means that if $\Path$ is a map $[a,b] \to \Space$ and $\Path'$ is a map $[a',b'] \to \Space$, then there is an $\varepsilon > 0$ such that $\Path(a+t) = \Path(a'+t)$ for $0 \le t < \varepsilon$.

The equivalence classes are called \emph{directions}. The direction defined by a geodesic $\Path$ that issues at $\Point$ is denoted by $\Path_\Point$\index[xsyms]{directiongeodesic@$\Path_\Point$}; we will also use this notation for geodesic segments writing for example $[\Point,\AltPoint]_\Point$\index[xsyms]{directionsegment@$[\Point,\AltPoint]_\Point$}.

The angle $\angle_\Point(\Path_\Point,\Path'_\Point) \defeq \angle_\Point(\Path,\Path')$ between two directions at a point $\Point$ is well-defined. Moreover, since $\Space$ is a polyhedral complex, two directions include a zero angle only if they coincide. Thus the angle defines a metric on the set of all directions issuing at a given point $\Point$ and this metric space is called the \emph{space of directions} or \emph{(geometric) link} of $\Point$ and denoted $\Link_\Space \Point$, or just $\Link \Point$ if the space is clear from the context.

The polyhedral cell structure on $\Space$ induces a polyhedral cell structure on $\Link \Point$. Namely, if $\Cell$ is a cell that contains $\Point$, we let $\Cell \direction \Point$ denote the subset of $\Link \Point$ of all directions that point into $\Cell$, i.e., of directions $\Path_\Point$ where $\Path$ is a geodesic whose image is contained in $\Cell$. Then $\Link \Space$ can be regarded as an $\ModelSpace{1}$-polyhedral complex whose cells are $\Cell \direction \Point$ with $\Cell \supseteq \Point$.

If $\Cell$ is a cell of $\Space$, then the links of all interior points of $\Cell$ are canonically isometric. The \emph{(geometric) link of $\Cell$}, denoted $\Link \Cell$ is the subset of any of these of directions that are perpendicular to $\Cell$. It is an $\ModelSpace{1}$-polyhedral complex whose cells are the subsets $\BigCell \direction \Cell$\index[xsyms]{taudirsigma@$\BigCell \direction \Cell$} of directions that point into a coface $\BigCell$ of $\Cell$.

If $\Point \in \Space$ is a point and $\Cell$ is its carrier, then the link of $\Point$ decomposes as
\begin{equation}
\label{eq:point_link_decomposition}
\Link \Point = (\Cell \direction \Point) * \Link \Cell
\end{equation}
where

$\Cell \direction \Point$ can be identified with the boundary $\partial \Cell$ in an obvious way and, in particular, is a sphere of dimension $(\dim \Cell - 1)$.

From a combinatorial point of view, the map $\BigCell \mapsto \BigCell \direction \Cell$ establishes a bijective correspondence between the poset of (proper) cofaces of $\Cell$ and the poset of (nonempty) cells of $\Link \Cell$. The poset of cofaces of $\Cell$ is therefore called the \emph{combinatorial link} of $\Cell$.

If $\Space$ is a simplicial complex and $\Cell \subseteq \BigCell \subseteq \Space$ are simplices, one sometimes writes\linebreak[4] $\BigCell \setminus \Cell$ to denote the complement of $\Cell$ in $\BigCell$ (this alludes to abstract simplicial complexes). Using this notation, there is a bijective correspondence $\BigCell \mapsto \BigCell \setminus \Cell$ between the combinatorial link and the subcomplex of $\Space$ of simplices $\Cell'$ which are such that $\Cell \intersect \Cell' = \emptyset$ but $\Cell \vee \Cell'$ exists.

\subsection*{Visual Boundary}

Let $(\Space,d)$ be a \CAT{0}-space. A geodesic ray $\Ray$ in $\Space$ defines a \emph{Busemann function} $\Busemann_\Ray$ by
\[
\Busemann_\Ray(\Point) = \lim_{t \to \infty} (t - d(\Point,\Ray(t)))
\]
(note the reversed sign compared to \cite[Definition~II.8.17]{brihae}). Two geodesic rays $\Ray, \Ray'$ in $\Space$ are \emph{asymptotic} if they have bounded distance, i.e., if there is a bound $R > 0$ such that $d(\Ray(t),\Ray'(t))<R$ for every $t \ge 0$. If two rays define the same Busemann function then they are asymptotic. Conversely the Busemann functions $\Busemann_\Ray$, $\Busemann_{\Ray'}$ defined by two asymptotic rays $\Ray$ and $\Ray'$ may differ by an additive constant. A \emph{point at infinity} is the class $\Infty\Ray$ of rays asymptotic to a given ray $\Ray$ or, equivalently, the class $\Infty\Busemann$ of Busemann functions that differ from a given Busemann function $\Busemann$ by an additive constant. The \emph{visual boundary} $\Infty\Space$\index[xsyms]{xinfty@$\Infty\Space$} consists of all points at infinity. It becomes a \CAT{1}-space via the angular metric
\[
d_{\Infty\Space}(\Infty\Ray,\Infty{\Ray'}) = \angle(\Ray,\Ray')
\]
(see \cite[Chapter~II.9]{brihae}).

We say that a geodesic ray $\Ray$ \emph{tends to} $\Infty\Ray$, or that $\Infty\Ray$ is the \emph{limit point} of $\Ray$, and that a Busemann function $\Busemann$ is \emph{centered at} $\Infty\Busemann$.

\begin{prop}[{\cite[Proposition~II.8.12]{brihae}}]
\label{prop:ray_from_point_to_point_at_infty}
Let $\Space$ be a \CAT{0}-space. If $\Point$ is a point and $\InftyPoint$ is a point at infinity of $\Space$, then there is a unique geodesic ray $\Ray$ that issues at $\Point$ and tends to $\InftyPoint$.
\end{prop}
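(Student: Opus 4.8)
The plan is to obtain $\Ray$ as a limit of geodesic segments running from $\Point$ towards $\InftyPoint$, and to extract uniqueness from the convexity of the metric in a \CAT{0}-space. For the existence part, choose a geodesic ray $\Ray'$ that tends to $\InftyPoint$ (such a ray exists since $\InftyPoint$ is by definition a class of rays) and put $R_0 \defeq d(\Point,\Ray'(0))$. For $n \in \N$ let $\Path_n \colon [0,d_n] \to \Space$ be the arc-length parametrized geodesic from $\Point$ to $\Ray'(n)$, where $d_n \defeq d(\Point,\Ray'(n))$; by the triangle inequality $\abs{d_n - n} \le R_0$, so $d_n \to \infty$. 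The key claim is that for each fixed $t \ge 0$ the points $\Path_n(t)$ (defined as soon as $d_n \ge t$) form a Cauchy sequence.

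To prove this, fix $m < n$, both large, and consider the geodesic triangle with vertices $\Point$, $\Ray'(m)$, $\Ray'(n)$: the side $[\Ray'(m),\Ray'(n)]$ has length $n-m$ while the two sides at $\Point$ have lengths $d_m,d_n$, so the Euclidean law of cosines shows that the comparison angle $\bar\angle$ at $\Point$ satisfies $\sin^2(\bar\angle/2) = O\big(R_0(n-m)/(d_m d_n)\big) = O(R_0/m)$, no matter how large $n-m$ is (this is where it matters that all the $\Ray'(j)$ lie on a single ray). The \CAT{0} inequality for this triangle then bounds $d(\Ray'(m),\Path_n(d_m)) \le 2 d_m \sin(\bar\angle/2) = O(\sqrt{R_0 m})$, and convexity of the metric along $\Path_m$ and $\Path_n|_{[0,d_m]}$ --- two geodesics of equal length issuing at $\Point$ --- gives
\[
d(\Path_m(t),\Path_n(t)) \le \frac{t}{d_m}\, d\big(\Ray'(m),\Path_n(d_m)\big) = O\big(t\sqrt{R_0/m}\big),
\]
which tends to $0$ as $m \to \infty$, uniformly in $n \ge m$. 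Since $\Space$ is complete, $\Ray(t) \defeq \lim_n \Path_n(t)$ exists for all $t \ge 0$, uniformly on bounded intervals, so $\Ray$ is an arc-length parametrized geodesic ray with $\Ray(0) = \Point$.

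That $\Ray$ tends to $\InftyPoint$ follows by comparing $\Path_n$ with the segment $\Ray'|_{[0,n]}$: these end at the common point $\Ray'(n)$ and their initial points lie within $R_0$ of each other, so convexity of the metric bounds $d(\Path_n(t),\Ray'(t))$ by a fixed multiple of $R_0$ for every $t$, the only corrections coming from $\abs{d_n-n}\le R_0$; letting $n\to\infty$ gives $d(\Ray(t),\Ray'(t)) = O(R_0)$ for all $t$, so $\Ray$ and $\Ray'$ are asymptotic, i.e.\ $\Ray$ tends to $\InftyPoint$. For uniqueness, suppose $\Ray_1,\Ray_2$ are arc-length parametrized geodesic rays, both issuing at $\Point$ and both tending to $\InftyPoint$. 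Each is then asymptotic to $\Ray'$, hence to the other, so $t\mapsto d(\Ray_1(t),\Ray_2(t))$ is bounded; it is moreover convex (as $\Space$ is \CAT{0}) and vanishes at $t=0$, and since a bounded convex function on $[0,\infty)$ is non-increasing it is therefore $\le d(\Ray_1(0),\Ray_2(0)) = 0$. Being a distance, it vanishes identically, whence $\Ray_1 = \Ray_2$.

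The crux of the argument is the Cauchy estimate. The naive comparison of $\Path_m$ and $\Path_n$ through the angle at $\Point$ succeeds only because $\sin^2(\bar\angle/2)$ is of order $R_0(n-m)/(d_m d_n)$, so that the factor $n-m$ in the numerator is always dominated by the product $d_m d_n \gtrsim mn$ in the denominator; once that is in hand, the rest is a routine application of the convexity of the \CAT{0}-metric, with only minor care needed where the two approximating geodesics have slightly different lengths ($d_n \ne n$), which happens in the asymptoticity step. One further point, implicit in the word ``exists'' above, is that the limit defining $\Ray$ uses completeness of $\Space$ --- this holds for all the spaces considered in these notes, but is worth recording.
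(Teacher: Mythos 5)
Your proof is correct and is essentially the argument from \cite[Proposition~II.8.12]{brihae}, which the paper cites without reproducing: approximate the desired ray by the segments $[\Point,\Ray'(n)]$, extract uniform convergence on compacts from a \CAT{0} comparison at $\Point$, and obtain uniqueness because the convex, bounded function $t\mapsto d(\Ray_1(t),\Ray_2(t))$ vanishing at $t=0$ must vanish identically. Your parenthetical remark about completeness is well taken and is the one thing worth noting against the paper's statement rather than against your argument: \cite{brihae} assumes $\Space$ complete, and the existence part genuinely fails without that hypothesis (for instance, the union of the open strip $(-1,1)\times\R$ and the single point $(1,0)$ is a convex, hence \CAT{0}, subset of $\E^2$, yet no geodesic ray issues from $(1,0)$); since the buildings appearing in these notes are complete, the omission is harmless in context.
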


In the situation of the proposition we denote the image of $\Ray$ by $[\Point,\InftyPoint)$.

\footerlevel{3}
\headerlevel{3}

\section{Spherical Geometry}
\label{sec:spherical_geometry}

In this section we discuss some spherical geometry, that is, geometry of spheres $\S^n$ of curvature $1$. We start with configurations that are essentially $2$-di\-men\-sio\-nal and then extend them to higher dimensions.

First we recall the Spherical Law of Cosines:
\begin{prop}[{\cite[I.2.2]{brihae}}]
Let $a$, $b$ and $c$ be points on a sphere, let $[c,a]$ and $[c,b]$ be geodesic segments that join $c$ to $a$ respectively $b$ (which may not be uniquely determined if $a$ or $b$ has distance $\pi$ to $c$), and let $\gamma$ be the angle in $c$ between these segments. Then
\[
\cos d(a,b) = \cos d(a,c) \cos d(b,c) + \sin d(a,c) \sin d(b,c) \cos \gamma \text{ .}
\]
\end{prop}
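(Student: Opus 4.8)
The plan is to exploit the concrete model underlying the statement: a sphere is the unit sphere of some $\R^{n+1}$ with the metric $\cos d(\Vector,\AltVector) = \scp{\Vector}{\AltVector}$, so it suffices to compute the inner product $\scp{a}{b}$ in terms of $d(a,c)$, $d(b,c)$ and $\gamma$. No reduction of dimension is needed, although one could first observe that $a$, $b$, $c$ span a subspace of dimension at most $3$ and so lie on a common great $2$-sphere.

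First I would split $a$ and $b$ into their components along $c$ and along the orthogonal complement of $c$, writing $a = \scp{a}{c}\, c + (a - \scp{a}{c}\, c)$ and likewise for $b$. Since $\scp{a}{c} = \cos d(a,c)$ and $a$ is a unit vector, the orthogonal summand $a - \scp{a}{c}\, c$ has norm $\sqrt{1 - \cos^2 d(a,c)} = \sin d(a,c)$, using $d(a,c) \in [0,\pi]$; similarly for $b$. Supposing for the moment that $d(a,c), d(b,c) \in (0,\pi)$, I may therefore write $a = (\cos d(a,c))\, c + (\sin d(a,c))\, u$ and $b = (\cos d(b,c))\, c + (\sin d(b,c))\, u'$ with $u, u'$ unit vectors orthogonal to $c$. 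The curves $t \mapsto (\cos t)\, c + (\sin t)\, u$ and $t \mapsto (\cos t)\, c + (\sin t)\, u'$ are the unit-speed geodesics realizing $[c,a]$ and $[c,b]$, with initial directions $u$ and $u'$; as the angle at $c$ is the usual spherical angle, it equals the angle between these two tangent vectors, so $\scp{u}{u'} = \cos \gamma$. Expanding $\scp{a}{b}$ bilinearly and using $\scp{c}{c} = 1$, $\scp{c}{u} = \scp{c}{u'} = 0$ and $\scp{u}{u'} = \cos \gamma$ then gives
\[
\cos d(a,b) = \scp{a}{b} = \cos d(a,c) \cos d(b,c) + \sin d(a,c) \sin d(b,c) \cos \gamma \text{ ,}
\]
which is the claim.

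Finally I would dispose of the degenerate cases, in which $d(a,c) \in \{0,\pi\}$ or $d(b,c) \in \{0,\pi\}$; here the relevant initial direction is not defined and, when the distance is $\pi$, the segment $[c,a]$ or $[c,b]$ is not unique. But then $\sin d(a,c) = 0$ or $\sin d(b,c) = 0$, the term involving $\gamma$ disappears, and the identity reduces to $\cos d(a,b) = \pm \cos d(b,c)$ (respectively $\pm \cos d(a,c)$); this holds because $a = \pm c$ forces $d(a,b) = d(b,c)$ or $d(a,b) = \pi - d(b,c)$. The argument is otherwise routine; the one step deserving a word of care is the identification of the angle in $c$ with the Euclidean angle between the initial tangent vectors $u$ and $u'$, which rests on the fact that on a sphere the angle between geodesics is the classical one.
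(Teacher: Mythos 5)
Your proof is correct. The paper itself gives no proof — it cites this as Proposition~I.2.2 of Bridson--Haefliger — and your argument (split $a$ and $b$ into components along $c$ and orthogonal to $c$, identify the spherical angle at $c$ with the Euclidean angle between the orthogonal unit components $u,u'$, and expand $\scp{a}{b}$ bilinearly) is essentially the standard computation given in that reference, with the degenerate cases $d(a,c)\in\{0,\pi\}$ or $d(b,c)\in\{0,\pi\}$ handled cleanly at the end.
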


\subsection*{Spherical Triangles}

For us a spherical triangle is given by three points $a$, $b$ and $c$ any two of which have distance $< \pi$ and that are not collinear (i.e., do not lie in a common $1$-sphere). Note that this implies in particular that all angles and all edge lengths have to be positive. The \emph{spherical triangle} itself is the convex hull of $a$, $b$ and $c$.

\begin{obs}
\label{obs:spherical_triangles}
Let $a$, $b$ and $c$ be points on a sphere any two of which have distance $< \pi$. Write the respective angles as $\alpha = \angle_a(b,c)$, $\beta = \angle_b(a,c)$ and $\gamma = \angle_c(a,b)$.
\begin{enumerate}
\item If $d(a,b) = \pi/2$ and $d(b,c),d(a,c) \le \pi/2$, then $\gamma \ge \pi/2$. \label{item:edge_gives_angle}
\item If $d(a,b) = \pi/2$ and $\beta = \pi/2$, then $d(a,c) = \gamma = \pi/2$.\\
If $d(a,b) = \pi/2$ and $\beta < \pi/2$, then $d(a,c) < \pi/2$. \label{item:edge_and_angle_give_edge_and_angle}
\item If $d(a,b) = d(a,c)= \pi/2$ and $b \ne c$, then $\beta = \gamma = \pi/2$. \label{item:edges_give_angles}
\item If $\beta = \gamma = \pi/2$ and $b \ne c$, then $d(a,b) = d(a,c) = \pi/2$. \label{item:angles_give_edges}
\end{enumerate}
\end{obs}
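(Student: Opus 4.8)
The plan is to derive all four parts of the Observation from the Spherical Law of Cosines applied to the triangle with vertices $a$, $b$, $c$, possibly combined with its dual (polar) version or with a symmetric use of the law. Throughout I would write $A = d(b,c)$, $B = d(a,c)$, $C = d(a,b)$ for the side lengths opposite to $\alpha$, $\beta$, $\gamma$ respectively, so that the Law of Cosines reads $\cos C = \cos A \cos B + \sin A \sin B \cos\gamma$ and cyclically. Since any two of the points have distance $<\pi$, all of $A,B,C \in (0,\pi)$, so all sines are strictly positive; this is the key sign fact that makes the whole argument go through.

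For part (\ref{item:edge_gives_angle}): plug $C = \pi/2$ into $\cos C = \cos A \cos B + \sin A \sin B \cos\gamma$ to get $0 = \cos A \cos B + \sin A \sin B \cos\gamma$, hence $\cos\gamma = -\cot A \cot B$. Since $A = d(b,c) \le \pi/2$ and $B = d(a,c) \le \pi/2$ we have $\cot A, \cot B \ge 0$, so $\cos\gamma \le 0$, i.e.\ $\gamma \ge \pi/2$. For part (\ref{item:edge_and_angle_give_edge_and_angle}): again use $C = \pi/2$, but now feed in information about $\beta$. From the Law of Cosines at $b$, $\cos B = \cos A \cos C + \sin A \sin C \cos\beta = \sin A \cos\beta$ (using $C=\pi/2$). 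If $\beta = \pi/2$ this gives $\cos B = 0$, so $d(a,c) = \pi/2$; then part (\ref{item:edges_give_angles}) (or a direct recomputation of $\gamma$ via $\cos\gamma = -\cot A \cot B = 0$) gives $\gamma = \pi/2$. If $\beta < \pi/2$ then $\cos\beta > 0$ and $\sin A > 0$, so $\cos B > 0$, i.e.\ $d(a,c) < \pi/2$.

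For part (\ref{item:edges_give_angles}): with $B = C = \pi/2$, the Law of Cosines at $a$ gives $\cos A = \cos B \cos C + \sin B \sin C \cos\alpha = \cos\alpha$, so $A = \alpha$ — consistent but not yet the claim. Instead use the Law of Cosines at $b$: $\cos B = \cos A \cos C + \sin A \sin C \cos\beta$, i.e.\ $0 = 0 + \sin A \cos\beta$, and since $b \ne c$ forces $A = d(b,c) > 0$ hence $\sin A > 0$, we get $\cos\beta = 0$, so $\beta = \pi/2$; symmetrically $\gamma = \pi/2$. Part (\ref{item:angles_give_edges}) is the polar dual of (\ref{item:edges_give_angles}): apply the same reasoning to the polar triangle, whose side lengths are $\pi$ minus the angles of the original and whose angles are $\pi$ minus the original side lengths, so that $\beta = \gamma = \pi/2$ translates into two sides of the polar triangle having length $\pi/2$, giving two angles of the polar triangle equal to $\pi/2$, i.e.\ $d(a,b) = d(a,c) = \pi/2$ in the original. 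Alternatively, and perhaps more cleanly, I would avoid invoking polar duality and instead use the dual Law of Cosines (the cosine rule for angles, $\cos\gamma = -\cos\alpha\cos\beta + \sin\alpha\sin\beta\cos C$), plugging in $\beta = \gamma = \pi/2$: this yields $0 = 0 + \sin\alpha \cdot 1 \cdot \cos C$, and $b \ne c$ gives $\alpha = d(\text{the angle at }a) > 0$ hence $\sin\alpha > 0$, so $\cos C = 0$; symmetry then handles $d(a,c)$.

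The main obstacle is bookkeeping rather than conceptual: one must be careful that the hypotheses genuinely exclude the degenerate cases (collinear points, distance $\pi$, zero-length sides) so that all sines in play are strictly positive, since every step divides by or cancels a sine. The condition ``$b \ne c$'' in parts (\ref{item:edges_give_angles}) and (\ref{item:angles_give_edges}) is exactly what rules out $A = 0$; and the standing assumption that pairwise distances are $<\pi$ rules out $\sin$ vanishing at the other end. If the paper prefers not to invoke the dual cosine rule, the cleanest route for part (\ref{item:angles_give_edges}) is via the polar triangle together with part (\ref{item:edges_give_angles}), noting that the polarity swaps the roles of ``side $= \pi/2$'' and ``angle $= \pi/2$''.
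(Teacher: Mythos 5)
Your proof takes exactly the route the paper indicates: the paper states that all four parts follow from the Spherical Law of Cosines and, rather than spell that out, only sketches a geometric picture for part~(iv) (placing $b$, $c$ on an equator and observing that perpendicular great circles through them meet only at the poles). Your fully worked version is correct, and your bookkeeping of signs via the convention $A = d(b,c)$, $B=d(a,c)$, $C=d(a,b)$ is clean.

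One small imprecision in your treatment of part~(iv): you justify $\sin\alpha > 0$ by ``$b\neq c$ gives $\alpha > 0$,'' but $b\neq c$ alone does not rule out $\alpha \in \{0,\pi\}$ (three collinear points on a great circle with $b\neq c$ have $\alpha\in\{0,\pi\}$). What actually rules out the collinear case here is the hypothesis $\beta=\gamma=\pi/2$, since collinearity forces all angles into $\{0,\pi\}$. With that one replacement the argument is airtight; the same remark applies to your polar-triangle variant, where one needs $a,b,c$ non-collinear to ensure the polar vertices $b'$, $c'$ are distinct before invoking part~(iii).
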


\begin{proof}
All properties can be deduced from the Spherical Law of Cosines. But they can also easily be verified geometrically. We illustrate this for the fourth statement. Put $b$ and $c$ on the equator of a $2$-sphere. The two great circles that meet the equator perpendicularly in $b$ and $c$ only meet at the poles, which have distance $\pi/2$ from the equator.
\end{proof}

The following statements are less obvious:

\begin{prop}
\label{prop:angles_bound_length}
If in a spherical triangle the angles are at most $\pi/2$, then the edges have length at most $\pi/2$.

If in addition two of the edges have length $< \pi/2$ then so has the third.
\end{prop}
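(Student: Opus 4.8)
The plan is to reduce both statements to the Spherical Law of Cosines together with the polar-triangle construction. Given a spherical triangle with vertices $a$, $b$, $c$, angles $\alpha,\beta,\gamma$ and opposite edge lengths $A = d(b,c)$, $B = d(a,c)$, $C = d(a,b)$, I would form the polar triangle, whose vertices $a^*,b^*,c^*$ are (suitably chosen) poles of the great circles through $\{b,c\}$, $\{a,c\}$, $\{a,b\}$ respectively. The key classical fact is that the polar triangle has edge lengths $\pi - \alpha, \pi - \beta, \pi - \gamma$ and angles $\pi - A, \pi - B, \pi - C$. So the hypothesis $\alpha,\beta,\gamma \le \pi/2$ says exactly that the polar triangle has all edges $\ge \pi/2$, and the conclusion that $A,B,C \le \pi/2$ says the polar triangle has all angles $\ge \pi/2$. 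Thus the first statement is equivalent to: \emph{a spherical triangle all of whose edges have length $\ge \pi/2$ has all angles $\ge \pi/2$}; and the addendum is equivalent to: \emph{if moreover two edges have length $>\pi/2$, then all three angles are $>\pi/2$} (here I use that strictness on two edges dualizes to strictness on two angles, and then I want strictness on the third as well — see below).

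For the dualized statement I would argue directly with the Law of Cosines. Writing it in the form
\[
\cos\gamma = \frac{\cos C - \cos A\cos B}{\sin A\sin B}\text{ ,}
\]
and using that all three edge lengths lie in $(0,\pi)$ so the denominator is positive, I need only check that the numerator $\cos C - \cos A\cos B$ is $\le 0$ when $A,B,C \ge \pi/2$. Indeed then $\cos C \le 0$ while $\cos A\cos B \ge 0$, so the numerator is $\le 0$, giving $\cos\gamma \le 0$, i.e. $\gamma \ge \pi/2$; and symmetrically for $\alpha,\beta$. For the strict addendum: if two of $A,B,C$ exceed $\pi/2$, then for each vertex at least one of the two adjacent... — more carefully, fix the angle $\gamma$ at $c$; its numerator is $\cos C - \cos A\cos B$. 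If $C > \pi/2$ then $\cos C < 0$ and we are done; if $C = \pi/2$ then by hypothesis the other two edges $A,B$ both exceed $\pi/2$ (since two of the three are $>\pi/2$ and $C$ is not one of them), so $\cos A\cos B > 0$ and again the numerator is $<0$. Either way $\gamma > \pi/2$, and the same reasoning applies to $\alpha$ and $\beta$. Translating back through the polar duality yields exactly Proposition~\ref{prop:angles_bound_length}.

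The main obstacle is justifying the polar-triangle correspondence carefully enough in this metric setting: one must check that the triangle's non-degeneracy hypotheses (pairwise distances $<\pi$, non-collinearity, hence all angles and edges strictly positive) guarantee that the poles can be chosen consistently so that the polar triangle is again a genuine spherical triangle in the sense defined here, and that the edge/angle duality $\alpha \leftrightarrow \pi - A^*$ etc.\ holds with the correct signs. An alternative that sidesteps polar triangles entirely is to run the Law-of-Cosines estimate in the other direction: from $\alpha,\beta,\gamma \le \pi/2$ one wants $\cos C \ge 0$, but $\cos C = \cos A\cos B + \sin A\sin B\cos\gamma$ is not obviously nonnegative without knowing something about $A,B$, so one would need an inductive or continuity argument (deform the triangle, use that the edge lengths cannot cross $\pi/2$ without an angle crossing $\pi/2$, invoking Observation~\ref{obs:spherical_triangles}\eqref{item:edge_gives_angle} at the crossing). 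The polar-triangle route is cleaner, so I would take that, and relegate the verification of the duality to a citation (e.g.\ \cite{ratcliffe}) or a short lemma.
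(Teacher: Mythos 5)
Your treatment of the first statement is correct and takes a genuinely different route from the paper: you reduce via polar duality to the statement ``all edges $\ge\pi/2$ implies all angles $\ge\pi/2$'' and kill it with one application of the Law of Cosines, whereas the paper works directly in the original triangle, setting $x=\cos d(b,c)$, $y=\cos d(a,c)$, $z=\cos d(a,b)$, deriving $z\ge xy$, $x\ge yz$, $y\ge xz$ from non-obtuse angles, and getting $z(1-x^2)\ge 0$ by substitution. The polar route is cleaner once the duality is in hand; the paper's algebraic trick avoids having to set up polar triangles at all, which matters because this proposition is a low-level auxiliary fact in a long book.

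For the addendum, however, your dualization is incorrect. The addendum's extra hypothesis is a constraint on two \emph{edges} of the original triangle ($<\pi/2$), and edges dualize to polar \emph{angles} — you even say so in your parenthetical (``strictness on two edges dualizes to strictness on two angles'') — yet you then state and prove the dual as ``two \emph{edges} $>\pi/2$ implies all angles $>\pi/2$.'' The correct dual is: all polar edges $\ge\pi/2$ and two polar \emph{angles} $>\pi/2$ imply the third polar angle $>\pi/2$. What you actually prove is true but, undualized, reads ``$\alpha,\beta,\gamma\le\pi/2$ and two angles $<\pi/2$ imply all edges $<\pi/2$,'' which is not Proposition~\ref{prop:angles_bound_length}. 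So the argument does not establish the second statement. The easiest fix is to drop the polar detour for the addendum and argue as the paper does: by the first part $x,y,z\ge 0$ and $\cos\gamma\ge 0$; if $z=\cos d(a,b)=0$, then every term in $0=z=xy+\sin(\cdot)\sin(\cdot)\cos\gamma$ is non-negative, hence each is zero, hence $x=0$ or $y=0$; contrapose. Alternatively, redo the duality correctly and prove the correct dual statement (which does hold, by essentially the degenerate-case analysis you sketched, just applied to the right quantities).
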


\begin{proof}
Let $a$, $b$ and $c$ be the vertices of the triangle and set $x \defeq \cos d(b,c)$, $y \defeq \cos d(a,c)$ and $z \defeq \cos d(a,b)$. Then the Spherical Law of Cosines implies that
\[
z \ge xy, \quad x \ge yz, \quad \text{and} \quad y \ge xz \text{ .}
\]
Substituting $y$ in the first inequality gives $z \ge x^2z$, i.e., $z(1-x^2) \ge 0$. Since $x \ne 1$ by our non-degeneracy assumption for spherical triangles, this implies that $z \ge 0$. Permuting the points yields the statement for the other edges.

For the second statement assume that there is an edge, say $[a,b]$, that has length $\pi/2$. Then $\cos d(a,b) = 0$.  By what we have just seen, all terms in the Spherical Law of Cosines are non-negative so one factor in each summand has to be zero. This implies that at least one of $d(a,c)$ and $d(b,c)$ is $\pi/2$.
\end{proof}

\subsection*{Decomposing Spherical Simplices}

Now we want to study higher dimensional simplices. We first study simplicial cones in Euclidean space.

Let $V$ be a Euclidean vector space of dimension $n+1$ and let $H_0^+,\ldots,H_n^+$ be linear halfspaces with bounding hyperplanes $H_0,\ldots,H_n$. We assume that the $H_i$ are in general position, i.e., that any $k$ of them meet in a subspace of dimension $n+1-k$. For $0 \le i \le n$ we set $L_i  \defeq \Intersect_{j \ne i} H_j$ and $L_i^+ \defeq L_i \intersect H_i^+$ and call the latter a \emph{bounding ray}. In this situation $S \defeq \Intersect_i H_i^+$ is a simplicial cone that is the convex hull of the bounding rays.

For every $i$ let $v_i$ be the unit vector in $L_i^+$. We define the \emph{angle} between $L_i^+$ and $L_j^+$ to be the angle between $v_i$ and $v_j$. Similarly, for two halfspaces $H_i^+$ and $H_j^+$ let $N$ be the orthogonal complement of $H_i \intersect H_j$. The \emph{angle} between $H_i^+$ and $H_j^+$ is defined to be the angle between $H_i^+ \intersect N$ and $H_j^+ \intersect N$. We are particularly interested in when two halfspaces or bounding rays are \emph{perpendicular}, i.e., include an angle of $\pi/2$.

So assume that there are index sets $I$ and $J$ that partition $\{0,\ldots,n\}$ such that $H_i^+$ is perpendicular to $H_j^+$ for every $i \in I$ and $j \in J$. Then $V$ decomposes as an orthogonal sum
\[
V = V_I \oplus V_J \quad \text{with} \quad V_I \defeq \Intersect_{i \in I} H_i \text{ ,} \quad V_J \defeq \Intersect_{j \in J} H_j
\]
where the $v_j, j \in J$ form a basis for $V_I$ and vice versa. In particular, $L_i^+$ is perpendicular to $L_j^+$ for $i \in I$ and $j \in J$.

By duality we see that conversely if $I$ and $J$ partition $\{0,\ldots,n\}$ such that $L_i^+$ is perpendicular to $L_j^+$ for every $i \in I$ and $j \in J$, then also $H_i^+$ is perpendicular to every $H_j^+$ for $i \in I$ and $j \in J$.

This shows:

\begin{obs}
\label{obs:direct_product_decomposition}
Let $S$ be a simplicial cone in $\E^{n+1}$ and let $S_1$, $S_2$ be faces of $S$ that span complementary subspaces of $V$. These are equivalent:
\begin{enumerate}
\item $S_1$ and $S_2$ span orthogonal subspaces.
\item every bounding ray of $S_1$ is perpendicular to every bounding ray of $S_2$.
\item every facet of $S$ that contains $S_1$ is perpendicular to every facet of $S$ that contains $S_2$.\qed
\end{enumerate}
\end{obs}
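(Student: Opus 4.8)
The plan is to unwind the definitions so that all three conditions become statements about the fixed basis $v_0,\dots,v_n$ of $V$ formed by the unit vectors along the bounding rays $L_0^+,\dots,L_n^+$, and then to invoke the discussion preceding the statement. First I would record the combinatorial dictionary. Every face of a simplicial cone is the positive hull of a subset of its bounding rays, so after relabelling we may write $S_1 = \pos\{v_i : i \in I\}$ and $S_2 = \pos\{v_j : j \in J\}$ for subsets $I, J \subseteq \{0,\dots,n\}$; since $(v_i)_i$ is a basis, $S_1$ spans $\operatorname{span}\{v_i : i\in I\}$ and $S_2$ spans $\operatorname{span}\{v_j : j\in J\}$, and these subspaces are complementary precisely when $I$ and $J$ partition $\{0,\dots,n\}$. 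Next, a facet of $S$ has the form $S \cap H_k$ and omits exactly the bounding ray $L_k^+$ (since $v_k \notin H_k$ while $v_\ell \in H_k$ for $\ell \ne k$); hence the facets of $S$ containing $S_1$ are the $S\cap H_j$ with $j\in J$, and those containing $S_2$ are the $S\cap H_i$ with $i\in I$. Thus condition (iii) says exactly that $H_i^+$ is perpendicular to $H_j^+$ for all $i\in I$ and $j\in J$, which is the hypothesis analysed in the paragraphs before the statement.

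With this translation in place the equivalences are short. For (i)~$\Leftrightarrow$~(ii): two subspaces described by spanning sets are orthogonal if and only if every vector of one spanning set is orthogonal to every vector of the other, by bilinearity of the scalar product; and $v_i \perp v_j$ is by definition the assertion that $L_i^+$ and $L_j^+$ are perpendicular. For (ii)~$\Leftrightarrow$~(iii): this is precisely the content of the discussion preceding the observation. There it is shown that if $H_i^+\perp H_j^+$ for all $i\in I$, $j\in J$, then $V$ splits as the orthogonal sum $V_I \oplus V_J$, with $V_I = \bigcap_{i\in I} H_i$ spanned by $\{v_j : j\in J\}$ and $V_J = \bigcap_{j \in J} H_j$ spanned by $\{v_i : i\in I\}$, whence $L_i^+\perp L_j^+$ across the partition; and conversely, by the duality between the $v_i$ and the inward normals of the $H_i$, perpendicularity of the bounding rays across the partition forces perpendicularity of the facets across the partition. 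Combining these gives (iii)~$\Rightarrow$~(i), (i)~$\Rightarrow$~(ii), and (ii)~$\Rightarrow$~(iii), closing the cycle.

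I do not expect a genuine obstacle here: the statement is a repackaging of the linear-algebra computation already carried out, and the only points that need care are the bookkeeping identifying ``faces/facets containing $S_1$'' with the index blocks, and keeping straight that a face spans the span of the corresponding $v_i$'s while the auxiliary spaces $V_I$, $V_J$ from the preceding discussion are spanned by the \emph{complementary} block of $v$'s. The mildest soft spot is the appeal to ``duality'' for the implication (ii)~$\Rightarrow$~(iii); if one wanted to spell it out, one would pass to the dual basis $(n_i)_i$ of outward unit normals characterised by $\scp{v_i}{n_j}$ vanishing for $i \ne j$, note that $H_i^+\perp H_j^+$ amounts to $n_i\perp n_j$, and observe that a block-orthogonality of $(v_i)_i$ is equivalent to the same block-orthogonality of the dual basis.
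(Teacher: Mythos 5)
Your proposal is correct and follows the paper's own line of reasoning: the observation is stated with \verb|\qed| precisely because it is meant to be read off from the preceding paragraphs, and your proof does exactly that, supplying the index-set bookkeeping (faces $\leftrightarrow$ subsets of bounding rays, facets $\leftrightarrow$ complementary indices) and noting that (i)$\Leftrightarrow$(ii) is bilinearity while (ii)$\Leftrightarrow$(iii) is the $V = V_I \oplus V_J$ decomposition plus the duality remark. Your elaboration of the ``by duality'' step via the dual basis of normals is a legitimate way to make that remark precise and matches what the paper intends.
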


Now we translate the above to spherical geometry. We start with the angles. The definition is perfectly analogous to that made above in Euclidean space. Passage to the link plays the role of intersecting with the orthogonal complement.

Let $\BigCell$ be a spherical polyhedron. Let $\Cell_1$ and $\Cell_2$ be faces of $\BigCell$ of same dimension $k$ such that $\Cell \defeq \Cell_1 \intersect \Cell_2$ has codimension $1$ in both. Then $\Cell_1$ and $\Cell_2$ span a sphere $S$ of dimension $k+1$. We look at the $1$-sphere $\Link_{S} \Cell$. The subset $\Link_{\BigCell \intersect S} \Cell$ is a $1$-dimensional polyhedron with vertices $\Cell_1 \direction \Cell$ and $\Cell_2 \direction \Cell$. The diameter of this polyhedron is called the \emph{angle $\angle(\Cell_1,\Cell_2)$ between} $\Cell_1$ and $\Cell_2$.

\begin{rem}
\label{rem:vertex_angle}
Note that, in particular, if $\Cell_1$ and $\Cell_2$ are two vertices (faces of dimension $0$ that meet in their face $\emptyset$ of dimension $-1$), then the angle between them is just the length of the edge that joins them, i.e., their distance.
\end{rem}

A \emph{spherical simplex} of dimension $n$ is a spherical polytope of dimension $n$ that is the intersection of $n+1$ hemispheres (and of the $n$-dimensional sphere that it spans). Faces of spherical simplices are again spherical simplices. Spherical simplices of dimension $2$ are spherical triangles. If $\Cell$ is a face of a simplex $\BigCell$, its \emph{complement (in $\BigCell$)} is the face $\Cell'$ whose vertices are precisely the vertices that are not vertices of $\Cell$. In that case $\Cell$ and $\Cell'$ are also said to be \emph{complementary faces of $\BigCell$}.

We can now restate Observation~\ref{obs:direct_product_decomposition} as

\begin{obs}
\label{obs:spherical_join_decomposition}
Let $\BigCell$ be a spherical simplex and let $\Cell_1$, $\Cell_2$ be two complementary faces of $\BigCell$. These are equivalent:
\begin{enumerate}
\item $\BigCell = \Cell_1 * \Cell_2$.
\item $d(\Cell_1,\Cell_2) = \pi/2$.
\item $d(v,w) = \pi/2$ for any two vertices $v$ of $\Cell_1$ and $w$ of $\Cell_2$.
\item $\angle(\BigCell_1,\BigCell_2) = \pi/2$ for any two facets $\BigCell_1$ and $\BigCell_2$ that contain $\Cell_1$ respectively $\Cell_2$.\qed
\end{enumerate}
\end{obs}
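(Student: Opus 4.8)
The plan is to reduce everything to Observation~\ref{obs:direct_product_decomposition} by passing from the spherical simplex $\BigCell$ to the simplicial cone it spans, and to treat condition (i) separately using the metric \eqref{eq:spherical_join} of the spherical join. Let $n = \dim\BigCell$ and realize the $n$-sphere $\S^n$ spanned by $\BigCell$ as the unit sphere in a Euclidean vector space $V$ of dimension $n+1$. By the definition of a spherical simplex, $\BigCell = S \intersect \S^n$ for a simplicial cone $S = \Intersect_{i=0}^{n} H_i^{+} \subseteq V$ whose bounding hyperplanes $H_0,\dots,H_n$ are in general position; its bounding rays $L_0^{+},\dots,L_n^{+}$ meet $\S^n$ in the vertices $w_0,\dots,w_n$ of $\BigCell$, and the facet $S \intersect H_i$ meets $\S^n$ in the facet of $\BigCell$ opposite $w_i$. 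Since $\Cell_1$ and $\Cell_2$ are complementary faces, their vertex sets partition $\{w_0,\dots,w_n\}$ along a partition $\{0,\dots,n\} = I \dunion J$; correspondingly $\Cell_1 = S_1 \intersect \S^n$ and $\Cell_2 = S_2 \intersect \S^n$, where $S_1$, $S_2$ are the faces of $S$ spanned by the bounding rays with indices in $I$ and in $J$, and by general position $S_1$ and $S_2$ span complementary linear subspaces $V_1$, $V_2$ of $V$. This is exactly the setup of Observation~\ref{obs:direct_product_decomposition}.

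Next I would match the conditions. Because $\cos d(v,w) = \scp{v}{w}$ on $\S^n$ (cf.\ Remark~\ref{rem:vertex_angle}), two vertices of $\BigCell$ lie at distance $\pi/2$ exactly when the corresponding bounding rays of $S$ are perpendicular, so (iii) is condition~(2) of Observation~\ref{obs:direct_product_decomposition}; and since $V_1$ and $V_2$ are spanned by those rays, (iii) amounts to $V_1 \perp V_2$, i.e.\ to $\scp{x}{y} = 0$ for all $x \in \Cell_1$, $y \in \Cell_2$, which is condition (ii). For the facets, a facet of $\BigCell$ contains $\Cell_1$ precisely when the corresponding facet of $S$ is $S \intersect H_j$ with $j \in J$, and dually for $\Cell_2$; moreover the angle $\angle(\BigCell_1,\BigCell_2)$ between two facets of $\BigCell$, defined in this section as a diameter inside the link of their common codimension-$2$ face, equals the Euclidean dihedral angle between the corresponding facets of $S$, since passing to that link is the spherical counterpart of intersecting with the orthogonal complement of $H_j \intersect H_k$. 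Hence (iv) is condition~(3) of Observation~\ref{obs:direct_product_decomposition}, and that observation yields (ii) $\Leftrightarrow$ (iii) $\Leftrightarrow$ (iv).

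It remains to bring in (i). One implication is immediate: if $\BigCell = \Cell_1 * \Cell_2$, then \eqref{eq:spherical_join} with $\theta = 0$ and $\theta' = \pi/2$ gives $d(x,y) = \pi/2$ for all $x \in \Cell_1$, $y \in \Cell_2$, which is (ii). Conversely, assuming (ii), i.e.\ $V = V_1 \oplus V_2$ orthogonally, one has --- as in the proof of Observation~\ref{obs:direct_product_decomposition} --- that $S$ consists of the sums $x_1 + x_2$ with $x_1 \in S_1$, $x_2 \in S_2$; writing a unit vector $x \in S$ as $\cos\theta\,(x_1/\abs{x_1}) + \sin\theta\,(x_2/\abs{x_2})$ with $\cos\theta = \abs{x_1}$ identifies $S \intersect \S^n$ with $\Cell_1 * \Cell_2$ as a set, and comparing $\scp{x}{x'}$ with the right-hand side of \eqref{eq:spherical_join} (using $V_1 \perp V_2$) shows this identification is an isometry, so $\BigCell = \Cell_1 * \Cell_2$. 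Combined with the previous paragraph, this gives the equivalence of all four statements. The step I expect to require the most care is the identification of the spherically defined inter-facet angle with the Euclidean dihedral angle --- together with the underlying realization of $\BigCell$ as the trace of a simplicial cone in general position --- while the rest is essentially bookkeeping.
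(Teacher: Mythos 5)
Your proof is correct and follows exactly the paper's intended argument: the paper states this observation with only a \qed because it regards it as a direct restatement of Observation~\ref{obs:direct_product_decomposition} via the passage from the spherical simplex to the simplicial cone it spans, and you have simply written out that translation (matching (iii) with perpendicular bounding rays, (iv) with perpendicular facets, and handling (i) via the join metric \eqref{eq:spherical_join}) in full detail.
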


Combinatorially this can be expressed as follows: Let $\Vertices \BigCell$ denote the set of vertices and $\Facets \BigCell$ denote the set of facets of a spherical simplex $\BigCell$. On $\Vertices \BigCell$ we define the relation $\sim_v$ of having distance $\ne \pi/2$ and on $\Facets \BigCell$ the relation $\sim_f$ of having angle $\ne \pi/2$. Both relations are obviously reflexive and symmetric. There is a map $\Vertices \BigCell \to \Facets \BigCell$ that takes a vertex to its complement. It is not generally clear how this map behaves with respect to the relations but Observation~\ref{obs:spherical_join_decomposition} states that it preserves their transitive hulls. More precisely:

\begin{obs}
Let $\BigCell$ be a spherical simplex and let $\sim_v$ be the relation on $\Vertices \BigCell$ and $\sim_f$ the relation on $\Facets \BigCell$ defined above. Let $\approx_v$ and $\approx_f$ be their transitive hulls. If $v_1$ and $v_2$ are vertices with complements $\BigCell_1$ and $\BigCell_2$, then
\[
v_1 \approx_v v_2 \quad \text{if and only if} \quad \BigCell_1 \approx_f \BigCell_2 \text{ .}\tag*{\qed}
\]
\end{obs}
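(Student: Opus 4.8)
The plan is to reduce the statement to an elementary fact about graphs and then quote Observation~\ref{obs:spherical_join_decomposition}. First I would use the complement bijection $c\colon \Vertices \BigCell \to \Facets \BigCell$ (so that $\BigCell_i = c(v_i)$) to transport $\sim_f$ from facets to vertices: define $v \sim_f' v'$ to mean $c(v) \sim_f c(v')$. Since $c$ is a bijection carrying $\sim_f'$ to $\sim_f$, the desired equivalence ``$v_1 \approx_v v_2$ iff $\BigCell_1 \approx_f \BigCell_2$'' becomes ``$v_1 \approx_v v_2$ iff $v_1 \approx_f' v_2$'', i.e.\ the assertion that $\sim_v$ and $\sim_f'$ have the same transitive hull. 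Equivalently, viewing $\sim_v$ and $\sim_f'$ as the edge sets of two graphs $G_v$ and $G_f'$ on the common vertex set $\Vertices \BigCell$, I must show that $G_v$ and $G_f'$ have the same connected components.

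Next I would build the dictionary between bipartitions of $\Vertices \BigCell$ and complementary face pairs of $\BigCell$. Given $V_1 \subseteq \Vertices \BigCell$ with complement $V_2 \defeq \Vertices \BigCell \setminus V_1$, let $\Cell_1$ and $\Cell_2$ be the faces of $\BigCell$ spanned by $V_1$ and by $V_2$; these are complementary faces, so Observation~\ref{obs:spherical_join_decomposition} applies. Because $c(v_i)$ is the facet obtained by deleting the vertex $v_i$, a facet $c(v)$ contains $\Cell_1$ exactly when $v \notin V_1$, so the facets of $\BigCell$ containing $\Cell_1$ are precisely the $c(v)$ with $v \in V_2$, and symmetrically those containing $\Cell_2$ are the $c(v)$ with $v \in V_1$. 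Reading conditions~(3) and~(4) of Observation~\ref{obs:spherical_join_decomposition} through this dictionary and through the definition of $\sim_f'$, their equivalence says exactly
\[
\text{no $\sim_v$-edge joins $V_1$ to $V_2$}\quad\iff\quad\text{no $\sim_f'$-edge joins $V_1$ to $V_2$}
\]
for every such $V_1$ (the two trivial bipartitions being disposed of directly, since neither graph has an edge crossing them).

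Finally I would invoke the elementary fact that the connected components of a finite graph are determined by the family of its \emph{edgeless cuts} --- the subsets $S$ of the vertex set with no edge between $S$ and its complement: these subsets are exactly the unions of connected components, and the components themselves are the minimal nonempty ones. The displayed equivalence says $G_v$ and $G_f'$ have the same edgeless cuts, hence the same connected components, which completes the argument. I expect the only real point of care to be the bookkeeping in the dictionary, namely that ``facets containing the face on $V_1$'' corresponds to ``complements of the vertices in $V_2$'' (and not of those in $V_1$); once the indices are lined up, the statement is just the remark that a finite graph is determined, up to which vertices lie in a common component, by the bipartitions it fails to cross --- and this is precisely what Observation~\ref{obs:spherical_join_decomposition} transfers between vertices and facets.
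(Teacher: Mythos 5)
Your proof is correct and fills in exactly the argument the paper has in mind: the paper offers no explicit proof and simply asserts the observation as a restatement of Observation~\ref{obs:spherical_join_decomposition}, and your reduction---transport $\sim_f$ to the vertices via the complement bijection, translate conditions~(iii) and~(iv) of that observation into the statement that the two graphs have the same edgeless cuts, and conclude they share connected components---is the natural and clean way to make that precise. The index bookkeeping you flag (facets over $\Cell_1$ are the $c(v)$ with $v\in V_2$) is handled correctly, and the treatment of the trivial bipartitions closes the one gap one might worry about.
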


The reason why we dwell on this is that if $\BigCell$ is the fundamental simplex of a finite reflection group, the relation $\sim_f$ will give rise to the Coxeter diagram while the relation $\sim_v$ will be seen to be an equivalence relation. We just chose to present the statements in more generality.

\subsection*{Spherical Polytopes with Non-Obtuse Angles}

Let $\BigCell$ be a spherical polytope. We have defined angles $\angle(\Cell_1,\Cell_2)$ for any two faces $\Cell_1$ and $\Cell_2$ of $\BigCell$ that have same dimension and meet in a common codimension-$1$ face. In what follows, we are interested in polytopes where all of these angles are at most $\pi/2$. We say that such a polytope has \emph{non-obtuse angles}. Our first aim is to show that it suffices to restrict the angles between facets, all other angles will then automatically be non-obtuse. Second we observe that if $\BigCell$ has non-obtuse angles, then the relation $\sim_v$ on the vertices of having distance $\ne \pi/2$ is an equivalence relation.

First however we need to note another phenomenon:

\begin{obs}
Let $\BigCell$ be a polyhedron and let $\Cell_1\le \Cell_2$ be faces. Then there is a canonical isometry
\[
\Link \Cell_2 \to \Link \Cell_2 \direction \Cell_1
\]
that takes $\Link_{\BigCell} \Cell_2$ to $\Link_{\BigCell \direction \Cell_1} \Cell_2 \direction \Cell_1$.\qed
\end{obs}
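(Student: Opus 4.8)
This is the familiar principle that the link of a link is a link, so the plan is to make the identification explicit by fixing a basepoint. Fix a point $p$ in the relative interior of $\Cell_1$. By \eqref{eq:point_link_decomposition} there is a canonical isometric decomposition $\Link p = (\Cell_1 \direction p) * \Link \Cell_1$, under which $\Link \Cell_1$ is the subspace of directions at $p$ perpendicular to $\Cell_1$ and $\Cell_1 \direction p$ is a subsphere (of dimension $\dim \Cell_1 - 1$). Applying \eqref{eq:point_link_decomposition} a second time, inside the polyhedron $\Cell_2$ (whose face $\Cell_1$ is the carrier of $p$), gives $\Cell_2 \direction p = (\Cell_1 \direction p) * (\Cell_2 \direction \Cell_1)$; thus, as a cell of $\Link p$, the cell $\Cell_2 \direction p$ is the spherical join of $\Cell_1 \direction p$ with the cell $\Cell_2 \direction \Cell_1$ of $\Link \Cell_1$.

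Next I would identify both sides of the claimed isometry with $\Link_{\Link p}(\Cell_2 \direction p)$, the link of that cell inside $\Link p$. For the left-hand side: the link of a cell can be read off from the link of any point of the cell as the perpendicular directions there, so applying this to a point $u$ in the relative interior of $\Cell_2 \direction p$ of the form $[p,q]_p$ with $q \in \relint \Cell_2$ identifies $\Link \Cell_2$ — the directions at $q$ perpendicular to $\Cell_2$ — with $\Link_{\Link p}(\Cell_2 \direction p)$. For the right-hand side: since $\Cell_1 \direction p$ is a subsphere and $\Cell_2 \direction p = (\Cell_1 \direction p) * (\Cell_2 \direction \Cell_1)$, the behaviour of links under spherical joins (a routine consequence of the definition of the join metric) says that taking the link of $\Cell_2 \direction p$ in $\Link p$ discards the factor $\Cell_1 \direction p$ and returns the link of $\Cell_2 \direction \Cell_1$ inside $\Link \Cell_1$, that is $\Link_{\Link \Cell_1}(\Cell_2 \direction \Cell_1)$. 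Composing the two identifications gives the canonical isometry $\Link \Cell_2 \to \Link \Cell_2 \direction \Cell_1$.

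Finally I would match the cell structures. By the correspondence of Section~\ref{sec:metric_spaces} between the cofaces of $\Cell_1$ and the cells of $\Link \Cell_1$, a coface $\BigCell'$ of $\Cell_2$ corresponds to the coface $\BigCell' \direction \Cell_1$ of $\Cell_2 \direction \Cell_1$, and the isometry above carries the cell $\BigCell' \direction \Cell_2$ of $\Link \Cell_2$ onto $(\BigCell' \direction \Cell_1) \direction (\Cell_2 \direction \Cell_1)$; restricting to the cofaces with $\Cell_2 \le \BigCell' \le \BigCell$ yields the asserted isometry $\Link_\BigCell \Cell_2 \to \Link_{\BigCell \direction \Cell_1}(\Cell_2 \direction \Cell_1)$. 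Everything here is bookkeeping with the join decomposition \eqref{eq:point_link_decomposition}; the one point that deserves care — and the step I would expect to be the main, if minor, obstacle — is the identification of $\Link \Cell_2$ with $\Link_{\Link p}(\Cell_2 \direction p)$, since the basepoint $p$ lies on the boundary of $\Cell_2$ rather than in its relative interior (this is routine, e.g.\ by sliding $p$ into $\relint \Cell_2$ along the direction $u$).
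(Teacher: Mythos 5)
Your proof is correct and follows essentially the same route as the paper's after-the-fact description of the isometry: both fix an interior point of $\Cell_1$ and read off the map inside the sphere of directions there, with the paper writing the explicit formula $[p_2,x]_{p_2}\mapsto[[p_1,p_2]_{p_1},[p_1,x]_{p_1}]_{[p_1,p_2]_{p_1}}$ while you factor the same identification through $\Link_{\Link p}(\Cell_2\direction p)$ via the two join decompositions from \eqref{eq:point_link_decomposition}. You have correctly isolated the one non-trivial step (identifying $\Link\Cell_2$ with the perpendicular directions at a boundary point $p$ of $\Cell_2$ along $u$), and your justification for it is sound.
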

To describe this isometry let $p_2$ be an interior point of $\Cell_2$ (that has distance $< \pi/2$ to $\Cell_1$ and) that projects onto an interior point $p_1$ of $\Cell_1$. If $\gamma$ is a point of $\Link \Cell_2$, then there is a geodesic segment $[p_2,x]$ representing it. For every $y \in [p_2,x]$ the geodesic segment $[p_1,y]$ represents a direction at $p_1$ that is a point of $\Link \Cell_1$. All these points form a segment in $\Link \Cell_1$ that defines a point $\rho$ of $\Link \Cell_2 \direction \Cell_1$. The isometry takes $\gamma$ to $\rho$.
Formally (using the notation from Section~\ref{sec:metric_spaces}) this can be written as:
\[
[p_2,x]_{p_2} \mapsto [[p_1,p_2]_{p_1},[p_1,x]_{p_1}]_{[p_1,p_2]_{p_1}}
\]
(see Figure~\ref{fig:link_of_link}).

\begin{figure}[!ht]
\begin{center}
\includegraphics{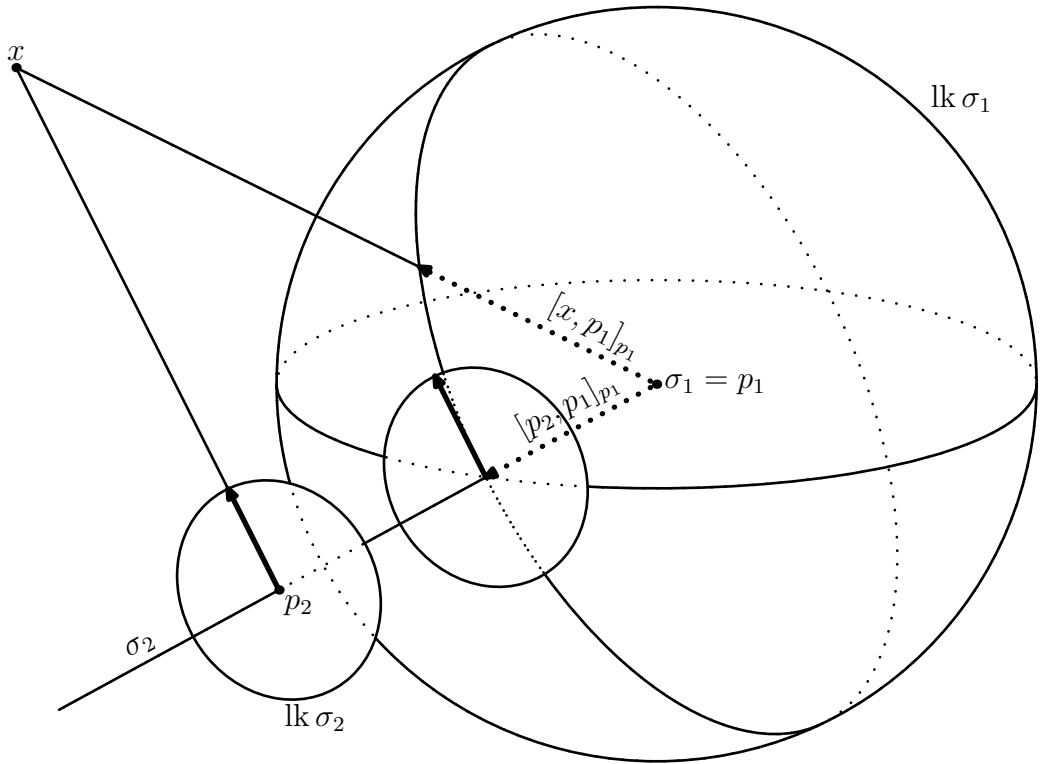}
\end{center}
\caption{The picture illustrates the identification of links. The cell $\Cell_2$ is an edge and $\Cell_1$ is one of its vertices. The link of the vertex $\Cell_2 \direction \Cell_1 = [p_2,p_1]_{p_1}$ is identified with the link of $\Cell_2$. The direction from $p_2$ toward $x$, which is an element of $\Link \Cell_2$, is identified with the direction from $[p_2,p_1]_{p_1}$ toward $[x,p_1]_{p_1}$.}
\label{fig:link_of_link}
\end{figure}

\begin{prop}
\label{prop:simplex_angles_bound_angles}
If a spherical polytope $\BigCell$ has the property that the angle between any two facets is at most $\pi/2$, then it has non-obtuse angles.
\end{prop}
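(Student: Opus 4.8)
The plan is to reduce the statement about arbitrary faces of $\BigCell$ to the hypothesis about facets by an induction on codimension, using the link-of-a-link identification recorded just above. Fix two faces $\Cell_1, \Cell_2$ of $\BigCell$ of the same dimension $k$ whose intersection $\Cell = \Cell_1 \intersect \Cell_2$ has codimension $1$ in each; I must show $\angle(\Cell_1,\Cell_2) \le \pi/2$. If $k = \dim \BigCell - 1$ there is nothing to prove, so assume $k < \dim\BigCell - 1$; then $\Cell$ is a proper face of positive codimension and $\BigCell$ has a vertex $v$ not contained in $\Cell_1 \vee \Cell_2$ (indeed, since $\Cell_1\vee\Cell_2$ lies in the sphere $S$ of dimension $k+1$ spanned by $\Cell_1$ and $\Cell_2$, and $\dim\BigCell > k+1$). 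Passing to the link $\Link_\BigCell v$, which is again a spherical polytope, the angle between two facets of $\Link_\BigCell v$ is the angle between the corresponding facets of $\BigCell$, hence still at most $\pi/2$; so the inductive hypothesis applies to $\Link_\BigCell v$, which has strictly smaller dimension.

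The key step is then to check that the angle $\angle(\Cell_1,\Cell_2)$ is unchanged (or at least not increased) by passing to $\Link_\BigCell v$ — more precisely, that $\Cell_1 \direction v$ and $\Cell_2 \direction v$ are faces of $\Link_\BigCell v$ of equal dimension meeting in a common codimension-$1$ face $\Cell \direction v$, and that $\angle(\Cell_1 \direction v, \Cell_2 \direction v) = \angle(\Cell_1,\Cell_2)$. This is exactly the content of the Observation preceding the proposition: for $\Cell_1 \le \BigCell$ (say via a chain through $v$) there is a canonical isometry $\Link \Cell_1 \to \Link(\Cell_1 \direction v)$ taking $\Link_\BigCell \Cell_1$ to $\Link_{\BigCell \direction v}(\Cell_1 \direction v)$, and the angle $\angle(\Cell_1,\Cell_2)$ was *defined* in terms of such a link (the diameter of the $1$-polyhedron $\Link_{\BigCell \intersect S}\Cell$ with vertices $\Cell_1 \direction \Cell$ and $\Cell_2 \direction \Cell$). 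One has to verify that this combinatorial/metric datum is transported faithfully under the link map; since the map is an isometry of $\ModelSpace{1}$-polyhedral complexes respecting the cell structure, it carries the relevant $1$-polyhedron to the analogous $1$-polyhedron in $\Link_\BigCell v$ and preserves its diameter. By the inductive hypothesis that diameter is $\le \pi/2$, giving $\angle(\Cell_1,\Cell_2) \le \pi/2$.

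The base case of the induction deserves a word: once the codimension of $\Cell$ has been reduced all the way, one lands (after taking links at enough vertices) in a situation where $\Cell_1$ and $\Cell_2$ are facets of a spherical polytope and $\Cell_1 \intersect \Cell_2$ is a common codimension-$1$ face — but that is precisely the hypothesis. Alternatively one can phrase the whole argument as: repeatedly take the link at a vertex not in $\Cell_1\vee\Cell_2$ until $\Cell_1$ and $\Cell_2$ have become facets, then invoke the hypothesis directly. I expect the main obstacle to be the bookkeeping in the link identification — making sure the face $\Cell \direction v$ really is codimension-$1$ in both $\Cell_i \direction v$, that the spanning sphere behaves correctly, and that the link of the ambient polytope at $v$ is again a genuine spherical polytope rather than a more general complex — rather than any substantive geometric difficulty; the geometry is entirely absorbed into the already-proved isometry of links.
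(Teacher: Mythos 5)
Your reduction step does not work, for two reasons. First, you choose a vertex $v$ of $\BigCell$ \emph{not} contained in $\Cell_1 \vee \Cell_2$ and then speak of the faces $\Cell_1 \direction v$ and $\Cell_2 \direction v$ of $\Link_{\BigCell} v$; but $\Cell_i \direction v$ is only defined when $v$ is a face of $\Cell_i$, so with your choice of $v$ the faces $\Cell_1,\Cell_2$ simply do not survive into the link and there is nothing for the inductive hypothesis to apply to. Second, even if you instead take $v$ to be a vertex of $\Cell = \Cell_1 \intersect \Cell_2$ (so that $\Cell_i \direction v$ makes sense and the angle is preserved by the link-of-link identification), the dimensions of the faces and of the ambient polytope both drop by one, so the codimension of $\Cell_i \direction v$ in $\BigCell \direction v$ equals that of $\Cell_i$ in $\BigCell$. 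Iterating links therefore never turns $\Cell_1,\Cell_2$ into facets: after taking links along all of $\Cell$ they become two vertices of $\Link_{\BigCell} \Cell$ joined by an edge, and what you then need is that this edge has length at most $\pi/2$ --- which is not the hypothesis (that concerns only facets) but exactly the statement still to be proved.

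This is the real gap: the substance of the proposition is the passage from facet angles to angles between faces of higher codimension, and it cannot be ``entirely absorbed into the isometry of links,'' which is pure bookkeeping. The paper does this work in the essential codimension-two case: for codimension-$2$ faces $\Cell_1,\Cell_2$ meeting in the codimension-$3$ face $\Cell$, the link $\Link_{\BigCell} \Cell$ is a spherical polygon whose angles are (via the link-of-link identification) angles between facets of $\BigCell$, hence at most $\pi/2$; the spherical angle-sum inequality then forces this polygon to be a triangle, and Proposition~\ref{prop:angles_bound_length} (a spherical triangle with non-obtuse angles has edges of length at most $\pi/2$) bounds the distance between the vertices $\Cell_1 \direction \Cell$ and $\Cell_2 \direction \Cell$, which is $\angle(\Cell_1,\Cell_2)$. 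Some quantitative spherical input of this kind is unavoidable, and your proposal contains no counterpart to it.
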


\begin{proof}
Proceeding by induction, it suffices to show that if $\Cell_1$ and $\Cell_2$ are faces of codimension $2$ in $\BigCell$ that meet in a face $\Cell\defeq \Cell_1 \intersect \Cell_2$ of codimension $3$, then $\angle(\Cell_1,\Cell_2) \le \pi/2$. In that situation $\Link_{\BigCell} \Cell$ is a spherical polygon in the $2$-sphere $\Link \Cell$. As described above $\Link \Cell_i$ can be identified with $\Link \Cell_i \direction \Cell$ in such a way that directions into $\BigCell$ are identified with each other.

Under this identification angles between facets of $\BigCell$ that contain $\Cell$ are identified with the angles between edges of the polygon described above. Since the sum of angles of a spherical $n$-gon is $> (n-2)\pi$ but the sum of angles of our polygon is $\le n(\pi/2)$ we see that $n < 4$ hence the polygon is a triangle.

Since the angle $\angle(\Cell_1,\Cell_2)$ is the distance of the vertices $\Cell_1 \direction \Cell$ and $\Cell_2 \direction \Cell$, the statement follows from Proposition~\ref{prop:angles_bound_length}.
\end{proof}

Along the way we have seen that if $\BigCell$ has non-obtuse angles then it is simple (links of vertices are simplices). In fact more is true:

\begin{lem}[{\cite[Lemma~6.3.3]{davis08}}]
A spherical polytope that has non-obtuse angles is a simplex.
\end{lem}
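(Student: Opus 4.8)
The plan is to proceed by induction on the dimension $n$ of the polytope $\BigCell$, exploiting the fact — just established in Proposition~\ref{prop:simplex_angles_bound_angles} and the remark following it — that if $\BigCell$ has non-obtuse angles, then so does the link $\Link_{\BigCell} \Cell$ of every face $\Cell$, since the angles occurring in such a link are (via the canonical identification $\Link \Cell_i \to \Link \Cell_i \direction \Cell$) a subset of the angles of $\BigCell$. In the base case $n \le 1$ there is nothing to prove: a $0$-dimensional polytope is a point and a $1$-dimensional spherical polytope of diameter $<\pi$ is a geodesic segment, hence a $1$-simplex. For the inductive step, I would first observe that by induction every proper face of $\BigCell$ is a simplex; in particular the link of every vertex $v$ of $\BigCell$ is a spherical polytope with non-obtuse angles of dimension $n-1$, hence a simplex, so every vertex lies on exactly $n$ facets and $\BigCell$ is a simple polytope. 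It remains to show that $\BigCell$ has exactly $n+1$ facets.

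The key step is a counting argument on facets. Suppose $\BigCell$ has facets $F_0, \ldots, F_m$ with $m \ge n$, and suppose for contradiction that $m \ge n+1$. Since $\BigCell$ is simple of dimension $n$, I would fix a vertex $v$; the $n$ facets through $v$ span $\Link v$, and the remaining facets meet $\Link v$ in a proper way. The cleanest route is to pass to the dual picture: consider the Gram-type data given by the outward unit normals $u_0, \ldots, u_m \in \S^n$ of the facets. The non-obtuse angle condition between facets $F_i$ and $F_j$ sharing a codimension-$2$ face says precisely that $\langle u_i \mid u_j\rangle \ge 0$ for those pairs; and because $\BigCell$ is simple, for a nonadjacent pair the supporting hyperplanes still interact through the combinatorics in a way that forces (using Proposition~\ref{prop:angles_bound_length} applied inside successive links) $\langle u_i \mid u_j\rangle \ge 0$ for \emph{all} $i \ne j$. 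So all pairwise inner products among the $u_i$ are nonnegative. A standard linear-algebra lemma then says: a set of vectors in $\R^{n+1}$ with pairwise nonnegative inner products that positively spans $\R^{n+1}$ (which the outward normals of a bounded polytope must, since $\BigCell$ is a polytope, i.e.\ has diameter $<D_\kappa$ and hence is compact with nonempty interior in $\S^n$) has at most $n+1$ elements — indeed, if it had more, one could split it into two groups spanning orthogonal complementary subspaces, iterate, and contradict the positive-spanning property, or more directly invoke that pairwise-nonnegative vectors that are not "decomposable" are linearly independent. This forces $m+1 \le n+1$, i.e.\ $\BigCell$ has at most $n+1$ facets, and since an $n$-dimensional polytope has at least $n+1$ facets, it has exactly $n+1$ and is a simplex.

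The main obstacle I anticipate is the passage from "non-obtuse angles between codimension-$2$-adjacent facets" to "all pairwise normals have nonnegative inner product." The definition of non-obtuse angles only constrains angles between faces of equal dimension meeting in a common codimension-$1$ face, so a priori it says nothing directly about two facets that do not share a ridge. Bridging this gap is exactly where simplicity of $\BigCell$ (all vertex links are simplices, by induction) and the repeated use of Proposition~\ref{prop:angles_bound_length} — "non-obtuse angles force non-obtuse, in fact controlled, edge lengths" — inside a chain of links must be combined; one essentially argues that in a simple polytope any two facets are connected through a sequence of codimension-$2$-adjacent facets, and the sign condition propagates along such a chain. An alternative that sidesteps the normals entirely is to argue directly: take a vertex $v$, so $\Link v$ is an $(n-1)$-simplex by induction; if $\BigCell$ had an extra facet $F$ not through $v$, then walking along an edge of $\BigCell$ from $v$ and tracking the facet structure via the link identifications, one produces a spherical simplex (a vertex link somewhere) whose combinatorics is impossible, again reducing to the two-dimensional fact that a spherical polygon with non-obtuse angles is a triangle (the argument already used inside the proof of Proposition~\ref{prop:simplex_angles_bound_angles}). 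Either way the two-dimensional polygon fact and the inductive simplicity of links are the two engines; I expect the bookkeeping identifying which angles of a link are angles of $\BigCell$ to be the only genuinely delicate point, and everything else to be routine.
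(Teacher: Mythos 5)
The paper does not prove this lemma; it cites Davis verbatim and offers no argument, so you are on your own here --- and the sketch has a genuine gap plus two sign/hypothesis slips in the normal-vector step. The inductive frame (vertex links are lower-dimensional with non-obtuse angles by Proposition~\ref{prop:simplex_angles_bound_angles}, hence simplices by induction, so $\BigCell$ is simple; reduce to counting $\le n+1$ facets) is fine. But: first, the paper's $\angle(F_i,F_j)$ is the \emph{interior} dihedral angle (the diameter of the ridge-link slice lying inside $\BigCell$), so ``non-obtuse'' forces, for \emph{outward} normals of facets sharing a ridge, $\langle u_i,u_j\rangle = -\cos\angle(F_i,F_j)\le 0$, not $\ge 0$. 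Second, the normals of a spherical polytope span $\R^{n+1}$ linearly but do not \emph{positively} span it (the all-right triangle in $\S^2$ has outward normals $-e_1,-e_2,-e_3$, which generate only the negative octant); indeed a family of nonzero vectors with pairwise nonnegative inner products can never positively span, so the ``standard lemma'' you invoke is vacuous. The tool you actually want, with the corrected sign, is the Coxeter--Vinberg lemma: unit vectors in $\R^{n+1}$ with pairwise inner products $\le 0$ that admit a common strictly negative linear functional (an interior point of the cone over $\BigCell$ gives one) are linearly independent, hence at most $n+1$ of them.

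But that lemma requires $\langle u_i,u_j\rangle\le 0$ for \emph{every} pair, whereas the hypothesis controls only pairs of facets sharing a ridge; you flag this yourself, and neither bridge you sketch survives scrutiny. Inner-product signs do not ``propagate along a chain'': from $\langle u_1,u_2\rangle\le 0$ and $\langle u_2,u_3\rangle\le 0$ one cannot conclude $\langle u_1,u_3\rangle\le 0$ (already three unit vectors in $\R^2$ give a counterexample). Propagation only works when the three facets pass through a common codimension-$3$ face, which is exactly what cannot be assumed without circularity. The statement you need --- in a convex polytope with non-obtuse dihedral angles, any two intersecting bounding hyperplanes meet non-obtusely --- is in fact true (it is an Andreev/Vinberg-type result and is where the real content of the lemma lives), but it requires a genuine global argument, not a local chain through Proposition~\ref{prop:angles_bound_length} inside successive links. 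The alternative ``walk along an edge until the combinatorics collapses'' is too imprecise to assess and points to no identifiable contradiction. So the passage from ``$\BigCell$ is simple with non-obtuse dihedral angles along ridges'' to ``$\BigCell$ has at most $n+1$ facets'' remains unproved.
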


Note that if $\BigCell$ has non-obtuse angles, then any two vertices have distance $\le \pi/2$, cf.\ Remark~\ref{rem:vertex_angle}.

\begin{obs}
\label{obs:acute_edge_equivalence_relation}
If $\BigCell$ is a spherical simplex that has non-obtuse angles, then the relation $\sim_v$ on the vertices of having distance $< \pi/2$ is an equivalence relation.
\end{obs}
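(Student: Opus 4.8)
The plan is to show that $\sim_v$ is transitive; reflexivity and symmetry are immediate. Since $\BigCell$ has non-obtuse angles, by Proposition~\ref{prop:simplex_angles_bound_angles} and the following Lemma it is a simplex, and any two of its vertices have distance $\le \pi/2$. So for vertices $v$, $w$ the relation $v \sim_v w$ means exactly $d(v,w) < \pi/2$, with the only alternative being $d(v,w) = \pi/2$. Suppose $u \sim_v v$ and $v \sim_v w$ but $u \not\sim_v w$, i.e.\ $d(u,v) < \pi/2$, $d(v,w) < \pi/2$, and $d(u,w) = \pi/2$. If $u$, $v$, $w$ were collinear this would be impossible (two of the three pairwise distances are $<\pi/2$ while one equals $\pi/2$ forces collinearity to fail, or one sees directly it cannot happen on a $1$-sphere), so $u$, $v$, $w$ form a spherical triangle $\Cell$, a face of $\BigCell$.

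The key point is that the face $\Cell = u \vee v \vee w$ inherits non-obtuse angles from $\BigCell$: faces of a spherical simplex are spherical simplices, and the angle between two facets of $\Cell$ meeting in a codimension-$2$ face of $\Cell$ is, after passing to the link of that face, an angle between faces of $\BigCell$ of the kind bounded by Proposition~\ref{prop:simplex_angles_bound_angles}. Hence the triangle $\Cell$ has all angles $\le \pi/2$. Now apply Proposition~\ref{prop:angles_bound_length}: since two of its edges, namely $[u,v]$ and $[v,w]$, have length $<\pi/2$, the third edge $[u,w]$ also has length $<\pi/2$. This contradicts $d(u,w) = \pi/2$, so no such triple exists and $\sim_v$ is transitive.

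The step I expect to be the main obstacle is making precise that a $2$-dimensional face spanned by three vertices of $\BigCell$ again has non-obtuse angles, i.e.\ that the angle $\angle(\Cell_1,\Cell_2)$ computed inside $\Cell$ agrees with (or is bounded by) the corresponding angle inside $\BigCell$. This is essentially the content of the link-identification observation preceding Proposition~\ref{prop:simplex_angles_bound_angles} together with that proposition itself: angles between faces of $\BigCell$ containing a common face $\Cell$ correspond to angles in $\Link \Cell$, and the restriction to the subcomplex $\Link_\Cell \BigCell$ does not create obtuse angles. Once this is granted, the rest is a direct appeal to Proposition~\ref{prop:angles_bound_length}. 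One could also argue more bluntly using the Spherical Law of Cosines on the triangle $u,v,w$ directly, but routing through Proposition~\ref{prop:angles_bound_length} keeps the argument short and reuses what is already established.
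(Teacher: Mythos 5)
Your proof is correct and follows essentially the same route as the paper's: consider the triangle spanned by the three vertices, observe that its angles are at most $\pi/2$ because $\BigCell$ has non-obtuse angles, and then apply the second statement of Proposition~\ref{prop:angles_bound_length} (the paper argues directly rather than by contradiction, but that is cosmetic). The ``main obstacle'' you flag is not one: by the paper's definition, the angle at a vertex $u$ of the triangle $u \vee v \vee w$ \emph{is} the angle $\angle([u,v],[u,w])$ between two edges of $\BigCell$ meeting in the codimension-$1$ face $\{u\}$ (the sphere $S$ spanned by $[u,v]$ and $[u,w]$ meets $\BigCell$ precisely in that triangle), so it is bounded by $\pi/2$ directly from the hypothesis without any further link-identification argument.
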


\begin{proof}
Let $a$, $b$ and $c$ be vertices of $\BigCell$. We have to show that if $d(a,c) < \pi/2$ and $d(c,b) < \pi/2$, then $d(a,b) < \pi/2$. Consider the triangle spanned by $a$, $b$ and $c$. Since $\BigCell$ has non-obtuse angles, the angles in this triangle are at most $\pi/2$. Now the statement is the second statement of Proposition~\ref{prop:angles_bound_length}.
\end{proof}

This suggests to call a spherical simplex with non-obtuse angles \emph{irreducible} if it has diameter $< \pi/2$. Then Observation~\ref{obs:acute_edge_equivalence_relation} and Observation~\ref{obs:spherical_join_decomposition} show that such a simplex is irreducible if and only if it can not be decomposed as the join of two proper faces.

The following is easy to see and allows us to include polyhedra in our discussion:

\begin{obs}
A spherical polyhedron $\BigCell$ decomposes as a join $S * \Cell$ of its maximal subsphere $S$ and a polytope $\Cell$. If the angle between any two facets of $\BigCell$ is non-obtuse, the same is true of $\Cell$.\qed
\end{obs}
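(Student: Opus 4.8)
The plan is to pass to the cone over $\BigCell$, where the statement is the familiar splitting of a polyhedral cone into its lineality space and a pointed cone. Write $\BigCell = \Intersect_{i \in I} H_i^+ \subseteq \S^n$, with $H_i^+$ a hemisphere bounded by the linear hyperplane $H_i = u_i^\perp$ (if $I = \emptyset$, or if $\BigCell$ is already a polytope, there is nothing to prove), and form the polyhedral cone $C \defeq \{tv \mid t \ge 0,\ v \in \BigCell\} = \{v \in \E^{n+1} \mid \scp{v}{u_i} \ge 0 \text{ for all } i\}$. Its lineality space is $L_0 \defeq C \intersect (-C) = \Intersect_i H_i$, and I would first verify that $S \defeq \S^n \intersect L_0$ is precisely the maximal subsphere of $\BigCell$: it is contained in $\BigCell$ since $L_0 \subseteq H_i \subseteq H_i^+$, and any subsphere $\S^n \intersect L'$ inside $\BigCell$ has $L' \subseteq H_i$ for each $i$ (a linear subspace contained in a hemisphere lies in its bounding hyperplane), hence $L' \subseteq L_0$.

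Next, putting $W \defeq L_0^\perp$, each $u_i$ is perpendicular to $H_i \supseteq L_0$ and so lies in $W$; therefore $C = L_0 \oplus C'$, where $C' \defeq C \intersect W$ is a pointed polyhedral cone ($C' \intersect (-C') = L_0 \intersect W = 0$). Applying the join formula \eqref{eq:spherical_join}, which identifies $\S^n$ with $\S(L_0) * \S(W)$, turns this splitting into $\BigCell = S * \Cell$ with $\Cell \defeq \BigCell \intersect \S(W) = \S(C')$. Since $C'$ is pointed, $\Cell$ contains no pair of antipodal points, and being a compact subset of a sphere it has diameter $< \pi$, so $\Cell$ is a polytope; this is the first assertion. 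Moreover the faces of $C$ are exactly the sets $L_0 \oplus G'$ with $G'$ a face of $C'$, so the faces of $\BigCell$ are exactly the joins $S * G$ with $G$ a face of $\Cell$; this correspondence shifts dimension by $\dim S + 1$ and respects the face order and intersections, so it restricts to a bijection between facets of $\Cell$ and facets of $\BigCell$ that carries a pair of facets of $\Cell$ meeting along a common codimension-$1$ face to such a pair in $\BigCell$.

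What remains — and is the only step requiring real care — is that this bijection preserves the angle between such a pair. Given facets $G_1, G_2$ of $\Cell$ with $G \defeq G_1 \intersect G_2$ of codimension $1$ in each, let $\Sigma_0$ be the subsphere of $\S(W)$ spanned by $G_1$ and $G_2$, so that $S * \Sigma_0$ is the subsphere spanned by $S*G_1$ and $S*G_2$. Using that the link of a face $a * b$ in a spherical join $A * B$ equals $\Link_A a * \Link_B b$, together with $\Link_S S = \emptyset$, one obtains $\Link_{\BigCell \intersect (S*\Sigma_0)}(S*G) = \Link_{S*(\Cell\intersect\Sigma_0)}(S*G) = \Link_{\Cell\intersect\Sigma_0}(G)$, and under this identification the directions $(S*G_j)\direction(S*G)$ correspond to $G_j \direction G$. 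Hence $\angle(S*G_1,S*G_2) = \diam \Link_{\Cell\intersect\Sigma_0}(G) = \angle(G_1,G_2)$, so non-obtuseness of all facet angles of $\BigCell$ descends to $\Cell$. The main obstacle is thus purely bookkeeping: unwinding the link-theoretic definition of the angle from Section~\ref{sec:spherical_geometry} and matching up the spanned sphere $\Sigma_0$ and the sub-polyhedron cut out of it under the join decomposition; the cone--lineality splitting, the pointedness of $C'$, and the face correspondence are all routine polyhedral geometry.
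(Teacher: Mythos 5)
Your proof is correct. The paper offers no argument here — it presents this as an Observation with ``The following is easy to see'' and terminates the statement with a \texttt{\textbackslash qed} — so there is nothing to compare against; your cone-decomposition argument ($C = L_0 \oplus C'$, pass back to the sphere via the join, track facets and angles through the link identification) is precisely the standard reasoning the paper leaves implicit, and each step checks out, including the identification of the maximal subsphere with $\S(L_0)$, the diameter-$<\pi$ criterion making $\Cell$ a polytope, and the metric (not merely combinatorial) matching of links needed to transfer the angle.
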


To sum up we have shown the following:

\begin{thm}
\label{thm:non_obtuse_angle_decomposition}
Let $\BigCell$ be a spherical polyhedron that has the property that any two of its facets include an angle of at most $\pi/2$. Then $\BigCell = S * \Cell$ where $S$ is the maximal sphere contained in $\BigCell$ and $\Cell$ is a spherical simplex that has non-obtuse angles.

Moreover, $\Cell$ decomposes as a join $\Cell_1 * \cdots * \Cell_k$ of irreducible faces and two vertices of $\Cell$ lie in the same join factor if and only if they have distance $< \pi/2$.
\end{thm}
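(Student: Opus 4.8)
The plan is to assemble the statements already proved above; this theorem is really a summary and needs no fresh geometric input.

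First I would split off the sphere. By the Observation immediately preceding the theorem, a spherical polyhedron $\BigCell$ decomposes as $\BigCell = S * \Cell$ with $S$ its maximal subsphere and $\Cell$ a spherical polytope, and if any two facets of $\BigCell$ include an angle at most $\pi/2$ then the same holds for the facets of $\Cell$. Proposition~\ref{prop:simplex_angles_bound_angles} then tells us that $\Cell$ has non-obtuse angles, and by \cite[Lemma~6.3.3]{davis08} a spherical polytope with non-obtuse angles is a simplex; so $\Cell$ is a spherical simplex with non-obtuse angles, which is the first assertion.

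It remains to decompose $\Cell$. Since $\Cell$ has non-obtuse angles, Observation~\ref{obs:acute_edge_equivalence_relation} gives that the relation $\sim_v$ of ``distance $<\pi/2$'' is an equivalence relation on $\Vertices \Cell$; let $V_1,\dots,V_k$ be its classes and let $\Cell_i\le\Cell$ be the face spanned by $V_i$. By the note following \cite[Lemma~6.3.3]{davis08} any two vertices of $\Cell$ are at distance $\le\pi/2$, and vertices in different classes are not $\sim_v$-related, so every vertex of $\Cell_i$ is at distance exactly $\pi/2$ from every vertex of $\Cell_j$ whenever $i\ne j$. Peeling off the classes one at a time, $\Cell_1$ and the face $\Cell_1'$ spanned by $V_2\cup\dots\cup V_k$ are complementary faces of $\Cell$ for which condition~(iii) of Observation~\ref{obs:spherical_join_decomposition} holds, hence $\Cell=\Cell_1*\Cell_1'$; as $\Cell_1'$ is again a face of $\Cell$ it still has non-obtuse angles and its vertex set is $V_2\cup\dots\cup V_k$, so repeating the argument inside it yields $\Cell=\Cell_1*\cdots*\Cell_k$. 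Each $\Cell_i$, being a face of $\Cell$, has non-obtuse angles, and all of its vertices lie in one $\sim_v$-class, so any splitting of $\Cell_i$ as a nontrivial join would, via Observation~\ref{obs:spherical_join_decomposition}, force two of its vertices to distance $\pi/2$ --- impossible; by the characterization recorded after Observation~\ref{obs:acute_edge_equivalence_relation} this means $\Cell_i$ is irreducible. Finally, two vertices of $\Cell$ lie in the same factor $\Cell_i$ precisely when they lie in the same $\sim_v$-class, i.e.\ when they have distance $<\pi/2$, giving the last assertion.

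The argument is essentially bookkeeping, but two places call for a little care. One must check that the iterated two-factor decomposition is coherent --- that peeling off the classes in any order reproduces the same join $\Cell_1*\cdots*\Cell_k$ --- which follows from associativity of the spherical join together with the fact that the $\sim_v$-classes are canonical; and one should make sure that condition~(iii) of Observation~\ref{obs:spherical_join_decomposition}, stated there for complementary faces of the ambient simplex, is inherited by the smaller simplex $\Cell_1'$. Also, since ``irreducible'' is defined through the diameter, passing from ``no nontrivial join decomposition'' to ``$\diam<\pi/2$'' relies on the equivalence noted after Observation~\ref{obs:acute_edge_equivalence_relation}; this is the one spot where a meticulous reader might want the details spelled out, but it is not a genuine obstacle.
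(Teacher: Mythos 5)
Your proof is correct and takes essentially the same route as the paper, which presents this theorem explicitly as a summary ("To sum up we have shown the following") of the preceding observations and propositions that you invoke. The bookkeeping points you flag (order-independence of peeling off classes, inheritance of the join criterion, the diameter vs.\ no-nontrivial-join characterization of irreducibility) are all handled by the equivalence the paper records immediately after Observation~\ref{obs:acute_edge_equivalence_relation}, so there are no gaps.
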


\footerlevel{3}
\headerlevel{3}

\section{Finiteness Properties}
\label{sec:finiteness_properties}

In this section we collect the main facts about the topological finiteness properties $F_n$. Topological finiteness properties of groups were introduced by C.T.C. Wall in \cite{wall65,wall66}. A good reference on the topic is \cite{geoghegan}, where also other properties such as finite geometric dimension are introduced. At the end of the section we briefly describe the relation between topological and homological finiteness properties. The standard book on homology of groups is \cite{brown82}.

Let $\Disk[n]$ denote the closed unit-ball in $\R^n$ as a topological space and let $\Sphere[n-1] \subseteq \Disk[n]$ denote the unit sphere also regarded as a topological space, in particular, $\Sphere[-1] = \emptyset$. An \emph{$n$-cell} is a space homeomorphic to $\Disk[n]$ and its \emph{boundary} is the subspace that is identified with $\Sphere[n-1]$.

Recall that a \emph{CW-complex} $\Space$ is a topological space that is obtained from the empty set by inductively gluing in cells of increasing dimension along their boundary, see \cite[Chapter~0]{hatcher} for a proper definition. Under the gluing process, the cells need not be embedded in $\Space$ but nonetheless we call their images \emph{cells}. A \emph{subcomplex} of $\Space$ is the union of some of its cells. The union of all cells up to dimension $n$ is called the \emph{$n$-skeleton of $\Space$} and denoted $\Space^{(n)}$.

When we speak of a CW-complex we always mean the topological space together with its decomposition into cells. Furthermore by an action of a group on a CW-complex we mean an action that preserves the cell structure.

A topological space $\Space$ is \emph{$n$-connected} if for $-1 \le i \le n$ every map $\Sphere[i] \to \Space$ extends to a map $\Disk[i+1] \to \Space$. In other words a space is $n$-connected if it is non-empty and $\pi_i(\Space)$ is trivial for $0 \le i \le n$. We say that $\Space$ is \emph{$n$-aspherical} if it satisfies the same condition except possibly for $i=1$. A CW-complex is \emph{$n$-spherical} if it is $n$-dimensional and $(n-1)$-connected and it is \emph{properly $n$-spherical} if in addition it is not $n$-connected (equivalently if it is not contractible).

A connected CW-complex $\Space$ is called a \emph{classifying space} for a group $\Group$ or a \emph{$K(\Group,1)$ complex} if the fundamental group of $\Space$ is (isomorphic to) $\Group$ and all higher homotopy groups are trivial (cf.\ \cite[Sections~VII.11,12]{bredon}, \cite[Chapter~7]{geoghegan}, \cite[Section~1.B]{hatcher}). The latter condition means that every map $\Sphere[n] \to \Space$ extends to a map $\Disk[n+1] \to \Space$ for $n \ge 2$. Yet another way to formulate it is to require the universal cover $\tilde{\Space}$ to be contractible. Classifying spaces exist for every group and are unique up to homotopy equivalence. If $\Space$ is a classifying space for $\Group$ we can identify $\Group$ with the fundamental group of $\Space$ and obtain an action of $\Group$ on $\Space$ (which can be made to preserve the cell structure); we may sometimes do this implicitly.

We can now define the topological finiteness properties that we are interested in. A group $\Group$ is \emph{of type $F_n$} if there is a $K(\Group,1)$ complex that has finite $n$-skeleton (here ``finite'' means ``having a finite number of cells'', topologically this is equivalent to the complex being compact). A group that is of type $F_n$ for every $n \in \N$ is said to be \emph{of type $F_\infty$}.

There are a few obvious reformulations of this definition:

\begin{lem}
Let $\Group$ be a group and let $n \ge 2$. These are equivalent:
\begin{enumerate}
\item $\Group$ is of type $F_n$.
\item $\Group$ acts freely on a contractible CW-complex $\Space_2$ that has finite $n$-skeleton modulo the action of $\Group$.
\item there is a finite, $(n-1)$-aspherical CW-complex $\Space_3$ with fundamental group $\Group$.
\item $\Group$ acts freely on an $(n-1)$-connected CW-complex $\Space_4$ that is finite modulo the action of $\Group$.
\end{enumerate}
\end{lem}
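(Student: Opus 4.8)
The plan is to prove the equivalence by running through the cycle $(1)\Rightarrow(2)\Rightarrow(4)\Rightarrow(3)\Rightarrow(1)$, using only elementary covering space theory, cellular approximation, Whitehead's theorem, and the procedure of killing homotopy groups by attaching cells. I will use freely that an action preserving the cell structure permutes the skeleta, and that for a free such action of $\Group$ on a CW-complex $\Space$ the projection $\Space\to\Group\backslash\Space$ is a regular covering with deck group $\Group$ (passing to a subdivision if necessary so that no cell is setwise fixed; this affects neither finiteness of skeleta nor homotopy type).

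For $(1)\Rightarrow(2)$ I would take a $K(\Group,1)$ complex $\Space$ with finite $n$-skeleton and let $\Space_2$ be its universal cover. Then $\Group$ acts freely on $\Space_2$ by deck transformations, the quotient $\Group\backslash\Skeleton{\Space_2}{n}=\Skeleton{\Space}{n}$ is finite, and $\pi_i(\Space_2)=0$ for all $i$ (since $\pi_1(\Space_2)=0$ and $\pi_i(\Space_2)\cong\pi_i(\Space)=0$ for $i\ge2$), so $\Space_2$ is contractible by Whitehead's theorem. For $(2)\Rightarrow(4)$ I would set $\Space_4\defeq\Skeleton{\Space_2}{n}$: the action of $\Group$ restricts to a free action on $\Space_4$, the quotient $\Group\backslash\Space_4=\Skeleton{(\Group\backslash\Space_2)}{n}$ is finite, and by cellular approximation the inclusion $\Space_4\hookrightarrow\Space_2$ induces isomorphisms on $\pi_i$ for $i\le n-1$, so $\Space_4$ is $(n-1)$-connected because $\Space_2$ is contractible.

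For $(4)\Rightarrow(3)$ I would take $\Space_3\defeq\Group\backslash\Space_4$, a finite CW-complex; since $n\ge2$ the $(n-1)$-connected complex $\Space_4$ is simply connected and hence is the universal cover of $\Space_3$, so $\pi_1(\Space_3)\cong\Group$ while $\pi_i(\Space_3)\cong\pi_i(\Space_4)=0$ for $2\le i\le n-1$, i.e.\ $\Space_3$ is $(n-1)$-aspherical. For the remaining implication $(3)\Rightarrow(1)$ I would start from a finite $(n-1)$-aspherical complex $\Space_3$ with $\pi_1(\Space_3)\cong\Group$ and build an ascending chain $\Space_3=\Space^{[n]}\subseteq\Space^{[n+1]}\subseteq\cdots$: given $\Space^{[m]}$ with $m\ge n$, $\pi_1\cong\Group$, and $\pi_i=0$ for $2\le i\le m-1$, I attach $(m+1)$-cells along a set of $\Z\Group$-module generators of $\pi_m(\Space^{[m]})$ (possibly infinitely many) to obtain $\Space^{[m+1]}$. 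Because $m+1\ge3$, this leaves $\pi_1$ and all $\pi_i$ with $i\le m-1$ unchanged while making $\pi_m$ trivial; and because every new cell has dimension $>n$, each $\Space^{[m]}$ has the same $n$-skeleton as $\Space_3$, which is finite. The union $\Space\defeq\bigcup_m\Space^{[m]}$ then has $\pi_1(\Space)\cong\Group$ and, since $\pi_i(\Space)=\varinjlim_m\pi_i(\Space^{[m]})$ and $\pi_i(\Space^{[m]})$ vanishes once $m>i$, also $\pi_i(\Space)=0$ for every $i\ge2$; so $\Space$ is a $K(\Group,1)$ complex with finite $n$-skeleton, and $\Group$ is of type $F_n$.

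The only genuinely non-formal step, and the one I expect to need the most care, is the cell-attachment argument in $(3)\Rightarrow(1)$: that attaching $(m+1)$-cells along $\Z\Group$-module generators of $\pi_m$ really annihilates $\pi_m$ while fixing the lower homotopy groups, and that the infinite union reintroduces no higher homotopy. These are precisely the facts underlying Whitehead's construction of Eilenberg--MacLane complexes; the rest of the lemma is bookkeeping with coverings and skeleta.
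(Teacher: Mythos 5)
Your proof is correct. The paper organizes the argument as hub-and-spoke: it fixes a $K(\Group,1)$ complex $\Space_1$ with finite $n$-skeleton, proves $(1)\Rightarrow(2),(3),(4)$ by respectively taking the universal cover, the $n$-skeleton, and the $n$-skeleton of the universal cover of $\Space_1$, and then proves each of $(2),(3),(4)\Rightarrow(1)$ separately. You instead run a cycle $(1)\Rightarrow(2)\Rightarrow(4)\Rightarrow(3)\Rightarrow(1)$, supplying the new bridges $(2)\Rightarrow(4)$ (pass to the $n$-skeleton of $\Space_2$) and $(4)\Rightarrow(3)$ (take the quotient $\Group\backslash\Space_4$). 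The underlying tools are the same in both treatments — universal covers, truncation to $n$-skeleta, and the Whitehead-style inductive attachment of higher cells to kill homotopy — but your organization has the small advantage that the only nontrivial cell-attachment occurs downstairs, non-equivariantly, in your $(3)\Rightarrow(1)$, whereas the paper also needs a $\Group$-equivariant version of that step for its $(4)\Rightarrow(1)$. Your care with the subdivision needed so that the free cellular action yields a covering map, with cellular approximation in $(2)\Rightarrow(4)$, and with the colimit formula $\pi_i(\Space)=\varinjlim_m\pi_i(\Space^{[m]})$ in $(3)\Rightarrow(1)$ is all correct and spells out details that the paper elides.
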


\begin{proof}
Assume that $\Group$ is of type $F_n$ and let $\Space_1$ be a $K(\Group,1)$ complex with finite $n$-skeleton.

We may take $\Space_2$ to be the universal cover of $\Space_1$. Indeed $\tilde{\Space}_1$ is contractible and $\Group$ acts on it freely by deck transformations. Since $\tilde{\Space}_1/\Group = \Space_1$ we see that it also has finite $n$-skeleton modulo the action of $\Group$.

The space $\Space_3$ may be taken to be the $n$-skeleton of $\Space_1$: By assumption $\Space_1^{(n)}$ is finite and has fundamental group $\Group$. Since the $(i-1)$th homotopy group only depends on the $i$-skeleton, we see that it is also $(n-1)$-aspherical.

Finally the space $\Space_4$ may be taken to be the $n$-skeleton of the universal cover of $\Space_1$ as one sees by combining the arguments above.

Conversely if $\Space_2$ is given, one obtains a $K(\Group,1)$ complex with finite $n$-skeleton by taking the quotient modulo the action of $\Group$.

If $\Space_3$ is given, one may kill the higher homotopy groups by gluing in cells from dimension $n+1$ on. The homotopy groups up to $\pi_{n-1}(\Space_3)$ are unaffected by this because they only depend on the $n$-skeleton.

If $\Space_4$ is given, one may $\Group$-equivariantly glue in cells from dimension $n+1$ on to get a contractible space on which $\Group$ acts freely and then take the quotient modulo this action.
\end{proof}

The maximal $n$ in $\N \union \{\infty\}$ such that $\Group$ is of type $F_n$ is called the \emph{finiteness length} of $\Group$.

Until now the properties $F_n$ may seem fairly arbitrary so the following should serve as a motivation:

\begin{prop}
Every group is of type $F_0$. A group is of type $F_1$ if and only if it is finitely generated and is of type $F_2$ if and only if it is finitely presented.
\end{prop}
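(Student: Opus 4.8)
The plan is to build every complex in sight out of \emph{presentation complexes} and then to invoke the preceding lemma (more precisely, the cell-attaching argument in its proof) to upgrade a finite, low-dimensional complex with the correct fundamental group into a genuine $K(G,1)$ with finite skeleton. For $F_0$: any group $G$ admits some presentation $\langle S \mid R\rangle$, and the associated presentation complex --- one $0$-cell, one $1$-cell for each element of $S$, one $2$-cell for each element of $R$ --- has fundamental group $G$ by van Kampen's theorem. Attaching cells of dimension $\ge 3$ to kill the higher homotopy groups, exactly as in the proof of the lemma, turns this into a $K(G,1)$ complex whose $0$-skeleton is a single point, hence finite; so every group is of type $F_0$.

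For $F_1$: if $G$ is finitely generated, I take a presentation $\langle S\mid R\rangle$ with $S$ finite, so that the $1$-skeleton of its presentation complex is a finite graph (the set $R$, and hence the set of $2$-cells, may be infinite, but that is irrelevant here); killing the higher homotopy groups as above yields a $K(G,1)$ complex with finite $1$-skeleton, so $G$ is of type $F_1$. Conversely, if $G$ is of type $F_1$, choose a $K(G,1)$ complex $X$ with $X^{(1)}$ finite. Then $\pi_1(X^{(1)})$ is free of finite rank because $X^{(1)}$ is a finite graph, and since attaching cells of dimension $\ge 2$ can only introduce relations, $G=\pi_1(X)$ is a quotient of $\pi_1(X^{(1)})$, hence finitely generated.

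For $F_2$: I apply the lemma with $n=2$, which says that $G$ is of type $F_2$ if and only if there is a finite $1$-aspherical --- that is, finite connected --- CW-complex $Y$ with $\pi_1(Y)\cong G$. If $G=\langle S\mid R\rangle$ with $S$ and $R$ both finite, the presentation complex is such a $Y$, which gives one implication. For the other, starting from a finite connected $Y$ with $\pi_1(Y)\cong G$, I collapse a (necessarily finite) spanning tree of $Y^{(1)}$; the quotient is homotopy equivalent to $Y$, has a single $0$-cell, and still has only finitely many $1$- and $2$-cells, so the standard presentation of the fundamental group of a one-vertex CW-complex (generators = $1$-cells, relators = the words read off the attaching maps of the $2$-cells) exhibits $G$ as finitely presented.

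The only step carrying real content is the passage from a finite $(n-1)$-aspherical complex with fundamental group $G$ to a $K(G,1)$ with finite $n$-skeleton --- i.e.\ that the homotopy groups in dimensions $\ge n$ can be killed off by attaching cells of dimension $\ge n+1$ without disturbing the homotopy type in lower dimensions. This is precisely the argument already carried out in the proof of the lemma, so in the body of the proof it only needs to be cited; everything else (van Kampen for presentation complexes, the fact that $\pi_1$ depends only on the $2$-skeleton, collapsing a contractible subcomplex) is standard.
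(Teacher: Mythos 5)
Your proof is correct and follows essentially the same route as the paper: build the presentation $2$-complex, kill higher homotopy by attaching cells of dimension $\ge 3$, and in the converse direction collapse a spanning tree and read off a presentation from the $1$- and $2$-cells. The only cosmetic difference is that you route the $F_2$ equivalence through the preceding lemma (finite $(n-1)$-aspherical $\Leftrightarrow$ $F_n$) rather than doing it by hand, and for $F_1$ you argue the converse via surjectivity of $\pi_1(X^{(1)}) \to \pi_1(X)$ rather than first collapsing a tree; both are sound and amount to the same thing.
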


\begin{proof}
Given a group presentation $\Group = \gen{S \mid R}$ a $K(\Group,1)$ complex can be constructed as follows: Start with one vertex. Glue in a $1$-cell for every element of $S$ (at this stage the fundamental group is the free group generated by $S$) and pick an orientation for each of them. Then glue in $2$-cells for every element $r$ of $R$, along the boundary as prescribed by the $S$-word $r$ (cf.\ \cite[Chapter~6]{seithr80}). Finally kill all higher homotopy by gluing in cells from dimension $3$ on. This gives rise to a $K(\Group,1)$ complex and it is clear that it has finite $1$-skeleton if $S$ is finite and finite $2$-skeleton if $R$ is also finite.

Conversely assume we are given a $K(\Group,1)$ complex. Its $1$-skeleton is a graph so it contains a maximal tree $T$. Factoring this tree to a point is a homotopy equivalence (\cite[Corollary~3.2.5]{spanier}), so we obtain a $K(\Group,1)$ complex that has only one vertex, which shows the first statement. Moreover, we can read off a presentation of $\Group$ as follows: Take one generator for each $1$-cell. Again, after an orientation has been chosen for each $1$-cell, a relation for each $2$-cell can be read off the way the $2$-cell is glued in. If the $1$-skeleton was finite, the obtained presentation is finitely generated, if the $2$-skeleton was finite, the obtained presentation is finite.
\end{proof}

There is another, stronger, finiteness property: a group $\Group$ is of type $F$ if there is a finite $K(\Group,1)$ complex. Clearly if a group is of type $F$, then it is of type $F_\infty$, but the converse is false:

\begin{fact}[{\cite[Corollary~7.2.5, Proposition~7.2.12]{geoghegan}}]
\label{fact:finite_groups}
Every finite group is of type $F_\infty$ but is not of type $F$ unless it is the trivial group. In fact every group of type $F$ is torsion-free.
\end{fact}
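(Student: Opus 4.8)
The plan is to establish the two halves of the statement separately, in both cases using the lemma that characterises type $F_n$.

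\emph{Finite groups are of type $F_\infty$.} Fix $n$ and let Milnor's join construction supply the relevant complex: write $\Group^{*(n+1)}$ for the $(n+1)$-fold join of $\Group$, viewed as a discrete set of $\abs{\Group}$ points. A simplex of this complex is a choice of at most one point from each of the $n+1$ join factors, so the complex is $n$-dimensional and, $\Group$ being finite, has only finitely many simplices. The diagonal action of $\Group$ is free: an element fixing a point must fix at least one of the chosen vertices, hence be trivial; so the quotient is again a finite complex. Since the $k$-fold join of nonempty spaces is $(k-2)$-connected, $\Group^{*(n+1)}$ is $(n-1)$-connected. Thus $\Group$ acts freely on an $(n-1)$-connected CW-complex that is finite modulo $\Group$, and by the lemma above (implication (iv)$\Rightarrow$(i), or directly: kill the higher homotopy $\Group$-equivariantly and pass to the quotient to get a $K(\Group,1)$ with finite $n$-skeleton) $\Group$ is of type $F_n$. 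As $n$ was arbitrary, $\Group$ is of type $F_\infty$.

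\emph{Groups of type $F$ are torsion-free.} Suppose $\Group$ has a finite $K(\Group,1)$ complex $\Space$, say of dimension $d$. Then the universal cover $\tilde\Space$ is a contractible CW-complex of dimension $d$ on which $\Group$, and hence every subgroup of $\Group$, acts freely by deck transformations. If $\AltGroup \le \Group$ is finite, then $\tilde\Space/\AltGroup$ is a $K(\AltGroup,1)$ complex of dimension at most $d$, so $\AltGroup$ has cohomological dimension at most $d$. On the other hand a nontrivial finite group has infinite cohomological dimension: if a prime $p$ divides $\abs{\AltGroup}$, a subgroup $C$ of order $p$ satisfies $H^{2i}(C;\Z) \cong \Z/p\Z$ for every $i \ge 1$, so $C$ — and therefore $\AltGroup$, since cohomological dimension does not grow under passage to subgroups — has infinite cohomological dimension. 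This contradiction forces $\AltGroup$ to be trivial, so $\Group$ contains no nontrivial finite subgroup and is torsion-free. In particular a finite group of type $F$ must be trivial; and conversely the trivial group is of type $F$, a point being a finite $K(1,1)$.

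\emph{Main obstacle.} The only genuinely nonformal ingredient is that a nontrivial finite group cannot act freely on a finite-dimensional contractible complex, equivalently that it has infinite cohomological dimension. This rests either on the $2$-periodicity of the cohomology of cyclic groups together with the subgroup bound on cohomological dimension, or on Smith theory; in a write-up one would simply cite it (e.g.\ from \cite{brown82}), just as the statement itself is quoted from \cite{geoghegan}. Everything else is routine manipulation with CW-complexes and the definitions recalled above.
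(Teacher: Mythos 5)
The paper does not prove this \emph{fact}; it simply cites \cite[Corollary~7.2.5, Proposition~7.2.12]{geoghegan}. Your argument is correct and is essentially the standard one found there: the Milnor join construction gives, for each $n$, a finite $(n-1)$-connected free $\Group$-complex, and the torsion-free claim follows from the $2$-periodic nonvanishing cohomology of cyclic $p$-groups together with the subgroup monotonicity of cohomological dimension. One minor remark: once you have the cyclic subgroup $C \le \AltGroup$ acting freely on a finite-dimensional contractible complex, you already have your contradiction at the level of $C$; the final sentence promoting infinite cohomological dimension from $C$ to $\AltGroup$ is harmless but unnecessary.
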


To give examples of groups of type $F$ by definition means to give examples of finite classifying spaces:

\begin{exmpl}
\begin{enumerate}
\item For every $n \in \N$ the free group $F_n$ generated by $n$ elements is of type $F$: it is the fundamental group of a wedge of $n$ circles which is a classifying space because it is $1$-dimensional.
\item For every $n \in \N$ the free abelian group $\Z^n$ generated by $n$ elements is of type $F$: it is the fundamental group of an $n$-torus, which is a classifying space because its universal cover is $\R^n$.
\item For every $g \le 0$ the closed oriented surface $S_g$ of genus $g$ is a classifying space because it is $2$-dimensional and contains no embedded $2$-sphere. Hence its fundamental group $\pi_1(S_g)$ is of type $F$.
\end{enumerate}
\end{exmpl}

The properties $F_n$ have an important feature that the property $F$ has not:

\begin{fact}[{\cite[Corollary~7.2.4]{geoghegan}}]
\label{fact:fn_is_virtual}
For every $n$, if $\Group$ is a group and $\AltGroup$ is a subgroup of finite index, then $\AltGroup$ is of type $F_n$ if and only if $\Group$ is of type $F_n$.
\end{fact}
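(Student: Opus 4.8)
The plan is to prove the two implications separately, obtaining the upward one from the downward one.

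\emph{From $\Group$ to $\AltGroup$.} First I would record that type $F_n$ always passes to subgroups of finite index. Start from a $K(\Group,1)$-complex $X$ with finite $n$-skeleton; its universal cover $\tilde X$ is contractible and carries a free cellular $\Group$-action with quotient $X$. Restricting this action to $\AltGroup$, the quotient $\AltGroup\backslash\tilde X$ is again a CW-complex, it is a $K(\AltGroup,1)$ because its universal cover is still $\tilde X$, and it is a $[\Group:\AltGroup]$-sheeted covering of $X$, hence has $[\Group:\AltGroup]$ times as many cells as $X$ in each dimension. In particular its $n$-skeleton is finite and $\AltGroup$ is of type $F_n$. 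This half is routine.

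\emph{From $\AltGroup$ to $\Group$.} For the converse I would first reduce to the normal case. Let $N=\bigcap_{g\in\Group} g\AltGroup g^{-1}$ be the normal core of $\AltGroup$; mapping $\Group$ to the permutations of $\Group/\AltGroup$ shows $N$ is normal in $\Group$ of index at most $[\Group:\AltGroup]!$, and $N$ has finite index in $\AltGroup$, so by the half just proved $N$ is of type $F_n$. Since $N$ has finite index in $\Group$ too, it suffices to treat $\AltGroup=N\trianglelefteq\Group$. Then one has a short exact sequence $1\to N\to\Group\to Q\to 1$ with $Q=\Group/N$ finite; by Fact~\ref{fact:finite_groups} the finite group $Q$ is of type $F_\infty$, hence of type $F_n$, and $N$ is of type $F_n$ by assumption. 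Invoking that the property $F_n$ is closed under extensions (see~\cite{geoghegan}) then gives that $\Group$ is of type $F_n$. Concretely, the extension-closure is proven by building a $K(\Group,1)$-complex as the total space of a fibration $K(N,1)\to K(\Group,1)\to K(Q,1)$, one skeleton of the base at a time, attaching over each cell of the base a cell times (a finite piece of) the fibre via the monodromy $Q\to\Out(N)$, so that finite $n$-skeleta of base and fibre force a finite $n$-skeleton upstairs.

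\emph{Main obstacle.} The hard part is the extension-closure of $F_n$ used at the end: one must carry out the fibration construction carefully enough to be sure the monodromy gluings assemble into a genuine CW-complex and that exactly finitely many cells are produced through dimension $n$. Everything else — the reduction to the normal core and the downward implication — is elementary covering-space bookkeeping. An alternative that avoids extension-closure is to write $F_n$ as ``finitely presented together with type $FP_n$'' (for $n\ge 2$) and transfer each ingredient along finite index using that $\Z\Group$ is finitely generated free as a $\Z\AltGroup$-module, but this needs homological machinery not set up in the text.
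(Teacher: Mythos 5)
The paper does not prove this statement: it quotes it as a Fact directly from Geoghegan [Corollary~7.2.4], so there is no in-text argument to compare against, and the question is simply whether your proof is sound. It is.

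The downward half --- restrict the free $\Group$-action on the universal cover of a $K(\Group,1)$ with finite $n$-skeleton to $\AltGroup$ and take the finite-sheeted quotient cover --- is exactly the standard covering-space argument. The normal-core reduction for the upward half is also correct: $N=\bigcap_{g\in\Group}g\AltGroup g^{-1}$ is normal of finite index, has finite index in $\AltGroup$ and so is of type $F_n$ by the half just proved, and it suffices to pass from $N$ to $\Group$. The one place to watch is the final step, closure of $F_n$ under extensions. That is a true theorem, but it is at least as heavy as the statement being proved, and your fibration sketch, while a correct outline, conceals real work: the monodromy $Q\to\Out(N)$ is only a homotopy action, the $2$-cells of the base must be filled using chosen homotopies between compositions of monodromy equivalences, and one must track the cell count through dimension $n$ with some care. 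So as written your argument is correct but rests on a black box of comparable depth to the conclusion. The alternative you yourself flag --- $\FP_n$ transfers across finite index because $\Z\Group$ is a finitely generated free $\Z\AltGroup$-module (so finite-type resolutions restrict and induce up), finite presentability is a commensurability invariant, and for $n\ge 2$ type $F_n$ is type $F_2$ together with type $\FP_n$ --- avoids extension-closure entirely and is the lighter, more usual textbook route. Either way what you wrote is correct.
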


To see that the analogous statement is false for $F$, note that every finite group has a subgroup of finite index that is of type $F$: the trivial group. But in general it is not of type $F$ itself, see Fact~\ref{fact:finite_groups}.

A group is said to \emph{virtually} have some property if it has a subgroup of finite index that has that property. So one implication of Fact~\ref{fact:fn_is_virtual} can be restated by saying that a group that is virtually of type $F_n$ is of type $F_n$. Note in particular, that a group that is virtually of type $F$ is itself of type $F_\infty$.

The definition of the properties $F_n$ is not easy to work with mainly for two reasons: for a given group one often
knows the ``right'' space to act on, but the action is not free but only ``almost free'' for example in the sense that
cell stabilizers are finite. Sometimes the group has a torsion-free subgroup of finite index which then acts freely.
But, for example, the groups we want to study in these notes are not virtually torsion-free. Another problem that is
not so obvious to deal with from the definition is how to prove that a group is not of type $F_n$.

Fortunately Ken Brown has given a criterion which allows one to determine the precise finiteness length of a given group. Below we state Brown's Criterion in the full generality, even though we only need a special case.

We need some notation (see \cite{brown87}). Let $\Space$ be a CW-complex on which a group $\Group$ acts. By a $\Group$-invariant filtration we mean a family of $\Group$-invariant subcomplexes $(\Space_\alpha)_{\alpha \in I}$, where $I$ is some partially ordered index set, such that $\Space_\alpha \subseteq \Space_\beta$ whenever $\alpha \le \beta$, and such that $\Union_{\alpha \in I} \Space_\alpha = \Space$.

A directed system of groups is a family of groups $(\Group_\alpha)_{\alpha \in I}$, indexed by some partially ordered set $I$, together with morphisms $f_\alpha^\beta \colon \Group_\alpha \to \Group_\beta$ for $\alpha \le \beta$, such that $f_\beta^\gamma f_\alpha^\beta = f_\alpha^\gamma$ whenever $\alpha \le \beta \le \gamma$. A directed system of groups is said to be \emph{essentially trivial} if for every $\alpha$ there is a $\beta \ge \alpha$ such that $f_\alpha^\beta$ is the trivial morphism.

Clearly for every homotopy functor $\pi_i$, a filtration $(\Space_\alpha)_{\alpha \in I}$ induces a directed system of groups $(\pi_i(\Space_\alpha))_{\alpha \in I}$. We can now state Brown's Criterion:

\begin{thm}[{{\cite[Theorem~2.2, Theorem~3.2]{brown87}}}]
\label{thm:brown_criterion}
Let $\Group$ be a group that acts on an $(n-1)$-connected CW-complex $\Space$. Assume that for $0 \le k \le n$, the stabilizer of every $k$-cell of $\Space$ is of type $F_{n-k}$. Let $(\Space_\alpha)_{\alpha \in I}$ be a filtration of $\Group$-invariant subcomplexes of $\Space$ that are compact modulo the action of $\Group$. Then $\Group$ is of type $F_n$ if and only if the directed system $(\pi_i(\Space_\alpha))_{\alpha \in I}$ is essentially trivial for $0 \le i < n$.
\end{thm}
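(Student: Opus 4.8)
\emph{Step 1: reduction to the cocompact case.} The plan is to first establish the special case where $\Space$ itself is cocompact, and then deduce the general statement by a limiting argument. The special case reads: if $\Group$ acts on an $(n-1)$-connected CW-complex $Y$ that is compact modulo $\Group$ and whose $k$-cell stabilisers are of type $F_{n-k}$ for $0 \le k \le n$, then $\Group$ is of type $F_n$. For $n = 0$ this is trivial and for $n = 1$ it is the classical fact that a group acting cocompactly on a connected complex with finitely generated vertex stabilisers is finitely generated. For $n \ge 2$ I would argue as follows. Since $Y$ has only finitely many $\Group$-orbits of cells and the stabiliser $\Group_\Cell$ of each $p$-cell $\Cell$ is of type $F_{n-p}$, one can pick a contractible free $\Group$-CW-complex $E\Group$ whose restriction to each such $\Group_\Cell$ is cocompact through dimension $n-p$. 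In the Borel construction $E\Group \times_\Group Y$ a cell of the $n$-skeleton is a $\Group$-orbit of a product $e \times \Cell$ with $\dim e + \dim\Cell \le n$, and counting these through $\Group_\Cell$-orbits of cells of $\Skeleton{E\Group}{n - \dim\Cell}$ shows there are only finitely many; thus $E\Group \times_\Group Y$ has finite $n$-skeleton. Its bundle projection onto $B\Group$ has the $(n-1)$-connected fibre $Y$, hence is $n$-connected, so $E\Group \times_\Group Y$ has fundamental group $\Group$ and trivial $\pi_i$ for $2 \le i \le n-1$; attaching cells of dimension $\ge n+1$ to kill the remaining homotopy produces a $K(\Group,1)$ with finite $n$-skeleton.

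\emph{Step 2: essential triviality implies type $F_n$.} Suppose $(\pi_i(\Space_\alpha))_\alpha$ is essentially trivial for $0 \le i < n$. I would construct a single $\Group$-invariant complex $Y \supseteq \Space_{\alpha_0}$, obtained by attaching finitely many $\Group$-orbits of cells of dimensions $1, \dots, n$, that is compact modulo $\Group$, has all cell stabilisers of type $F_{n-k}$, and is $(n-1)$-connected; then Step 1 applies. The complex is built by induction on $i = 0, \dots, n-1$: having arranged $\pi_j = 0$ for $j < i$, pick $\alpha_{i+1}$ with $\pi_i(\Space_{\alpha_i}) \to \pi_i(\Space_{\alpha_{i+1}})$ trivial, so that the relevant spheres bound inside the cocompact complex $\Space_{\alpha_{i+1}}$, and attach $\Group$-orbits of $(i+1)$-cells realising these fillings; since $\Space_{\alpha_{i+1}}$ has only finitely many cell-orbits, finitely many new orbits suffice, and the new cells are chosen so that their (finite or trivial) stabilisers are of type $F_{n-i-1}$.

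\emph{Step 3: type $F_n$ implies essential triviality.} For the converse I would pass to homology. A group of type $F_n$ is of type $FP_n$, and, replacing $\Space$ by the free $(n-1)$-connected $\Group$-complex $\Space \times E\Group$ — whose quotient carries a filtration by the finite subcomplexes built from the $\Space_\alpha$ together with the skeleta of $E\Group$ — the long exact homology sequences show that if $(\tilde{H}_i(\Space_\alpha))_\alpha$ failed to be essentially trivial in some degree $i < n$, then $\Group$ would not be of type $FP_n$; this is the routine necessity half of the homological form of the criterion. The upgrade from reduced homology back to homotopy groups uses the high connectivity of $\Space$ by a Hurewicz-type argument, finite presentability of $\Group$ handling the low-degree subtleties.

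\emph{Main obstacle.} The delicate step is the construction in Step 2: the equivariant ``coning off'' of the homotopy of the $\Space_\alpha$ must add only finitely many $\Group$-orbits of cells in each dimension (so that the enlargement stays cocompact), must equip the new cells with stabilisers of the prescribed finiteness type, and must be iterated through dimensions $0, \dots, n-1$ so that the later stages do not undo the connectivity already gained. It is precisely the hypothesis of essential triviality — stronger than the automatic vanishing $\pi_i(\Space) = 0$ — that makes the filling cocompact, and the bookkeeping of the stabilisers together with the order of the induction are where the real care lies.
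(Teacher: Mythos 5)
The paper cites this theorem from Brown's 1987 article and does not supply a proof; it only remarks that Brown's own argument is algebraic (via the $\FP_n$ property) and that a topological proof ``based on rebuilding a CW-complex within its homotopy type'' is sketched in Geoghegan's book. Your outline is recognisably aiming at the topological route, and the overall architecture --- reduce to the cocompact case, show essential triviality lets one cone off into that case, then prove necessity homologically --- is the right shape. But Step~1 has a genuine gap: you assert that one can pick a single contractible free $\Group$-CW-complex $E\Group$ whose restriction to each stabiliser $\Group_\Cell$ is cocompact through dimension $n - \dim\Cell$, \emph{simultaneously} for all of the finitely many cell orbits of $Y$. The hypothesis only gives each $\Group_\Cell$ its \emph{own} contractible free $\Group_\Cell$-complex with finite $(n-\dim\Cell)$-skeleton; a $\Group$-free complex restricted to an infinite-index subgroup $\Group_\Cell$ will in general have infinitely many $\Group_\Cell$-orbits of cells in every skeleton, and nothing you have said explains why one $E\Group$ can be made to serve every stabiliser at once. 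The actual topological proof does not form a global Borel construction; it modifies $Y$ one orbit at a time, replacing $\Group/\Group_\Cell \times \Cell$ by $\Group \times_{\Group_\Cell}(E_\Cell \times \Cell)$ for a stabiliser-specific $E_\Cell$ --- that ``rebuilding'' is precisely the content your one sentence glosses over, and it is the technical spine of Step~1.

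Step~3 is also too thin. The necessity of essential triviality given type $\FP_n$ does not follow from long exact sequences alone: Brown's Theorem~2.2 runs through the equivariant homology spectral sequence of the filtered $\Group$-complex (with $E^1$ built from the homology of the cell stabilisers), and it is in analysing how that spectral sequence interacts with the filtration $(\Space_\alpha)$ that the ``essentially trivial'' condition actually emerges as both necessary and sufficient. Invoking Hurewicz and finite presentability to upgrade from homology to homotopy is the correct last move, but the spectral sequence argument it sits on is the part that needs to be written out.
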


The weaker version we will be using is:

\begin{cor}
\label{cor:adapted_browns_criterion}
Let $\Group$ be a group that acts on a contractible CW-complex $\Space$. Assume that the stabilizer of every cell is finite. Let $(\Space_k)_{k \in \N}$ be a filtration of $\Group$-invariant subcomplexes of $\Space$ that are compact modulo the action of $\Group$. Assume that the maps $\pi_i(\Space_k) \to \pi_i(\Space_{k+1})$ are isomorphisms for $0 \le i < n - 1$ and that the maps $\pi_{n-1}(\Space_k) \to \pi_{n-1}(\Space_{k+1})$ are surjective and infinitely often not injective. Then $\Group$ is of type $F_{n-1}$ but not of type $F_n$.
\end{cor}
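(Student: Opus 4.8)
The plan is to deduce this from Brown's Criterion, Theorem~\ref{thm:brown_criterion}, applied twice: at level $n-1$ for the positive statement and at level $n$ for the negative one.

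First I would dispose of the hypotheses of Theorem~\ref{thm:brown_criterion} that are common to both applications. Since $\Space$ is contractible it is $m$-connected for every $m$, so in particular $(n-1)$-connected and $(n-2)$-connected. Since every cell stabilizer is finite, Fact~\ref{fact:finite_groups} shows that each is of type $F_\infty$, hence of type $F_{m-k}$ for any $m$ and $k$; and the filtration $(\Space_k)_k$ is $\Group$-invariant and cocompact by hypothesis. So in either application the conclusion of Brown's Criterion reduces to a statement about essential triviality of the directed systems $(\pi_i(\Space_k))_k$ in the relevant range of $i$.

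Next I would compute those systems below the critical dimension. Because homotopy groups commute with directed unions of subcomplexes, $\pi_i(\Space)=\varinjlim_k \pi_i(\Space_k)$, and this colimit vanishes as $\Space$ is contractible. For $0\le i<n-1$ the bonding maps $\pi_i(\Space_k)\to\pi_i(\Space_{k+1})$ are isomorphisms by hypothesis, so the canonical map from $\pi_i(\Space_k)$ to the colimit is an isomorphism; hence $\pi_i(\Space_k)=0$ for all $k$ and all $i<n-1$. Thus each of these directed systems is trivial, \emph{a fortiori} essentially trivial, and applying Theorem~\ref{thm:brown_criterion} with the role of $n$ played by $n-1$ yields that $\Group$ is of type $F_{n-1}$.

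For the negative statement I would apply Theorem~\ref{thm:brown_criterion} at level $n$ and argue by contradiction. If $\Group$ were of type $F_n$, the system $(\pi_{n-1}(\Space_k))_k$ would be essentially trivial, so there is an $l$ for which $f_0^l$ is trivial; being a composite of surjections it is also surjective, which forces $\pi_{n-1}(\Space_l)=0$. A one-line induction using surjectivity of the remaining bonding maps then gives $\pi_{n-1}(\Space_m)=0$ for all $m\ge l$, so $f_m^{m+1}$ is injective for all $m\ge l$, contradicting the assumption that these maps fail to be injective for infinitely many $m$. Hence $\Group$ is not of type $F_n$. I expect this last observation --- that being \emph{surjective} and \emph{infinitely often non-injective} is incompatible with essential triviality --- to be the only genuinely content-bearing step; everything else is unwinding the hypotheses of Brown's Criterion.
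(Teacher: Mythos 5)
Your proof is correct and follows essentially the same route as the paper: both reduce to Brown's Criterion, both observe that the systems below dimension $n-1$ consist of isomorphisms with trivial colimit and hence are trivial, and both show that a surjective, infinitely-often-non-injective system cannot be essentially trivial. The only cosmetic difference is that you phrase the negative part as a contradiction, whereas the paper verifies non–essential-triviality directly; the underlying observation is identical.
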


\begin{proof}
Since $\Space$ is contractible it is, in particular, $(n-1)$-connected. The finite cell stabilizers are of type $F_\infty$ by Fact~\ref{fact:finite_groups}. The directed systems $(\pi_i(\Space_k))_{k \in \N},0 \le i < n-1$ of isomorphisms have trivial limit and therefore must be trivial. It remains to look at the directed system $(\pi_{n-1}(\Space_k))_{k \in \N}$. Let $\alpha,\beta \in \N$ be such that $\beta \ge \alpha$. Let $\gamma \ge \beta$ be such that $\pi_{n-1}(\Space_\gamma) \to \pi_{n-1}(\Space_{\gamma+1})$ is not injective. Then $\pi_{n-1}(\Space_\gamma)$ is non-trivial. Thus, since $\pi_{n-1}(\Space_\alpha) \to \pi_{n-1}(\Space_\gamma)$ is surjective and factors through $\pi_{n-1}(\Space_\alpha) \to \pi_{n-1}(\Space_\beta)$, the latter cannot be trivial.
\end{proof}

Brown's original proof is algebraic using the relation between topological and homological finiteness properties (see below). A topological proof based on rebuilding a CW-complex within its homotopy type is sketched in \cite{geoghegan}.

The homological finiteness properties we want to introduce now are closely related to, but slightly weaker than, the topological finiteness properties discussed above -- as is homology compared to homotopy. We will not actually use them and therefore give a rather brief description. The interested reader is referred to \cite{brown82} and \cite{bieri76}.

Let $\Group$ be a group. The ring $\Z\Group$ consists of formal sums of the form $\sum_{g \in G} n_g g$ where the $n_g$ are elements of $\Z$ and all but a finite number of them is $0$. Addition and multiplication are defined in the obvious way. The ring $\Z$ becomes a $\Z\Group$-module by letting $\Group$ act trivially, i.e., via $(\sum_{g \in G} n_g g) \cdot m = \sum_{g \in G} n_g m$. A \emph{partial resolution} of length $n$ of the $\Z\Group$-module $\Z$ is an exact sequence
\begin{equation}
\label{eq:partial_resolution}
F_n \to \cdots \to F_1 \to F_0 \to \Z \to 0
\end{equation}
of $\Z\Group$-modules (this is not to be confused with a \emph{resolution} of length $n$ which would have a leading $0$). The partial resolution is said to be \emph{free}, \emph{projective}, or \emph{of finite type} if the modules are free, projective, or finitely generated respectively.

The group $\Group$ is said to be of type $\FP_n$ if there is a partial free resolution of length $n$ of finite type of the $\Z\Group$-module $\Z$. This is equivalent to the existence of a partial projective resolution of length $n$ of finite type (\cite[Proposition~VIII.4.3]{brown82}).

The following is not hard to see from the way the homology of $\Group$ is defined:

\begin{obs}
If $\Group$ is of type $\FP_n$, then $H_i\Group$ is finitely generated for $i \le n$.\qed
\end{obs}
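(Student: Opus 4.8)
The plan is to compute $H_i\Group = H_i(\Group;\Z)$ from a free resolution of the trivial $\Z\Group$-module $\Z$ that in low degrees is of finite type, and then to observe that in degrees $\le n$ the chain groups of $\Z \otimes_{\Z\Group} F_\bullet$ are finitely generated abelian groups, of which $H_i\Group$ is a subquotient.

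First I would take the partial free resolution $F_n \to \cdots \to F_1 \to F_0 \to \Z \to 0$ of finite type provided by the hypothesis that $\Group$ is of type $\FP_n$, and extend it to a full free resolution $\cdots \to F_{n+1} \to F_n \to \cdots \to F_0 \to \Z \to 0$. This is always possible: writing $F_{-1} \defeq \Z$ and $K \defeq \ker(F_n \to F_{n-1})$, choose any free resolution $\cdots \to F_{n+2} \to F_{n+1} \to K \to 0$ and splice it in via the composite $F_{n+1} \to K \hookrightarrow F_n$. The new modules $F_{n+1},F_{n+2},\ldots$ need not be of finite type, which does not matter. By definition $H_i\Group$ is the $i$-th homology of the complex $\Z \otimes_{\Z\Group} F_\bullet$ obtained by applying $\Z \otimes_{\Z\Group} \DummyArg$ to this resolution.

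Now for $0 \le i \le n$ the module $F_i$ is finitely generated free over $\Z\Group$, say $F_i \cong (\Z\Group)^{r_i}$, so $\Z \otimes_{\Z\Group} F_i \cong \Z^{r_i}$ is a finitely generated abelian group; here it is used that we have extended the resolution, since the group of $i$-boundaries in degree $i = n$ is the image of the map out of $F_{n+1}$ (for $i < n$ the partial resolution of finite type already suffices). Since $H_i\Group$ is the quotient of the group of $i$-cycles, a subgroup of $\Z \otimes_{\Z\Group} F_i$, by the group of $i$-boundaries, it is a subquotient of $\Z^{r_i}$; as $\Z$ is Noetherian, every such subquotient is finitely generated, so $H_i\Group$ is finitely generated for all $i \le n$. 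I do not expect any substantial obstacle here — the only point needing care is the small bookkeeping about the top degree noted above.
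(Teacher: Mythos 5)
Your argument is correct, and it supplies exactly the standard computation that the paper leaves implicit (the observation carries a \qed with no proof, after the remark that ``the following is not hard to see''). One small simplification: once you know the group of $i$-cycles is a subgroup of $\Z \otimes_{\Z\Group} F_i \cong \Z^{r_i}$ and hence finitely generated, the quotient by boundaries is automatically finitely generated regardless of what $F_{n+1}$ looks like, so the ``bookkeeping about the top degree'' you flag is not actually a point of concern beyond the need to extend the resolution so that $H_n\Group$ is correctly defined.
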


Now we describe the relation between the properties $F_n$ and $\FP_n$ we mentioned earlier:

\begin{fact}
If a group is of type $F_n$, then it is of type $\FP_n$. It is of type $F_1$ if and only if it is of type $\FP_1$. For $n \ge 2$ it is of type $F_n$ if and only if it is of type $F_2$ and of type $\FP_n$. There are groups that are of type $\FP_2$ but not of type $F_2$.
\end{fact}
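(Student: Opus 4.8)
The plan is to dispatch the four assertions separately, the first three by standard homotopy-theoretic bookkeeping and the last by invoking a hard construction.

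\emph{The first assertion, and the converse $\FP_1 \Rightarrow F_1$.} If $\Group$ is of type $F_n$, pick a $K(\Group,1)$ complex $\Space$ with finite $n$-skeleton and pass to its universal cover $\tilde\Space$: it is contractible and $\Group$ acts on it freely and cellularly, so the cellular chain complex $C_*(\tilde\Space)$ is a free resolution of the trivial module $\Z$ over $\Z\Group$, and since $\Space$ has finitely many cells in each dimension $\le n$, the modules $C_i(\tilde\Space)$ are finitely generated free for $i \le n$. Truncating at degree $n$ gives a free partial resolution of length $n$ of finite type, so $\Group$ is of type $\FP_n$; specializing $n=1$ gives $F_1 \Rightarrow \FP_1$. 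For the converse, a finite-type partial resolution $F_1 \to F_0 \to \Z \to 0$ forces the augmentation ideal of $\Z\Group$ to be finitely generated as a $\Z\Group$-module; since that ideal is generated by $\{g-1 \mid g \in \Group\}$, expressing finitely many generators in terms of these elements and collecting the group elements that occur yields a finite generating set of $\Group$.

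\emph{The equivalence for $n \ge 2$.} The implication $F_n \Rightarrow F_2$ is immediate (pass to the $2$-skeleton of a classifying space) and $F_n \Rightarrow \FP_n$ is the first assertion, so the substance is $F_2 \wedge \FP_n \Rightarrow F_n$. I would prove, by induction on $k$ from $k=2$ up to $k=n-1$, the local statement that a group of type $F_k$ and $\FP_{k+1}$ is of type $F_{k+1}$; granting it, and using that $\FP_n$ implies $\FP_j$ for $j \le n$, the $F_2$ hypothesis bootstraps up to $F_n$. For the inductive step, take a $K(\Group,1)$ complex $\Space$ with finite $k$-skeleton. Since $\tilde\Space$ is contractible, $\tilde\Space^{(k)}$ is $(k-1)$-connected, so Hurewicz gives $\pi_k(\tilde\Space^{(k)}) \cong H_k(\tilde\Space^{(k)})$, which equals the module $Z_k$ of cellular $k$-cycles of $\tilde\Space$ (there being no $(k+1)$-cells in the $k$-skeleton); for $k \ge 2$ this is also $\pi_k(\Space^{(k)})$ as a $\Z\Group$-module. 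The crucial homological input is that $\FP_{k+1}$ forces $Z_k$ to be finitely generated over $\Z\Group$, since $C_k(\tilde\Space) \to \cdots \to C_0(\tilde\Space) \to \Z \to 0$ is a length-$k$ free resolution of finite type. Attaching finitely many $(k+1)$-cells to $\Space^{(k)}$ along a generating set of $\pi_k$ then produces a complex with finite $(k+1)$-skeleton, fundamental group $\Group$, and $\pi_i = 0$ for $2 \le i \le k$; coning off $\pi_i$ for $i \ge k+1$ with cells of dimension $\ge k+2$ leaves the $(k+1)$-skeleton untouched and yields the desired $K(\Group,1)$, so $\Group$ is of type $F_{k+1}$. (For $n=2$ the equivalence degenerates to $F_2 \Rightarrow \FP_2$ again.)

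\emph{Main obstacle and the last assertion.} The one genuinely nonformal ingredient above is the homological lemma that in any finite-type partial projective resolution of length $k$ of a group of type $\FP_{k+1}$ the top kernel is finitely generated; this is the ``generalized Schanuel'' phenomenon, which I would either derive from Schanuel's lemma by comparing the given resolution with one witnessing $\FP_{k+1}$, or simply cite from \cite{brown82}. The truly hard part, however, is the final sentence: producing a group of type $\FP_2$ that is not finitely presented cannot be done by soft arguments, and I would invoke the Bestvina--Brady construction --- for a finite connected flag complex $L$ with $H_1(L;\Z)=0$ but $\pi_1(L)\neq 1$, the kernel of the length homomorphism from the right-angled Artin group on $L$ onto $\Z$ is of type $\FP_2$ (and, choosing $L$ acyclic, even $\FP_\infty$) yet not of type $F_2$ --- rather than attempt a self-contained proof.
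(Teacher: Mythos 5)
Your decomposition matches the paper's: universal cover and truncation of the cellular chain complex for $F_n \Rightarrow \FP_n$, the augmentation-ideal resolution for $\FP_1 \Rightarrow F_1$, and a citation to Bestvina--Brady for the last sentence. For the $n \ge 2$ equivalence the paper just cites Wall, while you spell out the standard Hurewicz-plus-Schanuel rebuilding argument; that is correct and, if anything, more informative than the citation.

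The one genuine gap is the final clause of your $\FP_1 \Rightarrow F_1$ paragraph. Expressing each $\Z\Group$-generator of the augmentation ideal $I$ as a $\Z$-combination $\sum_g n_g(g-1)$ and collecting the group elements that occur does give a finite $S$ with $I = \Z\Group\cdot\{s-1 \mid s \in S\}$, but the step from there to $\langle S \rangle = \Group$ is not automatic: for arbitrary $g \in \Group$ one only has $g-1 = \sum_s r_s(s-1)$ with $r_s \in \Z\Group$, and a priori the $r_s$ involve group elements far outside $S$. The paper fills exactly this hole by normalizing $S = S^{-1}$ and running an induction on the total coefficient weight $\sum n_{s,h}$ in equation~\eqref{eq:g_minus_1}; your sketch asserts the conclusion of that lemma without argument. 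Nothing else in the proposal is off.
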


\begin{proof}
The first two statements are elementary: Let $\Group$ be a group. Let $\Space$ be a $K(\Group,1)$ complex with finite $n$-skeleton. Let $\tilde{\Space}$ be its universal cover. Then $\Group$ acts on $\tilde{\Space}$ so its augmented chain complex has a $\Z\Group$-structure (cf.\ \cite[Section~I.4]{brown82}). Since $\tilde{\Space}$ is contractible and thus has trivial homology, the augmented chain complex is a resolution of the $\Z\Group$-module $\Z$. That $\tilde{\Space}$ has finite $n$-skeleton modulo $\Group$ implies that the resolution is finitely generated up to the $n$-th term.

For the second statement consider first the resolution
\[
0 \to I \to \Z\Group \stackrel{\varepsilon}{\to} \Z \to 0
\]
where $\varepsilon(\sum_{g \in G} n_gg) = \sum_{g \in G} n_g$ and $I = \ker \varepsilon$ is the ideal generated by elements $g-1$ with $g \in \Group$. If $I$ is finitely generated, then there is a finite set $S \subseteq \Group$ such that $S = S^{-1}$ and $I$ is generated by elements of the form $s-1$. So if $g \in \Group$ is arbitrary, then we can write
\begin{equation}
g-1 = \sum_{s \in S}\sum_{h \in \Group} n_{s,h}h(s - 1)
\label{eq:g_minus_1}
\end{equation}
with $n_{s,h} \in \N$. We show that $g$ lies in the span of $S$ by induction on $\sum n_{s,h}$. If $\sum n_{s,h} = 1$ then $g = s$ for the unique pair $(s,h)$ for which $n_{s,h} = 1$ and we are done. Otherwise let $(s',g')$ be such that $n_{s',g'} > 0$ and $g's' = g$; this exists by \eqref{eq:g_minus_1}. Then
\[
g' - 1 = (g-1) - g'(s'-1) = \sum_{s \in S} \sum_{h \in \Group} n'_{s,h}h(s-1)
\]
with $n'_{s',g'} = n_{s',g'} - 1$ and $n'_{s,h} = n_{s,h}$ for $(s,h) \ne (s',g')$. So $g$ lies in the span of $S$ by induction. This shows that $\Group$ is generated by $S$.

Now let
\[
F_1 \to F_0 \stackrel{\varepsilon'}{\to} \Z \to 0
\]
be a partial resolution by finitely generated free $\Z\Group$-modules. There is a basis $f_1,\ldots,f_k$ for $F_0$ with $\varepsilon'(f_1)=1$. Choose elements $m_2,\ldots,m_k \in \Z\Group$ such that $\varepsilon'(f_i) = \varepsilon(m_i)$ and take $m_1 \defeq 1 \in \Z\Group$. The map $\psi_0 \colon F_0 \to \Z\Group$ that takes $f_i$ to $m_i$ is surjective and makes the right square of
\begin{diagram}
&& F_1 & \rTo & F_0 & \rTo^{\varepsilon'} & \Z & \rTo & 0\\
&& \dTo^{\psi_1} && \dTo^{\psi_0} && \dTo && & \\
0 & \rTo & I & \rTo & \Z\Group & \rTo^\varepsilon & \Z & \rTo & 0
\end{diagram}
commute. The map $\psi_1$ exists because $F_1$ is (free and therefore) projective (see \cite[Lemma~7.3]{brown82}). The five-lemma (\cite[Lemma~IV.5.10]{bredon}) implies that $\psi_1$ is surjective and hence $I$ is finitely generated. So it follows from our previous discussion that $\Group$ is finitely generated.

The third statement follows from Corollary~2.1 and the remark following Theorem~4 in \cite{wall66}. The fourth statement is proven in \cite{besbra97}.
\end{proof}

\footerlevel{3}
\headerlevel{3}

\section{Number Theory}
\label{sec:number_theory}

The aim of this section is to motivate and define the ring of $S$-integers of a set $S$ of places over a global function field. Polynomial rings and Laurent Polynomial rings are special cases which explains the relationship between our Main Theorem and the Rank Theorem. The proof of the Main Theorem does not depend on the contents of this section. The exposition does not follow any particular book, but most references are to \cite{weil74}. Other relevant books include \cite{artin67}, \cite{cassels}, and \cite{serre79}.

\subsection*{Valuations}

Let $\Field$ be a field. A \emph{valuation} (or \emph{absolute value}) on $\Field$ is a function $\Valuation \colon \Field \to \R$ such that
\begin{enumerate}[label=(VAL\arabic{*}), ref=VAL\arabic{*},leftmargin=*]
\item $\Valuation(a) \ge 0$ for all $a \in \Field$ with equality only for $a = 0$,\label{item:valuation_positive_definite}
\item $\Valuation(ab) = \Valuation(a) \cdot \Valuation(b)$ for all $a,b \in \Field$, and\label{item:valuation_morphism}
\item $\Valuation(a + b) \le \Valuation(a) + \Valuation(b)$ for all $a,b \in \Field$.\label{item:valuation_triangle_inequality}
\end{enumerate}
If it satisfies the stronger \emph{ultrametric inequality}
\begin{enumerate}[label=(VAL\arabic{*}'), ref=VAL\arabic{*}',leftmargin=*]
\setcounter{enumi}{2}
\item $\Valuation(a+b) \le \max \{\Valuation(a),\Valuation(b)\}$ for all $a,b \in \Field$,
\end{enumerate}
then it is said to be \emph{non-Archimedean}, otherwise \emph{Archimedean}.

The valuation with $\Valuation(0) = 0$ and $\Valuation(a) = 1$ for $a \ne 0$ is called the \emph{trivial valuation}.

Two valuations $\Valuation_1$ and $\Valuation_2$ are \emph{equivalent} if $\Valuation_1(a) \le 1$ if and only if $\Valuation_2(a) \le 1$ for every $a \in \Field$. If this is the case, then there is a constant $c > 0$ such that $\Valuation_1 = \Valuation_2^c$. The equivalence class $\PlaceOf\Valuation$ of a valuation $\Valuation$ is called a \emph{place}. Note that it makes sense to speak of a (non-)Archimedean place.

\begin{rem}
Usually only a weaker version of \eqref{item:valuation_triangle_inequality} is required. But since we are only interested in places, our definition suffices (see \cite[Theorem~3]{artin67}).
\end{rem}

\begin{exmpl}
\begin{enumerate}
\item The usual absolute value $\Valuation(a) \defeq \abs{a}$ is a valuation on $\Q$, it is Archimedean.
\item Let $p$ be a prime. Every $a \in \Q$ can be written in a unique way as $p^m(b/c)$ with $b$, $c$ integers not divisible by $p$, $c$ positive, and $m$ an integer. Setting $\Valuation_p(a) \defeq p^{-m}$ defines a valuation on $\Q$ that is non-Archimedean. It is called the \emph{$p$-adic valuation}.
\end{enumerate}
\end{exmpl}

Given a valuation $\Valuation$ on a field $\Field$, we can define a metric $d \colon \Field \times \Field \to \R$ by $d(a,b) = \Valuation(a - b)$ and have metric and topological concepts that come with it. In particular, $\Field$ may be \emph{complete} or \emph{locally compact with respect to $\Valuation$}. Note that neither the topology nor whether a sequence is a Cauchy-sequence depends on the particular valuation of a place so we may say that a field is for example locally compact or complete with respect to a \emph{place}.

The \emph{completion $\Field_\Valuation$ of $\Field$ with respect to $\Valuation$} is a field that is complete with respect to $\Valuation'$ and contains $\Field$ as a dense subfield such that $\Valuation'|_\Field = \Valuation$. The extension $\Valuation'$ of the valuation $\Valuation$ is usually also denoted $\Valuation$. Completions exist, are unique up to $\Field$-isomorphism, and can be constructed as $\R$ is constructed from $\Q$.

\begin{exmpl}
\label{exmpl:q_valuations}
The completion of $\Q$ with respect to the absolute value is $\R$. The completion of $\Q$ with respect to the $p$-adic valuation $\Valuation_p$ is $\Q_p$, the field of $p$-adic numbers.
\end{exmpl}

\subsection*{Discrete Valuations}

Unlike one might expect, a discrete valuation is not just a valuation that is discrete but rather it is the logarithm of a non-Archimedean valuation that is discrete. It is clear that this notion cannot produce anything essentially new compared to that of a valuation, but we mention it because it is commonly used in the algebraic theory of local fields (and rings).

A \emph{discrete valuation} on a field $\Field$ is a
\begin{enumerate}[label=(DVAL\arabic{*}), ref=DVAL\arabic{*},leftmargin=*]
\item homomorphism $\DiscreteValuation \colon \Units\Field \to \R$ that has discrete image, and
\item satisfies $\DiscreteValuation(a+b) \ge \min\{\DiscreteValuation(a),\DiscreteValuation(b)\}$.
\end{enumerate}

Two discrete valuations $\DiscreteValuation_1$ and $\DiscreteValuation_2$ are called \emph{equivalent} if $\DiscreteValuation_1 = c \cdot \DiscreteValuation_2$ for some $c \in \Units\R$.

One often makes the convention that $\DiscreteValuation(0) = \infty$. Note that if $\Valuation$ is a non-Archimedean valuation on $\Field$, then the map that takes $a$ to $-\log\Valuation(a)$ is a homomorphism. Its image is a subgroup of $\R$, thus either discrete or dense. In the first case it is a discrete valuation. Conversely if $\DiscreteValuation$ is a discrete valuation and $0 < r < 1$, then the map $a \mapsto r^{\DiscreteValuation(a)}$ is a non-Archimedean valuation. Both constructions are clearly inverse to each other up to equivalence. In particular, a discrete valuation $\DiscreteValuation$ defines a place and gives rise to a metric and we also denote the completion of $\Field$ with respect to this metric by $\Field_\DiscreteValuation$.

\begin{exmpl}
\label{exmpl:f_q_t_valuations}
\begin{enumerate}
\item Every $a \in \F_q(t)$ can be written in a unique way as $a = b/c$ with $b,c \in \F_q[t]$ and $c$ having leading coefficient $1$. Setting $\DiscreteValuation_{\infty}(a) \defeq \deg(c) - \deg(b)$ defines a discrete valuation on $\F_q(t)$. The completion of $\F_q(t)$ with respect to $\DiscreteValuation_{\infty}$ is $\F_q((t^{-1}))$.
\item Let $p \in \F_q[t]$ be irreducible. Every element in $a \in \F_q(t)$ can be written in a unique way as $p^m(b/c)$ with $m$ an integer, $b,c \in \F_q[t]$ such that the leading coefficient of $c$ is $1$ and $b$ and $c$ are not divisible by $p$. Setting $\DiscreteValuation_{p}(a) \defeq m$ defines a discrete valuation on $\F_q(t)$. The completion of $\F_q(t)$ with respect to $\DiscreteValuation_{p}$ is $\F_q((p(t)))$.
\end{enumerate}
\end{exmpl}

Let $\DiscreteValuation$ be a non-trivial discrete valuation on a field $\Field$. Since its image is infinite cyclic, $\DiscreteValuation$ can be considered as a surjective homomorphism $\Units\Field \to \Z$ (obscuring the distinction between equivalent discrete valuations). In what follows we adopt this point of view.

The topology defined by $\DiscreteValuation$ can be understood algebraically: The ring $\Ring \defeq \{a \in \Field \mid \DiscreteValuation(a) \ge 0\}$  is a \emph{discrete valuation ring}, i.e., an integral domain that has a unique maximal ideal and this ideal is non-zero and principal. Its maximal ideal is $\MaxIdeal \defeq \{a \in \Ring \mid \DiscreteValuation(a) \ge 1\}$. For $n \in \N$ the ideals $\MaxIdeal^n$ are open and closed in $\Ring$ and in fact they form a basis for the neighborhood filter of $0$. Since $\Ring$ is open and closed in $\Field$ this also describes the topology of $\Field$.

The completion of $\Ring$ is the inverse limit $\lim_{\leftarrow} \Ring/\MaxIdeal^n$ and the completion of $\Field$ is the field of fractions of the completion of $\Ring$ (see \cite[Section~7]{eisenbud94}).

The field $\Ring/\MaxIdeal$ is the \emph{residue field} of $\Field$ with respect to $\DiscreteValuation$.

\begin{rem}
The term ``discrete valuation ring'' reflects the following fact: Let $\Ring$ be a discrete valuation ring with maximal ideal $\MaxIdeal$ and field of fractions $\Field$. Let $\pi$ be an element that generates $\MaxIdeal$. For every $a \in \Units\Field$ there is a $u \in \Units\Ring$ and an $l \in \Z$ such that $a = u\pi^l$ (see \cite[Proposition~11.1]{eisenbud94}). The number $l$ is uniquely determined and the map $a \mapsto l$ is a discrete valuation.
\end{rem}

\subsection*{Local Fields}

A \emph{local field} is a non-discrete locally compact field.

Let $\LocalField$ be a local field and let $\Haar$ denote a Haar measure on $(\LocalField,+)$ (which is unimodular since the group is abelian). For $a \in \Units\LocalField$ the map $b \mapsto ab$ is an automorphism of $(\LocalField,+)$, so the measure $\Haar_a$ defined by $\Haar_a(\Set) = \Haar(a\Set)$ is again a Haar measure. By the uniqueness of the Haar measure, there is a constant $\HaarModule(a)$, called the \emph{module of $a$} such that $\Haar_a = \HaarModule(a) \Haar$. Setting $\HaarModule(0) = 0$ we obtain a map $\HaarModule \colon \LocalField \to \R$ which is easily seen to satisfy \eqref{item:valuation_positive_definite} and \eqref{item:valuation_morphism}. In fact it is a valuation (\cite[Theorem~I.3.4]{weil74}) and the topology on $\LocalField$ is the topology defined by $\HaarModule$ (\cite[Corollary~I.2.1]{weil74}). Thus:

\begin{prop}
A field is a local field if and only if it is equipped with a valuation with respect to which it is locally compact.
\end{prop}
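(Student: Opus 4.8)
The plan is to prove the two implications separately, and to note that almost all of the substance of the ``only if'' direction has already been assembled in the discussion immediately preceding the statement.

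For the ``only if'' direction I would argue as follows. Suppose $\LocalField$ is a local field. Then $(\LocalField,+)$ is a locally compact abelian group, so it carries a nonzero Haar measure $\Haar$, and --- as recalled just above --- for $a \in \Units{\LocalField}$ the measure $\Haar_a$ given by $\Haar_a(\Set) = \Haar(a\Set)$ is again a Haar measure, hence equals $\HaarModule(a)\Haar$ for a unique $\HaarModule(a) > 0$; one puts $\HaarModule(0) = 0$. By \cite[Theorem~I.3.4]{weil74} the resulting function $\HaarModule \colon \LocalField \to \R$ is a valuation, and by \cite[Corollary~I.2.1]{weil74} the metric it induces defines the original topology of $\LocalField$. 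Since $\LocalField$ is locally compact in that topology, it is locally compact with respect to the valuation $\HaarModule$, as required.

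For the ``if'' direction, suppose $\Field$ is a field carrying a valuation $\Valuation$ with respect to which it is locally compact, i.e.\ locally compact in the metric $d(a,b) = \Valuation(a-b)$. Local compactness is then part of the hypothesis, so it remains only to see that $\Field$ is non-discrete; this amounts to $\Valuation$ being non-trivial, which I would take as understood here (the trivial valuation induces the discrete topology, under which $\Field$ is locally compact but, being discrete, not a local field). Granting that $\Valuation$ is non-trivial, I would pick $a \in \Units{\Field}$ with $\Valuation(a) \ne 1$; then $\Valuation(a^n) = \Valuation(a)^n$ tends to $0$ if $\Valuation(a) < 1$, and $\Valuation(a^{-n}) = \Valuation(a)^{-n}$ tends to $0$ if $\Valuation(a) > 1$, while in both cases all the relevant powers are nonzero, and in fact distinct, since $a^m = a^n$ with $m \ne n$ would give $\Valuation(a) = 1$. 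Hence $0$ is not isolated, $\Field$ is non-discrete, and therefore $\Field$ is a non-discrete locally compact field, that is, a local field.

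I do not expect a genuine obstacle here: the two facts that do the real work --- that $\HaarModule$ is a valuation and that it recovers the topology --- are exactly the cited results of Weil, and the remaining verifications are elementary point-set arguments. The only point that needs a little care is the degenerate case in the converse, where one must exclude the trivial valuation so as not to admit discrete fields.
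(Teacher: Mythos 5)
Your proposal is correct and takes essentially the same approach as the paper, which deduces the proposition directly from the cited facts about the module $\HaarModule$ (it is a valuation and it induces the given topology) and leaves the converse as the evident point-set observation you spell out. Your explicit handling of the trivial-valuation degenerate case in the ``if'' direction is a sensible precision that the paper leaves implicit.
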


So one can distinguish local fields by their valuations. In the Archimedean case we obtain:

\begin{thm}{{\cite[Theorem~I.3.5]{weil74}}}
\label{thm:archimedean_local_fields}
If $\LocalField$ is locally compact with respect to an Archimedean valuation $\Valuation$, then $\LocalField$ is isomorphic to either $\R$ or $\C$ and $\Valuation$ is equivalent to the usual absolute value.
\end{thm}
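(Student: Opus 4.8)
The plan is to split the proof into two halves: first reduce to the case where $\LocalField$ contains $\R$ with $\Valuation$ restricting to the usual absolute value, and then run a Gelfand--Mazur-type minimisation argument showing that a complete valued field of this kind must be $\R$ or $\C$.

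For the reduction, recall that a valuation is non-Archimedean precisely when it is bounded on the image of $\Z$. Hence the Archimedean $\Valuation$ is unbounded on $\Z$, so $\LocalField$ has characteristic $0$ and $\Valuation$ restricts to an Archimedean valuation on $\Q$. By Ostrowski's theorem over $\Q$ this restriction is equivalent to the usual absolute value, and after replacing $\Valuation$ by a fixed positive power --- which changes neither the topology nor the place $\PlaceOf\Valuation$ --- I may assume $\Valuation|_\Q$ is the usual absolute value. A locally compact topological group is complete, so $\LocalField$ is complete with respect to $\Valuation$; therefore the closure of $\Q$ in $\LocalField$ is a complete field in which $\Q$ is dense, hence a copy of $\R$ carrying the usual absolute value. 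So from now on $\R \subseteq \LocalField$ and $\Valuation$ restricts to the usual absolute value on $\R$.

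The heart of the argument is a claim of Gelfand--Mazur type: if a field $F$ is complete with respect to a valuation $\Valuation$ with the honest triangle inequality, $\C \subseteq F$, and $\Valuation|_\C$ is a power of the modulus, then $F = \C$. To prove it, fix $\xi \in F$ and study $g \colon \C \to \R$, $g(z) \defeq \Valuation(\xi - z)$; it is continuous and $g(z) \ge \Valuation(z) - \Valuation(\xi) \to \infty$ as $\abs z \to \infty$, so it attains its minimum $m \ge 0$ on the nonempty, closed, bounded set $Z \defeq g^{-1}(m)$. Suppose $m > 0$ and pick $z_0 \in Z$; set $w \defeq \xi - z_0$, so $\Valuation(w) = m$. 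For $\epsilon \in \C$ with $\Valuation(\epsilon) < m$ and $\zeta$ a primitive $n$-th root of unity in $\C$, the factorisation $w^n - \epsilon^n = \prod_{k=0}^{n-1}(w - \zeta^k\epsilon)$ together with multiplicativity of $\Valuation$ and minimality of $m$ (every factor but the one with $k = 0$ has the form $\Valuation(\xi - z)$ with $z \in \C$, hence is $\ge m$) gives $\Valuation(w - \epsilon)\,m^{n-1} \le \Valuation(w^n - \epsilon^n) \le m^n + \Valuation(\epsilon)^n$, whence $\Valuation(w - \epsilon) \le m + m(\Valuation(\epsilon)/m)^n \to m$ as $n \to \infty$. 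Thus $\Valuation(w - \epsilon) = m$, so an entire ball around $z_0$ lies in $Z$, i.e.\ $Z$ is open. Since $\C$ is connected this forces $Z = \C$, contradicting boundedness of $Z$. Hence $m = 0$, so $\xi \in \C$, and $F = \C$.

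Finally I would apply the claim to $M \defeq \LocalField(i)$. Since $\LocalField$ is complete and $M/\LocalField$ is finite, $\Valuation$ extends to $M$ and $M$ is complete (one may have to pass to an equivalent valuation on $M$ to keep the honest triangle inequality, which does not affect the argument), $\C = \R(i) \subseteq M$, and $\Valuation|_\C$ is a power of the modulus because the extension of the absolute value from $\R$ to the finite extension $\C$ is unique. The claim yields $M = \C$, so $\R \subseteq \LocalField \subseteq \C$, and since $[\C : \R] = 2$ either $\LocalField = \R$ or $\LocalField = \C$. In either case $\Valuation$ on $\LocalField$ agrees with a power of the usual absolute value (for $\LocalField = \C$ use again uniqueness of the extension, or note that complex conjugation is a $\Valuation$-isometry, so $\Valuation(z)^2 = \Valuation(z\bar z) = \abs z^2$); undoing the initial normalisation, $\Valuation$ is equivalent to the usual absolute value. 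The main obstacle is the minimisation claim --- in particular showing that the minimum set is open via the roots-of-unity factorisation; the supporting inputs (Ostrowski over $\Q$, completeness of locally compact groups, and the existence, uniqueness and completeness of the valuation extension to a finite extension of a complete field) I would quote from the literature.
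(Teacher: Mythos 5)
The paper offers no proof of this theorem: it simply cites Weil, whose own treatment embeds the Archimedean classification in a measure-theoretic analysis of locally compact fields via the Haar-module function. Your proof is the other canonical route --- reduce to completeness, build $\R$ inside $\LocalField$ as the closure of $\Q$, and run a Gelfand--Mazur minimisation over $\C$ --- and the minimisation argument is carried out correctly. Two of the steps you plan to quote deserve more care than the proposal gives them. First, after normalising so that $\Valuation|_\Q$ is the usual absolute value you will typically have raised $\Valuation$ to an exponent $\ge 1$, and the result need no longer satisfy the honest triangle inequality \eqref{item:valuation_triangle_inequality} on all of $\LocalField$; but your Gelfand--Mazur claim is already formulated with ``$\Valuation|_\C$ a power of the modulus,'' so it is cleaner not to normalise at all and simply carry the exponent $s\le 1$ through the computation. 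Second, and more seriously, the ``existence, uniqueness and completeness of the valuation extension to a finite extension of a complete field'' is, in the Archimedean case, commonly \emph{deduced from} the classification of complete Archimedean fields --- i.e.\ from the very theorem being proved --- so a blanket citation here risks circularity. You need only the quadratic extension $\LocalField(i)/\LocalField$ (and only when $i\nin\LocalField$), and for that the non-circular argument is short: $\LocalField(i)$ is a two-dimensional $\LocalField$-vector space over the complete field $\LocalField$, hence complete and with all $\LocalField$-norms equivalent, and one extends $\Valuation$ by the spectral radius (or verifies directly that $\alpha \mapsto \Valuation\bigl(N_{\LocalField(i)/\LocalField}(\alpha)\bigr)^{1/2}$ is equivalent to a norm). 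With that supplied, the argument is complete and sound.
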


The non-Archimedean case offers more examples:

\begin{thm}{{\cite[Theorem~I.3.5, Theorem~I.4.8]{weil74}}}
\label{thm:non-archimedean_local_fields}
If $\LocalField$ is locally compact with respect to a non-Archimedean valuation, then either
\begin{enumerate}
\item $\LocalField$ is a completion of a finite extension of $\Q$ and isomorphic to a finite extension of some $\Q_p$, or
\item $\LocalField$ is a completion of a finite extension of $\F_q(t)$ and isomorphic (as a field) to $\F_{q^k}((t))$ for some $k$.
\end{enumerate}
\end{thm}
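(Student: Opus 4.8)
The plan is to first distill from the hypothesis of local compactness the three structural facts that underlie the whole classification, and only then to split into the cases $\operatorname{char}\LocalField = 0$ and $\operatorname{char}\LocalField = p > 0$. Write $\Valuation$ for the given non-Archimedean valuation and let $\Integers \defeq \{a \in \LocalField \mid \Valuation(a) \le 1\}$ be its valuation ring (a subring of $\LocalField$, since $\Valuation$ is non-Archimedean), with maximal ideal $\MaxIdeal \defeq \{a \mid \Valuation(a) < 1\}$. The first step establishes: (a) $\LocalField$ is complete; (b) $\Integers$ is compact and the residue field $\Integers/\MaxIdeal$ is finite; (c) the value group $\Valuation(\Units\LocalField) \subseteq \R_{>0}$ is infinite cyclic. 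For (a) one uses that a locally compact metrizable topological group is complete, the metric $d(a,b) = \Valuation(a-b)$ inducing the topology. For (b), a compact neighbourhood of $0$ contains a ball $\{\Valuation \le c\}$; picking $\pi_0$ with $0 < \Valuation(\pi_0) < 1$ (possible, as the topology is non-discrete, so $\Valuation$ is non-trivial) gives $\Valuation(\pi_0^{n} a) \le \Valuation(\pi_0)^n \le c$ for every $a \in \Integers$ once $n$ is large, whence $\Integers \subseteq \pi_0^{-n}\{\Valuation \le c\}$ is compact; then $\MaxIdeal$ is an open subgroup of the compact group $\Integers$, so $\Integers/\MaxIdeal$ is finite, equal to some $\F_q$ with $q$ a power of the residue characteristic $p$. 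For (c) one notes that each $\{a \in \Integers \mid \Valuation(a) = \gamma\}$ is open (by the ultrametric inequality), so the compact set $\{a \in \Integers \mid \Valuation(a) \ge \gamma_0\}$ is a disjoint union of finitely many of them; hence $\Valuation(\Units\LocalField) \cap [\gamma_0,\gamma_0^{-1}]$ is finite for every $\gamma_0 > 0$ (the value group being stable under inversion), which forces discreteness, hence infinite cyclicity. Fixing a uniformizer $\pi$ generating $\MaxIdeal$ and a set of representatives of $\F_q$ inside $\Integers$, completeness then gives every element of $\Integers$ a unique convergent expansion $\sum_{i \ge 0} c_i \pi^i$.

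If $\operatorname{char}\LocalField = 0$, the prime field $\Q$ lies in $\LocalField$ and $\Valuation|_{\Q}$ is a non-trivial non-Archimedean valuation (non-trivial because $p \in \MaxIdeal$ forces $\Valuation(p) < 1$), hence equivalent to the $p$-adic valuation by Ostrowski's theorem, so the closure of $\Q$ in $\LocalField$ is a copy of $\Q_p$. Next I would show $[\LocalField : \Q_p] < \infty$: since $p\Integers$ is a power of $\MaxIdeal$, the quotient $\Integers/p\Integers$ is finite, and $\Integers$ is $p$-adically complete and separated, so topological Nakayama makes $\Integers$ a finitely generated module over the $p$-adic integers $\Z_p$. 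Then, writing $\LocalField = \Q_p(\alpha_0)$ by the primitive element theorem (the extension is separable in characteristic $0$) and using Krasner's lemma to replace $\alpha_0$ by an element $\alpha$ algebraic over $\Q$, $\Valuation$-close to $\alpha_0$, and still generating $\LocalField$ over $\Q_p$, the number field $L \defeq \Q(\alpha) \subseteq \LocalField$ has closure containing $\Q_p$ and $\alpha$, hence all of $\Q_p(\alpha) = \LocalField$; so $\LocalField$ is the completion of the finite extension $L$ of $\Q$ at the induced place, and $\LocalField$ is a finite extension of $\Q_p$. That is alternative (i).

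If $\operatorname{char}\LocalField = p > 0$, then $\F_p \subseteq \Integers$, and $x^q - x = \prod_{c \in \F_q}(x-c)$ has $q$ distinct roots modulo $\MaxIdeal$, which Hensel's lemma (using completeness) lifts to $q$ roots in $\Integers$; in characteristic $p$ these roots are closed under addition and multiplication, so they form a subfield isomorphic to $\F_q$ mapping isomorphically onto the residue field. Taking $t \defeq \pi$ a uniformizer and using the expansion from the first step with coefficients in this copy of $\F_q$ identifies $\Integers$ with $\F_q[[t]]$ and $\LocalField$ with $\F_q((t))$; the subfield $\F_q(t)$ of rational functions is dense, so $\LocalField$ is the completion of $\F_q(t)$, which is a finite extension of $\F_p(t)$. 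Writing $\abs{\Integers/\MaxIdeal} = q^k$ with $q$ as in the statement, this is alternative (ii).

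The main obstacle is the bridge from the ``local'' structure obtained in the first step ($\LocalField$ complete, discretely valued, with finite residue field) to a realization of $\LocalField$ as the completion of a \emph{finite} extension of a global field: in characteristic $0$ this hinges on Krasner's lemma together with the finiteness of $[\LocalField : \Q_p]$, and in characteristic $p$ on the Hensel lifting of the residue field followed by the power-series expansion. By comparison, the topological preliminaries (compactness of $\Integers$, finiteness of the residue field, discreteness of the value group) and the appeal to Ostrowski's theorem for $\Q$ are routine.
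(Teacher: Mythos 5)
Your argument is correct. The paper does not contain a proof of this statement: it is quoted directly from Weil's \emph{Basic Number Theory} (Theorems I.3.5 and I.4.8) in a background chapter that declares at the outset that proofs are generally omitted, so there is no internal argument to measure you against. What you have supplied is the standard valuation-theoretic route---items (a)--(c) extracted from compactness of the valuation ring, then Ostrowski and Krasner in characteristic zero and the Teichm\"uller lift together with the power-series expansion in characteristic $p$. Weil's cited proofs take a genuinely different path: he develops the module function $\HaarModule$ from Haar measure on $(\LocalField,+)$ first, and reads off compactness of $\Integers$, finiteness of $\Integers/\MaxIdeal$, and discreteness of the value group from multiplicativity and continuity of $\HaarModule$; that machinery is engineered to feed directly into his adelic framework, whereas your route is shorter and more self-contained relative to the level of Section~\ref{sec:number_theory}. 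One step you should spell out slightly more in the characteristic-zero case: after Krasner gives $\Q_p(\alpha_0) \subseteq \Q_p(\alpha)$, you still need $\alpha \in \LocalField$ (by comparing degrees, or by applying Krasner in the reverse direction after observing that $\alpha_0$ is also closer to $\alpha$ than to any conjugate of $\alpha$) before you may conclude that $L = \Q(\alpha) \subseteq \LocalField$ and that $\LocalField$ is its completion.
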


This suggests to introduce the following notion: a \emph{global field} is either a \emph{global number field}, that is, a finite extension of $\Q$, or a \emph{global function field}, that is, a finite extension of some $\F_q(t)$. Then Theorems~\ref{thm:archimedean_local_fields} and \ref{thm:non-archimedean_local_fields} can be restated to say that every local field is the completion of a global field with respect to some place. A partial converse is:

\begin{thm}[{{\cite[Theorem~I.3.5, Theorem~II.1.2]{weil74}}}]
\label{thm:classification_of_valuations}
Every non-trivial place of $\Q$ is one of those described in Example~\ref{exmpl:q_valuations}. Every non-trivial place of $\F_q(t)$ is one of those described in Example~\ref{exmpl:f_q_t_valuations}.
\end{thm}

Let $\GlobalField$ be one of $\Q$ and $\F_q(t)$. If $\GlobalField'$ is a finite extension of $\GlobalField$ and $\Valuation'$ is a valuation on $\GlobalField'$, then obviously $\Valuation \defeq \Valuation'|_{\GlobalField}$ is a valuation on $\GlobalField$. What $\Valuation'$ can look like if one knows $\Valuation$ can be understood by studying how $\GlobalField'$ embeds into the algebraic closure of $\GlobalField_\Valuation$, see \cite[Theorem~II.1.1]{weil74} and (for number fields) \cite[page~4]{plarap94}.

\subsection*{$S$-Integers}

Let $\GlobalField$ be a global field and let $S$ be a finite subset of the set of places of $\GlobalField$. If $\GlobalField$ is a number field, assume that $S$ contains all Archimedean places, if it is a function field, assume that $S$ is non-empty. The subring
\[
\Integers[S] \defeq \{a \in \GlobalField \mid \Valuation(a) \le 1 \text{ for all } \PlaceOf{\Valuation} \nin S\}
\]
is called the \emph{ring of $S$-integers of $\GlobalField$}. Informally one may think of it as the ring of elements of $\GlobalField$ that are integer except possibly at places in $S$. Indeed:

\begin{thm}[{{\cite[Theorem~17.6]{vanderwaerden91b}}}]
If $\GlobalField$ is a number field and $S$ is the set of Archimedean places, then $\Integers[S]$ is the ring of algebraic integers of $\GlobalField$.
\end{thm}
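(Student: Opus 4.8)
The plan is to prove both inclusions between $\Integers[S]$ and the ring of algebraic integers $\mathcal{O}_{\GlobalField}$, the latter being by definition the integral closure of $\Z$ in $\GlobalField$. Since $S$ consists of all Archimedean places, $\Integers[S]$ is exactly the set of $a \in \GlobalField$ with $\Valuation(a) \le 1$ for every non-Archimedean place $\PlaceOf{\Valuation}$.

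For the inclusion $\mathcal{O}_{\GlobalField} \subseteq \Integers[S]$ I would argue directly with the ultrametric inequality. Let $a \in \mathcal{O}_{\GlobalField}$, say $a^n + c_{n-1}a^{n-1} + \cdots + c_0 = 0$ with $c_i \in \Z$, and let $\Valuation$ be a non-Archimedean place of $\GlobalField$. The restriction of $\Valuation$ to $\Q$ is non-Archimedean, hence by Theorem~\ref{thm:classification_of_valuations} either trivial or $p$-adic for some prime $p$; in either case $\Valuation(c) \le 1$ for all $c \in \Z$. If one had $\Valuation(a) > 1$, then $\Valuation(c_i a^i) \le \Valuation(a)^i < \Valuation(a)^n = \Valuation(a^n)$ for every $i < n$, contradicting $a^n = -(c_{n-1}a^{n-1} + \cdots + c_0)$ together with the ultrametric inequality (valid since $\Valuation$ is non-Archimedean). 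Hence $\Valuation(a) \le 1$. (Applying the same estimate to a minimal equation of a nonzero $a \in \GlobalField$ over $\Q$ and to that of $a^{-1}$ shows in passing that a place trivial on $\Q$ is trivial on all of $\GlobalField$, so it imposes no condition anyway.)

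For the reverse inclusion $\Integers[S] \subseteq \mathcal{O}_{\GlobalField}$ I would pass to completions. Let $a \in \Integers[S]$ and let $f = X^n + c_{n-1}X^{n-1} + \cdots + c_0 \in \Q[X]$ be its minimal polynomial over $\Q$; it suffices to show that each $c_i$ has non-negative $p$-adic valuation for every prime $p$, for then $c_i \in \bigcap_p \Z_{(p)} = \Z$ and $a$ is integral over $\Z$. Fix $p$. Over $\Q_p$ the polynomial $f$ factors as $f = \prod_j f_j$, where the $\Valuation_j$ run over the finitely many places of $\Q(a)$ above $p$ and $f_j$ is the minimal polynomial of $a$ over $\Q_p$ inside the local field $\Q(a)_{\Valuation_j}$; this is the decomposition of $\Q(a)\otimes_{\Q}\Q_p$ into local fields (\cite[Theorem~II.1.1]{weil74}). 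Each $\Valuation_j$ extends to some non-Archimedean place of $\GlobalField$, because $\GlobalField/\Q(a)$ is finite, so the hypothesis on $a$ forces $\Valuation_j(a) \le 1$; thus $a$ lies in the valuation ring of $\Q(a)_{\Valuation_j}$, which by Theorem~\ref{thm:non-archimedean_local_fields} is the integral closure of $\Z_p$ in that field, hence integrally closed in its field of fractions. Therefore $f_j \in \Z_p[X]$, so $f = \prod_j f_j \in \Z_p[X]$ and each $c_i$ has non-negative $p$-adic valuation.

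The main obstacle is this reverse inclusion: unlike the elementary ultrametric estimate of the first part, it genuinely uses the theory of how a place of $\Q$ extends to a finite extension — the splitting of $\Q(a)\otimes_\Q\Q_p$ and the identification of the completions with finite extensions of $\Q_p$ whose valuation ring is the integral closure of $\Z_p$. An alternative that repackages the same input is to invoke that $\mathcal{O}_{\GlobalField}$ is a Dedekind domain with $\mathcal{O}_{\GlobalField} = \bigcap_{\mathfrak{p}} (\mathcal{O}_{\GlobalField})_{\mathfrak{p}}$ over its nonzero primes, and then to check that the discrete valuations $\Valuation_{\mathfrak{p}}$ are, up to equivalence, precisely the non-Archimedean places of $\GlobalField$: the inclusion $\mathcal{O}_{\GlobalField} \subseteq \mathcal{O}_\Valuation$ proved above makes $\mathfrak{m}_\Valuation \cap \mathcal{O}_{\GlobalField}$ a prime $\mathfrak{p}$ with $(\mathcal{O}_{\GlobalField})_{\mathfrak{p}} \subseteq \mathcal{O}_\Valuation$, and a discrete valuation ring has no overrings in its fraction field other than itself and the whole field; the desired equality $\Integers[S] = \{x : \Valuation_{\mathfrak{p}}(x) \ge 0 \text{ for all } \mathfrak{p}\} = \mathcal{O}_{\GlobalField}$ then follows.
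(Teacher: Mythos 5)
Your proof is correct. The paper itself does not prove this statement at all; it is quoted as an external result with a citation to van der Waerden's \emph{Algebra II}, Theorem~17.6, so there is nothing in the text to compare your argument against. What you have produced is a valid standalone proof from the material developed in Section~\ref{sec:number_theory}: the forward inclusion $\mathcal{O}_\GlobalField \subseteq \Integers[S]$ via the ultrametric inequality and the classification of places of $\Q$, and the reverse inclusion via the decomposition $\Q(a)\otimes_\Q\Q_p \cong \prod_j \Q(a)_{\Valuation_j}$ and the fact that the valuation ring of a non-Archimedean local field is the integral closure of $\Z_p$. Both the completion argument and the alternative Dedekind-domain repackaging you sketch at the end are sound; the one small point worth making explicit in the Dedekind version is that $\MaxIdeal_\Valuation \cap \mathcal{O}_\GlobalField$ is nonzero because $\Valuation$ restricts to a $p$-adic place on $\Q$ (places trivial on $\Q$ being trivial on all of $\GlobalField$, as your parenthetical already observes), so $p$ lies in the intersection.
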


\begin{exmpl}
\begin{enumerate}
\item Let $\GlobalField = \Q$, let $\Valuation_\infty$ be the absolute value and let $\Valuation_p$ be the $p$-adic valuation for some prime $p$. If $S=\{\PlaceOf{\Valuation_\infty}, \PlaceOf{\Valuation_p}\}$, then $\Integers[S] = \Z[1/p]$.
\item Let $\GlobalField = \F_q(t)$, let $\Valuation_\infty$ be the valuation at infinity and for $a \in \F_q$ let $\Valuation_a$ be the valuation corresponding to the irreducible polynomial $t-a$. If $S = \{\PlaceOf{\Valuation_\infty}\}$, then $\Integers[S] = \F_q[t]$. If $S = \{\PlaceOf{\Valuation_a}\}$, then $\Integers[S] = \F_q[(t-a)^{-1}]$, in particular if $a=0$, then $\Integers[S] = \F_q[t^{-1}]$. Finally, if $S = \{\PlaceOf{\Valuation_\infty},\PlaceOf{\Valuation_0}\}$, then $\Integers[S] = \F_q[t,t^{-1}]$.
\end{enumerate}
\end{exmpl}

\footerlevel{3}
\headerlevel{3}

\section{Affine Varieties and Linear Algebraic Groups}
\label{sec:algebraic_groups}

In this section we try to introduce linear algebraic groups with as little theory as possible. In particular, we only consider subvarieties of affine space without giving an intrinsic definition. There are three standard books on linear algebraic groups, \cite{borel91}, \cite{humphreys81}, \cite{springer98}, which are recommended to the reader who looks for a proper introduction. In \cite{demgab} algebraic groups are developed in the terms of schemes which is the appropriate language for rationality questions and algebraic groups over rings (see also \cite{sga3_1,sga3_2,sga3_3}).

\subsection*{Affine Varieties}

Let $\Field$ be a field and let $\ClosedField$ be an algebraically closed field that contains it. \emph{Affine $n$-space} over $\ClosedField$ is defined to be $\A^n\ClosedField \defeq \ClosedField^n$. Let $\Ring \defeq \ClosedField[t_1, \ldots, t_n]$ be the ring of polynomials in $n$ variables over $\ClosedField$ and let $\Ring_\Field \defeq \Field[t_1,\ldots,t_n]$ be the ring of polynomials in $n$ variables over $\Field$. For a subset $M \subseteq \Ring$, we define the set
\[
V(M) \defeq \{(x_1,\ldots,x_n) \mid f(x_1,\ldots,x_n) = 0 \text{ for all }f \in M\} \text{ .}
\]
Clearly if $I$ is the ideal generated by $M$, then $V(I) = V(M)$.

We see at once that $V(0) = \A^n\ClosedField$ and $V(\Ring) = \emptyset$. If $I_1$ and $I_2$ are two ideals of $\Ring$, then $V(I_1 \intersect I_2) = V(I_1) \union V(I_2)$ and if $(I_i)_i$ is a family of ideals, then $V(\sum_i I_i) = \Intersect_i V(I_i)$. This shows that the sets of the form $V(I)$ are the closed sets of a topology on $\A^n\ClosedField$, called the \emph{Zariski topology}.

If $\Space$ is a closed subset of $\A^n\ClosedField$, we denote by $J(\Space)$ the ideal of polynomials in $\Ring$ vanishing on $\Space$ and by $J_\Field(\Space)$ the ideal of polynomials in $\Ring_\Field$ vanishing on $\Space$. We call $\Ring[\Space] \defeq \Ring/J(\Space)$ the \emph{affine algebra} of $\Space$ and analogously define $\Ring_\Field[\Space] \defeq \Ring_\Field/J_\Field(\Space)$.

If in the definition of the Zariski-topology above, we replace $\Ring$ by $\Ring_\Field$, we obtain a coarser topology, called the \emph{$\Field$-Zariski topology}. Subsets that are closed respectively open with respect to this topology are called \emph{$k$-closed} respectively \emph{$k$-open}. A $\Field$-closed subset $\Space$ is said to be \emph{defined over $\Field$} if the homomorphism $\ClosedField \otimes_\Field \Ring_\Field[\Space] \to \Ring[\Space]$ is an isomorphism. This is always the case if $\Field$ is perfect (in particular, if $\Field$ is finite or of characteristic $0$).

If $\Space \subseteq \A^n\ClosedField$ is a closed subset, we can equip it with the topology induced by the Zariski topology which we also call Zariski topology. If $\Space$ is $\Field$-closed, we may similarly define the $\Field$-Zariski topology on $\Space$ and accordingly say that a subset of $\Space$ is $\Field$-closed or $\Field$-open.

A closed subset of $\A^n\ClosedField$ is called an \emph{affine variety}. It is said to be \emph{irreducible} if it is not empty and is not the union of two distinct proper non-empty closed subsets.

If $\Space$ is a closed subset of $\A^n\ClosedField$ and $\AltSpace$ is a closed subset of $\A^m\ClosedField$, then $\Space \times \AltSpace$ is a closed subset of $\A^{n+m}\ClosedField$. Moreover, if $\Space$ and $\AltSpace$ are irreducible, then so is $\Space \times \AltSpace$.

The elements of the affine algebra $\Ring[\Space]$ of an affine variety $\Space$ can be regarded as $\ClosedField$-valued functions on $\Space$. These functions are called \emph{regular}. Let $\Space$ and $\AltSpace$ be affine varieties. A map $\alpha \colon \Space \to \AltSpace$ is a \emph{morphism} if its components are regular functions, that is, $\alpha(\Point_1,\ldots,\Point_n) = (f_1(\Point_1,\ldots,\Point_n),\ldots,f_m(\Point_1,\ldots,\Point_n))$ with $f_1,\ldots,f_m \in \Ring[\Space]$. The morphism is said to be \emph{defined over $\Field$} or to be a \emph{$\Field$-morphism}, if $f_1,\ldots,f_m \in \Ring_\Field[\Space]$. A ($\Field-$)morphism is a \emph{($\Field$-)iso\-mor\-phism} if there is a ($\Field$-)morphism that is its inverse.

If $\Integers$ is a subring of $\ClosedField$, then $\Integers^n$ is an $\Integers$-submodule of $\A^n\ClosedField$. If $\Space \subseteq \A^n\ClosedField$ is an affine variety, then we denote by $\Space(\Integers)$ the intersection $\Space \intersect \Integers^n$.

\subsection*{Linear Algebraic Groups}

\begin{exmpl}
The set
\[
\GL_n \defeq
 \left\{
\left(
\begin{array}{ccc}
x_{1,1}& \cdots & x_{1,n+1} \\
\vdots & \ddots & \vdots \\
x_{n+1,1} & \cdots & x_{n+1,n+1}
\end{array}
\right)
\in \A^{(n+1)^2}\ClosedField
\mathrel{\bigg|}
\begin{array}{c}
\det (x_{i,j}) = 1,\\
x_{i,n+1} = 0, 1 \le i \le n\\
x_{n+1,j} = 0, 1 \le j \le n
\end{array}
\right\}
\]
is an affine variety defined over $\Field$. In addition it is a group isomorphic to $\GL_n(\ClosedField)$ with multiplication being matrix multiplication.
\end{exmpl}

For our purpose, a linear algebraic group is a closed subgroup of $\GL_n$. If it is defined over $\Field$, we also say briefly that it is a $\Field$-group. A morphism of linear algebraic groups is a map that is at the same time a homomorphism and a morphism of affine varieties. An isomorphism of linear algebraic groups is a map that is an isomorphism of groups as well as of affine varieties. An (iso-)morphism of $\Field$-groups is a morphism of linear algebraic groups that is defined over $\Field$ (and whose inverse exists and is defined over $\Field$).

Let $\GroupScheme$ be a linear algebraic group. A subgroup $\Torus$ of $\GroupScheme$ is a \emph{torus} if it is isomorphic to $\GL_1 \times \cdots \times \GL_1$. The number of factors is the \emph{rank} of $\Torus$. The torus $\Torus$ is $\Field$-split, if it is defined over $\Field$ and $\Field$-isomorphic to $\GL_1 \times \cdots \times \GL_1$, where the number of factors is the rank of $\Torus$.

The \emph{rank} of $\GroupScheme$ is the rank of a maximal torus that it contains. The \emph{$\Field$-rank} of $\GroupScheme$ is the rank of a maximal $\Field$-split torus that it contains. If the ($\Field$-)rank of $\GroupScheme$ is $0$, then $\GroupScheme$ is said to be ($\Field$-)\emph{anisotropic}, otherwise ($\Field$-)\emph{isotropic}. If $\GroupScheme$ contains no non-trivial proper connected closed normal subgroup then it is said to be \emph{almost simple}.

\subsection*{$S$-Arithmetic Groups}

Let $\Field$ be a global field and $\ClosedField$ its algebraic closure. Let $\GroupScheme \le \GL_n$ be a linear algebraic group. Let $S$ be a set of places of $\Field$ that contains all Archimedean places if $\Field$ is a number field and is non-empty if $\Field$ is a function field, and let $\Integers[S]$ denote the ring of $S$-integers of $\Field$. A group of the form $\GroupScheme(\Integers[S])$ is called an \emph{$S$-arithmetic group}.

\footerlevel{3}
\headerlevel{3}

\section{Buildings}
\label{sec:buildings}

The possible points of view on buildings are quite various. They can be regarded combinatorially as edge-colored graphs or geometrically as metric spaces. The concept that mediates between the two is the building as a simplicial complex.

Buildings were developed by Jacques Tits, who wrote \cite{tits74} on spherical buildings. The standard reference on buildings today is \cite{abrbro}, it develops the different definitions of buildings and also the theory of twin buildings. Its predecessor \cite{brown89} is a beautiful introduction to buildings as simplicial complexes and is probably the best book with which to start learning the topic (also it is available online). The books \cite{weiss04} and \cite{weiss09} develop the theory of spherical respectively affine buildings in terms of edge-colored graphs and, in particular, contain (together with \cite{titwei02}) a revision of the classification of buildings of these types. The same language is used in \cite{ronan}. For twin buildings \cite{abramenko96} has long been the standard reference.

We consider buildings as cell complexes that are equipped with a metric, to be more precise as $\ModelSpace{\kappa}$-polyhedral complexes in the terminology of Section~\ref{sec:metric_spaces}. Our exposition is motivated by \cite{klelee97} but changed so as to keep the terminology and results in \cite{abrbro} within reach.

\subsection*{Spherical Coxeter Complexes}

We start by introducing spherical Coxeter complexes, see \cite[Section~1]{abrbro}.

Let $\SApartment \defeq \S^n$ be a sphere. A \emph{reflection} of $\SApartment$ is an involutory isometry that fixes a hyperplane. A finite subgroup $\Weyl$ of $\Isom\SApartment$ that is generated by reflections is a (\emph{finite} or) \emph{spherical reflection group}. A hyperplane $\Wall$ that is the fixed point set of some $\weyl \in \Weyl$ is called a \emph{wall}. The closure of a connected component of the complement of all walls is a polyhedron that is called a \emph{chamber}, its facets are \emph{panels}. Every closed hemisphere defined by a wall is a \emph{root}. Two points or cells of $\SApartment$ are called \emph{opposite} if they are mapped onto each other by the antipodal map.

The action of $\Weyl$ on $\SApartment$ is simply transitive on chambers, see \cite[Theorem~1.69]{abrbro}. Therefore the restriction of the projection $\SApartment \to \Weyl \LeftMod \SApartment$ to chambers is an isometry. We call $\ModelChamber \defeq \Weyl \LeftMod \SApartment$ the \emph{model chamber} of $\SApartment$.

The chambers induce a cell structure on $\SApartment$ so that it becomes an $\ModelSpace{1}$-polyhedral complex. We call $\SApartment$ equipped with this structure a \emph{spherical Coxeter complex}. Combinatorially the Coxeter complex is a simplicial complex, that is, its face lattice is that of an abstract simplicial complex. To be more precise, $\ModelChamber$ is clearly a polyhedron whose facets include angles at most $\pi/2$. Thus it decomposes as in Theorem~\ref{thm:non_obtuse_angle_decomposition} as a join of a sphere and a simplex with non-obtuse angles. The simplex decomposes further into irreducible simplices. By \cite[Section~3.3]{klelee97} this decomposition induces a decomposition of $\SApartment$. So $\SApartment$ decomposes as a join of a sphere and a spherical Coxeter complex whose cells are simplices which have non-obtuse angles (in particular, diameter $\le \pi/2$). The simplicial complex is called the \emph{essential part} and $\SApartment$ \emph{essential} if it equals its essential part. The essential part decomposes further as a join of \emph{irreducible} spherical Coxeter complexes, that is, of Coxeter complexes whose cells have diameter $< \pi/2$.

From now on all spherical Coxeter complexes are assumed to be essential.

There is a structure theory (including classification) of reflection groups that puts the following into a broader context. We refer the reader to \cite{bourbaki_lie_4-6, groben85, humphreys90}.

Let $\Types$ be the set of vertices of $\ModelChamber$. For a cell $\Cell$ of $\SApartment$ we define $\typ \Cell$ to be the image of the vertex set of $\Cell$ under the projection $\SApartment \to \ModelChamber$ and call it the \emph{type of $\Cell$}. The \emph{cotype} of cell is the complement of $\typ \Cell$ in $\Types$. Given two walls $\Wall_1,\Wall_2$, the fact that the group generated by the reflections at these walls is finite implies that the angle between them can only be $\pi/n$ with $n \ge 2$ an integer. The \emph{Coxeter diagram} $\typ \SApartment$ of $\SApartment$ is a graph whose vertex set is $\Types$, and where there is an edge between $\Type$ and $\AltType$ if the complements of $i$ and $j$ in $\ModelChamber$ are not perpendicular. In that case they include an angle of $\pi/n$ for $n \ge 3$ and the edge is labelled by $n$. By Observation~\ref{obs:spherical_join_decomposition} the irreducible join factors of $\SApartment$ correspond to connected components of $\typ \SApartment$; more explicitly: if $\AltTypes \subseteq \Types$ is the vertex set of a connected component of $\typ \SApartment$, then the cells $\Cell$ with $\typ \Cell \subseteq \AltTypes$ form an irreducible join factor of $\SApartment$.

\begin{figure}[p]
\begin{center}
\includegraphics{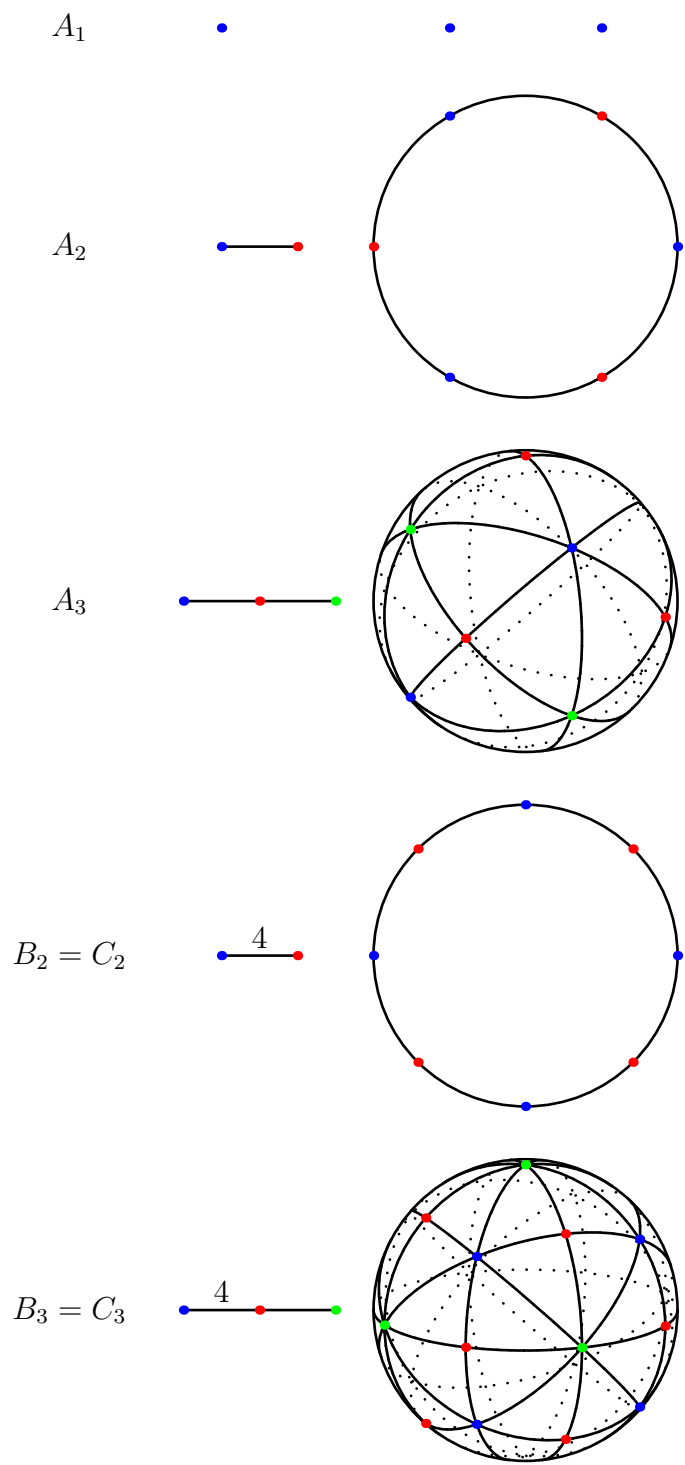}
\end{center}
\caption{Some spherical Coxeter complexes. To the left of each complex is its name and its diagram.}
\label{fig:spherical_coxeter_complexes}
\end{figure}

Fix a chamber $\Chamber_0 \subseteq \SApartment$. Let $S$ be the set of reflections at walls that bound $\Chamber_0$. Note that every $s \in S$ corresponds to a panel of $\Chamber_0$ and any two of these panels have different cotype. Let $\WeylDistance(\Chamber_0,\Chamber)$ be the element of $\Weyl$ that takes $\Chamber_0$ to $\Chamber$. Using simple transitivity, this can be extended to define a \emph{Weyl-distance}: if $\AltChamber_1$ and $\AltChamber_2$ are arbitrary chambers, we can write $\AltChamber_1 = \weyl' \Chamber_0$ and $\AltChamber_2 = \weyl'\weyl \Chamber_0$. Then $\WeylDistance(\AltChamber_1,\AltChamber_2) = \weyl$. Every element $\weyl \in \Weyl$ can be assigned a \emph{length}, namely the number of walls that separate $\Chamber_0$ from $\weyl \Chamber_0$. Replacing $\Chamber_0$ by a different chamber corresponds to conjugating the Weyl-distance by an element of $\Weyl$. This conjugation takes $S$ to a set $S'$ in a type-preserving way. So if we regard the pair $(\Weyl,S)$ as an abstract Coxeter system and identify $\Types$ with $S$, we get a Weyl-distance that is independent of a fixed chamber.

\subsection*{Euclidean Coxeter Complexes}

Now we turn to Euclidean Coxeter complexes, see \cite[Section~10]{abrbro}.

Let $\Apartment \defeq \E^n$ be a Euclidean space. A reflection of $\Apartment$ is an involutory isometry that fixes a hyperplane. A subgroup $\EWeyl$ of $\Isom \Apartment$ that is discrete, generated by reflections, and has no proper invariant subspace (in particular, no fixed point) is called a \emph{Euclidean reflection group} (cf.\ \cite{coxeter35}, \cite[Theorem~10.9]{abrbro}). A hyperplane that is the fixed point set of some $\weyl \in \EWeyl$ is called a \emph{wall}. As before the walls define a cell structure on $\Apartment$ and we call $\Apartment$ with this cell structure a \emph{Euclidean Coxeter complex}. The maximal cells are called \emph{chambers}, the codimension-$1$-cells \emph{panels}. A \emph{root} is a closed halfspace defined by some wall.

A Euclidean Coxeter complex is called \emph{irreducible} if it is a simplicial complex and an arbitrary Euclidean Coxeter complex decomposes as a direct product of its irreducible Coxeter subcomplexes.

The action of $\EWeyl$ is simply transitive on chambers. To define the type of cells let us first assume that $\Apartment$ is irreducible. The restriction of the projection $\Apartment \to \ModelChamber \defeq \EWeyl \LeftMod \Apartment$ to chambers is an isometry. This allows us as before to assign a \emph{type} $\typ{\Cell} \subseteq \Types$ to every cell $\Cell$ of $\Apartment$ where $\Types$ is the set of vertices of $\ModelChamber$. The walls of $\Apartment$ still include angles $\pi/n$ with $n \ge 3$ (\cite[Lemma~4.2]{coxeter35}) but now they can in addition be parallel. The Coxeter diagram $\typ \Apartment$ is defined to be the graph with vertices $\Types$ where there is an edge from $\Type$ to $\AltType$ if $\conv(\Types \setminus \Type)$ and $\conv(\Types \setminus \AltType)$ are either disjoint or meet in an angle $< \pi/2$. If the angle is $\pi/n$ in the latter case, the edge is labeled by $n$, and in the former case it is labeled by $\infty$.

\begin{figure}[!ht]
\begin{center}
\includegraphics{figs/affine_coxeter_complexes\screenornot}
\end{center}
\caption{Some (excerpts of) affine Coxeter complexes. To the left of each complex is its name and its diagram.}
\label{fig:affine_coxeter_complexes}
\end{figure}

If $\Apartment$ is not irreducible, the Coxeter diagram $\typ \Apartment$ is the disjoint union of the Coxeter diagrams of the individual factors and the type of a cell $\Cell_1 \times \cdots \times \Cell_n$ is the union $\typ \Cell_1 \union \cdots \union \typ \Cell_n$.

The action of $\EWeyl$ on $\Apartment$ induces an action on the visual boundary $\Infty\Apartment$ which is a sphere. We denote the image of $\EWeyl$ in $\Isom \Infty\Apartment$ by $\InftyEWeyl$. The group $\InftyEWeyl$ is a finite reflection group that turns $\Infty\Apartment$ into a spherical Coxeter complex. Let $\Vertex$ be a vertex of $\Apartment$ and let $\EWeyl_\Vertex$ be its stabilizer in $\EWeyl$. Then $\EWeyl_\Vertex$ acts on $\Infty\Apartment$ as a subgroup of $\InftyEWeyl$ and we call $\Vertex$ \emph{special} if it acts as all of $\InftyEWeyl$. In that case, since $\EWeyl_\Vertex$ acts simply transitively on chambers that contain $\Vertex$ and acts simply transitively on chambers of $\Infty\Apartment$, the link of $\Vertex$ is isomorphic to $\Infty\Apartment$. A \emph{sector} is the convex hull of a special vertex $\Vertex$ and a chamber $\InftyChamber$ of $\Infty\Apartment$, i.e., the union of geodesic rays $[\Vertex,\InftyPoint)$ with $\InftyPoint \in \InftyChamber$.

Let $\Chamber_0$ be a chamber and let $S$ be the set of reflections at walls bounding $\Chamber_0$. Considering $(\EWeyl,S)$ as an abstract Coxeter system, we obtain as before \emph{Weyl-distance} $\WeylDistance$ for $\Apartment$ with values in $\EWeyl$. The elements $\weyl \in \EWeyl$ have a \emph{length} which is, as before, defined to be the number of walls that separate $\Chamber_0$ from $\weyl \Chamber_0$.

\subsection*{Hyperbolic Coxeter Complexes}

We do not introduce hyperbolic Coxeter complexes. We just mention briefly that hyperbolic reflection groups may contain parabolic isometries which lead to the existence of ``vertices at infinity''. To obtain a Coxeter complex with compact cells one therefore has to cut out a horoball around each of these vertices. This leads to the so-called Davis-realization, see \cite[Chapter~7]{davis08}.

\subsection*{Buildings}

We now define spherical and Euclidean buildings based on our previous definition of spherical and Euclidean Coxeter complexes, cf.\ \cite[Section~4]{abrbro}.

A \emph{building} $\Building$ is an $\ModelSpace{\kappa}$-polytopal complex that can be covered by subcomplexes $\Apartment \in \ApartmentSystem$, called \emph{apartments}, subject to the following conditions:
\begin{enumerate}[label=(B\arabic{*}), ref=B\arabic{*},leftmargin=*]
\setcounter{enumi}{-1}
\item All apartments are Coxeter complexes. \label{item:apartments_are_coxeter}
\item For any two points of $\Building$ there is an apartment that contains them both.\label{item:plenty_of_apartments}
\item Whenever two apartments $\SApartment_1$ and $\SApartment_2$ contain a common chamber, there is an isometry $\SApartment_1 \to \SApartment_2$ that takes cells onto cells of the same type and restricts to the identity on $\SApartment_1 \intersect \SApartment_2$.\label{item:compatible_apartments}
\end{enumerate}

A set $\ApartmentSystem$ of apartments, i.e., of subcomplexes for which \eqref{item:apartments_are_coxeter} to \eqref{item:compatible_apartments} are satisfied, is called an \emph{apartment system} for $\Building$. The axioms imply that all apartments are of the same type.

A building is \emph{spherical} if its apartments are spherical Coxeter complexes (so that the building is an $\ModelSpace{1}$-polytopal complex) and is \emph{Euclidean} if its apartments are Euclidean Coxeter complexes (so that the building is an $\ModelSpace{0}$-polytopal complex). We usually denote spherical buildings by $\SBuilding$ and Euclidean buildings by $\EBuilding$.

Let $\SBuilding$ be a spherical building. Two points or cells are \emph{opposite} in $\SBuilding$ if there is an apartment that contains them and in which they are opposite. The apartment that contains two given opposite chambers is unique. This can be used to show that spherical buildings have a unique apartment system.

In general, a union of apartment systems is again an apartment system (see \cite[Theorem~4.54]{abrbro}), so there is a maximal apartment system, called the \emph{complete system of apartments}. It is characterized by the fact that it contains every subcomplex that is isomorphic to an apartment. If we talk about apartments without specifying the apartment system, we mean the complete system of apartments.

\emph{Chambers}, \emph{panels}, \emph{walls}, \emph{roots} of a building are chambers, panels, walls, roots of any of its apartments. If $\EBuilding$ is a Euclidean building, then a vertex is \emph{special} if it is a special vertex of an apartment that contains it and a \emph{sector} of $\EBuilding$ is a sector of one of its apartments. Note that if $\EBuilding$ is not spherical, then the notions of walls, roots and sectors depend on the apartment system.

For every apartment $\Apartment$ there is a quotient map onto the model chamber $\ModelChamber$. By \eqref{item:compatible_apartments} these fit together to define a projection $\ModelProjection \colon \SBuilding \to \ModelChamber$. In particular, every cell $\Cell$ of $\Building$ can be given a well defined type $\typ \Cell$ and the building has a Coxeter diagram $\typ \Building$.

A building is \emph{thick} if every panel is contained in at least three chambers. A building is \emph{thin} if every panel is contained in precisely two chambers, i.e., it is a Coxeter complex. A building is \emph{irreducible} if its apartments are irreducible.

Throughout, actions on buildings are assumed to be type preserving, i.e., the induced action on $\ModelChamber$ is trivial. The action of a group on a building is said to be \emph{strongly transitive} if it is transitive on pairs $(\Chamber,\Apartment)$ where $\Chamber$ is a chamber of an apartment $\Apartment$. For spherical buildings, this is the same as to say that the action is transitive on pairs of opposite chambers.

\begin{fact}
Every spherical building $\SBuilding$ decomposes as a spherical join $\SBuilding = \SBuilding_1 * \cdots * \SBuilding_n$ of irreducible spherical buildings $\SBuilding_i$. Every Euclidean building $\EBuilding$ decomposes as a direct product $\EBuilding = \EBuilding_1 \times \cdots \times \EBuilding_n$ of irreducible Euclidean buildings $\EBuilding_i$. In both cases the irreducible factors are the subcomplexes of the form $\typ^{-1} \CoxeterDiagram$ where $\CoxeterDiagram$ is a connected component of the Coxeter diagram.
\end{fact}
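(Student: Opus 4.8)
The plan is to reduce the global statement to the already-recorded decomposition of the \emph{apartments} as Coxeter complexes and then glue. Write $\CoxeterDiagram=\typ\SBuilding$, let $\CoxeterDiagram_1,\dots,\CoxeterDiagram_n$ be its connected components, and let $\AltTypes_i\subseteq\Types$ be the vertex set of $\CoxeterDiagram_i$. Using the globally defined type function coming from $\ModelProjection\colon\SBuilding\to\ModelChamber$, set $\SBuilding_i\defeq\typ^{-1}\CoxeterDiagram_i$, i.e.\ the subcomplex of all cells $\Cell$ with $\typ\Cell\subseteq\AltTypes_i$; define $\EBuilding_i$ analogously. For every apartment $\Apartment$, Observation~\ref{obs:spherical_join_decomposition} together with the discussion of $\typ\SApartment$ above (and, for the passage from the combinatorial to the metric statement, \cite[Section~3.3]{klelee97}) gives $\Apartment=\Apartment_1*\cdots*\Apartment_n$ with $\Apartment_i=\Apartment\intersect\SBuilding_i$ the irreducible Coxeter subcomplex on the types in $\AltTypes_i$; moreover a cell of $\Apartment$ whose type is $\subseteq\AltTypes_i$ automatically lies in $\Apartment_i$ (its other join components are empty). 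Since apartments cover $\SBuilding$, the sets $\Apartment\intersect\SBuilding_i$ cover $\SBuilding_i$.

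Next I would check that $\SBuilding_i$, with $\ApartmentSystem_i\defeq\{\Apartment\intersect\SBuilding_i\mid\Apartment\in\ApartmentSystem\}$, is itself a building. Axiom~\eqref{item:apartments_are_coxeter} holds because each $\Apartment\intersect\SBuilding_i$ is the Coxeter complex $\Apartment_i$, whose diagram is $\CoxeterDiagram_i$; since $\CoxeterDiagram_i$ is connected, $\SBuilding_i$ will be irreducible once it is known to be a building. For~\eqref{item:plenty_of_apartments}: two points of $\SBuilding_i$ lie in cells of type $\subseteq\AltTypes_i$; by~\eqref{item:plenty_of_apartments} for $\SBuilding$ they lie in a common apartment $\Apartment$, and by the previous paragraph those cells lie in $\Apartment\intersect\SBuilding_i\in\ApartmentSystem_i$. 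The delicate point is~\eqref{item:compatible_apartments}: given $\Apartment^{(1)}\intersect\SBuilding_i$ and $\Apartment^{(2)}\intersect\SBuilding_i$ sharing a chamber $\Chamber$ of $\SBuilding_i$ (a cell of type $\AltTypes_i$ in $\SBuilding$), one extends $\Chamber$ inside $\Apartment^{(1)}$ and $\Apartment^{(2)}$ to chambers $\AltChamber^{(1)},\AltChamber^{(2)}$ of $\SBuilding$ by adjoining chambers of the remaining join factors and then feeds these into~\eqref{item:compatible_apartments} for $\SBuilding$; the required type-preserving isometry $\Apartment^{(1)}\intersect\SBuilding_i\to\Apartment^{(2)}\intersect\SBuilding_i$ is obtained by restricting to the $i$-th join factor. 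Verifying that the resulting map really fixes the overlap $(\Apartment^{(1)}\intersect\Apartment^{(2)})\intersect\SBuilding_i$ pointwise is exactly the step I expect to be the main obstacle and the one requiring genuine care (a bare composition of two applications of~\eqref{item:compatible_apartments} need not fix a point of the overlap that is not visited by the intermediate apartment, so one must choose the extensions compatibly, e.g.\ with the help of a retraction onto $\Apartment^{(2)}$ centered at $\AltChamber^{(2)}$); the corresponding fact about buildings is the standard reduction in \cite[Section~4]{abrbro}.

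Finally I would assemble the global decomposition. Define $\Phi\colon\SBuilding_1*\cdots*\SBuilding_n\to\SBuilding$ on cells by $\Cell_1*\cdots*\Cell_n\mapsto\Cell_1\vee\cdots\vee\Cell_n$, where each $\Cell_j$ is a (possibly empty) cell of $\SBuilding_j$; this join exists in $\SBuilding$ because any two of the $\Cell_j$ lie in a common apartment by~\eqref{item:plenty_of_apartments}, and inside a Coxeter complex cells lying in distinct join factors are always joinable, so induction on $n$ produces the cell $\Cell_1\vee\cdots\vee\Cell_n$. One checks $\Phi$ is a bijection on cells: injectivity and surjectivity both follow from the fact that a cell $\Cell$ of $\SBuilding$ is recovered as the join of its faces of type $\typ\Cell\intersect\AltTypes_j$, a decomposition valid inside any apartment containing $\Cell$ and hence independent of the apartment. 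On each cell $\Phi$ is an isometry, since both sides carry the iterated spherical-join metric~\eqref{eq:spherical_join}, again checked within one apartment. Thus $\Phi$ is an isomorphism of $\ModelSpace{1}$-polyhedral complexes restricting to the inclusions $\SBuilding_i\into\SBuilding$, i.e.\ $\SBuilding=\SBuilding_1*\cdots*\SBuilding_n$; each factor is a building by the previous paragraph and irreducible because $\CoxeterDiagram_i$ is connected. The Euclidean case runs identically, with spherical joins of apartments replaced by direct products (the metric~\eqref{eq:spherical_join} replaced by the product metric) and~\eqref{item:compatible_apartments} used in the same way; the only extra remark is that the direct-product decomposition of a Euclidean Coxeter complex into irreducibles is canonical, so once an apartment system is fixed the whole construction is independent of choices.
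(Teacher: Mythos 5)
The paper states this as a \emph{Fact} and gives no proof of its own; the very next result, Proposition~\ref{prop:chamber_decomposition_induces_building_decomposition}, cited from Kleiner--Leeb, is what the paper leans on for the spherical case (a join decomposition of the model chamber induces a join decomposition of the building, with the model-chamber decomposition coming from Theorem~\ref{thm:non_obtuse_angle_decomposition}), and the Euclidean case is treated as equally standard. Your proposal is therefore an independent argument rather than a reconstruction of one in the paper, and the two differ in emphasis: you verify the building axioms for each $\SBuilding_i$ directly and then assemble the isomorphism $\Phi$ by hand, while the paper's route reduces everything to a single decomposition of $\ModelChamber$ and outsources the transfer to \cite{klelee97}. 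Your version is more self-contained; the paper's is shorter because it delegates precisely the part you find delicate.

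The one place you have not closed is the verification of \eqref{item:compatible_apartments} for $\SBuilding_i$, and it is a genuine gap as written. Axiom~\eqref{item:compatible_apartments} in this paper quantifies over apartments of $\SBuilding$ sharing a common \emph{chamber} of $\SBuilding$, whereas your $\Apartment^{(1)}$ and $\Apartment^{(2)}$ are only known to share a cell $\Chamber$ of type $\AltTypes_i$. Extending $\Chamber$ to full chambers $\AltChamber^{(1)},\AltChamber^{(2)}$ of $\SBuilding$ inside each apartment separately need not produce the same chamber, so you cannot feed these directly into \eqref{item:compatible_apartments}; you need either the strengthened compatibility for apartments sharing any common simplex (a theorem in \cite{abrbro}, not a defining axiom here) or a chain of apartments each consecutive pair of which shares a genuine chamber. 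You flag this and gesture at a retraction argument, but the restricted map $\Apartment^{(1)}\intersect\SBuilding_i\to\Apartment^{(2)}\intersect\SBuilding_i$ is not yet shown to fix the overlap pointwise. The remainder — the induction giving $\Cell_1\vee\cdots\vee\Cell_n$ via pairwise joins inside a common apartment, the bijectivity of $\Phi$ via recovering $\Cell$ from its $(\typ\Cell\intersect\AltTypes_j)$-type faces, the cell-wise metric check, and the Euclidean analogue by direct products — is sound.
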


In the case of a spherical building $\SBuilding$ this will be important later, so we make the statement a bit more explicit. Note first that a join decomposition of $\SBuilding$ gives rise to a join decomposition of $\ModelChamber$. The converse is also true:

\begin{prop}[{\cite[Proposition~3.3.1]{klelee97}}]
\label{prop:chamber_decomposition_induces_building_decomposition}
Let $\SBuilding$ be a spherical building and let $\ModelProjection \colon \SBuilding \to \ModelChamber$ be the projection onto the model chamber. If the model chamber decomposes as $\ModelChamber = \Chamber_1 * \cdots * \Chamber_n$ then the building decomposes as $\SBuilding = \SBuilding_1 * \cdots * \SBuilding_n$ where $\SBuilding_i = \ModelProjection^{-1}(\Chamber_i)$.
\end{prop}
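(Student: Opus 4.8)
The plan is to exhibit an explicit isomorphism of $\ModelSpace{1}$-polyhedral complexes $\SBuilding \to \SBuilding_1 * \cdots * \SBuilding_n$ that is the identity on underlying sets. Write $\Types = \Types_1 \dunion \cdots \dunion \Types_n$ for the partition of the vertex set of $\ModelChamber$ in which $\Chamber_i$ is the face spanned by $\Types_i$; then $\SBuilding_i = \ModelProjection^{-1}(\Chamber_i)$ is precisely the full subcomplex of those cells $\Cell$ of $\SBuilding$ with $\typ\Cell \subseteq \Types_i$. For a cell $\Cell$ of $\SBuilding$ let $\Cell_{(i)}$ denote its face of type $\typ\Cell \intersect \Types_i$.

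The first step is to show that every cell -- and it is enough to check chambers -- splits isometrically as $\Cell = \Cell_{(1)} * \cdots * \Cell_{(n)}$. A chamber $\Chamber$ of $\SBuilding$ is type-preservingly isometric to $\ModelChamber$ (the restriction of $\ModelProjection$ does the job), and $\ModelChamber = \Chamber_1 * \cdots * \Chamber_n$ is a join decomposition; by Theorem~\ref{thm:non_obtuse_angle_decomposition} (together with Observation~\ref{obs:spherical_join_decomposition}) such a decomposition is detected purely by which pairs of vertices lie at distance $\pi/2$ -- equivalently, the join factors of $\ModelChamber$ are the classes of the relation $\sim_v$ on its vertices, namely $\Types_1,\ldots,\Types_n$. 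Transporting this along the isometry and applying the same theorem to $\Chamber$ gives the splitting of $\Chamber$; in particular each $\Chamber_{(i)}$ is a chamber of $\SBuilding_i$. Passing to faces, the splitting holds for arbitrary cells, and it identifies the intrinsic metric of $\Cell$ with the join metric of its pieces.

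The main work -- and the step I expect to be the real obstacle -- is the reverse direction: for arbitrary cells $\Cell^1$ of $\SBuilding_1,\ldots,\Cell^n$ of $\SBuilding_n$ I must show that the join $\Cell^1 \vee \cdots \vee \Cell^n$ exists in $\SBuilding$, so that $\Cell \mapsto (\Cell_{(1)},\ldots,\Cell_{(n)})$ is a bijection of face posets. The key input is that apartments decompose compatibly: any apartment $\SApartment$ is an essential spherical Coxeter complex with model chamber $\ModelChamber$, and since each $\Types_i$ is a union of connected components of the Coxeter diagram, $\SApartment$ splits as $\SApartment_1 * \cdots * \SApartment_n$ with $\SApartment_i = \SApartment \intersect \SBuilding_i$ (standard Coxeter theory, or \cite[Section~3.3]{klelee97}); consequently $\tau_1 * \cdots * \tau_n$ is a cell of $\SApartment$ for any cells $\tau_i$ of $\SApartment_i$. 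I would then induct on the number of factors: $\Cell^1$ and $\Cell^2$ lie in a common apartment by \eqref{item:plenty_of_apartments}, and there $\Cell^1 * \Cell^2$ is a cell, so $\Cell^1 \vee \Cell^2$ exists; given $\rho \defeq \Cell^1 \vee \cdots \vee \Cell^{n-1}$ (whose type lies in $\Types_1 \union \cdots \union \Types_{n-1}$, hence which lies in the sub-join $\SApartment_1 * \cdots * \SApartment_{n-1}$ of any apartment containing it), choose an apartment containing $\rho$ and $\Cell^n$ and argue as before to get $\rho \vee \Cell^n = \Cell^1 \vee \cdots \vee \Cell^n$. Uniqueness of the cell with prescribed faces $\Cell_{(i)}$ is automatic, since cells of $\SBuilding$ are simplices and hence determined by their vertex sets.

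Finally I would assemble: the map $\Cell \mapsto \Cell_{(1)} * \cdots * \Cell_{(n)}$ restricts on each cell of $\SBuilding$ to the isometry produced in the first step, these isometries are compatible with passage to faces, and by the third step they together cover every cell of $\SBuilding_1 * \cdots * \SBuilding_n$ exactly once; so they glue to an isomorphism of $\ModelSpace{1}$-polyhedral complexes $\SBuilding \cong \SBuilding_1 * \cdots * \SBuilding_n$ with $\SBuilding_i = \ModelProjection^{-1}(\Chamber_i)$, as claimed. Everything outside the third paragraph is bookkeeping with types and the join construction of Section~\ref{sec:metric_spaces}; the only genuinely geometric ingredients are the presence of a common apartment for two cells and the join decomposition of a Coxeter complex along its diagram. (That each $\SBuilding_i$ is itself a spherical building is not needed here, but now follows.)
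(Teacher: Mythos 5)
The paper does not give its own proof of this proposition: it cites \cite[Proposition~3.3.1]{klelee97} and moves on. So there is no in-paper argument to compare against; I can only evaluate your proof on its own terms.

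Your proof is correct and its structure is the natural one: (i) each chamber, being type-preservingly isometric to $\ModelChamber$, inherits the join splitting, and this passes to faces; (ii) for cells $\Cell^i \subseteq \SBuilding_i$, a common apartment together with the Coxeter-complex decomposition $\SApartment = \SApartment_1 * \cdots * \SApartment_n$ (which the paper itself takes from \cite[Section~3.3]{klelee97}) supplies the join $\Cell^1 \vee \cdots \vee \Cell^n$, with the passage from two points to two cells handled correctly via carriers; (iii) bijectivity of $\Cell \mapsto (\Cell_{(1)},\ldots,\Cell_{(n)})$ on face posets plus the cell-wise isometries give an isomorphism of $\ModelSpace{1}$-polyhedral complexes. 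This is the expected route, and step (ii) --- the existence of joins across the $\SBuilding_i$ --- is indeed the only place where the building axioms do real work.

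One small imprecision: you write that ``the join factors of $\ModelChamber$ are the classes of the relation $\sim_v$ on its vertices, namely $\Types_1,\ldots,\Types_n$.'' That is literally true only for the irreducible decomposition supplied by Theorem~\ref{thm:non_obtuse_angle_decomposition}. An arbitrary join decomposition $\ModelChamber = \Chamber_1 * \cdots * \Chamber_n$ only forces each $\Types_i$ to be a \emph{union} of $\sim_v$-classes (equivalently a union of connected components of the diagram); they need not be single classes. This does not break anything, because the property you actually use --- that two vertices in different $\Types_i$'s lie at distance $\pi/2$, and that this is the criterion for a vertex partition to give a join decomposition (Observation~\ref{obs:spherical_join_decomposition}) --- is correct and is exactly what transports along the type-preserving isometry $\Chamber \cong \ModelChamber$. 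Phrasing it as ``each $\Types_i$ is a union of $\sim_v$-classes'' (which you in fact do in the next paragraph when you invoke connected components) would be the accurate statement.
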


Together with Observation~\ref{obs:spherical_join_decomposition} and Observation~\ref{obs:acute_edge_equivalence_relation} this gives us two ways to determine whether two adjacent vertices lie in the same join factor:

\begin{obs}
\label{obs:vertices_not_in_same_join_factor}
Let $\SBuilding$ be a spherical building. Two adjacent vertices $\Vertex$ and $\AltVertex$ lie in the same irreducible join factor of $\SBuilding$ if the following equivalent conditions are satisfied:
\begin{enumerate}
\item there is an edge path in $\typ \SBuilding$ that connects $\typ \Vertex$ to $\typ \AltVertex$.
\item $d(\Vertex,\AltVertex)<\pi/2$.\qed
\end{enumerate}
\end{obs}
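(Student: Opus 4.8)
The plan is to localize the whole statement to the chamber spanned by the two vertices and then stitch together three results already proved. Since $\Vertex$ and $\AltVertex$ are adjacent, they span an edge, and that edge lies in some chamber $\Chamber$ of $\SBuilding$. Because our spherical Coxeter complexes (hence the apartments of $\SBuilding$, hence $\Chamber$) are essential, $\Chamber$ is a spherical simplex with non-obtuse angles, and the projection $\ModelProjection$ restricts to an isometry $\Chamber \to \ModelChamber$. Under this isometry $\typ\Vertex$ and $\typ\AltVertex$ are two (distinct) vertices of $\ModelChamber$ with $d(\typ\Vertex,\typ\AltVertex) = d(\Vertex,\AltVertex)$, and $\typ\SBuilding$ is the Coxeter diagram carried by the vertex set $\Types$ of $\ModelChamber$.

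\textbf{Equivalence of (i) and (ii).} First I would unwind the definition of the Coxeter diagram: the complement in $\ModelChamber$ of a vertex $i$ is a facet $\BigCell_i$, and an edge of $\typ\SBuilding$ between $i$ and $j$ is by definition the assertion that $\BigCell_i$ and $\BigCell_j$ are not perpendicular, i.e.\ $\BigCell_i \sim_f \BigCell_j$. Thus an edge path from $\typ\Vertex$ to $\typ\AltVertex$ is precisely the relation $\BigCell_{\typ\Vertex} \approx_f \BigCell_{\typ\AltVertex}$, which by the Observation following Observation~\ref{obs:spherical_join_decomposition} is equivalent to $\typ\Vertex \approx_v \typ\AltVertex$, where $\approx_v$ is the transitive hull of ``distance $\neq \pi/2$''. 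Now I invoke Observation~\ref{obs:acute_edge_equivalence_relation} for the non-obtuse simplex $\ModelChamber$: there the relation ``distance $<\pi/2$'' is already an equivalence relation, and since all vertex distances in $\ModelChamber$ are $\le\pi/2$, ``distance $\neq\pi/2$'' and ``distance $<\pi/2$'' coincide; hence $\approx_v$ is simply ``distance $<\pi/2$''. Therefore (i) holds iff $d(\typ\Vertex,\typ\AltVertex)<\pi/2$, which by the isometry $\Chamber\to\ModelChamber$ is exactly (ii).

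\textbf{The conditions force a common join factor.} By Observation~\ref{obs:spherical_join_decomposition} a decomposition $\ModelChamber = \Chamber' * \Chamber''$ into complementary faces is equivalent to every facet through $\Chamber'$ being perpendicular to every facet through $\Chamber''$; since those facets are exactly the complements of the vertices of $\Chamber''$ and of $\Chamber'$ respectively, this says there is no edge of $\typ\SBuilding$ joining a vertex of $\typ\Chamber'$ to a vertex of $\typ\Chamber''$. Iterating, the finest join decomposition $\ModelChamber = \Chamber_1 * \cdots * \Chamber_k$ corresponds to the partition of $\Types$ into connected components of $\typ\SBuilding$. Condition (i) places $\typ\Vertex$ and $\typ\AltVertex$ in the same component, hence as vertices of the same factor $\Chamber_\ell$. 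By Proposition~\ref{prop:chamber_decomposition_induces_building_decomposition} the building splits accordingly as $\SBuilding = \SBuilding_1 * \cdots * \SBuilding_k$ with $\SBuilding_\ell = \ModelProjection^{-1}(\Chamber_\ell)$, and since $\typ\Vertex, \typ\AltVertex \in \Chamber_\ell$ we get $\Vertex, \AltVertex \in \ModelProjection^{-1}(\Chamber_\ell) = \SBuilding_\ell$, which is the irreducible join factor in question.

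\textbf{Main obstacle.} There is no serious difficulty; the only thing to be careful about is the bookkeeping linking the three transitive hulls ($\sim_f$/$\approx_f$ on facets, $\sim_v$/$\approx_v$ on vertices of $\ModelChamber$, and the diagram-path relation) and making sure the essential-ness convention is genuinely in force so that $\Chamber$, and hence $\ModelChamber$, really is a non-obtuse simplex and Observation~\ref{obs:acute_edge_equivalence_relation} applies. Once that is in place, each implication is a direct translation between the combinatorial, metric, and decomposition-theoretic statements already established.
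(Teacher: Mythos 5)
Your proof is correct and follows essentially the same route the paper intends: the paper states this observation without proof, remarking only that it follows from Observation~\ref{obs:spherical_join_decomposition}, Observation~\ref{obs:acute_edge_equivalence_relation}, and Proposition~\ref{prop:chamber_decomposition_induces_building_decomposition}, and your argument is precisely the unpacking of that remark, with the correct bookkeeping of $\sim_v$, $\sim_f$, and their transitive hulls. The only thing you could add is the (immediate) converse --- that adjacent vertices in a common irreducible factor satisfy (i) because their types lie in a single connected component of the diagram and satisfy (ii) because irreducible spherical Coxeter complexes have all cell diameters $<\pi/2$ --- but this is a one-line remark that does not change the substance.
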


Let $\Building$ be either a spherical or a Euclidean building. In Section~\ref{sec:metric_spaces} we described a natural cell structure on the link $\Link \Cell$ of a cell $\Cell$ consisting of $\BigCell \direction \Cell$ where $\BigCell$ is a coface of $\Cell$. Moreover, for every apartment $\Apartment$ of $\Building$ the subspace $\Link_\Apartment \Cell$ of directions that point into $\Apartment$ form a subcomplex. The following is fundamental, cf.\ Proposition~4.9 in \cite{abrbro}:

\begin{fact}
\label{fact:links_are_spherical_buildings}
Let $\Building$ be a spherical or Euclidean building and let $\Cell \subseteq \Building$ be a cell. Then $\Link \Cell$ is a spherical building with apartment system $\Link_\Apartment \Cell$ where $\Apartment$ ranges over the apartments that contain $\Cell$. Its Coxeter diagram is obtained from $\typ \Building$ by removing $\typ \Cell$.
\end{fact}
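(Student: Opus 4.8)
The plan is to verify the building axioms \eqref{item:apartments_are_coxeter}--\eqref{item:compatible_apartments} for $\Link \Cell$ with the claimed family of subcomplexes, and then to read off the Coxeter diagram inside a single apartment. Everything rests on two bookkeeping remarks. First, the cells of $\Link \Cell$ are exactly the sets $\BigCell \direction \Cell$ with $\BigCell$ a proper coface of $\Cell$, and for an apartment $\Apartment$ of $\Building$ containing $\Cell$ the subcomplex $\Link_\Apartment \Cell$ consists of those $\BigCell \direction \Cell$ with $\Cell \le \BigCell \subseteq \Apartment$. Second, since apartments are subcomplexes of $\Building$, an apartment containing a point contains that point's carrier; hence an apartment that contains two cells contains every common face of them, and --- applying \eqref{item:plenty_of_apartments} to interior points --- any two cells of $\Building$ lie in a common apartment.

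For \eqref{item:apartments_are_coxeter} I would show that $\Link_\Apartment \Cell$ is a spherical Coxeter complex for every Coxeter complex $\Apartment$ (spherical or Euclidean) containing $\Cell$. Let $\Weyl$ denote the reflection group of $\Apartment$. The stabilizer of $\Cell$ in $\Weyl$ is type preserving and $\Cell$ carries one vertex of each of its types, so this stabilizer fixes $\Cell$ pointwise; it is the parabolic subgroup generated by the reflections at the (finitely many) walls of $\Apartment$ through $\Cell$, and in the Euclidean case it is finite. Each such reflection, at a wall $\Wall \supseteq \Cell$, induces a reflection of the sphere $\Link_\Apartment \Cell$ in the hyperplane $\Wall \direction \Cell$, and the chambers it cuts out are the $\BigCell \direction \Cell$ with $\BigCell \supseteq \Cell$ a chamber of $\Apartment$; so $\Link_\Apartment \Cell$ is a spherical Coxeter complex. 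In particular, in both the spherical and the Euclidean case all apartments of $\Link \Cell$ are spherical, so once \eqref{item:plenty_of_apartments}--\eqref{item:compatible_apartments} hold, $\Link \Cell$ is a spherical building (and an $\ModelSpace{1}$-polytopal complex, as links always are).

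Axiom \eqref{item:plenty_of_apartments}: given $\xi, \eta \in \Link \Cell$, they lie in cells $\BigCell_1 \direction \Cell$ and $\BigCell_2 \direction \Cell$; by the second remark some apartment $\Apartment$ of $\Building$ contains $\BigCell_1$ and $\BigCell_2$, hence contains $\Cell$, and then $\xi, \eta \in \Link_\Apartment \Cell$. Axiom \eqref{item:compatible_apartments}: if $\Link_{\Apartment_1}\Cell$ and $\Link_{\Apartment_2}\Cell$ share a chamber $\BigCell \direction \Cell$, then the chamber $\BigCell$ of $\Building$ lies in $\Apartment_1 \intersect \Apartment_2$, so \eqref{item:compatible_apartments} for $\Building$ yields a type-preserving isometry $\phi \colon \Apartment_1 \to \Apartment_2$ fixing $\Apartment_1 \intersect \Apartment_2$ pointwise. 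Since $\Cell \subseteq \BigCell \subseteq \Apartment_1 \intersect \Apartment_2$, the map $\phi$ fixes $\Cell$ pointwise, and its differential at an interior point of $\Cell$ is a type-preserving isometry $\Link_{\Apartment_1}\Cell \to \Link_{\Apartment_2}\Cell$. It restricts to the identity on $\Link_{\Apartment_1}\Cell \intersect \Link_{\Apartment_2}\Cell$: this intersection equals $\Link_{\Apartment_1 \intersect \Apartment_2}\Cell$ (local uniqueness of geodesics in the $\CAT{\kappa}$ space $\Building$ turns the two germs of any common direction into a single geodesic lying in $\Apartment_1 \intersect \Apartment_2$), and $\phi$ is the identity there.

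It remains to identify the diagram. Fix an apartment $\Apartment \supseteq \Cell$ and a chamber $\BigCell$ of $\Apartment$ with $\Cell \le \BigCell$; then $\BigCell \direction \Cell$ is a chamber of $\Link_\Apartment \Cell$ whose facets are the cells $\Panel_i \direction \Cell$, where $\Panel_i$ ranges over the panels of $\BigCell$ containing $\Cell$, i.e.\ the panels of cotype $i$ for $i \in \Types \setminus \typ \Cell$. This identifies the type set of $\Link \Cell$ with $\Types \setminus \typ \Cell$. For $i \ne j$ in $\Types \setminus \typ \Cell$ the diagram edge is governed by the angle between $\Panel_i \direction \Cell$ and $\Panel_j \direction \Cell$, which by the canonical link-of-link identification (cf.\ Figure~\ref{fig:link_of_link}, applied with $\Cell \le \Panel_i \intersect \Panel_j$) equals the angle between $\Panel_i$ and $\Panel_j$, namely the label of the edge between $i$ and $j$ in $\typ \Building$. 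Hence the diagram of $\Link \Cell$ is the full subdiagram of $\typ \Building$ on $\Types \setminus \typ \Cell$, and by \eqref{item:compatible_apartments} it does not depend on $\Apartment$. I expect the genuinely delicate parts to be this last angle-matching --- one must combine the link-of-link identification with the angle-based definition of the Coxeter diagram carefully --- together with the parallel check that each $\Link_\Apartment \Cell$ is really an \emph{essential} spherical Coxeter complex when $\Building$ is, so that passing to the link introduces no spurious join factor.
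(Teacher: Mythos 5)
The paper states this Fact without proof, citing only \cite{abrbro}, Proposition~4.9, so there is no internal argument to compare against. Your proof is the standard one and is correct. On the two points you flag as delicate: the dihedral angle between the facets $\Panel_i$ and $\Panel_j$ of $\BigCell$ is itself computed inside $\Link(\Panel_i \cap \Panel_j)$, so the link-of-link isometry applied at $\Cell \le \Panel_i \cap \Panel_j$ carries it exactly to the spherical angle between $\Panel_i \direction \Cell$ and $\Panel_j \direction \Cell$ inside $\Link((\Panel_i \cap \Panel_j) \direction \Cell)$; and since $\Panel_i$ and $\Panel_j$ both contain the nonempty cell $\Cell$, no pair of types in $\Types \setminus \typ\Cell$ bears an $\infty$-edge, as must be the case for a spherical diagram. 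Essentiality of $\Link_\Apartment\Cell$ holds because the fixed-point set in $\Apartment$ of the parabolic subgroup fixing $\Cell$ pointwise is the intersection of all walls through $\Cell$, which equals $\aff\Cell$; hence no nonzero direction perpendicular to $\Cell$ is fixed by that parabolic. Your handling of (B2) is also sound: the identity $\Link_{\Apartment_1}\Cell \cap \Link_{\Apartment_2}\Cell = \Link_{\Apartment_1 \cap \Apartment_2}\Cell$ follows directly from the definition of a direction as a germ of geodesics, since any common direction has a representative segment lying in both apartments.
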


Since spherical buildings are flag complexes, it follows from \cite[Theorem~II.5.4]{brihae} that Euclidean buildings are \CAT{0}-spaces and spherical buildings are \CAT{1}-spaces.

A statement similar to Fact~\ref{fact:links_are_spherical_buildings} holds for the asymptotic structure of Euclidean buildings. To describe it, we need a further notion. Let $\EBuilding$ be a Euclidean building. An apartment system $\ApartmentSystem$ of $\EBuilding$ is called a \emph{system of apartments} if given any two sectors $\Sector_1$ and $\Sector_2$ there exist subsectors $\Sector_i'$ of $\Sector_i$ and an apartment $\Apartment$ that contains $\Sector_1'$ and $\Sector_2'$. Note that asymptotically this implies that $\Infty\Sector_i = \Infty{\Sector_i'}$ and that $\Infty\Apartment$ contains $\Infty{\Sector_i'}$. Thus $\Union_{\Apartment \in \ApartmentSystem} \Infty\Apartment \subseteq \Infty\EBuilding$ is covered by the spherical Coxeter complexes $\Infty\Apartment, \Apartment \in \ApartmentSystem$. The Coxeter complexes allow to define a cell structure on $\Union_{\Apartment \in \ApartmentSystem} \Infty\Apartment$ which turns out to be a spherical building. We only state this for the complete system of apartments which covers all of $\Infty\EBuilding$ (see Theorem~11.79 in \cite{abrbro}):

\begin{fact}
The visual boundary of a Euclidean building is a spherical building. More precisely if $\EBuilding$ is a Euclidean building equipped with the complete system of apartments, then $\Infty\EBuilding$ is a spherical building whose chambers are the visual boundaries of sectors and whose apartment system consists of visual boundaries of apartments.
\end{fact}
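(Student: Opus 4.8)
The plan is to put the obvious cell structure on $\Infty\EBuilding$ and take the subcomplexes $\Infty\Apartment$, for $\Apartment$ an apartment of $\EBuilding$, as apartments. For such an $\Apartment$ the visual boundary $\Infty\Apartment$ is a spherical Coxeter complex under the finite reflection group $\InftyEWeyl$ (recorded in the discussion of Euclidean Coxeter complexes), and by the very definition of a sector its chambers are precisely the visual boundaries $\Infty\Sector$ of the sectors $\Sector\subseteq\Apartment$. Declaring a subset of $\Infty\EBuilding$ to be a cell whenever it is a cell of some $\Infty\Apartment$ produces a candidate $\ModelSpace{1}$-polyhedral structure (only finitely many shapes occur, so by \cite{bridson91} the quotient pseudo-metric is a metric); that the $\Infty\Apartment$ cover $\Infty\EBuilding$ in the complete apartment system was observed just above, so axiom~\eqref{item:apartments_are_coxeter} holds by construction. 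For axiom~\eqref{item:plenty_of_apartments}, given $\PointAtInfty_1,\PointAtInfty_2\in\Infty\EBuilding$, each $\PointAtInfty_i$ lies in some $\Infty\Apartment$ and hence in a closed chamber $\Infty\Sector_i$ for a sector $\Sector_i$; since the complete apartment system of a Euclidean building is a system of apartments, there are subsectors $\Sector_i'$ of $\Sector_i$ and an apartment $\Apartment$ with $\Sector_1',\Sector_2'\subseteq\Apartment$, and then $\PointAtInfty_i\in\Infty\Sector_i=\Infty{\Sector_i'}\subseteq\Infty\Apartment$.

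For axiom~\eqref{item:compatible_apartments}, suppose $\Infty\Apartment_1$ and $\Infty\Apartment_2$ share a chamber $\InftyChamber=\Infty\Sector_1=\Infty\Sector_2$ with $\Sector_i\subseteq\Apartment_i$. The sectors $\Sector_1,\Sector_2$ are asymptotic, hence contain a common subsector $\Sector'\subseteq\Apartment_1\intersect\Apartment_2$; as $\Sector'$ is a union of chambers of $\EBuilding$, axiom~\eqref{item:compatible_apartments} for $\EBuilding$ supplies a type-preserving isometry $\phi\colon\Apartment_1\to\Apartment_2$ restricting to the identity on $\Apartment_1\intersect\Apartment_2$. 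It carries sectors to sectors of the same type, so the induced isometry $\bar\phi\colon\Infty\Apartment_1\to\Infty\Apartment_2$ is a type-preserving isomorphism of spherical Coxeter complexes, and it remains only to see that $\bar\phi$ fixes $\Infty\Apartment_1\intersect\Infty\Apartment_2$ pointwise. Take $\PointAtInfty$ in this intersection and any $\Point\in\Sector'$; applying Proposition~\ref{prop:ray_from_point_to_point_at_infty} inside each of the \CAT{0}-spaces $\Apartment_1$, $\Apartment_2$, $\EBuilding$ one sees that the unique ray from $\Point$ to $\PointAtInfty$ in $\EBuilding$ already lies in $\Apartment_1\intersect\Apartment_2$, so it is fixed by $\phi$ and therefore $\bar\phi(\PointAtInfty)=\PointAtInfty$. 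The same device — pushing two apartments into a third along a shared subsector and invoking uniqueness of rays — shows that the cell structures carried by distinct $\Infty\Apartment$ agree on their overlaps, so the global cell structure is well defined. Altogether $\Infty\EBuilding$ is a spherical building whose chambers are the $\Infty\Sector$ and whose complete apartment system consists of the $\Infty\Apartment$.

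The genuine obstacle is not the axiom-checking but the two structural facts it rests on: that two asymptotic sectors of a Euclidean building contain a common subsector, and that the complete apartment system is a system of apartments. Both encode the ``flatness at infinity'' of the building, and follow from convexity of apartments together with uniqueness of geodesic rays in the \CAT{0}-space $\EBuilding$ and the standard retraction argument that brings two subsectors into a common apartment. I would either import these from \cite{abrbro} or establish them in this spirit; the rest is bookkeeping with the building axioms.
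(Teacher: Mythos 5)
The paper does not give its own proof: it records this statement as a Fact with a pointer to Theorem~11.79 of \cite{abrbro}, after sketching that a system of apartments furnishes a covering of $\Infty\EBuilding$ by spherical Coxeter complexes. Your proposal reconstructs the standard argument behind that citation, and the axiom checks you carry out are sound. In particular your handling of (B2) via a common subsector and uniqueness of geodesic rays in the \CAT{0}-space $\EBuilding$ is exactly the right mechanism, and you are right to flag "asymptotic sectors share a subsector" and "the complete apartment system is a system of apartments" as the structural inputs that do the real work (the former is a consequence of the latter plus elementary Euclidean geometry in a single apartment). The one place I would push harder is your claim that "the cell structures carried by distinct $\Infty\Apartment$ agree on their overlaps" follows from pushing two apartments into a third: the overlap $\Infty\Apartment_1\intersect\Infty\Apartment_2$ is generally larger than the shared chamber, and the cells in the overlap need not all be faces of a single shared chamber, so the argument would have to be run cell-by-cell. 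The cleaner and more standard route, which you might prefer, is to define the cell structure on $\Infty\EBuilding$ intrinsically -- a chamber at infinity is an asymptotic class of sectors, a lower-dimensional cell is the set of limit points of rays in a sector-face -- so that independence of the apartment is automatic and the compatibility you need never has to be verified separately.
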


We collect some general facts about buildings:

\begin{fact}
\label{fact:building}
Let $\Building$ be a spherical or Euclidean building.
\begin{enumerate}
\item The Weyl-distances on the apartments fit together to define a well-defined Weyl-distance $\WeylDistance$ on the chambers of $\Building$. That is, if $\WeylDistance_\Apartment$ denotes the Weyl-distance on an apartment $\Apartment$, then $\WeylDistance_\Apartment(\Chamber,\AltChamber)$ is the same for every apartment $\Apartment$ that contains $\Chamber$ and $\AltChamber$.\label{item:weyl_distance}
\item If $\Chamber$ is a chamber of $\Building$ and $\Cell$ is an arbitrary cell, then there is a unique chamber $\AltChamber \ge \Cell$ such that $\WeylDistance(\Chamber,\AltChamber)$ has minimal length. This element is called the \emph{projection of $\Chamber$ to $\Cell$} and denoted $\WeylProjection_\Cell \Chamber$. It has the property that every apartment that contains $\Chamber$ and $\Cell$ also contains $\AltChamber$. If $\AltCell$ is an arbitrary cell, then the projection of $\AltCell$ to $\Cell$ is $\WeylProjection_\Cell \AltCell \defeq \Intersect_{\Chamber \ge \AltCell} \WeylProjection_\Cell \Chamber$.
\item If $\Apartment$ is an apartment of $\Building$ and $\Chamber$ is a chamber of $\Apartment$, the \emph{retraction onto $\Apartment$ centered at $\Chamber$}, denoted $\Retraction{\Apartment}{\Chamber}$, is the map that (isometrically and in a type preserving way) takes a chamber $\AltChamber$ to the chamber $\AltChamber'$ of $\Apartment$ that is characterized by $\WeylDistance(\Chamber,\AltChamber) = \WeylDistance(\Chamber,\AltChamber')$. It is an isometry on apartments that contain $\Chamber$ and is generally contracting (both, in the usual sense and in terms of Weyl-distance).
\end{enumerate}
\end{fact}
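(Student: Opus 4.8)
These three statements are standard facts about buildings, and I would prove them directly from the axioms \eqref{item:apartments_are_coxeter}--\eqref{item:compatible_apartments} together with the combinatorial description of Weyl distance in a Coxeter complex: $\WeylDistance(\Chamber,\AltChamber)$ is the product of reflections read off any minimal gallery from $\Chamber$ to $\AltChamber$, and this product, being reduced, depends only on the sequence of panel types crossed. For part~(1), if $\SApartment_1$ and $\SApartment_2$ are apartments both containing chambers $\Chamber$ and $\AltChamber$, then they share a chamber, so by \eqref{item:compatible_apartments} there is a type-preserving isometry $\SApartment_1 \to \SApartment_2$ that is the identity on $\SApartment_1 \intersect \SApartment_2$ and hence fixes $\Chamber$ and $\AltChamber$. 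Such a map sends minimal galleries to minimal galleries and preserves panel types, so $\WeylDistance_{\SApartment_1}(\Chamber,\AltChamber) = \WeylDistance_{\SApartment_2}(\Chamber,\AltChamber)$; this is precisely the coherence needed to glue the $\WeylDistance_\SApartment$ into a single function $\WeylDistance$ on chambers of $\Building$.

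For part~(2), I would first work inside one apartment $\SApartment$ containing $\Chamber$ and $\Cell$; such an apartment exists because a chamber containing $\Cell$ and the point $\Chamber$ lie in a common apartment by \eqref{item:plenty_of_apartments}, and a subcomplex that contains interior points of cells contains those cells. In the Coxeter complex $\SApartment$ the chambers $\ge \Cell$ form a residue corresponding to a coset $\weyl \Weyl_J$ where $J$ is the cotype of $\Cell$, the chamber $\ge \Cell$ of minimal $\WeylDistance$-length from $\Chamber$ is the one indexed by the minimal-length coset representative, and it enjoys the gate property $\ell(\WeylDistance(\Chamber,\AltChamber)) = \ell(\WeylDistance(\Chamber,\WeylProjection_\Cell\Chamber)) + \ell(\WeylDistance(\WeylProjection_\Cell\Chamber,\AltChamber))$ for every chamber $\AltChamber \ge \Cell$. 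To pass to the whole building I would use the retraction $\Retraction{\SApartment}{\Chamber}$ from part~(3): it fixes $\Cell$ (as $\Cell \in \SApartment$) and preserves $\WeylDistance(\Chamber,\DummyArg)$, so it maps the residue of $\Cell$ in $\Building$ into the residue of $\Cell$ in $\SApartment$ without increasing $\WeylDistance$-lengths from $\Chamber$; hence the minimum of $\ell(\WeylDistance(\Chamber,\AltChamber))$ over $\AltChamber \ge \Cell$ in $\Building$ is attained inside $\SApartment$ at $\WeylProjection_\Cell\Chamber$, and the gate property, and with it uniqueness, carries over. Running the same construction inside any apartment $\SApartment'$ containing $\Chamber$ and $\Cell$ produces a chamber with the same defining minimality, which must equal $\WeylProjection_\Cell\Chamber$ by uniqueness, so $\WeylProjection_\Cell\Chamber \in \SApartment'$. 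The projection $\WeylProjection_\Cell\AltCell$ of a general cell and its stated properties are then formal consequences.

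For part~(3), define $\Retraction{\SApartment}{\Chamber}$ on chambers by sending $\AltChamber$ to the unique chamber $\AltChamber'$ of $\SApartment$ with $\WeylDistance(\Chamber,\AltChamber') = \WeylDistance(\Chamber,\AltChamber)$; this is well posed since $\SApartment$ is a Coxeter complex and $\WeylDistance$ is well defined by part~(1), and it extends to cells using part~(2). If $\SApartment'$ is an apartment containing $\Chamber$, the type-preserving isometry $\SApartment' \to \SApartment$ provided by \eqref{item:compatible_apartments} fixes $\Chamber$, hence preserves $\WeylDistance(\Chamber,\DummyArg)$, hence coincides with $\Retraction{\SApartment}{\Chamber}$ on $\SApartment'$, so that restriction is an isometry. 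For the contracting statement I would show that $\Retraction{\SApartment}{\Chamber}$ does not increase gallery distance, for which it suffices that two adjacent chambers $\AltChamber,\YetAltChamber$ sharing a panel $\Panel$ map to chambers that are equal or adjacent; this follows by working in an apartment containing $\Chamber$ and $\Panel$, where the $\WeylDistance$ from $\Chamber$ to $\AltChamber$ and to $\YetAltChamber$ equals $\WeylDistance(\Chamber,\WeylProjection_\Panel\Chamber)$ or $\WeylDistance(\Chamber,\WeylProjection_\Panel\Chamber)s$ with $s$ the type of $\Panel$, and the two chambers of $\SApartment$ at these distances share a panel. This yields the contraction in terms of $\WeylDistance$-length; the contraction for the \CAT{\kappa} metric then follows by reducing to a minimal gallery joining two given points and comparing it with its image, using that $\Retraction{\SApartment}{\Chamber}$ is an isometry in the direction of $\Chamber$.

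The genuinely load-bearing ingredient is the gate property of residues, together with the apartment-independence of projections and retractions; everything else is bookkeeping with the axioms and with the combinatorics of the Coxeter system $(\Weyl,S)$ or $(\EWeyl,S)$. The only point requiring care in the present metric setting is the passage between the combinatorial picture (galleries, Weyl distance) and the geometric one (geodesics, links of cells), but since buildings are flag complexes and links are themselves spherical buildings by Fact~\ref{fact:links_are_spherical_buildings}, the two are compatible; for the details I would refer to \cite[Sections~4, 5, and~11]{abrbro}.
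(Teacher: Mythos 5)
Your outline is the standard argument (part (1) from the isometries of axiom (B2), part (2) from the gate property of residues in a Coxeter complex transported into the building, part (3) from well-definedness of $\WeylDistance$), and it matches what the paper itself does, since the paper offers no proof but records this as a Fact and cites Propositions~4.81, 4.95 and 4.39 of \cite{abrbro} — exactly the results your sketch reconstructs and to which you also defer for details. The only step where your sketch is lighter than the cited source is the transfer of the gate property, and with it uniqueness and apartment-independence of $\WeylProjection_\Cell \Chamber$, from a single apartment to the whole building: the retraction $\Retraction{\Apartment}{\Chamber}$ preserves $\WeylDistance(\Chamber,\DummyArg)$ but not Weyl distances from other chambers, so this point needs the convexity/gallery arguments carried out in \cite{abrbro}, which both you and the paper implicitly invoke.
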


The existence of the objects is shown in Proposition~4.81, Proposition~4.95, and Proposition~4.39 of \cite{abrbro} respectively.

In the remainder of this paragraph we will be concerned with the relation between the asymptotic and the local structure of Euclidean buildings. The results are either general facts about \CAT{0}-spaces or can be reduced to the study of a single apartment using:

\begin{lem}
\label{lem:ray_in_apartment}
Let $\EBuilding$ be a Euclidean building and let $\Ray$ be a geodesic ray in $\EBuilding$. There is an apartment in the complete system of apartments of $\EBuilding$ that contains $\Ray$.
\end{lem}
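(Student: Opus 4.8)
The plan is to separate the statement into an asymptotic and a local part. First I reduce it to the claim that \emph{some} apartment passes through the initial point $\Point\defeq\Ray(0)$ of $\Ray$ and has the point at infinity $\InftyPoint\defeq\Infty{\Ray}$ in its visual boundary; then I produce such an apartment from a suitable sector.

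\emph{Reduction.} Recall first that every apartment of $\EBuilding$ is a convex, isometrically embedded subspace: applying the retraction $\Retraction{\Apartment}{\Chamber}$ of Fact~\ref{fact:building}, which fixes $\Apartment$ pointwise, is isometric on apartments through $\Chamber$, and is distance non-increasing on all of $\EBuilding$, to the (unique, since $\EBuilding$ is \CAT{0}) geodesic between two points of $\Apartment$ shows that this geodesic already lies in $\Apartment$. Hence each apartment $\Apartment$ is a \CAT{0} space in its own right and $\Infty{\Apartment}$ is a subset of $\Infty{\EBuilding}$. Now suppose $\Apartment$ is an apartment with $\Point\in\Apartment$ and $\InftyPoint\in\Infty{\Apartment}$. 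By Proposition~\ref{prop:ray_from_point_to_point_at_infty} applied inside $\Apartment$ there is a geodesic ray in $\Apartment$ that issues at $\Point$ and tends to $\InftyPoint$; regarded in $\EBuilding$ it is a geodesic ray issuing at $\Point$ and tending to $\InftyPoint$, so by Proposition~\ref{prop:ray_from_point_to_point_at_infty} applied in $\EBuilding$ it coincides with $\Ray$. Thus $\Ray\subseteq\Apartment$, and it suffices to find an apartment through $\Point$ whose visual boundary contains $\InftyPoint$.

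\emph{Construction.} Since $\Infty{\EBuilding}$ is a spherical building, its closed chambers cover it, so $\InftyPoint$ lies in some chamber $\InftyChamber$ of $\Infty{\EBuilding}$. Now I invoke the structure theory of Euclidean buildings: through the given point $\Point$ there is a sector $\Sector$ with $\Infty{\Sector}=\InftyChamber$; equivalently, the chambers at infinity ``visible from'' $\Point$ exhaust all of $\Infty{\EBuilding}$ (see \cite[Chapter~11]{abrbro}). By definition a sector of $\EBuilding$ is a sector of one of its apartments, so $\Sector\subseteq\Apartment$ for some apartment $\Apartment$. Then $\Point\in\Sector\subseteq\Apartment$ and $\InftyPoint\in\InftyChamber=\Infty{\Sector}\subseteq\Infty{\Apartment}$, and the reduction yields $\Ray\subseteq\Apartment$.

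\emph{Where the work is.} The first two steps are routine \CAT{0} geometry; the genuine input is the existence of a sector through the \emph{prescribed} point $\Point$ realizing a prescribed chamber at infinity. This is exactly the place where passing to the complete apartment system is essential, for with a smaller apartment system the apartments through a fixed point need not see every chamber of $\Infty{\EBuilding}$. In the locally finite situation relevant for the applications in these notes one can replace the citation by a compactness argument: fix a chamber $\Chamber$ with $\Point$ in its closure; for each $n$ there is by \eqref{item:plenty_of_apartments} an apartment $\Apartment_n$ containing an interior point of $\Chamber$ -- hence $\Chamber$ -- and $\Ray(n)$, and therefore, by convexity, containing $[\Point,\Ray(n)]=\Ray([0,n])$; since the building is locally finite, the apartments containing $\Chamber$ form a compact family, and a subsequential limit of the $\Apartment_n$ contains $\Ray([0,n])$ for every $n$, hence contains all of $\Ray$.
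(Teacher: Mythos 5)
Your proof is correct, but it takes a genuinely different route from the paper's. The paper works directly with roots (half-apartments): it picks a root of $\Infty\EBuilding$ containing $\Infty\Ray$, exhibits a corresponding root of $\EBuilding$ containing a subray of $\Ray$, and then slides the bounding wall backward along $\Ray$; each enlarged subcomplex is again a root in the sense of \cite[Definition~5.80]{abrbro}, hence lies in an apartment by \cite[Proposition~5.81~(2)]{abrbro}, and iterating this exhausts $\Ray$. That argument stays entirely inside the combinatorial theory of walls and roots from \cite{abrbro}, Section~5. You instead decouple the problem: a \CAT{0} reduction (convexity of apartments via the retraction, plus uniqueness of the geodesic ray from a point to a point at infinity, Proposition~\ref{prop:ray_from_point_to_point_at_infty}) shows it suffices to produce any apartment through $\Point=\Ray(0)$ whose visual boundary contains $\Infty\Ray$, and then a single structural fact about the complete apartment system from \cite[Chapter~11]{abrbro} supplies such an apartment. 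Each approach buys something: the paper's proof avoids the asymptotic theory entirely, while yours isolates exactly where the passage to the complete apartment system is used, which is opaque in the root-sliding argument. The compactness alternative you sketch for the locally finite case is a sound elementary substitute for the citation, though it only covers a sub-case of the lemma. One minor point of precision: since the tip of a sector must be a special vertex, it is cleaner to invoke the fact that for any chamber $\Chamber$ of $\EBuilding$ and any chamber $\InftyChamber$ of $\Infty\EBuilding$ the complete system contains an apartment with $\Chamber\subseteq\Apartment$ and $\InftyChamber\subseteq\Infty\Apartment$; applied to a chamber whose closure contains $\Point$ this feeds into your reduction in exactly the same way.
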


\begin{proof}
Let $\Infty\Root$ be a root of $\Infty\EBuilding$ that contains $\Infty\Ray$. There is a corresponding root $\Root$ of $\EBuilding$ that contains a subray of $\Ray$. Moving the wall that bounds $\Root$ backwards along $\Ray$ produces a subcomplex of $\EBuilding$ that is itself isomorphic to a root and thus a root in the sense of \cite[Definition~5.80]{abrbro}. It is therefore contained in an apartment by \cite[Proposition~5.81~(2)]{abrbro}, which makes it again a root in our sense. Iterating this procedure one obtains a root that fully contains $\Ray$.
\end{proof}

\begin{obs}
\label{obs:euclidean_building_decomposition}
Let $\EBuilding$ be a Euclidean building. A decomposition as a direct product $\EBuilding = \EBuilding_1 \times \cdots \times \EBuilding_n$ induces
\begin{enumerate}
\item for every point $\Point \in \EBuilding$ a decomposition $\Link \Point = \Link_{\EBuilding_1} \Point * \cdots \Link_{\EBuilding_n} \Point$.
\item for every cell $\Cell = \Cell_1 \times \cdots \times \Cell_n$ a decomposition $\Link \Cell = \Link_{\EBuilding_1} \Cell_1 * \cdots * \Link_{\EBuilding_n} \Cell_n$.
\item a decomposition $\Infty\EBuilding = \Infty\EBuilding_1 * \cdots * \Infty\EBuilding_n$.\qed
\end{enumerate}
\end{obs}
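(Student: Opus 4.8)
The plan is to reduce everything to the case of two factors---both the direct product and the spherical join are associative up to natural isometry---and then to invoke the standard description of geodesics in a product. Recall from \cite{brihae} that, for the $\ell^2$-product metric of Section~\ref{sec:metric_spaces}, a geodesic in $X_1 \times X_2$ is, up to rescaling, a pair $t \mapsto (\Path_1(t\cos\theta),\Path_2(t\sin\theta))$ of geodesics in the two factors with proportional parametrizations, for a unique angle $\theta \in [0,\pi/2]$, and the same holds for geodesic rays. By the definition of the angle between geodesics and by formula~\eqref{eq:spherical_join}, this identifies the space of directions at $\Point = (\Point_1,\Point_2)$ with $\Link_{X_1}\Point_1 * \Link_{X_2}\Point_2$ and the visual boundary with $\Infty{X_1} * \Infty{X_2}$, both isometrically. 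For the Euclidean building $\EBuilding = \EBuilding_1 \times \cdots \times \EBuilding_n$ this yields (1) and (3), provided one checks that these isometries respect the $\ModelSpace{1}$-polyhedral cell structures: cells of $\EBuilding$ are products $\Cell_1 \times \cdots \times \Cell_n$ of cells of the factors, cofaces of such a product cell are products of cofaces, and the set $(\Cell_1 \times \cdots \times \Cell_n) \direction \Point$ of directions into it is the join $(\Cell_1\direction\Point_1) * \cdots * (\Cell_n\direction\Point_n)$; at infinity, a sector of $\EBuilding$ is a product of sectors of the $\EBuilding_i$ (a special vertex is a product of special vertices, a chamber of $\Infty\EBuilding$ a join of chambers) and an apartment of $\EBuilding$ a product of apartments, which one can extract from Lemma~\ref{lem:ray_in_apartment}.

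For (2) I would combine (1) with the point-link decomposition~\eqref{eq:point_link_decomposition}. Choose an interior point $\Point_i$ of $\Cell_i$ and put $\Point = (\Point_1,\ldots,\Point_n)$, an interior point of $\Cell = \Cell_1 \times \cdots \times \Cell_n$. On the one hand, \eqref{eq:point_link_decomposition} gives $\Link\Point = (\Cell\direction\Point) * \Link\Cell$ and $\Link_{\EBuilding_i}\Point_i = (\Cell_i\direction\Point_i) * \Link_{\EBuilding_i}\Cell_i$; on the other hand, (1) gives $\Link\Point = \Link_{\EBuilding_1}\Point_1 * \cdots * \Link_{\EBuilding_n}\Point_n$, and clearly $\Cell\direction\Point = (\Cell_1\direction\Point_1) * \cdots * (\Cell_n\direction\Point_n)$. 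Substituting and rearranging the join,
\[
\Link\Point = \big((\Cell_1\direction\Point_1) * \cdots * (\Cell_n\direction\Point_n)\big) * \big(\Link_{\EBuilding_1}\Cell_1 * \cdots * \Link_{\EBuilding_n}\Cell_n\big) = (\Cell\direction\Point) * \big(\Link_{\EBuilding_1}\Cell_1 * \cdots * \Link_{\EBuilding_n}\Cell_n\big)\text{ .}
\]
In a spherical join $A * B$ with $A$ nonempty, the set of points at distance $\pi/2$ from every point of $A$ is exactly $B$, so the second join factor is determined by the first; comparing the two decompositions of $\Link\Point$ gives $\Link\Cell = \Link_{\EBuilding_1}\Cell_1 * \cdots * \Link_{\EBuilding_n}\Cell_n$, again as $\ModelSpace{1}$-polyhedral complexes.

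The whole argument is bookkeeping rather than substance, so I do not anticipate a real obstacle. The only points requiring care are the compatibility of the cell structures (not merely the metrics) under the join isometries and, for (3), the identification of the apartment system of the product building with the one assembled from products of apartments of the factors; this last point is where Lemma~\ref{lem:ray_in_apartment}, together with the fact that the irreducible factors of a (spherical or Euclidean) building are the preimages under $\typ$ of the connected components of its Coxeter diagram, does the work.
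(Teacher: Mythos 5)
The paper does not prove this Observation: it is stated with a terminal \qed and treated as standard. Your blind proof is a sound and reasonably economical way to supply the missing argument, and it follows the route one would expect: reduce to two factors, invoke the product-geodesic description from Bridson--Haefliger together with the angle formula to get the join decomposition of links of points and of the visual boundary isometrically, check the cell structures by hand, and then derive the cell-link decomposition from the point-link decomposition \eqref{eq:point_link_decomposition} via the cancellation observation that in a spherical join the second factor is recovered as the set of points perpendicular to the first.

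Two small remarks. The ``$A$ nonempty'' caveat in your join-cancellation step is actually unnecessary (if $A=\emptyset$ then $A*B=B$ and the conclusion is trivial), but keeping it is harmless. More substantively, the claim that an apartment of a product building is a product of apartments is not really a consequence of Lemma~\ref{lem:ray_in_apartment}: that lemma places a single ray inside an apartment, which is what you want for covering $\Infty\EBuilding$ by $\Infty\Apartment$'s, but the apartment decomposition itself comes from the type-preserving characterization of the irreducible factors (the Fact preceding Proposition~\ref{prop:chamber_decomposition_induces_building_decomposition}), which you invoke correctly in your last sentence. If you rewrite, I would sever Lemma~\ref{lem:ray_in_apartment} from the apartment-decomposition claim and use it only for the infinity argument.
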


Let $\Point \in \EBuilding$ be a point. By Proposition~\ref{prop:ray_from_point_to_point_at_infty} there is a projection from the building at infinity onto $\Link \Point$. Namely if $\PointAtInfty$ is a point of $\Infty\EBuilding$, there is a unique geodesic $\Ray$ that issues at $\Point$ and tends to $\PointAtInfty$. The direction $\Ray_\Point$ defined by this ray will also be called the direction defined by $\PointAtInfty$ and denoted $\PointAtInfty_\Point$.

\begin{obs}
Let $\EBuilding$ be a Euclidean building and $\Point \in \EBuilding$. The projection $\Infty\EBuilding \to \Link \Point$ that takes $\PointAtInfty$ to $\PointAtInfty_\Point$ maps cells to (but generally not onto) cells.
\end{obs}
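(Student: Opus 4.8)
The plan is to reduce the statement to a single apartment and there to recognise the spherical Coxeter complex $\Infty\Apartment$ as a subdivision of $\Link_\Apartment \Point$, so that each cell of the former is swallowed by a cell of the latter.

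First I would reduce to an apartment. Fix a cell $\Cell$ of $\Infty\EBuilding$ and choose a point $\PointAtInfty$ in its relative interior. By Lemma~\ref{lem:ray_in_apartment} there is an apartment $\Apartment$ in the complete system of apartments of $\EBuilding$ containing the ray $[\Point,\PointAtInfty)$; in particular $\Point \in \Apartment$ and $\PointAtInfty \in \Infty\Apartment$. Since $\Infty\Apartment$ is a subcomplex of the spherical building $\Infty\EBuilding$ and contains $\PointAtInfty$, it contains the carrier $\Cell$ of $\PointAtInfty$. For every $\PointAtInfty' \in \Cell$ the ray $[\Point,\PointAtInfty')$ is the unique geodesic ray from $\Point$ to $\PointAtInfty'$ in the \CAT{0}-space $\EBuilding$; since $\Apartment$ is a closed convex subset containing $\Point$ with $\PointAtInfty' \in \Infty\Apartment$, this ray lies in $\Apartment$, so that $\PointAtInfty'_\Point$ is the same whether computed in $\Apartment$ or in $\EBuilding$. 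Hence the restriction to $\Cell$ of the projection $\Infty\EBuilding \to \Link \Point$ agrees with the analogous projection $\Infty\Apartment \to \Link_\Apartment \Point$, and every cell of $\Link_\Apartment \Point$ is a cell of $\Link \Point$ — the cells of $\Link \Point$ being the $\BigCell \direction \Point$ for cofaces $\BigCell$ of the carrier of $\Point$, and those with $\BigCell \subseteq \Apartment$ forming $\Link_\Apartment \Point$. So it is enough to treat the Euclidean Coxeter complex $\Apartment$.

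In the apartment case the map $\PointAtInfty \mapsto \PointAtInfty_\Point$ identifies $\Infty\Apartment$ with $\Link_\Apartment \Point$ isometrically, because in a Euclidean space the asymptotic angle between two rays is the Euclidean angle between their directions, which is the angle at $\Point$ between the corresponding emanating directions. Under this identification I would match up the reflecting subspheres. By \eqref{eq:point_link_decomposition} the cell structure of $\Link \Point$, hence the walls of $\Link_\Apartment \Point$, comes from the cofaces of the carrier of $\Point$; concretely a wall of $\Link_\Apartment \Point$ is $\Wall \direction \Point$ for a wall $\Wall$ of $\Apartment$ passing through $\Point$. The sphere of directions $\Wall \direction \Point$ along $\Wall$ is exactly the image of $\Infty\Wall$, which is a wall of $\Infty\Apartment$ because the reflection of $\Apartment$ fixing $\Wall$ acts on $\Infty\Apartment$ as a reflection with wall $\Infty\Wall$. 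Thus every wall of $\Link_\Apartment \Point$ pulls back to a wall of $\Infty\Apartment$, so $\Infty\Apartment$ refines $\Link_\Apartment \Point$; consequently every cell of $\Infty\Apartment$ — in particular $\Cell$ — is contained in a single cell of $\Link_\Apartment \Point \subseteq \Link \Point$. That the image need not fill that cell is already clear when $\Point$ lies in the interior of a chamber of $\EBuilding$, where $\Link \Point$ is a single spherical cell.

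The main obstacle is the apartment case — more precisely, the bookkeeping that identifies the reflecting subspheres of $\Link_\Apartment \Point$ (the walls of the finite ``local'' reflection group fixing $\Point$) with walls at infinity, which is what makes $\Infty\Apartment$ a subdivision of $\Link_\Apartment \Point$; the reduction itself is a soft consequence of Lemma~\ref{lem:ray_in_apartment} and convexity of apartments. One should also keep in mind that $\Link \Point$ need not be a building when $\Point$ is not a vertex, so the argument has to be conducted at the level of the polyhedral cell structure rather than of chamber systems.
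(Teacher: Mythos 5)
Your proof is correct and follows the same route as the paper: reduce to an apartment via Lemma~\ref{lem:ray_in_apartment}, then observe that the cell structure of $\Infty\Apartment$ refines that of $\Link_\Apartment \Point$. The paper compresses the second step to ``clear from the definition''; your wall-matching argument simply makes that explicit.
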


\begin{proof}
Using Lemma~\ref{lem:ray_in_apartment} we may consider an apartment $\Apartment$ that contains $\Point$ and $\PointAtInfty$. So what remains to be seen is that the cell structure of $\Apartment$ is at least as fine as that of $\Link_{\Apartment} \Point$ but that is clear from the definition.
\end{proof}

This projection is compatible with the join decompositions in Observation~\ref{obs:euclidean_building_decomposition}. In particular:

\begin{obs}
\label{obs:asymptotic_to_local_join_compatible}
Let $\EBuilding = \EBuilding_1 \times \cdots \times \EBuilding_n$ be a Euclidean building and let $\Point \in \EBuilding$. A point at infinity $\PointAtInfty \in \Infty\EBuilding$ has distance $< \pi/2$ to $\Infty\EBuilding_i$ if and only if the direction $\PointAtInfty_\Point$ it defines at $\Point$ has distance $< \pi/2$ to $\Link_{\EBuilding_i} \Point$. In that case the direction defined by the projection of $\PointAtInfty$ to $\Infty\EBuilding_i$ is the same as the projection of $\PointAtInfty_\Point$ to $\Link_{\EBuilding_i} \Point$.\qed
\end{obs}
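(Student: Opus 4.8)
The plan is to reduce everything to a single apartment, where the statement becomes an elementary computation in the unit sphere of a Euclidean vector space. First I would let $\Ray = [\Point,\PointAtInfty)$ be the geodesic ray issuing at $\Point$ and tending to $\PointAtInfty$ (Proposition~\ref{prop:ray_from_point_to_point_at_infty}), so that $\PointAtInfty_\Point$ is by definition the direction $\Ray_\Point$, and invoke Lemma~\ref{lem:ray_in_apartment} to get an apartment $\Apartment$ of $\EBuilding$ in the complete system that contains $\Ray$, hence contains $\Point$ and $\PointAtInfty$. The product decomposition $\EBuilding = \EBuilding_1 \times \cdots \times \EBuilding_n$ is induced by the decomposition of $\typ\EBuilding$ into connected components $\CoxeterDiagram_j$ with $\EBuilding_j = \typ^{-1}\CoxeterDiagram_j$; since $\Apartment$ is a Euclidean Coxeter complex carrying the same Coxeter diagram, it decomposes compatibly as a direct product $\Apartment = \Apartment_1 \times \cdots \times \Apartment_n$ with each $\Apartment_j$ an apartment of $\EBuilding_j$, $\Point = (\Point_1,\ldots,\Point_n)$, and $\Ray$ a product $(\Ray_1,\ldots,\Ray_n)$ of (affinely reparametrised) rays, the $j$-th one constant if $\Ray$ has trivial $\EBuilding_j$-component. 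One then checks that the join decompositions of Observation~\ref{obs:euclidean_building_decomposition} restrict along $\Infty\Apartment \subseteq \Infty\EBuilding$ and along $\Link_\Apartment\Point \subseteq \Link\Point$ to the corresponding join decompositions of $\Infty\Apartment$ and $\Link_\Apartment\Point$; since the join coordinates of a point in a spherical join are intrinsic to the point, they are unchanged under these restrictions.

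Next I would work inside $\Apartment$. Writing $V$ for its vector space of translations and $V = V_1 \oplus \cdots \oplus V_n$ for the orthogonal decomposition coming from the $\Apartment_j$, one identifies both $\Infty\Apartment$ and $\Link_\Apartment\Point$ with the unit sphere $\Sphere(V)$; under these identifications both join decompositions become the standard decomposition $\Sphere(V) = \Sphere(V_1) * \cdots * \Sphere(V_n)$ (cf.\ Section~\ref{sec:spherical_geometry}), the ray $\Ray$ is the affine ray $t \mapsto \Point + t\Vector$ for the unit vector $\Vector$ representing both $\PointAtInfty$ and $\PointAtInfty_\Point$, and the projection $\Infty\EBuilding \to \Link\Point$ restricts on $\Infty\Apartment$ to the identity of $\Sphere(V)$. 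Decomposing $\Vector = \Vector_1 + \cdots + \Vector_n$ with $\Vector_j \in V_j$ and putting $c_j \defeq \norm{\Vector_j}$, the number $c_j$ is simultaneously the $j$-th join coordinate of $\PointAtInfty$ in $\Infty\EBuilding = \Infty\EBuilding_1 * \cdots * \Infty\EBuilding_n$ and the $j$-th join coordinate of $\PointAtInfty_\Point$ in $\Link\Point = \Link_{\EBuilding_1}\Point * \cdots * \Link_{\EBuilding_n}\Point$.

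Finally I would read off the two assertions. Applying the join metric \eqref{eq:spherical_join} to the grouping $\Infty\EBuilding = \Infty\EBuilding_i * (\Join_{j\ne i}\Infty\EBuilding_j)$ gives $d(\PointAtInfty,\Infty\EBuilding_i) = \arccos c_i$, and likewise $d(\PointAtInfty_\Point,\Link_{\EBuilding_i}\Point) = \arccos c_i$; hence both are $<\pi/2$ exactly when $c_i > 0$, which is the claimed equivalence. When $c_i>0$, the projection of $\PointAtInfty$ to $\Infty\EBuilding_i$ is the point represented by $\Vector_i/\norm{\Vector_i} \in \Sphere(V_i) = \Infty\Apartment_i$, that is $\Infty{\Ray_i}$, and the direction it defines at $\Point_i$ is $(\Ray_i)_{\Point_i}$, the class of $\Vector_i/\norm{\Vector_i}$ in $\Sphere(V_i) = \Link_{\Apartment_i}\Point_i$; the projection of $\PointAtInfty_\Point$ to $\Link_{\EBuilding_i}\Point$ is that same class, so the two coincide. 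The main obstacle I anticipate lies entirely in the first two steps: verifying that the two a priori unrelated join decompositions — that of the visual boundary and that of the link of $\Point$ — both restrict from the global building to the chosen apartment and there agree under the identifications of $\Infty\Apartment$ and $\Link_\Apartment\Point$ with $\Sphere(V)$; once that is in place, the rest is bookkeeping with the join metric.
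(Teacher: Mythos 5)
Your proof is correct and fleshes out exactly what the paper regards as immediate: the paper gives no argument here (the observation carries a bare \qed and is framed as an instance of the projection $\Infty\EBuilding \to \Link\Point$ being ``compatible with the join decompositions'' of Observation~\ref{obs:euclidean_building_decomposition}). Your reduction to an apartment via Lemma~\ref{lem:ray_in_apartment} followed by the identification of $\Infty\Apartment$ and $\Link_\Apartment\Point$ with $\Sphere(V)$ and the join-coordinate computation is the natural unpacking of that claim and matches the strategy used in the proofs of the surrounding observations.
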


\begin{obs}
\label{obs:flat_equivalent_perpendicular}
Let $\EBuilding$ be a Euclidean building and let $\Cell \subseteq \EBuilding$ be a cell. Let $\PointAtInfty$ be a point at infinity of $\EBuilding$ and let $\Busemann$ be a Busemann function centered at $\PointAtInfty$. The restriction of $\Busemann$ to $\Cell$ is constant if and only if $\PointAtInfty_\Point$ is perpendicular to $\Cell$ for every interior point $\Point$ of $\Cell$. In particular, in that case $\PointAtInfty_\Point$ is a direction of $\Link \Cell$.
\end{obs}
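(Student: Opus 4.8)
The plan is to reduce everything to a single apartment, where a Busemann function is affine. Fix an interior point $\Point$ of $\Cell$ and, using Lemma~\ref{lem:ray_in_apartment}, choose an apartment $\Apartment$ in the complete system that contains the ray $[\Point,\PointAtInfty)$. Since $\Apartment$ is a subcomplex and contains $\Point$, it contains the carrier of $\Point$, which is $\Cell$. As $\EBuilding$ is \CAT{0}, hence uniquely geodesic, and $\Apartment$ is an isometrically embedded convex subspace, distances between points of $\Apartment$ are the same whether measured in $\Apartment$ or in $\EBuilding$; therefore $\Busemann|_\Apartment$ is the Busemann function of $[\Point,\PointAtInfty)$ computed inside $\Apartment \cong \E^n$. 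The routine computation in Euclidean space then gives that $\Busemann|_\Apartment$ is affine, with gradient the unit vector $u$ representing the direction $\PointAtInfty_\Point$ at $\Point$ (with the paper's sign convention, $\Busemann|_\Apartment(\Point') = \scp{\Point'}{u} + c$ once an origin is fixed).

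With this in hand the equivalence is short. I would first note that, as $\Point$ lies in the relative interior of $\Cell$, the directions $[\Point,\AltPoint]_\Point$ for $\AltPoint \in \Cell$ fill out all of $\Cell \direction \Point$ and span the linear space $\aff(\Cell) - \Point$; consequently $\PointAtInfty_\Point$ is perpendicular to $\Cell$ if and only if $u \perp \aff(\Cell) - \Point$, which in turn holds if and only if the affine function $\Busemann|_\Apartment$ is constant on $\Cell$. Thus, if $\Busemann|_\Cell$ is constant then $\PointAtInfty_\Point$ is perpendicular to $\Cell$, and since $\Point$ was an arbitrary interior point this proves one implication. Conversely, if $\PointAtInfty_\Point$ is perpendicular to $\Cell$ for some interior point $\Point$, then $u$ annihilates $\aff(\Cell) - \Point$, so $\Busemann|_\Apartment$ is constant on $\Cell$ and hence so is $\Busemann|_\Cell$.

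The final clause is then immediate: by the definition of the geometric link, $\Link \Cell$ consists, inside the link of any interior point of $\Cell$, precisely of the directions that are perpendicular to $\Cell$, so in the situation above $\PointAtInfty_\Point \in \Link \Cell$.

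I do not expect a genuine obstacle. The two points to handle with a little care are the structural inputs---that by Lemma~\ref{lem:ray_in_apartment} one may take the apartment to contain the whole ray, and that apartments are convex so that restricting $\Busemann$ to an apartment again yields a Busemann function there---together with the elementary bookkeeping that the directions into $\Cell$ at a relative interior point span $\aff(\Cell) - \Point$. The fact that Busemann functions on $\E^n$ are affine is a one-line computation.
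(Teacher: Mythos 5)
Your proof is correct and follows essentially the same route as the paper's: both invoke Lemma~\ref{lem:ray_in_apartment} to place the situation inside a single apartment containing $\Cell$ and the ray toward $\PointAtInfty$, then use that a Busemann function on Euclidean space is affine with gradient in the direction of $\PointAtInfty$, so its level sets are perpendicular to that direction. You merely spell out the elementary linear-algebra step that the paper leaves implicit.
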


\begin{proof}
We use again Lemma~\ref{lem:ray_in_apartment} to obtain an apartment $\Apartment$ that contains $\PointAtInfty$ and $\Point$ (and thus $\Cell$). On that apartment $\Busemann$ is just an affine form whose level sets are perpendicular to the direction towards $\PointAtInfty$.
\end{proof}

\subsection*{Twin Buildings}

Twin buildings generalize spherical buildings. The crucial feature of spherical buildings is the opposition relation. An approach to twin buildings founded on the existence of an opposition relation has been described in \cite{abrvanmal01}. We will use this approach but we will not be very economical with our axioms.

Twin buildings will be defined to be pairs of polyhedral complexes and we fix some shorthand notation concerning such pairs: by a point, cell, etc.\ of a pair $(A,B)$ of polyhedral complexes we mean a point, cell, etc.\ of either $A$ or $B$. We also write $\Point \in (A,B)$, $\Cell \subseteq (A,B)$ and the like. A map $(A,B) \to (A',B')$ between pairs of polyhedral complexes is a pair of maps $A \to A'$ and $B \to B'$. The letter $\varepsilon$ refers to either $+$ or $-$ and, in each statement, $-\varepsilon$ refers to the other of the two.

For us a \emph{twin building} is a pair $(\Building_+,\Building_-)$ of (disjoint) buildings of same type together with an \emph{opposition relation} $\opm{} \subseteq \Building_+ \times \Building_-$ subject to the following conditions:
there exists a set $\ApartmentSystem$ of \emph{twin apartments} $(\Apartment_+,\Apartment_-)$, which are pairs of subcomplexes $\Apartment_\varepsilon$ of $\Building_\varepsilon$, satisfying
\begin{enumerate}[label=(TB\arabic{*}), ref=TB\arabic{*},leftmargin=*]
\setcounter{enumi}{-1}
\item every $\Apartment_\varepsilon$ with $(\Apartment_+,\Apartment_-) \in \ApartmentSystem$ is a Coxeter complex of the same type as $\Building_\varepsilon$.
\item any two points $\Point,\AltPoint \in (\Building_+,\Building_-)$ are contained in a common twin apartment\label{item:twin_building_common_apartment}
\item the relation $\op$ restricted to a twin apartment $(\Apartment_+,\Apartment_-)$ induces a type-preserving isomorphism of polyhedral complexes $\Apartment_+ \leftrightarrow \Apartment_-$.\label{item:twin_building_apartment_opposition}
\item if $\Cell_+$ and $\Cell_-$ are opposite panels, then being non-opposite is a bijective correspondence between the chambers that contain $\Cell_+$ and the chambers that contain $\Cell_-$.
\label{item:twin_building_panel_isomorphism}
\end{enumerate}

Two points $\Point_+ \in \Building_+$ and $\Point_- \in \Building_-$ are \emph{opposite} if $\Point_+ \op \Point_-$. To give a meaning to the last axiom, we have to observe that the opposition relation naturally induces an opposition relation on the cells: namely if $\Cell_+ \subseteq \Building_+$ and $\Cell_- \subseteq \Building_-$ are cells, we say that $\Cell_+$ is \emph{opposite} $\Cell_-$ if $\op$ induces a bijection $\Cell_+ \leftrightarrow \Cell_-$. By \eqref{item:twin_building_common_apartment} and \eqref{item:twin_building_apartment_opposition} this is equivalent to the condition that $\Cell_+$ and $\Cell_-$ contain interior points that are opposite. If this is the case, we also write $\Cell_+ \op \Cell_-$.

The buildings $\Building_+$ and $\Building_-$ are called the \emph{positive} and the \emph{negative half} of $(\Building_+,\Building_-)$. The type $\typ (\Building_+,\Building_-)$ of the twin building is the type of its halves. We denote the Weyl-distance (Fact~\ref{fact:building}~\eqref{item:weyl_distance}) on $\PosBuilding$, respectively $\NegBuilding$, by $\PosWeylDistance$, respectively $\NegWeylDistance$.

A group acts on a twin building if it acts on each of the buildings and preserves the opposition relation. The action is said to be \emph{strongly transitive} if it is transitive on pairs $(\Chamber,\PNApartments)$ where $\Chamber$ is a chamber of a twin apartment $\PNApartments$. As for spherical buildings this is equivalent to the action being transitive on pairs of opposite chambers.

Let $(\Apartment_+,\Apartment_-)$ be a twin apartment of a twin building $(\Building_+,\Building_-)$. Let $\Chamber_+ \subseteq \Apartment_+$ and $\Chamber_- \subseteq \Apartment_-$ be chambers. By \eqref{item:twin_building_apartment_opposition} there is a unique chamber $\AltChamber_-$ in $\Apartment_-$ that is opposite $\Chamber_+$. The Weyl-codistance $\WeylCoDistance(\Chamber_+,\Chamber_-)$ in $(\Apartment_+,\Apartment_-)$ between $\Chamber_+$ and $\Chamber_-$ is defined to be the Weyl-distance from $\AltChamber_-$ to $\Chamber_-$. Note that this is the same as the Weyl-distance from $\Chamber_+$ to the unique chamber in $(\Apartment_+,\Apartment_-)$ opposite $\Chamber_-$. The Weyl-codistance $\WeylCoDistance(\Chamber_-,\Chamber_+)$ is the inverse of $\WeylCoDistance(\Chamber_+,\Chamber_-)$.

It is clear that every twin building according to the definition in \cite[Section~5.8]{abrbro} gives rise to a twin building according to our definition and the converse follows from \cite{abrvanmal01}. Thus we may use results about twin buildings from \cite{abrbro}. From these we need the following:

\begin{fact}
\label{fact:twin_building}
Let $(\Building_+,\Building_-)$ be a twin building.
\begin{enumerate}
\item The Weyl-codistances on the twin apartments fit together to define a well-defined Weyl-codistance $\WeylCoDistance$ on the chambers of $(\Building_+,\Building_-)$. That is, if $\WeylCoDistance_{(\Apartment_+,\Apartment_-)}$ denotes the Weyl-distance on a twin apartment $(\Apartment_+,\Apartment_-)$, then $\WeylCoDistance_{(\Apartment_+,\Apartment_-)}(\Chamber_\varepsilon,\Chamber_{-\varepsilon})$ is the same for every twin apartment $(\Apartment_+,\Apartment_-)$ that contains two given chambers $\Chamber_\varepsilon$ and $\Chamber_{-\varepsilon}$.
\item If $\Chamber \subseteq \Building_\varepsilon$ is a chamber and $\Cell \subseteq \Building_{-\varepsilon}$ is an arbitrary cell, then there is a unique chamber $\AltChamber \ge \Cell$ such that $\WeylCoDistance(\Chamber,\AltChamber)$ has maximal length. This element is called the \emph{projection of $\Chamber$ to $\Cell$} and denoted $\WeylProjection_\Cell \Chamber$. It has the property that every twin apartment that contains $\Chamber$ and $\Cell$ also contains $\AltChamber$. If $\AltCell \subseteq \Building_\varepsilon$ is an arbitrary cell, then the projection of $\AltCell$ to $\Cell$ is $\WeylProjection_\Cell \AltCell \defeq \Intersect_{\Chamber \ge \AltCell} \WeylProjection_\Cell \Chamber$.\label{item:twin_projection}
\item If $(\Apartment_+,\Apartment_-)$ is a twin apartment of $\Building$ and $\Chamber$ is a chamber of $(\Apartment_+,\Apartment_-)$, the \emph{retraction onto $(\Apartment_+,\Apartment_-)$ centered at $\Chamber$}, denoted $\Retraction{(\Apartment_+,\Apartment_-)}{\Chamber}$, is the map that (isometrically and in a type preserving way) takes a chamber $\AltChamber$ to the chamber $\AltChamber'$ of $(\Apartment_+,\Apartment_-)$ that is characterized by $\WeylDistance_\varepsilon(\Chamber,\AltChamber) = \WeylDistance_\varepsilon(\Chamber,\AltChamber')$, respectively $\WeylCoDistance(\Chamber,\AltChamber) = \WeylCoDistance(\Chamber,\AltChamber')$, depending on whether $\Chamber$ and $\AltChamber$ lie in the same half of the twin building. It is an opposition-preserving isometry on twin apartments that contain $\Chamber$ and generally contracting (both, in the usual sense and in terms of Weyl-distance).
\end{enumerate}
\end{fact}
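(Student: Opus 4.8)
The plan is to deduce all three statements from the corresponding facts about combinatorial twin buildings in \cite[Section~5.8]{abrbro}, after matching up the two notions of twin building.

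First I would set up the translation. Given a twin building $(\PosBuilding,\NegBuilding)$ in our sense, each half $\Building_\varepsilon$ is in particular a building, so by Fact~\ref{fact:building}~\eqref{item:weyl_distance} its chambers carry a Weyl-distance $\WeylDistance_\varepsilon$ with values in $\Weyl$. The opposition relation $\op$, the twin apartments, and axioms \eqref{item:twin_building_common_apartment}--\eqref{item:twin_building_panel_isomorphism} are exactly what is needed to manufacture a Weyl-codistance: for chambers $\Chamber \subseteq \Building_\varepsilon$ and $\AltChamber \subseteq \Building_{-\varepsilon}$, pick a twin apartment $\PNApartments$ containing both by \eqref{item:twin_building_common_apartment}, let $\Chamber'$ be the unique chamber of $\Apartment_{-\varepsilon}$ opposite $\Chamber$ (it exists by \eqref{item:twin_building_apartment_opposition}), and put $\WeylCoDistance(\Chamber,\AltChamber) \defeq \WeylDistance_{-\varepsilon}(\Chamber',\AltChamber)$. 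Statement~(1) is precisely the assertion that this is independent of the chosen twin apartment; I would verify it either by the standard retraction-onto-a-fixed-twin-apartment argument or, more economically, by invoking \cite{abrvanmal01}, where the apartment-based axiomatization of twin buildings is shown to be equivalent to the codistance-based one of \cite{abrbro}. Axiom \eqref{item:twin_building_panel_isomorphism} is what makes $\WeylCoDistance$ satisfy the codistance axioms; it is the geometric form of the requirement that codistance to adjacent chambers behaves correctly across a pair of opposite panels.

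With (1) in hand, $(\PosBuilding,\NegBuilding)$ together with $(\WeylDistance_+,\WeylDistance_-,\WeylCoDistance)$ is a combinatorial twin building in the sense of \cite[Section~5.8]{abrbro}, and statements (2) and (3) are imported directly. For (2), the existence and uniqueness of the chamber $\AltChamber \ge \Cell$ for which $\WeylCoDistance(\Chamber,\AltChamber)$ has \emph{maximal} length, its presence in every twin apartment through $\Chamber$ and $\Cell$, and the definition of $\WeylProjection_\Cell \AltCell$ for a cell $\AltCell$ in the same half as $\Chamber$ are the twin-building projection results of \cite[Section~5.8]{abrbro}; the point to flag is the reversal from minimal to maximal length, which is the defining feature of codistance (one projects ``as far away as possible'' in the opposite half). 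For (3), the twin retraction $\Retraction{(\Apartment_+,\Apartment_-)}{\Chamber}$ is assembled from the ordinary retraction $\Retraction{\Apartment_\varepsilon}{\Chamber}$ on the half containing $\Chamber$ (Fact~\ref{fact:building}) and the map on the opposite half characterized by preservation of $\WeylCoDistance$ from $\Chamber$; that the result is a well-defined, type- and opposition-preserving map, isometric on twin apartments through $\Chamber$ and contracting elsewhere, is the twin-building analogue in \cite[Section~5.8]{abrbro} of Proposition~4.39 there. Passing from ``contracting in terms of Weyl-distance'' to metric contraction is the same cell-by-cell argument as in the non-twin case, using that the metric on each half is the quotient metric of its $\ModelSpace{\kappa}$-polyhedral structure.

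The main obstacle is the first step: confirming that the $(\mathrm{TB}i)$ really do yield a combinatorial twin building, i.e.\ that the apartment-locally defined codistance is globally coherent and satisfies the \cite{abrbro} axioms. This is genuinely the content of \cite{abrvanmal01}. Since, as noted in the text, we have deliberately not been economical with the axioms --- in particular \eqref{item:twin_building_panel_isomorphism} already hands us the panel bijection that makes codistance well-behaved --- the verification is bookkeeping rather than new mathematics, and everything downstream is a citation.
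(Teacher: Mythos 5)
Your proposal matches the paper's own treatment: the paper likewise reduces to the codistance-based twin buildings of \cite[Section~5.8]{abrbro} via the equivalence established in \cite{abrvanmal01} and then cites the relevant results there (specifically Lemma~5.149 for the existence of the projection and Lemma~5.173 for its containment in every twin apartment through the given chamber and cell). Your reasoning about why the apartment-based axioms yield the combinatorial structure and why (2) and (3) then follow by citation is exactly the intended argument.
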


The first statement is implied by \cite{abrvanmal01}. The existence of the projection is shown in Lemma~5.149 and the statement about the containment in a twin apartment in Lemma~5.173 of \cite{abrbro}.

\footerlevel{3}
\headerlevel{3}

\section{Buildings and Groups}
\label{sec:buildings_and_groups}

Buildings are a tool to better understand groups. The link is via strongly transitive actions as introduced in the last section. In this section we give an overview of how one obtains for a given group a building and a strongly transitive action thereon.
The definitions are taken from the Chapters~6 and 7 of \cite{abrbro} which provide a thorough introduction.

\subsection*{BN-Pairs}

Let $\Group$ be a group. A tuple $(\Group,B,N,S)$ is said to be a \emph{Tits system} and $(B,N)$ is said to be a \emph{BN-pair} if $\Group$ is generated by $B$ and $N$, the intersection $T \defeq B \intersect N$ is normal in $N$, $S$ is a generating set for $W \defeq N/T$ and the following conditions hold:
\begin{enumerate}[label=(BN\arabic{*}), ref=BN\arabic{*},leftmargin=*]
\item For $s \in S$ and $w \in W$,
\[
sBw \subseteq BswB \union BwB\text{ .}
\]
\item For $s \in S$,
\[
sBs^{-1} \not\leq B \text{ .}
\]
\end{enumerate}

\begin{fact}[{\cite[Theorem~6.56]{abrbro}}]
Let $(\Group,B,N,S)$ be a Tits system. Let $T \defeq B \intersect N$ and $W \defeq N/T$. Then the pair $(W,S)$ is a Coxeter system and there is a thick building $\SBuilding$ of type $(W,S)$ on which $\Group$ acts strongly transitively. The group $B$ is the stabilizer of a chamber and the group $N$ stabilizes an apartment, which contains this chamber, and acts transitively on its chambers.
\end{fact}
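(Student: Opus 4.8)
The plan is to realize $\SBuilding$ as a coset geometry of $\Group$ and to read off every required property from the Bruhat decomposition. First I would establish that $(W,S)$ is a Coxeter system. The combinatorial engine is provided by the two axioms: iterating (BN1) one shows that $\Group = \Union_{w \in W} BwB$ and that $BwB \cdot BsB = BwsB$ when $\ell(ws) > \ell(w)$ and $BwB \cdot BsB = BwsB \union BwB$ otherwise, where $\ell$ is word length with respect to $S$. From these multiplication rules one deduces the exchange condition for $(W,S)$ — if $\ell(sw) < \ell(w)$ then $sw$ has a reduced word obtained by deleting a letter from a reduced word for $w$ — which by the standard characterization (\cite[Section~2.3]{abrbro}) makes $(W,S)$ a Coxeter system; along the way this turns the Bruhat decomposition into a genuine bijection $w \mapsto BwB$.

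Next I would build the complex. For $J \subseteq S$ put $W_J \defeq \gen{J}$ and $P_J \defeq BW_JB$, which by the multiplication rules is a subgroup containing $B$ (a standard parabolic), and conversely every subgroup containing $B$ is some $P_J$. Let $\SBuilding$ be the poset of cosets $gP_J$ with $J \subsetneq S$, ordered by reverse inclusion, regarded as a simplicial complex. The fundamental chamber is $\Chamber_0 \defeq P_\emptyset = B$; $\Group$ acts on $\SBuilding$ by left translation and $B = \operatorname{Stab}_\Group(\Chamber_0)$ by construction. The fundamental apartment $\Apartment_0$ is the subcomplex spanned by the cosets $wP_J$, $w \in W$; since $N$ surjects onto $W$ and $T = B \intersect N$ fixes $\Apartment_0$ pointwise, $N$ stabilizes $\Apartment_0$, and $W = N/T$ permutes the chambers $wB$ of $\Apartment_0$ simply transitively, so $\Apartment_0$ is the Coxeter complex of $(W,S)$. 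Take $\ApartmentSystem \defeq \{g\Apartment_0 : g \in \Group\}$.

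Then I would verify the building axioms (B0)--(B2) of the excerpt. Axiom (B0) holds since each $g\Apartment_0$ is a Coxeter complex of type $(W,S)$. For (B1), that two simplices lie in a common apartment: after translating, it suffices to show every chamber $gB$ lies in some apartment $g'\Apartment_0$ through $\Chamber_0$; writing $g \in BwB$, one joins $gB$ to $B$ by a gallery and produces the apartment by induction on $\ell(w)$, controlling how the rank-one parabolics $P_{\{s\}} = B \union BsB$ attach new chambers. Axiom (B2) is handled the same way, using that the pointwise stabilizer of a chamber acts transitively on the apartments containing it. Thickness is exactly where (BN2) enters: a panel of cotype $\{s\}$ lies in the chambers given by left cosets $gB$ inside $P_{\{s\}} = B \dunion BsB$; here $B$ gives one, and $BsB = \Union_{b \in B} bsB$ is a single left coset only if $sBs^{-1} \le B$, so (BN2) forces $BsB$ to split into at least two left $B$-cosets, giving at least three chambers in all. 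Finally, strong transitivity: $\Group$ is transitive on chambers because it is transitive on cosets $gB$, and $\operatorname{Stab}(\Chamber_0) = B$ is transitive on apartments through $\Chamber_0$ (again by Bruhat), hence $\Group$ is transitive on pairs (chamber, apartment containing it).

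The main obstacle I expect is the verification of (B1) and (B2): these are the substantive steps, and both reduce — via the $\Group$-action and the Bruhat decomposition — to a careful gallery argument inside a single apartment, tracking how the rank-one double cosets $BsB$ glue chambers together. This bookkeeping, rather than any single deep idea, is what makes the complete proof lengthy; the Coxeter-system property of $(W,S)$, the identification $\operatorname{Stab}(\Chamber_0) = B$ and $\operatorname{Stab}(\Apartment_0) \supseteq N$, thickness from (BN2), and strong transitivity all follow fairly directly from the multiplication rules for the double cosets $BwB$.
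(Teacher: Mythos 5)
The paper does not prove this statement; it is recorded as a Fact and cited directly to \cite[Theorem~6.56]{abrbro}, so there is no in-paper argument to compare against. Your proposal reproduces the standard proof found in that reference (Bruhat decomposition and exchange condition to get the Coxeter system, coset complex of standard parabolics as the building, thickness from (BN2) via the decomposition of $P_{\{s\}} = B \sqcup BsB$ into left $B$-cosets, gallery induction for (B1) and (B2), strong transitivity from transitivity of $B$ on apartments through the fundamental chamber), so it matches the cited source's approach and is correct in outline.
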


If $\GroupScheme$ is a semisimple algebraic group defined over a field $\Field$, then $\GroupScheme(\Field)$ admits a BN-pair of spherical type, see \cite[Section~5]{tits74} (see any of the books \cite{borel91,humphreys81,springer98} for the notions from the theory of algebraic groups). Assume for simplicity that $\GroupScheme$ is $\Field$-split, i.e., there is a maximal torus $T$ that is $\Field$-split. Let $N$ be its normalizer and $B$ a Borel group that contains $T$. Then $(B(\Field),N(\Field))$ is a BN-pair for $\GroupScheme(\Field)$. Its type is that of the root system associated to $\GroupScheme$.

\subsection*{Twin BN-Pairs}

Let $B_+$, $B_-$, and $N$ be subgroups of a group $\Group$ such that $B_+ \intersect N = B_- \intersect N \eqdef T$. Assume that $N$ normalizes $T$ and set $W \defeq N/T$. A tuple $(\Group,B_+,B_-,N,S)$ is a \emph{twin Tits system} and $(B_+,B_-,N)$ is a \emph{twin BN-pair} if $S \subseteq W$ is such that $(W,S)$ is a Coxeter system and the following hold for $\varepsilon \in \{+,-\}$:
\begin{enumerate}[label=(TBN\arabic{*}), ref=TBN\arabic{*},leftmargin=*]
\setcounter{enumi}{-1}
\item $(G,B_\varepsilon,N,S)$ is a Tits system.
\item If $l(sw) < l(w)$, then $B_\varepsilon s B_{\varepsilon} w B_{-\varepsilon} = B_\varepsilon s w B_{-\varepsilon}$.\label{tbn:codistance}
\item $B_+ s \intersect B_- = \emptyset$.
\end{enumerate}
Here $l(w)$ denotes the length of an element $w \in W$ when written as a product of elements of $S$.

\begin{fact}[{\cite[Theorem~6.87]{abrbro}}]
Let $(\Group,B_+,B_-,N,S)$ be a twin Tits system of type $(W,S)$. There is a thick twin building $\PNBuildings$ of type $(W,S)$ on which $\Group$ acts strongly transitively. The groups $B_+$ and $B_-$ are stabilizers of opposite chambers. The group $N$ stabilizes an apartment and acts transitively on the chambers of each half.
\end{fact}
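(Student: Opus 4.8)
The plan is to produce the two halves $\Building_+$ and $\Building_-$ separately from the two ordinary Tits systems hidden in the data, and then to fuse them into a twin building by manufacturing a codistance out of the remaining axioms. First, axiom (TBN0) says that $(\Group,B_\varepsilon,N,S)$ is a Tits system for each $\varepsilon\in\{+,-\}$, so the preceding Fact (\cite[Theorem~6.56]{abrbro}) yields a thick building $\Building_\varepsilon$ of type $(W,S)$ on which $\Group$ acts strongly transitively, with $B_\varepsilon=\operatorname{Stab}_\Group(\Chamber_\varepsilon)$ for a chamber $\Chamber_\varepsilon$ and with $N$ stabilising an apartment $\Apartment_\varepsilon\ni\Chamber_\varepsilon$ and permuting its chambers transitively. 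Concretely one may take the chambers of $\Building_\varepsilon$ to be the left cosets $gB_\varepsilon$, $g\in\Group$, acted on by left translation, with $\Chamber_\varepsilon=eB_\varepsilon$ and $\Apartment_\varepsilon=\{nB_\varepsilon:n\in N\}$, and with Weyl-distance $\WeylDistance_\varepsilon(gB_\varepsilon,hB_\varepsilon)=w$ iff $g^{-1}h\in B_\varepsilon wB_\varepsilon$, the Bruhat decomposition $\Group=\coprod_{w\in W}B_\varepsilon wB_\varepsilon$ being part of the theory of Tits systems. (Throughout, a double coset $B_\varepsilon wB_{\varepsilon'}$ with $\varepsilon,\varepsilon'\in\{+,-\}$ means $B_\varepsilon nB_{\varepsilon'}$ for any representative $n$ of $w$; this is unambiguous because $T=B_+\cap N=B_-\cap N$ lies in $B_+\cap B_-$.)

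The heart of the construction is the \emph{Birkhoff decomposition} $\Group=\coprod_{w\in W}B_+wB_-$, the twin analogue of the Bruhat decomposition. Exhaustiveness holds because $\Group$ is generated by $B_+$, $B_-$ and $N$, each of which lies in $\bigcup_w B_+wB_-$, and this union is closed under multiplication by the axioms; disjointness is an induction on $l(w)$, using (TBN1) to descend from $w$ to $sw$ whenever $l(sw)<l(w)$ and using (TBN2) in the base step (it forbids $e\in B_+sB_-$) -- see \cite{abrbro} for the computations. Taking inverses one gets $\Group=\coprod_w B_-wB_+$ as well, and one may then define the Weyl-codistance between chambers of the two halves by $\WeylCoDistance(gB_\varepsilon,hB_{-\varepsilon}):=w$ where $g^{-1}h\in B_\varepsilon wB_{-\varepsilon}$; this is well defined by Birkhoff and $\Group$-invariant because $\Group$ acts on the left.

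Next one checks that $(\Building_+,\Building_-,\WeylCoDistance)$ satisfies the combinatorial twin-building axioms: symmetry $\WeylCoDistance(D,C)=\WeylCoDistance(C,D)^{-1}$ is immediate from $(B_+wB_-)^{-1}=B_-w^{-1}B_+$; the descent law -- if $C'$ is $s$-adjacent to $C$ and $l(s\WeylCoDistance(C,D))<l(\WeylCoDistance(C,D))$ then $\WeylCoDistance(C',D)=s\WeylCoDistance(C,D)$ -- is precisely (TBN1) in coset form; and the ascent law -- for $l(sw)>l(w)$ there is a chamber $C'$ that is $s$-adjacent to $C$ with codistance $sw$ to $D$ -- follows from the previous two together with (TBN2) by the computations in \cite{abrbro}. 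With the combinatorial axioms in place, $(\Building_+,\Building_-)$ is a twin building in the sense of \cite[Section~5.8]{abrbro}, hence -- via \cite{abrvanmal01}, as recalled in Section~\ref{sec:buildings} -- a twin building $\PNBuildings$ in our sense, thick because each half is, with two cells opposite precisely when their interior points have codistance $1$.

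Finally, the group-theoretic assertions are read off. By construction $B_\varepsilon=\operatorname{Stab}_\Group(\Chamber_\varepsilon)$, and $\WeylCoDistance(\Chamber_+,\Chamber_-)=1$ since $e\in B_+\cdot 1\cdot B_-$, so $\Chamber_+\op\Chamber_-$. The chambers of $\Building_-$ opposite $\Chamber_+$ are exactly the $hB_-$ with $h\in B_+B_-$, i.e.\ the orbit $B_+\cdot\Chamber_-$; hence $B_+$ is transitive on the chambers opposite $\Chamber_+$, and since $\Group$ is transitive on the chambers of $\Building_+$ it follows (move $C$ to $\Chamber_+$, then move the image of $D$ to $\Chamber_-$ by $\operatorname{Stab}_\Group(\Chamber_+)=B_+$) that $\Group$ is transitive on pairs of opposite chambers -- equivalently, by Section~\ref{sec:buildings}, strongly transitive on $\PNBuildings$. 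Moreover, writing $\Apartment_\varepsilon=\{n_wB_\varepsilon:w\in W\}$ for fixed representatives $n_w$, one has $n_wB_+\op n_{w'}B_-$ iff $n_w^{-1}n_{w'}\in B_+B_-$, and the uniqueness in the Birkhoff decomposition forces $w=w'$; so the opposition relation identifies the chambers of $\Apartment_+$ with those of $\Apartment_-$ type-preservingly, making $(\Apartment_+,\Apartment_-)$ a twin apartment stabilised by $N$, on whose chambers of each half $N$ acts transitively. The step I expect to be the main obstacle is the Birkhoff decomposition together with the ascent axiom, where the two halves genuinely interact and where both (TBN1) and (TBN2) are indispensable; reconciling the resulting combinatorial object with the notion of twin building as a pair of polyhedral complexes used in these notes, and extracting strong transitivity, are by comparison routine.
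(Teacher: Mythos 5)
The paper does not prove this Fact; it states it with a pointer to \cite[Theorem~6.87]{abrbro} and then uses it. Your proposal is a correct reconstruction of exactly the argument that reference gives -- the halves $\Building_\varepsilon$ as the coset buildings $\Group/B_\varepsilon$ of the two Tits systems guaranteed by (TBN0), the codistance read off from the Birkhoff decomposition $\Group=\coprod_w B_+wB_-$, the descent axiom reduced to (TBN1), disjointness of double cosets and the ascent axiom using (TBN2), and strong transitivity and the stabilizer statements extracted as you describe -- so it is the same route, just spelled out where the paper merely cites.
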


We said above that every semisimple algebraic group admits a spherical BN-pair. In a similar way every Kac--Moody group admits a twin BN-pair, see \cite{remy02}.

In Appendix~\ref{chap:affine_kac-moody_groups} it is shown that if $\GroupScheme$ is a connected, simply connected, almost simple $\F_q$ group, then $\GroupScheme(\F_q[t,t^{-1}])$ is a Kac--Moody group of affine type. This is how twin buildings enter the scene.

\subsection*{BN-pairs of Groups over Local Fields}

Let $\GroupScheme$ be a group defined over a field $\GlobalField$ and assume for simplicity that $\GroupScheme$ is semisimple, connected and simply connected. If $\GlobalField$ is equipped with a valuation, then $\GroupScheme(\GlobalField)$ carries another BN-pair besides the one discussed above. It is of Euclidean type and the theory around it was developed by Nagayoshi Iwahori and Hideya Matsumoto \cite{iwamat65} in the split case and widely generalized by François Bruhat and Jacques Tits in \cite{brutit72, brutit84}, see also \cite{rousseau77}.

If $\EBuilding$ is the Euclidean building associated to $\GroupScheme(\GlobalField)$ with the valuation on $\GlobalField$ and $\SBuilding$ is the spherical building associated to $\GroupScheme(\GlobalField)$, then $\SBuilding$ can be identified with a subspace of the building at infinity of $\EBuilding$, see \cite{buxgrawit10b}.

\footerlevel{3}
\headerlevel{3}

\section{Simplicial Morse Theory}
\label{sec:morse_theory}

In this section we reformulate Bestvina--Brady Morse theory as introduced in \cite{besbra97} (see also \cite{bestvina08}) to make it more easily applicable later on.

Let $P$ be a totally ordered set and let $\Space$ be an $\ModelSpace{0}$-simplicial complex. A map $f \colon \Vertices \Space \to P$ is a \emph{Morse function on $\Space$} if
\begin{enumerate}[label=(Mor\arabic{*}), ref=Mor\arabic{*},leftmargin=*]
\item $f(v) \ne f(w)$ for two adjacent vertices $v$ and $w$ and\label{item:non_constant_on_edges}
\item the image of $f$ is order-equivalent to a subset of $\Z$.\label{item:discrete_image}
\end{enumerate}
We sometimes speak of $f(v)$ as the \emph{height} of $v$.

If $f$ is a Morse function on $\Space$, then every simplex $\Cell$ has a unique vertex $\Vertex$ on which $f$ is maximal. The \emph{descending link} $\Link\Descending \Vertex$ of a vertex $\Vertex$ is the subcomplex of simplices $\Cell \direction \Vertex$ such that $\Vertex$ is the vertex of maximal height of $\Cell$. By condition \eqref{item:non_constant_on_edges} this is the full subcomplex of vertices $\AltVertex$ adjacent to $\Vertex$ such that $f(\AltVertex) < f(\Vertex)$ (speaking in terms of the combinatorial link).

For $J \subseteq P$ we let $\Space_J$ denote the full subcomplex of $f^{-1}(J)$.

The corestriction to its image of a Morse function $f$ as above may by \eqref{item:discrete_image} be regarded as a map $\Vertices \Space \to \R$ with discrete image. Since $\Space$ is a simplicial complex, this map induces a map $f_\R \colon \Space \to \R$ that is cell-wise affine. Moreover, by \eqref{item:non_constant_on_edges} $f_\R$ is non-constant on cells of dimension $\ge 1$. Hence it is a Morse function in the sense of \cite{besbra97}.

The following two statements are at the core of Morse theory. Using our construction of $f_\R$ above, they are immediate consequences of Lemma~2.5 respectively Corollary~2.6 in \cite{besbra97}.

\begin{lem}[Morse Lemma]
Let $f \colon \Vertices \Space \to P$ be a Morse function. Let $r,s \in P$ be such that $r<s$ and $f(\Vertices \Space) \intersect (r,s) = \emptyset$. Then $X_{(-\infty,s]}$ is homotopy equivalent to $X_{(-\infty,r]}$ with copies of $\Link\Descending \Vertex$ coned off for $\Vertex \in \Space_{\{s\}}$.
\end{lem}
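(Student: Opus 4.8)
The plan is to derive the statement from the Morse theory of \cite{besbra97}, applied to the affine map $f_\R \colon \Space \to \R$ constructed above. Recall that $f_\R$ is affine on each cell, non-constant on each positive-dimensional cell, and has discrete image on vertices, so it is a Morse function in the sense of \cite{besbra97}; and the descending link of a vertex $\Vertex$ in that sense --- the subcomplex of $\Link \Vertex$ spanned by the directions along which $f_\R$ decreases --- is precisely $\Link\Descending \Vertex$ as defined in this section.

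The first step is to set up a dictionary between the combinatorial full subcomplexes appearing in the statement and topological sublevel sets of $f_\R$. For any real $t$, the set $f_\R^{-1}((-\infty, t])$ deformation retracts onto the full subcomplex of $\Space$ spanned by the vertices of height $\le t$: on a simplex $\Cell$ whose top vertex has height $> t$, the truncated region $\Cell \intersect f_\R^{-1}((-\infty,t])$ deformation retracts (linearly in barycentric coordinates) onto the face of $\Cell$ spanned by its vertices of height $\le t$, and these retractions agree on common faces, hence glue. Since $f(\Vertices \Space) \intersect (r,s) = \emptyset$, choosing a real $r'$ with $r < r' < s$ gives $f_\R^{-1}((-\infty, r']) \simeq X_{(-\infty, r]}$, while $f_\R^{-1}((-\infty, s]) \simeq X_{(-\infty, s]}$ directly.

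The second step is to apply the Bestvina--Brady Morse lemma \cite[Lemma~2.5]{besbra97} to $f_\R$ between the heights $r'$ and $s$. The hypothesis $f(\Vertices\Space) \intersect (r,s) = \emptyset$ ensures that the only vertices with $f_\R$-height in $(r', s]$ are those of $\Space_{\{s\}}$, and that for each such vertex every lower neighbour has height $\le r$, so that $\Link\Descending \Vertex$ sits inside the full subcomplex $X_{(-\infty, r]}$. Hence \cite[Lemma~2.5]{besbra97} gives that $f_\R^{-1}((-\infty, s])$ is homotopy equivalent to $f_\R^{-1}((-\infty, r'])$ with a cone on $\Link\Descending \Vertex$ glued in along its inclusion into the latter, for each $\Vertex \in \Space_{\{s\}}$. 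Transporting this across the two homotopy equivalences from the first step yields the assertion.

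The only non-formal points, and the place where a little care is needed, are internal to the first step: that the cell-wise deformation retractions are mutually compatible, and that the several cones (one per vertex of $\Space_{\{s\}}$) are attached along pairwise disjoint subcomplexes of $X_{(-\infty,r]}$, so that they may be coned off simultaneously. Both are routine and are already handled by the arguments in \cite[Section~2]{besbra97}; everything else is bookkeeping.
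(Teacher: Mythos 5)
Your proposal is correct and follows exactly the route the paper takes: the paper constructs $f_\R$ and then literally declares the Morse Lemma to be an immediate consequence of Lemma~2.5 of Bestvina--Brady, and your two steps just spell out the routine dictionary between $f_\R$-sublevel sets and the combinatorial full subcomplexes $X_{(-\infty,r]}$, $X_{(-\infty,s]}$. One small correction to your closing remark: the subcomplexes of $X_{(-\infty,r]}$ along which the cones are attached (the images of the various $\Link\Descending\Vertex$) need \emph{not} be pairwise disjoint --- two vertices at height $s$ can well share lower neighbours --- but this is irrelevant, since Bestvina--Brady attach a separate cone for each vertex of the critical level rather than coning off a single subspace, so no disjointness is ever needed. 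Everything else is as you say.
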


\begin{cor}
\label{cor:morse_theory}
Let $f \colon \Vertices \Space \to P$ be a Morse function. Assume that there is an $R \in P$ such that $\Link\Descending \Vertex$ is $(n-1)$-connected for every $\Vertex$ with $f(\Vertex) > R$.

Let $s,r \in P \union \{\infty\}$ be such that $s \ge r \ge R$. Then the inclusion $\Space_{(-\infty,r]} \into \Space_{(-\infty,s]}$ induces an isomorphism in $\pi_i$ for $0 \le i \le n-1$ and an epimorphism in $\pi_n$.
\end{cor}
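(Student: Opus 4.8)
The plan is to apply the preceding Morse Lemma one value of $f$ at a time and combine it with the standard fact that coning off an $(n-1)$-connected subcomplex is invisible to $\pi_i$ for $i<n$ and at worst surjective on $\pi_n$. First I would reduce to an \emph{elementary step}. Since $f(\Vertices \Space)$ is order-equivalent to a subset of $\Z$, every non-maximal element of it has an immediate successor, so the values of $f$ lying in $(r,s]$ can be listed as $t_1 < t_2 < \cdots$ (finitely or infinitely many, also when $s=\infty$). Put $Y_0 \defeq \Space_{(-\infty,r]}$ and $Y_j \defeq \Space_{(-\infty,t_j]}$ for $j\ge 1$; these form an increasing chain of subcomplexes of the CW complex $\Space$ with $\Union_{j} Y_j = \Space_{(-\infty,s]}$, so $\pi_i(\Space_{(-\infty,s]}) = \operatorname{colim}_j \pi_i(Y_j)$ for all $i$. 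Composites and directed colimits of isomorphisms are isomorphisms, and composites and directed colimits of epimorphisms (out of a fixed group) are epimorphisms; hence it suffices to prove that each inclusion $Y_j \into Y_{j+1}$ induces an isomorphism on $\pi_i$ for $i\le n-1$ and an epimorphism on $\pi_n$, noting that the value $t_{j+1}$ crossed at the $j$-th step satisfies $t_{j+1} > r \ge R$.

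For the elementary step, apply the Morse Lemma to the interval between $t_j$ (or $r$ when $j=0$) and $t_{j+1}$, which $f(\Vertices \Space)$ misses by construction. It identifies $Y_{j+1}$, up to homotopy equivalence rel $Y_j$, with $Y_j$ together with a cone glued on $\Link\Descending \Vertex$ along $\Link\Descending \Vertex$ for each vertex $\Vertex$ of height $t_{j+1}$, the cone points being new and distinct. Since $t_{j+1} > R$, each of these descending links is $(n-1)$-connected by hypothesis.

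It then remains to invoke the coning-off lemma: if $Y'$ arises from a CW complex $Y$ by attaching cones $\operatorname{Cone}(A_\alpha)$, with distinct apices, along a family of $(n-1)$-connected subcomplexes $A_\alpha$, then $Y \into Y'$ induces an isomorphism on $\pi_i$ for $i\le n-1$ and an epimorphism on $\pi_n$. The mechanism is that $Y'/Y$ is the wedge of the unreduced suspensions of the $A_\alpha$, hence $n$-connected, so $H_i(Y',Y)=0$ for $i\le n$; van Kampen shows $\pi_1(Y)\to\pi_1(Y')$ is onto, and an isomorphism once $n\ge 2$ (the $A_\alpha$ being then simply connected); and the relative Hurewicz theorem — applied to universal covers when $n\ge 2$, which is legitimate since $\pi_1$ is unchanged — upgrades this to $\pi_i(Y',Y)=0$ for $i\le n$, whence the long exact sequence of the pair $(Y',Y)$ gives the claim.

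The only real obstacle is this last lemma, and within it the low-dimensional bookkeeping: the behaviour of $\pi_0$ and $\pi_1$ under coning, the care with basepoints, and the covering-space reduction that makes relative Hurewicz applicable even though the sublevel complexes need not be simply connected. Everything else is the routine directed-colimit argument built on the already-cited Morse Lemma.
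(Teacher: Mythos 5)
Your argument is correct, but the paper does not give a proof here at all: it simply notes that, under the translation $f \leadsto f_\R$, the statement is an immediate consequence of Corollary~2.6 of Bestvina--Brady. What you have written out --- reduction to a directed colimit of elementary Morse steps across a single height value, then the standard coning-off lemma proved via van Kampen, excision, and relative Hurewicz applied to universal covers --- is precisely the argument that underlies the cited result, so the two are in substance the same.
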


Finally we state an elementary fact that will be useful for verifying that a function is a Morse function.

\begin{obs}
\label{obs:product_order_in_z}
Let $P = \R \times \cdots \times \R$ with the lexicographic order. Let $Q \subseteq P$ be such that $\CoordinateProjection_1 Q$ is discrete and $\CoordinateProjection_i Q$ is finite for $i > 1$. Then $Q$ is order-isomorphic to a subset of $\Z$.\qed
\end{obs}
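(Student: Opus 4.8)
The plan is to reduce the statement to the elementary fact that a linearly ordered set is order-isomorphic to a subset of $\Z$ if and only if each of its closed intervals is finite, and then to check this interval condition for $Q$, equipped with the order induced from the lexicographic order on $P = \R \times \cdots \times \R$ (say with $n$ factors). The substantive point is the interval condition, so I would establish that first and deal with the embedding into $\Z$ afterwards.

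So let $a = (a_1,\ldots,a_n)$ and $b = (b_1,\ldots,b_n)$ be elements of $Q$ with $a \le b$, and set $[a,b]_Q \defeq \{q \in Q : a \le q \le b\}$; the goal is to see that this set is finite. Every $q \in [a,b]_Q$ satisfies $a_1 \le \CoordinateProjection_1 q \le b_1$, because in the lexicographic order an inequality that is strict in the first coordinate already decides the comparison. Hence $\CoordinateProjection_1$ takes only finitely many values on $[a,b]_Q$: these values lie in $\CoordinateProjection_1 Q \intersect [a_1,b_1]$, the intersection of the discrete set $\CoordinateProjection_1 Q$ with a compact interval, which is finite. Now partition $[a,b]_Q$ according to the value $c$ of the first coordinate. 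If $a_1 < c < b_1$, then by the same lexicographic observation every element of $Q$ with first coordinate $c$ lies strictly between $a$ and $b$; the tuple of its remaining coordinates lies in $\CoordinateProjection_2 Q \times \cdots \times \CoordinateProjection_n Q$, so there are at most $\abs{\CoordinateProjection_2 Q} \cdots \abs{\CoordinateProjection_n Q}$ of them. The same bound obviously holds for the at most two fibres over $c = a_1$ and $c = b_1$. Since all the $\CoordinateProjection_i Q$ with $i > 1$ are finite and there are finitely many values $c$, the interval $[a,b]_Q$ is finite.

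For the reduction step one may argue directly instead of quoting a theorem. Assume $Q \ne \emptyset$ and fix $x_0 \in Q$. Define $f \colon Q \to \Z$ by $f(q) \defeq \abs{\{r \in Q : x_0 \le r < q\}}$ when $q \ge x_0$ and $f(q) \defeq -\abs{\{r \in Q : q < r \le x_0\}}$ when $q < x_0$; both counting sets are contained in a closed interval of $Q$, hence finite by the previous paragraph, so $f$ is well defined. A short case distinction shows $f$ is strictly increasing: passing from $q$ to a strictly larger $q'$ either strictly enlarges the set of $r$ with $x_0 \le r < (\cdot)$, or strictly shrinks the set of $r$ with $(\cdot) < r \le x_0$, or moves from a negative value to a nonnegative one. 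Thus $f$ is an order embedding and $Q$ is order-isomorphic to $f(Q) \subseteq \Z$. I do not expect any real obstacle here; the only things needing a moment's care are that a discrete subset of $\R$ is finite inside a bounded interval and the routine bookkeeping in the case distinction for $f$.
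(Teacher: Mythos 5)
Your reduction --- every closed interval of a linear order being finite is equivalent to embeddability in $\Z$, verified here by a fibre-wise count over the finitely many first coordinates --- is exactly the reasoning the paper leaves unstated (the Observation carries an immediate \emph{q.e.d.}), and the structure of your argument, including the explicit order embedding $f$, is sound.

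There is one spot that needs more than ``a moment's care,'' namely the very claim you flag as such: it is \emph{not} true in general that a discrete subset of $\R$ is finite inside a bounded interval. The set $D \defeq \{1/n : n \ge 1\} \union \{-1/n : n \ge 1\}$ is discrete (every point is isolated), yet it is infinite and contained in $[-1,1]$, and moreover $-1,1 \in D$, so restricting to endpoints in $\CoordinateProjection_1 Q$ does not save the step. In fact if one allowed $\CoordinateProjection_1 Q = D$ (with the remaining projections singletons) the resulting $Q$ would have order type $\omega + \omega^{*}$, which does not embed in $\Z$; so under the weak reading of ``discrete'' the Observation itself is false, not merely your proof of it. Both the statement and your first-coordinate bound therefore require ``discrete'' to mean ``discrete and closed'' in $\R$, equivalently locally finite, i.e., having finite intersection with every compact set. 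That is the convention the paper is in fact working with: where the Observation is invoked, the relevant hypothesis is supplied by Observation~\ref{obs:height_discrete_image}, whose proof shows precisely that every compact subset of $\R$ meets $\Height(\Vertices \OneSpace)$ in a finite set. With that reading made explicit your proof is correct; the fibre count via $\abs{\CoordinateProjection_2 Q} \cdots \abs{\CoordinateProjection_n Q}$ and the bookkeeping for $f$ are fine as written.
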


\footerlevel{3}

\footerlevel{2}
\headerlevel{2}

\chapter{Finiteness Properties of $\GroupScheme(\F_q[t])$}
\label{chap:one_place}

In geometric group theory, it is a common situation to have a group $\Group$ that acts on a polyhedral complex $\Space$ with the properties that $\Space$ is contractible and the stabilizers of cells are finite but $\Space$ is not compact modulo the action of $\Group$. One is then interested in a $\Group$-invariant subspace $\Space_0$ of $\Space$ that is compact modulo $\Group$ and still highly connected.

Since this is a common problem, there is a standard procedure to solve it. Namely one has to construct a $\Group$-invariant Morse-function whose sublevel sets are $\Group$-co\-com\-pact and whose descending links are highly connected. Then Bestvina--Brady Morse theory shows that the sublevel sets are highly connected and the problem is solved. Obviously there has to remain some work to be done and so constructing an appropriate Morse-function and analyzing the descending links is usually not a trivial task.

Let $\GroupScheme$ be an $\F_q$-isotropic, connected, noncommutative, almost simple $\F_q$-group. In this chapter we want to determine the finiteness length of $\GroupScheme(\F_q[t])$. We will see that $\GroupScheme(\F_q[t,t^{-1}])$ acts strongly transitively on a locally finite irreducible Euclidean twin building and that $\GroupScheme(\F_q[t])$ is the stabilizer in $\GroupScheme(\F_q[t,t^{-1}])$ of a point of the twin building. Postponing the verification of this statement for the moment our goal is therefore to prove:

\begin{xrefthm}{thm:one_place_geometric}
Let $\PNBuildings$ be an irreducible, thick, locally finite Euclidean twin building of dimension $\Dimension$. Let $\BigGroup$ be a group that acts strongly transitively on $\PNBuildings$ and assume that the kernel of the action is finite. Let $\TheNegPoint \in \NegBuilding$ be a point and let $\Group \defeq \BigGroup_\TheNegPoint$ be the stabilizer of $\TheNegPoint$. Then $\Group$ is of type $F_{\Dimension-1}$ but not of type $F_{\Dimension}$.
\end{xrefthm}

Throughout the chapter we fix an irreducible, thick, locally finite Euclidean twin building $\PNBuildings$ of dimension $\Dimension$ and a point $\TheNegPoint \in \NegBuilding$. We consider the action of the stabilizer $\Group$ of $\TheNegPoint$ in the automorphism group of $\PNBuildings$ on $\OneSpace \defeq \PosBuilding$. Our task is to define a $\Group$-invariant Morse function on $\OneSpace$ that has $\Group$-cocompact sublevel sets and whose descending links are $(\Dimension-2)$-connected.

In Section~\ref{sec:schulz} we describe an important result that indicates the preferable structure of descending links. In Sections~\ref{sec:codistance} and \ref{sec:approximate_height} we construct a function that almost works and sketch the further course of action. The Sections~\ref{sec:zonotopes} to \ref{sec:morse_function} are devoted to rectifying the flaws of the first function. In Sections~\ref{sec:spherical_subcomplexes} and \ref{sec:descending_links} the descending links are analyzed and in Section~\ref{sec:main_theorem} the theorem is proved.

This chapter is based on \cite{buxgrawit10} (except for Section~\ref{sec:moves} which is taken from \cite{witzel10}). However, the proof differs from the proof given there: we define the Morse function on the barycentric subdivision of $\OneSpace$ instead of the coarser subdivision in \cite{buxgrawit10} where only cells of constant height are subdivided. This makes it necessary to define the Morse function also on cells of non-constant height. The additional technicalities needed for this pay off by noticeably simplifying the analysis of the descending links.

\headerlevel{3}

\section{Hemisphere Complexes}
\label{sec:schulz}

In his Ph.D.\ thesis \cite{schulz} (see also \cite{schulz10}) Bernd Schulz investigated subcomplexes of spherical buildings that he expected to occur as descending links of Morse-functions in Euclidean buildings. As we will see, these \emph{hemisphere complexes} are indeed just the right class of subcomplexes and we will make heavy use of Schulz' results. Here we only describe his main result. Partial results that need slight generalizations will be discussed in Section~\ref{sec:spherical_subcomplexes}.

Let $\SBuilding$ be a thick spherical building. If $\Set$ is a subset of $\SBuilding$ we write $\SBuilding(\Set)$\index[xsyms]{deltaa@$\SBuilding(\Set)$} for the subcomplex supported by $\Set$. Recall that this is the subcomplex of all cells of $\SBuilding$ that are fully contained in $\Set$.

We fix a point $n \in \SBuilding$ which we call the \emph{north pole} of $\SBuilding$. The \emph{closed hemisphere} $S\ClosedHemi$ is the set of all points of $\SBuilding$ that have distance $\ge \pi/2$ from $n$. The \emph{open hemisphere} $S\OpenHemi$ is defined analogously. In other words $S\ClosedHemi$ is $\SBuilding$ with the open ball of radius $\pi/2$ around $n$ removed and $S\OpenHemi$ is $\SBuilding$ with the closed ball of radius $\pi/2$ around $n$ removed. The \emph{equator} $S\Equator$ is the set of all points that have distance precisely $\pi/2$ from $n$, i.e., $S\Equator = S\ClosedHemi \setminus S\OpenHemi$.

The \emph{closed hemisphere complex} is the subcomplex $\SBuilding\ClosedHemi \defeq \SBuilding(S\ClosedHemi)$ supported by the closed hemisphere. The \emph{open hemisphere complex} is the subcomplex $\SBuilding\OpenHemi \defeq \SBuilding(S\OpenHemi)$\index[xsyms]{deltapi@$\SBuilding\OpenHemi$} supported by the open hemisphere. The \emph{equator complex} is the subcomplex $\SBuilding\Equator \defeq \SBuilding(S\Equator)$ supported by the equator.

\begin{obs}
The open hemisphere complex, the closed hemisphere complex and the equator complex each is a full subcomplex of $\SBuilding$.\qed
\end{obs}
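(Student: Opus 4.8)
The plan is to deduce all three statements from a single convexity fact about the distance to the north pole along a cell. First I would recall what has to be checked: a subcomplex is full exactly when it contains a simplex as soon as it contains all of that simplex's vertices, and a vertex $v$ of $\SBuilding$ belongs to $\SBuilding\ClosedHemi$, to $\SBuilding\OpenHemi$, or to $\SBuilding\Equator$ according to whether $d(n,v)\ge\pi/2$, $d(n,v)>\pi/2$, or $d(n,v)=\pi/2$. Thus it suffices to prove the following: if $\tau$ is a cell all of whose vertices satisfy one of these three conditions and $\Point$ lies in the relative interior of $\tau$, then $\Point$ satisfies the same condition. Indeed, given a simplex $\sigma$ all of whose vertices lie in one of the three complexes, every point of $\sigma$ lies in the relative interior of some face of $\sigma$, whose vertices are again vertices of $\sigma$; so the reduced statement forces all of $\sigma$ into the corresponding supporting set $S\ClosedHemi$, $S\OpenHemi$, or $S\Equator$, i.e.\ into the complex.

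To prove the reduced statement I would argue inside an apartment. Let $v_0,\dots,v_l$ be the vertices of $\tau$ and let $\Point$ be a relative interior point. By \eqref{item:plenty_of_apartments} pick an apartment $\Apartment$ containing $n$ and $\Point$; since $\Apartment$ is a subcomplex and $\tau$ is the carrier of $\Point$, it contains $\tau$ and hence all the $v_i$. Identify the (essential, hence spherical) Coxeter complex $\Apartment$ with a unit sphere $\S^m\subseteq\R^{m+1}$; since distances in $\SBuilding$ between points of an apartment are measured inside that apartment (Fact~\ref{fact:building}), one has $\cos d(n,v_i)=\langle n,v_i\rangle$ for the Euclidean inner product, and likewise for $\Point$. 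As $\tau$ has diameter $\le\pi/2<\pi$, its relative interior consists precisely of the normalized strictly positive combinations of its vertices, so I may write $\Point=\bigl(\sum_i\lambda_i v_i\bigr)/\norm{\sum_i\lambda_i v_i}$ with all $\lambda_i>0$. The key identity is then
\[
\cos d(n,\Point)=\langle n,\Point\rangle=\frac{\sum_i\lambda_i\cos d(n,v_i)}{\norm{\sum_i\lambda_i v_i}}\text{ ,}
\]
and since every $\lambda_i$ is positive and the denominator is positive, $\cos d(n,\Point)$ is $\le 0$ if every $\cos d(n,v_i)\le 0$, is $<0$ if every $\cos d(n,v_i)<0$, and is $0$ if every $\cos d(n,v_i)=0$. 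Rewriting the cosines as angles gives $d(n,\Point)\ge\pi/2$, $>\pi/2$, or $=\pi/2$ respectively, which is exactly what is needed.

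I do not expect a real obstacle here; the two points that deserve a sentence of justification are that an apartment containing a relative interior point of a cell contains the whole cell (immediate from the definitions of subcomplex and carrier) and that the building metric restricts to the spherical metric on each apartment (which one can extract from the contraction property of retractions in Fact~\ref{fact:building}), together with the elementary description of the relative interior of a spherical simplex of diameter less than $\pi$. With these in hand, the single computation above simultaneously establishes fullness of $\SBuilding\ClosedHemi$, $\SBuilding\OpenHemi$, and $\SBuilding\Equator$.
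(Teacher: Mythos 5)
Your proposal is correct and takes essentially the same approach as the paper: both reduce to a single apartment containing the north pole and the cell in question, and then exploit the ($\pi$-)convexity of the three sets $S\ClosedHemi$, $S\OpenHemi$, $S\Equator$ inside that apartment. The only difference is that the paper cites that convexity abstractly while you verify it directly via the inner-product formula $\cos d(n,\Point)=\langle n,\Point\rangle$ and positivity of the barycentric coefficients — a computation that is the content of that convexity claim, not a different argument.
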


\begin{proof}
For every simplex $\Cell$ there is an apartment $\Apartment$ that contains $n$ and $\Cell$. The result follows from the fact that $S^{\sim \pi/2} \intersect \Apartment$ is $\pi$-convex and $\Cell$ is the convex hull of its vertices, where $\sim$ is either of $\ge$, $>$, and $=$.
\end{proof}

Recall that $\SBuilding$ decomposes as a spherical join $\SBuilding = \SBuilding_1 * \cdots * \SBuilding_k$ of irreducible subbuildings. The \emph{horizontal part} $\SBuilding\Hor$\index[xsyms]{deltahor@$\SBuilding\Hor$} is defined to be the join of all join factors that are contained in the equator complex. The \emph{vertical part} $\SBuilding\Ver$\index[xsyms]{deltaver@$\SBuilding\Ver$} is the join of all remaining join factors. So there is an obvious decomposition
\begin{equation}
\label{eq:building_decomposes_as_hor_join_ver}
\SBuilding = \SBuilding\Hor * \SBuilding\Ver \text{ .}
\end{equation}

We can now state the main result of Schulz' thesis, see \cite[Satz,~p.27]{schulz} and \cite[Theorem~B]{schulz10}:

\begin{thm}
\label{thm:schulz_main}
Let $\SBuilding$ be a thick spherical building with north pole $n \in \SBuilding$. The closed hemisphere complex $\SBuilding\ClosedHemi$ is properly $(\dim \SBuilding)$-spherical. The open hemisphere complex $\SBuilding\OpenHemi$ is properly $(\dim \SBuilding\Ver)$-spherical.
\end{thm}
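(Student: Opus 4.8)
The plan is to reduce to the case of an irreducible building and then induct on $d \defeq \dim \SBuilding$, using the simplicial Morse theory of Section~\ref{sec:morse_theory}, the Solomon--Tits theorem (a thick spherical building of dimension $m$ is properly $m$-spherical), and the fact that links of cells of $\SBuilding$ are again thick spherical buildings (Fact~\ref{fact:links_are_spherical_buildings}). For the reduction, decompose $\SBuilding = \SBuilding_1 * \cdots * \SBuilding_k$ into irreducible factors. Since $d(n,n)=0$, the north pole lies in $\SBuilding\Ver$, and a factor $\SBuilding_i$ is horizontal exactly when $n$ has no component in it; write $n_i$ for the component of $n$ in a vertical factor. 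A direct computation with \eqref{eq:spherical_join} shows that a simplex of $\SBuilding$ lies in the closed (resp.\ open) hemisphere if and only if each of its components in the vertical factors lies in the corresponding hemisphere of $\SBuilding_i$ -- and, in the open case, it has no component in a horizontal factor -- while the horizontal factors are contained in the equator. Hence
\[
\SBuilding\ClosedHemi = \SBuilding\Hor * \Join_{i \text{ vertical}} \SBuilding_i\ClosedHemi
\qquad\text{and}\qquad
\SBuilding\OpenHemi = \Join_{i \text{ vertical}} \SBuilding_i\OpenHemi \text{ ,}
\]
each hemisphere complex of $\SBuilding_i$ being taken with north pole $n_i$. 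Since a join of a properly $a$-spherical and a properly $b$-spherical complex is properly $(a+b+1)$-spherical, the theorem for $\SBuilding$ follows from the theorem for the irreducible vertical factors together with Solomon--Tits for the horizontal factors. So assume $\SBuilding$ is irreducible; then $\SBuilding\Hor=\emptyset$, $\SBuilding\Ver=\SBuilding$, and both complexes are to be shown properly $d$-spherical. For the dimension: both are full subcomplexes, and in an apartment $\Apartment$ through $n$ a chamber $\Chamber$ having the cell opposite the carrier of $n$ as a face lies in $S\ClosedHemi$ -- on the sphere $\Apartment$ one has $d(n,\cdot) = \pi - d(-n,\cdot)$, and every point of $\Chamber$ has distance at most $\diam\Chamber \le \pi/2$ from $-n$ -- and, $\SBuilding$ being irreducible, $\diam\Chamber < \pi/2$, so such a $\Chamber$ even lies in $S\OpenHemi$; thus $\dim\SBuilding\ClosedHemi = \dim\SBuilding\OpenHemi = d$.

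For the irreducible case one inducts on $d$, the reducible case in dimension $<d$ being covered by the reduction above. The base case $d=0$ is immediate: $\SBuilding$ is a set of at least three mutually opposite points and $\SBuilding\ClosedHemi = \SBuilding\OpenHemi = \SBuilding\setminus\{n\}$ is properly $0$-spherical. For the inductive step, equip $\SBuilding\ClosedHemi$ (resp.\ $\SBuilding\OpenHemi$) with the Morse function coming from $h(x) = d(n,x)$: on vertices $h$ takes finitely many values, so after refining it lexicographically by an auxiliary combinatorial height that separates adjacent vertices of equal $h$, Observation~\ref{obs:product_order_in_z} makes it a Morse function in the sense of Section~\ref{sec:morse_theory}. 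The descending link of a vertex $\Vertex$ then consists of the directions at $\Vertex$ along which $h$ decreases; using Fact~\ref{fact:links_are_spherical_buildings} one identifies it -- after a deformation straightening the non-affine level sets of $h$ -- with a hemisphere complex of the $(d-1)$-dimensional thick spherical building $\Link\Vertex$, taken with respect to the direction $[\Vertex,n]_\Vertex$ toward $n$, and the inductive hypothesis (applied to $\Link\Vertex$, together with the reduction to irreducible factors) then bounds its connectivity. For a locally highest vertex the descending link is all of $\Link\Vertex$, which is properly $(d-1)$-spherical.

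Feeding this into the Morse Lemma of Section~\ref{sec:morse_theory}, one concludes that $\SBuilding\ClosedHemi$ and $\SBuilding\OpenHemi$ are $(d-1)$-connected: the complexes are $d$-dimensional, so the descending links are at most $(d-1)$-dimensional, and coning off $(d-2)$-connected complexes reintroduces no reduced homology in degrees $<d$ and (van Kampen) creates no fundamental group, so $(d-1)$-connectedness propagates once the lowest stage of the filtration is understood -- the latter is the equator complex, whose connectivity is reduced, via the link decomposition, to the same induction. Being $d$-dimensional and $(d-1)$-connected, each complex is homotopy equivalent to a wedge of $d$-spheres, and it is not contractible: the descending link of a locally highest vertex is a thick spherical building, hence has nontrivial $H_{d-1}$, so coning it off creates a class in $H_d$ that survives because no $(d+1)$-cells are ever attached; alternatively one exhibits an explicit $d$-cycle by gluing the part of an apartment through $n$ on one side of the equator to the corresponding part of a second such apartment along their common equatorial $(d-1)$-sphere.

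The crux -- and what occupies Schulz' thesis -- is the inductive step, specifically the analysis of the descending links. The function $h=d(n,\cdot)$ is neither affine on cells nor smooth at the points opposite $n$, so realising its descending links as genuine hemisphere complexes of the links $\Link\Vertex$ requires care; moreover one must keep track of which join factors of each $\Link\Vertex$ are horizontal, since the connectivity bound for an \emph{open} hemisphere complex drops with the size of its horizontal part, so the induction only closes if the relevant descending links have no horizontal part. Establishing the connectivity of the equator complex that starts the filtration is a subsidiary problem of comparable difficulty, handled by the same induction.
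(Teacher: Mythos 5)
The paper does not in fact prove this theorem: it is imported directly from Schulz \cite[Satz, p.~27]{schulz} and \cite[Theorem~B]{schulz10}, and the closest the paper comes (Proposition~\ref{prop:coconvex_complexes_spherical}, which recovers the closed hemisphere case) uses a different device entirely -- a finite chain of apartments \`a la von Heydebreck -- rather than Morse theory. So your proposal has to be judged against Schulz, not against an in-paper argument.

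Your reduction to irreducible join factors, the dimension count, and the base case are correct, and the Morse-theoretic outline is close in spirit to Schulz's. But the inductive step as written is set up the wrong way around, which introduces a real obstruction. You take the Morse function to be $h = d(n,\cdot)$ itself, which builds $\SBuilding\ClosedHemi$ from the equator upward toward $-n$. With that orientation the descending link of a vertex $v$ with $h(v) > \pi/2$ is the full subcomplex of adjacent vertices $w$ satisfying $\pi/2 \le h(w) < h(v)$ -- the lower bound is forced because the filtration lives in $\SBuilding\ClosedHemi$ -- and this is \emph{not} a hemisphere complex of $\Link v$: for $v$ only slightly off the equator it is a thin annular band between two nearby small spheres in $\Link v$, and there is no reason to expect it to be $(d-2)$-connected. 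The workable orientation is $-h$: then the descending link of $v$ consists of adjacent $w$ with $h(w) > h(v)$, automatically contained in $\SBuilding\ClosedHemi$, and (after the straightening you allude to) it is the open hemisphere complex of $\Link v$ with north pole $[v,n]_v$, while the base of the filtration is a contractible neighbourhood of the carrier of $-n$ rather than the equator complex. Your attempt to ground the induction on $\SBuilding\Equator$ also cannot work: $\SBuilding\Equator$ has dimension at most $d-1$ in the irreducible case, so having the Morse lemma yield $(d-1)$-connectivity of $\SBuilding\ClosedHemi$ would require $\SBuilding\Equator$ to be contractible, which is generally false, and your appeal to ``the same induction'' for it is circular, since the equator complex is not itself a hemisphere complex. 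Finally -- as you say yourself -- the genuine analytic content of Schulz's theorem, namely that the descending links really are hemisphere complexes of $\Link v$ and that their horizontal join factors vanish so the induction closes, is exactly what you do not carry out. Even after fixing the orientation, this is an honest roadmap, not a proof.
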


Recall that a CW-complex is properly $k$-spherical if it is $k$-dimensional, $(k-1)$-connected and not $k$-connected.

To determine whether a simplex lies in the horizontal link or not, we have the following criterion (cf.\ \cite[Lemma~4.2]{buxwor08}):

\begin{lem}
\label{lem:horizontal_criterion}
Let $\SBuilding$ be a spherical building with north pole $n$. Let $\Vertex \in \SBuilding$ be a vertex. These are equivalent:
\begin{enumerate}
\item $\Vertex \in \SBuilding\Hor$.\label{item:vertex_in_horizontal_link}
\item $\SpherDistance(\Vertex,\AltVertex) = \pi/2$ for every non-equatorial vertex $\AltVertex$ adjacent to $\Vertex$.\label{item:non_equatorial_pi/2_away}
\item $\typ \Vertex$ and $\typ \AltVertex$ lie in different connected components of $\typ \SBuilding$ for every non-equatorial vertex $\AltVertex$ adjacent to $\Vertex$.\label{item:non_equatorial_not_in_same_component}
\end{enumerate}
The statement remains true, if in the second and third statement $\AltVertex$ ranges over the non-equatorial vertices of a fixed chamber that contains $\Vertex$.
\end{lem}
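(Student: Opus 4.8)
The plan is to set up the three statements via the join decomposition $\SBuilding = \SBuilding_1 * \cdots * \SBuilding_k$ of the spherical building into irreducible factors, and then to run a chain of implications through the equatorial structure. First I would reduce \eqref{item:vertex_in_horizontal_link} to a statement about which join factor contains $\Vertex$: by definition $\SBuilding\Hor$ is the join of those $\SBuilding_i$ that are contained in the equator complex, so $\Vertex \in \SBuilding\Hor$ iff the irreducible factor containing $\Vertex$ consists entirely of equatorial vertices. The key geometric input is that in a spherical join, two vertices in different join factors have distance exactly $\pi/2$ (Observation~\ref{obs:spherical_join_decomposition}, together with Proposition~\ref{prop:chamber_decomposition_induces_building_decomposition} and Observation~\ref{obs:vertices_not_in_same_join_factor}), while two adjacent vertices in the \emph{same} irreducible factor have distance $< \pi/2$. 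The equivalence of \eqref{item:non_equatorial_pi/2_away} and \eqref{item:non_equatorial_not_in_same_component} is then immediate from Observation~\ref{obs:vertices_not_in_same_join_factor}: for adjacent vertices, distance $\pi/2$ is equivalent to lying in distinct connected components of $\typ\SBuilding$.

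For \eqref{item:vertex_in_horizontal_link} $\Rightarrow$ \eqref{item:non_equatorial_not_in_same_component}: suppose $\Vertex$ lies in an irreducible factor $\SBuilding_i \subseteq \SBuilding\Equator$, and let $\AltVertex$ be adjacent to $\Vertex$ and non-equatorial. Then $\AltVertex \notin \SBuilding_i$ (else $\AltVertex$ would be equatorial), so $\AltVertex$ lies in a different factor, hence $\typ\Vertex$ and $\typ\AltVertex$ lie in different connected components of $\typ\SBuilding$. For the converse \eqref{item:non_equatorial_not_in_same_component} $\Rightarrow$ \eqref{item:vertex_in_horizontal_link}, I would argue contrapositively: if $\Vertex \notin \SBuilding\Hor$, then the irreducible factor $\SBuilding_i$ containing $\Vertex$ is not contained in the equator complex, so $\SBuilding_i$ contains some non-equatorial vertex. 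The point that needs care here is passing from ``$\SBuilding_i$ contains a non-equatorial vertex'' to ``$\Vertex$ has a non-equatorial neighbour in $\SBuilding_i$'': since $\SBuilding_i$ is irreducible, its Coxeter diagram is connected, so I can walk along a gallery (equivalently, along edges, using that chambers of $\SBuilding_i$ restrict to chambers and the building is a flag complex) from $\Vertex$ to a non-equatorial vertex, and along the way there must be a first edge leaving the equator — more carefully, I want to produce a non-equatorial vertex \emph{adjacent} to $\Vertex$. Here I would use that the distance function to the north pole $n$, restricted to an apartment containing $n$ and $\SBuilding_i$, is controlled by the affine/spherical geometry: if every neighbour of $\Vertex$ in $\SBuilding_i$ were equatorial, then $\Vertex$ would lie in the sub-join-factor spanned by equatorial vertices, forcing $\SBuilding_i \subseteq \SBuilding\Equator$ by the ``no triangles'' / flag-complex structure and the equivalence relation $\sim_v$ of Observation~\ref{obs:acute_edge_equivalence_relation} — contradiction. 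This local-to-global step inside the irreducible factor is the main obstacle, and I expect it to be handled exactly as in \cite[Lemma~4.2]{buxwor08}.

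Finally, for the last sentence of the lemma (that it suffices to let $\AltVertex$ range over the non-equatorial vertices of a single fixed chamber $\Chamber \supseteq \Vertex$): one direction is trivial, since restricting the quantifier to fewer vertices only weakens statements \eqref{item:non_equatorial_pi/2_away} and \eqref{item:non_equatorial_not_in_same_component}. For the other direction, I would show that if the chamber-restricted version of \eqref{item:non_equatorial_not_in_same_component} holds, then $\Vertex \in \SBuilding\Hor$, which by the full equivalence already proved gives the unrestricted versions. Concretely, work in an apartment $\Apartment$ containing $n$ and $\Chamber$; the vertices of $\Chamber$ other than $\Vertex$ that lie in the irreducible factor $\SBuilding_i$ of $\Vertex$ span, together with $\Vertex$, a panel/face of $\Chamber$ inside $\SBuilding_i$, and if all of these are equatorial then — since $\Chamber \cap \SBuilding_i$ is a chamber of the irreducible Coxeter complex $\Apartment \cap \SBuilding_i$ and the equator meets this apartment in a $\pi$-convex set — the whole chamber of $\SBuilding_i$, and hence $\SBuilding_i$ itself, lies on the equator, i.e.\ $\Vertex \in \SBuilding\Hor$. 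Thus the chamber-restricted hypothesis already pins down membership in the horizontal part, and the equivalence with the global statements closes the argument. $\qed$
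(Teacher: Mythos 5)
Your skeleton coincides with the paper's: the implications \eqref{item:vertex_in_horizontal_link} $\implies$ \eqref{item:non_equatorial_pi/2_away} $\equiv$ \eqref{item:non_equatorial_not_in_same_component} are exactly Observation~\ref{obs:vertices_not_in_same_join_factor}, and your treatment of them is fine. The gap is in the converse implication, which carries all the content of the lemma. Your second-paragraph argument --- ``if every neighbour of $\Vertex$ in $\SBuilding_i$ were equatorial, then $\Vertex$ would lie in the sub-join-factor spanned by equatorial vertices, forcing $\SBuilding_i \subseteq \SBuilding\Equator$'' --- does not work as stated: being equatorial is a metric condition relative to $n$, and neither the relation $\sim_v$ of Observation~\ref{obs:acute_edge_equivalence_relation} (which only records which vertex pairs of a chamber are at distance $<\pi/2$) nor the flag-complex/type structure detects it; the equatorial vertices of $\SBuilding_i$ need not span a join factor, and you in effect concede the point by calling this ``the main obstacle'' and deferring it to \cite[Lemma~4.2]{buxwor08}. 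Your final paragraph comes closer, but its decisive inference --- ``the whole chamber of $\SBuilding_i$, and hence $\SBuilding_i$ itself, lies on the equator'' --- is precisely the statement that needs proof: $\pi$-convexity only yields that $\SBuilding\Equator$ is a full subcomplex (a cell all of whose vertices are equatorial lies in it), not that a single equatorial chamber forces the whole irreducible factor onto the equator.

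The missing idea is a one-line dimension count, and it is what the paper uses. If the irreducible factor $\SBuilding_i$ containing $\Vertex$ is not contained in $\SBuilding\Equator$, then $\SpherDistance(n,\SBuilding_i)<\pi/2$, the projection $n_i$ of $n$ to $\SBuilding_i$ exists (Lemma~\ref{lem:spherical_projection}, as in the proof of Lemma~\ref{lem:horizontal_link_decomposition}), and a point of $\SBuilding_i$ is equatorial if and only if it has distance $\pi/2$ from $n_i$; in any apartment of $\SBuilding_i$ containing $n_i$ this locus is a great subsphere of codimension one, so $\SBuilding\Equator \intersect \SBuilding_i$ is a subcomplex of dimension strictly smaller than $\dim \SBuilding_i$. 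Hence for any chamber $\Chamber$ of $\SBuilding$ containing $\Vertex$, the cell $\Chamber \intersect \SBuilding_i$ is a chamber of $\SBuilding_i$, of full dimension, and therefore cannot lie in $\SBuilding\Equator$; by fullness it has a non-equatorial vertex $\AltVertex$, which is adjacent to $\Vertex$ and lies in the same irreducible factor, so $\SpherDistance(\Vertex,\AltVertex)<\pi/2$, contradicting \eqref{item:non_equatorial_pi/2_away}. With this inserted your argument closes, and since the witness $\AltVertex$ is produced inside the given chamber, the chamber-restricted form of the lemma follows at the same time.
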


\begin{proof}
The implications $\eqref{item:vertex_in_horizontal_link} \implies \eqref{item:non_equatorial_pi/2_away} \equiv \eqref{item:non_equatorial_not_in_same_component}$ follow from Observation~\ref{obs:vertices_not_in_same_join_factor}.

For $\eqref{item:non_equatorial_pi/2_away} \implies \eqref{item:vertex_in_horizontal_link}$ it remains to see that if $\Chamber$ is a chamber and $\SBuilding_1$ is a join factor of $\SBuilding$ that contains $n$, then $\Chamber \intersect \SBuilding_1$ contains a non-equatorial vertex. This follows from the fact that $\Chamber \intersect \SBuilding_1$ has the same dimension as $\SBuilding_1$ while $\SBuilding\Equator \intersect \SBuilding_1$ has strictly lower dimension.
\end{proof}

\begin{lem}
\label{lem:horizontal_link_decomposition}
Let $\SBuilding$ be a spherical building with north pole $n$. Assume that the building decomposes as a spherical join $\SBuilding = \Join_i \SBuilding_i$ of (not necessarily irreducible) subbuildings $\SBuilding_i$. Let $I$ be the set of indices $i$ such that $\SBuilding_i$ is not entirely contained in $\SBuilding\Equator$. Then
\[
\SBuilding\Hor = \Join_{i \in I} \SBuilding_i\Hor * \Join_{i \nin I} \SBuilding_i
\]
where the north pole of $\SBuilding_i$ is the point $n_i$ closest to $n$.
\end{lem}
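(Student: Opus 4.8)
The plan is to prove the asserted equality of subcomplexes by comparing vertex sets. Each of $\SBuilding\Hor$ and $\SBuilding_i\Hor$ is, by construction, the preimage $\typ^{-1}(\CoxeterDiagram')$ of a union $\CoxeterDiagram'$ of connected components of the relevant Coxeter diagram, hence a full subcomplex; a join of full subcomplexes of the join factors is again full in $\SBuilding$; and the same is true of each $\SBuilding_j$ itself. So both sides of the claimed identity are full subcomplexes of $\SBuilding$, and it suffices to check that they have the same vertices. Since every vertex of $\SBuilding$ lies in exactly one factor $\SBuilding_i$, and a vertex $\Vertex \in \SBuilding_i$ lies in the right-hand side precisely when $i \nin I$, or $i \in I$ and $\Vertex \in \SBuilding_i\Hor$, what must be shown is this: for $\Vertex \in \SBuilding_i$, one has $\Vertex \in \SBuilding\Hor$ if and only if $i \nin I$, or ($i \in I$ and $\Vertex \in \SBuilding_i\Hor$).

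First I would record the relevant piece of join geometry. Write $\SBuilding = \SBuilding_i * \SBuilding_i^{\perp}$ with $\SBuilding_i^{\perp} \defeq \Join_{j \ne i} \SBuilding_j$, and express the north pole in join coordinates as $n = \cos\theta\, n_i + \sin\theta\, n_i^{\perp}$ with $\theta \in [0,\pi/2]$, $n_i \in \SBuilding_i$, $n_i^{\perp} \in \SBuilding_i^{\perp}$. Applying \eqref{eq:spherical_join} to $n$ and a point $x \in \SBuilding_i$ (which has join coordinate $0$) yields $\cos d(n,x) = \cos\theta \cos d(n_i,x)$. Two consequences: (a) $\SBuilding_i \subseteq \SBuilding\Equator$ exactly when $\theta = \pi/2$ — the nontrivial direction being that if $\theta < \pi/2$ then taking $x = n_i$ gives $d(n,n_i) = \theta < \pi/2$, so $n_i \notin \SBuilding\Equator$; hence for $i \in I$ one has $\theta < \pi/2$, so $\cos\theta > 0$, and $n_i$ is the unique point of $\SBuilding_i$ closest to $n$, that is, the north pole of $\SBuilding_i$ named in the statement. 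And (b) for $i \in I$ and a vertex $\AltVertex \in \SBuilding_i$, since $\cos\theta \ne 0$, the identity shows that $\AltVertex$ is equatorial in $\SBuilding$ (at distance $\pi/2$ from $n$) if and only if it is equatorial in $\SBuilding_i$ (at distance $\pi/2$ from $n_i$).

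Then I would finish with the horizontality criterion of Lemma~\ref{lem:horizontal_criterion}, using that $\SBuilding_i$ is full in $\SBuilding$, so that adjacency of vertices of $\SBuilding_i$ is the same whether computed in $\SBuilding_i$ or in $\SBuilding$. If $i \nin I$, then by (a) every vertex of $\SBuilding_i$ is equatorial, and any non-equatorial vertex of $\SBuilding$ adjacent to $\Vertex$ lies in some $\SBuilding_j$ with $j \ne i$ and hence has distance $\pi/2$ from $\Vertex$ by the join structure; so the criterion of Lemma~\ref{lem:horizontal_criterion} is met and $\Vertex \in \SBuilding\Hor$, matching the right-hand side. If $i \in I$, split the non-equatorial vertices adjacent to $\Vertex$ into those lying in some $\SBuilding_j$, $j \ne i$ — automatically at distance $\pi/2$ from $\Vertex$, imposing no condition — and those lying in $\SBuilding_i$. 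By (b), the vertices of $\SBuilding_i$ that are non-equatorial with respect to $n$ are exactly those non-equatorial with respect to $n_i$. Therefore the condition of Lemma~\ref{lem:horizontal_criterion} for $\Vertex$ inside $\SBuilding$ (pole $n$) is literally the same as that condition for $\Vertex$ inside $\SBuilding_i$ (pole $n_i$); applying the lemma in $\SBuilding$ and in $\SBuilding_i$ gives $\Vertex \in \SBuilding\Hor$ if and only if $\Vertex \in \SBuilding_i\Hor$, as required.

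The only genuine work is bookkeeping: checking that the degenerate cases ($\theta = 0$, i.e.\ $n$ itself lies in some $\SBuilding_i$, forcing $I = \{i\}$; and $\theta = \pi/2$, i.e.\ $i \nin I$) are correctly absorbed, and that fullness — and with it adjacency — passes between $\SBuilding$ and its join factors. All the geometry sits in the single identity $\cos d(n,x) = \cos\theta \cos d(n_i,x)$; everything else is the combinatorics of spherical joins. One could instead avoid Lemma~\ref{lem:horizontal_criterion} by refining the given decomposition to the irreducible one $\SBuilding = \SBuilding'_1 * \cdots * \SBuilding'_m$ and showing, by iterating the same identity, that an irreducible factor $\SBuilding'_k \subseteq \SBuilding_i$ lies in $\SBuilding\Equator$ if and only if $i \nin I$ or $\SBuilding'_k$ lies in $\SBuilding_i\Equator$; this is equivalent but a little longer to spell out.
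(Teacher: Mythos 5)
Your proof is correct; the geometry is identical to the paper's — the single join identity $\cos d(n,x) = \cos\theta\cos d(n_i,x)$, with both consequences (a) and (b) matching what the paper extracts from \eqref{eq:spherical_join} — but the combinatorial framing differs. The paper reduces the claim to the corresponding decomposition of the equator,
\[
\SBuilding\Equator = \Join_{i \in I} \SBuilding_i\Equator * \Join_{i \nin I} \SBuilding_i\text{,}
\]
observing that this suffices because the irreducible join decomposition of $\SBuilding$ refines the given one (so $\SBuilding\Hor$, a join of those irreducible factors lying in $\SBuilding\Equator$, is determined once the equator decomposes). This equator statement is then a vertex-level identity between full subcomplexes, which needs only the already-established observation that $\SBuilding\Equator$ is full. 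You instead attack $\SBuilding\Hor$ directly through Lemma~\ref{lem:horizontal_criterion}, and therefore have to verify separately that both sides of the claimed identity are full subcomplexes and that adjacency is preserved between $\SBuilding_i$ and $\SBuilding$. That is a little more bookkeeping for the same geometric content; what it buys you is that you never have to pass through the irreducible refinement explicitly or appeal to the definition of $\SBuilding\Hor$ via irreducible factors. Your closing remark — that one could instead refine to the irreducible decomposition and argue about which irreducible factors lie in the equator — is in spirit the route the paper takes, and it is actually the shorter one, chiefly because the fullness of $\SBuilding\Equator$ comes for free.
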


\begin{proof}
First note that the subbuildings $\SBuilding_i$ are $\pi$-convex and if $i \in I$ then $\SpherDistance(n,\SBuilding_i) < \pi/2$, so $n_i \defeq \ClosestPointProjection_{\SBuilding_i} n$ exists by Lemma~\ref{lem:spherical_projection}. Note further that it suffices to show that
\[
\SBuilding\Equator = \Join_{i \in I} \SBuilding_i\Equator * \Join_{i \nin I} \SBuilding_i
\]
because the decomposition of $\SBuilding$ into irreducible factors is clearly a refinement of the decomposition $\Join_i \SBuilding_i$.

The north pole $n$ can be written as the spherical join of the $n_i, i \in I$ and none of the coefficients is zero. It thus follows from the definition of the spherical join \eqref{eq:spherical_join}, that a vertex $\Vertex$ in a join factor $\SBuilding_i$ has distance $\pi/2$ from $n$ if and only if it has distance $\pi/2$ from $n_i$. Clearly every vertex of $\SBuilding$ is contained in some $\SBuilding_i$. The result therefore follows from the fact that $\SBuilding\Equator$ is a full subcomplex.
\end{proof}

\footerlevel{3}
\headerlevel{3}

\section{Metric Codistance}
\label{sec:codistance}

We want to define a metric codistance on the twin building $\PNBuildings$, i.e., a metric analogue of the Weyl-codistance.

Let $\PosPoint \in \PosBuilding$ and $\NegPoint \in \NegBuilding$ be points. Let $\TwinApartment = \PNApartments$ be a twin apartment that contains both. We define $d^*_\TwinApartment(\PosPoint,\NegPoint)$ to be the distance from $\PosPoint$ to the unique point in $\Apartment$ that is opposite $\NegPoint$. It is clear that this is the same as the distance from $\NegPoint$ to the unique point in $\TwinApartment$ that is opposite $\PosPoint$.

\begin{obs}
\label{obs:restricted_retractions_are_isomorphisms}
Let $\Chamber$ be a chamber and let $\TwinApartment$ and $\TwinApartment'$ be twin apartments that contain $\Chamber$. Let $\Retraction{\TwinApartment}{\Chamber}$ and $\Retraction{\TwinApartment'}{\Chamber}$ be the retractions centered at $\Chamber$ onto $\TwinApartment$ respectively $\TwinApartment'$. Then $\Retraction{\TwinApartment}{\Chamber}|_{\TwinApartment'}$ and $\Retraction{\TwinApartment'}{\Chamber}|_{\TwinApartment}$ are isomorphisms of thin twin apartments that are inverse to each other. In particular, they preserve Weyl- and metric distance and opposition.
\end{obs}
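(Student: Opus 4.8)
The plan is to deduce everything from the characterisation of the retractions recorded in Fact~\ref{fact:twin_building}, together with one classical ingredient: a Weyl group acts simply transitively on the chambers of a Coxeter complex. First I would fix notation and, without loss of generality, assume that $\Chamber$ lies in the positive half, so that $\Chamber \subseteq \PosApartment \intersect \PosApartment'$. Recall from Fact~\ref{fact:twin_building} that $\Retraction{\TwinApartment}{\Chamber}$ is a type-preserving map of the whole twin building onto $\TwinApartment$ which carries each half into the corresponding half and which sends a chamber $\AltChamber$ to the chamber of $\TwinApartment$ lying in the same half as $\AltChamber$ and having the same Weyl-distance to $\Chamber$ when $\AltChamber$ is positive, resp.\ the same Weyl-codistance to $\Chamber$ when $\AltChamber$ is negative; moreover its restriction to any twin apartment containing $\Chamber$ is an opposition-preserving isometry. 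Applying this to $\TwinApartment'$ (which contains $\Chamber$ by hypothesis) already shows that $\Retraction{\TwinApartment}{\Chamber}|_{\TwinApartment'}\colon \TwinApartment' \to \TwinApartment$ and, symmetrically, $\Retraction{\TwinApartment'}{\Chamber}|_{\TwinApartment}\colon \TwinApartment \to \TwinApartment'$ are type- and opposition-preserving isometries. So the only thing left to prove is that they are inverse to one another; given that, bijectivity and the remaining assertions are automatic.

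To establish this I would set $R \defeq \Retraction{\TwinApartment'}{\Chamber}|_{\TwinApartment} \circ \Retraction{\TwinApartment}{\Chamber}|_{\TwinApartment'}$, a type- and opposition-preserving self-isometry of $\TwinApartment'$, and show $R = \operatorname{id}_{\TwinApartment'}$. By the description above each factor, and hence $R$, preserves for every chamber both the half in which it lies and its Weyl-distance resp.\ Weyl-codistance to $\Chamber$. The crucial point is that this data determines a chamber of the thin twin building $\TwinApartment' = \AltPNApartments$ uniquely: inside $\PosApartment'$ this holds because $\Weyl$ acts simply transitively on chambers, so $\PosWeylDistance(\Chamber,\DummyArg)$ is injective on the chambers of $\PosApartment'$; and inside $\NegApartment'$ it holds because, by definition, $\WeylCoDistance(\Chamber,\AltChamber)$ is the Weyl-distance in $\NegApartment'$ from the unique chamber opposite $\Chamber$ (which exists by \eqref{item:twin_building_apartment_opposition} and does not depend on $\AltChamber$) to $\AltChamber$, so $\WeylCoDistance(\Chamber,\DummyArg)$ is injective on the chambers of $\NegApartment'$ as well. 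Hence $R$ fixes every chamber of $\TwinApartment'$ setwise, and since a type-preserving isometry of a chamber onto itself must fix each of its vertices (they carry pairwise distinct types) and is therefore the identity on that chamber (the vertices affinely span the ambient model space), $R$ restricts to the identity on each chamber. As every point of $\TwinApartment'$ lies in some chamber, $R = \operatorname{id}_{\TwinApartment'}$, and the symmetric computation gives $\Retraction{\TwinApartment}{\Chamber}|_{\TwinApartment'} \circ \Retraction{\TwinApartment'}{\Chamber}|_{\TwinApartment} = \operatorname{id}_{\TwinApartment}$.

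Putting this together, $\Retraction{\TwinApartment}{\Chamber}|_{\TwinApartment'}$ and $\Retraction{\TwinApartment'}{\Chamber}|_{\TwinApartment}$ are mutually inverse, and each is a type- and opposition-preserving isometry, hence an isomorphism of thin twin apartments. In particular each preserves the cell structure, the opposition relation, Weyl-distance and Weyl-codistance, and the metric distance within each half, and therefore also the metric codistance $d^*$, which is built from metric distance and opposition. I expect the only genuinely non-formal step to be the uniqueness of a chamber of a twin apartment in terms of its half and its Weyl-(co)distance to the fixed chamber $\Chamber$; everything else is bookkeeping of the two halves and of the switch between Weyl-distance and Weyl-codistance, plus the harmless observation that a type-preserving self-isometry of a chamber is trivial.
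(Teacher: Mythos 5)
Your proof is correct. The paper leaves this observation without proof, treating it as an immediate consequence of Fact~\ref{fact:twin_building}(3), and your argument supplies exactly the natural justification: the restricted retractions are type- and opposition-preserving isometries by that fact, and they are mutually inverse because a chamber of a twin apartment is uniquely determined by its half together with its Weyl-(co)distance to $\Chamber$ (which each retraction preserves by definition), with the passage from ``fixes each chamber setwise'' to ``is the identity'' handled by the standard type-preservation argument.
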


\begin{lem}
\label{lem:codistance_well-defined}
Let $\TwinApartment$ and $\TwinApartment'$ be two twin apartments that contain $\PosPoint$ and $\NegPoint$. Then $d^*_\TwinApartment(\PosPoint,\NegPoint) = d^*_{\TwinApartment'}(\PosPoint,\NegPoint)$.
\end{lem}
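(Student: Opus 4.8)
The plan is to establish the equality by proving both inequalities $d^*_\TwinApartment(\PosPoint,\NegPoint)\le d^*_{\TwinApartment'}(\PosPoint,\NegPoint)$ and its reverse via a retraction argument that reduces matters to Observation~\ref{obs:restricted_retractions_are_isomorphisms}. Suppose first that $\TwinApartment$ and $\TwinApartment'$ have a chamber $\Chamber$ in common. The retraction $\Retraction{\TwinApartment}{\Chamber}$ is the identity on $\TwinApartment$, and by Observation~\ref{obs:restricted_retractions_are_isomorphisms} its restriction to $\TwinApartment'$ is an isomorphism $\TwinApartment'\to\TwinApartment$ preserving metric distance and opposition. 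Since $\PosPoint$ and $\NegPoint$ both lie in $\TwinApartment\intersect\TwinApartment'$, this isomorphism fixes each of them; hence it carries the unique point of $\TwinApartment'$ opposite $\NegPoint$ to the unique point of $\TwinApartment$ opposite $\NegPoint$, and being an isometry it equates their distances to $\PosPoint$. This is exactly $d^*_{\TwinApartment'}(\PosPoint,\NegPoint)=d^*_\TwinApartment(\PosPoint,\NegPoint)$.

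The task is thus to reduce to the case of a common chamber. I would first treat the case in which $\NegPoint$ lies in the interior of a chamber $\Chamber_-$ of $\NegBuilding$ (or, symmetrically, $\PosPoint$ lies in the interior of a chamber of $\PosBuilding$): any subcomplex of $\NegBuilding$ containing $\NegPoint$ contains its carrier $\Chamber_-$, so $\Chamber_-$ is common to $\TwinApartment$ and $\TwinApartment'$ and the first paragraph applies. For arbitrary $\PosPoint,\NegPoint$ I would then argue by approximation. For a fixed twin apartment the map sending a point $\Point$ of its negative half to $d^*_\TwinApartment(\PosPoint,\Point)$ is the composite of the opposition isometry with $d(\PosPoint,-)$, hence continuous; approximating $\NegPoint$ by points of the negative half of $\TwinApartment$ lying in interiors of chambers and using the special case already settled exhibits $d^*_\TwinApartment(\PosPoint,\NegPoint)$ as a limit of quantities that are independent of the twin apartment used to compute them, and a uniform Lipschitz control on the (now partially defined) metric codistance — extracted from the fact that retractions onto twin apartments are distance-non-increasing — then forces the limit to be independent of $\TwinApartment$.

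One could instead try to run the retraction argument directly without a common chamber: fixing a chamber $\Chamber$ of the positive half of $\TwinApartment$ with $\PosPoint\in\Chamber$, the retraction $\Retraction{\TwinApartment}{\Chamber}$ maps everything into $\TwinApartment$, fixes $\TwinApartment$ — and so $\PosPoint$ and $\NegPoint$ — pointwise, and is distance-non-increasing; if it sends the point of $\TwinApartment'$ opposite $\NegPoint$ to a point opposite $\NegPoint$, necessarily the point of $\TwinApartment$ opposite $\NegPoint$, then $d^*_\TwinApartment(\PosPoint,\NegPoint)\le d^*_{\TwinApartment'}(\PosPoint,\NegPoint)$, and symmetry finishes the proof.

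I expect the main obstacle to be exactly this general-position phenomenon: two twin apartments can contain both $\PosPoint$ and $\NegPoint$ while sharing no chamber, so Observation~\ref{obs:restricted_retractions_are_isomorphisms} does not apply on the nose. The crux is the auxiliary claim that a retraction onto a twin apartment carries points opposite $\NegPoint$ to points opposite $\NegPoint$; I would try to deduce it from the fact that such retractions preserve Weyl-codistance from the central chamber, so that they send cells opposite the carrier of $\NegPoint$ to cells opposite it, compatibly with the canonical type-preserving identification of those cells, whence opposition of points is preserved as well.
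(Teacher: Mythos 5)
Your first two paragraphs are sound: when $\TwinApartment$ and $\TwinApartment'$ share a chamber, Observation~\ref{obs:restricted_retractions_are_isomorphisms} applies directly and settles the claim, and you correctly deduce the case where the carrier of $\NegPoint$ (or of $\PosPoint$) is a chamber. The difficulty you point to at the end is also the right one.

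The gap is in the bridge you propose. Your approximation scheme needs a Lipschitz bound on the \emph{partially defined} codistance, i.e.\ an inequality of the form $|d^*(\PosPoint,\NegPoint_1)-d^*(\PosPoint,\NegPoint_2)|\le d(\NegPoint_1,\NegPoint_2)$ where $\NegPoint_1$ and $\NegPoint_2$ sit in chamber interiors of possibly \emph{different} negative apartments, with no single twin apartment containing $\PosPoint$, $\NegPoint_1$, and $\NegPoint_2$ simultaneously. That is precisely the kind of apartment-independence you are trying to prove; the fact that retractions onto a twin apartment $(\Apartment_+,\Apartment_-)$ centered at $\Chamber$ are $1$-Lipschitz only controls distances in $(\Apartment_+,\Apartment_-)$ after pushing both points in, and it is not opposition- or codistance-preserving on apartments not containing $\Chamber$ (Fact~\ref{fact:twin_building} only asserts preservation on twin apartments through the center, and merely contraction otherwise). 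Your third-paragraph fallback has the same root problem, as you acknowledge: showing that $\Retraction{\TwinApartment}{\Chamber}$ carries the point of $\TwinApartment'$ opposite $\NegPoint$ to a point opposite $\NegPoint$ is again a statement about a twin apartment $\TwinApartment'$ that need not contain $\Chamber$, and preservation of Weyl-codistance \emph{from $\Chamber$} does not by itself give preservation of opposition between cells unrelated to $\Chamber$.

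The idea you are missing is an intermediate third apartment, chosen to share a chamber with each of $\TwinApartment$ and $\TwinApartment'$. Pick a chamber $\PosChamber\subseteq\PosApartment$ with $\PosPoint\in\PosChamber$ and a chamber $\NegChamber\subseteq\NegApartment'$ with $\NegPoint\in\NegChamber$; by \eqref{item:twin_building_common_apartment} there is a twin apartment $\TwinApartment''$ containing both $\PosChamber$ and $\NegChamber$. Then $\TwinApartment''$ contains $\PosPoint$ and $\NegPoint$, shares the chamber $\PosChamber$ with $\TwinApartment$, and shares the chamber $\NegChamber$ with $\TwinApartment'$. Your common-chamber case, applied twice, now gives $d^*_\TwinApartment(\PosPoint,\NegPoint)=d^*_{\TwinApartment''}(\PosPoint,\NegPoint)=d^*_{\TwinApartment'}(\PosPoint,\NegPoint)$, with no limiting argument required. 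This is the paper's proof.
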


\begin{proof}
Let $\PosChamber \subseteq \TwinApartment$ be a chamber that contains $\PosPoint$ and let $\NegChamber \subseteq \TwinApartment'$ be a chamber that contains $\NegPoint$. Let $\TwinApartment''$ be a twin apartment that contains $\PosChamber$ and $\NegChamber$.

By Observation~\ref{obs:restricted_retractions_are_isomorphisms} the map $\Retraction{\TwinApartment''}{\NegChamber}|_{\Apartment}$ is an isometry that takes the point opposite $\NegPoint$ in $\TwinApartment$ to the point opposite $\NegPoint$ in $\TwinApartment''$. Thus $d^*_\TwinApartment(\PosPoint,\NegPoint) = d^*_{\TwinApartment''}(\PosPoint,\NegPoint)$. Applying the same argument to $\Retraction{\TwinApartment''}{\PosChamber}|_{\Apartment'}$ yields $d^*_{\TwinApartment'}(\PosPoint,\NegPoint) = d^*_{\TwinApartment''}(\PosPoint,\NegPoint)$.
\end{proof}

Thus we obtain a well-defined \emph{metric codistance} $\EuclCoDistance$\index[xsyms]{dstar@$\EuclCoDistance$} by taking $\EuclCoDistance(\PosPoint,\NegPoint)$ to be $\EuclCoDistance_{\TwinApartment}(\PosPoint,\NegPoint)$ for any twin apartment $\Apartment$ that contains $\PosPoint$ and $\NegPoint$.

An important feature of the metric codistance is that it gives rise to a unique direction toward infinity that we describe now. We consider as before points $\PosPoint \in \PosBuilding$ and $\NegPoint \in \NegBuilding$ and a twin apartment $\PNApartments$ that contains them. We assume that the two points are not opposite, i.e., that $d^*(\PosPoint,\NegPoint) \ne 0$.

We define the geodesic ray in $\TwinApartment$ from $\PosPoint$ to $\NegPoint$ to be the geodesic ray in $\PosApartment$ that issues at $\PosPoint$ and moves away from the point opposite $\NegPoint$. As a set we denote it by $\TwinRay{\PosPoint}{\NegPoint}[][\TwinApartment]$.

\begin{lem}
\label{lem:twin_ray_well-defined}
Let $\TwinApartment$ and $\TwinApartment'$ be two twin apartments that contain $\PosPoint$ and $\NegPoint$. Then $\TwinRay{\PosPoint}{\NegPoint}[][\TwinApartment] \subseteq \TwinApartment'$. That is, $\TwinRay{\PosPoint}{\NegPoint}[][\TwinApartment] = \TwinRay{\PosPoint}{\NegPoint}[][\TwinApartment']$.
\end{lem}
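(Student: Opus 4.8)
The plan is to follow the proof of Lemma~\ref{lem:codistance_well-defined}: pass to an auxiliary twin apartment and exploit the retractions of Observation~\ref{obs:restricted_retractions_are_isomorphisms}. Choose a chamber $\PosChamber \subseteq \PosApartment$ whose closure contains $\PosPoint$ and a chamber $\NegChamber \subseteq \NegApartment'$ whose closure contains $\NegPoint$, and (applying \eqref{item:twin_building_common_apartment} to interior points) a twin apartment $\TwinApartment''$ containing $\PosChamber$ and $\NegChamber$; write $\PosApartment''$ for its positive half. Then $\TwinApartment''$ contains $\PosPoint$ and $\NegPoint$ as well. It suffices to prove $\TwinRay{\PosPoint}{\NegPoint}[][\TwinApartment] = \TwinRay{\PosPoint}{\NegPoint}[][\TwinApartment'']$ and $\TwinRay{\PosPoint}{\NegPoint}[][\TwinApartment'] = \TwinRay{\PosPoint}{\NegPoint}[][\TwinApartment'']$; the two assertions are handled in the same way, using that $\TwinApartment$ and $\TwinApartment''$ share the positive chamber $\PosChamber$ through $\PosPoint$, respectively that $\TwinApartment'$ and $\TwinApartment''$ share the negative chamber $\NegChamber$ through $\NegPoint$.

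For the first, set $\phi \defeq \Retraction{\TwinApartment''}{\PosChamber}|_{\TwinApartment}$. By Observation~\ref{obs:restricted_retractions_are_isomorphisms} this is an isomorphism of thin twin apartments $\TwinApartment \to \TwinApartment''$ preserving metric distance and opposition. Since $\Retraction{\TwinApartment''}{\PosChamber}$ restricts to the identity on $\TwinApartment''$, while $\PosPoint \in \PosChamber \subseteq \PosApartment''$ and $\NegPoint \in \NegChamber \subseteq \NegApartment''$, the map $\phi$ fixes both $\PosPoint$ and $\NegPoint$. Preservation of opposition then forces $\phi$ to take the point of $\PosApartment$ opposite $\NegPoint$ to the point of $\PosApartment''$ opposite $\NegPoint$, and, being an isometric cellular isomorphism onto the uniquely geodesic space $\PosApartment''$, it takes the geodesic ray in $\PosApartment$ issuing at $\PosPoint$ away from the former point to the geodesic ray in $\PosApartment''$ issuing at $\PosPoint$ away from the latter; that is, $\phi\bigl(\TwinRay{\PosPoint}{\NegPoint}[][\TwinApartment]\bigr) = \TwinRay{\PosPoint}{\NegPoint}[][\TwinApartment'']$. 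The identical computation with $\Retraction{\TwinApartment''}{\NegChamber}|_{\TwinApartment'}$ yields an isomorphism $\TwinApartment' \to \TwinApartment''$ fixing $\PosPoint$ and $\NegPoint$ and carrying $\TwinRay{\PosPoint}{\NegPoint}[][\TwinApartment']$ onto $\TwinRay{\PosPoint}{\NegPoint}[][\TwinApartment'']$.

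The step I expect to be the crux is upgrading these to the literal equality of the three rays as subsets of $\PosBuilding$, since neither retraction is the identity. The plan is to show that all three geodesic rays, which issue at $\PosPoint$, tend to one and the same point at infinity $\PointAtInfty \in \Infty\PosBuilding$; Proposition~\ref{prop:ray_from_point_to_point_at_infty} then identifies each of them with the unique ray $[\PosPoint,\PointAtInfty)$, finishing the proof. To pin down $\PointAtInfty$ one gives an apartment-independent description of $\TwinRay{\PosPoint}{\NegPoint}[][\TwinApartment]$: letting $\PosCell$ and $\NegCell$ be the carriers of $\PosPoint$ and $\NegPoint$, the ray leaves $\PosPoint$ into the coface $\WeylProjection_{\PosCell}\NegCell$ of $\PosCell$ and then passes through a chain of cells governed by further projections of $\NegCell$ of this kind; by Fact~\ref{fact:twin_building}~\eqref{item:twin_projection} these projections — hence the whole chain of cells crossed by the ray, and therefore its endpoint at infinity — do not depend on the twin apartment used to compute them. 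Throughout, it is the well-definedness of the metric codistance $\EuclCoDistance$ (Lemma~\ref{lem:codistance_well-defined}) that makes phrases like ``the point opposite $\NegPoint$'' unambiguous across apartments and keeps the displayed equalities meaningful. The hard part is thus exactly this apartment-independence of the asymptotic direction of $\TwinRay{\PosPoint}{\NegPoint}$.
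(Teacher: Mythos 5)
Your preamble via the auxiliary twin apartment $\TwinApartment''$ and the retraction isomorphisms does not actually reduce the problem. You correctly observe that $\Retraction{\TwinApartment''}{\PosChamber}|_{\TwinApartment}$ carries $\TwinRay{\PosPoint}{\NegPoint}[][\TwinApartment]$ isometrically onto $\TwinRay{\PosPoint}{\NegPoint}[][\TwinApartment'']$, and likewise on the other side — but, as you yourself note, neither retraction fixes the ray pointwise, so this gives no literal equality of subsets. Sharing a chamber near one endpoint does not help once the ray leaves that chamber. Similarly, the detour through points at infinity buys nothing: to show the three rays tend to a common $\PointAtInfty \in \Infty\PosBuilding$ you must still produce an apartment-independent description of the ray, and that is the entire content of the lemma. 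So the whole first half of your argument can be deleted, and by the time you reach what you call the ``crux'' you are facing the original problem unchanged.

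The second issue is that the crux is asserted, not proved. The key idea — that the ray is governed locally by twin projections of $\NegCell$, hence is apartment-independent via Fact~\ref{fact:twin_building}~\eqref{item:twin_projection} — is exactly the right mechanism, and is indeed what the paper uses. But ``the ray leaves $\PosPoint$ into the coface $\WeylProjection_{\PosCell}\NegCell$'' requires an argument: one must see that in $\TwinApartment$ the ray from $\PosPoint$ away from $\opm{\TwinApartment}\NegPoint$ enters the chamber (or cell) over $\PosCell$ at maximal gallery-distance from the carrier of $\opm{\TwinApartment}\NegPoint$, and then translate maximal distance into maximal codistance to identify this with the twin projection; degenerate cases, where the ray departs $\PosPoint$ along a lower-dimensional cell, need separate care. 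Worse, your ``chain of cells governed by further projections of $\NegCell$ of this kind'' is purely heuristic — you never formulate the inductive claim, never show what happens when the ray crosses a face, and never close the induction. The paper sidesteps the discrete chain entirely with a connectedness argument: the set $\Set$ of points of $\TwinRay{\PosPoint}{\NegPoint}[][\TwinApartment]$ lying in $\TwinApartment'$ is non-empty (it contains $\PosPoint$ by convexity) and closed, and is shown to be open by arguing at each $\AltPoint \in \Set$ with carrier $\Cell$ that a forward neighborhood of $\AltPoint$ along the ray is contained in $\WeylProjection_\Cell\NegChamber$, which by Fact~\ref{fact:twin_building}~\eqref{item:twin_projection} lies in $\TwinApartment'$. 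This is precisely your ``chain'' idea, packaged rigorously.

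Finally, a small but telling slip: you write that the well-definedness of $\EuclCoDistance$ (Lemma~\ref{lem:codistance_well-defined}) makes ``the point opposite $\NegPoint$'' unambiguous across apartments. It does not. The opposite point $\opm{\TwinApartment}\NegPoint$ genuinely depends on $\TwinApartment$ — only its distance to $\PosPoint$ is apartment-independent. The whole difficulty of the present lemma is that the rays in different apartments point away from potentially different opposite points; pretending otherwise obscures exactly the gap your proof has not closed.
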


\begin{proof}
Let $\AltPoint$ be a point of $\Set \defeq \TwinRay{\PosPoint}{\NegPoint}[][\TwinApartment] \intersect \TwinApartment'$. We will show that a neighborhood of $\AltPoint$ in $\TwinRay{\PosPoint}{\NegPoint}[][\TwinApartment]$ is also contained in $\Set$, which is therefore open. On the other hand it is clearly closed and since $\TwinRay{\PosPoint}{\NegPoint}[][\TwinApartment]$ is connected we deduce that $\Set = \TwinRay{\PosPoint}{\NegPoint}[][\TwinApartment]$.

First note that $[\PosPoint,\AltPoint] \subseteq \Set$ because the positive half of $\TwinApartment'$ is convex. Let $\NegChamber$ be a chamber that contains $\NegPoint$ and let $\Cell$ be the carrier of $\AltPoint$. Let $\AltChamber$ be the projection of $\NegChamber$ to $\Cell$. The chamber $\Chamber_0$ opposite $\NegChamber$ contains the point $\Point_0$ opposite $\NegPoint$ in $\TwinApartment$. Since $\TwinRay{\PosPoint}{\NegPoint}[][\TwinApartment]$ moves away from $\Point_0$, an initial part of it is contained in the chamber over $\PosPoint$ furthest away from $\Chamber_0$, but this is just $\AltChamber$. The result now follows from the fact that $\AltChamber \subseteq \TwinApartment'$ by Fact~\ref{fact:twin_building}~\eqref{item:twin_projection}.
\end{proof}

By the lemma setting $\TwinRay{\PosPoint}{\NegPoint} \defeq \TwinRay{\PosPoint}{\NegPoint}[][\TwinApartment]$ for any twin apartment $\TwinApartment$ that contains $\PosPoint$ and $\NegPoint$ defines a well-defined ray in the Euclidean building. The ray $\TwinRay{\NegPoint}{\PosPoint}$ is defined in the same way.

\footerlevel{3}
\headerlevel{3}

\section{Height: a First Attempt}
\label{sec:approximate_height}

After the introduction of the metric codistance in Section~\ref{sec:codistance} an obvious height function on $\PosBuilding$ imposes itself, namely
\[
\ApproxHeight(\Point) \defeq \EuclCoDistance(\Point,\TheNegPoint) \text{ .}
\]
This function has a \emph{gradient} $\Gradient\ApproxHeight$ that is defined by letting $\Gradient_\Point\ApproxHeight$ be the direction of $\TwinRay{\Point}{\TheNegPoint}$ for every $\Point \in \PosBuilding$ with $\ApproxHeight(\Point) > 0$. It is a gradient in the following sense:

\begin{obs}
\label{obs:weak_gradient_criterion}
Let $\Point$ be such that $\ApproxHeight(\Point) > 0$. Let $\Path$ be a path that issues at $\Point$. The direction $\Path_\Point$ points into $\ApproxHeight^{-1}([0,\ApproxHeight(\Point)))$, i.e., $\ApproxHeight\circ \Path$ is descending on an initial interval, if and only if $\angle(\Gradient_\Point\ApproxHeight,\Path_\Point) > \pi/2$. In other words, the set of directions of $\Link \Point$ that are infinitesimally descending is an open hemisphere complex with north pole $\Gradient_\Point\ApproxHeight$.
\end{obs}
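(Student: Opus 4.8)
The plan is to reduce the whole statement to elementary Euclidean geometry inside a single positive apartment. Since both sides of the asserted equivalence depend only on the germ of $\Path$ at $\Point$, we may assume that $\Path$ is a geodesic; and because every direction at a point of a building points into some chamber, we may moreover take $\Path$ to run straight from $\Point$ into a chamber $\Chamber_+$ of $\PosBuilding$ into which $\Path_\Point$ points (any geodesic with the same direction coincides with this one on an initial interval, by the very definition of directions). Thus $\Path$ lies in $\Chamber_+$ near $\Point$.

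Next I would fix a chamber $\Chamber_-$ of $\NegBuilding$ with $\TheNegPoint \in \Chamber_-$ and apply \eqref{item:twin_building_common_apartment} to interior points of $\Chamber_+$ and $\Chamber_-$ to obtain a twin apartment $\TwinApartment = \PNApartments$ with $\Chamber_+ \subseteq \PosApartment$ and $\Chamber_- \subseteq \NegApartment$. Let $\Point_0 \in \PosApartment$ be the point opposite $\TheNegPoint$ in $\TwinApartment$. By the definition of the metric codistance, $\ApproxHeight$ restricted to $\PosApartment$ is the function $\AltPoint \mapsto \EuclDistance(\AltPoint,\Point_0)$, so $\ApproxHeight(\Path(t)) = \EuclDistance(\Path(t),\Point_0)$ for small $t$; and by Lemma~\ref{lem:twin_ray_well-defined} the ray $\TwinRay{\Point}{\TheNegPoint}$ equals $\TwinRay{\Point}{\TheNegPoint}[][\TwinApartment]$, so $\Gradient_\Point\ApproxHeight$ is the direction at $\Point$ of the ray in the Euclidean space $\PosApartment$ that issues at $\Point$ and moves directly away from $\Point_0$. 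That ray is antipodal in $\Link \Point$ to $[\Point,\Point_0]_\Point$, whence $\angle(\Gradient_\Point\ApproxHeight,\Path_\Point) = \pi - \angle_\Point(\Path_\Point,[\Point,\Point_0]_\Point)$.

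Now the computation is just the Euclidean law of cosines inside $\PosApartment$: with $\theta \defeq \angle_\Point(\Path_\Point,[\Point,\Point_0]_\Point)$ and $\ApproxHeight(\Point) = \EuclDistance(\Point,\Point_0)$, one gets $\ApproxHeight(\Path(t))^2 - \ApproxHeight(\Point)^2 = t\big(t - 2\,\ApproxHeight(\Point)\cos\theta\big)$ for small $t \ge 0$. If $\cos\theta > 0$, i.e. $\angle(\Gradient_\Point\ApproxHeight,\Path_\Point) > \pi/2$, this is negative on $0 < t < 2\,\ApproxHeight(\Point)\cos\theta$, so $\ApproxHeight\circ\Path$ is descending on an initial interval; if $\cos\theta \le 0$, i.e. $\angle(\Gradient_\Point\ApproxHeight,\Path_\Point) \le \pi/2$, the expression is positive for all small $t > 0$, so $\ApproxHeight\circ\Path$ is not descending on any initial interval. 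This is exactly the claimed equivalence, and it shows that the set of infinitesimally descending directions is $\{v \in \Link \Point : \angle(v,\Gradient_\Point\ApproxHeight) > \pi/2\}$, i.e. the open hemisphere of $\Link \Point$ with north pole $\Gradient_\Point\ApproxHeight$, so the subcomplex it supports is the open hemisphere complex.

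The step I expect to be the main obstacle is not the Euclidean computation but justifying the reduction to one apartment: checking that an arbitrary direction at $\Point$ points into some chamber, that this chamber of $\PosBuilding$ together with a chamber of $\NegBuilding$ containing $\TheNegPoint$ lies in a common twin apartment, and—most importantly—that the gradient direction $\Gradient_\Point\ApproxHeight$ computed inside the chosen apartment is independent of that choice, which is precisely the content of Lemma~\ref{lem:twin_ray_well-defined}. Once these points are secured, the law-of-cosines calculation finishes the argument.
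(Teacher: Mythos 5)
Your argument is correct and follows essentially the same route as the paper's proof: restrict to a single twin apartment containing both a chamber through the initial segment of $\Path$ and the point $\TheNegPoint$, observe that $\ApproxHeight$ on $\PosApartment$ is just distance from the point $\ThePosPoint$ opposite $\TheNegPoint$ and that $\Gradient_\Point\ApproxHeight$ points directly away from $\ThePosPoint$, then compare the angle. The paper simply says the final implication ``is clear'' once the level set is recognized as a round sphere; you spell that step out with the law of cosines, which is a harmless (and indeed clarifying) elaboration rather than a different approach.
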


\begin{proof}
We may assume $\Path$ to be sufficiently short such that its image is contained in a chamber $\Chamber$ that contains $\Point$. Let $\TwinApartment = \PNApartments$ be an apartment that contains $\Chamber$ and $\TheNegPoint$ and let $\ThePosPoint$ be the point opposite $\TheNegPoint$ in $\TwinApartment$. The level set of $\Point$ in $\PosApartment$ is a round sphere around $\ThePosPoint$. The gradient $\Gradient_\Point \ApproxHeight$ is the direction away from $\ThePosPoint$. So it is clear that $\Path_\Point$ points into the sphere if and only if it includes an obtuse angle with $\Gradient_\Point \ApproxHeight$.
\end{proof}

\begin{figure}[!ht]
\begin{center}
\includegraphics{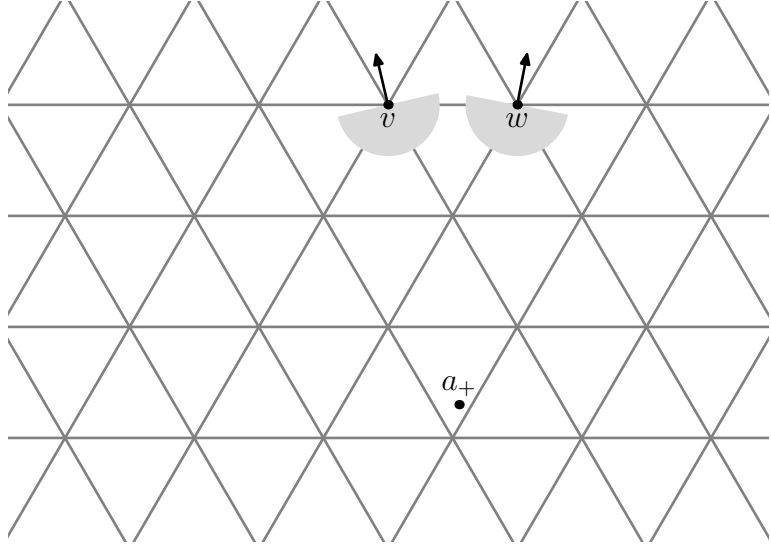}
\end{center}
\caption{The picture shows part of an apartment $\PosApartment$ where $\PNApartments$ is a twin apartment that contains $\TheNegPoint$. The point $\ThePosPoint$ is opposite $\TheNegPoint$. The arrows at the vertices $\Vertex$ and $\AltVertex$ indicate the gradients. The shaded regions show the infinitesimal descending links. One can see that the direction from $\Vertex$ toward $\AltVertex$ is infinitesimally descending, as is the direction from $\AltVertex$ toward $\Vertex$.}
\label{fig:infinitesimal_descending_links}
\end{figure}

The height function $\ApproxHeight$ is almost enough to make the strategy sketched at the beginning of the chapter work: It is $\Group$-invariant and its sublevel sets are compact modulo $\Group$. Moreover, Observation~\ref{obs:weak_gradient_criterion} shows that a direction $\Path_\Point$ issuing at some point $\Point$ is descending if and only if it includes an obtuse angle with the gradient at $\Point$. Let us call this the \emph{infinitesimal angle criterion}. So the space of directions that are infinitesimally descending is an open hemisphere complex and therefore spherical by Theorem~\ref{thm:schulz_main}. However this is not the same as the descending link. The difference is indicated in Figure~\ref{fig:infinitesimal_descending_links}: There are adjacent vertices such that for both vertices the direction toward the other vertex is infinitesimally descending and yet at most one of them can actually be descending for the other. So what we need instead of Observation~\ref{obs:weak_gradient_criterion} is a criterion stating that if $\Vertex$ and $\AltVertex$ are adjacent vertices then $\Height(\AltVertex) < \Height(\Vertex)$ if and only if the angle in $\Vertex$ between the gradient and $\AltVertex$ is obtuse. We call this the \emph{angle criterion}. The macroscopic condition that $\Height(\AltVertex) < \Height(\Vertex)$ replaces the infinitesimal condition that the direction from $\Vertex$ to $\AltVertex$ be descending by demanding that the direction remain descending all the way from $\Vertex$ to $\AltVertex$.

There would be no difference between being infinitesimally descending and being macroscopically descending if the level sets in every apartment $\PosApartment$ were a hyperplane. The hope that $\ApproxHeight$ works after some modifications is nourished by the observation that the actual level sets, which are spheres, become more and more flat with increasing height and thus locally look more and more like a hyperplane. In fact, if we fix a point and consider spheres through that point whose radii tend to infinity, in the limit we get a horosphere which in Euclidean space is the same as a hyperplane.

\begin{figure}[!ht]
\begin{center}
\includegraphics{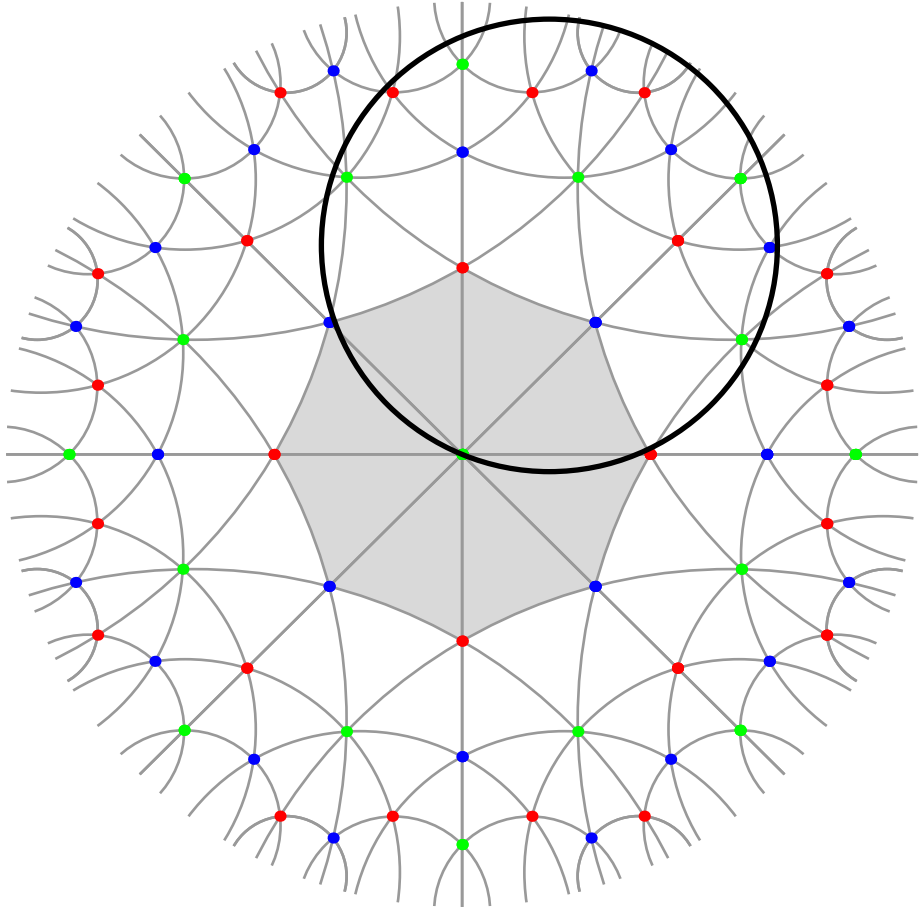}
\end{center}
\caption{A hyperbolic Coxeter complex. The shaded region is the star of the central vertex. The circle is a horosphere, i.e., the level set of a Busemann function. The picture shows that the angle criterion does not hold for Busemann functions in hyperbolic space: there are descending vertices that include acute angles with the gradient.}
\label{fig:hyperbolic_plane}
\end{figure}

Before we descend from this philosophical level to deal with the actual problem at hand, let us look how far our hope goes in the hyperbolic case (that would be interesting in studying finiteness properties of hyperbolic Kac--Moody groups). If $(\PosBuilding',\NegBuilding')$ is a twin building of compact hyperbolic type, we can define a metric codistance just as we have done for Euclidean twin buildings. So if $(\PosApartment',\NegApartment')$ is a twin apartment that contains $\TheNegPoint$, then the level sets of codistance from $\TheNegPoint$ are still spheres in $\PosApartment'$. It is also true that as a limit of spheres we get a horosphere. What is not true is that a horosphere in hyperbolic space is the same as a subspace, see Figure~\ref{fig:hyperbolic_plane}. So even if the level set were a horosphere, the angle criterion would be false. In other words, metric codistance looks much less promising as a Morse function for hyperbolic twin buildings. This matches examples by Abramenko of cell stabilizers in hyperbolic twin buildings that have finiteness length less than dimension minus one.

So we return to our Euclidean twin building and have a closer look at where the problems occur. Let $\Vertex$ and $\AltVertex$ be adjacent vertices of $\PosBuilding$ so that $[\Vertex,\AltVertex]$ is an edge (look again at Figure~\ref{fig:infinitesimal_descending_links}). Let $\PNApartments$ be a twin apartment that contains $[\Vertex,\AltVertex]$ and $\TheNegPoint$ and let $\ThePosPoint$ be the point opposite $\TheNegPoint$ in $\PNApartments$. Let $L$ be the line in $\PosApartment$ spanned by $\Vertex$ and $\AltVertex$. We distinguish two cases:

In the first case the projection of $\ThePosPoint$ onto $L$ does not lie in the interior of $[\Vertex,\AltVertex]$. In that case precisely one of the gradients $\Gradient_\Vertex\ApproxHeight$ and $\Gradient_\AltVertex\ApproxHeight$ includes an obtuse angle with $[\Vertex,\AltVertex]$ and the edge is indeed descending for that vertex.

In the second case $\ThePosPoint$ projects into the interior of $[\Vertex,\AltVertex]$ and both gradients include an obtuse angle with the edge but the edge can only be descending for at most one of them. This is the problematic case.

Consider a hyperplane $H$ perpendicular to $L$ that contains $\ThePosPoint$. The fact that the projection of $\ThePosPoint$ to $L$ lies in the interior of $[\Vertex,\AltVertex]$ can be rephrased to say that $H$ meets the interior of $[\Vertex,\AltVertex]$.

Now there is a finite number of parallelity classes of edges in $\PosApartment$. Let $H_1, \ldots, H_m$ be the family of hyperplanes through $\ThePosPoint$ that are perpendicular to one of these classes (see Figure~\ref{fig:problematic_regions}). The cells in which the gradient criterion fails are precisely those that meet one of the $H_i$ perpendicularly in an interior point.

\begin{figure}[!ht]
\begin{center}
\includegraphics{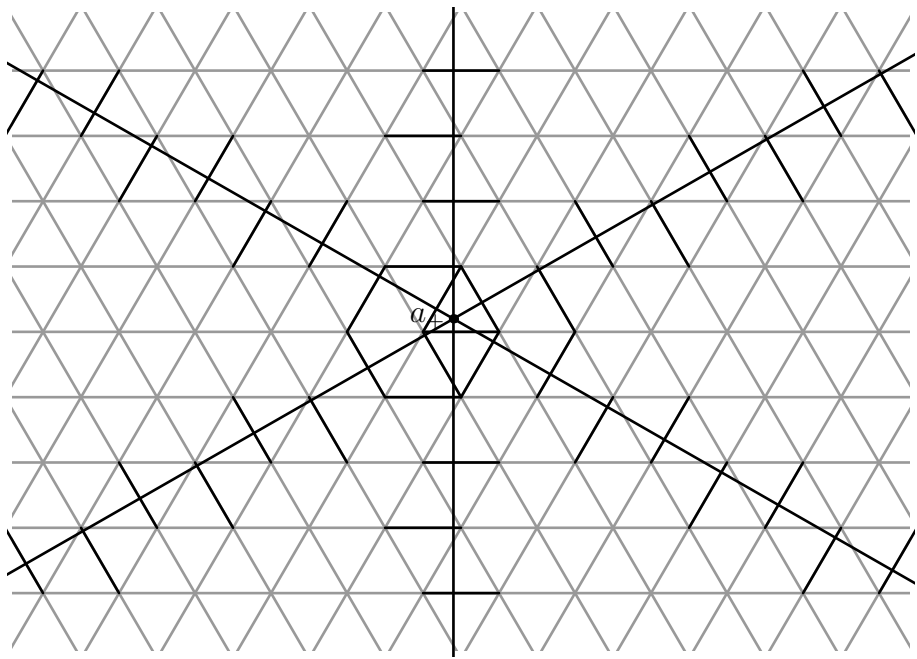}
\end{center}
\caption{The setting is as in Figure~\ref{fig:infinitesimal_descending_links}. Drawn are hyperplanes through $\ThePosPoint$ that are perpendicular to a parallelity class of edges. Every edge that meets such a hyperplane perpendicularly is problematic.}
\label{fig:problematic_regions}
\end{figure}

We will resolve this problem by introducing a new height function $\Height$ that artificially flattens the problematic regions. So adjacent vertices between which the gradient of $\ApproxHeight$ does not decide correctly will have same height with respect to $\Height$. We then introduce a secondary height function to decide between points of same height.

\footerlevel{3}
\headerlevel{3}

\section{Zonotopes}
\label{sec:zonotopes}

In the last section a hyperplane arrangement of problematic regions turned up. Corresponding to a hyperplane arrangement there is always a zonotope $\Zonotope$ (zonotopes will be defined below, see also \cite{mcmullen71,ziegler}). To each of the individual hyperplanes $H$ corresponds a \emph{zone} of the zonotope, which is the set of faces of $\Zonotope$ that contain an edge perpendicular to $H$ as a summand. This suggests that zonotopes can be helpful in flattening the height function in the problematic regions. Indeed they will turn out to be a very robust tool for solving a diversity of problems concerning the height function.

Let $\E$ be a Euclidean vector space with scalar product $\scp{\DummyArg}{\DummyArg}$ and metric $\EuclDistance$. Recall that the relative interior $\relint F$ of a polyhedron $F$ in $\E$ is the interior of $F$ in its affine span. It is obtained from $F$ by removing all proper faces.

Let $\Zonotope \subseteq \E$ be a convex polytope. We denote by $\EuclProjection[\Zonotope]$ the closest point-projection onto $\Zonotope$, i.e., $\EuclProjection[\Zonotope] \Point = \AltPoint$ if $y$ is the point in $\Zonotope$ closest to $\Point$. The \emph{normal cone} of a non-empty face $F$ of $\Zonotope$ is the set
\[
\NormalCone{F} \defeq \{\Vector \in \E \mid \scp{\Vector}{\Point} = \max_{\AltPoint \in \Zonotope} \scp{\Vector}{\AltPoint} \text{ for every }\Point \in F \} \text{ .}
\]
The significance of this notion for us is (see Figure~\ref{fig:projection_decomposition}):
\begin{obs}
\label{obs:space_decomposition}
The space $\E$ decomposes as a disjoint union
\[
\E = \Union_{\emptyset \ne F \le \Zonotope} \relint F + \NormalCone{F}
\]
with $(F - F) \intersect(\NormalCone{F} - \NormalCone{F}) = \{0\}$
and if $\Point$ is written in the unique way as $f + n$ according to this decomposition, then $\EuclProjection[\Zonotope] \Point = f$.\qed
\end{obs}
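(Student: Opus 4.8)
The plan is to derive the three assertions from the behaviour of the nearest-point projection onto $\Zonotope$. Since $\Zonotope$ is a nonempty compact convex subset of the \CAT{0}-space $\E$, Lemma~\ref{lem:spherical_projection} supplies, for every $\Point \in \E$, a unique nearest point $f \defeq \EuclProjection[\Zonotope]\Point$ with $\angle_f(\Point,\AltPoint) \ge \pi/2$ for all $\AltPoint \in \Zonotope$ (the distance hypothesis of that lemma is vacuous, as $\ModelDiameter{0} = \infty$); in $\E$ this angle condition is exactly the inequality $\scp{\Point - f}{\AltPoint - f} \le 0$ for all $\AltPoint \in \Zonotope$, and conversely any $f \in \Zonotope$ satisfying these inequalities is the nearest point (an elementary computation). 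I would also invoke the standard polytope fact (see \cite{ziegler}) that a face of $\Zonotope$ which meets the relative interior of another face contains all of that face.

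For the covering, fix $\Point \in \E$, set $f \defeq \EuclProjection[\Zonotope]\Point$ and $n \defeq \Point - f$, and let $F$ be the unique face of $\Zonotope$ with $f \in \relint F$ (the carrier of $f$). The projection inequality says $\scp{n}{\AltPoint} \le \scp{n}{f}$ for every $\AltPoint \in \Zonotope$, so $f$ lies in the face $F_n \defeq \{\AltPoint \in \Zonotope \mid \scp{n}{\AltPoint} = \max_{\YetAltPoint \in \Zonotope}\scp{n}{\YetAltPoint}\}$; since $F_n$ meets $\relint F$, it contains $F$, and hence $n \in \NormalCone{F}$. Therefore $\Point = f + n \in \relint F + \NormalCone{F}$, which shows $\E = \Union_{\emptyset \ne F \le \Zonotope} \relint F + \NormalCone{F}$.

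For disjointness, uniqueness of the decomposition, and the formula $\EuclProjection[\Zonotope]\Point = f$, I would run this backwards. If $\Point = f + n$ with $f \in \relint F$ and $n \in \NormalCone{F}$, then by definition of $\NormalCone{F}$ the functional $\scp{n}{\cdot}$ attains its maximum over $\Zonotope$ at $f$, so $\scp{\Point - f}{\AltPoint - f} = \scp{n}{\AltPoint - f} \le 0$ for every $\AltPoint \in \Zonotope$; by the converse part above this forces $f = \EuclProjection[\Zonotope]\Point$. Thus $f$ is determined by $\Point$, hence so are $n = \Point - f$ and $F$ (as the carrier of $f$), which gives disjointness, uniqueness, and the projection formula at once. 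For the transversality clause, note that each $\Vector \in \NormalCone{F}$ is constant on $F$ (equal to the maximum of $\scp{\Vector}{\cdot}$ over $\Zonotope$), hence orthogonal to $F - F$; so $\NormalCone{F} - \NormalCone{F}$ lies in the orthogonal complement of the linear span of $F - F$, and therefore $(F - F) \intersect (\NormalCone{F} - \NormalCone{F}) = \{0\}$.

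The one step that is not a mechanical reading of the projection inequality is the passage, inside the covering argument, from ``$\scp{n}{\cdot}$ is maximised at the single point $f$'' to ``$\scp{n}{\cdot}$ is maximised on the entire carrier of $f$'' --- this is exactly where the face-containment fact is used, and without it one would only obtain $\Point \in f + \NormalCone{F_n}$ with $f$ possibly on a proper face of $F_n$. Beyond that I anticipate no difficulty.
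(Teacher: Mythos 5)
Your proof is correct. The paper gives no argument for this observation (it is closed with a tombstone inside the statement itself, signalling that it is regarded as self-evident), so there is nothing to compare against; your derivation, via the variational characterization $\scp{\Point - f}{\AltPoint - f} \le 0$ of the nearest-point projection together with the face-containment property of polytope faces, is the standard way to make the assertion precise.
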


\begin{figure}[!ht]
\begin{center}
\includegraphics{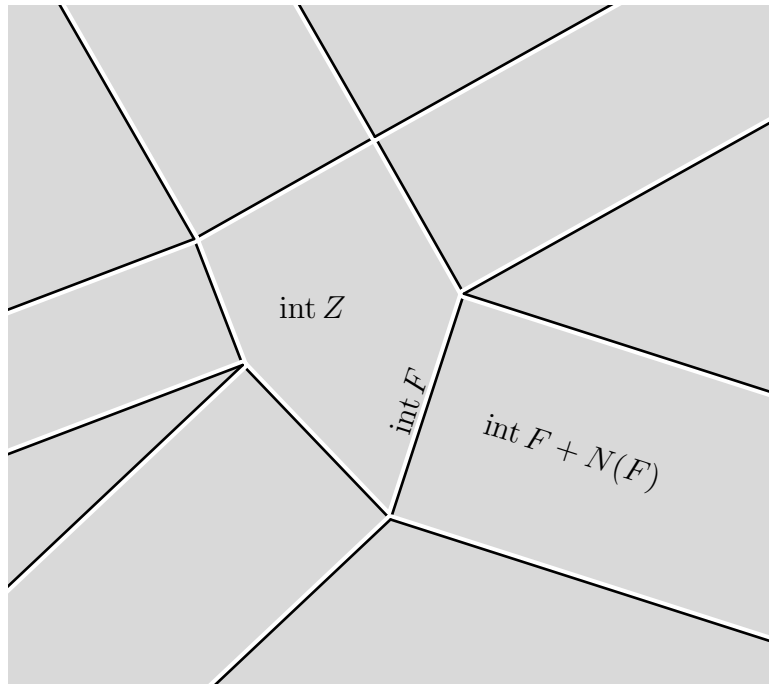}
\end{center}
\caption{The decomposition given by Observation~\ref{obs:space_decomposition}: The shaded regions are the classes of the partition. The boundary points are drawn in black and belong to the the shaded region they touch.}
\label{fig:projection_decomposition}
\end{figure}

We are interested in the situation where $\Zonotope$ is a zonotope.
For our purpose a \emph{zonotope} is described by a finite set $\Directions \subseteq \E$ and defined to be\index[xsyms]{zd@$\Zonotope[\Directions]$}
\[
\Zonotope[\Directions] = \sum_{\Direction \in \Directions} [0,\Direction]
\]
where the sum is the Minkowski sum ($\ConvexSet_1 + \ConvexSet_2 = \{\Vector_1 + \Vector_2 \mid \Vector_1 \in \ConvexSet_1, \Vector_2 \in \ConvexSet_2\}$). The faces of zonotopes are themselves translates of zonotopes, they have the following nice description:

\begin{lem}
If $F$ is a face of $\Zonotope[\Directions]$ and $\Vector \in \relint \NormalCone{F}$, then
\[
F = \Zonotope[\Directions_\Vector] + \sum_{\substack{\Direction \in \Directions \\ \scp{\Vector}{\Direction} > 0}} \Direction \text{ ,}
\]
where $\Directions_\Vector \defeq \{\Direction \in \Directions \mid \scp{\Vector}{\Direction} = 0\}$. 
\end{lem}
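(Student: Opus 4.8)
The plan is to compute the face $F$ of $\Zonotope[\Directions]$ directly as the set of points of $\Zonotope[\Directions]$ where a fixed linear functional $\scp{\Vector}{\DummyArg}$ attains its maximum, for $\Vector \in \relint \NormalCone{F}$. Recall that every point of $\Zonotope[\Directions]$ has the form $\sum_{\Direction \in \Directions} \lambda_\Direction \Direction$ with $\lambda_\Direction \in [0,1]$, so that $\scp{\Vector}{\sum_\Direction \lambda_\Direction \Direction} = \sum_\Direction \lambda_\Direction \scp{\Vector}{\Direction}$. Since each summand is chosen independently and $\lambda_\Direction$ ranges over $[0,1]$, this sum is maximized precisely by the choices $\lambda_\Direction = 1$ whenever $\scp{\Vector}{\Direction} > 0$, $\lambda_\Direction = 0$ whenever $\scp{\Vector}{\Direction} < 0$, and $\lambda_\Direction$ arbitrary in $[0,1]$ whenever $\scp{\Vector}{\Direction} = 0$, i.e.\ whenever $\Direction \in \Directions_\Vector$. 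Reading off the set of maximizers gives exactly $\sum_{\Direction \in \Directions_\Vector}[0,\Direction] + \sum_{\scp{\Vector}{\Direction} > 0} \Direction = \Zonotope[\Directions_\Vector] + \sum_{\scp{\Vector}{\Direction} > 0}\Direction$, which is the claimed formula.

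First I would establish the elementary fact that for any $\Vector \in \E$, the face of $\Zonotope[\Directions]$ exposed by $\Vector$ (the set of points maximizing $\scp{\Vector}{\DummyArg}$ over $\Zonotope[\Directions]$) equals $\Zonotope[\Directions_\Vector] + \sum_{\scp{\Vector}{\Direction}>0}\Direction$; this is the computation just sketched, using that a sum over independent intervals is maximized coordinatewise. Next I would recall that, by definition of the normal cone, $F$ is contained in the set of maximizers of $\scp{\Vector}{\DummyArg}$ for every $\Vector \in \NormalCone{F}$; the point is to upgrade this to an equality when $\Vector$ lies in the \emph{relative interior} of $\NormalCone{F}$. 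This is where one uses that distinct faces of a polytope have distinct relative interiors of normal cones (equivalently, the normal fan is a fan): if $\Vector \in \relint \NormalCone{F}$ exposed a strictly larger face $F' \supsetneq F$, then $\NormalCone{F'} \subsetneq \NormalCone{F}$ would be a proper face of $\NormalCone{F}$ containing $\Vector$ in its interior relative to $\NormalCone{F}$, a contradiction. Combining the two steps identifies $F$ with the explicit expression.

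The main obstacle — really the only point requiring care — is the second step: knowing that $\Vector \in \relint\NormalCone{F}$ pins down $F$ exactly rather than just bounding it. This rests on standard polytope theory (the normal fan refines no face, Observation~\ref{obs:space_decomposition} already encodes the relevant disjointness $(F-F)\cap(\NormalCone{F}-\NormalCone{F}) = \{0\}$), so I would either cite \cite{ziegler} for the correspondence between faces and relative interiors of normal cones, or give the short self-contained argument above. Everything else is the one-line interval-maximization computation, and the identification $\Directions_\Vector = \{\Direction \in \Directions \mid \scp{\Vector}{\Direction} = 0\}$ is immediate from the definitions; no genuine difficulty remains once the exposed-face formula and the relative-interior fact are in hand.
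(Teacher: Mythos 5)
Your proposal is correct and follows essentially the same route as the paper: both compute the face exposed by a linear functional $\scp{\Vector}{\DummyArg}$ via coordinatewise maximization over the Minkowski sum $\sum_\Direction [0,\Direction]$, and both rely on the standard fact that $\Vector \in \relint\NormalCone{F}$ forces the exposed face to be exactly $F$. The paper phrases the first step as ``the maximum-set of a Minkowski sum is the Minkowski sum of the maximum-sets'' and leaves the second step implicit, while you spell both out; the content is the same.
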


\begin{proof}
For a convex set $\ConvexSet$ let $\ConvexSet^v$ be the set of points of $\ConvexSet$ on which the linear form $\scp{v}{\DummyArg}$ attains its maximum. By linearity
\[
\bigg(\sum_{\Direction \in \Directions} [0,\Direction]\bigg)^v = \sum_{\Direction \in \Directions} [0,\Direction]^v \text{ .}
\]
The result now follows from the fact that $[0,\Direction]^v$ respectively equals $\{0\}$, $[0,\Direction]$, or $\{\Direction\}$ depending on whether $\scp{v}{\Direction}$ is negative, zero, or positive.
\end{proof}

It is a basic fact from linear optimization that the relative interiors of normal cones of non-empty faces of $\Zonotope$ partition $\E$, so for every non-empty face $F$ a vector $\Vector$ as in the lemma exists and vice versa.

The zonotopes we will be concerned with have the following interesting property:

\begin{prop}
\label{prop:zonotope_contains_parallel_translate}
Let $\Cell$ be a polytope and let $\Directions$ be a finite set of vectors that has the property that $\AltVertex - \Vertex \in \Directions$ for any two vertices $\Vertex, \AltVertex$ of $\Cell$. Then for every point $\Point$ of $\Zonotope[\Directions]$ there is a parallel translate of $\Cell$ through $\Point$ contained in $\Zonotope[\Directions]$.

More precisely for a vertex $\Vertex$ of $\Cell$ let $E_\Vertex$ be the set of vectors $\AltVertex - \Vertex$ for vertices $\AltVertex \ne \Vertex$ of $\Cell$. Then there is a vertex $\Vertex$ of $\Cell$ such that $\Point + \Zonotope[E_v] \subseteq \Zonotope[\Directions]$.
\end{prop}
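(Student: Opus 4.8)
The plan is to translate the statement into a purely combinatorial assertion about zonotopes and then prove that by induction on $\dim\Cell$, handling the boundary of the zonotope via the description of its faces provided by the Lemma above and its interior via a ``rays from the centre'' argument. First the \emph{reductions}: the first assertion follows from the second, since $\Cell-\Vertex=\conv(\{0\}\union E_\Vertex)\subseteq\Zonotope[E_\Vertex]$, so that $\Point+\Zonotope[E_\Vertex]\subseteq\Zonotope[\Directions]$ forces the translate $\Point-\Vertex+\Cell$ of $\Cell$ (which passes through $\Point$) into $\Zonotope[\Directions]$. Putting $\Delta\defeq\{\AltVertex-\Vertex:\Vertex,\AltVertex\text{ vertices of }\Cell\}$, and more generally $\Delta(P)$ for the difference set of a polytope $P$, we have $\Zonotope[\Directions]=\Zonotope[\Delta]+\Zonotope[\Directions\setminus\Delta]$ and $E_\Vertex\subseteq\Delta$, so it suffices to treat $\Directions=\Delta$ (decompose $\Point$ accordingly). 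For a vertex $\Vertex$ the Minkowski sum splits as $\Zonotope[\Delta]=\Zonotope[E_\Vertex]+\Zonotope[\Delta\setminus E_\Vertex]$ along $\Delta=E_\Vertex\dunion(\Delta\setminus E_\Vertex)$; comparing support functions and cancelling the finite term coming from $\Zonotope[E_\Vertex]$ shows that $\Point+\Zonotope[E_\Vertex]\subseteq\Zonotope[\Delta]$ if and only if $\Point\in\Zonotope[\Delta\setminus E_\Vertex]$. Writing $Z\defeq\Zonotope[\Delta]$ and $Z_\Vertex\defeq\Zonotope[\Delta\setminus E_\Vertex]$, the proposition becomes $Z=\Union_\Vertex Z_\Vertex$; note that each $Z_\Vertex$ is convex and contains $0$, which, since $\Delta=-\Delta$, is the centre of $Z$ and hence lies in $\relint Z$.

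Now I prove $Z=\Union_\Vertex Z_\Vertex$ by induction on $\dim\Cell$, the case $\dim\Cell=0$ being trivial. Suppose first $\Point\in\partial Z$, so $\Point\in\relint F$ for a proper face $F$ of $Z$. By the Lemma, $F=\Zonotope[\Delta_u]+s_u$ for some $u\in\relint\NormalCone{F}$ (which cannot vanish on $\operatorname{span}\Delta$), where $\Delta_u=\{z\in\Delta:\scp{u}{z}=0\}$ and $s_u=\sum_{\scp{u}{z}>0}z$; hence $\Point=s_u+q$ with $q\in\Zonotope[\Delta_u]$. Partition the vertices of $\Cell$ by the value of $\scp{u}{\DummyArg}$ into level sets $V_1,\dots,V_r$ with $r\ge 2$, where $V_r$ is the top one; then $\conv V_r=\Cell^u$ is a proper face of $\Cell$, of smaller dimension. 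Since $\Delta_u=\Union_m\Delta(\conv V_m)$, we may write $q=\sum_m q_m$ with $q_m\in\Zonotope[\Delta(\conv V_m)]$, and the inductive hypothesis applied to $\conv V_r$ yields a vertex $\Vertex\in V_r$ with $q_r\in\Zonotope[\Delta(\conv V_r)\setminus E_\Vertex']$, where $E_\Vertex'$ is the edge-vector set of $\conv V_r$ at $\Vertex$. Because $\Vertex\in V_r$, every vector of $E_\Vertex$ has nonpositive pairing with $u$; thus $\Delta^+\defeq\{z\in\Delta:\scp{u}{z}>0\}$, $\Delta(\conv V_r)\setminus E_\Vertex'$, and $\Union_{m<r}\Delta(\conv V_m)$ are subsets of $\Delta\setminus E_\Vertex$ and (essentially) pairwise disjoint, so $\Point=s_u+q_r+\sum_{m<r}q_m$ lies in the corresponding sum of sub-zonotopes, which is contained in $Z_\Vertex$. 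Finally, once $\partial Z\subseteq\Union_\Vertex Z_\Vertex$ is established, an arbitrary $\Point\in Z$ lies on the segment $[0,b]$, where $b$ is the point at which the ray from $0$ through $\Point$ leaves $Z$; choosing $\Vertex$ with $b\in Z_\Vertex$ and using that $Z_\Vertex$ is convex and contains $0$ gives $\Point\in Z_\Vertex$.

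The delicate part is the boundary step -- picking the right vertex of the top level set and verifying the final containment. In the intended application $\Cell$ is a chamber, hence a simplex, so the vectors $\AltVertex-\Vertex$ are pairwise distinct and every ``essentially disjoint'' above is literal. For a general polytope a vector of some $\Delta(\conv V_m)$ with $m<r$ may coincide with one that is removed along with $E_\Vertex$, and then the (non-unique) decomposition $q=\sum_m q_m$ must be chosen compatibly with the choice of $\Vertex$ so that no removed generator is actually used; I expect this bookkeeping to be the only genuine complication, the reformulation and the centre-ray reduction being routine.
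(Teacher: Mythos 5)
Your reformulation to $Z=\bigcup_v Z_v$ (where $Z=\Zonotope[\Delta]$ and $Z_v=\Zonotope[\Delta\setminus E_v]$), the Minkowski cancellation argument, and the centre-ray reduction of the interior to the boundary are all correct, and the route is genuinely different from the paper's. The paper never touches faces or induction on dimension: it writes $\Point=\sum_{z\in\Directions}\alpha_z z$ with $\alpha_z\in[0,1]$, forms the complete directed graph on the vertices of $\Cell$ with the edge $v\to w$ labelled $\alpha_{w-v}$, and repeatedly cancels flow around strictly positive cycles (possible because the difference vectors around any cycle sum to zero, so the representation is preserved) until some vertex has all outgoing labels zero, i.e.\ $\alpha_z=0$ for $z\in E_v$. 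This is shorter and avoids the delicate point you run into.

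The difficulty you flag in your boundary step is a genuine gap, and the step is false as stated: when a vector of some $\Delta(\conv V_m)$ with $m<r$ coincides with a vector of $E_v'\subseteq\Delta(\conv V_r)$, the claimed inclusion $\bigcup_{m<r}\Delta(\conv V_m)\subseteq\Delta\setminus E_v$ fails, and a rectangle cut by a height functional already exhibits this (the top and bottom edges have identical difference sets). Your ``compatible decomposition'' remedy can be made to work if one fixes the convention \emph{in advance} (assign every $z\in\Delta(\conv V_r)$ to $V_r$, so that $q_m$ for $m<r$ avoids $\Delta(\conv V_r)\supseteq E_v'$), but a cleaner and more robust repair uses the full generality that the proposition is stated in: apply the inductive hypothesis to the lower-dimensional polytope $\conv V_r=\Cell^u$ with the \emph{larger} direction set $\Delta_u$ rather than with $\Delta(\conv V_r)$. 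The hypothesis only requires $\Delta(\conv V_r)\subseteq\Delta_u$, which holds because vertices of $V_r$ share a common $u$-level, and $\dim\conv V_r<\dim\Cell$ since $\Cell^u$ is a proper face. This gives directly a vertex $v\in V_r$ with $q\in\Zonotope[\Delta_u\setminus E_v']$ without ever decomposing $q$, and then $\Point=s_u+q\in\Zonotope[\Delta^+]+\Zonotope[\Delta_u\setminus E_v']=\Zonotope[\Delta^+\dunion(\Delta_u\setminus E_v')]\subseteq\Zonotope[\Delta\setminus E_v]$, using $E_v\cap\Delta_u=E_v'$ and $E_v\cap\Delta^+=\emptyset$. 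With either repair your induction closes; without one, the argument does not go through as written.
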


This is illustrated in Figure~\ref{fig:almost_rich}.

\begin{proof}
We first show the second statement. It is not hard to see that we may assume that $\Directions$ contains precisely the vectors $\AltVertex - \Vertex$ with $\Vertex, \AltVertex$ vertices of $\Cell$ and we do so. Write
\begin{equation}
\label{eq:point_sum}
\Point = \sum_{z \in D} \alpha_z z
\end{equation}
with $0 \le \alpha_z \le 1$. We consider the complete directed graph whose vertices are the vertices of $\Cell$ and label the edge from $v$ to $w$ by $\alpha_{w-v}$.

If there is a cycle in this graph, all edges of which have a strictly positive label, we set
\[
C \defeq \{w-v \mid \text{the cycle contains an edge from }v\text{ to }w\}\text{ ,}
\]
which is a subset of $\Directions$. For $z \in C$ let $k_z$ be the number of edges in the cycle from a vertex $v$ to a vertex $w$ with $w-v = z$. Let $m$ be the minimum over the $\alpha_z/k_z$ with $z \in C$. We may then subtract $k_z \cdot m$ from $\alpha_z$ for every $z \in C$ and \eqref{eq:point_sum} remains true. Moreover, at least one edge in the cycle is now labeled by $0$. Iterating this procedure we eventually obtain a graph that does not contain any cycles with strictly positive labels. In particular, there is a vertex whose outgoing edges are all labeled by $0$ because there are only finitely many vertices.

Let $v$ be such a vertex. Then $\alpha_z = 0$ for $z \in E_v$. Thus $\Point = \sum_{z \in D \setminus E_v} \alpha_{z} z$ and $\Point + \Zonotope(E_v) \subseteq \Zonotope(D)$.

For the first statement note that $x \in x + (\Cell - v) \subseteq x + \Zonotope(E_v)$ because $x + \Zonotope(E_v)$ is convex and contains all vertices of $x + (\Cell - v)$.
\end{proof}

We say that a finite set of vectors $\Directions$ is \emph{sufficiently rich} for a polytope $\Cell$ if it satisfies the hypothesis of Proposition~\ref{prop:zonotope_contains_parallel_translate}, i.e., if for any two distinct vertices $\Vertex$ and $\AltVertex$ of $\Cell$ the vector $\AltVertex - \Vertex$ is in $\Directions$. Trivially if $\Directions$ is sufficiently rich for $\Cell$ then it is sufficiently rich for the convex hull of any set of vertices of $\Cell$. Note that the property in the conclusion of Proposition~\ref{prop:zonotope_contains_parallel_translate} is not hereditary in that way: for example, a square contains a parallel translate of itself through each of its points, but it does not contain a parallel translate of a diagonal through each of its points.

\begin{prop}
\label{prop:sufficiently_rich_min_vertex}
If $\Directions$ is sufficiently rich for a polytope $\Cell$ then among the points of $\Cell$ closest to $\Zonotope[\Directions]$ there is a vertex. Moreover, the points farthest from $\Zonotope[\Directions]$ form a face of $\Cell$.
\end{prop}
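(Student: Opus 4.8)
The plan is to work with the function $g\colon\E\to\R$ given by $g(\Point)\defeq\EuclDistance(\Point,\Zonotope[\Directions])$, which is convex because the distance to a convex set is convex, together with the (unique) nearest-point projection $p$ of $\E$ onto the convex polytope $\Zonotope[\Directions]$. The two ingredients I would combine are Proposition~\ref{prop:zonotope_contains_parallel_translate} and the following \emph{slab property}, which is where sufficient richness is used: if $\Vertex,\AltVertex$ are distinct vertices of $\Cell$ and $e\defeq\AltVertex-\Vertex$, then both $e$ and $-e$ lie in $\Directions$, so $[0,e]$ and $[0,-e]$ occur among the segments summing to $\Zonotope[\Directions]$; writing $\ConvexSet_0$ for the sum of the remaining segments this gives $\Zonotope[\Directions]=\ConvexSet_0+[-e,e]$, hence every $\Point\in\Zonotope[\Directions]$ can be written $\Point=\Point_0+te$ with $\Point_0\in\ConvexSet_0$ and $t\in[-1,1]$, and consequently at least one of $\Point+e$ and $\Point-e$ again lies in $\Zonotope[\Directions]$.

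For the farthest points, convexity of $g$ already yields that its maximum $D\defeq\max_{\Cell}g$ is attained at a vertex, and the set $M$ of maximizers is a union of faces of $\Cell$ (a convex function attaining its maximum over a face at a relative interior point is constant on that face). The key point is that all vertices of $\Cell$ lying in $M$ share one and the same \emph{outward vector} $\Vertex-p(\Vertex)$: given maximizing vertices $\Vertex_1,\Vertex_2$ with $e=\Vertex_2-\Vertex_1$, the slab property provides (after possibly interchanging $\Vertex_1$ and $\Vertex_2$) a point $p(\Vertex_1)+e\in\Zonotope[\Directions]$, which lies at distance $\norm{\Vertex_1-p(\Vertex_1)}=D$ from $\Vertex_2$; by uniqueness of the nearest-point projection it must equal $p(\Vertex_2)$, so $\Vertex_2-p(\Vertex_2)=\Vertex_1-p(\Vertex_1)\eqdef u_0$. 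Using $g(\Point)\le D=\norm{u_0}$ together with the Cauchy--Schwarz inequality one then verifies that $\scp{u_0}{\Point}\le\max_{\Zonotope[\Directions]}\scp{u_0}{\DummyArg}+\norm{u_0}^2\eqdef c$ for every $\Point\in\Cell$, with equality exactly when $\Point\in M$; hence $M=\{\Point\in\Cell\mid\scp{u_0}{\Point}=c\}$ is a single face of $\Cell$.

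For the closest points I would argue by induction on the dimension of the smallest affine subspace containing $\Cell\cup\Zonotope[\Directions]$. The set of minimizers of $g$ on $\Cell$ is convex; choose an extreme point $\Point^*$ of it, set $q^*\defeq p(\Point^*)$ and $u^*\defeq\Point^*-q^*$, and let $F$ be the face of $\Zonotope[\Directions]$ with $q^*$ in its relative interior, so that $u^*\in\NormalCone{F}$ by Observation~\ref{obs:space_decomposition}. Optimality of $\Point^*$ gives $\scp{u^*}{\Point-\Point^*}\ge0$ for all $\Point\in\Cell$, so $\scp{u^*}{\DummyArg}$ is $\ge\scp{u^*}{\Point^*}$ on all of $\Cell$ and $\le\scp{u^*}{q^*}$ on all of $\Zonotope[\Directions]$; from this one gets that $\Cell\cap(\Zonotope[\Directions]+u^*)$ is contained in the face $G$ of $\Cell$ on which $\scp{u^*}{\DummyArg}$ is minimal and there coincides with $G\cap(F+u^*)$. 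Now $F$ is, up to translation, the sub-zonotope $\Zonotope[\{\Direction\in\Directions\mid\scp{u^*}{\Direction}=0\}]$ by the description of the faces of a zonotope obtained above, and this set of directions is sufficiently rich for $G$, since any two vertices of $G$ have the same value under $\scp{u^*}{\DummyArg}$ and hence differ by a vector perpendicular to $u^*$. Because $\Point^*\in G\cap(F+u^*)$, the minimal distance from $G$ to $F+u^*$ is $0$, so the induction hypothesis, applied to the pair $(G,F)$ inside the hyperplane $\{\scp{u^*}{\DummyArg}=\scp{u^*}{\Point^*}\}$, produces a vertex of $G$ — and therefore of $\Cell$ — that lies in $F+u^*\subseteq\Zonotope[\Directions]+u^*$, and such a vertex has distance exactly $\norm{u^*}=\min_{\Cell}g$ from $\Zonotope[\Directions]$.

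The only configuration in which this reduction does not shrink the data is $G=\Cell$ and $F=\Zonotope[\Directions]$, that is, $\scp{u^*}{\DummyArg}$ constant on both $\Cell$ and $\Zonotope[\Directions]$; then $\Cell$ and $\Zonotope[\Directions]$ lie in parallel hyperplanes orthogonal to $u^*$, and orthogonal projection onto one of these hyperplanes is an isometry on each of the two polytopes, preserves edge differences and sufficient richness, and — by Pythagoras — converts the problem into the same problem with vanishing minimum in a strictly smaller ambient space, which serves as the base of the induction. The main obstacle is not a single idea but the careful handling of this induction: confirming that $F$ really is the advertised sub-zonotope, that $\Cell\cap(\Zonotope[\Directions]+u^*)$ collapses onto the face $G$, and that the degenerate parallel-hyperplane case genuinely lowers the dimension; the slab property, used via uniqueness of the projection, is the one new mechanism making sufficient richness effective in both halves of the statement.
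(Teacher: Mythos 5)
Your argument for the closest points never proves the case it ultimately reduces everything to. When $u^*\neq 0$, your normal-cone analysis passes from $(\Cell,\Zonotope[\Directions])$ to the pair $(G,F+u^*)$, which meet at $\Point^*$ — that is, to an instance with vanishing minimum; and when the minimum vanishes ($u^*=0$) the construction gives no reduction at all ($G=\Cell$, the maximizing face of $\scp{u^*}{\DummyArg}$ is all of $\Zonotope[\Directions]$, the cutting hyperplane is the whole space). You declare the vanishing-minimum case to be ``the base of the induction'', but it is never argued, and it is not trivial: one must show that if $\Cell$ meets $\Zonotope[\Directions]$ then a vertex of $\Cell$ lies in $\Zonotope[\Directions]$. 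That is exactly where Proposition~\ref{prop:zonotope_contains_parallel_translate} — which you list as an ingredient but never actually use — carries the weight: $\Zonotope[\Directions]$ contains a parallel translate of $\Cell$ through the common point, so $\Cell$ meets a translate of itself inside $\Zonotope[\Directions]$, hence has a minimizer on its boundary, and one recurses into a proper face (this is the paper's one-step proof, which needs no distinction between zero and positive distance). The slab property for a single edge direction cannot substitute for this. Two smaller points: the face $F$ carrying $q^*$ need not have $u^*$ in the relative interior of its normal cone, so the sub-zonotope description must be applied to the face of $\Zonotope[\Directions]$ on which $\scp{u^*}{\DummyArg}$ is maximal; and inside $\aff(\Cell\union\Zonotope[\Directions])$ a nonzero $u^*$ that is constant on that affine hull would be orthogonal to itself, so the genuinely non-shrinking configuration forces $u^*=0$ and your parallel-hyperplane/Pythagoras paragraph does not supply the missing case.

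There is also a gap in the farthest-point half, at ``the slab property provides (after possibly interchanging $\Vertex_1$ and $\Vertex_2$) a point $p(\Vertex_1)+e\in\Zonotope[\Directions]$''. The slab property yields at $p(\Vertex_1)$ only the disjunction $p(\Vertex_1)+e\in\Zonotope[\Directions]$ or $p(\Vertex_1)-e\in\Zonotope[\Directions]$, and at $p(\Vertex_2)$ only $p(\Vertex_2)-e\in\Zonotope[\Directions]$ or $p(\Vertex_2)+e\in\Zonotope[\Directions]$; interchanging the vertices does not exclude the mixed case $p(\Vertex_1)-e\in\Zonotope[\Directions]$ and $p(\Vertex_2)+e\in\Zonotope[\Directions]$, in which your uniqueness-of-projection step produces nothing. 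The common-outward-vector claim is true, but it needs an input you have not established — for instance that the distance function is constant on $[\Vertex_1,\Vertex_2]$, which is not automatic from convexity (a convex function may dip below its endpoint values; that is precisely why the second statement of the proposition has content). The paper obtains this constancy by applying the first statement to the convex hull of the maximizing vertices, so this half too ultimately leans on the closest-point statement whose proof is the missing piece above. What does work in your write-up: the slab property itself, and the supporting-hyperplane argument that, once a common outward vector $u_0$ is granted, identifies the set of farthest points with the face of $\Cell$ on which $\scp{u_0}{\DummyArg}$ is maximal.
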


\begin{proof}
Let $\Point \in \Cell$ be a point that minimizes distance to $\Zonotope[\Directions]$. Proceeding inductively it suffices to find a point in a proper face of $\Cell$ that has the same distance. Let $\bar{\Point} = \EuclProjection[{\Zonotope[\Directions]}] \Point$. By Proposition~\ref{prop:zonotope_contains_parallel_translate} $\Zonotope[\Directions]$ contains a translate $\Cell'$ of $\Cell$ through $\bar{\Point}$. All points in $\Cell \intersect (\Point - \bar{\Point} + \Cell')$ have the same distance to $\Zonotope[\Directions]$ as $\Point$. And since this set is the non-empty intersection of $\Cell$ with a translate of itself, it contains a boundary point of $\Cell$.

For the second statement note that if $\EuclDistance(\Zonotope[\Directions],\DummyArg)$ attains its maximum over $\conv V$ in a relatively interior point then it is in fact constant on $\conv V$ by convexity. Now if $V$ is a set of vertices of $\Cell$ on which $\EuclDistance(\Zonotope[\Directions],\DummyArg)$ is maximal, we apply the first statement to $\conv V$ and see that an element of $V$ is in fact a minimum and thus $\EuclDistance(\Zonotope[\Directions],\DummyArg)$ is constant on $\conv V$. Since $\conv V$ contains an interior point of the minimal face $\BigCell$ of $\Cell$ that contains $V$, this shows that $\EuclDistance(\Zonotope[\Directions],\DummyArg)$ is constant on $\BigCell$.
\end{proof}

Now let $\Weyl$ be a finite linear reflection group of $\E$. The action of $\Weyl$ induces a decomposition of $\E$ into cones, the maximal of which we call $\Weyl$-chambers. Clearly if $\Directions$ is $\Weyl$-invariant, then so is $\Zonotope[\Directions]$. To this situation we will apply:

\begin{lem}
\label{lem:n_f_lemma}
Let $\Zonotope$ be a $\Weyl$-invariant polytope. Let $\Vector \in \E$ be arbitrary and let $n = \Vector - \EuclProjection[\Zonotope](\Vector)$. Every $\Weyl$-chamber that contains $\Vector$ also contains $n$.
\end{lem}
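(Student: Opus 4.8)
\emph{Proof plan.} The plan is to deduce the statement from two standard properties of the nearest-point projection onto a closed convex set together with the $\Weyl$-equivariance of that projection. Write $p \defeq \EuclProjection[\Zonotope](\Vector)$, so that $n = \Vector - p$, and fix a $\Weyl$-chamber $C$ containing $\Vector$. Since the walls bounding $C$ are reflecting hyperplanes of $\Weyl$, we may write
\[
C = \{\Point \in \E \mid \scp{\alpha_i}{\Point} \ge 0 \text{ for } i = 1, \ldots, m\}
\]
where the $\alpha_i$ are inward unit normals to these walls $H_i$, and the reflections $s_i$ at the $H_i$ lie in $\Weyl$; in particular every $s_i$ maps $\Zonotope$ onto itself. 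The two facts I shall use about $p$ are that it is the \emph{unique} point of $\Zonotope$ at minimal distance from $\Vector$, and that it satisfies the obtuse-angle inequality $\scp{\Vector - p}{\Point - p} \le 0$ for all $\Point \in \Zonotope$ (the Euclidean special case of Lemma~\ref{lem:spherical_projection}).

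First I would show that $p$ lies in $C$ as well. If not, then $\scp{\alpha_i}{p} < 0$ for some $i$, and the point $s_i p \in \Zonotope$ is distinct from $p$. Using $s_i p = p - 2\scp{\alpha_i}{p}\,\alpha_i$, that $s_i$ is an isometry, and $\scp{\alpha_i}{\Vector} \ge 0$, a one-line computation gives
\[
\abs{\Vector - s_i p}^2 = \abs{\Vector - p}^2 + 4\,\scp{\alpha_i}{\Vector}\,\scp{\alpha_i}{p} \le \abs{\Vector - p}^2 \text{ ,}
\]
contradicting the uniqueness of the nearest point. Hence $\scp{\alpha_i}{p} \ge 0$ for every $i$, i.e.\ $p \in C$.

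It then remains to verify $\scp{\alpha_i}{n} \ge 0$ for each $i$, which by the displayed description of $C$ means exactly $n \in C$. Fix $i$. If $\scp{\alpha_i}{p} > 0$, apply the obtuse-angle inequality to $\Point = s_i p \in \Zonotope$; since $s_i p - p = -2\scp{\alpha_i}{p}\,\alpha_i$, it becomes $-2\,\scp{\alpha_i}{p}\,\scp{\alpha_i}{n} \le 0$, whence $\scp{\alpha_i}{n} \ge 0$. If $\scp{\alpha_i}{p} = 0$, then directly $\scp{\alpha_i}{n} = \scp{\alpha_i}{\Vector} - \scp{\alpha_i}{p} = \scp{\alpha_i}{\Vector} \ge 0$ because $\Vector \in C$. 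In both cases $\scp{\alpha_i}{n} \ge 0$, so $n \in C$.

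All the computations involved are single lines, so there is no genuinely hard step. The one point requiring care is the degenerate case $\scp{\alpha_i}{p} = 0$ — the projection landing on a wall of $C$ — in which the obtuse-angle inequality is vacuous and one must instead use $\Vector \in C$ directly; and for that one needs the intermediate claim $p \in C$ in hand before performing the case analysis.
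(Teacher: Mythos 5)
Your proof is correct and takes essentially the same route as the paper's: reflect the projection point across a bounding wall, use the $\Weyl$-invariance of $\Zonotope$, and invoke the obtuse-angle (maximality) characterization of the nearest-point projection. Your preliminary step showing $p \in C$ is harmless but not actually needed: when $\scp{\alpha_i}{p} \le 0$ one gets $\scp{\alpha_i}{n} = \scp{\alpha_i}{\Vector} - \scp{\alpha_i}{p} \ge 0$ directly from $\Vector \in C$, so only the case $\scp{\alpha_i}{p} > 0$ requires the obtuse-angle inequality.
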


\begin{proof}
Let $f = \EuclProjection[\Zonotope](\Vector)$ so that $\Vector = n + f$. It suffices to show that there is no $\Weyl$-wall $\Wall$ that separates $f$ from $n$, i.e., is such that $f$ and $n$ lie in different components of $\E \setminus \Wall$. Assume to the contrary that there is such a wall $\Wall$.

Let $\sigma_\Wall \in \Weyl$ denote the reflection at $\Wall$. Since $f$ is a point of $\Zonotope$ and $\Zonotope$ is $\Weyl$-invariant, $\sigma_\Wall(f)$ is a point of $\Zonotope$ as well. The vector $\sigma_\Wall(f) - f$ is orthogonal to $\Wall$ and lies on the same side as $n$ (the side on which $f$ does not lie). Thus $\scp{n}{\sigma_\Wall(f) - f}> 0$ which can be rewritten as $\scp{n}{\sigma_\Wall(f)} > \scp{n}{f}$. This is a contradiction because $\scp{n}{\DummyArg}$ attains its maximum over $\Zonotope$ in $f$.
\end{proof}

Note that the lemma allows the case where $n$ is contained in a wall and $\Vector$ is not, but not the other way round. The precise statement will be important.

\footerlevel{3}
\headerlevel{3}

\section{Height}
\label{sec:height}

With the tools from Section~\ref{sec:zonotopes} we can in this section define the actual height function we will be working with. Recall that we fixed a Euclidean twin building $\PNBuildings$ and a point $\TheNegPoint \in \NegBuilding$ and that the space we are interested in is $\OneSpace \defeq \PosBuilding$.

Let $\Weyl$ be the spherical Coxeter group associated to $\Infty\OneSpace$. Let $\E$ be a Euclidean vector space of the same dimension as $\OneSpace$ and let $\Weyl$ act on $\E$ as a linear reflection group. The action of $\Weyl$ turns $\Infty\E$ into a spherical Coxeter complex. Every apartment $\PosApartment$ of $\PosBuilding$ (or $\NegApartment$ of $\NegBuilding$) can be isometrically identified with $\E$ in a way that respects the asymptotic structure, i.e., such that the induced map $\Infty\PosApartment \to \Infty\E$ is a type preserving isomorphism. This identification is only unique up to the action of $\Weyl$ and the choice of the base point of $\E$ so we have to take care that nothing we construct depends on the concrete identification.

Let $\Directions$ be a finite subset of $\E$. We will make increasingly stronger assumptions on $\Directions$ culminating in the assumption that it be rich as defined in Section~\ref{sec:descending_links} (page \pageref{page:rich}) but for the moment we only assume that $\Directions$ is $\Weyl$-invariant and centrally symmetric ($\Directions = - \Directions$). In the last section we have seen how $\Directions$ defines a zonotope $\Zonotope \defeq \Zonotope[\Directions]$. It follows from the assumptions on $\Directions$ that $\Zonotope$ is $\Weyl$-invariant and centrally symmetric.

Let $\TwinApartment = \PNApartments$ be a twin apartment and let $\PosPoint \in \PosApartment$ and $\ANegPoint \in \NegApartment$ be points. We identify $\E$ with $\PosApartment$ which allows us to define the polytope $\PosPoint + \Zonotope$. This is well-defined because $\Zonotope$ is $\Weyl$-invariant.

Let $\APosPoint$ be the point opposite $\ANegPoint$ in $\TwinApartment$. We define the $\Zonotope$-perturbed codistance between $\PosPoint$ and $\ANegPoint$ in $\TwinApartment$ to be
\begin{equation}
\label{eq:apartment_perturbed_codistance}
\EuclCoDistance[\Zonotope][\TwinApartment](\PosPoint,\ANegPoint) \defeq \EuclDistance(\PosPoint + \Zonotope,\APosPoint) \text{ ,}
\end{equation}
i.e., the minimal distance from a point in $\PosPoint + \Zonotope$ to $\APosPoint$. This is again independent of the chosen twin apartment:

\begin{lem}
\label{lem:perturbed_codistance_well-defined}
If $\TwinApartment$ and $\TwinApartment'$ are twin apartments that contain points $\PosPoint$ and $\ANegPoint$, then $\EuclCoDistance[\Zonotope][\TwinApartment](\PosPoint,\ANegPoint) = \EuclCoDistance[\Zonotope][\TwinApartment'](\PosPoint,\ANegPoint)$.
\end{lem}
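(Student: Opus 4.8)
The plan is to mimic the proof of Lemma~\ref{lem:codistance_well-defined}, reducing the general case of two twin apartments to the case where the two apartments share a chamber, and then invoking the fact (Observation~\ref{obs:restricted_retractions_are_isomorphisms}) that the restricted retraction between two twin apartments sharing a chamber is an isomorphism of thin twin apartments preserving Weyl-distance, metric distance, and opposition.

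First I would pick a chamber $\PosChamber$ of $\PosApartment$ (inside $\TwinApartment$) containing $\PosPoint$ and a chamber $\NegChamber$ of $\NegApartment'$ (inside $\TwinApartment'$) containing $\ANegPoint$, and choose by \eqref{item:twin_building_common_apartment} a twin apartment $\TwinApartment''$ containing both $\PosChamber$ and $\NegChamber$. It then suffices to prove $\EuclCoDistance[\Zonotope][\TwinApartment](\PosPoint,\ANegPoint) = \EuclCoDistance[\Zonotope][\TwinApartment''](\PosPoint,\ANegPoint)$ and, by the symmetric argument using $\PosChamber \subseteq \TwinApartment \intersect \TwinApartment''$ and switching the roles of the two halves, $\EuclCoDistance[\Zonotope][\TwinApartment'](\PosPoint,\ANegPoint) = \EuclCoDistance[\Zonotope][\TwinApartment''](\PosPoint,\ANegPoint)$. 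For the first of these, consider the retraction $\Retraction{\TwinApartment''}{\NegChamber}$; by Observation~\ref{obs:restricted_retractions_are_isomorphisms} its restriction to $\PosApartment$ is an isomorphism $\PosApartment \to \PosApartment''$ of thin twin apartment halves, it is an isometry, it is type-preserving, and it sends the point $\APosPoint$ opposite $\ANegPoint$ in $\TwinApartment$ to the point $\APosPoint''$ opposite $\ANegPoint$ in $\TwinApartment''$, and it fixes $\PosPoint$ (which lies in $\NegChamber$'s partner chamber $\PosChamber$, hence in $\PosApartment \intersect \PosApartment''$).

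The key extra point, compared to Lemma~\ref{lem:codistance_well-defined}, is that one must check the retraction carries the perturbing zonotope $\PosPoint + \Zonotope \subseteq \PosApartment$ onto $\PosPoint + \Zonotope \subseteq \PosApartment''$. This is where the hypotheses on $\Directions$ enter: the identification of each apartment with $\E$ is only well-defined up to the action of $\Weyl$ and up to translation, but since $\Zonotope = \Zonotope[\Directions]$ with $\Directions$ $\Weyl$-invariant (and centrally symmetric), the translate $\PosPoint + \Zonotope$ of the zonotope based at $\PosPoint$ is independent of the choice of identification. The restricted retraction $\Retraction{\TwinApartment''}{\NegChamber}|_{\PosApartment}$ is a type-preserving isometry of Euclidean Coxeter complexes fixing $\PosPoint$, so under any compatible identifications of $\PosApartment$ and $\PosApartment''$ with $\E$ it is realized by an element of $\Weyl$ followed by a translation fixing the image of $\PosPoint$; hence it maps $\PosPoint + \Zonotope$ isometrically onto $\PosPoint + \Zonotope$. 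Combining: $\Retraction{\TwinApartment''}{\NegChamber}|_{\PosApartment}$ carries the pair $(\PosPoint + \Zonotope, \APosPoint)$ in $\PosApartment$ isometrically onto the pair $(\PosPoint + \Zonotope, \APosPoint'')$ in $\PosApartment''$, so the minimal distances agree, i.e.\ $\EuclCoDistance[\Zonotope][\TwinApartment](\PosPoint,\ANegPoint) = \EuclDistance(\PosPoint + \Zonotope, \APosPoint) = \EuclDistance(\PosPoint + \Zonotope, \APosPoint'') = \EuclCoDistance[\Zonotope][\TwinApartment''](\PosPoint,\ANegPoint)$.

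The main obstacle I anticipate is not the retraction bookkeeping — that is essentially verbatim from Lemma~\ref{lem:codistance_well-defined} — but making fully rigorous the claim that $\PosPoint + \Zonotope$ is a well-defined subset of a given apartment, independent of the identification with $\E$, and that type-preserving isometries between apartments respect it. The cleanest way is to observe that any two admissible identifications of $\PosApartment$ with $\E$ differ by a translation composed with an element of $\Weyl$ (the stabilizer-of-structure group), and $\Weyl$-invariance plus central symmetry of $\Directions$ makes $\Zonotope$ invariant under exactly that group fixing the origin; after translating the origin to $\PosPoint$, the set $\PosPoint + \Zonotope$ is therefore canonical. Once this is isolated as a remark (which the text essentially already makes when introducing $\Directions$), the proof of the lemma is just the retraction argument above applied twice.
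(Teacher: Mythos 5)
Your proposal follows the paper's proof: as in Lemma~\ref{lem:codistance_well-defined}, reduce to an intermediate twin apartment $\TwinApartment''$ sharing a chamber with each of the given ones, apply Observation~\ref{obs:restricted_retractions_are_isomorphisms} to get an isomorphism preserving distance and opposition, and then note that this isomorphism carries $\PosPoint + \Zonotope$ onto $\PosPoint + \Zonotope$ because $\Zonotope$ is $\Weyl$-invariant and the map is type-preserving and fixes $\PosPoint$. You correctly isolate the one new ingredient relative to the unperturbed codistance, namely that $\PosPoint + \Zonotope$ is a canonically defined subset of the apartment (independent of the identification with $\E$) and is preserved by type-preserving isometries fixing $\PosPoint$.

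One slip worth correcting, which incidentally also appears in the same form in the paper's printed proof of Lemma~\ref{lem:codistance_well-defined}: for the transition from $\TwinApartment$ to $\TwinApartment''$ the retraction must be centered at a chamber these two twin apartments actually share, namely $\PosChamber$, not $\NegChamber$. Observation~\ref{obs:restricted_retractions_are_isomorphisms} requires both twin apartments to contain the center chamber; $\TwinApartment$ contains $\PosChamber$, but $\NegChamber$ lives in $\TwinApartment'$ and in general $\NegChamber \not\subseteq \TwinApartment$, so the observation does not license you to treat $\Retraction{\TwinApartment''}{\NegChamber}|_{\TwinApartment}$ as an isomorphism. With the center changed to $\PosChamber$ everything you wrote goes through: $\Retraction{\TwinApartment''}{\PosChamber}|_{\TwinApartment}$ is an isomorphism $\TwinApartment \to \TwinApartment''$ fixing $\TwinApartment \intersect \TwinApartment''$, hence fixing both $\PosPoint \in \PosChamber$ and $\ANegPoint \in \NegChamber \subseteq \TwinApartment''$, it preserves opposition and therefore sends $\APosPoint$ to $\APosPoint''$, and it carries $\PosPoint + \Zonotope$ isometrically to $\PosPoint + \Zonotope$. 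Symmetrically use $\Retraction{\TwinApartment''}{\NegChamber}|_{\TwinApartment'}$ for the second step. Also, $\PosChamber$ and $\NegChamber$ were chosen independently in $\TwinApartment$ and $\TwinApartment'$, so describing $\PosChamber$ as ``$\NegChamber$'s partner chamber'' is not accurate; the assertion you actually need from it, that $\PosPoint \in \PosChamber \subseteq \PosApartment \intersect \PosApartment''$, is nevertheless correct.
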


\begin{proof}
Let $\APosPoint'$ be the point opposite $\ANegPoint$ in $\TwinApartment'$. As in Lemma~\ref{lem:codistance_well-defined} one sees that there is a map from $\TwinApartment$ to $\TwinApartment'$ that takes $\ANegPoint$ as well as $\PosPoint$ to themselves and preserves distance and opposition. Thus it also takes $\APosPoint$ to $\APosPoint'$ and $\PosPoint + \Zonotope$ to itself. This shows that the configuration in $\TwinApartment'$ is an isometric image of the configuration in $\TwinApartment$, hence the distances agree.
\end{proof}

We may therefore define the \emph{$\Zonotope$-perturbed codistance} of two points $\PosPoint \in \PosBuilding$ and $\ANegPoint \in \NegBuilding$ to be $\EuclCoDistance[\Zonotope](\PosPoint,\ANegPoint) \defeq \EuclCoDistance[\Zonotope][\TwinApartment](\PosPoint,\ANegPoint)$\index[xsyms]{dstarz@$\EuclCoDistance[\Zonotope]$} for any twin apartment $\TwinApartment$ that contains $\PosPoint$ and $\ANegPoint$.

\begin{figure}[!ht]
\begin{center}
\includegraphics{figs/perturbed_codistance}
\end{center}
\caption{The figure shows two points $\PosPoint$ and $\NegPoint$ that lie in a twin apartment $\PNApartments$. The halves $\PosApartment$ and $\NegApartment$ are identified with each other via $\op$ and with $\E$. Each of the dashed lines represents the $\Zonotope$-perturbed codistance of $\PosPoint$ and $\NegPoint$.}
\label{fig:perturbed_codistance}
\end{figure}

\begin{obs}
If $\PosPoint$ and $\APosPoint$ are two points of $\E$, then
\[
\EuclDistance(\PosPoint + \Zonotope,\APosPoint) = \EuclDistance(\PosPoint,\APosPoint + \Zonotope) \text{ .}
\]
In particular, $\EuclCoDistance[\Zonotope](\PosPoint,\ANegPoint) = \EuclDistance(\PosPoint,\APosPoint + \Zonotope)$ in the situation of \eqref{eq:apartment_perturbed_codistance}.
\end{obs}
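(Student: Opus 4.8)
The plan is to reduce the statement to the single elementary fact that $\Zonotope$ is symmetric about the origin, $-\Zonotope = \Zonotope$. This holds because $\Directions = -\Directions$: reindexing the Minkowski sum via $\Direction \mapsto -\Direction$ and using $[0,-\Direction] = -[0,\Direction]$ together with the fact that negation commutes with Minkowski sums gives $\Zonotope = \sum_{\Direction \in \Directions}[0,\Direction] = \sum_{\Direction \in \Directions}[0,-\Direction] = -\sum_{\Direction \in \Directions}[0,\Direction] = -\Zonotope$; alternatively one simply quotes the remark made above that $\Zonotope$ is centrally symmetric. Everything below takes place inside the single Euclidean vector space $\E$, so no building-theoretic input enters.

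First I would unfold the left-hand side as $\EuclDistance(\PosPoint + \Zonotope,\APosPoint) = \inf\{\EuclDistance(\PosPoint + \Vector,\APosPoint) \mid \Vector \in \Zonotope\}$, and then use translation invariance of the Euclidean metric, translating both points by $-\Vector$, to write $\EuclDistance(\PosPoint + \Vector,\APosPoint) = \EuclDistance(\PosPoint,\APosPoint - \Vector)$ for every $\Vector \in \E$. Hence $\EuclDistance(\PosPoint + \Zonotope,\APosPoint) = \inf\{\EuclDistance(\PosPoint,\APosPoint - \Vector) \mid \Vector \in \Zonotope\}$. Now, as $\Vector$ ranges over $\Zonotope$, the point $\APosPoint - \Vector$ ranges over $\APosPoint - \Zonotope$, and since $-\Zonotope = \Zonotope$ this set is exactly $\APosPoint + \Zonotope$; therefore the last infimum equals $\EuclDistance(\PosPoint,\APosPoint + \Zonotope)$, which is the claimed identity.

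For the ``in particular'' clause I would just combine this with the definition \eqref{eq:apartment_perturbed_codistance}: there $\APosPoint$ is by construction the point opposite $\ANegPoint$ in $\TwinApartment$, so $\EuclCoDistance[\Zonotope](\PosPoint,\ANegPoint) = \EuclCoDistance[\Zonotope][\TwinApartment](\PosPoint,\ANegPoint) = \EuclDistance(\PosPoint + \Zonotope,\APosPoint)$, and the first part of the observation rewrites the last expression as $\EuclDistance(\PosPoint,\APosPoint + \Zonotope)$. I do not expect any genuine obstacle; the only two points deserving a moment's care are invoking the central symmetry of $\Zonotope$ itself (rather than merely that of $\Directions$) and noting that all sets involved are subsets of the fixed Euclidean space $\E$, so that the ordinary translation argument applies verbatim.
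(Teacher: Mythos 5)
Your proof is correct and is essentially the paper's own argument: the paper also notes that $\EuclDistance(\PosPoint+\Vector,\APosPoint)=\EuclDistance(\PosPoint,\APosPoint-\Vector)$ for each $\Vector\in\Zonotope$ and then invokes central symmetry of $\Zonotope$. You have merely spelled out the infimum and the reindexing step explicitly.
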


This is illustrated in Figure~\ref{fig:perturbed_codistance}

\begin{proof}
If $\Vector$ is a vector in $\Zonotope$, then $\EuclDistance(\PosPoint + \Vector,\APosPoint) = \EuclDistance(\PosPoint,\APosPoint - \Vector)$. The statement now follows from the fact that $\Zonotope$ is centrally symmetric.
\end{proof}

It is clear that we might as well have identified $\E$ with the negative half of a twin apartment and taken the distance there.

We can now define the height function. The \emph{height} of a point $\Point \in \OneSpace$ is defined to be the $\Zonotope$-perturbed codistance from the fixed point $\TheNegPoint$\index[xsyms]{height@$\Height$}:
\[
\Height(\Point) \defeq \EuclCoDistance[\Zonotope](\Point,\TheNegPoint) \text{ .}
\]

\begin{obs}
\label{obs:height_discrete_image}
The set $\Height(\Vertices \OneSpace)$ is discrete.
\end{obs}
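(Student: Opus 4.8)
The plan is to pull the height function back, by means of a single retraction, into one Euclidean Coxeter complex, where discreteness is immediate. Fix a chamber $\AltChamber$ of $\NegBuilding$ containing $\TheNegPoint$, a twin apartment $(\Apartment_+^{0},\Apartment_-^{0})$ containing $\AltChamber$, and the point $\ThePosPoint \in \Apartment_+^{0}$ opposite $\TheNegPoint$. Let $\rho \defeq \Retraction{(\Apartment_+^{0},\Apartment_-^{0})}{\AltChamber}$ be the retraction of $\PNBuildings$ onto $(\Apartment_+^{0},\Apartment_-^{0})$ centered at $\AltChamber$; by Fact~\ref{fact:twin_building} it is type preserving, maps $\OneSpace = \PosBuilding$ into $\Apartment_+^{0}$, restricts to the identity on $\AltChamber$ (hence fixes $\TheNegPoint$), and restricts to an opposition-preserving isometry on every twin apartment that contains $\AltChamber$. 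The goal is then to prove that $\Height(\Point) = \EuclDistance(\rho(\Point) + \Zonotope,\ThePosPoint)$ for every $\Point \in \OneSpace$, the distance being taken inside $\Apartment_+^{0}$, which we identify with $\E$.

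To prove this, given $\Point$ choose a twin apartment $\TwinApartment = \PNApartments$ containing $\Point$ and $\AltChamber$. Such a twin apartment exists by axiom~\eqref{item:twin_building_common_apartment}, applied to $\Point$ and an interior point of $\AltChamber$, since a twin apartment that contains a point contains its carrier. Let $\APosPoint \in \PosApartment$ be the point opposite $\TheNegPoint$ in $\TwinApartment$. By the definition of $\Height$ and by \eqref{eq:apartment_perturbed_codistance}, $\Height(\Point) = \EuclCoDistance[\Zonotope][\TwinApartment](\Point,\TheNegPoint) = \EuclDistance(\Point + \Zonotope,\APosPoint)$, computed inside $\PosApartment$. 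Now $\rho$ restricts to a type-preserving, opposition-preserving isometry $\TwinApartment \to (\Apartment_+^{0},\Apartment_-^{0})$ fixing $\TheNegPoint$; it therefore carries $\APosPoint$ to $\ThePosPoint$, and --- being type preserving, with $\Zonotope$ being $\Weyl$-invariant --- it identifies the polytope $\Point + \Zonotope$ of $\PosApartment$ with the polytope $\rho(\Point) + \Zonotope$ of $\Apartment_+^{0}$ in the sense of Section~\ref{sec:height}. Since $\rho|_\TwinApartment$ is an isometry, it preserves the distance, and the claimed identity follows.

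Finally, discreteness can be read off. By the identity, $\Height(\Point)$ depends only on $\rho(\Point)$, which is a vertex of $\Apartment_+^{0}$ whenever $\Point$ is a vertex of $\OneSpace$ (because $\rho$ is type preserving), so $\Height(\Vertices \OneSpace) \subseteq \{\EuclDistance(w + \Zonotope,\ThePosPoint) : w \in \Vertices \Apartment_+^{0}\}$. Here $\Apartment_+^{0}$ is a locally finite Euclidean Coxeter complex, so $\Vertices \Apartment_+^{0}$ is a locally finite subset of $\E$, while $\Zonotope$ is a bounded polytope; hence for each $R > 0$ a vertex $w$ with $\EuclDistance(w + \Zonotope,\ThePosPoint) \le R$ lies in the bounded set $\overline{B}(\ThePosPoint,R) - \Zonotope$, and there are only finitely many such $w$. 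Thus the set on the right meets every bounded interval in a finite set, and so does its subset $\Height(\Vertices \OneSpace)$; a subset of $\R$ with this property is order-isomorphic to a subset of $\Z$, which is what discreteness means here. I expect the only slightly delicate point to be the clause that the type-preserving isometry $\rho|_\TwinApartment$ transports $\Point + \Zonotope$ correctly --- i.e., that such isometries respect the asymptotic identifications used to define $\Point + \Zonotope$; everything else is bookkeeping with $\rho|_\TwinApartment$ and the elementary fact that the distances from a locally finite set of points to a fixed bounded set form a discrete subset of $\R$.
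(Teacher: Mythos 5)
Your proof is correct and takes essentially the same approach as the paper's: both fix a chamber of $\NegBuilding$ containing $\TheNegPoint$ and a twin apartment through it, retract onto that twin apartment, observe that $\Height$ factors through the retraction (you spell this out via the opposition-preserving isometry on twin apartments, the paper just records $\Height = \Height|_{\PosApartment} \circ \rho|_{\OneSpace}$), and conclude from properness of $\Height|_{\PosApartment}$ on the locally finite vertex set of a Euclidean Coxeter complex. Your version only adds detail — in particular, verifying the $\Weyl$-equivariance point you flag, which the paper leaves implicit.
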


\begin{proof}
Let $\NegChamber$ be a chamber that contains $\TheNegPoint$ and let $\PNApartments$ be a twin apartment that contains $\NegChamber$. Let $\rho \defeq \Retraction{\PNApartments}{\NegChamber}$ be the retraction onto $\PNApartments$ centered at $\NegChamber$. Then $\Height = \Height|_{\PosApartment} \circ \rho|_{\OneSpace}$. And $\Height|_{\PosApartment}$ is clearly a proper map. So every compact subset of $\R$ meets $\Height(\Vertices \OneSpace)$ in a finite set.
\end{proof}

Proceeding as in Section~\ref{sec:codistance} we next want to define a gradient for $\Height$.

Consider again a twin apartment $\TwinApartment \defeq \PNApartments$ and let $\PosPoint \in \PosApartment$ and $\ANegPoint \in \NegApartment$ be points. Let $\APosPoint$ be the point opposite $\ANegPoint$ in $\TwinApartment$. Assume that $\EuclCoDistance[\Zonotope](\PosPoint,\ANegPoint) > 0$, i.e., that $\PosPoint \nin \APosPoint + \Zonotope$. The ray $\TwinRay{\PosPoint}{\ANegPoint}[\Zonotope][\TwinApartment]$ is defined to be the ray in $\PosApartment$ that issues at $\PosPoint$ and moves away from (the projection point of $\PosPoint$ onto) $\APosPoint + \Zonotope$.

\begin{prop}
Let $\TwinApartment$ and $\TwinApartment'$ be twin apartments that contain points $\PosPoint$ and $\ANegPoint$. If $\EuclCoDistance[\Zonotope](\PosPoint,\ANegPoint) > 0$, then $\TwinRay{\PosPoint}{\ANegPoint}[\Zonotope][\TwinApartment] = \TwinRay{\PosPoint}{\ANegPoint}[\Zonotope][\TwinApartment']$.
\end{prop}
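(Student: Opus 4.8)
The plan is to imitate the proof of Lemma~\ref{lem:twin_ray_well-defined}, with the single point ``opposite $\ANegPoint$'' replaced by the convex set $\APosPoint + \Zonotope$. Write $\APosPoint$ for the point opposite $\ANegPoint$ in $\TwinApartment$ and put $\bar{\PosPoint} \defeq \EuclProjection[{\APosPoint+\Zonotope}]\PosPoint$, the closest point of $\APosPoint+\Zonotope$ to $\PosPoint$ in $\PosApartment$, so that by definition $\TwinRay{\PosPoint}{\ANegPoint}[\Zonotope][\TwinApartment]$ is the ray in $\PosApartment$ issuing at $\PosPoint$ in the direction $\PosPoint-\bar{\PosPoint}$. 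Set $\Set \defeq \TwinRay{\PosPoint}{\ANegPoint}[\Zonotope][\TwinApartment]\intersect\TwinApartment'$. Since $\PosPoint\in\TwinApartment'$, the set $\Set$ is non-empty, and it is obviously closed in $\TwinRay{\PosPoint}{\ANegPoint}[\Zonotope][\TwinApartment]$; as that ray is connected, it suffices to prove $\Set$ is also open in it, for then $\Set=\TwinRay{\PosPoint}{\ANegPoint}[\Zonotope][\TwinApartment]$, i.e.\ $\TwinRay{\PosPoint}{\ANegPoint}[\Zonotope][\TwinApartment]\subseteq\TwinApartment'$, and this containment together with its symmetric counterpart gives the asserted equality exactly as in Lemma~\ref{lem:twin_ray_well-defined}.

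So fix $\AltPoint\in\Set$. First, $[\PosPoint,\AltPoint]\subseteq\Set$ because the positive half of $\TwinApartment'$ is convex; the work is to push a little past $\AltPoint$. The one genuinely new ingredient is an elementary fact about closest-point projections onto a closed convex set $C$ in a Euclidean space: if $\bar{x}=\EuclProjection[C]x$, then $\bar{x}$ is also the closest point of $C$ to every point of the ray $\{\bar{x}+\lambda(x-\bar{x})\mid\lambda\ge 0\}$, because the variational inequality $\scp{x-\bar{x}}{q-\bar{x}}\le 0$ (valid for all $q\in C$) is invariant under scaling $x-\bar{x}$ by a positive factor. Applied with $C=\APosPoint+\Zonotope$, this shows that $\bar{\PosPoint}$ is the closest point of $\APosPoint+\Zonotope$ to \emph{every} point of $\TwinRay{\PosPoint}{\ANegPoint}[\Zonotope][\TwinApartment]$; hence this ray is the straight ray in $\PosApartment$ from $\bar{\PosPoint}$ through $\PosPoint$ and beyond, with constant direction $\PosPoint-\bar{\PosPoint}$ at each of its points. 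This is precisely the role played by the point $\Point_0$ in the proof of Lemma~\ref{lem:twin_ray_well-defined}, with the point replaced by the convex set $\APosPoint+\Zonotope$ and its projection point $\bar{\PosPoint}$.

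Now one finishes as in Lemma~\ref{lem:twin_ray_well-defined}: let $\Cell$ be the carrier of $\AltPoint$ --- a cell of $\PosApartment'$ since $\AltPoint\in\TwinApartment'$ --- and choose a chamber $\NegChamber$ of the negative half of $\TwinApartment'$ containing $\ANegPoint$. The germ of $\TwinRay{\PosPoint}{\ANegPoint}[\Zonotope][\TwinApartment]$ beyond $\AltPoint$ lies in a unique chamber $\AltChamber\ge\Cell$, and the crux is to identify $\AltChamber$ with the twin-building projection $\WeylProjection_\Cell\NegChamber$; granting this, $\TwinApartment'$ contains both $\NegChamber$ and $\Cell$, so $\WeylProjection_\Cell\NegChamber\subseteq\TwinApartment'$ by Fact~\ref{fact:twin_building}~\eqref{item:twin_projection}, whence a neighbourhood of $\AltPoint$ in the ray lies in $\Set$, proving openness. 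This identification is the main obstacle. In Lemma~\ref{lem:twin_ray_well-defined} the direction away from the single point $\Point_0$ opposite $\ANegPoint$ visibly points into the chamber over $\Cell$ furthest from the chamber $\Chamber_0$ opposite $\NegChamber$, which is $\WeylProjection_\Cell\NegChamber$; here we move away from the zonotope $\APosPoint+\Zonotope$ instead, and must check that $\PosPoint-\bar{\PosPoint}$ still points into that same chamber over $\Cell$. For this I would invoke the $\Weyl$-invariance of $\Zonotope$ (hence of $\APosPoint+\Zonotope$) together with Lemma~\ref{lem:n_f_lemma}: placing the origin at $\APosPoint$, the vector $\PosPoint-\bar{\PosPoint}=(\PosPoint-\APosPoint)-\EuclProjection[\Zonotope](\PosPoint-\APosPoint)$ lies in every $\Weyl$-chamber based at $\APosPoint$ that contains $\PosPoint-\APosPoint$, so it determines the same chamber over $\Cell$ as the direction from $\APosPoint$ towards $\PosPoint$ does, reducing the claim to the unperturbed case already treated in Lemma~\ref{lem:twin_ray_well-defined}.
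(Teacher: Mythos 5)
Your proposal is correct in substance, but it is a genuinely different proof from the one in the paper. The paper argues at infinity: by Lemma~\ref{lem:twin_ray_well-defined} the unperturbed ray $\TwinRay{\PosPoint}{\ANegPoint}$ lies in every twin apartment containing $\PosPoint$ and $\ANegPoint$, hence so does the carrier at infinity of its limit point; Lemma~\ref{lem:n_f_lemma} then places the limit point of $\TwinRay{\PosPoint}{\ANegPoint}[\Zonotope][\TwinApartment]$ in that closed carrier, and the uniqueness of the geodesic ray from $\PosPoint$ to a point at infinity (Proposition~\ref{prop:ray_from_point_to_point_at_infty}) forces the perturbed ray into $\TwinApartment'$. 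You instead rerun the open--closed argument of Lemma~\ref{lem:twin_ray_well-defined} on the perturbed ray itself. This buys you a proof that never leaves the apartments and avoids the visual boundary, but it costs you the ability to use Lemma~\ref{lem:twin_ray_well-defined} as a black box: you must redo its chamber identification in the perturbed situation, which is exactly the crux you flag. Your observation that the closest-point projection onto $\APosPoint+\Zonotope$ is constant along the outward ray is correct and is what makes this feasible.

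The one step to tighten is the final sentence, where you conclude that the perturbed direction ``determines the same chamber over $\Cell$ as the direction from $\APosPoint$ towards $\PosPoint$''. Two things are glossed over: Lemma~\ref{lem:twin_ray_well-defined} controls the unperturbed ray only at its own points, and $\AltPoint$ does not lie on that ray; and the $\Weyl$-chambers of Lemma~\ref{lem:n_f_lemma} are cones bounded by the linear walls through $\APosPoint$, whereas the chamber over $\Cell$ containing the germ is cut out by the affine walls of $\PosApartment$ through $\AltPoint$, so lying in a common $\Weyl$-cone does not by itself decide that chamber. Both points are repaired by comparing with $\AltPoint-\APosPoint$ rather than with $\PosPoint-\APosPoint$: by your constancy observation, $(\AltPoint-\APosPoint)-\EuclProjection[\Zonotope](\AltPoint-\APosPoint)$ is a positive multiple of $\PosPoint-\bar{\PosPoint}$, so Lemma~\ref{lem:n_f_lemma} applied to $\AltPoint-\APosPoint$ shows that the ray direction lies in every closed $\Weyl$-cone at $\APosPoint$ containing $\AltPoint-\APosPoint$. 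Since the linear part of every affine wall of $\PosApartment$ through $\AltPoint$ is a wall of this cone decomposition, the germ of the perturbed ray beyond $\AltPoint$ lies, for each wall containing $\Cell$, on the closed side away from $\APosPoint$, and inside the wall when $\APosPoint$ lies on it; hence it lies in the same closed chamber over $\Cell$ as the germ of the ray issuing at $\AltPoint$ and moving away from $\APosPoint$, i.e.\ of $\TwinRay{\AltPoint}{\ANegPoint}[][\TwinApartment]$, and to that ray the argument (not the statement) of Lemma~\ref{lem:twin_ray_well-defined} applies verbatim with $\PosPoint$ replaced by $\AltPoint$, identifying the chamber with $\WeylProjection_\Cell\NegChamber$. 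With this adjustment your proof is complete; the remaining steps (convexity, $\WeylProjection_\Cell\NegChamber\subseteq\TwinApartment'$ by Fact~\ref{fact:twin_building}, and equality from containment, which rests on the well-definedness of $\EuclCoDistance[\Zonotope]$ from Lemma~\ref{lem:perturbed_codistance_well-defined}) are handled exactly as in the paper's own lemmas.
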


\begin{proof}
To simplify notation we identify $\E$ with $\PosApartment$ in such a way that the origin of $\E$ gets identified with $\PosPoint$. Let $\Vector$ be the vector that points from $\APosPoint$ to $\PosPoint$, let $f$ be the vector that points from $\APosPoint$ to the projection point of $\PosPoint$ onto $\APosPoint + \Zonotope$, and let $n = \Vector - f$.

Then $\TwinRay{\PosPoint}{\ANegPoint}$ is the geodesic ray spanned by $\Vector$ and $\TwinRay{\PosPoint}{\ANegPoint}[\Zonotope][\TwinApartment]$ is the geodesic ray spanned by $n$. By Lemma~\ref{lem:twin_ray_well-defined} the ray $\TwinRay{\PosPoint}{\ANegPoint}$ is a well-defined ray in the building, in particular, it defines a point at infinity $\Infty{\TwinRay{\PosPoint}{\ANegPoint}}$ that is contained in (the visual boundary of) every twin apartment that contains $\PosPoint$ and $\ANegPoint$. Hence also the carrier $\Cell$ of $\Infty{\TwinRay{\PosPoint}{\ANegPoint}}$ in $\Infty\PosBuilding$ is contained in every such twin apartment. By Lemma~\ref{lem:n_f_lemma} the ray spanned by $n$ lies in every chamber in which $\Vector$ lies, so $\Infty{\TwinRay{\PosPoint}{\ANegPoint}[\Zonotope][\TwinApartment]}$ lies in $\Cell$.

This shows that if $\TwinApartment'$ contains $\PosPoint$ and $\ANegPoint$, then it also contains $\TwinRay{\PosPoint}{\ANegPoint}[\Zonotope][\TwinApartment]$. Thus $\TwinRay{\PosPoint}{\ANegPoint}[\Zonotope][\TwinApartment] = \TwinRay{\PosPoint}{\ANegPoint}[\Zonotope][\TwinApartment']$.
\end{proof}

The \emph{$\Zonotope$-perturbed ray from $\PosPoint$ to $\ANegPoint$} is defined to be $\TwinRay{\PosPoint}{\ANegPoint}[\Zonotope] \defeq \TwinRay{\PosPoint}{\ANegPoint}[\Zonotope][\TwinApartment]$ for any twin apartment $\TwinApartment$ that contains $\PosPoint$ and $\ANegPoint$. It is well-defined by the proposition. The $\Zonotope$-perturbed ray $\TwinRay{\ANegPoint}{\PosPoint}[\Zonotope]$ from $\ANegPoint$ to $\PosPoint$ is defined analogously.

The \emph{gradient} $\Gradient\Height$\index[xsyms]{gradientnoninfty@$\Gradient\Height$} of $\Height$ is given by letting $\Gradient_{\Point} \Height$\index[xsyms]{gradientnoninftypoint@$\Gradient_\Point\Height$} be the direction in $\Link \Point$ defined by $\TwinRay{\Point}{\TheNegPoint}[\Zonotope]$. The \emph{asymptotic gradient} $\Infty\Gradient \Height$\index[xsyms]{gradientinfty@$\Infty\Gradient\Height$} is given by letting $\Infty\Gradient_{\Point} \Height$\index[xsyms]{gradientinftypoint@$\Infty\Gradient_\Point\Height$} be the limit point of $\TwinRay{\Point}{\TheNegPoint}[\Zonotope]$.

\footerlevel{3}
\headerlevel{3}

\section{Flat Cells and the Angle Criterion}
\label{sec:angle_criterion}

In the last section we introduced a height function by perturbing the metric codistance. In this section we describe in which way the perturbation influences the resulting height function.

We start with a property that is preserved by the perturbation.

\begin{obs}
\label{obs:height_convex}
Let $\PNApartments$ be a twin apartment that contains $\TheNegPoint$. The restriction of $\Height$ to $\PosApartment$ is a convex function. In particular, if $\Cell \subseteq \OneSpace$ is a cell, then among the $\Height$-maximal points of $\Cell$ there is a vertex.
\end{obs}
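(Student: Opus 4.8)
The plan is to reduce the statement to two elementary facts: the distance function to a convex subset of a Euclidean space is convex, and a convex function on a polytope attains its maximum at a vertex.

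First I would fix the twin apartment $\PNApartments$ containing $\TheNegPoint$, identify $\PosApartment$ with $\E$, and let $\ThePosPoint$ be the point opposite $\TheNegPoint$. Unwinding \eqref{eq:apartment_perturbed_codistance} and using that $\Zonotope$ is centrally symmetric, one gets $\Height(\Point) = \EuclCoDistance[\Zonotope](\Point,\TheNegPoint) = \EuclDistance(\Point, \ThePosPoint + \Zonotope)$ for every $\Point \in \PosApartment$. Since $\ThePosPoint + \Zonotope$ is a translate of the Minkowski sum $\sum_{\Direction \in \Directions}[0,\Direction]$ it is convex, so $\Height|_{\PosApartment}$ is the distance function to a nonempty convex set and hence convex. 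This establishes the first assertion.

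For the second assertion, given a cell $\Cell \subseteq \OneSpace$ I would pick a point $\Point$ in the relative interior of $\Cell$ and invoke \eqref{item:twin_building_common_apartment} to obtain a twin apartment $\AltPNApartments$ containing $\Point$ and $\TheNegPoint$. Since $\PosApartment'$ is a subcomplex and contains $\Point$, it contains the carrier of $\Point$, which is $\Cell$. Applying the first assertion to $\AltPNApartments$, the restriction of $\Height$ to $\PosApartment' \supseteq \Cell$ is convex; as a convex function on a polytope attains its maximum at a vertex, among the $\Height$-maximal points of $\Cell$ there is a vertex.

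I do not expect a serious obstacle. The only points needing a word of care are that $\ThePosPoint + \Zonotope$ is genuinely convex (immediate from the description of a zonotope as a Minkowski sum of segments) and that a twin apartment passing through an interior point of $\Cell$ contains all of $\Cell$ --- which uses that apartments are subcomplexes, hence closed under passing to faces.
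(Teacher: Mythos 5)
Your argument is correct and is essentially the paper's own proof, just with the routine steps spelled out: the paper also observes that on $\PosApartment$ the function $\Height$ is distance from the convex set $\ThePosPoint+\Zonotope$ (hence convex), and derives the second statement by choosing a twin apartment containing $\Cell$ and $\TheNegPoint$. Your extra care in passing from \eqref{item:twin_building_common_apartment} (which is stated for points) to a twin apartment containing the whole cell, via an interior point and the fact that apartments are subcomplexes, is exactly the standard argument that is left implicit in the paper.
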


\begin{proof}
On $\PosApartment$ the function $\Height$ is distance from a convex set. The second statement follows by choosing a twin apartment $\PNApartments$ that contains $\Cell$ and $\TheNegPoint$.
\end{proof}

Another property that is preserved is the infinitesimal angle criterion that we know from Observation~\ref{obs:weak_gradient_criterion}:

\begin{obs}
\label{obs:perturbed_weak_gradient_criterion}
Let $\Path$ be a geodesic that issues at a point $\Point \in \OneSpace$ with $\Height(\Point) > 0$. The function $\Height \circ \Path$ is strictly decreasing on an initial interval if and only if $\angle_{\Point}(\Gradient_{\Point}\Height,\Path) > \pi/2$.
\end{obs}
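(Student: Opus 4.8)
The plan is to imitate the proof of Observation~\ref{obs:weak_gradient_criterion}: localize to a single apartment, where $\Height$ becomes the distance function to a convex set, and then do a short Euclidean computation; the only step that needs more than in the unperturbed case is passing from the sign of a one-sided derivative to genuine strict monotonicity on an interval, and that is where convexity of $\Height$ on an apartment (Observation~\ref{obs:height_convex}) is used.

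First I would localize. Since $\Path$ issues at $\Point$, there is a cell $\Cell$ with $\Point \in \Cell$ and an $\varepsilon > 0$ such that $\Path([0,\varepsilon]) \subseteq \Cell$ (the direction $\Path_\Point$ lies in a closed cell $\Cell \direction \Point$ of $\Link \Point$). By \eqref{item:twin_building_common_apartment}, applied to an interior point of $\Cell$ and $\TheNegPoint$, there is a twin apartment $\TwinApartment = \PNApartments$ with $\Cell \subseteq \PosApartment$ and $\TheNegPoint \in \NegApartment$; in particular $\Point$ and the segment $\Path([0,\varepsilon])$ lie in $\PosApartment$. Identify $\PosApartment$ isometrically with $\E$ and let $\ThePosPoint$ be the point opposite $\TheNegPoint$ in $\TwinApartment$. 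By \eqref{eq:apartment_perturbed_codistance} together with central symmetry of $\Zonotope$, the restriction of $\Height$ to $\PosApartment$ is the map $\PosPoint \mapsto \EuclDistance(\PosPoint, C)$ where $C \defeq \ThePosPoint + \Zonotope$ is a closed convex set, and $\Point \nin C$ because $\Height(\Point) > 0$. Put $p \defeq \EuclProjection[C]\Point$. By definition the $\Zonotope$-perturbed ray $\TwinRay{\Point}{\TheNegPoint}[\Zonotope]$ is the ray in $\PosApartment$ issuing at $\Point$ away from $p$, so under our identification $\Gradient_\Point\Height$ is the direction of the unit vector $(\Point - p)/\norm{\Point - p}$, and $\angle_\Point(\Gradient_\Point\Height,\Path)$ is the Euclidean angle between $(\Point - p)/\norm{\Point - p}$ and the unit vector $u \in \E$ corresponding to the direction $\Path_\Point$.

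Next I would invoke the standard fact that $x \mapsto \tfrac12\,\EuclDistance(x,C)^2$ is differentiable on $\E$ with gradient $x - \EuclProjection[C]x$; hence $\EuclDistance(\DummyArg,C)$ is differentiable at every point off $C$, with gradient $(x - \EuclProjection[C]x)/\EuclDistance(x,C)$ there. Applying this at $\Point$, the right-hand derivative of $t \mapsto \Height(\Path(t))$ at $t = 0$ equals $\scp{u}{(\Point - p)/\norm{\Point - p}} = \cos\angle_\Point(\Gradient_\Point\Height,\Path)$. Now I conclude by convexity: on $[0,\varepsilon]$ the geodesic $\Path$ is a straight segment in $\PosApartment \cong \E$ and $\EuclDistance(\DummyArg,C)$ is convex, so $\Height\circ\Path$ is a convex function on $[0,\varepsilon]$. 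A convex function of one real variable is strictly decreasing on an initial subinterval precisely when its right-hand derivative at the left endpoint is negative (if it is negative this is immediate; if it is nonnegative then, the derivative of a convex function being nondecreasing, the function is nondecreasing). Combining the two, $\Height\circ\Path$ is strictly decreasing on an initial interval if and only if $\cos\angle_\Point(\Gradient_\Point\Height,\Path) < 0$, i.e.\ if and only if $\angle_\Point(\Gradient_\Point\Height,\Path) > \pi/2$.

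The main obstacle is precisely this last infinitesimal-to-macroscopic passage; everything else — the choice of apartment, the fact that $\Height$ restricts to a distance-from-a-convex-set function, and the well-definedness of $\Gradient\Height$ — is bookkeeping already justified by Lemma~\ref{lem:perturbed_codistance_well-defined} and the proposition defining the perturbed ray $\TwinRay{\DummyArg}{\DummyArg}[\Zonotope]$.
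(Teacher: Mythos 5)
Your proposal is correct and follows essentially the same approach as the paper: localize to a twin apartment containing $\TheNegPoint$ and an initial segment of $\Path$, note that $\Height|_{\PosApartment}$ is distance to the convex set $\ThePosPoint+\Zonotope$ with $\Gradient_\Point\Height$ the outward unit normal, and then read off the claim. The paper leaves the final Euclidean computation to the reader; you have simply spelled it out via the differentiability of the distance-to-a-convex-set function and the fact that a convex one-variable function with negative right derivative at the left endpoint is strictly decreasing on an initial interval.
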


\begin{proof}
Let $\TwinApartment = \PNApartments$ be a twin apartment that contains $\TheNegPoint$ and an initial segment of the image of $\Path$. The statement follows from the fact that $\Height$ restricted to $\PosApartment$ measures distance from a convex set and $\Gradient\Height$ is the direction away from that set.
\end{proof}

We now come to a phenomenon that arises from the perturbation: the existence of higher-dimensional cells of constant height. A cell $\Cell$ is called \emph{flat} if $\Height|_{\Cell}$ is constant.

\begin{obs}
If $\Cell$ is flat, then the (asymptotic) gradient of $\Height$ is the same for all points $\Point$ of $\Cell$. It is perpendicular to $\Cell$.
\end{obs}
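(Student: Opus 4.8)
The plan is to localise to a single twin apartment and reduce the statement to a fact about nearest--point projection onto a convex polytope in a Euclidean space. First I would pick a twin apartment $\PNApartments$ containing both $\Cell$ and $\TheNegPoint$, identify $\PosApartment$ isometrically with $\E$, and let $\ThePosPoint$ be the point opposite $\TheNegPoint$ in $\PNApartments$. By the observation immediately preceding this one, the restriction of $\Height$ to $\PosApartment$ equals $\EuclDistance(\DummyArg,K)$, where $K \defeq \ThePosPoint + \Zonotope$ is a convex polytope. Since $\Gradient_\Point \Height$ (and $\Infty\Gradient_\Point\Height$) are only defined when $\Height(\Point) > 0$, we may assume $\Cell \intersect K = \emptyset$; put $r \defeq \Height|_\Cell > 0$. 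For $\Point \in \Cell$ the direction $\Gradient_\Point \Height$ is the one represented by the vector $u_\Point \defeq \Point - \EuclProjection[K]\Point$ (of norm $r$), and $\Infty\Gradient_\Point\Height$ is the endpoint at infinity of the ray $\{\Point + s u_\Point \mid s \ge 0\}$. So it suffices to show that $u_\Point$ is independent of $\Point \in \Cell$ and orthogonal to $\aff \Cell$.

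The key step is the constancy of $u_\Point$. Fix $\Point,\AltPoint \in \Cell$; then $[\Point,\AltPoint]\subseteq\Cell$ and $\EuclDistance(\DummyArg,K)\equiv r$ on it. Comparing $r = \EuclDistance(\Point + t(\AltPoint-\Point),K)$ with $\norm{u_\Point + t(\AltPoint-\Point)}$, squaring, and letting $t \to 0^+$ gives $\scp{u_\Point}{\AltPoint-\Point}\ge 0$; by symmetry $\scp{u_\AltPoint}{\AltPoint-\Point}\le 0$, hence $\scp{u_\AltPoint - u_\Point}{\AltPoint - \Point}\le 0$. On the other hand, the $\kappa = 0$ case of Lemma~\ref{lem:spherical_projection} gives $\scp{u_\Point}{z - \EuclProjection[K]\Point}\le 0$ and $\scp{u_\AltPoint}{z - \EuclProjection[K]\AltPoint}\le 0$ for all $z\in K$; taking $z = \EuclProjection[K]\AltPoint$ and $z = \EuclProjection[K]\Point$ respectively and setting $w \defeq \EuclProjection[K]\AltPoint - \EuclProjection[K]\Point$ yields $\scp{u_\AltPoint - u_\Point}{w}\ge 0$. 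Since $\AltPoint - \Point = w + (u_\AltPoint - u_\Point)$, combining the two inequalities gives
\[
0 \;\ge\; \scp{u_\AltPoint - u_\Point}{\AltPoint - \Point} \;=\; \scp{u_\AltPoint - u_\Point}{w} + \norm{u_\AltPoint - u_\Point}^2 \;\ge\; \norm{u_\AltPoint - u_\Point}^2 ,
\]
so $u_\AltPoint = u_\Point$. Write $u$ for this common vector.

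Perpendicularity is now immediate: for $\Point,\AltPoint\in\Cell$ the inequalities $\scp{u}{\AltPoint-\Point}\ge 0$ and $\scp{u}{\AltPoint-\Point}\le 0$ both hold, so $\scp{u}{\AltPoint-\Point}=0$; letting $\Point,\AltPoint$ range over $\Cell$ shows $u$ is orthogonal to the linear span of $\Cell - \Cell$, i.e.\ $\Gradient_\Point\Height$ is perpendicular to $\Cell$. For the asymptotic gradient, the rays $\{\Point + su \mid s\ge 0\}$, $\Point\in\Cell$, are parallel translates of one another inside the flat apartment $\PosApartment$, hence pairwise asymptotic, so they define one and the same point of $\Infty\OneSpace$. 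I expect the only genuine computation is the one establishing constancy of $u_\Point$; the reduction to a single apartment and the identification $\Height|_{\PosApartment} = \EuclDistance(\DummyArg,K)$ are already available, and the rest is bookkeeping.
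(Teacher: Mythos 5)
Your proof is correct and follows the same approach as the paper: reduce to a single twin apartment, identify $\Height|_{\PosApartment}$ with distance to the convex polytope $K = \ThePosPoint + \Zonotope$, and show the nearest--point projection of $\Cell$ onto $K$ is a parallel translate so that the rays away from it are parallel. The paper simply asserts the parallel--translate step as visually obvious; your variational computation (combining the first--order optimality of $d(\cdot,K)$ along $[\Point,\AltPoint]$ with the obtuse--angle property of the projection) is a correct and welcome rigorous justification of exactly that point.
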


\begin{proof}
Let $\TwinApartment = \PNApartments$ be a twin apartment that contains $\TheNegPoint$ and $\Cell$. Let $\ThePosPoint$ be the point opposite $\TheNegPoint$ in $\TwinApartment$. If $\Height$ is constant on $\Cell$ then the projection of $\Cell$ to $\ThePosPoint + \Zonotope$ is a parallel translate and the flow lines away from it are parallel to each other and perpendicular to $\Cell$.
\end{proof}

The observation allows us to define the (\emph{asymptotic}) \emph{gradient} ($\Infty\Gradient_\Cell \Height$\index[xsyms]{gradientinftycell@$\Infty\Gradient_\Cell\Height$}) $\Gradient_\Cell \Height$\index[xsyms]{gradientnoninftycell@$\Gradient_\Cell\Height$} at a flat cell $\Cell$ to be the (asymptotic) gradient of either of its interior points. Note that since $\Gradient_\Cell \Height$ is perpendicular to $\Cell$ it defines a direction in $\Link \Cell$. We take this direction to be the \emph{north pole} of $\Link\Cell$ and define the \emph{horizontal link} $\Link\Hor \Cell$\index[xsyms]{lkhor@$\Link\Hor\Cell$}, the \emph{vertical link} $\Link\Ver \Cell$\index[xsyms]{lkver@$\Link\Ver\Cell$} and the \emph{open hemisphere link} $\Link\OpenHemi\Cell$ according to Section~\ref{sec:schulz}. The decomposition \eqref{eq:building_decomposes_as_hor_join_ver} then reads:
\begin{equation}
\label{eq:link_decomposes_as_hor_join_ver}
\Link \Cell = \Link\Hor\Cell * \Link\Ver\Cell \text{ .}
\end{equation}

\begin{figure}[!ht]
\begin{center}
\includegraphics{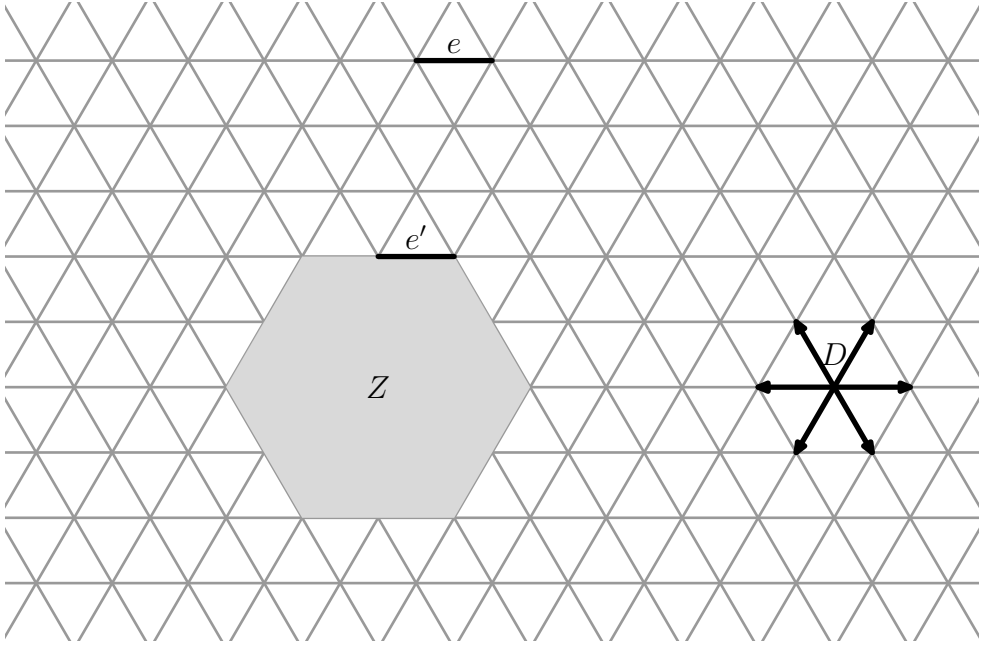}
\end{center}
\caption{The zonotope $\Zonotope$ of an almost rich set $\Directions$. Since $\Directions$ is almost rich, it is sufficiently rich for the edge $e$. Hence $\Zonotope$ contains a parallel translate of $e$ through each of its points. For example, $e'$ is a parallel translate through every point of the projection of $e$ onto $\Zonotope$.}
\label{fig:almost_rich}
\end{figure}

Finally we turn our attention to what we have gained by introducing the zonotope. Let $\PosApartment$ be an apartment of $\PosBuilding$ and identify $\E$ with $\PosApartment$ as before. We say that $\Directions$ is \emph{almost rich} if it contains the vector $\Vertex - \AltVertex$ for any two adjacent vertices $\Vertex$ and $\AltVertex$ of $\PosApartment$ (see Figure~\ref{fig:almost_rich}). Note that this condition is independent of the apartment as well as of the identification.

\begin{prop}
\label{prop:almost_rich_min_vertex}
Assume that $\Directions$ is almost rich and let $\Cell$ be a cell. Among the $\Height$-minima of $\Cell$ there is a vertex and the set of $\Height$-maxima of $\Cell$ is a face.

The statement remains true if $\Cell$ is replaced by the convex hull of some of its vertices.
\end{prop}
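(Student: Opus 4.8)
The plan is to reduce the claim to Proposition~\ref{prop:sufficiently_rich_min_vertex} by passing to an apartment in which $\Height$ becomes the distance function from a translate of the zonotope $\Zonotope$.

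First I would fix a twin apartment $\TwinApartment = \PNApartments$ that contains $\Cell$ and $\TheNegPoint$; such an apartment exists by \eqref{item:twin_building_common_apartment}, applied to an interior point of $\Cell$ and the point $\TheNegPoint$, since $\PosApartment$ is a subcomplex of $\PosBuilding$. Let $\ThePosPoint \in \PosApartment$ be the point opposite $\TheNegPoint$ in $\TwinApartment$, and identify $\E$ isometrically with $\PosApartment$ in a way that respects the asymptotic structure. Because $\Zonotope$ is $\Weyl$-invariant this identification puts a well-defined copy of $\Zonotope$ into $\PosApartment$, and by the definition of $\Height$, equation \eqref{eq:apartment_perturbed_codistance}, and the fact that $\Zonotope$ is centrally symmetric, we obtain
\[
\Height(\Point) = \EuclDistance(\Point + \Zonotope, \ThePosPoint) = \EuclDistance(\Point, \ThePosPoint + \Zonotope) \qquad \text{for all } \Point \in \PosApartment .
\]
Thus on $\PosApartment$ the function $\Height$ is the distance from the polytope $\ThePosPoint + \Zonotope = \ThePosPoint + \Zonotope[\Directions]$.

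Next I would check that $\Directions$ is sufficiently rich for $\Cell$ in the sense of Section~\ref{sec:zonotopes}. Since $\PNBuildings$ is irreducible, $\PosBuilding$ is a simplicial complex, so $\Cell$ is a simplex; hence any two distinct vertices $\Vertex, \AltVertex$ of $\Cell$ are adjacent vertices of $\PosApartment$, and as $\Directions$ is almost rich the vector $\AltVertex - \Vertex$ lies in $\Directions$. Now translate everything by $-\ThePosPoint$: the polytope $\Cell - \ThePosPoint$ has the same differences of vertices, so $\Directions$ is sufficiently rich for it, and $\Height(\Point) = \EuclDistance(\Point - \ThePosPoint, \Zonotope[\Directions])$ for $\Point \in \Cell$. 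Applying Proposition~\ref{prop:sufficiently_rich_min_vertex} to $\Cell - \ThePosPoint$ and $\Zonotope[\Directions]$ shows that among the points of $\Cell - \ThePosPoint$ closest to $\Zonotope[\Directions]$ there is a vertex, and that the points of $\Cell - \ThePosPoint$ farthest from $\Zonotope[\Directions]$ form a face of $\Cell - \ThePosPoint$. Translating back by $\ThePosPoint$, this says exactly that among the $\Height$-minima of $\Cell$ there is a vertex and that the set of $\Height$-maxima of $\Cell$ is a face of $\Cell$.

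For the last sentence I would run the same argument with $\conv V$ in place of $\Cell$, where $V$ is a set of vertices of $\Cell$: as $\Cell$ is a simplex, $\conv V$ is a face of $\Cell$, hence a polytope contained in $\TwinApartment$, and its vertex set is contained in $\Vertices \Cell$, so $\Directions$ is sufficiently rich for $\conv V$ as well (as remarked after the definition of ``sufficiently rich''); Proposition~\ref{prop:sufficiently_rich_min_vertex} then gives the conclusion for $\conv V$. I do not expect a genuine obstacle; the only points requiring care are the existence of a twin apartment containing both $\Cell$ and $\TheNegPoint$ and the remark that irreducibility of $\PNBuildings$ forces $\Cell$ to be a simplex, which is precisely what upgrades ``almost rich'' to ``sufficiently rich for $\Cell$''.
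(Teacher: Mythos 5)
Your proof is correct and follows the paper's argument exactly: fix a twin apartment containing $\Cell$ and $\TheNegPoint$, observe that $\Height$ restricted to the positive half is distance from $\ThePosPoint + \Zonotope[\Directions]$, note that almost-richness gives sufficient richness for (the simplex) $\Cell$, and invoke Proposition~\ref{prop:sufficiently_rich_min_vertex}. You add only a bit more detail than the paper — spelling out why $\Cell$ is a simplex (irreducibility of $\PNBuildings$) and handling the convex-hull clause explicitly — but the route is the same.
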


\begin{proof}
Let $\TwinApartment = \PNApartments$ be a twin apartment that contains $\Cell$ and $\TheNegPoint$. Let $\ThePosPoint$ be the point opposite $\TheNegPoint$ in $\TwinApartment$. The restriction of $\Height$ to $\PosApartment$ is distance from $\ThePosPoint + \Zonotope[\Directions]$. Since $\Directions$ is almost rich, it is sufficiently rich for $\Cell$. The statement now follows from Proposition~\ref{prop:sufficiently_rich_min_vertex}.
\end{proof}

This implies the angle criterion:

\begin{cor}
\label{cor:angle_criterion}
Assume that $\Directions$ is almost rich. Let $\Vertex$ and $\AltVertex$ be adjacent vertices. The restriction of $\Height$ to $[\Vertex,\AltVertex]$ is monotone. In particular, $\Height(v) > \Height(w)$ if and only if $\angle_\Vertex(\Gradient_\Vertex \Height,\AltVertex) > \pi/2$.
\end{cor}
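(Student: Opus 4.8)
The plan is to deduce both assertions from Proposition~\ref{prop:almost_rich_min_vertex}, combined with the convexity of $\Height$ on apartments (Observation~\ref{obs:height_convex}) and the infinitesimal angle criterion (Observation~\ref{obs:perturbed_weak_gradient_criterion}).

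First I would fix a twin apartment $\TwinApartment = \PNApartments$ containing $[\Vertex,\AltVertex]$ and $\TheNegPoint$, and identify $\PosApartment$ with $\E$ as usual. By Observation~\ref{obs:height_convex} the restriction $g \defeq \Height|_{[\Vertex,\AltVertex]}$ is a convex function on the segment. The crucial input is that $[\Vertex,\AltVertex]$ is the convex hull of its two vertices, so Proposition~\ref{prop:almost_rich_min_vertex} applies (this is where the almost-richness of $\Directions$ is used) and tells us that $g$ attains its minimum at $\Vertex$ or at $\AltVertex$; this is exactly what rules out the minimum sitting in the interior of the edge. A convex function on a segment whose minimum is attained at one of the endpoints is monotone, which gives the first assertion. (If $g$ is constant the edge is flat and there is nothing more to prove.)

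For the angle criterion I would proceed as follows. Assume $\Height(\Vertex) > 0$, so that $\Gradient_\Vertex\Height$ is defined, and apply Observation~\ref{obs:perturbed_weak_gradient_criterion} to the geodesic $\Path = [\Vertex,\AltVertex]$ issuing at $\Vertex$: the function $\Height \circ \Path = g$ is strictly decreasing on an initial interval if and only if $\angle_\Vertex(\Gradient_\Vertex\Height,\AltVertex) > \pi/2$. It then remains to identify ``strictly decreasing on an initial interval'' with the macroscopic condition $\Height(\AltVertex) < \Height(\Vertex)$. Here convexity of $g$ together with the fact that its minimum lies at a vertex does the work: if $\Height(\AltVertex) < \Height(\Vertex)$ then the minimum of $g$ is not at $\Vertex$, hence is at $\AltVertex$, so $g$ is non-increasing and non-constant, which forces $g'_+(\Vertex) < 0$ and thus a strict decrease near $\Vertex$; conversely, if $\Height(\AltVertex) \ge \Height(\Vertex)$ then the minimum of $g$ is at $\Vertex$, and a convex function on a segment with minimum at its left endpoint is non-decreasing, hence not strictly decreasing on any initial interval. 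Combining the two equivalences yields the statement.

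I do not expect a serious obstacle: essentially all the content is already packaged into Proposition~\ref{prop:almost_rich_min_vertex}. The only place that needs a little care is the passage from the infinitesimal statement of Observation~\ref{obs:perturbed_weak_gradient_criterion} to the macroscopic one — checking that convexity of $g$ plus ``minimum at a vertex'' genuinely excludes a profile that is increasing or constant near $\Vertex$ while $\Height(\AltVertex) < \Height(\Vertex)$, and vice versa — together with the bookkeeping around flat edges, where $g$ is constant and both conditions fail simultaneously.
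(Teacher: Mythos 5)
Your proof is correct and follows essentially the same route as the paper: convexity of $\Height$ on the edge (Observation~\ref{obs:height_convex}) plus Proposition~\ref{prop:almost_rich_min_vertex} give monotonicity, and then Observation~\ref{obs:perturbed_weak_gradient_criterion} converts the macroscopic inequality into the angle condition. You have merely spelled out the passage from ``strictly decreasing on an initial interval'' to $\Height(\AltVertex) < \Height(\Vertex)$ in more detail than the paper's one-line remark that a monotone convex function is descending iff it is descending initially.
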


\begin{proof}
The restriction of $\Height$ to $[\Vertex,\AltVertex]$ is monotone because it is convex (Observation~\ref{obs:height_convex}) and attains its minimum in a vertex (Proposition~\ref{prop:almost_rich_min_vertex}). Let $\Path$ be the geodesic path from $\Vertex$ to $\AltVertex$. Since $\Height \circ \Path$ is monotone and convex, it is descending if and only if it is descending on an initial interval. The second statement therefore follows from Observation~\ref{obs:perturbed_weak_gradient_criterion}.
\end{proof}

A more convenient version is:

\begin{cor}
\label{cor:higher_dimensional_angle_criterion}
Assume that $\Directions$ is almost rich. Let $\Cell$ be a flat cell and $\BigCell \ge \Cell$. Then $\Cell$ is the set of $\Height$-maxima of $\BigCell$ if and only if $\BigCell \direction \Cell \subseteq \Link\OpenHemi\Cell$.
\end{cor}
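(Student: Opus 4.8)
The plan is to prove the two implications separately, in each case reducing matters to the infinitesimal angle criterion of Observation~\ref{obs:perturbed_weak_gradient_criterion} together with the convexity of $\Height$ provided by Observation~\ref{obs:height_convex}. Fix once and for all a twin apartment $\PNApartments$ that contains $\BigCell$ and $\TheNegPoint$; on $\PosApartment$ the function $\Height$ is distance from the convex set $\ThePosPoint+\Zonotope$ (with $\ThePosPoint$ the point opposite $\TheNegPoint$), and its gradient $\Gradient_\Cell\Height=g$ — which by definition is the north pole of $\Link\Cell$ — is the unit vector of $\PosApartment$ pointing away from that set. Since $\Cell$ is flat, $g$ is perpendicular to $\aff\Cell$ and equals the gradient of $\Height$ at every point of $\Cell$; write $h$ for the (necessarily positive) value of $\Height$ on $\Cell$. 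By the very definition of the open hemisphere complex, $\BigCell\direction\Cell\subseteq\Link\OpenHemi\Cell$ holds if and only if every point $b$ of $\BigCell\direction\Cell$, regarded as a unit vector of $\PosApartment$ perpendicular to $\aff\Cell$, satisfies $\scp{g}{b}<0$.

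For the forward direction, assume $\Cell$ is the set of $\Height$-maxima of $\BigCell$ and fix $p\in\relint\Cell$ together with a point $b$ of $\BigCell\direction\Cell$. Since $b$ is perpendicular to $\Cell$ and points into $\BigCell$, the segment $t\mapsto p+tb$ lies in $\BigCell\setminus\Cell$ for small $t>0$, so $\Height(p+tb)<h=\Height(p)$ there; convexity of $\Height|_{\PosApartment}$ then forces the right derivative of $\Height$ at $p$ in direction $b$ to be negative. As that derivative equals $\scp{g}{b}$, we get $b$ at distance $>\pi/2$ from $g$ in $\Link\Cell$; since $b$ was arbitrary, $\BigCell\direction\Cell\subseteq\Link\OpenHemi\Cell$.

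For the converse, assume $\BigCell\direction\Cell\subseteq\Link\OpenHemi\Cell$; the aim is to show $\Height(w)<h$ for every vertex $w$ of $\BigCell$ outside $\Cell$. Fix such a $w$ and any vertex $v$ of $\Cell$ and decompose $w-v=u+n$ with $u$ parallel to $\aff\Cell$ and $n$ perpendicular to it; because $\Cell=\BigCell\cap\aff\Cell$ and $w\notin\Cell$ we have $n\ne0$. Using the join decomposition $\Link p=(\Cell\direction p)*\Link\Cell$ of Section~\ref{sec:metric_spaces} (for $p\in\relint\Cell$) and $[p,w]\subseteq\BigCell$, the unit vector $n/\norm{n}$ is precisely the $\Link\Cell$-component of the direction $[p,w]_p\in\BigCell\direction p=(\Cell\direction p)*(\BigCell\direction\Cell)$, hence a point of $\BigCell\direction\Cell$; by hypothesis $\scp{g}{n}<0$. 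Since $g\perp u$, the derivative of $\Height$ at $v$ in the direction of $w$ equals $\scp{g}{w-v}/\norm{w-v}=\scp{g}{n}/\norm{w-v}<0$. Now $\Height$ is convex on $[v,w]$ and, by Proposition~\ref{prop:almost_rich_min_vertex} applied to $\conv\{v,w\}$, attains its minimum over $[v,w]$ at one of the endpoints; the negative derivative excludes $v$, so the minimum is at $w$ and $\Height(w)<\Height(v)=h$. Finally, $\Height$ is convex on the polytope $\BigCell$, hence attains its maximum at a vertex, so $\max_\BigCell\Height=h$; the Jensen estimate then gives $\{x\in\BigCell:\Height(x)=h\}=\conv\{\text{vertices of }\BigCell\text{ of height }h\}\subseteq\Cell$, and the reverse inclusion is clear, so $\Cell$ is the set of $\Height$-maxima of $\BigCell$.

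The main obstacle is the monotonicity step in the converse: Observation~\ref{obs:perturbed_weak_gradient_criterion} only tells us that $\Height$ decreases \emph{initially} along $[v,w]$, and a convex function may decrease and then climb back to its starting value, so one genuinely needs the sharper extremum statement of Proposition~\ref{prop:almost_rich_min_vertex} — and hence almost-richness of $\Directions$ — to upgrade $\Height(w)\le h$ to $\Height(w)<h$. A smaller technical point is the identification of $n/\norm{n}$ with a point of $\BigCell\direction\Cell$ through the decomposition $\BigCell\direction p=(\Cell\direction p)*(\BigCell\direction\Cell)$, which is exactly what makes the hypothesis on $\BigCell\direction\Cell$ applicable to the segment $[v,w]$.
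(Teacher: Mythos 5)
Your proof is correct and follows essentially the same route as the paper's: the forward implication is the infinitesimal angle criterion (Observation~\ref{obs:perturbed_weak_gradient_criterion}), and the converse reduces, one vertex pair at a time, to the macroscopic angle criterion, with almost-richness entering through Proposition~\ref{prop:almost_rich_min_vertex}. The only stylistic difference is that where the paper simply invokes Corollary~\ref{cor:angle_criterion} to pass from the obtuse-angle condition to the strict height inequality on each segment $[\Vertex,\AltVertex]$, you re-derive that corollary in place (negative directional derivative at $\Vertex$, then convexity plus the endpoint-minimum statement to force $\Height(\AltVertex)<\Height(\Vertex)$), and you also spell out the concluding Jensen step that the paper leaves implicit. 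Both choices are fine; you could shorten the argument by citing Corollary~\ref{cor:angle_criterion} directly.
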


\begin{proof}
The implication $\Rightarrow$ is clear from Observation~\ref{obs:perturbed_weak_gradient_criterion}.
Conversely let $\BigCell \ge \Cell$ be such that $\BigCell \direction \Cell \subseteq \Link\OpenHemi\Cell$. Then $\angle_{\Point}(\Gradient_\Point\Height,\AltPoint) > \pi/2$ for every point $\Point$ of $\Cell$ and every point $\AltPoint$ of $\BigCell$ not in $\Cell$. To see this consider a twin apartment $\PNApartments$ that contains $\TheNegPoint$ and use that $\PosApartment$ is Euclidean. In particular, if $\Vertex$ is a vertex of $\Cell$ and $\AltVertex$ is a vertex of $\BigCell$ not in $\Cell$, then $\angle_{\Vertex}(\Gradient_\Vertex\Height,\AltVertex) > \pi/2$. Thus Corollary~\ref{cor:angle_criterion} implies $\Height(\Vertex) > \Height(\AltVertex)$.
\end{proof}

Flat cells obviously prevent $\Height$ from being a Morse function. To obtain a Morse function, we need a secondary height function that decides for any two vertices of a flat cell $\Cell$ which one should come first. In fact we will actually define the Morse function on the barycentric subdivision of $\OneSpace$ so the secondary height function will have to decide which of $\Cell$ and its faces should come first.

One has to keep in mind however that the descending link of $\Cell$ with respect to the Morse function should be a hemisphere complex with north pole $\Gradient_\Cell \Height$. What is more, according to Theorem~\ref{thm:schulz_main} the full horizontal part of $\Link \Vertex$ has to be descending to obtain maximal connectedness.

In the next section we will provide the means to define a secondary height function that takes care of this in the case where the primary height function is a Busemann function. The connection to our height function $\Height$ is that, informally speaking, around a flat cell $\Cell$ it looks like a Busemann function centered at $\Infty\Gradient_\Cell \Height$.

\footerlevel{3}
\headerlevel{3}

\section{Secondary Height: the Game of Moves}
\label{sec:moves}

Kai-Uwe Bux and Kevin Wortman in Section~5 of \cite{buxwor08} have devised a machinery that for a Busemann function on a Euclidean building produces a secondary Morse function such that the descending links are either contractible or closed hemisphere complexes. It would be possible at this point to refer to their article. However since in Chapter~\ref{chap:two_places} we will need a generalization of their method to arbitrary Euclidean buildings, we directly prove this generalization here.

Much of the argument in \cite{buxwor08} is carried out in the Euclidean building even though most of the statements are actually statements about links, which are spherical buildings. Here we take a local approach, arguing as much as possible inside the links.

This section is fairly independent from our considerations so far and can be used separately, as has been done in \cite{buxgrawit10b}.

Throughout the section let $\Space = \prod_i\EBuilding_i$ be a finite product of irreducible Euclidean buildings. Since $\Space$ is in general not simplicial, it cannot be a flag complex, however it has the following property reminiscent of flag complexes:

\begin{obs}
\label{obs:polyflag}
If $\Cell^1,\ldots,\Cell^k$ are cells in a product of flag-complexes and for $1 \le l < m \le k$ the cell $\Cell^l \vee \Cell^m$ exists, then $\Cell^1 \vee \cdots \vee \Cell^k$ exists.
\end{obs}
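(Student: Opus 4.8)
The plan is to reduce the statement coordinatewise to the individual flag complexes and then invoke the "no triangles condition".

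First I would record the elementary behaviour of the operation $\vee$ in a product. Write the given product as $\Space = \EBuilding_1 \times \cdots \times \EBuilding_n$ with each $\EBuilding_j$ a flag complex. Every cell $\Cell$ of $\Space$ has the form $\Cell = \Cell_1 \times \cdots \times \Cell_n$ with $\Cell_j$ a cell of $\EBuilding_j$, the cofaces of $\Cell$ are exactly the products $\BigCell_1 \times \cdots \times \BigCell_n$ of cofaces $\BigCell_j \ge \Cell_j$, and two cells $\Cell = \Cell_1 \times \cdots \times \Cell_n$ and $\AltCell = \AltCell_1 \times \cdots \times \AltCell_n$ lie in a common coface of $\Space$ if and only if $\Cell_j$ and $\AltCell_j$ lie in a common coface of $\EBuilding_j$ for every $j$; in that case the minimal common coface is $\Cell \vee \AltCell = (\Cell_1 \vee \AltCell_1) \times \cdots \times (\Cell_n \vee \AltCell_n)$. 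Iterating, $\Cell^1 \vee \cdots \vee \Cell^k$ exists in $\Space$ if and only if $\Cell^1_j \vee \cdots \vee \Cell^k_j$ exists in $\EBuilding_j$ for every $j$, and the hypothesis that $\Cell^l \vee \Cell^m$ exists for $l<m$ is equivalent to $\Cell^l_j \vee \Cell^m_j$ existing for all $l<m$ and all $j$. Hence it suffices to treat a single flag complex, i.e.\ I may assume $\Space$ is itself a flag complex $\Delta$ and that $\Cell^1,\ldots,\Cell^k$ are simplices of $\Delta$ any two of which span a simplex.

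Next I would apply the flag property. Put $V \defeq \Vertices \Cell^1 \cup \cdots \cup \Vertices \Cell^k$. Given two vertices $v,w \in V$, say $v \in \Vertices \Cell^l$ and $w \in \Vertices \Cell^m$: if $l=m$ they are vertices of the simplex $\Cell^l$, and if $l \ne m$ they are both vertices of the simplex $\Cell^l \vee \Cell^m$, which exists by hypothesis; in either case $v=w$ or $v$ and $w$ are joined by an edge of $\Delta$. Since $\Delta$ satisfies the no triangles condition, there is a simplex $\AltCell$ of $\Delta$ whose vertex set is $V$. Because $\Delta$ is simplicial, each $\Cell^l$ — being the simplex on the vertex subset $\Vertices \Cell^l \subseteq V$ — is a face of $\AltCell$, and any simplex containing all of $\Cell^1,\ldots,\Cell^k$ has vertex set containing $V$ and therefore contains $\AltCell$. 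Thus $\AltCell = \Cell^1 \vee \cdots \vee \Cell^k$, which proves the claim for $\Delta$ and, by the first paragraph, for $\Space$.

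The argument has essentially no obstacle; the only point requiring a little care is the product bookkeeping in the first paragraph — in particular the fact that the minimal common coface in a product is the product of the minimal common cofaces — which is immediate once one notes that cofaces of $\Cell_1 \times \cdots \times \Cell_n$ are precisely the products of cofaces of the $\Cell_j$. One could alternatively organise the second paragraph as an induction on $k$ (using the case $k=2$ as the hypothesis), but the direct argument via the vertex set $V$ is shorter.
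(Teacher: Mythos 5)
Your proof is correct and follows the same route as the paper's: reduce coordinatewise to the factors by observing that joins in a product are formed componentwise, then invoke the flag property in each factor. You simply spell out the last step (passing to the union of vertex sets and applying the no-triangles condition) that the paper leaves implicit in the phrase ``hold because the factors are flag-complexes.''
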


\begin{proof}
Write $\Cell^l = \prod_i \Cell^l_i$ and $\Cell^m = \prod_i \Cell^m_i$. Then $\Cell^l \vee \Cell^m$ exists if and only if $\Cell^l_i \vee \Cell^m_i$ exist for every $i$. The statement is thus translated to a family of statements, one for each factor, that hold because the factors are flag-complexes.
\end{proof}

Let $\Busemann$ be a Busemann function on $\Space$ centered at $\PointAtInfty \in \Infty\OneSpace$. A cell on which $\Busemann$ is constant is called \emph{flat}. If $\Cell$ is flat, then the direction from any point of $\Cell$ toward $\PointAtInfty$ is perpendicular to $\Cell$ (see Observation~\ref{obs:flat_equivalent_perpendicular}) so that it defines a point $n$ in $\Link \Cell$. This point shall be our north pole and the notions from Section~\ref{sec:schulz} carry over accordingly. In particular, the \emph{horizontal link} $\Link\Hor\Cell$\index[xsyms]{lkhor@$\Link\Hor\Cell$} is the join of all join factors of $\Link \Cell$ that are perpendicular to $n$. Note that the north pole does not actually depend on $\Busemann$ but only on $\PointAtInfty$.

We write\index[xsyms]{tauhorizontalsigma@$\BigCell\horizontal\Cell$}
\[
\BigCell \horizontal \Cell \quad \text{if} \quad \BigCell \direction \Cell \subseteq \Link\Hor \Cell
\]
and say for short that ``$\BigCell$ lies in the horizontal link of $\Cell$''. Note that this, in particular, requires $\BigCell$ to be flat but is a stronger condition. If we want to emphasize the point at infinity $\PointAtInfty$ with respect to which $\BigCell$ lies in the horizontal link of $\Cell$ then we write $\BigCell \horizontal_\PointAtInfty \Cell$.

The next observation deals with the interaction of $\PointAtInfty$ and its projections onto the factors of $\Infty\Space$.

\begin{obs}
\label{obs:horizontal_iff_horizontal_on_factors}
Let $\BigCell = \prod_i \BigCell_i$ and $\Cell = \prod_i \Cell_i$ be non-empty flat cells. Let $I$ be the set of indices $i$ such that $d(\PointAtInfty,\Infty\EBuilding_i) \ne \pi/2$.
Then 
\[
\BigCell \horizontal_{\PointAtInfty} \Cell \quad \text{if and only if} \quad \BigCell_i \horizontal_{\PointAtInfty_i} \Cell_i \text{ for every } i \in I\text{ ,}
\]
where $\PointAtInfty_i \defeq \ClosestPointProjection[\Infty\EBuilding_i]\PointAtInfty$.
In other words
\[
\Link\Hor\Cell = (\Join_{i \in I} \Link\Hor \Cell_i) * (\Join_{i \nin I} \Link \Cell_i)
\]
where the north pole of $\Link \Cell_i$ is the direction toward $\PointAtInfty_i$.
\end{obs}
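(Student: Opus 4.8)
The description of $\Link\Hor\Cell$ in the second half of the statement is the substantive assertion; the ``if and only if'' is then formal. So the plan is to first prove
\[
\Link\Hor\Cell = \Big(\Join_{i \in I} \Link\Hor\Cell_i\Big) * \Big(\Join_{i \nin I} \Link \Cell_i\Big)
\]
and to read off the equivalence at the end. Fix an interior point $\Point = (\Point_i)_i$ of $\Cell$. Since $\Cell$ is flat, $n \defeq \PointAtInfty_\Point$ is perpendicular to $\Cell$ by Observation~\ref{obs:flat_equivalent_perpendicular}, hence is a point of $\Link \Cell$, and it is by definition the north pole defining $\Link\Hor\Cell$. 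By Observation~\ref{obs:euclidean_building_decomposition}~(ii) we have $\Link \Cell = \Join_i \Link_{\EBuilding_i}\Cell_i$ and $\Link\Point = \Join_i \Link_{\EBuilding_i}\Point$, the former a join of subbuildings of the spherical building $\Link\Cell$ (Fact~\ref{fact:links_are_spherical_buildings}). The idea is to apply Lemma~\ref{lem:horizontal_link_decomposition} to this join of $\Link\Cell$ with north pole $n$; everything then comes down to identifying the index set and the factor north poles occurring there.

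This identification is the heart of the matter, and it is exactly what Observation~\ref{obs:asymptotic_to_local_join_compatible} is made for. Regard the $n$-fold joins as $2$-fold joins $\Link\Cell = \Link_{\EBuilding_i}\Cell_i * \big(\Join_{j\ne i}\Link_{\EBuilding_j}\Cell_j\big) \subseteq \Link_{\EBuilding_i}\Point * \big(\Join_{j\ne i}\Link_{\EBuilding_j}\Point\big) = \Link\Point$, where the inclusion is the coordinate-preserving isometric embedding coming from the embeddings of the factors (formula \eqref{eq:spherical_join}). Write $n = \cos\theta\,a + \sin\theta\,b$ in the smaller join, with $a \in \Link_{\EBuilding_i}\Cell_i$ (well defined once $\theta < \pi/2$) and $\theta = d(n, \Link_{\EBuilding_i}\Cell_i)$; by coordinate-preservation this is also the join decomposition of $n$ in the larger join, so $\theta = d(n, \Link_{\EBuilding_i}\Point)$ as well. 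Now suppose $i \in I$, i.e.\ $d(\PointAtInfty, \Infty\EBuilding_i) < \pi/2$. Then Observation~\ref{obs:asymptotic_to_local_join_compatible} gives $\theta = d(n, \Link_{\EBuilding_i}\Point) < \pi/2$ and $\ClosestPointProjection[\Link_{\EBuilding_i}\Point] n = (\PointAtInfty_i)_\Point$; but with $\theta < \pi/2$ the nearest point in the first factor is just $a$, so $a = (\PointAtInfty_i)_\Point$. In particular $(\PointAtInfty_i)_\Point = a \in \Link_{\EBuilding_i}\Cell_i$, i.e.\ $(\PointAtInfty_i)_\Point \perp \Cell_i$; by Observation~\ref{obs:flat_equivalent_perpendicular} this says $\Cell_i$ is flat with respect to $\PointAtInfty_i$ and that the north pole of $\Link\Hor\Cell_i$ is precisely $a$. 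Moreover $a \in \Link_{\EBuilding_i}\Cell_i$ realizes $d(n, \Link_{\EBuilding_i}\Cell_i) = \theta < \pi/2$, so by uniqueness of nearest points at distance $< \pi/2$ (Lemma~\ref{lem:spherical_projection}) $\ClosestPointProjection[\Link_{\EBuilding_i}\Cell_i] n = a$, and $\Link_{\EBuilding_i}\Cell_i$ is not contained in the equator of $n$. If instead $i \nin I$, then $d(\PointAtInfty, \Infty\EBuilding_i) = \pi/2$, so by Observation~\ref{obs:asymptotic_to_local_join_compatible} $d(n, \Link_{\EBuilding_i}\Point) = \pi/2$; hence $n$ lies in the complementary join factor $\Join_{j \ne i}\Link_{\EBuilding_j}\Point$, and the join metric \eqref{eq:spherical_join} forces $d(n,c) = \pi/2$ for every $c \in \Link_{\EBuilding_i}\Point$, in particular for every $c \in \Link_{\EBuilding_i}\Cell_i$; so $\Link_{\EBuilding_i}\Cell_i$ lies in the equator of $n$ and contributes no horizontal factor. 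Thus the index set $\{\,i : \Link_{\EBuilding_i}\Cell_i \not\subseteq (\Link\Cell)\Equator\,\}$ from Lemma~\ref{lem:horizontal_link_decomposition} equals $I$, and the north pole it assigns to the factor $\Link_{\EBuilding_i}\Cell_i$ with $i\in I$ is the direction toward $\PointAtInfty_i$, i.e.\ the north pole used in $\Link\Hor\Cell_i$.

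Plugging these identifications into Lemma~\ref{lem:horizontal_link_decomposition} gives exactly the asserted join decomposition of $\Link\Hor\Cell$. For the equivalence: since $\Cell = \prod_i\Cell_i \le \BigCell = \prod_i\BigCell_i$ we have $\Cell_i \le \BigCell_i$ for all $i$, and under $\Link\Cell = \Join_i\Link_{\EBuilding_i}\Cell_i$ the cell $\BigCell\direction\Cell$ corresponds to $\Join_i(\BigCell_i\direction\Cell_i)$. A cell of $\Join_i\Link_{\EBuilding_i}\Cell_i$, being a join of one cell from each factor, is contained in a join of subcomplexes $\Join_i L_i$ if and only if its $i$-th component lies in $L_i$ for every $i$. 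Taking $L_i = \Link\Hor\Cell_i$ for $i \in I$ and $L_i = \Link_{\EBuilding_i}\Cell_i$ for $i \nin I$, the conditions for $i \nin I$ hold trivially, so $\BigCell\direction\Cell \subseteq \Link\Hor\Cell$ if and only if $\BigCell_i\direction\Cell_i \subseteq \Link\Hor\Cell_i$ for every $i \in I$; that is, $\BigCell \horizontal_{\PointAtInfty} \Cell$ if and only if $\BigCell_i \horizontal_{\PointAtInfty_i} \Cell_i$ for every $i \in I$. (Note that for $i \nin I$ the point $\PointAtInfty_i$ need not be defined, but it never enters the statement.)

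The main obstacle is the step carried out in the middle paragraph: Observation~\ref{obs:asymptotic_to_local_join_compatible} speaks of the closest-point projection of $n$ onto the full link $\Link_{\EBuilding_i}\Point$, whereas Lemma~\ref{lem:horizontal_link_decomposition} wants the closest-point projection onto the smaller, perpendicular sublink $\Link_{\EBuilding_i}\Cell_i$; showing that these agree — and simultaneously that each $\Cell_i$ with $i\in I$ is flat with respect to $\PointAtInfty_i$, so that the right-hand side even makes sense — is where the flatness of $\Cell$ is genuinely used, via the compatibility of the join coordinate of $n$ in $\Link\Cell$ with its join coordinate in $\Link\Point$. One also has to mind that all distances and projections are taken in \CAT{1}-complexes, so that uniqueness of nearest points, and hence the clean case split $i\in I$ versus $i\nin I$, hinges on the distances in question being $< \pi/2$.
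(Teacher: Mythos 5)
Your proof is correct and follows essentially the same route as the paper: it combines Observation~\ref{obs:asymptotic_to_local_join_compatible} (projecting $\PointAtInfty$ to a factor commutes with taking directions at $\Point$) with the local statement Lemma~\ref{lem:horizontal_link_decomposition}, which is exactly the paper's two-line argument, only carried out in full detail. The extra care you take in matching the index set and the factor north poles, and in handling $i \nin I$ where $\PointAtInfty_i$ is undefined, is precisely what the paper leaves implicit.
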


\begin{proof}
By Observation~\ref{obs:asymptotic_to_local_join_compatible} it makes no difference whether we first take the direction toward $\PointAtInfty$ and then project it to a join factor or we first project $\PointAtInfty$ to a join factor and then take the direction toward that point. The result therefore follows from its local analogue, Lemma~\ref{lem:horizontal_link_decomposition}.
\end{proof}

For each factor we have:

\begin{lem}
\label{lem:cohorizontal_faces_meet}
Let $\EBuilding_i$ be an irreducible Euclidean building and $\PointAtInfty \in \Infty\EBuilding_i$. If cells $\Cell_1$, $\Cell_2$, and $\BigCell$ of $\EBuilding_i$ satisfy $\BigCell \horizontal_\PointAtInfty \Cell_1$ and $\BigCell \horizontal_\PointAtInfty \Cell_2$ then $\Cell_1 \intersect \Cell_2 \ne \emptyset$.
\end{lem}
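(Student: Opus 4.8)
The plan is to argue by contradiction, reducing everything to a linear‑algebra statement inside a single apartment. Since $\EBuilding_i$ is irreducible it is a simplicial complex, so $\BigCell$ is a simplex and $\Cell_1,\Cell_2$ are faces of it; thus $\Cell_1 \intersect \Cell_2 \ne \emptyset$ is the same as saying that $\Cell_1$ and $\Cell_2$ have a common vertex. Suppose they do not. Since $\Link\Hor\Cell_1$ and $\Link\Hor\Cell_2$ are subcomplexes, the relation $\horizontal_\PointAtInfty$ passes to faces: if $\Cell_1 \le \BigCell' \le \BigCell$ then $\BigCell' \horizontal_\PointAtInfty \Cell_1$, and likewise for $\Cell_2$. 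So I would first replace $\BigCell$ by $\Cell_1 \vee \Cell_2$ and thereby assume $\BigCell = \Cell_1 \vee \Cell_2$, i.e.\ that $\Cell_1$ and $\Cell_2$ are complementary faces of $\BigCell$.

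Next I would pass to an apartment. By Lemma~\ref{lem:ray_in_apartment} a ray issuing at an interior point of $\BigCell$ toward $\PointAtInfty$ lies in some apartment $\Apartment$; since $\BigCell$ is flat this ray leaves $\BigCell$ orthogonally, so $\Apartment$ contains a coface of $\BigCell$, hence $\BigCell$ itself. Identify $\Apartment$ with a Euclidean space with direction space $V$. On $\Apartment$ the Busemann function centered at $\PointAtInfty$ is affine, so a face $\Cell \le \BigCell$ is flat exactly when its direction space $E_\Cell$ (the linear subspace parallel to $\aff\Cell$) is orthogonal to the unit vector $u$ pointing toward $\PointAtInfty$; then the north pole of $\Link\Cell$ is $u$ itself, no projection being needed. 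Let $\Weyl_\Cell$ be the stabilizer of $\Cell$ in the Coxeter group of $\Apartment$, acting on $V_\Cell \defeq E_\Cell^{\perp}$. Since apartments are essential, the definition of $\Link\Hor\Cell$ (the join of the $\Weyl_\Cell$‑irreducible summands of $V_\Cell$ orthogonal to $u$) translates $\BigCell \horizontal_\PointAtInfty \Cell$ into the condition that the direction space of $\BigCell$ relative to $\Cell$, namely $U_\Cell \defeq E_\BigCell \intersect E_\Cell^{\perp}$, is orthogonal to $\langle \Weyl_\Cell\, u\rangle$ — the smallest $\Weyl_\Cell$‑invariant subspace of $V_\Cell$ containing $u$. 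Applying this to $\Cell_1$ and to $\Cell_2$ gives $U_{\Cell_1} \perp \langle \Weyl_{\Cell_1}\, u\rangle$ and $U_{\Cell_2} \perp \langle \Weyl_{\Cell_2}\, u\rangle$.

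Now a dimension count. As $\Cell_1,\Cell_2$ are complementary in $\BigCell$, $\dim U_{\Cell_1} = \dim\BigCell - \dim\Cell_1 = \dim\Cell_2 + 1$ and $\dim U_{\Cell_2} = \dim\Cell_1 + 1$, so $\dim U_{\Cell_1} + \dim U_{\Cell_2} = \dim\BigCell + 1 > \dim E_\BigCell$, while $U_{\Cell_1},U_{\Cell_2} \subseteq E_\BigCell$. Hence $U_{\Cell_1} \intersect U_{\Cell_2} \ne \{0\}$; pick $x \ne 0$ in it. Then $x \in E_\BigCell$, $x \perp E_{\Cell_1}$, $x \perp E_{\Cell_2}$, and $x \perp \langle \Weyl_{\Cell_1}\, u\rangle + \langle \Weyl_{\Cell_2}\, u\rangle$.

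The step I expect to be the real obstacle is showing that these conditions force $x = 0$, i.e.\ that
\[
E_{\Cell_1} + E_{\Cell_2} + \langle \Weyl_{\Cell_1}\, u\rangle + \langle \Weyl_{\Cell_2}\, u\rangle = E_\BigCell \text{ .}
\]
Writing $E_\BigCell = E_{\Cell_1} + E_{\Cell_2} + \R\delta$ for the direction $\delta$ joining $\Cell_1$ to $\Cell_2$, this says that the component of $\delta$ orthogonal to $E_{\Cell_1}+E_{\Cell_2}$ is \emph{not} orthogonal to $\langle\Weyl_{\Cell_1}\,u\rangle + \langle\Weyl_{\Cell_2}\,u\rangle$ — a statement purely about the affine Coxeter complex of the \emph{irreducible} building $\EBuilding_i$: the walls through $\aff\Cell_1$ and through $\aff\Cell_2$ together with the gradient direction $u$ must ``span across'' $\BigCell$, and this is exactly where connectedness of $\typ\EBuilding_i$ is used (it genuinely fails for reducible buildings, matching Observation~\ref{obs:horizontal_iff_horizontal_on_factors}). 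One easy special case: if $\Link\Cell_1$ or $\Link\Cell_2$ is irreducible, its horizontal part is empty, forcing $\BigCell = \Cell_1$ or $\BigCell = \Cell_2$ and hence $\Cell_1 = \Cell_2$ at once; so the work is concentrated in the cases where both links are reducible and one must track how the two decomposition patterns sit inside the connected diagram $\typ\EBuilding_i$. I would carry out this last step by an analysis of the affine Weyl group parallel to Section~5 of \cite{buxwor08}.
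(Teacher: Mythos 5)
Your setup is sound as far as it goes: the reduction to $\BigCell = \Cell_1 \vee \Cell_2$, the apartment-level reformulation of $\BigCell \horizontal \Cell$ as $U_\Cell \perp \langle \Weyl_\Cell\, u\rangle$, and the dimension count producing a nonzero $x \in U_{\Cell_1} \cap U_{\Cell_2}$ are all correct. But the argument stops exactly where the lemma actually lives. The spanning statement you need, namely $E_{\Cell_1} + E_{\Cell_2} + \langle\Weyl_{\Cell_1}\,u\rangle + \langle\Weyl_{\Cell_2}\,u\rangle \supseteq E_\BigCell$, is not proved; you say only that you ``would carry out this last step by an analysis of the affine Weyl group parallel to Section~5 of \cite{buxwor08}.'' That step is where irreducibility of $\EBuilding_i$ enters and where essentially the entire content of the lemma is concentrated, so leaving it as a plan means the proposal has a genuine gap rather than a proof. (The special case where one of $\Link \Cell_1$, $\Link\Cell_2$ is irreducible is correct but covers almost nothing of the difficulty.)

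For comparison, the paper avoids any such spanning analysis. It fixes a chamber $\Chamber \ge \BigCell$ and a vertex $\Vertex$ of $\Chamber$ with $\Busemann(\Vertex) \ne \Busemann(\BigCell)$, quotients $\Chamber$ by the linear spans of $\Cell_1 - \Cell_1$ and $\Cell_2 - \Cell_2$, and applies Lemma~\ref{lem:horizontal_criterion} once at $\Cell_1$ and once at $\Cell_2$. If $\Cell_1 \intersect \Cell_2 = \emptyset$, the images $\overline{\Cell_1}$, $\overline{\Cell_2}$, $\overline{\Vertex}$ are three distinct points and the criterion forces both $\angle_{\overline{\Cell_1}}(\overline{\Cell_2},\overline{\Vertex})$ and $\angle_{\overline{\Cell_2}}(\overline{\Cell_1},\overline{\Vertex})$ to be $\pi/2$: a Euclidean triangle with two right angles, which is impossible. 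All the Coxeter-theoretic input has already been packaged into Lemma~\ref{lem:horizontal_criterion}, and what remains is two-dimensional Euclidean geometry. Unless you can actually supply the spanning step (which would amount to reproving a chunk of the move calculus), you should adopt this two-right-angles argument instead.
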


\begin{proof}
Let $\Busemann$ be a Busemann function that defines $\PointAtInfty$. Let $\Chamber$ be a chamber that contains $\BigCell$ and let $\Vertex$ be a vertex of $\Chamber$ with $\Busemann(v) \ne \Busemann(\BigCell)$. We take the quotient of $\Chamber$ modulo directions in $\Cell_1$ and $\Cell_2$ (i.e., we factor out the linear span of $\Cell_1 - \Cell_1$ and $\Cell_2 - \Cell_2$). The images of $\BigCell$, $\Vertex$, $\Cell_1$, and $\Cell_2$ under this projection are denoted $\overline\BigCell$, $\overline\Vertex$, $\overline{\Cell_1}$, and $\overline{\Cell_2}$ respectively. If $\Cell_1$ and $\Cell_2$ did not meet, then $\overline{\Cell_1}$ and $\overline{\Cell_2}$ would be distinct points. In any case $\overline{\Vertex}$ is distinct from both. By Lemma~\ref{lem:horizontal_criterion} we would have $\angle_{\overline{\Cell_1}}(\overline{\Cell_2},\overline{\Vertex}) = \angle_{\overline{\Cell_2}}(\overline{\Cell_1},\overline{\Vertex}) = \pi/2$ which is impossible.
\end{proof}

For the rest of the section all horizontal links are taken with respect to a fixed Busemann function $\Busemann$ centered at a point $\PointAtInfty$.
We say that $\Busemann$ is \emph{in general position} if it is not constant on any (non-trivial) factor of $\Space$. This is equivalent to the condition that $\PointAtInfty$ is not contained in any (proper) join factor of $\Infty\Space$ and in that case we also call $\PointAtInfty$ \emph{in general position}. Combining Observation~\ref{obs:horizontal_iff_horizontal_on_factors} and Lemma~\ref{lem:cohorizontal_faces_meet} we see:

\begin{obs}
\label{obs:general_position_cells_meet}
Assume that $\PointAtInfty$ is in general position. If $\BigCell \horizontal \Cell_1$ and $\BigCell \horizontal \Cell_2$ then $\Cell_1 \intersect \Cell_2 \ne \emptyset$.\qed
\end{obs}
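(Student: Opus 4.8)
The plan is to reduce the statement to its one-factor version, Lemma~\ref{lem:cohorizontal_faces_meet}, using the factorwise description of horizontal links provided by Observation~\ref{obs:horizontal_iff_horizontal_on_factors}. Write $\Space = \prod_i \EBuilding_i$ and decompose the cells as products $\BigCell = \prod_i \BigCell_i$, $\Cell_1 = \prod_i \Cell_{1,i}$, $\Cell_2 = \prod_i \Cell_{2,i}$. Each relation $\BigCell \horizontal \Cell_m$ ($m = 1,2$) forces $\Cell_m \le \BigCell$ and all three cells to be flat for $\Busemann$, so in each factor $\Cell_{m,i} \le \BigCell_i$ and $\BigCell_i$, $\Cell_{m,i}$ are non-empty flat cells for $\Busemann|_{\EBuilding_i}$.

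The one genuine point is to see that $\PointAtInfty$ being in general position is equivalent to $d(\PointAtInfty,\Infty\EBuilding_i) < \pi/2$ for \emph{every} index $i$. If some distance equalled $\pi/2$, then by the join formula \eqref{eq:spherical_join} the point $\PointAtInfty$ would lie in the proper join factor $\Join_{j \ne i}\Infty\EBuilding_j$ of $\Infty\Space$, contradicting general position; conversely, if $\PointAtInfty \in \Join_{i \in J}\Infty\EBuilding_i$ for a proper subset $J$, then $d(\PointAtInfty,\Infty\EBuilding_k) = \pi/2$ for any $k \notin J$. Consequently the index set $I$ of Observation~\ref{obs:horizontal_iff_horizontal_on_factors} is the full index set, and for each $i$ the projection $\PointAtInfty_i \defeq \ClosestPointProjection[\Infty\EBuilding_i]\PointAtInfty$ exists by Lemma~\ref{lem:spherical_projection} (its distance to the $\pi$-convex subcomplex $\Infty\EBuilding_i$ is $< \pi/2 = D_1/2$) and is a genuine point at infinity of $\EBuilding_i$.

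Now apply Observation~\ref{obs:horizontal_iff_horizontal_on_factors} with $I$ the full index set: the hypotheses $\BigCell \horizontal \Cell_1$ and $\BigCell \horizontal \Cell_2$ become $\BigCell_i \horizontal_{\PointAtInfty_i} \Cell_{1,i}$ and $\BigCell_i \horizontal_{\PointAtInfty_i} \Cell_{2,i}$ for every $i$. For each fixed $i$, Lemma~\ref{lem:cohorizontal_faces_meet}, applied in the irreducible Euclidean building $\EBuilding_i$ with point at infinity $\PointAtInfty_i$, gives $\Cell_{1,i} \intersect \Cell_{2,i} \ne \emptyset$. Since the intersection of product cells is the product of the factorwise intersections, $\Cell_1 \intersect \Cell_2 = \prod_i(\Cell_{1,i} \intersect \Cell_{2,i}) \ne \emptyset$, which is the claim. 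There is no real obstacle here: once the characterization of general position in the second paragraph is in place, the argument is pure bookkeeping, and the only care needed is to verify that the projections $\PointAtInfty_i$ are well defined and that Observation~\ref{obs:horizontal_iff_horizontal_on_factors} is applicable (which requires the cells to be non-empty and flat, already guaranteed by the hypotheses).
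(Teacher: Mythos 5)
Your proof is correct and takes exactly the paper's route: the paper's own proof of this observation is literally the single sentence ``Combining Observation~\ref{obs:horizontal_iff_horizontal_on_factors} and Lemma~\ref{lem:cohorizontal_faces_meet} we see,'' and your argument is precisely that combination, with the (correct and worth spelling out) verification that general position forces the index set $I$ in Observation~\ref{obs:horizontal_iff_horizontal_on_factors} to be the full index set.
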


The assumption that $\PointAtInfty$ be in general position is crucial as can be seen in the most elementary case:

\begin{exmpl}
\label{exmpl:cohorizontal_faces_dont_meet}
Consider the product $\EBuilding_1 \times \EBuilding_2$ of two buildings of type $\tilde{A}_1$. Let $\Busemann$ be such that $\Infty\Busemann \in \Infty\EBuilding_1$. Let $\Vertex_1 \in \EBuilding_1$ be a vertex and $\Chamber_2 \subseteq \EBuilding_2$ be a chamber with vertices $\Vertex_2$ and $\AltVertex_2$. The links of $(\Vertex_1,\Vertex_2)$ and of $(\Vertex_1, \AltVertex_2)$ are of type $A_1 * A_1$ and $\{\Vertex_1\} \times \Chamber_2$ lies in the horizontal link of both.
\end{exmpl}

We are now ready to state a technical tool that we will use throughout the section. We will give two proofs at the end.

\begin{prop}
\label{prop:horizontal_properties}
The relation $\horizontal$ (that is, $\horizontal_{\PointAtInfty}$) has the following properties:
\begin{enumerate}
\item If $\BigCell \horizontal \Cell$ and $\BigCell \ge \Another\BigCell \ge \Cell$ then $\Another\BigCell \horizontal \Cell$.\label{item:faces}
\item If $\BigCell \horizontal \Cell$ and $\BigCell \vee \Another\Cell$ exists and is flat then $\BigCell \vee \Another\Cell \horizontal \Cell \vee \Another\Cell$. In particular, if $\BigCell \horizontal \Cell$ and $\BigCell \ge \Another\Cell \ge \Cell$ then $\BigCell \horizontal \Another\Cell$. \label{item:join}
\item If $\BigCell \horizontal \Another\Cell$ and $\Another\Cell \horizontal \Cell$ then $\BigCell \horizontal \Cell$, i.e., $\horizontal$ is transitive.\label{item:transitivity}
\item If $\BigCell \horizontal \Cell_1$ and $\BigCell \horizontal \Cell_2$ and $\Cell_1 \intersect \Cell_2 \ne \emptyset$ then $\BigCell \horizontal \Cell_1 \intersect \Cell_2$.\label{item:non_empty_meet}
\end{enumerate}
\end{prop}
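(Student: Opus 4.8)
The plan is to reduce all four assertions to one fact about an \emph{equatorial} cell $\Cell$ of a spherical building $\SBuilding$ with north pole $n$ (a cell all of whose points have distance $\pi/2$ from $n$). For such a $\Cell$ the direction $n\direction\Cell$ is defined (the distance $\pi/2$ is $<\pi$, so the geodesic is unique), and we regard it as the north pole of the spherical building $\Link_\SBuilding\Cell$ (Fact~\ref{fact:links_are_spherical_buildings}). The fact is: for every vertex $\Vertex$ of $\SBuilding$ that is not a vertex of $\Cell$ but with $\{\Vertex\}\vee\Cell$ a cell,
\begin{equation*}
\Vertex\in\SBuilding\Hor\quad\Longleftrightarrow\quad \Vertex\direction\Cell\in(\Link_\SBuilding\Cell)\Hor\,.\tag{$\star$}
\end{equation*}
This is checked in an apartment $\Apartment$ (a sphere) containing $n$, $\Cell$ and $\Vertex$: equatoriality means $n$ is orthogonal to the span of $\Cell$, so $n$ lies in the subspace whose unit sphere is $\Link_\Apartment\Cell$ and $n\direction\Cell=n$; splitting into join factors, a factor $\SBuilding_k$ is horizontal exactly when $n$ has vanishing component in it, and because $\Cell$ is equatorial the link of the $\SBuilding_k$--part of $\Cell$ contains that component as one of its points (its north pole), so that link is a horizontal factor of $\Link_\SBuilding\Cell$ iff the component vanishes. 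I also use two routine facts: the horizontal link is a \emph{full} subcomplex (a union of join factors of the ambient link); and for flat cells $\Cell\le\AltCell$ the canonical type--preserving isometry $\Link\AltCell\cong\Link_{\Link\Cell}(\AltCell\direction\Cell)$ of Section~\ref{sec:spherical_geometry} carries the north pole $\PointAtInfty_\AltCell$ to $\PointAtInfty_\Cell\direction(\AltCell\direction\Cell)$ — seen by tracing the ray $[\,\cdot\,,\PointAtInfty)$ through that identification (using Observation~\ref{obs:flat_equivalent_perpendicular} to know $\AltCell\direction\Cell$ is equatorial) — hence identifies $\Link\Hor\AltCell$ with $(\Link_{\Link\Cell}(\AltCell\direction\Cell))\Hor$.

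Statement~\textup{(i)} is immediate: $\Link\Hor\Cell$ is a subcomplex of $\Link\Cell$ and $\Another\BigCell\direction\Cell$ is a face of $\BigCell\direction\Cell\subseteq\Link\Hor\Cell$ (and $\Another\BigCell$ is flat, being a face of $\BigCell$). For~\textup{(ii)} put $\mu=\Cell\vee\Another\Cell$ and $\Xi=\BigCell\vee\Another\Cell$; then $\mu$ is flat (a face of the flat cell $\Xi$), $\Xi\direction\Cell=(\BigCell\direction\Cell)\vee(\mu\direction\Cell)$ (links preserve joins), the cell $\sigma:=\mu\direction\Cell$ is equatorial in $\SBuilding:=\Link\Cell$, and $\Link\mu=\Link_\SBuilding\sigma$ with north pole $\PointAtInfty_\Cell\direction\sigma=\PointAtInfty_\mu$. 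The vertices of $\Xi\direction\mu=(\Xi\direction\Cell)\direction\sigma$ are exactly the vertices of $\BigCell\direction\Cell$ not lying in $\sigma$; each lies in $\SBuilding\Hor$ (as $\BigCell\direction\Cell\subseteq\SBuilding\Hor$), so by $(\star)$ each lies in $(\Link_\SBuilding\sigma)\Hor=\Link\Hor\mu$, whence by fullness $\Xi\direction\mu\subseteq\Link\Hor\mu$, i.e.\ $\Xi\horizontal\mu$. (The case $\Another\Cell\ge\Cell$, where $\mu=\Another\Cell$ and $\Xi=\BigCell$, is the ``in particular''.) For~\textup{(iii)}, where $\Cell\le\Another\Cell\le\BigCell$, the hypothesis $\Another\Cell\horizontal\Cell$ reads $\nu:=\Another\Cell\direction\Cell\subseteq\SBuilding\Hor$ in $\SBuilding:=\Link\Cell$, and $\Link\Another\Cell=\Link_\SBuilding\nu$, so $\BigCell\horizontal\Another\Cell$ reads $(\BigCell\direction\Cell)\direction\nu\subseteq(\Link_\SBuilding\nu)\Hor$; a vertex of $\BigCell\direction\Cell$ lying in $\nu$ is in $\SBuilding\Hor$ since $\nu\subseteq\SBuilding\Hor$, and one not in $\nu$ is in $\SBuilding\Hor$ by $(\star)$; fullness gives $\BigCell\direction\Cell\subseteq\SBuilding\Hor$, i.e.\ $\BigCell\horizontal\Cell$.

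For~\textup{(iv)} let $\Cell:=\Cell_1\intersect\Cell_2$, which by hypothesis is nonempty, hence a common face of $\Cell_1$ and $\Cell_2$ and a face of $\BigCell$, so flat. Work in $\SBuilding:=\Link\Cell$. Let $\Vertex$ be a vertex of $\BigCell\direction\Cell$, i.e.\ a vertex of $\BigCell$ not in $\Cell$; then $\Vertex\nin\Cell_1$ or $\Vertex\nin\Cell_2$, say the latter. Since $\BigCell\horizontal\Cell_2$, the vertex $\Vertex\direction\Cell_2$ lies in $\Link\Hor\Cell_2$, which under the identification $\Link\Cell_2\cong\Link_{\Link\Cell}(\Cell_2\direction\Cell)=\Link_\SBuilding(\Cell_2\direction\Cell)$ (north poles matching, as $\Cell,\Cell_2$ are flat) becomes $(\Link_\SBuilding(\Cell_2\direction\Cell))\Hor$; so $(\Vertex\direction\Cell)\direction(\Cell_2\direction\Cell)$ lies there. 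Now $\Cell_2\direction\Cell$ is equatorial in $\SBuilding$ and $\{\Vertex\direction\Cell\}\vee(\Cell_2\direction\Cell)$ is a cell, so $(\star)$ yields $\Vertex\direction\Cell\in\SBuilding\Hor$. Hence every vertex of $\BigCell\direction\Cell$ lies in $\SBuilding\Hor$, and by fullness $\BigCell\direction\Cell\subseteq\SBuilding\Hor$, i.e.\ $\BigCell\horizontal\Cell$. The hypothesis $\Cell_1\intersect\Cell_2\ne\emptyset$ cannot be dropped (cf.\ Example~\ref{exmpl:cohorizontal_faces_dont_meet}), and it is needed here already to make the conclusion meaningful.

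I expect the real work to be in making $(\star)$ precise — correctly relating ``horizontal with respect to $n$ in $\SBuilding$'' to ``horizontal with respect to $n\direction\Cell$ in $\Link_\SBuilding\Cell$'' for a merely equatorial (not necessarily horizontal) cell $\Cell$, which forces the single--apartment, join--factor computation above — together with the attendant bookkeeping of north poles under the iterated link identifications (making sure, in \textup{(ii)}–\textup{(iv)}, that the cell playing the role of north pole after each passage to a link is the one claimed). Everything else is formal: faces of subcomplexes, join--preservation of links, and fullness of the horizontal link. Note that, unlike a first attempt via Lemma~\ref{lem:horizontal_criterion} applied directly in $\Link\Cell$, the argument above never needs to compare a vertex of $\Cell_1\direction\Cell$ with a vertex of $\Cell_2\direction\Cell$; that comparison is precisely the obstruction such a direct approach runs into, and $(\star)$ sidesteps it.
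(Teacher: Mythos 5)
You have a genuine gap, and it sits exactly where you predicted the ``real work'' would be: the backward implication of your $(\star)$ is false for a merely \emph{equatorial} cell. Counterexample: let $\SBuilding$ be an irreducible building of type $A_3$, realized as the link of a vertex of a thick Euclidean building of type $\tilde{A}_3$; model an apartment as the unit sphere of $\{x \in \R^4 \mid \sum_i x_i = 0\}$, let $u$ be the vertex corresponding to $\{1,2\}$, let $w$ be the adjacent vertex corresponding to $\{1,2,3\}$, and take as north pole $n$ the point at distance $\pi/2$ from $u$ on the great circle through $u$ and $w$ (the normalization of $(0,0,1,-1)$, an interior point of the edge with vertices $\{3\}$ and $\{1,2,3\}$, hence a genuine point of $\SBuilding$; it is realized as $\PointAtInfty_\Point$ for any $\PointAtInfty$ in the boundary of an apartment through that vertex with this direction). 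Then $u$ is equatorial, $\Link_\SBuilding u$ has type $A_1 * A_1$, and its north pole $n \direction u$ is the \emph{vertex} $w \direction u$, lying purely in the type-$3$ join factor; consequently the entire type-$1$ factor is an irreducible join factor contained in the equator of $\Link_\SBuilding u$, so for the vertex $v$ corresponding to $\{1\}$ we get $v \direction u \in (\Link_\SBuilding u)\Hor$, although $v \nin \SBuilding\Hor$, since $\SBuilding$ is irreducible and contains $n$, whence $\SBuilding\Hor = \emptyset$. The flaw in your one-apartment justification is the step ``that link is a horizontal factor of $\Link_\SBuilding\Cell$ iff the component vanishes'': the link of the $\SBuilding_k$-part of $\Cell$ need not be an irreducible factor of $\Link_\SBuilding\Cell$ — it can decompose further, and one of its pieces can be entirely equatorial even though the component $n_k$ is nonzero. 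Only the forward implication of $(\star)$ is correct.

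This does not sink everything. Part (i) is fine, and part (ii) uses only the forward half of $(\star)$, which is true (the correct reason being that a vanishing $\SBuilding_k$-component of $n$ forces the whole $\SBuilding_k$-part of $\Link\Cell$ to lie at distance $\pi/2$ from $n \direction \Cell$). In (iii) you invoke the backward half with linking cell $\Another\Cell \direction \Cell$, which by hypothesis lies in $\SBuilding\Hor$ and not merely in the equator; in that case the implication does hold, because the vertical irreducible factors of $\SBuilding$ do not meet $\Another\Cell \direction \Cell$ and reappear, unchanged and still non-equatorial, as irreducible factors of its link — but that is an argument you have not given. The real casualty is (iv): there the linking cell $\Cell_2 \direction \Cell$ with $\Cell = \Cell_1 \intersect \Cell_2$ is only known to be flat, hence equatorial in $\Link\Cell$ — that it is horizontal over $\Cell$ is part of what is being proved — so you invoke exactly the false case. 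Moreover your step uses only one of the two hypotheses per vertex, and the configuration above (take $\Cell$ a vertex, $\Cell_2$ the edge with $\Cell_2 \direction \Cell = u$, and $\BigCell$ the flat $2$-cell spanned by $\Cell_2$ and a vertex whose direction lies in the equatorial type-$1$ factor) shows that from $\Vertex \nin \Cell_2$, $\BigCell \horizontal \Cell_2$ and flatness of $\Cell \le \Cell_2$ alone one cannot conclude that $\Vertex \direction \Cell$ is horizontal. Using both hypotheses at a single vertex is unavoidable; this is precisely what the paper's proofs do, via a spherical triangle with two right angles (Observation~\ref{obs:spherical_triangles}) in the metric proof, or via a path in the Coxeter diagram that must first meet $\typ\Cell_1$ or $\typ\Cell_2$ in the combinatorial proof. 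The ``sidestep'' announced in your closing remark is exactly the false half of $(\star)$.
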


A key observation in \cite{buxwor08} is that for every flat cell $\BigCell$, among its faces $\Cell$ with $\BigCell \horizontal \Cell$ there is a minimal one provided $\Space$ is irreducible. Observation~\ref{obs:general_position_cells_meet} allows us to replace the irreducibility assumption by the assumption that $\PointAtInfty$ be in general position:

\begin{lem}
\label{lem:tau_min}
Assume that $\PointAtInfty$ is in general position. Let $\BigCell$ be a flat cell of $\Space$. The set of $\Cell \le \BigCell$ such that $\BigCell \horizontal \Cell$ is an interval, i.e., it contains a minimal element $\BigCell\Min$\index[xsyms]{sigmamin@$\Cell\Min$} and
\[
\BigCell \horizontal \Cell \quad \text{if and only if} \quad \BigCell\Min \le \Cell \le \BigCell \text{ .}
\]
\end{lem}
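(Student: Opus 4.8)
The plan is to study the set $\mathcal{S} := \{\Cell \le \BigCell : \BigCell \horizontal \Cell\}$ of faces of $\BigCell$ in whose horizontal link $\BigCell$ sits, and to show that $\mathcal{S}$ is exactly the interval $[\BigCell\Min,\BigCell]$ in the face poset of $\BigCell$ for a suitable least element $\BigCell\Min$. First I would note that $\mathcal{S}$ is non-empty: $\BigCell$ is flat by hypothesis and $\BigCell\direction\BigCell$ is the empty simplex, so $\BigCell \horizontal \BigCell$ and $\BigCell \in \mathcal{S}$.

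The key step is to prove that $\mathcal{S}$ is closed under intersection. Given $\Cell_1,\Cell_2 \in \mathcal{S}$ we have $\BigCell\horizontal\Cell_1$ and $\BigCell\horizontal\Cell_2$, so since $\PointAtInfty$ is in general position, Observation~\ref{obs:general_position_cells_meet} forces $\Cell_1 \intersect \Cell_2 \ne \emptyset$. Hence $\Cell_1 \intersect \Cell_2$ is a common face of $\Cell_1$ and $\Cell_2$, and in particular a face of $\BigCell$, and Proposition~\ref{prop:horizontal_properties}~\eqref{item:non_empty_meet} gives $\BigCell\horizontal\Cell_1\intersect\Cell_2$; thus $\Cell_1\intersect\Cell_2 \in \mathcal{S}$. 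Since $\BigCell$ has only finitely many faces, $\mathcal{S}$ is a finite non-empty meet-closed subset of the face poset of $\BigCell$, so it has a least element $\BigCell\Min \defeq \Intersect_{\Cell\in\mathcal{S}}\Cell$.

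It then remains to check the interval description $\BigCell\horizontal\Cell \iff \BigCell\Min \le \Cell \le \BigCell$. The forward implication is immediate: $\BigCell\horizontal\Cell$ says precisely $\Cell\in\mathcal{S}$, whence $\BigCell\Min\le\Cell\le\BigCell$ by the definition of $\mathcal{S}$ and of $\BigCell\Min$. For the converse, from $\BigCell\horizontal\BigCell\Min$ and $\BigCell \ge \Cell \ge \BigCell\Min$ the ``in particular'' clause of Proposition~\ref{prop:horizontal_properties}~\eqref{item:join} yields $\BigCell\horizontal\Cell$.

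The one genuinely nontrivial input is the closure under intersection, and there the general position hypothesis is indispensable: Example~\ref{exmpl:cohorizontal_faces_dont_meet} exhibits a flat cell two of whose faces lie in its horizontal link while being disjoint, so that without general position $\mathcal{S}$ could be a two-element antichain with no least element. So the real content is pushed into Observation~\ref{obs:general_position_cells_meet} (and through it into Observation~\ref{obs:horizontal_iff_horizontal_on_factors} and Lemma~\ref{lem:cohorizontal_faces_meet}); once that is in hand, the remainder is a routine argument in the finite face poset of $\BigCell$ using the formal properties collected in Proposition~\ref{prop:horizontal_properties}.
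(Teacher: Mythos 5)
Your argument is correct and is essentially the same as the paper's: both define $\BigCell\Min$ as the intersection of the set of faces $\Cell\le\BigCell$ with $\BigCell\horizontal\Cell$, establish meet-closedness via Observation~\ref{obs:general_position_cells_meet} together with Proposition~\ref{prop:horizontal_properties}~\eqref{item:non_empty_meet}, and get the interval description from Proposition~\ref{prop:horizontal_properties}~\eqref{item:join}. The only additions you make (noting $\BigCell\in\mathcal{S}$ and pointing to Example~\ref{exmpl:cohorizontal_faces_dont_meet} for why general position is needed) are correct remarks but not new ideas.
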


In particular, $\BigCell \horizontal \BigCell\Min$.

\begin{proof}
Let $T \defeq \{\Cell \le \BigCell \mid \BigCell \horizontal \Cell\}$, which is finite. If $\Cell_1$ and $\Cell_2$ are in $T$, then since $\Busemann$ is in general position, Observation~\ref{obs:general_position_cells_meet} implies that $\Cell_1 \intersect \Cell_2 \ne \emptyset$. So by Proposition~\ref{prop:horizontal_properties}~\eqref{item:non_empty_meet} $\Cell_1 \intersect \Cell_2 \in T$. Hence there is a minimal element $\BigCell\Min$, namely the intersection of all elements of $T$. If $\Another\Cell$ satisfies $\BigCell\Min \le \Another\Cell \le \BigCell$, then $\Another\Cell \in T$ by Proposition~\ref{prop:horizontal_properties}~\eqref{item:join}.
\end{proof}

To see what can go amiss if $\PointAtInfty$ is not in general position we need a slightly bigger example than Example~\ref{exmpl:cohorizontal_faces_dont_meet}:
\begin{exmpl}
Let $\EBuilding = \EBuilding_1 \times \EBuilding_2$ where the first factor is of type $\tilde{A}_1$ and the second is of type $\tilde{A_2}$. The factor $X_2$ has three parallelity classes of edges. Let $\Busemann$ be a Busemann function that is constant on $\EBuilding_1$ and on one class of edges in $\EBuilding_2$. Consider a square $\BigCell$ that is flat. Its two edges in the $\EBuilding_2$-factor have a link of type $A_1 * A_1$ and $\BigCell$ lies in the horizontal link of each of them. Hence if there were to be a $\BigCell\Min$ it would have to be the empty simplex. However every vertex of $\BigCell$ has a link of type $A_1 * A_2$ and $\BigCell$ does not lie the horizontal link of any of them.
\end{exmpl}

To understand this example note that if a Busemann function is constant on some factor, then in this factor $\BigCell\Min = \emptyset$ for all cells $\BigCell$. But being empty does not behave well with respect to taking products: a product is empty if one of the factors is empty, not if all of the factors are empty. In other words the face lattice of a product of simplices is not the product of the face lattices of the simplices. But the face lattice of a product of simplices without the bottom element is the product of the face lattices of the simplices without the bottom elements: $\FaceLattice(\prod_i \Cell_i)_{> \emptyset} = \prod_i \FaceLattice(\Cell_i)_{> \emptyset}$.

Lemma~\ref{lem:tau_min} generalizes \cite[Lemma~5.2]{buxwor08} except for the explicit description in terms of orthogonal projections. We will see that transitivity of $\horizontal$ suffices to replace the explicit description. For the rest of the section we assume that $\PointAtInfty$ is in general position.

We define \emph{going up} by\index[xsyms]{sigmauptau@$\Cell\Up\BigCell$}
\[
\Cell \Up \BigCell \quad \text{if} \quad \BigCell\Min = \Cell \ne \BigCell
\]
and \emph{going down} by\index[xsyms]{sigmadowntau@$\Cell\Down\BigCell$}
\[
\BigCell \Down \Cell \quad \text{if} \quad \Cell \lneq \BigCell \text{ but not }\BigCell \horizontal \Cell\text{ .}
\]
A \emph{move} is either going up or going down. The main result of this section is:

\begin{prop}
\label{prop:bound_on_moves}
There is a bound on the lengths of sequences of moves that only depends on the dimensions of the $\EBuilding_i$. In particular, no sequence of moves enters a cycle.
\end{prop}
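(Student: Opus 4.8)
The plan is to assign to each flat cell $\BigCell$ a numerical invariant that decreases under every move, and then argue that this invariant lives in a range whose size is controlled by the dimensions of the $\EBuilding_i$. A natural candidate is the \emph{dimension of the vertical part of the link}, $\dim \Link\Ver \BigCell$, possibly together with $\dim \BigCell$ itself, arranged lexicographically; another candidate is $\dim \BigCell - \dim \BigCell\Min$. The intuition is that ``going up'' replaces $\BigCell$ by the strictly smaller cell $\BigCell\Min$ on whose horizontal link $\BigCell$ still sits, while ``going down'' passes to a proper face $\Cell \lneq \BigCell$ that is \emph{not} in the horizontal interval $[\BigCell\Min,\BigCell]$ — so in both cases the cell changes, but the two kinds of move push in opposite directions on $\dim \BigCell$, which is why a single monotone quantity is not obvious and the right invariant has to mix dimensional data with the horizontal/vertical splitting \eqref{eq:link_decomposes_as_hor_join_ver}.

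First I would record the basic effect of each move on the relevant links. If $\Cell \Up \BigCell$, i.e.\ $\Cell = \BigCell\Min$, then by Lemma~\ref{lem:tau_min} and Proposition~\ref{prop:horizontal_properties}~\eqref{item:join} the cell $\BigCell$ lies in $\Link\Hor\Cell$, so $\BigCell \direction \Cell$ is a horizontal simplex; this should force $\Link\Ver\Cell$ to embed in (a join factor of) $\Link\Ver\BigCell$ after passing to the appropriate link-of-link identification, using the canonical isometry $\Link\Cell_2 \to \Link\Cell_2\direction\Cell_1$ from the Observation preceding Proposition~\ref{prop:simplex_angles_bound_angles} together with Lemma~\ref{lem:horizontal_link_decomposition}. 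If $\BigCell \Down \Cell$, i.e.\ $\Cell \lneq \BigCell$ but not $\BigCell \horizontal \Cell$, then $\BigCell \direction \Cell$ contains a \emph{non-horizontal} direction, so $\BigCell \direction \Cell$ meets $\Link\Ver\Cell$; this is exactly what makes $\dim\Link\Ver$ strictly drop when one then ``goes up'' from $\Cell$, and more directly it should let one bound how far down one can go. The combinatorial identity $\FaceLattice(\prod_i \Cell_i)_{>\emptyset} = \prod_i \FaceLattice(\Cell_i)_{>\emptyset}$ noted in the text, together with Observation~\ref{obs:horizontal_iff_horizontal_on_factors}, reduces everything to the irreducible factors, so it suffices to bound move-sequences in a single irreducible $\EBuilding_i$ in terms of $\dim\EBuilding_i$ and then add.

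The key step, and the one I expect to be the main obstacle, is showing that the composite ``down then up'' (or ``up then down'') strictly decreases the invariant, since a single move alone need not — a ``down'' move increases neither of the obvious quantities in an obviously useful way. The cleanest route is probably: (i) show that $\dim\Link\Ver\BigCell$ is non-increasing along \emph{every} move and strictly decreasing along every ``up'' move that is preceded by a ``down'' move, using that a ``down'' move exposes a vertical direction which is then quotiented out by the subsequent ``up''; (ii) observe that two consecutive ``down'' moves cannot occur because $\BigCell\Down\Cell$ and then $\Cell\Down\Cell'$ would need $\Cell' \lneq \Cell \lneq \BigCell$ with neither $\BigCell\horizontal\Cell$ nor $\Cell\horizontal\Cell'$, and transitivity (Proposition~\ref{prop:horizontal_properties}~\eqref{item:transitivity}) plus Lemma~\ref{lem:tau_min} constrains this — more plausibly, the very definition of going down as ``$\Cell\lneq\BigCell$ but not $\BigCell\horizontal\Cell$'' combined with the fact that after a down move one is not at a $\BigCell\Min$ means the next move from $\Cell$, if it is ``up'', genuinely lowers $\dim\Link\Ver$. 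So in any sequence of moves, between any two ``down'' moves there is an ``up'' move, and each such ``up'' move strictly lowers $\dim\Link\Ver \in \{-1,0,1,\dots,\dim\Space\}$; hence there are at most $\dim\Space+2$ ``up'' moves, at most $\dim\Space+3$ ``down'' moves, and the total length is bounded by $2\dim\Space+5$, a bound depending only on $\sum_i\dim\EBuilding_i$. The final line is immediate: a bounded sequence cannot enter a cycle, since a cycle would yield arbitrarily long sequences by repetition.
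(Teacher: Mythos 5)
Your proposal takes a genuinely different route from the paper — you hope for a single monotone numerical invariant (you suggest $\dim\Link\Ver\BigCell$) that decreases over the sequence — whereas the paper's proof assembles a sequence of lemmas showing that $\Up$ and $\Down$ are each transitive, that alternating chains can be shortened (Lemma~\ref{lem:shortening}), and, crucially, that for an alternating chain $\Cell_1 \Up \BigCell_1 \Down \Cell_2 \Up \cdots$ the join $\Cell_1 \vee \cdots \vee \Cell_k$ of the lower terms \emph{exists} (Lemma~\ref{lem:vee_of_lower_terms_exists}); bounding the number of faces of that cell then bounds the number of moves. This is why the paper's bound is exponential in $\dim\EBuilding_i$ rather than the linear $2\dim\Space+5$ you predict — a discrepancy that should itself be a warning sign that the invariant argument is missing something.

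There is a concrete error. You assert in step (ii) that ``two consecutive `down' moves cannot occur.'' That is false: going down is simply passing to \emph{any} proper face on whose horizontal link $\BigCell$ does not lie, and such faces can be iterated. Indeed the paper's own proof of Proposition~\ref{prop:bound_on_moves} explicitly reads ``The maximal number of successive moves down is $\dim(\Space)$,'' and Lemma~\ref{lem:down_transitive} proves $\Down$ is \emph{transitive} (not that it is instantaneously terminal), which only makes sense if chains $\BigCell \Down \Cell \Down \Cell'$ occur. Your appeal to transitivity of $\horizontal$ to rule this out doesn't apply either: $\BigCell \Down \Cell$ and $\Cell \Down \Cell'$ give the \emph{negations} $\neg(\BigCell \horizontal \Cell)$ and $\neg(\Cell \horizontal \Cell')$, and Proposition~\ref{prop:horizontal_properties}~\eqref{item:transitivity} yields nothing useful from two negative hypotheses.

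Beyond that error, the central claim — that $\dim\Link\Ver\BigCell$ is non-increasing along every move and strictly drops at some identifiable subclass of moves — is not established; you yourself flag this as ``the main obstacle'' and propose it as ``probably'' the right statement rather than proving it. A down move $\BigCell \Down \Cell$ passes to a lower-dimensional cell, so $\dim\Link\Cell > \dim\Link\BigCell$, and the extra directions could land in the vertical part; nothing you have written rules out $\dim\Link\Ver\Cell > \dim\Link\Ver\BigCell$ after a down move. So the proposal, as stated, has a wrong auxiliary claim and an unproved main claim; it does not yet constitute a proof, and the correct argument (as in the paper) needs the join-of-lower-terms lemma, which is genuinely more than what you have outlined.
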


The results in \cite{buxwor08} for which the arguments do not apply analogously are Observation~5.3 and the Lemmas~5.10 and 5.13. They correspond to Observation~\ref{obs:min_min}, Lemma~\ref{lem:min_in_vertical_link}, and Lemma~\ref{lem:cell1_bigcell2_min} below. For the convenience of the reader we also give proofs of the statements that can be easily adapted from those in \cite{buxwor08}. Observation~\ref{obs:significant_either_or} is new and simplifies some arguments.

A good starting point is of course:

\begin{obs}
\label{obs:no_up_down_cycle}
There do not exist cells $\Cell$ and $\BigCell$ such that $\Cell \Up \BigCell$ and $\BigCell \Down \Cell$.
\end{obs}

\begin{proof}
If $\Cell \Up \BigCell$, then in particular $\BigCell \horizontal \Cell$ which contradicts $\BigCell \Down \Cell$.
\end{proof}

We come to the first example of how transitivity of $\horizontal$ replaces the explicit description of $\BigCell\Min$:

\begin{obs}
\label{obs:min_min}
If $\BigCell \horizontal \Cell$, then $\Cell\Min = \BigCell\Min$. In particular, $(\BigCell\Min)\Min = \BigCell\Min$.
\end{obs}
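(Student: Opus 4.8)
The plan is to derive the two inclusions $\Cell\Min \le \BigCell\Min$ and $\BigCell\Min \le \Cell\Min$ purely from the formal properties of $\horizontal$ recorded in Proposition~\ref{prop:horizontal_properties} together with the interval description of Lemma~\ref{lem:tau_min}, without reopening the orthogonal-projection picture at all; once that is done the "in particular" is just a specialization.

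First I would note that the hypothesis $\BigCell \horizontal \Cell$ already places $\Cell$ inside the relevant interval: by Lemma~\ref{lem:tau_min} we have $\BigCell\Min \le \Cell \le \BigCell$, so $\Cell$ (and hence its face $\BigCell\Min$) is flat and $\Cell\Min$ is defined. To get $\Cell\Min \le \BigCell\Min$ I would invoke $\BigCell \horizontal \BigCell\Min$ (the remark following Lemma~\ref{lem:tau_min}) and apply Proposition~\ref{prop:horizontal_properties}~\eqref{item:faces} to the chain $\BigCell \ge \Cell \ge \BigCell\Min$, obtaining $\Cell \horizontal \BigCell\Min$; thus $\BigCell\Min$ is one of the faces of $\Cell$ that $\Cell$ lies horizontally over, and minimality of $\Cell\Min$ forces $\Cell\Min \le \BigCell\Min$.

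For the reverse inclusion I would run transitivity the other way: from $\Cell \horizontal \Cell\Min$ and $\BigCell \horizontal \Cell$, Proposition~\ref{prop:horizontal_properties}~\eqref{item:transitivity} yields $\BigCell \horizontal \Cell\Min$; since $\Cell\Min \le \Cell \le \BigCell$, the cell $\Cell\Min$ is a face of $\BigCell$ over which $\BigCell$ lies horizontally, so minimality of $\BigCell\Min$ gives $\BigCell\Min \le \Cell\Min$. Combining the two inclusions gives $\Cell\Min = \BigCell\Min$. Finally, specializing to $\Cell = \BigCell\Min$ — legitimate because $\BigCell \horizontal \BigCell\Min$ — yields the idempotence $(\BigCell\Min)\Min = \BigCell\Min$.

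There is no real obstacle here: the whole argument is a two-line manipulation in the face lattice, and its entire content is packaged in transitivity of $\horizontal$ and in $\BigCell\Min$ being well defined (the latter being guaranteed by the standing assumption that $\PointAtInfty$ is in general position). So the only thing to be careful about is that Proposition~\ref{prop:horizontal_properties} and Lemma~\ref{lem:tau_min} are already in place — which they are at this point in the section — and that flatness of $\Cell$ and $\BigCell\Min$, needed for $\Cell\Min$ to make sense, is inherited from flatness of $\BigCell$.
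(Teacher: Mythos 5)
Your argument is essentially identical to the paper's own proof: both directions are obtained in exactly the same way, using Proposition~\ref{prop:horizontal_properties}~\eqref{item:faces} on the chain $\BigCell \ge \Cell \ge \BigCell\Min$ to get $\Cell\Min \le \BigCell\Min$, and Proposition~\ref{prop:horizontal_properties}~\eqref{item:transitivity} on $\BigCell \horizontal \Cell \horizontal \Cell\Min$ to get $\BigCell\Min \le \Cell\Min$. The added preamble about flatness being inherited and $\Cell\Min$ being well defined is correct but not strictly needed to state; the core two-line manipulation matches the paper.
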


\begin{proof}
We have $\BigCell \horizontal \BigCell\Min$ and $\BigCell \ge \Cell \ge \BigCell\Min$ so by Proposition~\ref{prop:horizontal_properties}~\eqref{item:faces} $\Cell \horizontal \BigCell\Min$, i.e., $\Cell\Min \le \BigCell\Min$. Conversely $\BigCell \horizontal \Cell \horizontal \Cell\Min$ so by Proposition~\ref{prop:horizontal_properties}~\eqref{item:transitivity} $\BigCell \horizontal \Cell\Min$, i.e., $\BigCell\Min \le \Cell\Min$.
\end{proof}

We call a cell $\Cell$ \emph{significant} if $\Cell\Min = \Cell$.

\begin{obs}
\label{obs:significant_either_or}
If $\Cell$ is significant and $\BigCell \gneq \Cell$ is a proper flat coface, then either $\Cell \Up \BigCell$ or $\BigCell \Down \Cell$.
\end{obs}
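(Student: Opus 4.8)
The plan is to derive this directly from Observation~\ref{obs:min_min}, so there is very little to do. First I would note that since $\BigCell$ is flat and $\Cell \le \BigCell$, the cell $\Cell$ is flat as well (the height function, being constant on $\BigCell$, is constant on each of its faces), so $\Cell\Min$ is defined; and by the hypothesis that $\Cell$ is significant we have $\Cell\Min = \Cell$. Likewise $\BigCell\Min$ is defined because $\BigCell$ is flat. (Here I am tacitly using the standing assumption of this part of the section that $\PointAtInfty$ is in general position, which via Lemma~\ref{lem:tau_min} is exactly what guarantees that $\BigCell\Min$ exists.)

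Now I would split into two cases according to whether $\BigCell \horizontal \Cell$ holds. If $\BigCell \horizontal \Cell$, then Observation~\ref{obs:min_min} gives $\BigCell\Min = \Cell\Min = \Cell$, and since $\BigCell$ is a \emph{proper} coface of $\Cell$ we have $\BigCell \ne \Cell$; by definition this is precisely $\Cell \Up \BigCell$. If instead $\BigCell \horizontal \Cell$ fails, then, combined with $\Cell \lneq \BigCell$ (which holds because $\BigCell$ is a proper coface of $\Cell$), this is exactly the definition of $\BigCell \Down \Cell$. In either case one of the two conclusions holds, which is what we wanted; moreover the two cases are genuinely exclusive, since $\Cell \Up \BigCell$ requires $\BigCell \horizontal \Cell$ whereas $\BigCell \Down \Cell$ forbids it (this is also recorded in Observation~\ref{obs:no_up_down_cycle}).

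There is no real obstacle here: all the substance has already been packaged into Lemma~\ref{lem:tau_min} (existence of $\BigCell\Min$ for flat cells) and Observation~\ref{obs:min_min} (that $\BigCell \horizontal \Cell$ forces $\Cell\Min = \BigCell\Min$). The only point requiring any care is to keep the general-position hypothesis on $\PointAtInfty$ in force, since without it $\BigCell\Min$ need not be well-defined, as the examples following Lemma~\ref{lem:tau_min} show.
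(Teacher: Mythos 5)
Your proof is correct and follows essentially the same route as the paper's: split on whether $\BigCell \horizontal \Cell$ and in the affirmative case invoke Observation~\ref{obs:min_min} to get $\BigCell\Min = \Cell\Min = \Cell$, hence $\Cell \Up \BigCell$; otherwise $\BigCell \Down \Cell$ by definition. The extra remarks about $\Cell$ being flat and the general-position hypothesis are sound but just spell out what the paper leaves implicit.
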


\begin{proof}
If $\BigCell \horizontal \Cell$ then $\BigCell\Min = \Cell\Min = \Cell$ by Observation~\ref{obs:min_min} so $\Cell \Up \BigCell$. Otherwise $\BigCell \Down \Cell$.
\end{proof}

The next two lemmas show transitivity of $\Up$ and $\Down$ so that we can restrict our attention to alternating sequences of moves.

\begin{lem}
\label{lem:up_transitive}
It never happens that $\Cell_1 \Up \Cell_2 \Up \Cell_3$. In particular, $\Up$ is transitive.
\end{lem}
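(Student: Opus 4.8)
The plan is to derive a contradiction from the existence of a chain $\Cell_1 \Up \Cell_2 \Up \Cell_3$, using only the definition of going up together with the idempotence of $(\,\cdot\,)\Min$ recorded in Observation~\ref{obs:min_min}.

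First I would unwind $\Cell_2 \Up \Cell_3$: by definition this means $\Cell_3\Min = \Cell_2$ and $\Cell_2 \ne \Cell_3$. Applying the ``in particular'' clause of Observation~\ref{obs:min_min} (the minimum of a minimum is itself) gives
\[
\Cell_2\Min = (\Cell_3\Min)\Min = \Cell_3\Min = \Cell_2 \text{ ,}
\]
so $\Cell_2$ is significant.

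Next I would unwind $\Cell_1 \Up \Cell_2$: this means $\Cell_2\Min = \Cell_1$ with $\Cell_1 \ne \Cell_2$. Combining with $\Cell_2\Min = \Cell_2$ from the previous step forces $\Cell_1 = \Cell_2$, contradicting $\Cell_1 \ne \Cell_2$. Hence no chain $\Cell_1 \Up \Cell_2 \Up \Cell_3$ exists. In particular the hypothesis of transitivity (namely $\Cell_1 \Up \Cell_2$ and $\Cell_2 \Up \Cell_3$) is never satisfied, so $\Up$ is (vacuously) transitive.

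There is essentially no obstacle here; the only care needed is in correctly reading off the definition of $\Up$ and in invoking Observation~\ref{obs:min_min}, which itself rests on transitivity of $\horizontal$ (Proposition~\ref{prop:horizontal_properties}~\eqref{item:transitivity}) and on the standing assumption that $\PointAtInfty$ is in general position, ensuring $\Cell\Min$ is defined for every flat cell $\Cell$ via Lemma~\ref{lem:tau_min}.
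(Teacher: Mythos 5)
Your proof is correct and is essentially identical to the paper's: both reduce to the chain of equalities $\Cell_1 = \Cell_2\Min = (\Cell_3\Min)\Min = \Cell_3\Min = \Cell_2$ via Observation~\ref{obs:min_min}, contradicting $\Cell_1 \ne \Cell_2$. You merely spell out the two applications of the definition of $\Up$ a bit more explicitly.
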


\begin{proof}
Suppose $\Cell_1 \Up \Cell_2 \Up \Cell_3$. Then by Observation~\ref{obs:min_min} $\Cell_1 = \Cell_2\Min = (\Cell_3\Min)\Min = \Cell_3\Min = \Cell_2$ contradicting $\Cell_1 \ne \Cell_2$.
\end{proof}

\begin{lem}
\label{lem:down_transitive}
The relation $\Down$ is transitive.
\end{lem}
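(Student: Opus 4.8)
The plan is to unwind both definitions and reduce the statement to Proposition~\ref{prop:horizontal_properties}~\eqref{item:join}, which encodes exactly the monotonicity of $\horizontal$ in its second argument along a chain of faces. Suppose $\BigCell_1 \Down \BigCell_2$ and $\BigCell_2 \Down \BigCell_3$; the goal is $\BigCell_1 \Down \BigCell_3$. By definition the hypotheses give $\BigCell_3 \lneq \BigCell_2 \lneq \BigCell_1$, so in particular $\BigCell_3 \lneq \BigCell_1$, which is the face-containment half of going down. What remains is to rule out $\BigCell_1 \horizontal \BigCell_3$.

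For this I would argue by contradiction. Assuming $\BigCell_1 \horizontal \BigCell_3$, note first that this forces $\BigCell_1$ to be flat (recall $\horizontal$ requires its left-hand cell to be flat). Since moreover $\BigCell_1 \ge \BigCell_2 \ge \BigCell_3$, the ``in particular'' clause of Proposition~\ref{prop:horizontal_properties}~\eqref{item:join} yields $\BigCell_1 \horizontal \BigCell_2$. But $\BigCell_1 \Down \BigCell_2$ says precisely that $\BigCell_1 \horizontal \BigCell_2$ fails, a contradiction. Hence $\BigCell_1 \horizontal \BigCell_3$ does not hold, and together with $\BigCell_3 \lneq \BigCell_1$ this is exactly $\BigCell_1 \Down \BigCell_3$.

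I do not expect a genuine obstacle here; the entire content is Proposition~\ref{prop:horizontal_properties}~\eqref{item:join}, and unlike the transitivity of $\Up$ (Lemma~\ref{lem:up_transitive}, which really used $(\BigCell\Min)\Min = \BigCell\Min$ from Observation~\ref{obs:min_min}) no minimality considerations are needed. The only point to be slightly careful about is the degenerate case in which $\BigCell_1$ is not flat: then $\BigCell_1 \horizontal \BigCell_3$ is impossible outright, so the conclusion holds for free and the contradiction argument above is only invoked when $\BigCell_1$ is flat.
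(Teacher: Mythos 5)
Your proof is correct and is essentially the same as the paper's, with a mirror-image application of Proposition~\ref{prop:horizontal_properties}: you grow the second argument via the ``in particular'' clause of part~\eqref{item:join} to conclude $\BigCell_1 \horizontal \BigCell_2$, contradicting $\BigCell_1 \Down \BigCell_2$, whereas the paper shrinks the first argument via part~\eqref{item:faces} to conclude $\BigCell_2 \horizontal \BigCell_3$, contradicting $\BigCell_2 \Down \BigCell_3$. Either works; your side remark about the flatness degeneracy is unnecessary but not wrong, since if $\BigCell_1$ fails to be flat then $\BigCell_1 \horizontal \BigCell_3$ is already false and the contradiction step is vacuous.
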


\begin{proof}
Assume $\Cell_1 \Down \Cell_2 \Down \Cell_3$ but not $\Cell_1 \Down \Cell_3$. Clearly $\Cell_1 \gneq \Cell_3$. So $\Cell_1 \horizontal \Cell_3$ and by Proposition~\ref{prop:horizontal_properties}~\eqref{item:faces} $\Cell_2 \horizontal \Cell_3$, contradicting $\Cell_2 \Down \Cell_3$.
\end{proof}

Now we approach the proof that the length of an alternating sequences of moves is bounded.

\begin{lem}
If
\[
\Cell_1 \Up \BigCell_1 \Down \Cell_2
\]
then
\[
\Cell_1 = (\Cell_1 \vee \Cell_2)\Min \quad \text{and} \quad \Cell_1 \vee \Cell_2 \Down \Cell_2 \text{ .}
\]
In particular, $\Cell_1 \Up \Cell_1 \vee \Cell_2 \Down \Cell_2$ unless $\Cell_1 \Down \Cell_2$.
\end{lem}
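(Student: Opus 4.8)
The plan is to argue entirely with the combinatorial relation $\horizontal$ and the formal facts collected in Proposition~\ref{prop:horizontal_properties} together with Observation~\ref{obs:min_min}, never appealing to the geometry of $\Space$. Write $\BigCell \defeq \Cell_1 \vee \Cell_2$. First I would make sure this cell is legitimate: the hypothesis $\Cell_1 \Up \BigCell_1$ forces $\BigCell_1$ to be flat (so that $\BigCell_1\Min$ is defined, by Lemma~\ref{lem:tau_min}) and gives $\Cell_1 = \BigCell_1\Min \le \BigCell_1$, while $\BigCell_1 \Down \Cell_2$ gives $\Cell_2 \lneq \BigCell_1$. Hence $\Cell_1$ and $\Cell_2$ are both faces of the single flat cell $\BigCell_1$, so $\BigCell = \Cell_1 \vee \Cell_2$ exists, is a face of $\BigCell_1$, and is flat.

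Next I would establish $\BigCell\Min = \Cell_1$. From $\Cell_1 \Up \BigCell_1$ we have $\BigCell_1 \horizontal \Cell_1$; since $\Cell_1 \le \BigCell \le \BigCell_1$, the ``in particular'' clause of Proposition~\ref{prop:horizontal_properties}~\eqref{item:join} yields $\BigCell_1 \horizontal \BigCell$. Applying Observation~\ref{obs:min_min} to $\BigCell_1 \horizontal \BigCell$ then gives $\BigCell\Min = \BigCell_1\Min = \Cell_1$, which is the first assertion.

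Then I would check $\BigCell \Down \Cell_2$, i.e.\ that $\Cell_2 \lneq \BigCell$ and that not $\BigCell \horizontal \Cell_2$. We have $\Cell_2 \le \BigCell$ by construction. If $\Cell_2 = \BigCell$, then $\Cell_1 \le \Cell_2 \le \BigCell_1$, so Proposition~\ref{prop:horizontal_properties}~\eqref{item:join} applied to $\BigCell_1 \horizontal \Cell_1$ would give $\BigCell_1 \horizontal \Cell_2$, contradicting $\BigCell_1 \Down \Cell_2$; hence $\Cell_2 \lneq \BigCell$. If $\BigCell \horizontal \Cell_2$, then combining with $\BigCell_1 \horizontal \BigCell$ (established above) and transitivity, Proposition~\ref{prop:horizontal_properties}~\eqref{item:transitivity}, gives $\BigCell_1 \horizontal \Cell_2$, again contradicting $\BigCell_1 \Down \Cell_2$. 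So $\BigCell \Down \Cell_2$.

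For the ``in particular'' clause: if $\Cell_1 \ne \BigCell$, then $\BigCell\Min = \Cell_1$ together with $\Cell_1 \ne \BigCell$ is precisely $\Cell_1 \Up \BigCell$, so $\Cell_1 \Up \Cell_1 \vee \Cell_2 \Down \Cell_2$; and if $\Cell_1 = \BigCell$, equivalently $\Cell_2 \le \Cell_1$, then $\BigCell \Down \Cell_2$ reads $\Cell_1 \Down \Cell_2$. Thus one of the two alternatives always holds. The argument is purely formal; the only points needing care are that $\Cell_1 \vee \Cell_2$ really exists and is flat (which is why one should first locate both $\Cell_1$ and $\Cell_2$ inside $\BigCell_1$) and that Observation~\ref{obs:min_min} and each clause of Proposition~\ref{prop:horizontal_properties} are used in the direction in which they are stated — I do not expect a genuine obstacle here.
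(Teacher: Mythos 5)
Your proof is correct and follows essentially the same approach as the paper's: both reduce everything to the formal properties of $\horizontal$ in Proposition~\ref{prop:horizontal_properties} together with Observation~\ref{obs:min_min} (the paper derives $\Cell_1\vee\Cell_2 \horizontal \Cell_1$ via \eqref{item:faces} and you derive $\BigCell_1 \horizontal \Cell_1\vee\Cell_2$ via the ``in particular'' clause of \eqref{item:join}, then both invoke Observation~\ref{obs:min_min}; your case split for $\BigCell \Down \Cell_2$ is a slightly more verbose version of the paper's one-line observation that $\Cell_1 = \BigCell_1\Min \not\le \Cell_2$).
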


\begin{proof}
By Proposition~\ref{prop:horizontal_properties}~\eqref{item:faces} $\Cell_1 \vee \Cell_2 \horizontal \Cell_1$ so $\Cell_1 = (\Cell_1 \vee \Cell_2)\Min$ by Observation~\ref{obs:min_min}. And $\Cell_1 = \BigCell_1\Min \not\le \Cell_2$ whence $\Cell_2 \lneq \Cell_1 \vee \Cell_2$ so that $\Cell_1 \vee \Cell_2 \Down \Cell_2$.
\end{proof}

\begin{lem}
\label{lem:min_in_vertical_link}
If $\BigCell \ge \Cell$ are flat cells then $(\BigCell\Min \vee \Cell) \direction \Cell \subseteq \Link\Ver \Cell$.
\end{lem}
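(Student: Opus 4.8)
We argue by induction on $k \defeq \dim(\BigCell\Min \vee \Cell) - \dim\Cell$. If $k = 0$ then $\BigCell\Min \vee \Cell = \Cell$ and $(\BigCell\Min\vee\Cell)\direction\Cell$ is the empty simplex, so there is nothing to prove. Assume $k \ge 1$; then $\BigCell\Min \not\le \Cell$, and $\Another\BigCell \defeq \BigCell\Min \vee \Cell$ is a flat coface of $\Cell$ with $\dim\Another\BigCell - \dim\Cell = k$. Since $\BigCell \horizontal \BigCell\Min$ and $\BigCell \ge \Another\BigCell \ge \BigCell\Min$, Proposition~\ref{prop:horizontal_properties}~\eqref{item:join} gives $\BigCell \horizontal \Another\BigCell$, and then Observation~\ref{obs:min_min} gives $(\Another\BigCell)\Min = \BigCell\Min$. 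Note also that $\Another\BigCell\direction\Cell$ is a simplex of the flag complex $\Link\Cell$ whose vertices are the cofacets of $\Cell$ lying below $\Another\BigCell$, and that, $\Another\BigCell$ being flat, each such vertex is perpendicular to the north pole $n_\Cell$ of $\Link\Cell$.

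Now fix a vertex $\Another\Vertex$ of $\Another\BigCell\direction\Cell$ and let $\Another\Cell$ be the corresponding cofacet of $\Cell$, so $\Cell \lneq \Another\Cell \le \Another\BigCell$ and $\Another\Cell$ is flat. As $\BigCell\Min \vee \Another\Cell$ lies between $\BigCell\Min \vee \Cell = \Another\BigCell$ and $\Another\BigCell$, we have $\Another\BigCell = (\Another\BigCell)\Min \vee \Another\Cell$; since $\dim\Another\BigCell - \dim\Another\Cell = k - 1 < k$, the inductive hypothesis applied to the pair $\Another\Cell \le \Another\BigCell$ gives
\[
\Another\BigCell \direction \Another\Cell \subseteq \Link\Ver \Another\Cell \text{ .}
\]
The next step is to transport this inclusion through the canonical isometry $\Link\Another\Cell \cong \Link_{\Link\Cell}(\Another\Vertex)$ attached to $\Cell \le \Another\Cell$ (where $\Another\Vertex = \Another\Cell \direction \Cell$). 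This isometry carries $\Another\BigCell\direction\Another\Cell$ onto the link of $\Another\Vertex$ in the simplex $\Another\BigCell\direction\Cell$, and — because $\Another\Cell$ is flat, so $\Another\Vertex \perp n_\Cell$ — it carries the north pole $n_{\Another\Cell}$ of $\Link\Another\Cell$ to the direction at $\Another\Vertex$ toward $n_\Cell$. Using the join decomposition $\Link\Cell = \Link\Hor\Cell * \Link\Ver\Cell$ and the identity $\Link_{A*B}(\Another\Vertex) = \Link_A(\Another\Vertex) * B$ for the link of a vertex in a join, a short computation — distinguishing the cases $\Another\Vertex \in \Link\Hor\Cell$ and $\Another\Vertex \in \Link\Ver\Cell$ — shows that, for a vertex $\AltVertex \ne \Another\Vertex$ of $\Another\BigCell\direction\Cell$, the direction at $\Another\Vertex$ toward $\AltVertex$ lies in $\Link\Ver\Another\Cell$ if and only if $\AltVertex \in \Link\Ver\Cell$. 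Together with the displayed inclusion this proves that every vertex of $\Another\BigCell\direction\Cell$ other than $\Another\Vertex$ lies in $\Link\Ver\Cell$.

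If $\Another\BigCell\direction\Cell$ has at least two vertices, running the previous paragraph for two different choices of $\Another\Vertex$ shows that all its vertices lie in $\Link\Ver\Cell$, and since $\Link\Ver\Cell$ is a full subcomplex of the flag complex $\Link\Cell$ this gives $\Another\BigCell\direction\Cell \subseteq \Link\Ver\Cell$. If instead $\Another\BigCell\direction\Cell$ is the single vertex $\Another\Vertex$, i.e.\ $\Another\BigCell$ is itself a cofacet of $\Cell$, then $\Another\Vertex \in \Link\Hor\Cell$ would mean $\Another\BigCell \horizontal \Cell$, hence $\BigCell\Min = (\Another\BigCell)\Min = \Cell\Min \le \Cell$ by Observation~\ref{obs:min_min}, contradicting $\BigCell\Min \not\le \Cell$; so $\Another\Vertex \in \Link\Ver\Cell$, as wanted. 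The technical heart is the link-of-link computation of the previous paragraph, namely checking that the isometry matches the north poles (this is the only place flatness of $\Another\Cell$ enters) and is compatible with the horizontal/vertical splittings; as elsewhere in this section, the product structure of $\Space$ and the general-position hypothesis are used only through Lemma~\ref{lem:tau_min}, Observation~\ref{obs:min_min} and Proposition~\ref{prop:horizontal_properties}, so no reduction to the irreducible case is needed.
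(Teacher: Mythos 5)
Your argument is correct in its conclusion but takes a genuinely different and substantially longer route than the paper. The paper's proof is a three-step direct argument with no induction: write $(\BigCell\Min\vee\Cell)\direction\Cell = (\Cell_v\direction\Cell)\vee(\Cell_h\direction\Cell)$ according to the join decomposition $\Link\Cell = \Link\Ver\Cell * \Link\Hor\Cell$; since $\Cell_h\horizontal\Cell$, Proposition~\ref{prop:horizontal_properties}~\eqref{item:join} gives $\BigCell\Min\vee\Cell \horizontal \Cell_v$; combining with $\BigCell\horizontal\BigCell\Min\vee\Cell$ via transitivity \eqref{item:transitivity} gives $\BigCell\horizontal\Cell_v$, so $\BigCell\Min\le\Cell_v$ and hence $\Cell_v = \BigCell\Min\vee\Cell$. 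Your induction on codimension, transporting verticality through the link-of-link isometry and Observation~\ref{obs:non_equatorial_in_link_and_link_of_link}, is also a valid idea, but it replicates by hand machinery that the paper has already packaged into Proposition~\ref{prop:horizontal_properties}, whose whole purpose in this section is precisely to let one avoid this kind of explicit link-of-link computation.

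One intermediate claim is stated too strongly. You assert that for a vertex $\AltVertex\ne\Another\Vertex$ of $\Another\BigCell\direction\Cell$, ``the direction at $\Another\Vertex$ toward $\AltVertex$ lies in $\Link\Ver\Another\Cell$ \emph{if and only if} $\AltVertex\in\Link\Ver\Cell$.'' The forward implication of this equivalence does not follow from the join identity $\Link_{A*B}(\Another\Vertex) = \Link_A(\Another\Vertex)*B$ together with Observation~\ref{obs:non_equatorial_in_link_and_link_of_link}: when $\Another\Vertex$ sits in a vertical irreducible factor $\Lambda_1$ of $\Link\Cell$, removing $\Another\Vertex$ can split $\Lambda_1$ into several sub-factors, and a sub-factor all of whose vertices are equatorial becomes horizontal in $\Link\Another\Cell$ even though $\Lambda_1$ was vertical. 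So a vertex $\AltVertex\in\Link\Ver\Cell$ can perfectly well correspond to a direction in $\Link\Hor\Another\Cell$. What is true without exception, by the argument you gesture at, is the converse: a horizontal factor of $\Link\Cell$ not containing $\Another\Vertex$ persists as a horizontal factor of $\Link\Another\Cell$, so $\AltVertex\in\Link\Hor\Cell$ forces the induced direction into $\Link\Hor\Another\Cell$, equivalently a direction in $\Link\Ver\Another\Cell$ forces $\AltVertex\in\Link\Ver\Cell$. Fortunately this is exactly the implication your proof uses, so the conclusion is unaffected; but the ``if and only if'' should be dropped and only the needed direction asserted and proved.
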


\begin{proof}
Write $(\BigCell\Min \vee \Cell) \direction \Cell = (\Cell_v \direction \Cell) \vee (\Cell_h \direction \Cell)$ with $\Cell_v \direction \Cell \subseteq \Link\Ver \Cell$ and $\Cell_h \direction \Cell \subseteq \Link\Hor \Cell$. We want to show that $\Cell_v = \BigCell\Min \vee \Cell$.

Since $\Cell_h \horizontal \Cell$, Proposition~\ref{prop:horizontal_properties}~\eqref{item:join} implies
\[
\BigCell\Min \vee \Cell = \Cell_h \vee \Cell_v \horizontal \Cell \vee \Cell_v = \Cell_v \text{ .}
\]
And since $\BigCell \horizontal \BigCell\Min \vee \Cell$, Proposition~\ref{prop:horizontal_properties}~\eqref{item:transitivity} implies $\BigCell \horizontal \Cell_v$. Thus $\BigCell\Min \le \Cell_v$ as desired.
\end{proof}

\begin{cor}
If
\[
\Cell_1 \Up \BigCell_1 \Down \Cell_2 \Up \BigCell_2
\]
then $\BigCell_2 \vee \Cell_1$ exists.
\end{cor}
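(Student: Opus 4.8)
The plan is to exhibit a cell of $\OneSpace$ that is simultaneously a coface of $\BigCell_2$ and of $\Cell_1$; by the definition of the join, producing such a common coface is exactly the assertion that $\BigCell_2 \vee \Cell_1$ exists. First I would record the consequences of the hypotheses. Since $\BigCell_1\Min$ and $\BigCell_2\Min$ are defined, the cells $\BigCell_1$ and $\BigCell_2$ are flat, with $\BigCell_1\Min = \Cell_1$ and $\BigCell_2\Min = \Cell_2$; moreover $\Cell_2 \lneq \BigCell_1$, so $\Cell_2$ is flat, being a face of the flat cell $\BigCell_1$. Also $\Cell_1$ and $\Cell_2$ are both faces of $\BigCell_1$, so the join $\Cell_1 \vee \Cell_2$ exists and is itself a (flat) face of $\BigCell_1$.

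The heart of the argument is to work inside $\Link \Cell_2$. This link is a spherical join of spherical buildings, hence a simplicial complex, and it carries the join decomposition $\Link \Cell_2 = \Link\Hor \Cell_2 * \Link\Ver \Cell_2$ into subcomplexes, the north pole being the direction at $\Cell_2$ defined by $\PointAtInfty$, as in \eqref{eq:link_decomposes_as_hor_join_ver}. I would locate two simplices, one in each join factor. On the horizontal side, Lemma~\ref{lem:tau_min} gives $\BigCell_2 \horizontal \BigCell_2\Min = \Cell_2$, that is, $\BigCell_2 \direction \Cell_2 \subseteq \Link\Hor \Cell_2$. On the vertical side, applying Lemma~\ref{lem:min_in_vertical_link} to the flat cells $\Cell_2 \le \BigCell_1$ gives $(\Cell_1 \vee \Cell_2) \direction \Cell_2 = (\BigCell_1\Min \vee \Cell_2) \direction \Cell_2 \subseteq \Link\Ver \Cell_2$. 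In the join of two simplicial complexes a simplex of the first factor and a simplex of the second together span a simplex of the join, so $(\BigCell_2 \direction \Cell_2) \vee \big((\Cell_1 \vee \Cell_2) \direction \Cell_2\big)$ is a cell of $\Link \Cell_2$.

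Finally I would transport this back via the order isomorphism $\BigCell \mapsto \BigCell \direction \Cell_2$ between the poset of cofaces of $\Cell_2$ and the poset of nonempty cells of $\Link \Cell_2$. The cell of $\Link \Cell_2$ just found corresponds to a coface $\BigCell'$ of $\Cell_2$ with $\BigCell' \ge \BigCell_2$ and $\BigCell' \ge \Cell_1 \vee \Cell_2 \ge \Cell_1$, so $\BigCell'$ is a common coface of $\BigCell_2$ and $\Cell_1$, and $\BigCell_2 \vee \Cell_1$ exists. I do not expect a serious obstacle; the only points demanding care are the bookkeeping that the cells $\Cell_2$ and $\Cell_1 \vee \Cell_2$ are flat (needed so that the north pole of $\Link \Cell_2$ is defined and so that Lemma~\ref{lem:min_in_vertical_link} applies) and the mild structural remark that $\Link \Cell_2$ is a simplicial complex, which is what makes the join-of-simplices step legitimate.
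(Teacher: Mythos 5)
Your proof is correct and follows essentially the same route as the paper's: you decompose $\Link\Cell_2$ as $\Link\Hor\Cell_2 * \Link\Ver\Cell_2$, place $\BigCell_2\direction\Cell_2$ in the horizontal factor (from $\Cell_2 \Up \BigCell_2$) and $(\Cell_1\vee\Cell_2)\direction\Cell_2$ in the vertical factor via Lemma~\ref{lem:min_in_vertical_link}, and conclude that their join exists. The extra bookkeeping you supply (flatness of $\Cell_2$ and $\Cell_1\vee\Cell_2$, simpliciality of $\Link\Cell_2$) is exactly the implicit scaffolding of the paper's three-line proof made explicit.
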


\begin{proof}
By assumption $\BigCell_2 \direction \Cell_2 \subseteq \Link\Hor \Cell_2$. Since $\Cell_1 = \BigCell_1\Min$, Lemma~\ref{lem:min_in_vertical_link} shows that $(\Cell_1 \vee \Cell_2) \direction \Cell_2 \subseteq \Link\Ver \Cell_2$. Hence $(\BigCell_2 \direction \Cell_2) \vee ((\Cell_1 \vee \Cell_2) \direction \Cell_2)$ exists and so does $\Cell_1 \vee \BigCell_2$.
\end{proof}

\begin{lem}
\label{lem:cell1_bigcell2_min}
If
\[
\Cell_1 \Up \BigCell_1 \Down \Cell_2 \Up \BigCell_2
\]
then $(\BigCell_2 \vee \Cell_1)\Min = \Cell_1$.
\end{lem}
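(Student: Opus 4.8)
The plan is to prove $\BigCell_2 \vee \Cell_1 \horizontal \Cell_1$ and then read off the claim from Observation~\ref{obs:min_min}. Two facts established above serve as input: by the lemma preceding Lemma~\ref{lem:min_in_vertical_link} we have $\Cell_1 = (\Cell_1 \vee \Cell_2)\Min$, so in particular $\Cell_1 \vee \Cell_2 \horizontal \Cell_1$; and by the corollary preceding the present lemma the join $\BigCell_2 \vee \Cell_1$ exists. Since $\Cell_2 \le \BigCell_2$ we have $\BigCell_2 \vee \Cell_1 = \BigCell_2 \vee (\Cell_1 \vee \Cell_2)$. Finally, $\Cell_1 = \BigCell_1\Min$ is significant: by Observation~\ref{obs:min_min}, $\Cell_1\Min = (\BigCell_1\Min)\Min = \BigCell_1\Min = \Cell_1$.

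The first step is to check that $\BigCell_2 \vee \Cell_1$ is flat, which is needed in order to invoke Proposition~\ref{prop:horizontal_properties}~\eqref{item:join}. The cell $\Cell_1 \vee \Cell_2$ is a face of $\BigCell_1$ (it is the join of two faces of $\BigCell_1$), and $\BigCell_1$ is flat because $\BigCell_1 \horizontal \Cell_1$; hence $\Cell_1 \vee \Cell_2$ is flat. Likewise $\BigCell_2$ is flat because $\BigCell_2 \horizontal \Cell_2$. These two flat cells share the face $\Cell_2$, and $\Busemann$ takes the common value $\Busemann(\Cell_2)$ on both. Writing the product cell $\BigCell_2 \vee (\Cell_1 \vee \Cell_2)$ as $\prod_i(\cdot)_i$ and using $\FaceLattice(\prod_i \Cell_i)_{>\emptyset} = \prod_i \FaceLattice(\Cell_i)_{>\emptyset}$, the question reduces, factor by factor, to the elementary statement that an affine form on a simplex that is constant on two faces meeting in a common vertex is constant on their join (here the two faces are the respective factors of $\BigCell_2$ and of $\Cell_1 \vee \Cell_2$, which share the nonempty factor of $\Cell_2$, this nonemptiness being where general position of $\PointAtInfty$ enters). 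Since $\Busemann$ is affine on the product cell, this shows $\Busemann$ is constant on $\BigCell_2 \vee \Cell_1$, i.e.\ that cell is flat. (Alternatively one can argue via directions: $\BigCell_2 \direction \Cell_2 \subseteq \Link\Hor\Cell_2$, while flatness of $\Cell_1 \vee \Cell_2$ together with Lemma~\ref{lem:min_in_vertical_link} places $(\Cell_1 \vee \Cell_2)\direction\Cell_2$ in $\Link\Ver\Cell_2$ and in the equator, so $(\BigCell_2 \vee \Cell_1)\direction\Cell_2$ lies in the equatorial link, which is full.)

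With flatness in hand, I would apply Proposition~\ref{prop:horizontal_properties}~\eqref{item:join} to $\BigCell_2 \horizontal \Cell_2$ with the additional cell $\Cell_1 \vee \Cell_2$: since $\BigCell_2 \vee (\Cell_1 \vee \Cell_2)$ exists and is flat, the proposition gives $\BigCell_2 \vee (\Cell_1 \vee \Cell_2) \horizontal \Cell_2 \vee (\Cell_1 \vee \Cell_2) = \Cell_1 \vee \Cell_2$, that is, $\BigCell_2 \vee \Cell_1 \horizontal \Cell_1 \vee \Cell_2$. Combining with $\Cell_1 \vee \Cell_2 \horizontal \Cell_1$ via transitivity of $\horizontal$ (Proposition~\ref{prop:horizontal_properties}~\eqref{item:transitivity}) yields $\BigCell_2 \vee \Cell_1 \horizontal \Cell_1$. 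By Observation~\ref{obs:min_min} this gives $(\BigCell_2 \vee \Cell_1)\Min = \Cell_1\Min$, and since $\Cell_1$ is significant, $(\BigCell_2 \vee \Cell_1)\Min = \Cell_1$, as desired.

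The hard part is the flatness step: it is the only point at which one has to step outside the purely formal calculus of the relation $\horizontal$ (Proposition~\ref{prop:horizontal_properties}, Observation~\ref{obs:min_min}) and argue geometrically, using the product structure of $\Space$ and the affinity of $\Busemann$ on cells. Everything else is bookkeeping with properties already proved; in particular, no new facts about Euclidean buildings are required.
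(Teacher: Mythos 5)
Your proof is correct and follows essentially the same route as the paper: apply Proposition~\ref{prop:horizontal_properties}~\eqref{item:join} to push $\BigCell_2 \horizontal \Cell_2$ up to the join, then transfer the result to $\Cell_1$ via Observation~\ref{obs:min_min}. The only cosmetic divergence is how you make that transfer: you invoke $\Cell_1 = (\Cell_1 \vee \Cell_2)\Min$ from the preceding unnamed lemma and then chain via transitivity of $\horizontal$, whereas the paper gets $(\Cell_1 \vee \Cell_2)\Min = \Cell_1$ directly from $\BigCell_1 \horizontal \Cell_1 \vee \Cell_2$ and $\BigCell_1\Min = \Cell_1$, using Observation~\ref{obs:min_min} twice. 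Both are correct bookkeeping.

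What you add is a genuine improvement: you explicitly verify that $\BigCell_2 \vee \Cell_1$ is flat before invoking Proposition~\ref{prop:horizontal_properties}~\eqref{item:join}, which does list flatness as a hypothesis. The paper's proof of this lemma silently omits that check (though it does follow from what the proof of the preceding corollary establishes: $\BigCell_2 \direction \Cell_2$ and $(\Cell_1 \vee \Cell_2) \direction \Cell_2$ both lie in the equator complex of $\Link \Cell_2$, the former via $\Link\Hor \Cell_2$ and the latter by flatness of $\Cell_1 \vee \Cell_2 \le \BigCell_1$, and the equator complex is full). Your parenthetical ``alternative via directions'' is precisely that argument and is the cleaner of your two; your factor-by-factor affine argument is also correct and is the more self-contained if one does not want to lean on the fullness of the equator complex. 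Either way, supplying this step is the right instinct.
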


\begin{proof}
By assumption $\BigCell_2 \horizontal \Cell_2$ so Proposition~\ref{prop:horizontal_properties}~\eqref{item:join} implies that ${\BigCell_2 \vee \Cell_1} \horizontal {\Cell_2 \vee \Cell_1}$. Moreover, $\BigCell_1 \horizontal \Cell_2 \vee \Cell_1$ so $(\Cell_2 \vee \Cell_1)\Min = \BigCell\Min = \Cell_1$ by Observation~\ref{obs:min_min}. Thus $(\BigCell_2 \vee \Cell_1)\Min = \Cell_1$ again by Observation~\ref{obs:min_min}.
\end{proof}

\begin{lem}
\label{lem:shortening}
An alternating chain
\[
\Cell_1 \Up \BigCell_1 \Down \Cell_2 \Up \BigCell_2
\]
can be shortened to either
\[
\Cell_1 \Up \Cell_1 \vee \BigCell_2 \Down \BigCell_2 \quad \text{or} \quad \Cell_1 \Up \BigCell_1 \Down \BigCell_2 \text{ .}
\]
\end{lem}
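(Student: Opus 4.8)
The plan is to feed off the two results immediately preceding, namely that $\Cell_1 \vee \BigCell_2$ exists (the corollary before Lemma~\ref{lem:cell1_bigcell2_min}) and that $(\Cell_1 \vee \BigCell_2)\Min = \Cell_1$ (Lemma~\ref{lem:cell1_bigcell2_min}), combined with the characterization from Lemma~\ref{lem:tau_min} that for a flat cell $\BigCell$ and a face $\Cell \le \BigCell$ one has $\BigCell \horizontal \Cell$ if and only if $\BigCell\Min \le \Cell$. First I would record the non-degeneracies hidden in the chain: from $\BigCell_1 \Down \Cell_2$ and Lemma~\ref{lem:tau_min} we get $\Cell_1 = \BigCell_1\Min \not\le \Cell_2$, in particular $\Cell_1 \ne \Cell_2$; and $\BigCell_2 \ne \Cell_1$, since $\BigCell_2 = \Cell_1$ would force $\Cell_2 = \BigCell_2\Min = \Cell_1\Min = (\BigCell_1\Min)\Min = \BigCell_1\Min = \Cell_1$ by Observation~\ref{obs:min_min}.

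Then I would split according to whether $\BigCell_2 \le \Cell_1$. In the case $\BigCell_2 \le \Cell_1$, I claim the chain collapses to $\Cell_1 \Up \BigCell_1 \Down \BigCell_2$: the first move is given, and for the second we have $\BigCell_2 \lneq \Cell_1 \lneq \BigCell_1$ (using $\BigCell_2 \ne \Cell_1$ and $\Cell_1 \ne \BigCell_1$), so $\BigCell_2 \lneq \BigCell_1$, while $\BigCell_1 \horizontal \BigCell_2$ would mean $\Cell_1 = \BigCell_1\Min \le \BigCell_2$, contradicting $\BigCell_2 \lneq \Cell_1$; hence $\BigCell_1 \Down \BigCell_2$.

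In the case $\BigCell_2 \not\le \Cell_1$, I claim the chain collapses to $\Cell_1 \Up \Cell_1 \vee \BigCell_2 \Down \BigCell_2$. Here $\BigCell_2 \not\le \Cell_1$ gives $\Cell_1 \lneq \Cell_1 \vee \BigCell_2$, which together with $(\Cell_1 \vee \BigCell_2)\Min = \Cell_1$ yields $\Cell_1 \Up \Cell_1 \vee \BigCell_2$. For the down move: $\Cell_1 \le \BigCell_2$ would give $\Cell_1 \vee \BigCell_2 = \BigCell_2$, hence $\Cell_1 = (\Cell_1 \vee \BigCell_2)\Min = \BigCell_2\Min = \Cell_2$, contradicting $\Cell_1 \ne \Cell_2$; so $\Cell_1 \not\le \BigCell_2$ and therefore $\BigCell_2 \lneq \Cell_1 \vee \BigCell_2$, while $(\Cell_1 \vee \BigCell_2) \horizontal \BigCell_2$ would mean $\Cell_1 = (\Cell_1 \vee \BigCell_2)\Min \le \BigCell_2$, a contradiction; hence $\Cell_1 \vee \BigCell_2 \Down \BigCell_2$.

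All the inequalities involved are light bookkeeping, so I do not expect a genuine obstacle; the only thing to be careful about is that both moves in each shortened chain are \emph{genuine} moves (the cells are distinct and properly nested, and the "$\horizontal$" fails), which is exactly what the non-degeneracy observations in the first step are there to supply. A minor point to double-check while writing it out is that $\Cell_1 \vee \BigCell_2$ is flat, so that $(\Cell_1 \vee \BigCell_2)\Min$ and the relation $(\Cell_1 \vee \BigCell_2)\horizontal\BigCell_2$ are meaningful — but this is built into the preceding corollary and Lemma~\ref{lem:cell1_bigcell2_min}.
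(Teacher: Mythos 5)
Your proof is correct and follows essentially the same route as the paper: both rely on the preceding corollary (that $\Cell_1 \vee \BigCell_2$ exists) and Lemma~\ref{lem:cell1_bigcell2_min} (that $(\Cell_1 \vee \BigCell_2)\Min = \Cell_1$), and both split on the same dichotomy---your case distinction $\BigCell_2 \le \Cell_1$ versus $\BigCell_2 \not\le \Cell_1$ is exactly the paper's distinction between $\Cell_1 = \Cell_1 \vee \BigCell_2$ and $\Cell_1 \ne \Cell_1 \vee \BigCell_2$. Your write-up simply records the non-degeneracies ($\Cell_1 \ne \Cell_2$, $\BigCell_2 \ne \Cell_1$) more explicitly up front, whereas the paper derives $\Cell_1 \ne \Cell_2$ directly from Observation~\ref{obs:no_up_down_cycle} and leaves the verification that the horizontal condition fails for each down-move slightly more implicit.
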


\begin{proof}
We know by assumption that $\BigCell_2\Min = \Cell_2$ and Lemma~\ref{lem:cell1_bigcell2_min} implies $(\Cell_1 \vee \BigCell_2)\Min = \Cell_1$. Since by Observation~\ref{obs:no_up_down_cycle} $\Cell_1 \ne \Cell_2$ this implies $\BigCell_2 \ne \Cell_1 \vee \BigCell_2$ so that $\Cell_1 \vee \BigCell_2 \Down \BigCell_2$.

If $\Cell_1 \ne \Cell_1 \vee \BigCell_2$, then $\Cell_1 \Up \Cell_1 \vee \BigCell_2$. If $\Cell_1 = \Cell_1 \vee \BigCell_2$, then $\BigCell_1 \gneq \Cell_1 \gneq \BigCell_2$ so that $\BigCell_1 \Down \BigCell_2$.
\end{proof}

\begin{cor}
\label{cor:no_cycles}
No sequence of moves enters a cycle.
\end{cor}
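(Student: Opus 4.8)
The plan is to argue by contradiction via a minimal counterexample, feeding entirely on the lemmas already proved in this section. Suppose some sequence of moves enters a cycle. Then there is a closed chain of moves $\Cell_0 \to \Cell_1 \to \cdots \to \Cell_n = \Cell_0$ with $n \ge 1$; I would pick such a chain with $n$ minimal. Since every move (going up or going down) relates two \emph{distinct} cells, automatically $n \ge 2$.

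The first step is to show that in a minimal cycle the moves alternate between $\Up$ and $\Down$, read cyclically. Two cyclically consecutive moves cannot both be $\Up$, since that would give $\Cell_{i-1} \Up \Cell_i \Up \Cell_{i+1}$, which Lemma~\ref{lem:up_transitive} forbids. They cannot both be $\Down$ either: by Lemma~\ref{lem:down_transitive} one could then delete the intermediate cell $\Cell_i$ and obtain a strictly shorter cycle, contradicting minimality. Hence the types of consecutive moves alternate around the cycle, which forces $n$ to be even; in particular $n=2$ or $n\ge 4$.

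If $n = 2$, the cycle consists of one up-move and one down-move between two cells; in whichever orientation this is written it produces cells with $\Cell \Up \BigCell$ and $\BigCell \Down \Cell$, contradicting Observation~\ref{obs:no_up_down_cycle}. If $n \ge 4$, the cyclic alternation guarantees three cyclically consecutive moves of the shape $\Cell_1 \Up \BigCell_1 \Down \Cell_2 \Up \BigCell_2$. By Lemma~\ref{lem:shortening} this segment can be replaced by a two-move segment from $\Cell_1$ to $\BigCell_2$ (either $\Cell_1 \Up (\Cell_1 \vee \BigCell_2) \Down \BigCell_2$ or $\Cell_1 \Up \BigCell_1 \Down \BigCell_2$). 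Substituting this back into the cycle yields a closed chain of moves of length $n-1 \ge 2$, again contradicting minimality. This exhausts all cases, so no cycle can occur.

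All the mathematical content is carried by the cited results — transitivity of $\Up$ and of $\Down$, the shortening lemma, and the impossibility of a length-two up–down loop — so the only thing to be careful about is the elementary parity bookkeeping: that a cyclically alternating closed chain has even length, and that when this length is at least four one can actually locate the pattern $\Up\Down\Up$ that Lemma~\ref{lem:shortening} requires. Both follow at once from chasing the move types around the cycle, so I anticipate no real obstacle.
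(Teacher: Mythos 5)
Your argument is correct and follows the same route as the paper: pass to a cycle of minimal length, use transitivity of $\Up$ and $\Down$ to force alternation, then invoke Lemma~\ref{lem:shortening} to rule out more than one up-move and Observation~\ref{obs:no_up_down_cycle} to dispatch the remaining length-two case. You have merely spelled out the parity bookkeeping that the paper's terse phrasing (``it can go up at most once'') leaves implicit.
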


\begin{proof}
Since $\Up$ and $\Down$ are transitive by Lemma~\ref{lem:up_transitive} respectively \ref{lem:down_transitive}, a cycle of minimal length must be alternating. Thus by Lemma~\ref{lem:shortening} it can go up at most once. But then it would have to be of the form ruled out by Observation~\ref{obs:no_up_down_cycle}.
\end{proof}

\begin{lem}
\label{lem:vee_of_lower_terms_exists}
If
\[
\Cell_1 \Up \BigCell_1 \Down \cdots \Down \Cell_{k-1} \Up \BigCell_{k-1} \Down \Cell_k
\]
then $\Cell_1 \vee \cdots \vee \Cell_k$ exists.
\end{lem}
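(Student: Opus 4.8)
The plan is to deduce this from the shortening result Lemma~\ref{lem:shortening} together with the ``polyflag'' property of products of flag complexes. By Observation~\ref{obs:polyflag} it suffices to show that $\Cell_l \vee \Cell_m$ exists for every pair of indices $1 \le l < m \le k$; the full join $\Cell_1 \vee \cdots \vee \Cell_k$ then exists automatically. So I would fix such $l$ and $m$ and concentrate on the sub-chain
\[
\Cell_l \Up \BigCell_l \Down \Cell_{l+1} \Up \BigCell_{l+1} \Down \cdots \Up \BigCell_{m-1} \Down \Cell_m \text{ ,}
\]
which is again an alternating sequence of moves, with first term $\Cell_l$ and last term $\Cell_m$.

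The main work is to collapse this sub-chain down to length two, which I would do by induction on $m-l$. If $m=l+1$ the sub-chain already has the form $\Cell_l \Up \BigCell_l \Down \Cell_m$ and nothing needs to be done. If $m>l+1$, I would apply Lemma~\ref{lem:shortening} to the initial four terms $\Cell_l \Up \BigCell_l \Down \Cell_{l+1} \Up \BigCell_{l+1}$: these can be replaced by $\Cell_l \Up \BigCell' \Down \BigCell_{l+1}$, where $\BigCell'$ is either $\BigCell_l$ or $\Cell_l \vee \BigCell_{l+1}$. The next move in the sub-chain is $\BigCell_{l+1} \Down \Cell_{l+2}$, so, since $\Down$ is transitive (Lemma~\ref{lem:down_transitive}), it composes with $\BigCell' \Down \BigCell_{l+1}$ to give $\BigCell' \Down \Cell_{l+2}$. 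Hence the sub-chain becomes
\[
\Cell_l \Up \BigCell' \Down \Cell_{l+2} \Up \BigCell_{l+2} \Down \cdots \Up \BigCell_{m-1} \Down \Cell_m \text{ ,}
\]
an alternating chain with the same endpoints $\Cell_l$ and $\Cell_m$ but with one fewer intermediate $\Cell$. By the inductive hypothesis it collapses further to a chain $\Cell_l \Up \BigCell \Down \Cell_m$. Now $\Cell_l \Up \BigCell$ gives $\Cell_l \le \BigCell$ and $\BigCell \Down \Cell_m$ gives $\Cell_m \lneq \BigCell$, so $\Cell_l$ and $\Cell_m$ are both faces of the single cell $\BigCell$, and therefore $\Cell_l \vee \Cell_m$ exists. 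This proves the claim for all $l<m$, and the lemma then follows from Observation~\ref{obs:polyflag}.

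I do not expect a genuine obstacle here; the real content is already packaged into Lemma~\ref{lem:shortening}. The only points requiring a little care are re-establishing the strictly alternating pattern after each application of Lemma~\ref{lem:shortening} — which is exactly what transitivity of $\Down$ is for — and verifying that the composed move $\BigCell' \Down \Cell_{l+2}$ is indeed a ``going down'' move; this is immediate since $\Cell_{l+2} \lneq \BigCell_{l+1} \le \BigCell'$ while $\BigCell' \horizontal \Cell_{l+2}$ would force $\BigCell_{l+1} \horizontal \Cell_{l+2}$ by Proposition~\ref{prop:horizontal_properties}~\eqref{item:faces}, contradicting $\BigCell_{l+1} \Down \Cell_{l+2}$. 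If anything is delicate it is keeping the bookkeeping of endpoints straight once the chain has become very short, but the base case $m=l+1$ already handles the extremal situation.
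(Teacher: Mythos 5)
Your proof is correct and follows the same route as the paper's: collapse the alternating chain from $\Cell_l$ to $\Cell_m$ one step at a time using Lemma~\ref{lem:shortening} together with transitivity of $\Down$ (Lemma~\ref{lem:down_transitive}), arrive at $\Cell_l \Up \BigCell \Down \Cell_m$ so that both are faces of $\BigCell$, and then assemble all pairwise joins into the full join via Observation~\ref{obs:polyflag}. You merely spell out the induction (on $m-l$) that the paper leaves implicit when it says ``longer chains can be shortened.''
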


\begin{proof}
First we show by induction that $\Cell_1 \vee \Cell_k$ exists. If $k=2$ this is obvious. Longer chains can be shortened using Lemma~\ref{lem:shortening}.

Applying this argument to subsequences we see that $\Cell_i \vee \Cell_j$ exists for any two indices $i$ and $j$. So by Observation~\ref{obs:polyflag} $\Cell_1 \vee \cdots \vee \Cell_k$ exists.
\end{proof}

\begin{proof}[Proof of Proposition \ref{prop:bound_on_moves}]
We first consider the case of alternating sequences. By Lemma~\ref{lem:vee_of_lower_terms_exists} for any alternating sequence of moves, the join of the lower elements (to which there is a move going down or from which there is a move going up) exists. This cell has at most $N \defeq \prod_i (2^{\dim(\EBuilding_i)+1}-1)$ non-empty faces. Since by Corollary~\ref{cor:no_cycles} the alternating sequence cannot contain any cycles $N$, is also the maximal number of moves.

The maximal number of successive moves down is $\dim(\Space) = \sum_{i} \dim(\EBuilding_i)$ while there are no two consecutive moves up by Lemma~\ref{lem:up_transitive}.

So an arbitrary sequence of moves can go up at most $N$ times and can go down at most $\dim(\Space) \cdot N$ times making $(\dim(\Space)+1) \cdot N$ a bound on its length (counting moves, not cells).
\end{proof}

\subsection*{Proof of Proposition \ref{prop:horizontal_properties} Using Spherical Geometry}

In this paragraph we prove Proposition~\ref{prop:horizontal_properties} using spherical geometry. This proof has the advantage of being elementary but on the other hand it is quite technical. The main tool will be the equivalence $\eqref{item:vertex_in_horizontal_link} \equiv \eqref{item:non_equatorial_pi/2_away}$ of Lemma~\ref{lem:horizontal_criterion}:

\begin{reminder}
\label{reminder:metric_horizontal_criterion}
Let $\SBuilding$ be a spherical building with north pole $n$. Let $\Vertex \in \SBuilding$ be a vertex. These are equivalent:
\begin{enumerate}
\item $\Vertex \in \SBuilding\Hor$.\label{item:mrem_vertex_in_horizontal_link}
\item $\SpherDistance(\Vertex,\AltVertex) = \pi/2$ for every non-equatorial vertex $\AltVertex$ adjacent to $\Vertex$.\label{item:mrem_non_equatorial_pi/2_away}
\end{enumerate}
\end{reminder}

We will repeatedly be in the situation where we have a cell $\Cell$ and two adjacent vertices $\Vertex$ and $\AltVertex$ and want to compare $\angle_{\Cell}(\Cell \vee \Vertex,\Cell \vee \AltVertex)$ to $d(\Vertex,\AltVertex)$. In the case where $\Cell$ is a vertex, this is essentially a $2$-dimensional problem and comes down to a statement about spherical triangles. For higher dimensional cells it is possible to consider the projections of $\Vertex$ and $\AltVertex$ onto $\Cell$ and obtain a statement about spherical $3$-simplices. The argument then becomes a little less transparent. For that reason, we prefer to carry out an induction on cells of the Euclidean space that allows us to keep the spherical arguments $2$-dimensional.

Before we start with the actual proof, we need to record one more fact. Recall that if $\Building$ is a spherical or Euclidean building and $\BigCell \ge \Cell$ are cells, then the link of $\BigCell$ in $\Building$ can be identified with the link of $\BigCell \direction \Cell$ in $\Link \Cell$.

\begin{obs}
\label{obs:non_equatorial_in_link_and_link_of_link}
Let $\Busemann$ be a Busemann function on $\Space$ and let $\Cell \le \BigCell$ be cells that are flat with respect to $\Busemann$. Let $n_\Cell$ and $n_\BigCell$ be the north poles determined by $\Busemann$ in $\Link \Cell$ and $\Link \BigCell$ respectively. The identification of $\Link \BigCell$ with $\Link (\BigCell \direction \Cell)$ identifies $n_\BigCell$ with the direction toward $n_\Cell$.

Moreover, a vertex $\Vertex$ adjacent to $\BigCell \direction \Cell$ in $\Link \Cell$ is equatorial in $\Link \Cell$ if and only if it is equatorial as a vertex of $\Link (\BigCell \direction \Cell)$.
\end{obs}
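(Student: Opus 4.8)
The plan is to reduce the statement to elementary linear algebra inside a single apartment. First I would pick an interior point $b$ of $\BigCell$; by Lemma~\ref{lem:ray_in_apartment}, applied in each factor of $\Space=\prod_i\EBuilding_i$, the ray $[b,\PointAtInfty)$ is contained in some apartment $\Apartment$ of the complete system, and since $\Apartment$ is a subcomplex containing $b$ it contains the carrier $\BigCell$ of $b$, hence also $\Cell\le\BigCell$; this is exactly the device used in the proof of Observation~\ref{obs:flat_equivalent_perpendicular}. On $\Apartment$ --- a Euclidean affine space --- the Busemann function $\Busemann$ restricts to an affine form, so the unit direction $n$ toward $\PointAtInfty$ is the same at every point of $\Apartment$, and a cell of $\Apartment$ is flat precisely when $n$ is orthogonal to it. Since the north poles $n_\Cell$, $n_\BigCell$ and the canonical identification of $\Link\BigCell$ with $\Link(\BigCell\direction\Cell)$ from Section~\ref{sec:metric_spaces} are intrinsic, it suffices to prove both assertions with $\Link$ replaced by $\Link_\Apartment$ everywhere; there $n_\Cell$ and $n_\BigCell$ are both represented by $n$.

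Next I would translate the links into orthogonal complements. Writing $W_\Cell\subseteq W_\BigCell$ for the direction spaces of $\Cell$ and $\BigCell$ and $\Cell^\perp$, $\BigCell^\perp$ for their orthogonal complements in the ambient direction space, we have $\BigCell^\perp\subseteq\Cell^\perp$ and an orthogonal decomposition $\Cell^\perp=U\oplus\BigCell^\perp$ with $U\defeq W_\BigCell\cap\Cell^\perp$; then $\Link_\Apartment\Cell$ is the unit sphere of $\Cell^\perp$, the subcomplex $\BigCell\direction\Cell$ is carried by the unit sphere of $U$, and the canonical identification $\Link_{\Link_\Apartment\Cell}(\BigCell\direction\Cell)\cong\Link_\Apartment\BigCell$ is simply the identity of the unit sphere of $\BigCell^\perp$. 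Flatness of $\BigCell$ gives $n\in\BigCell^\perp$, so $n$ is orthogonal to $U$; hence $n_\Cell$ lies at distance $\pi/2$ from $\BigCell\direction\Cell$ and the direction at $\BigCell\direction\Cell$ toward $n_\Cell$ is $n$ itself --- which is $n_\BigCell$. That is the first assertion.

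For the second assertion I would use $\cos d(\DummyArg,\DummyArg)=\scp{\DummyArg}{\DummyArg}$ on spheres. A vertex $\Vertex$ of $\Link_\Apartment\Cell$ adjacent to $\BigCell\direction\Cell$ but not lying in it is a unit vector in $\Cell^\perp\setminus U$, and the corresponding vertex $\bar{\Vertex}$ of $\Link_\Apartment\BigCell=\Link_{\Link_\Apartment\Cell}(\BigCell\direction\Cell)$ is the normalization of $\operatorname{pr}_{\BigCell^\perp}(\Vertex)$, where $\operatorname{pr}_{\BigCell^\perp}$ is orthogonal projection. Then $\Vertex$ is equatorial in $\Link_\Apartment\Cell$ iff $\scp{\Vertex}{n}=0$, while $\bar{\Vertex}$ is equatorial in $\Link_\Apartment\BigCell$ iff $\scp{\operatorname{pr}_{\BigCell^\perp}(\Vertex)}{n}=0$; and $\scp{\operatorname{pr}_{\BigCell^\perp}(\Vertex)}{n}=\scp{\Vertex}{\operatorname{pr}_{\BigCell^\perp}(n)}=\scp{\Vertex}{n}$ since $\operatorname{pr}_{\BigCell^\perp}$ is self-adjoint and fixes $n$ by the previous paragraph. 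So the two conditions agree. The only genuine obstacle is the bookkeeping in the first two paragraphs: one has to check carefully that the north pole and the Section~\ref{sec:metric_spaces} link-of-link isometry (with its mild requirement that the chosen interior point of $\BigCell$ project onto an interior point of $\Cell$) really do match the orthogonal-complement description above; once that dictionary is fixed, the two equivalences are the one-line computations just given. Alternatively one could avoid the apartment entirely and plug a point $x$ on $[b,\PointAtInfty)$ directly into the explicit formula $[p_2,x]_{p_2}\mapsto[[p_1,p_2]_{p_1},[p_1,x]_{p_1}]_{[p_1,p_2]_{p_1}}$ for the identification, but the resulting computation is the same.
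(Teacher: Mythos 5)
Your proof is correct, but it takes a genuinely different route from the paper's. The paper's proof stays inside the spherical link $\Link\Cell$: it takes a point $\SpherPoint\in\BigCell\direction\Cell$ closest to $\Vertex$, forms the spherical triangle with vertices $\Vertex$, $n_\Cell$, $\SpherPoint$ (in which $\SpherDistance(\SpherPoint,n_\Cell)=\pi/2$ because $\BigCell\direction\Cell$ is equatorial, and $\angle_\SpherPoint(\Vertex,n_\Cell)$ is exactly the distance from $\bar\Vertex$ to $n_\BigCell$ under the link-of-link identification), and then reads off the equivalence from Observation~\ref{obs:spherical_triangles}~\eqref{item:edge_and_angle_give_edge_and_angle} and \eqref{item:edges_give_angles}, i.e., from the Spherical Law of Cosines. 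You instead pass via Lemma~\ref{lem:ray_in_apartment} (or Observation~\ref{obs:flat_equivalent_perpendicular}) to a single Euclidean apartment, realize $\Link_\Apartment\Cell$, $\Link_\Apartment\BigCell$, $\BigCell\direction\Cell$ and the north poles as unit spheres in $\Cell^\perp$, $\BigCell^\perp$, $U=W_\BigCell\cap\Cell^\perp$ and a fixed gradient vector $n\in\BigCell^\perp$, and reduce both equivalences to the self-adjointness identity $\scp{\operatorname{pr}_{\BigCell^\perp}\Vertex}{n}=\scp{\Vertex}{\operatorname{pr}_{\BigCell^\perp}n}=\scp{\Vertex}{n}$. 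The paper's version is intrinsic and short once the spherical-triangle facts are available; yours is longer on bookkeeping (the apartment choice, the orthogonal-complement dictionary, and verifying that the explicit $[p_2,x]_{p_2}\mapsto[[p_1,p_2]_{p_1},[p_1,x]_{p_1}]_{[p_1,p_2]_{p_1}}$ identification becomes the identity of $S(\BigCell^\perp)$) but then bottoms out in a single inner-product line and avoids Observation~\ref{obs:spherical_triangles} entirely. It is worth noting explicitly, as you implicitly do, that $\operatorname{pr}_{\BigCell^\perp}\Vertex\ne 0$ when $\Vertex$ lies in the star of $\BigCell\direction\Cell$ but outside it, so the normalization defining $\bar\Vertex$ makes sense; this is immediate since $\operatorname{pr}_{\BigCell^\perp}$ restricted to $\Cell^\perp$ has kernel $U$.
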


\begin{proof}
The first statement is clear. For the second note that $\BigCell \direction \Cell$ is equatorial in $\Link \Cell$, i.e., that $\SpherDistance(n_\Cell,\BigCell \direction \Cell) = \pi/2$. Let $\SpherPoint$ be a point in $\BigCell \direction \Cell$ that is closest to $\Vertex$ (if $\SpherDistance(\Vertex,\BigCell) < \pi/2$ this is the unique projection point, otherwise any point). Then the distance in $\Link \Cell$ between the direction toward $v$ and the direction toward $n_\Cell$ is $\angle_\SpherPoint(\Vertex,n_\Cell)$. Now Observation~\ref{obs:spherical_triangles}~\eqref{item:edge_and_angle_give_edge_and_angle} and \eqref{item:edges_give_angles} imply that $\Vertex$ is equatorial if and only if $\angle_\SpherPoint(\Vertex,n_\Cell) = \pi/2$.
\end{proof}

Now we start with the proof of Proposition~\ref{prop:horizontal_properties}. The first statement is trivial.

\begin{proof}[Proof of Proposition~\ref{prop:horizontal_properties}~\eqref{item:join}]
Let $\SBuilding$ be the link of $\Cell$ with the north pole given by $\PointAtInfty$. Assume first that $\Cell$ is a facet of $\Another\Cell \vee \Cell$. Let $\overline{\Another\Cell} = (\Cell \vee \Another\Cell) \direction \Cell$ which is a vertex.

We identify $\Link (\Cell \vee \Another\Cell)$ with $\Link \overline{\Another\Cell}$. By Observation~\ref{obs:non_equatorial_in_link_and_link_of_link} this identification preserves being equatorial. So we may take a vertex $\AltVertex$ of $\BigCell \direction \Cell$ other than $\overline{\Another\Cell}$ (that corresponds to a vertex of $(\BigCell \vee \Another\Cell) \direction (\Cell \vee \Another\Cell)$) and a non-equatorial vertex $\Vertex$ adjacent to it (corresponding to a non-equatorial vertex of $\Link \Cell \vee \Another\Cell$) and by Reminder~\ref{reminder:metric_horizontal_criterion} our task is to show that $\angle_{\overline{\Another\Cell}}(\Vertex,\AltVertex) = \pi/2$ (meaning that the distance of the corresponding vertices in $\Link \Cell \vee \Another\Cell$ is $\pi/2$).

Since by assumption $\BigCell \horizontal \Cell$, we know by Reminder~\ref{reminder:metric_horizontal_criterion}, that $\SpherDistance(\Vertex,\AltVertex) = \pi/2$.

Thus the triangle with vertices $\Vertex$, $\AltVertex$, and $\overline{\Another\Cell}$ satisfies $\SpherDistance(\Vertex,\AltVertex) = \pi/2$ and the angle at $\overline{\Another\Cell}$ can be at most $\pi/2$ because we are considering cells in a Coxeter complex. Hence by Observation~\ref{obs:spherical_triangles}~\eqref{item:edge_gives_angle} it has to be precisely $\pi/2$ as desired.

For the general case set $\Another\Cell_0 \defeq \Another\Cell \vee \Cell$ and inductively take $\Another\Cell_{i+1}$ to be a facet of $\Another\Cell_i$ that contains $\Cell$ until $\Another\Cell_n = \Cell$ for some $n$.

By assumption $\BigCell = \BigCell \vee \Another\Cell_n \horizontal \Cell \vee \Another\Cell_n = \Cell$ and the above argument applied to $\Another\Cell_{n-1}$ shows that $\BigCell \vee \Another\Cell_{n-1} \horizontal \Cell \vee \Another\Cell_{n-1}$. Proceeding inductively we eventually obtain $\BigCell \vee \Another\Cell_0 \horizontal \Cell \vee \Another\Cell_0$ which is what we want.
\end{proof}

\begin{proof}[Proof of Proposition~\ref{prop:horizontal_properties}~\eqref{item:transitivity}]
Let $\SBuilding$ be the link of $\Cell$ and let $\overline{\Another\Cell} = \Another\Cell \direction \Cell$.

We identify $\Link \Another\Cell$ with $\Link \overline{\Another\Cell}$. Let $\AltVertex$ be a vertex of $\BigCell \direction \Cell$ that is not contained in $\overline{\Another\Cell}$ (and corresponds to a vertex of $\BigCell \direction \Another\Cell$) and let $\Vertex$ be a non-equatorial vertex adjacent to it (corresponding to a non-equatorial vertex of $\Link \Another\Cell$). From the fact that $\BigCell \horizontal \Another\Cell$ we deduce using Reminder~\ref{reminder:metric_horizontal_criterion} that $\angle(\overline{\Another\Cell} \vee \Vertex,\overline{\Another\Cell} \vee \AltVertex) = \pi/2$. Similarly, $\SpherDistance(\overline{\Another\Cell},\Vertex) = \pi/2$ because $\Another\Cell \horizontal \Cell$. We want to show that $\SpherDistance(\Vertex,\AltVertex) = \pi/2$.

Let $\SpherPoint$ be the projection of $\AltVertex$ to $\overline{\Another\Cell}$ if it exists or otherwise take any point of $\overline{\Another\Cell}$. Then $[\SpherPoint,\AltVertex]$ is perpendicular to $\overline{\Another\Cell}$ by the choice of $\SpherPoint$ and $[\SpherPoint,\Vertex]$ is perpendicular to $\overline{\Another\Cell}$ because $\SpherDistance(\Vertex,\overline{\Another\Cell}) = \pi/2$. Thus $\angle(\overline{\Another\Cell} \vee \Vertex,\overline{\Another\Cell} \vee \AltVertex) = \angle_\SpherPoint(\Vertex,\AltVertex)$.

We consider the triangle with vertices $\Vertex$, $\AltVertex$, and $\SpherPoint$. We know that $\SpherDistance(\Vertex,\SpherPoint) = \pi/2$ and that $\angle_\SpherPoint(\Vertex,\AltVertex) = \pi/2$. From this we deduce that $\SpherDistance(\Vertex,\AltVertex) = \pi/2$ using Observation~\ref{obs:spherical_triangles}~\eqref{item:edge_and_angle_give_edge_and_angle}.
\end{proof}

\begin{proof}[Proof of Proposition~\ref{prop:horizontal_properties}~\eqref{item:non_empty_meet}]
We assume first that $\Cell_1 \intersect \Cell_2$ is a facet of both, $\Cell_1$ and $\Cell_2$. Because we have already proven transitivity of $\horizontal$, it suffices to show that $\Cell_1 \horizontal \Cell_1 \intersect \Cell_2$.

Let $\SBuilding$ be the link of $\Cell_1 \intersect \Cell_2$ and let $\overline{\Cell_i} = \Cell_i \direction (\Cell_1 \intersect \Cell_2)$ for $i \in \{1,2\}$. Note that $\overline{\Cell_1}$ and $\overline{\Cell_2}$ are distinct vertices. Let $\Vertex$ be a non-equatorial vertex in $\SBuilding$ adjacent to $\overline{\Cell_1}$.

Using our criterion Reminder~\ref{reminder:metric_horizontal_criterion}, we have to show that $\SpherDistance(\Vertex,\overline{\Cell_1}) = \pi/2$. We identify $\Link \Cell_1$ with $\Link \overline{\Cell_1}$ and $\Link \Cell_2$ with $\Link \overline{\Cell_2}$. Since $\Vertex$ corresponds to a non-equatorial vertex in both of $\Link \Cell_1$ and $\Link \Cell_2$, the criterion implies that $\angle_{\overline{\Cell_2}}(\Vertex,\overline{\Cell_1}) = \pi/2 = \angle_{\overline{\Cell_1}}(\Vertex,\overline{\Cell_2})$. So the statement follows at once from Observation~\ref{obs:spherical_triangles}~\eqref{item:angles_give_edges}.

Now we consider the more general case where $\Cell_1 \intersect \Cell_2$ is a facet of $\Cell_2$ but need not be a facet of $\Cell_1$. Set $\Cell_2^0 \defeq \Cell_1 \vee \Cell_2$ and let $\Cell_2^{i+1}$ be a facet of $\Cell_2^i$ that contains $\Cell_2$ until $\Cell_2^n = \Cell_2$ for some $n$. Also let $\Cell_1^{i} = \Cell_1 \intersect \Cell_2^{i-1}$ for $1 \le i \le n$ and note that $\Cell_1^i = \Cell_1^{i-1} \intersect \Cell_2^{i-1}$. Then $\Cell_1^i \intersect \Cell_2^i$ is a facet of both, $\Cell_1^i$ and $\Cell_2^i$, for $1 \le i \le n$. By assumption $\BigCell \horizontal \Cell_1$ and Proposition~\ref{prop:horizontal_properties}~\eqref{item:join} shows that $\BigCell \horizontal \Cell_2^i$ for $0 \le i \le n$. Thus the argument above applied inductively shows that $\Cell_1 \horizontal \Cell_1 \intersect \Cell_2$.

An analogous induction allows to drop the assumption that $\Cell_1 \intersect \Cell_2$ be a facet of $\Cell_2$.
\end{proof}

\subsection*{Proof of Proposition \ref{prop:horizontal_properties} Using Coxeter Diagrams}

In this paragraph we prove Proposition \ref{prop:horizontal_properties} using Coxeter diagrams. We use the equivalence $\eqref{item:vertex_in_horizontal_link} \equiv \eqref{item:non_equatorial_not_in_same_component}$ of Lemma~\ref{lem:horizontal_criterion}:

\begin{reminder}
\label{reminder:combinatorial_horizontal_criterion}
Let $\SBuilding$ be a spherical building with north pole $n$. Let $\Vertex \in \SBuilding$ be a vertex and let $\Chamber$ be a chamber that contains $\Vertex$. These are equivalent:
\begin{enumerate}
\item $\Vertex \in \SBuilding\Hor$.\label{item:crem_vertex_in_horizontal_link}
\setcounter{enumi}{2}
\item $\typ \Vertex$ and $\typ \AltVertex$ lie in different connected components of $\typ \SBuilding$ for every non-equatorial vertex $\AltVertex$ of $\Chamber$.\label{item:crem_non_equatorial_not_in_same_component}
\end{enumerate}
\end{reminder}

As in the last paragraph, we need a statement about the compatibility of north-poles of links of cells that are contained in each other. Let $\Busemann$ be a Busemann function on $\Space$, let $\Another\Cell \ge \Cell$ be flat cells (with respect to $\Busemann$). Then $\Busemann$ defines a north pole in $\Link \Another\Cell$ as well as in $\Link\Cell$. If $\BigCell$ is a coface of $\Another\Cell$, then $\BigCell \direction \Cell$ is equatorial if and only if $\BigCell$ is flat if and only if $\BigCell \direction \Another\Cell$ is equatorial.

In the above situation $\typ \Link \Another\Cell$ can be considered as the sub-diagram obtained from $\typ \Link \Cell$ by removing $\typ \Another\Cell$ (or more precisely $\typ (\Another\Cell \direction \Cell)$). What we have just seen is:
\begin{obs}
Let $\Chamber$ be a chamber that contains flat cells $\Another\Cell \ge \Cell$. If $\typ \Vertex = \typ \Another\Vertex$ for vertices $\Vertex$ of $\Chamber \direction \Cell$ and $\Another\Vertex$ of $\Chamber \direction \Another\Cell$, then $\Vertex$ is equatorial if and only if $\Another\Vertex$ is.\qed
\end{obs}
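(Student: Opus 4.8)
The plan is to read off the claim from the three‑way equivalence just recorded: for a coface $\BigCell$ of $\Another\Cell$, the cell $\BigCell \direction \Cell$ is equatorial in $\Link \Cell$ if and only if $\BigCell$ is flat if and only if $\BigCell \direction \Another\Cell$ is equatorial in $\Link \Another\Cell$. The work is to manufacture from $\Vertex$ and $\Another\Vertex$ one coface $\BigCell$ of $\Another\Cell$, sitting inside $\Chamber$, whose direction over $\Cell$ detects $\Vertex$ and whose direction over $\Another\Cell$ is exactly $\Another\Vertex$; then the equivalence does the rest.

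First I would check $\Vertex \notin \Another\Cell \direction \Cell$. Writing $\typ \Vertex = \{j\}$, the hypothesis $\typ \Vertex = \typ \Another\Vertex$ together with $\Another\Vertex$ being a vertex of $\Chamber \direction \Another\Cell$ forces $j \notin \typ \Another\Cell$, whereas $\typ(\Another\Cell \direction \Cell) = \typ \Another\Cell \setminus \typ \Cell$. So inside the chamber $\Chamber \direction \Cell$ of the spherical building $\Link \Cell$ the face $\Vertex \vee (\Another\Cell \direction \Cell)$ properly contains $\Another\Cell \direction \Cell$; let $\BigCell$ be the coface of $\Cell$ corresponding to it under the order isomorphism between cofaces of $\Cell$ and cells of $\Link \Cell$. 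Then $\BigCell \ge \Another\Cell$ and $\BigCell \le \Chamber$. Since passing from $\Link \Cell$ to $\Link \Another\Cell$ just deletes $\typ(\Another\Cell \direction \Cell)$ from the type set (the sub‑diagram remark preceding the statement), a short count gives $\typ \BigCell = \typ \Another\Cell \cup \{j\}$, whence $\BigCell \direction \Another\Cell$ is the vertex of $\Chamber \direction \Another\Cell$ of type $\{j\}$, i.e.\ $\Another\Vertex$.

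It then remains to plug this $\BigCell$ into the equivalence. On the $\Another\Cell$-side it says ``$\Another\Vertex$ is equatorial $\iff$ $\BigCell$ is flat''. On the $\Cell$-side, $\BigCell \direction \Cell = \Vertex \vee (\Another\Cell \direction \Cell)$; applying the same equivalence to the coface $\Another\Cell$ of itself and using that $\Another\Cell$ is flat shows $\Another\Cell \direction \Cell$ is already equatorial, and as the equator complex of $\Link \Cell$ is a full subcomplex, $\Vertex \vee (\Another\Cell \direction \Cell)$ is equatorial iff $\Vertex$ is. Combining, $\Vertex$ equatorial $\iff$ $\BigCell$ flat $\iff$ $\Another\Vertex$ equatorial, which is the claim. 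I expect the only place demanding a little attention to be the type bookkeeping of the middle step --- verifying that the coface $\BigCell$ built inside $\Chamber$ really restricts to $\Another\Vertex$ over $\Another\Cell$; the two appeals to the displayed equivalence are then immediate.
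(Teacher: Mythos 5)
Your argument is correct and is essentially the paper's own route: the observation is stated there with no separate proof precisely because it is read off from the displayed equivalence ``$\BigCell \direction \Cell$ equatorial $\iff$ $\BigCell$ flat $\iff$ $\BigCell \direction \Another\Cell$ equatorial'', applied to the face of $\Chamber$ of type $\typ \Another\Cell \cup \typ \Vertex$, together with the fact that the equator complex is a full subcomplex. You have simply made explicit the type bookkeeping (that this face restricts to $\Another\Vertex$ over $\Another\Cell$ and to $\Vertex \vee (\Another\Cell \direction \Cell)$ over $\Cell$) that the paper leaves to the reader.
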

So once we have chosen a chamber $\Chamber$ and a non-empty flat face $\Cell$, we may think of being equatorial as a property of the nodes of $\typ\Link\Cell$.

We come to the proof of Proposition \ref{prop:horizontal_properties}. The first statement is again trivial.

\begin{proof}[Proof of Proposition~\ref{prop:horizontal_properties}~\eqref{item:join}]
Let $\Chamber$ be a chamber that contains $\BigCell \vee \Another\Cell$. Let $\Vertex$ be a non-equatorial vertex of $\Chamber \direction \Cell \vee \Another\Cell$ and let $\AltVertex$ be a vertex of $(\BigCell \vee \Another\Cell) \direction (\Cell \vee \Another\Cell)$. Note that $\typ (\Link \Cell \vee \Another\Cell)$ is obtained from $\typ \Link\Cell$ by removing $\typ \Another\Cell$.

Assume that there were a path in $\typ \Link (\Cell \vee \Another\Cell)$ that connects $\typ\AltVertex$ to $\typ\Vertex$. Then this would, in particular, be a path in $\typ \Link \Cell$ from a vertex of $\typ (\BigCell \direction \Cell)$ to the type of a non-equatorial vertex. But by Reminder~\ref{reminder:combinatorial_horizontal_criterion} there cannot be such a path because $\BigCell \horizontal \Cell$. Hence Reminder~\ref{reminder:combinatorial_horizontal_criterion} implies that $\BigCell \vee \Another\Cell \horizontal \Cell \vee \Another\Cell$.
\end{proof}

\begin{proof}[Proof of Proposition~\ref{prop:horizontal_properties}~\eqref{item:transitivity}]
Let $\Chamber$ be a chamber that contains $\BigCell$, let $\Vertex$ be a non-equatorial vertex of $\Chamber \direction \Cell$ and let $\AltVertex$ be a vertex of $\BigCell \direction \Cell$. Note that $\typ \Link \Another\Cell$ is obtained from $\typ \Link\Cell$ by removing $\typ \Another\Cell$.

Assume that there were a path in $\typ \Link \Cell$ from $\typ \Vertex$ to $\typ \AltVertex$. Then either this path does not meet $\typ \Another\Cell$, thus lying entirely in $\typ \Link \Another\Cell$ and therefore contradicting $\BigCell \horizontal \Another\Cell$ by Reminder~\ref{reminder:combinatorial_horizontal_criterion}. Or there would be a first vertex in the path that lies in $\typ \Another\Cell$, say $\typ \Another\AltVertex$.  Then the subpath from $\typ\Vertex$ to $\typ\Another\AltVertex$ would lie in $\typ\Link\Cell$ and thus contradict $\Another\Cell \horizontal \Cell$ by Reminder~\ref{reminder:combinatorial_horizontal_criterion}. Since no such path exists, Reminder~\ref{reminder:combinatorial_horizontal_criterion} implies that $\BigCell \horizontal \Cell$.
\end{proof}

\begin{proof}[Proof of Proposition~\ref{prop:horizontal_properties}~\eqref{item:non_empty_meet}]
Let $\Chamber$ be a chamber that contains $\BigCell$, let $\Vertex$ be a non-equatorial vertex of $\Chamber \direction (\Cell_1 \intersect \Cell_2)$ and let $\AltVertex$ be a vertex of $\BigCell \direction (\Cell_1 \intersect \Cell_2)$. Again $\typ \Link \Cell_i$ is obtained from $\typ \Link (\Cell_1 \intersect \Cell_2)$ by removing $\typ \Cell_i$. Note also that $\typ (\Cell_1 \direction (\Cell_1 \intersect \Cell_2))$ and $\typ (\Cell_2 \direction (\Cell_1 \intersect \Cell_2))$ are disjoint.

Assume that there were a path in $\typ \Link (\Cell_1 \intersect \Cell_2)$ from $\typ \Vertex$ to $\typ \AltVertex$. Let $\typ \Another\AltVertex$ be the first vertex in $\typ (\Cell_1 \direction (\Cell_1 \intersect \Cell_2)) \union \typ (\Cell_2 \direction (\Cell_1 \intersect \Cell_2))$ that the path meets and assume without loss of generality that $\typ \Another\AltVertex \in \typ (\Cell_1 \direction (\Cell_1 \intersect \Cell_2))$. Then the subpath from $\typ \Vertex$ to $\typ \Another\AltVertex$ lies entirely in $\typ \Link \Cell_2$ contradicting $\BigCell \horizontal \Cell_2$ by Reminder~\ref{reminder:combinatorial_horizontal_criterion}. Hence no such path exists and Reminder~\ref{reminder:combinatorial_horizontal_criterion} implies that $\BigCell \horizontal (\Cell_1 \intersect \Cell_2)$.
\end{proof}

\footerlevel{3}
\headerlevel{3}

\section{The Morse Function}
\label{sec:morse_function}

In this section we define the Morse function we will be using. Recall from Section~\ref{sec:height} that the definition of $\Height$ involves a set $\Directions$ of vectors in $\E$. We assume from now on that this set is almost rich.

Then Proposition~\ref{prop:almost_rich_min_vertex} implies that for every cell $\Cell$, the set of points of maximal height form a cell. We denote this cell by $\Roof\Cell$\index[xsyms]{roof@$\Roof\Cell$} and call it the \emph{roof} of $\Cell$. The roof of any cell is clearly a flat cell and the roof of a flat cell is the cell itself.

If $\Cell$ is flat, we can apply the results of the last section with respect to the point at infinity $\Infty\Gradient_\Cell \Height$. Note that the condition that $\Infty\Gradient_\Cell \Height$ be in general position is void because $\OneSpace$ is irreducible. Thus by Lemma~\ref{lem:tau_min} a flat cell $\Cell$ has a unique face $\Cell\Min$\index[xsyms]{sigmamin@$\Cell\Min$} that is minimal with the property that $\Cell$ lies in its horizontal link.

We define the \emph{depth} $\Depth\Cell$\index[xsyms]{dp@$\Depth\Cell$} of a cell $\Cell$ as follows: if $\Cell$ is flat, then $\Depth\Cell$ is the maximal length of a sequence of moves (with respect to $\Infty\Gradient_\Cell\Height$) that starts with $\Cell$, which makes sense by Proposition~\ref{prop:bound_on_moves}. If $\Cell$ is not flat, then $\Depth\Cell \defeq \Depth\Roof\Cell - 1/2$.

Note that if $\BigCell$ is a coface that is flat with respect to a Busemann function centered at $\Infty\Gradient_\Cell \Height$, then it still need not be flat with respect to $\Height$. But the important thing is that if $\BigCell$ is flat with respect to $\Height$, then $\Infty\Gradient_\Cell \Height = \Infty\Gradient_\BigCell \Height$ so, in particular, $\BigCell$ is flat with respect to the Busemann function centered at that point. Therefore $\Cell$ and $\BigCell$ share the same notion of moves. In particular:

\begin{obs}
If $\Cell \le \BigCell$ are flat and there is a move $\Cell \Up \BigCell$ then $\Depth\Cell > \Depth\BigCell$. If there is a move $\BigCell \Down \Cell$, then $\Depth\BigCell > \Depth\Cell$.\qed
\end{obs}

Let $\Subdiv\OneSpace$ be the flag complex of $\OneSpace$. Vertices of $\Subdiv\OneSpace$ are cells of $\OneSpace$. The Morse function $\Morse$ on $\Subdiv\OneSpace$ is defined to be
\begin{align*}
\Morse \colon \Vertices \Subdiv\OneSpace &\to \R \times \R \times \R\\
\Cell &\mapsto (\max \Height|_{\Cell}, \Depth\Cell, \dim\Cell)
\end{align*}
where the range is ordered lexicographically.

Cells of $\Subdiv\OneSpace$ are flags of $\OneSpace$ so, in particular, if $\Cell$ and $\Another\Cell$ are adjacent vertices in $\Subdiv\OneSpace$ then either $\Cell \lneq \Another\Cell$ or $\Another\Cell \lneq \Cell$. So $\dim\Cell \ne \dim\Another\Cell$, which shows that $\Morse$ takes different values on any two adjacent vertices of $\Morse$. Note further that $\max \Height|_\Cell$ is the height of some vertex of $\Cell$ (Observation~\ref{obs:height_convex}). So the first component of the image of $\Morse$ is $\Height(\Vertices \OneSpace)$ which is discrete by Observation~\ref{obs:height_discrete_image}. Dimension and, by Proposition~\ref{prop:bound_on_moves}, depth can take only finitely many values. Thus Observation~\ref{obs:product_order_in_z} shows that the image of $\Morse$ is order-isomorphic to $\Z$ and thus $\Morse$ is indeed a Morse function in the sense of Section~\ref{sec:morse_theory}.

We identify the flag complex $\Subdiv\OneSpace$ with the barycentric subdivision of $\OneSpace$ by identifying $\Cell$ with its barycenter $\Subdiv\Cell$. Nevertheless, as far as $\Subdiv\OneSpace$ is concerned, we are only interested in combinatorial, not in geometric, properties so instead of $\Subdiv\Cell$ we might as well take any other interior point of $\Cell$. The asset of this identification is essentially to make the following distinction: if we write $\Link \Cell$, we mean the link of the cell $\Cell$ in $\OneSpace$. If we write $\Link \Subdiv\Cell$, we mean the link of the vertex $\Subdiv\Cell$ in $\Subdiv\OneSpace$. Here $\Link\Subdiv\Cell$ is the combinatorial link, i.e., the poset of cofaces of $\Subdiv\Cell$ (which can be identified to the full subcomplex of $\Subdiv\OneSpace$ of vertices adjacent to $\Subdiv\Cell$). A join decomposition of $\Link\Subdiv\Cell$ is understood to be a join decomposition of simplicial complexes.

Let $\Cell \subseteq \OneSpace$ be a cell. The link of its barycenter in $\OneSpace$ decomposes as $\Link_{\OneSpace} \Subdiv\Cell = \partial \Cell * \Link \Cell$ by \eqref{eq:point_link_decomposition}. Passage to the barycentric subdivision $\Subdiv{\OneSpace}$ induces a barycentric subdivision on each of the join factors:
\begin{equation}
\label{eq:barycenter_link_decomposition}
\Link_{\Subdiv\OneSpace} \Subdiv\Cell = \Link\FacePart \Subdiv\Cell * \Link\CofacePart \Subdiv\Cell
\end{equation}
where $\Link\FacePart \Subdiv\Cell$ is the barycentric subdivision of $\partial \Cell$ and called the \emph{face part} and $\Link\CofacePart \Subdiv\Cell$ is the barycentric subdivision of $\Link \Cell$ and called the \emph{coface part} of $\Link \Subdiv\Cell$ and the join is a simplicial join.

The \emph{descending link} $\Link\Descending \Subdiv\Cell$ of a vertex $\Subdiv\Cell$ is the full subcomplex of $\Link \Subdiv\Cell$ of vertices $\Subdiv\Cell'$ with $\Morse(\Subdiv\Cell') < \Morse(\Subdiv\Cell)$ (see Section~\ref{sec:morse_theory}). Since the descending link is a full subcomplex, the decomposition \eqref{eq:barycenter_link_decomposition} immediately induces a decomposition
\begin{equation}
\label{eq:barycenter_descending_link_decomposition}
\Link\Descending \Subdiv\Cell = \Link\Descending\FacePart \Subdiv\Cell * \Link\Descending\CofacePart \Subdiv\Cell
\end{equation}
where of course $\Link\Descending\FacePart \Subdiv\Cell = \Link\Descending \Subdiv\Cell \intersect \Link\FacePart \Subdiv\Cell$ and $\Link\Descending\CofacePart \Subdiv\Cell = \Link\Descending \Subdiv\Cell \intersect \Link\CofacePart \Subdiv\Cell$.

\footerlevel{3}
\headerlevel{3}

\section{More Spherical Subcomplexes of Spherical Buildings}
\label{sec:spherical_subcomplexes}

Before we analyze the descending links of our Morse function and finish the proof of Theorem~\ref{thm:one_place_geometric}, we have to extend the class of subcomplexes of spherical buildings which we know to be highly connected slightly beyond hemisphere complexes.

\begin{obs}
\label{obs:cell_retraction}
Let $M \defeq \ModelSpace{\kappa}^m$ be some model space. Let $P \subseteq M$ be a compact polyhedron that is not all of $M$. Let $U \subseteq M$ be a proper open and convex subset. If $P \intersect U \ne \emptyset$, then $P \setminus U$ strongly deformation retracts onto $(\partial P) \setminus U$.
\end{obs}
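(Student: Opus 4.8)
The plan is to use the classical radial (conical) deformation retraction of a convex body minus an interior point onto its boundary, but based at a point of $U$ rather than at an arbitrary interior point; the only genuinely new ingredient is the observation that pushing a point \emph{away} from a point of the convex set $U$ automatically keeps it outside $U$. First I would reduce to the case that $P$ spans $M$: replace $M$ by the minimal subspace $\aff P$ (again a model space), and $U$ by $U\intersect\aff P$ (still open and convex and meeting $P$; and if $\aff P\subseteq U$ the claim is trivial since then $P\setminus U=\partial P\setminus U=\emptyset$). Then $P\ne M$ forces $\partial P\ne\emptyset$, and we may fix an irredundant presentation $P=\Intersect_{i=1}^{r}H_i^{+}$ with $r\ge 1$, so that $\relint P=\Intersect_i\relint H_i^{+}$. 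Since $U$ is open and $\relint P$ is dense in $P$, the set $\relint P\intersect U$ is nonempty; in the spherical case $\kappa>0$ I would additionally pick $c\in\relint P\intersect U$ whose antipode $-c$ does not lie in $P$, which is possible because two antipodal interior points of $P$ would force $P=M$, so $(-P)\intersect\relint P\subseteq\partial(-P)$ has empty interior and cannot contain the nonempty open set $\relint P\intersect U$. For such a $c$, every $x\in P$ satisfies $d(c,x)<D_\kappa$, so the geodesic $[c,x]$ is unique; write $v(x)\in\Link_M c$ for its direction and $s(x)\defeq d(c,x)$. For a direction $v$, let $\gamma_v\colon[0,D_\kappa)\to M$ be the geodesic issuing at $c$ in direction $v$ and put $t_v\defeq\sup\{t:\gamma_v([0,t])\subseteq P\}$; since $c\in\relint P$, $P$ is convex and compact, and ($\kappa>0$) $-c\nin P$, we get $0<t_v<D_\kappa$ with $\gamma_v(t_v)\in\partial P$, and the elementary line-segment principle for the convex set $P$ (a point strictly between an interior point and a point of $P$ is interior) gives $s(x)=t_{v(x)}$ exactly when $x\in\partial P$.

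Next I would set
\[
H\colon (P\setminus U)\times[0,1]\to P,\qquad H(x,\tau)\defeq\gamma_{v(x)}\bigl((1-\tau)\,s(x)+\tau\,t_{v(x)}\bigr),
\]
which is legitimate since $c\in U$ forces $P\setminus U\subseteq P\setminus\{c\}$. Then $H(\cdot,0)=\mathrm{id}$, $H(\cdot,1)$ takes values in $\partial P$, $H(x,\tau)\in P$ for all $\tau$ because $\gamma_{v(x)}([0,t_{v(x)}])\subseteq P$, and $H$ fixes $\partial P$ pointwise because there $s(x)=t_{v(x)}$ — so $H$ is already the familiar strong deformation retraction of $P\setminus\{c\}$ onto $\partial P$. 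The point that uses $U$ is that $H$ actually maps into $P\setminus U$: fixing $x\nin U$, the set $\gamma_{v(x)}^{-1}(U)\subseteq[0,D_\kappa)$ is an interval by convexity of $U$ (any two points on $\gamma_{v(x)}$ are at mutual distance $<D_\kappa$, so only short geodesics are involved), it contains $0$ since $c\in U$, and it omits $s(x)$ since $x\nin U$; hence it lies in $[0,s(x))$, so $\gamma_{v(x)}(u)\nin U$ for every $u\ge s(x)$, in particular for $u=(1-\tau)s(x)+\tau t_{v(x)}$. Thus, modulo continuity, $H$ exhibits $P\setminus U$ as strongly deformation retracting onto $\partial P\setminus U$.

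The remaining work is continuity of $H$. The maps $x\mapsto v(x)$ and $x\mapsto s(x)$ are continuous on $P\setminus\{c\}$ (unique short geodesics from $c$), and $(v,u)\mapsto\gamma_v(u)$ is jointly continuous, so the one nontrivial point is continuity of $v\mapsto t_v$ on $\Link_M c$. I would argue this by writing $t_v=\min_i e_i(v)$, where $e_i(v)$ is the first exit time of $\gamma_v$ from the halfspace $H_i^{+}$: since $c\in\relint H_i^{+}$, the linear functional defining $H_i^{+}$, restricted to $\gamma_v$, has the form $R_v\cos(\,\cdot\,-\phi_v)$ with $R_v>0$ and $\phi_v\in(-\pi/2,\pi/2)$ in the spherical case (an affine function with positive value at $0$ when $\kappa\le 0$), with $R_v,\phi_v$ depending continuously on $v$; hence each $e_i$ is continuous (into $(0,D_\kappa)$, resp.\ into $(0,\infty]$), and therefore so is their pointwise minimum $t_v$, which is finite by compactness of $P$. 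This finishes the proof. The main obstacle is really just this continuity of the exit-time function together with the bookkeeping that keeps every geodesic in sight unique (the genericity of $c$ in the spherical case); once those are secured, the convexity argument that $H$ avoids $U$ is a one-liner.
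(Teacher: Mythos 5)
Your proof is correct and follows the same approach the paper itself takes: pick a relative interior point $c$ of $P$ lying in $U$ (which exists because $U$ is open and $\relint P$ is dense in $P$), and geodesically project $P\setminus\{c\}$ away from $c$ onto $\partial P$, observing that this retraction keeps points out of $U$ because $U$ is convex and already contains $c$. The paper's proof is a three-line sketch of exactly this, so your additional care about the spherical case (choosing $c$ whose antipode is not in $P$) and about continuity of the exit-time function $v\mapsto t_v$ supplies legitimate detail that the paper leaves implicit rather than constituting a different argument.
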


\begin{rem}
The statement about compact polyhedra may seem a little strange. It applies to polytopes if $\kappa \le 0$ and to arbitrary polyhedra that are not the whole space if $\kappa >0$. This ensures that the boundary of $P$ actually bounds $P$.
\end{rem}

\begin{proof}
Since $U$ is open, the intersection $U \intersect P$ contains a (relatively) interior point of $P$. Let $\Point$ be such a point. The geodesic projection $P \to \partial P$ away from $\Point$ takes $P \setminus U$ onto $(\partial P) \setminus U$ because $U$ is convex.
\end{proof}

\begin{prop}
\label{prop:complex_retraction}
Let $\Lambda$ be an $\ModelSpace{\kappa}$-polytopal complex. Let $U \subseteq \Lambda$ be an open subset of $\Lambda$ that intersects each cell in a convex set. Then there is a strong deformation retraction
\[
\rho \colon \Lambda \setminus U \to \Lambda(\Lambda \setminus U)
\]
from the complement of $U$ onto the subcomplex supported by that complement.
\end{prop}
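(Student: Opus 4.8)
The plan is to build the retraction $\rho$ by induction on the skeleta of $\Lambda$, at each stage invoking Observation~\ref{obs:cell_retraction} cell by cell and then gluing the resulting cellwise retractions together. Write $\Lambda^{(n)}$ for the $n$-skeleton of $\Lambda$. The base case $n=0$ is trivial: $U$ is open, so $\Lambda^{(0)} \setminus U$ consists of those vertices not in $U$, and these are precisely the vertices supported by $\Lambda \setminus U$; the retraction is the identity. For the inductive step, suppose we have a strong deformation retraction $\rho^{(n-1)}$ of $\Lambda^{(n-1)} \setminus U$ onto $\Lambda(\Lambda^{(n-1)} \setminus U) = \Lambda(\Lambda \setminus U) \cap \Lambda^{(n-1)}$. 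I want to extend it over each $n$-cell $\BigCell$ of $\Lambda$.

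Fix such an $n$-cell $\BigCell$ and consider $\BigCell \setminus U$. There are two cases. If $\BigCell \cap U = \emptyset$, then $\BigCell \subseteq \Lambda \setminus U$, so $\BigCell$ is already supported by the complement and there is nothing to retract; we just keep $\rho^{(n-1)}$ on $\partial \BigCell \setminus U = \partial \BigCell$ and extend by the identity on $\BigCell$. If $\BigCell \cap U \neq \emptyset$, then since $U \cap \BigCell$ is open in $\BigCell$, convex, and (because $U$ is a \emph{proper} subset — here one uses that $\Lambda \neq U$, or rather that $U$ meets $\BigCell$ in a proper subset, which I should check holds, e.g.\ because $\BigCell$ is compact while a point outside $U$ in a neighbouring cell forces $U\cap\BigCell$ proper; alternatively restrict to the cells actually affected), Observation~\ref{obs:cell_retraction} applied with $M$ the model space spanned by $\BigCell$, $P = \BigCell$, gives a strong deformation retraction of $\BigCell \setminus U$ onto $(\partial \BigCell) \setminus U$. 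Precomposing the homotopy's final map with $\rho^{(n-1)}$ restricted to $\partial\BigCell \setminus U$ and concatenating homotopies, we get a strong deformation retraction of $\BigCell \setminus U$ onto $\Lambda(\Lambda \setminus U) \cap \BigCell$.

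The key point making the gluing legitimate is that all these cellwise retractions agree on overlaps: on a face $\Cell$ of $\BigCell$ the retraction produced for $\BigCell$ restricts (by the geodesic-projection description in Observation~\ref{obs:cell_retraction}, which projects away from an interior point of $\BigCell$ and therefore fixes $\partial\BigCell$ pointwise until the last instant, at which stage $\rho^{(n-1)}$ takes over) to exactly the retraction already defined on $\Cell \setminus U$ by the inductive hypothesis. Hence the maps and the straight-line (geodesic) homotopies assemble to a well-defined continuous strong deformation retraction $\rho^{(n)}$ on $\Lambda^{(n)} \setminus U$. Taking the union over all $n$ — which is legitimate since $\Lambda$ carries the weak topology with respect to its skeleta, and each point lies in some finite-dimensional skeleton where the homotopy has stabilised — yields $\rho$. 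If $\Lambda$ is infinite-dimensional one runs the homotopies on a rescaled time interval, say the $n$-th stage on $[1-2^{-n+1}, 1-2^{-n}]$, so that the total homotopy is defined and continuous on $[0,1]$.

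The main obstacle I anticipate is purely bookkeeping rather than conceptual: ensuring the cellwise homotopies of Observation~\ref{obs:cell_retraction} are chosen \emph{compatibly} across faces (so that the retraction of a cell genuinely restricts to the already-constructed retraction of its boundary) and handling the topology/continuity when $\Lambda$ has cells of unbounded dimension. A secondary subtlety worth being careful about is the hypothesis of Observation~\ref{obs:cell_retraction} that $P$ be a \emph{proper} subset of its model space when $\kappa > 0$ and a \emph{polytope} when $\kappa \le 0$: one should remark that the cells of an $\ModelSpace{\kappa}$-polytopal complex are polytopes (diameter $< D_\kappa$), so this is automatic, and that the convexity of $U \cap \BigCell$ is exactly what is assumed. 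With those checks in place the induction goes through routinely.
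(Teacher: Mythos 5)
Your proposal is correct and takes essentially the same route as the paper: induction over the skeleta, applying Observation~\ref{obs:cell_retraction} to each cell that meets $U$ but is not contained in it (these cell retractions fix boundaries pointwise, so they glue), and then concatenating the stagewise homotopies on dyadic time intervals, with continuity at the accumulation point coming from the weak topology exactly as you argue. The only cosmetic difference is bookkeeping: the paper's $i$-th stage retracts $\Lambda^{(i)}\setminus U$ just one step down, onto $\Lambda^{(i)}(\Lambda^{(i)}\setminus U)\union(\Lambda^{(i-1)}\setminus U)$, performing stage $i$ during $[1/2^i,1/2^{i-1}]$ (your intervals are the time-reversed version of these), whereas you compose with the previously constructed retraction at every stage.
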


\begin{proof}
The proof is inductively over the skeleta of $\Lambda$: For $i \in \N$ we show that $\Lambda^{(i)} \setminus U$ strongly deformation retracts onto $\Lambda^{(i)}(\Lambda^{(i)} \setminus U) \union (\Lambda^{(i-1)} \setminus U)$. For $i=0$ there is nothing to show. For $i>0$ we apply Observation~\ref{obs:cell_retraction} to each $i$-cell that meets $U$ but is not contained in it to obtain the desired strong deformation retraction.

To deduce the statement is now routine, cf.\ for example \cite[Proposition~0.16]{hatcher}: If $H_i$ is the strong deformation retraction from $\Lambda^{(i)}(\Lambda^{(i)} \setminus U)$ to $\Lambda^{(i)}(\Lambda^{(i)} \setminus U) \union (\Lambda^{(i-1)} \setminus U)$, we obtain a deformation retraction $H$ from $\Lambda \setminus U$ onto $\Lambda(\Lambda \setminus U)$ by performing $H_i$ in time $[1/2^{i}, 1/2^{i-1}]$. Continuity in $0$ follows from the fact that for a cell $\Cell \subseteq \Lambda^{(i)}$ the retraction $H$ is constant on $\Cell \times [0,1/2^i]$.
\end{proof}

\begin{prop}
\label{prop:coconvex_complexes_spherical}
Let $\SBuilding$ be a spherical building and let $\Chamber \subseteq \SBuilding$ be a chamber. Let $U \subseteq \SBuilding$ be an open subset such that for every apartment $\Apartment$ that contains $\Chamber$ the intersection $U \intersect \Apartment$ is a proper convex subset of $\Apartment$. Then the set $E \defeq \SBuilding \setminus U$ as well as the subcomplex $\SBuilding(E)$ supported by it are $(\dim \SBuilding - 1)$-connected.
\end{prop}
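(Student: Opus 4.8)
The idea is to reduce the statement to Schulz's Theorem~\ref{thm:schulz_main} together with the retraction machinery just established in Proposition~\ref{prop:complex_retraction}. First I would observe that $U$ meets every cell of $\SBuilding$ in a convex set: if $\Cell$ is a cell, pick an apartment $\Apartment$ containing both $\Chamber$ and $\Cell$ (using \eqref{item:plenty_of_apartments} and the fact that any cell lies in a common apartment with $\Chamber$); then $U \cap \Cell = (U \cap \Apartment) \cap \Cell$ is the intersection of two convex subsets of $\Apartment$, hence convex. Therefore Proposition~\ref{prop:complex_retraction} applies and gives a strong deformation retraction of $E = \SBuilding \setminus U$ onto $\SBuilding(E)$. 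So it suffices to prove that $E$ is $(\dim\SBuilding - 1)$-connected, and in fact it even suffices to prove that $\SBuilding(E)$ is.

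Next I would reduce to the hemisphere situation. Fix an apartment $\Apartment_0$ containing $\Chamber$. Since $U \cap \Apartment_0$ is a proper open convex subset of the sphere $\Apartment_0 = \S^n$, it is contained in an open hemisphere, i.e., there is a point $n_0 \in \Apartment_0$ (a ``north pole'') such that $U \cap \Apartment_0 \subseteq B(n_0, \pi/2)$, the open ball of radius $\pi/2$. (A proper open convex subset of the sphere lies in an open half-space — this is where properness is used.) Take $n \defeq n_0$ as the north pole of $\SBuilding$. I claim $E \supseteq S\ClosedHemi$, i.e., everything at distance $\ge \pi/2$ from $n$ is outside $U$: indeed if $\Point \in \SBuilding$ has $d(\Point,n) \ge \pi/2$, choose an apartment $\Apartment$ containing $\Chamber$, $\Point$, and $n$; the retraction $\rho_{\Apartment,\Chamber}$ (Fact~\ref{fact:building}) fixes $n$ and $\Chamber$, maps $\Apartment_0$ isometrically onto $\Apartment$ if $\Apartment_0 \supseteq \Chamber$... more carefully: $U \cap \Apartment$ is convex and $\rho_{\Apartment_0,\Chamber}$ restricted to $\Apartment$ is an isomorphism $\Apartment \to \Apartment_0$ fixing $\Chamber$ pointwise, hence fixing $n$ and respecting distances, so $U\cap\Apartment$ is carried to a convex subset of $\Apartment_0$ containing $\rho(\Chamber) = \Chamber$; combining with the fact that $U$ is $\rho$-invariant in the sense that $U\cap\Apartment = \rho^{-1}(U\cap\Apartment_0)$ is delicate. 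The cleaner route: $U\cap\Apartment$ is a proper open convex subset of $\Apartment$, and it contains $U\cap\Chamber$ which is nonempty-or-empty; regardless, since $U\cap\Apartment\subseteq B(m,\pi/2)$ for \emph{some} north pole $m$ depending on $\Apartment$, and one must check $m$ can be taken to be $n$. This is the crux.

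\textbf{The main obstacle} is exactly coordinating the north pole across apartments: a priori each apartment $\Apartment \ni \Chamber$ gives its own hemisphere containing $U\cap\Apartment$, and I need a single $n$ that works simultaneously, so that $S\ClosedHemi \subseteq E$ globally. I expect this is handled by using the retraction $\rho_{\Apartment_0,\Chamber}$: it is an isometry on each apartment containing $\Chamber$ onto $\Apartment_0$, it is type-preserving, and $U$ being convex on every such apartment should force $U \subseteq \rho_{\Apartment_0,\Chamber}^{-1}(U\cap\Apartment_0)$, whence $d(\cdot,n)\ge\pi/2$ (measured after retracting, but $\rho$ is distance-nonincreasing and an isometry on apartments through $\Chamber$) implies outside $U$. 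Once $S\ClosedHemi \subseteq E \subseteq \SBuilding$ is established, one argues: $\SBuilding(E)$ contains the closed hemisphere complex $\SBuilding\ClosedHemi$, which by Theorem~\ref{thm:schulz_main} is $(\dim\SBuilding-1)$-connected (it is properly $(\dim\SBuilding)$-spherical, in particular $(\dim\SBuilding-1)$-connected). To upgrade from ``$\SBuilding(E)\supseteq \SBuilding\ClosedHemi$'' to ``$\SBuilding(E)$ is $(\dim\SBuilding-1)$-connected'' I would exhibit a deformation retraction of $\SBuilding(E)$, or of $E$, onto $S\ClosedHemi$ (or its supported complex): the geodesic retraction of $\SBuilding\setminus\{n\}$ toward the equator, i.e., pushing each point away from $n$ along the unique geodesic until it hits $S\Equator$ or stops, is well-defined on $E$ because $E$ avoids the open ball of radius $\pi/2$ around $n$ only... no — $E$ may contain points near $n$ if those points are also near $\Chamber$ and outside $U$. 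Hmm. So instead: first deformation retract $E$ onto $S\ClosedHemi \cup (\text{stuff})$; the honest statement is that $\SBuilding\setminus B(n,\pi/2)$ retracts onto $S\Equator$-complex by Schulz, and $E \supseteq \SBuilding\setminus B(n,\pi/2) = S\ClosedHemi$, with $E$ possibly larger inside $B(n,\pi/2)$. The resolution: $E$ deformation retracts onto $S\ClosedHemi$ via the radial-from-$n$ projection composed with the earlier complex retraction, since $U$ convex means $E \cap \Apartment$ is ``co-convex'' and the geodesic from any $p \in E$ away from $n$ stays in $E$ (it leaves the convex set $U\cap\Apartment$ and never re-enters). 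Carefully: if $p\in E\cap\Apartment$ and $\gamma$ is the geodesic from $p$ directly away from $n$, then $\gamma$ eventually reaches $S\Equator\cap\Apartment$; along the way it stays outside $U\cap\Apartment$ because $U\cap\Apartment$ is convex and contained in the open $n$-hemisphere, so once you are heading away from $n$ you cannot enter a convex set living in the $n$-hemisphere. This gives the retraction $E \to S\ClosedHemi$, hence $E \simeq S\ClosedHemi = \SBuilding\ClosedHemi$ is $(\dim\SBuilding-1)$-connected, and combined with Proposition~\ref{prop:complex_retraction} so is $\SBuilding(E)$. I would write this last retraction argument apartment-by-apartment, checking it glues (it does, since the geodesic away from $n$ through $p$ lies in every apartment containing $n$ and $p$, by the analogue of Lemma~\ref{lem:twin_ray_well-defined} for spherical buildings / uniqueness of geodesics in \CAT{1} of small diameter). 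The only real work is the north-pole-coordination step flagged above.
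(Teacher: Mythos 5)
Your opening reduction — use Proposition~\ref{prop:complex_retraction} to pass between $E$ and $\SBuilding(E)$, after observing that $U$ meets each cell in a convex set by picking an apartment through $\Chamber$ and the cell — is exactly the paper's first step and is fine. After that the two arguments diverge completely, and your route has a genuine gap at the step you yourself flagged as "the crux."

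The north-pole coordination cannot be carried out. The hypothesis only says that $U \cap \Apartment$ is a proper open convex subset of \emph{each} apartment $\Apartment \supseteq \Chamber$, with no compatibility between different apartments. No single point $n$ will in general satisfy $U \subseteq B(n,\pi/2)$: take $U$ to be a small open ball around a point $x \in \SBuilding$ lying nowhere near $\Chamber$; then $U \cap \Apartment$ is a tiny ball (or empty) in each apartment, so the hypothesis is satisfied, but $x$ and hence $U$ can be forced outside $B(n,\pi/2)$ for every $n$ under your control. The retraction $\rho_{\Apartment_0,\Chamber}$ does not help here because $U$ is \emph{not} assumed to be $\rho$-equivariant; the inclusion $U \subseteq \rho_{\Apartment_0,\Chamber}^{-1}(U \cap \Apartment_0)$ you float is an extra hypothesis, not a consequence. (In the one application where such equivariance actually does hold — the pullback $\UpFat = \rho^{-1}(\UpLink)$ in Lemma~\ref{lem:flat_iff_disjoint_from_up} — the convex set $\UpLink$ is still not a $\pi/2$-ball, so the hemisphere picture wouldn't apply even there.) Indeed the paper's Remark following the proposition records that the hemisphere case is the \emph{special} case $U = B(\textnormal{point of } \Chamber, \pi/2)$; the statement proved is strictly more general, and the generality is used.

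There is a second, independent problem with the radial-projection step. Even granting a single $n$ with $U \subseteq B(n,\pi/2)$, the geodesic from $p \in E$ directly away from $n$ can enter $U$: on a circle with $n$ at angle $0$, $U$ the open arc $(\pi/8, 3\pi/8)$ (convex, inside $B(n,\pi/2)$), and $p$ at angle $\pi/10$, the ray away from $n$ runs straight into $U$. The claim that "heading away from $n$ you cannot enter a convex set living in the $n$-hemisphere" is false; it becomes true exactly when $n$ is chosen \emph{inside} $U$, because then $[n,q] \subseteq U$ for $q \in U$ forces the ray from $p \notin U$ away from $n$ to avoid $U$. But then you must choose $n \in U$, and to have $n$ lie in every apartment through $\Chamber$ you are forced to $n \in U \cap \Chamber$, which may be empty.

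The paper's actual proof uses neither Schulz's theorem nor any north pole. It is a Solomon--Tits-style gluing argument: cover a sphere $S \subseteq E$ of dimension $\le \dim\SBuilding - 1$ by finitely many apartments through $\Chamber$, order them via \cite[Lemma~3.5]{vheydebreck03} so that each new apartment $\Apartment_i$ meets the union of the previous ones in a union of roots containing $\Chamber$, and then show inductively that removing $U$ at each gluing step either attaches an $n$-cell along its boundary or collapses (via Observation~\ref{obs:cell_retraction}) to the previous stage. This entirely avoids the need to coordinate anything across apartments: the hypothesis is used one apartment (in fact one polyhedron $A_i$) at a time, which is precisely what makes the apartment-wise formulation of the hypothesis the right one. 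If you want an approach compatible with your Schulz instinct, the closest honest statement is case~(iii) of the Remark: Schulz proved the case where $U$ itself (globally) is convex, and the paper's proof is billed as an extension of his to the apartment-wise-convex case.
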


\begin{proof}
First note that $E$ and $\SBuilding(E)$ are homotopy equivalent by Proposition~\ref{prop:complex_retraction}, so it suffices to prove the statement for $E$.

We have to contract spheres of dimensions up to $\dim\SBuilding-1$. Let $S \subseteq E$ be such a sphere. Since $S$ is compact in $\SBuilding$, it is covered by a finite family of apartments that contain $\Chamber$. We apply \cite[Lemma~3.5]{vheydebreck03} to obtain a finite sequence $\Apartment_1, \ldots, \Apartment_k$ of apartments that satisfies the following three properties: each $\Apartment_i$ contains $\Chamber$, the sphere $S$ is contained in the union $\Union_i\SApartment_i$, and for $i \ge 2$ the intersection $\Apartment_i \intersect (\Apartment_1 \union \cdots \union\Apartment_{i-1})$ is a union of roots, each of which contains $\Chamber$.

For $1 \le i \le k$ set $\Lambda_i \defeq \Apartment_1 \union \cdots \union \Apartment_i$ so that $S \subseteq \Lambda_k \setminus U$. Then $\Lambda_i$ is obtained from $\Lambda_{i-1}$ by gluing in the set $A_i \defeq \overline{\Apartment_i \setminus (\Apartment_1 \union \cdots \union \Apartment_{i-1})}$ along its boundary. Note that $A_i$ is an $n$-dimensional polyhedron.

Now we study how the inductive construction above behaves when $U$ is cut out. To start with, $\Lambda_1 \setminus U = \Apartment_1 \setminus U$ is contractible or a $(\dim \SBuilding)$-sphere. The space   $\Lambda_i \setminus U$ is obtained from $\Lambda_{i-1}\setminus U$ by gluing in $A \setminus U$ along $(\partial A) \setminus U$. If $A$ and $U$ are disjoint, then this is gluing in an $n$-cell along its boundary. Otherwise Observation~\ref{obs:cell_retraction} implies that $A \setminus U$ deformation retracts onto $(\partial A) \setminus U$, so that $\Lambda_{i} \setminus U$ is a deformation retract of $\Lambda_{i-1} \setminus U$. In the end, the sphere $S$ can be contracted inside $\Lambda_k \setminus U$.
\end{proof}

\begin{rem}
Proposition~\ref{prop:coconvex_complexes_spherical} has some interesting special cases:
\begin{enumerate}
\item In the case where $U = \emptyset$, the proposition becomes the Solomon--Tits theorem that a spherical building is spherical (in the topological sense).
\item In the case where $U$ is the open $\pi/2$-ball around a point of $\Chamber$, it becomes Schulz' statement that closed hemisphere complexes are spherical (Theorem~\ref{thm:schulz_main}).
\item In fact Schulz proved the proposition in the case where $U$ is convex (see \cite[Theorem~A]{schulz10}) and our proof is an extension of his.
\end{enumerate}
\end{rem}

\footerlevel{3}
\headerlevel{3}

\section{Descending Links}
\label{sec:descending_links}

It remains to show that the descending link of every vertex of $\Subdiv\OneSpace$ is $(\Dimension-1)$-connected. To do so we have to put all the bits that we have amassed in the last sections together. Using \eqref{eq:barycenter_descending_link_decomposition}, we can study the face part and the coface part of $\Subdiv\OneSpace$ separately.

Recall that a cell on which $\Height$ is constant is called \emph{flat}. A flat cell $\BigCell$ has a face $\BigCell\Min$ and we say that $\BigCell$ is \emph{significant} if $\BigCell = \BigCell\Min$. A cell $\BigCell$ that is not significant, i.e., either not flat or flat but not equal to $\BigCell\Min$ is called \emph{insignificant}. Using that $\BigCell$ coincides with its roof $\Roof\BigCell$ if and only if it is flat we can say more concisely that $\BigCell$ is insignificant if $\BigCell \ne \Roof\BigCell\Min$. These cells are called so because:

\begin{lem}
\label{lem:insignificant_descending_link}
If $\BigCell$ is insignificant, then the descending link of $\Subdiv\BigCell$ is contractible. More precisely $\Link\FacePart\Descending \Subdiv\BigCell$ is already contractible.
\end{lem}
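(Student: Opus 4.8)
The plan is to reduce the statement to a combinatorial computation in the face lattice of the simplex $\BigCell$ (recall $\OneSpace$ is irreducible, so all its cells are simplices) and then to recognize the descending face part as a PL sphere from which the open star of a collapsible subcomplex has been removed. First I would use the decomposition \eqref{eq:barycenter_descending_link_decomposition}, $\Link\Descending\Subdiv\BigCell=\Link\FacePart\Descending\Subdiv\BigCell*\Link\CofacePart\Descending\Subdiv\BigCell$; since a join with a contractible complex is contractible, it is enough to prove the sharper claim that $\Link\FacePart\Descending\Subdiv\BigCell$ is contractible. As $\Link\FacePart\Subdiv\BigCell$ is the barycentric subdivision of $\partial\BigCell$, i.e.\ the order complex of the poset of nonempty proper faces of $\BigCell$, its descending part is the full subcomplex spanned by those $\Subdiv\Cell$ with $\Morse(\Subdiv\Cell)<\Morse(\Subdiv\BigCell)$. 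So everything comes down to deciding, for a nonempty face $\Cell\lneq\BigCell$, whether $(\max\Height|_\Cell,\Depth\Cell,\dim\Cell)<(\max\Height|_\BigCell,\Depth\BigCell,\dim\BigCell)$.

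The core step would be this combinatorial identification. One has $\max\Height|_\Cell=\max\Height|_\BigCell$ exactly when $\Cell\intersect\Roof\BigCell\ne\emptyset$, and then $\Roof\Cell=\Cell\intersect\Roof\BigCell$, so $\Depth\Cell$ equals $\Depth(\Cell\intersect\Roof\BigCell)$ or $\Depth(\Cell\intersect\Roof\BigCell)-1/2$ according as $\Cell\le\Roof\BigCell$ or not, while $\Depth\BigCell$ is $\Depth\Roof\BigCell$ or $\Depth\Roof\BigCell-1/2$ according as $\BigCell$ is flat or not. Since all faces of $\Roof\BigCell$ are flat with the common gradient $\Infty\Gradient_\BigCell\Height$, I would run the move calculus of Section~\ref{sec:moves} inside $\Roof\BigCell$: Lemma~\ref{lem:tau_min} gives $\Roof\BigCell\horizontal\Cell\iff\Roof\BigCell\Min\le\Cell$, hence $\Roof\BigCell\Down\Cell$, and so $\Depth\Cell<\Depth\Roof\BigCell$, whenever $\Roof\BigCell\Min\not\le\Cell\lneq\Roof\BigCell$; and if $\Roof\BigCell$ is itself insignificant, then $\Roof\BigCell\Min\Up\Roof\BigCell$, forcing $\Depth\Roof\BigCell\Min>\Depth\Roof\BigCell$ (using Observation~\ref{obs:min_min}). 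Assembling these, a nonempty proper face $\Cell$ fails to be descending precisely when $\Cell=\Roof\BigCell$ (which is possible only if $\BigCell$ is not flat) or $\Cell\intersect\Roof\BigCell=\Roof\BigCell\Min\lneq\Roof\BigCell$; write $B$ for this set of faces. Insignificance of $\BigCell$, namely $\BigCell\ne\Roof\BigCell\Min$, is exactly what makes $B$ nonempty as a set of proper faces: for non-flat $\BigCell$ this holds automatically because $\Roof\BigCell\in B$, and for flat $\BigCell$ one computes $B=\{\BigCell\Min\}$, a proper face precisely because $\BigCell\ne\BigCell\Min$.

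Finally I would conclude contractibility. The complex $\Link\FacePart\Descending\Subdiv\BigCell$ is $\Link\FacePart\Subdiv\BigCell$ with the open stars of the vertices $\Subdiv\Cell$, $\Cell\in B$, removed. When $B$ is a single vertex — the cases $\BigCell$ flat, and $\BigCell$ not flat with $\Roof\BigCell$ significant (where $B=\{\Roof\BigCell\}$) — this is just the fact that a PL sphere minus an open vertex star is a PL ball, which one also sees by radial projection away from the deleted point. When $\BigCell$ is not flat and $\Roof\BigCell$ is insignificant, $B$ is the union of the interval $[\Roof\BigCell\Min,\,\Roof\BigCell\Min\vee(\BigCell\setminus\Roof\BigCell)]$, which is a cone with apex $\Roof\BigCell\Min$, together with the extra vertex $\Roof\BigCell$ joined to it along the single edge $\Roof\BigCell\Min\le\Roof\BigCell$; the subcomplex spanned by $B$ is therefore collapsible, its regular neighbourhood in the PL sphere $\Link\FacePart\Subdiv\BigCell$ is a ball, and the closed complement of that neighbourhood is a ball as well. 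Either way $\Link\FacePart\Descending\Subdiv\BigCell$ is contractible, which proves the lemma. The hard part will be the combinatorial identification of the second paragraph: it requires a careful unwinding of the definition of $\Depth$ and of the up/down ``game of moves'' of Section~\ref{sec:moves}, keeping track of the half-integer shift $\Depth\BigCell=\Depth\Roof\BigCell-1/2$ for non-flat cells and of the degenerate possibilities ($\Roof\BigCell\Min=\Roof\BigCell$, $\Roof\BigCell\Min=\emptyset$, or $\dim\BigCell$ small, which in the irreducible situation are the cases to rule out or treat by hand); the topological endgame is then routine.
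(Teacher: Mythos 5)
There is a genuine gap at the ``assembling'' step of your second paragraph, i.e.\ in the claim that a proper face $\Cell\lneq\BigCell$ fails to be descending \emph{precisely} when $\Cell=\Roof\BigCell$ or $\Cell\intersect\Roof\BigCell=\Roof\BigCell\Min\lneq\Roof\BigCell$. The problematic faces are the flat faces $\Cell$ with $\Roof\BigCell\Min\lneq\Cell\lneq\Roof\BigCell$ in the case that $\BigCell$ is not flat. For such a $\Cell$ the heights tie, and since $\Roof\BigCell\horizontal\Cell$ but $\Cell\ne\Roof\BigCell\Min$, the move calculus of Section~\ref{sec:moves} provides no move in either direction between $\Roof\BigCell$ and $\Cell$ (Lemma~\ref{lem:tau_min}, Observation~\ref{obs:min_min}). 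The best comparison available is $\Depth\Cell\le\Depth\Roof\BigCell$: both cells are non-significant, so every move sequence from $\Cell$ starts with a down move $\Cell\Down\sigma$ to a face $\sigma$ with $\Roof\BigCell\Min\not\le\sigma$, and then $\Roof\BigCell\Down\sigma$ is available as well. Nothing excludes equality $\Depth\Cell=\Depth\Roof\BigCell$ (it amounts to the deepest admissible down-target below $\Roof\BigCell$ already being a face of $\Cell$, which the paper's machinery does not rule out). If equality holds, then $\Depth\Cell=\Depth\Roof\BigCell>\Depth\Roof\BigCell-1/2=\Depth\BigCell$, so $\Subdiv\Cell$ is \emph{not} descending although it is not in your set $B$; your identification of $\Link\FacePart\Descending\Subdiv\BigCell$ with the sphere minus the open stars of the vertices of $B$, and hence the regular-neighbourhood computation built on the specific shape of $B$, then fail as stated. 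Your proposal even flags this ``combinatorial identification'' as the hard part, but the needed strict inequality is exactly what is missing.

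This is also where your route diverges from the paper's, and why the paper's is more robust: the paper never determines the descending face part. It proves only the two one-sided facts you also prove correctly -- every $\Subdiv\Cell$ with $\Roof\BigCell\Min\not\le\Cell$ is descending (the down-move argument, with the height comparison absorbing the faces missing $\Roof\BigCell$), and the barycenter of $\Roof\BigCell\Min$ is ascending (the up-move argument, resp.\ non-flatness of $\BigCell$) -- and then retracts the descending face part, whatever it is, onto the punctured sphere $\Lambda$ of faces not containing $\Roof\BigCell\Min$ by geodesic projection away from the barycenter of $\Roof\BigCell\Min$; fullness of the descending subcomplex is what keeps the projection inside it, and the status of the intermediate faces is irrelevant. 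Your argument can be repaired in the same spirit: all faces whose status is uncertain contain $\Roof\BigCell\Min$, so the non-descending set, whatever it is exactly, spans a cone with apex the barycenter of $\Roof\BigCell\Min$, and a version of your collapsibility/regular-neighbourhood endgame (or simply the paper's projection) goes through without knowing $B$ exactly. As written, however, the ``precisely when'' is unjustified and the proof is incomplete.
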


\begin{proof}
Consider the full subcomplex $\Lambda$ of $\Link\FacePart \Subdiv\BigCell$ of vertices $\Subdiv\Cell$ with $\Roof{\BigCell}\Min \not\le \Cell \lneq \BigCell$: this is the barycentric subdivision of $\partial \BigCell$ with the open star of $\Roof{\BigCell}\Min$ removed. Therefore it is a punctured sphere and, in particular, contractible. We claim that $\Link\FacePart\Descending \Subdiv\BigCell$ deformation retracts on $\Lambda$.

So let $\Subdiv\Cell$ be a vertex of $\Lambda$. Then
\[
\max \Height|_{\Cell} \le \max \Height|_{\BigCell} = \max \Height|_{\Roof\BigCell\Min}
\]
so $\Height$ either makes $\Subdiv\Cell$ descending or is indifferent. As for depth, the fact that $\Roof\BigCell\Min \not\le \Cell$ implies of course that $\Roof\BigCell\Min \not\le \Roof\Cell$. So there is a move $\Roof\BigCell \Down \Roof\Cell$ which implies $\Depth \Roof\Cell < \Depth\Roof\BigCell - 1/2$. Therefore
\[
\Depth \Cell \le \Depth \Roof\Cell < \Depth\Roof\BigCell - 1/2 \le \Depth\BigCell
\]
so $\Subdiv\Cell$ is descending. This shows that $\Lambda \subseteq \Link\FacePart\Descending \Subdiv\BigCell$.

On the other hand $(\Roof\BigCell\Min)^\circ$ is not descending: Height does not decide because $\max \Height|_{\Roof\BigCell\Min} = \max \Height|_{\Roof\BigCell} = \max\Height|_{\BigCell}$. As for depth we have
\[
\Depth \BigCell \le \Depth \Roof\BigCell \le \Depth \Roof\BigCell\Min \text{ .}
\]
If $\BigCell$ is not flat, then the first inequality is strict. If $\BigCell$ is flat, then there is a move $\Roof\BigCell\Min \Up \Roof\BigCell = \BigCell$ so the second inequality is strict. In either case $(\Roof\BigCell\Min)^\circ$ is ascending.

So geodesic projection away from $(\Roof\BigCell\Min)^\circ$ defines a deformation retraction of $\Link\FacePart\Descending \Subdiv\BigCell$ onto $\Lambda$.
\end{proof}

\begin{lem}
\label{lem:significant_face_part_sphere}
If $\BigCell$ is significant, then all of $\Link\FacePart \Subdiv\BigCell$ is descending. So $\Link\Descending\FacePart \Subdiv\BigCell$ is a $(\dim \BigCell - 1)$-sphere.
\end{lem}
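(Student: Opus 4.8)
The plan is to prove the stronger, purely combinatorial claim that \emph{every} vertex of $\Link\FacePart \Subdiv\BigCell$ is descending; since $\Link\FacePart \Subdiv\BigCell$ is by definition the barycentric subdivision of $\partial\BigCell$, this at once gives $\Link\Descending\FacePart \Subdiv\BigCell = \Link\FacePart \Subdiv\BigCell$, which is a triangulated $(\dim\BigCell-1)$-sphere. So the task reduces to checking, for each non-empty proper face $\Cell \lneq \BigCell$, that $\Morse(\Subdiv\Cell) < \Morse(\Subdiv\BigCell)$.

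First I would set up the easy coordinates. Since $\BigCell$ is significant it is in particular flat, so $\Height$ is constant on $\BigCell$ and hence on every face $\Cell \le \BigCell$; thus $\max\Height|_\Cell = \max\Height|_\BigCell$, and the first coordinate of $\Morse$ never distinguishes $\Subdiv\Cell$ from $\Subdiv\BigCell$. Because $\BigCell$ is flat \emph{with respect to $\Height$}, the remark in Section~\ref{sec:morse_function} gives $\Infty\Gradient_\Cell\Height = \Infty\Gradient_\BigCell\Height$ for every face $\Cell \le \BigCell$, so $\Cell$ and $\BigCell$ share the same north pole, the same notion of moves, and the same operation $(-)\Min$; in particular $\Cell$ is again flat, so $\Depth\Cell$ is computed directly (no roof correction) and with respect to the same data as $\Depth\BigCell$.

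The heart of the matter is the depth comparison, and it is short: since $\BigCell$ is significant, $\BigCell\Min = \BigCell$, and because $\Cell \lneq \BigCell$ we have $\BigCell\Min = \BigCell \not\le \Cell$. By Lemma~\ref{lem:tau_min} this means we do \emph{not} have $\BigCell \horizontal \Cell$; together with $\Cell \lneq \BigCell$ this is exactly the definition of a move $\BigCell \Down \Cell$. By the observation in Section~\ref{sec:morse_function} comparing depths along moves, $\Depth\BigCell > \Depth\Cell$. Hence $\Morse(\Subdiv\Cell) < \Morse(\Subdiv\BigCell)$ (equal first coordinate, strictly smaller second coordinate), so $\Subdiv\Cell$ is descending. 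As $\Cell$ ranged over all vertices of $\Link\FacePart\Subdiv\BigCell$, the whole face part is descending, and it is the barycentric subdivision of $\partial\BigCell\cong S^{\dim\BigCell-1}$.

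I do not expect a real obstacle; the argument is essentially a one-line application of Lemma~\ref{lem:tau_min} plus the depth–move inequality. The only points requiring care are bookkeeping ones: verifying that a face of a flat cell is flat (immediate from constancy of $\Height$ on $\BigCell$), that the move relation and $(-)\Min$ genuinely transfer from $\BigCell$ to its faces (this uses flatness of $\BigCell$ with respect to $\Height$, not merely with respect to some Busemann function, which is precisely what the Section~\ref{sec:morse_function} remark provides), and that the vertices of $\Link\FacePart\Subdiv\BigCell$ are exactly the non-empty proper faces of $\BigCell$, so that the degenerate case $\Cell = \emptyset$ never enters.
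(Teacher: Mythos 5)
Your proof is correct and follows essentially the same route as the paper: significance gives $\BigCell\Min=\BigCell$, a proper face $\Cell$ then satisfies $\BigCell\Min\not\le\Cell$, producing a move $\BigCell\Down\Cell$ and hence a strict depth drop, while the $\Height$-coordinate is constant by flatness. The extra bookkeeping you added (flatness of faces, shared asymptotic gradient, the move/depth consistency) is all correct and merely makes explicit what the paper leaves implicit.
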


\begin{proof}
Let $\Cell \lneq \BigCell$ be arbitrary. We have $\max \Height|_{\Cell} = \max\Height|_{\BigCell}$ because $\BigCell$ is flat. Moreover $\Cell \lneq \BigCell =\BigCell\Min$ so that, in particular, $\BigCell\Min \not\le \Cell$. Hence there is a move $\BigCell \Down \Cell$ which implies $\Depth \BigCell > \Depth \Cell$ so that $\Subdiv\Cell$ is descending.
\end{proof}

Let $\Cell$ be a significant cell. To study the coface part $\Link\CofacePart \Subdiv\Cell$ it is tempting to argue that $\Link\Cell$ decomposes as $\Link\Ver \Cell * \Link\Hor \Cell$ by \eqref{eq:link_decomposes_as_hor_join_ver} and that this decomposition induces a decomposition of $\Link\CofacePart \Subdiv\Cell$. However this is not the case because $\Link\CofacePart \Subdiv\Cell$ contains barycenters of cells in $\Link\Cell$ that have vertical as well as horizontal vertices. But we will see that the descending coface part does decompose as a join of its horizontal and vertical part. Even better: the set $\Link\CofacePart\Descending \Subdiv\Cell$ is a subcomplex of $\Link \Cell$ and that subcomplex decomposes into its horizontal and vertical part.

Recall Observation~\ref{obs:min_min} and Observation~\ref{obs:significant_either_or}:

\begin{reminder}
\label{reminder:move_reminder}
\begin{enumerate}
\item If $\BigCell$ is flat and $\BigCell\Min \le \Cell \le \BigCell$, then $\Cell\Min = \BigCell\Min$.\label{item:height_min_min}
\item If $\Cell$ is significant and $\BigCell \ge \Cell$ is flat, then there is either a move $\Cell \Up \BigCell$ or a move $\BigCell \Down \Cell$.\label{item:height_either_or}
\end{enumerate}
\end{reminder}

\begin{prop}
\label{prop:descending_coface_link_is_descending_link}
Let $\Cell$ be significant. The descending coface part $\Link\Descending\CofacePart \Subdiv\Cell$ is a subcomplex of $\Link \Cell$. That is, for cofaces $\BigCell \gneq \Another\Cell \gneq \Cell$, if $\Subdiv\BigCell$ is descending then $\Subdiv{\Cell}'$ is descending.
\end{prop}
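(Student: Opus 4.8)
The plan is to prove the equivalent pointwise statement spelled out after the assertion: for cofaces $\BigCell \gneq \Another\Cell \gneq \Cell$ with $\Cell$ significant, if $\Subdiv\BigCell$ is descending then $\Subdiv{\Another\Cell}$ is descending. Once this downward closure among cofaces of $\Cell$ is established, the set of cells $\BigCell \direction \Cell$ with $\Subdiv\BigCell$ descending is closed under passing to faces in $\Link\Cell$, so $\Link\Descending\CofacePart \Subdiv\Cell$ is the barycentric subdivision of an honest subcomplex of $\Link \Cell$ — which is what is claimed.

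First I would analyze the first coordinate of $\Morse$. Since $\Cell$ is significant it is flat, so $\max \Height|_\Cell$ is the constant value $h$ of $\Height$ on $\Cell$; as $\Cell \le \BigCell$ we have $\max \Height|_\Cell \le \max \Height|_{\BigCell}$, while $\Subdiv\BigCell$ descending forces $\max \Height|_{\BigCell} \le \max \Height|_\Cell$. Hence $\max \Height|_\Cell = \max \Height|_{\Another\Cell} = \max \Height|_{\BigCell} = h$, and consequently (using that the roofs are, by Proposition~\ref{prop:almost_rich_min_vertex}, the faces on which $\Height$ attains its maximum, together with $\Height \equiv h$ on $\Cell$ and $\Another\Cell \le \BigCell$) one gets the chain of faces $\Cell \le \Roof{\Another\Cell} \le \Roof\BigCell$. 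Because these three first coordinates agree while dimensions strictly increase along $\Cell \lneq \Another\Cell \lneq \BigCell$, the lexicographic order collapses "descending" to the depth coordinate: $\Subdiv\BigCell$ is descending iff $\Depth\BigCell < \Depth\Cell$, and likewise $\Subdiv{\Another\Cell}$ is descending iff $\Depth{\Another\Cell} < \Depth\Cell$. So it remains to show $\Depth\BigCell < \Depth\Cell$ implies $\Depth{\Another\Cell} < \Depth\Cell$.

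Next I would transport the problem to roofs so that the move machinery of Section~\ref{sec:moves} applies. If $\Roof{\Another\Cell} = \Cell$ then $\Another\Cell$ is non-flat and $\Depth{\Another\Cell} = \Depth\Roof{\Another\Cell} - 1/2 = \Depth\Cell - 1/2 < \Depth\Cell$, and we are done; so assume $\Cell \lneq \Roof{\Another\Cell}$. Now $\Cell \lneq \Roof{\Another\Cell} \le \Roof\BigCell$ is a chain of flat cells, all cofaces of the significant cell $\Cell$ and all flat with respect to $\Height$, hence all computing moves with respect to the same point at infinity $\Infty\Gradient_\Cell \Height$. The crux is to exclude the move $\Roof\BigCell \Down \Cell$: if it existed, depth would strictly increase along it, so $\Depth\Roof\BigCell > \Depth\Cell$ — these being integers, $\Depth\Roof\BigCell \ge \Depth\Cell + 1$ — whence $\Depth\BigCell$, being $\Depth\Roof\BigCell$ when $\BigCell$ is flat and $\Depth\Roof\BigCell - 1/2$ otherwise, satisfies $\Depth\BigCell \ge \Depth\Cell$, contradicting $\Depth\BigCell < \Depth\Cell$. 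So by Reminder~\ref{reminder:move_reminder}~\eqref{item:height_either_or} (with $\Cell$ significant and $\Roof\BigCell \gneq \Cell$ flat) we must have $\Cell \Up \Roof\BigCell$, i.e.\ $\Roof{\BigCell}\Min = \Cell$. Then $\Cell = \Roof{\BigCell}\Min \le \Roof{\Another\Cell} \le \Roof\BigCell$ and Reminder~\ref{reminder:move_reminder}~\eqref{item:height_min_min} give $\Roof{\Another\Cell}\Min = \Cell$; since $\Roof{\Another\Cell} \gneq \Cell$ this is the move $\Cell \Up \Roof{\Another\Cell}$, so $\Depth\Roof{\Another\Cell} < \Depth\Cell$, and therefore $\Depth{\Another\Cell} \le \Depth\Roof{\Another\Cell} < \Depth\Cell$ whether or not $\Another\Cell$ is flat. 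Hence $\Subdiv{\Another\Cell}$ is descending.

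The step I expect to require the most care is the bookkeeping between $\Height$-flatness on the one hand and $\Infty\Gradient$-flatness together with the move relations $\horizontal$, $\Up$, $\Down$ on the other: one must make sure the moves attached to $\Cell$, $\Roof{\Another\Cell}$ and $\Roof\BigCell$ are all taken with respect to $\Infty\Gradient_\Cell \Height$ (legitimate since each is a flat coface of $\Cell$ and flat cofaces share the gradient at infinity), and that the convention $\Depth\BigCell = \Depth\Roof\BigCell - 1/2$ for non-flat $\BigCell$, combined with the integrality of depths of flat cells, is precisely what makes the numeric inequalities in the crux go through. Once the reduction "descending iff smaller depth" is in place and everything is pushed to roofs, the remainder is a short combination of Observation~\ref{obs:no_up_down_cycle}, Reminder~\ref{reminder:move_reminder}, and the monotonicity of depth along $\Up$ and $\Down$ moves noted in Section~\ref{sec:morse_function}.
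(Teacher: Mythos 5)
Your proof is correct and follows essentially the same route as the paper's: the same reduction to comparing depths (via the equality of $\max\Height$ over the three cells and the strict increase in dimension), the same split into the case $\Roof{\Another\Cell}=\Cell$ versus $\Roof{\Another\Cell}\gneq\Cell$, the same exclusion of the move $\Roof\BigCell\Down\Cell$ by a depth contradiction, and the same use of Reminder~\ref{reminder:move_reminder} to force $\Cell\Up\Roof\BigCell$ and then $\Cell\Up\Roof{\Another\Cell}$ via the $\Min$-compatibility. Your write-up is slightly more explicit about where integrality of depths of flat cells is used and about the downward-closure reformulation that justifies the ``subcomplex'' wording, but these are just careful spelling-out of steps the paper leaves implicit, not a different argument.
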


\begin{proof}
Let $\BigCell \gneq \Another\Cell \gneq \Cell$ and assume that $\Morse(\Subdiv\BigCell) < \Morse(\Subdiv\Cell)$. By inclusion of cells we have
\[
\max \Height|_{\BigCell} \ge \max \Height|_{\Cell'} \ge \max \Height|_{\Cell}
\]
and since $\Subdiv\BigCell$ is descending $\max \Height|_\BigCell \le \max \Height|_\Cell$ so equality holds. Clearly $\dim \BigCell > \dim \Cell$ so since $\Subdiv\BigCell$ is descending we conclude $\Depth\BigCell < \Depth\Cell$.
We have inclusions of flat cells
\[
\Roof\BigCell \ge \Roof\Cell' \ge \Cell \text{ .}
\]
If the second inclusion is equality, then $\Another\Cell \ne \Cell = \Roof\Cell'$ so $\Depth \Another\Cell < \Depth \Roof\Cell' = \Depth \Cell$ and $\Subdiv\Cell'$ is descending. Otherwise $\Roof\BigCell$ is a proper coface of $\Cell$ so by Reminder~\ref{reminder:move_reminder}~\eqref{item:height_either_or} there is a move $\Cell \Up \Roof\BigCell$ or a move $\Roof\BigCell \Down \Cell$. In the latter case we would have $\Depth \BigCell \ge \Depth \Roof\BigCell - 1/2 > \Depth \Cell$ contradicting the assumption that $\Subdiv\BigCell$ is descending. Hence the move is $\Cell \Up \Roof\BigCell$, that is, $\Cell = \Roof\BigCell\Min$. It then follows from Reminder~\ref{reminder:move_reminder}~\eqref{item:height_min_min} that also $\Roof\Cell'{}\Min = \Cell$ so that there is a move $\Cell \Up \Roof\Cell'$. Thus $\Depth \Cell' \le \Depth \Roof\Cell' < \Depth \Roof\Cell$.
\end{proof}

Instead of studying the descending part of the subdivision $\Link\Descending\CofacePart \Subdiv\Cell$ of the link of a significant cell $\Cell$ we may now study the \emph{descending link} $\Link\Descending \Cell$ of $\Cell$ of all cells $\BigCell \direction \Cell$ with $\Morse(\BigCell) < \Morse(\Cell)$.

We define the \emph{horizontal descending link} $\Link\Hor{}\Descending \Cell = \Link\Hor \Cell \intersect \Link\Descending \Cell$ and the \emph{vertical descending link} $\Link\Ver{}\Descending \Cell = \Link\Ver \Cell \intersect \Link\Descending \Cell$. Beware that we do not know yet whether $\Link\Descending \Cell$ decomposes as a join of these two subcomplexes. One inclusion however is clear: $\Link\Descending \Cell \subseteq \Link\Hor{}\Descending \Cell * \Link\Ver{}\Descending \Cell$.

\begin{lem}
\label{lem:significant_vertical_open_hemisphere}
If $\Cell$ is significant, then $\Link\Ver{}\Descending \Cell$ is an open hemisphere complex with north pole $\Gradient_\Cell \Height$.
\end{lem}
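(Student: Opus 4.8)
The plan is to show that $\Link\Ver{}\Descending \Cell$ coincides \emph{as a simplicial complex} with the open hemisphere link $\Link\OpenHemi \Cell$ of $\Link \Cell$ with north pole $\Gradient_\Cell \Height$, by matching up simplices directly. Since $\Cell$ is significant it is flat, so $\Gradient_\Cell \Height$ is a well-defined direction perpendicular to $\Cell$, that is, a point $n$ of the spherical building $\Link \Cell$, and $\Link \Cell = \Link\Hor \Cell * \Link\Ver \Cell$ with respect to $n$. Recall that $\Link\Ver{}\Descending \Cell = \Link\Ver \Cell \cap \Link\Descending \Cell$, where $\Link\Descending \Cell$ consists of the simplices $\BigCell \direction \Cell$ (for proper cofaces $\BigCell \gneq \Cell$) with $\Morse(\Subdiv\BigCell) < \Morse(\Subdiv\Cell)$, and that $\Directions$ is assumed almost rich, so Corollary~\ref{cor:higher_dimensional_angle_criterion} applies: $\Roof\BigCell = \Cell$ if and only if $\BigCell \direction \Cell \subseteq \Link\OpenHemi \Cell$. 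Hence it suffices to prove, for a proper coface $\BigCell \gneq \Cell$, that
\[
\BigCell \direction \Cell \in \Link\Ver{}\Descending \Cell
\quad\Longleftrightarrow\quad
\Roof\BigCell = \Cell .
\]

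For the implication ``$\Leftarrow$'': if $\Roof\BigCell = \Cell$, then by Corollary~\ref{cor:higher_dimensional_angle_criterion} the simplex $\BigCell \direction \Cell$ lies in $\Link\OpenHemi \Cell$; in particular each of its vertices is non-equatorial, hence lies in a vertical irreducible join factor, so $\BigCell \direction \Cell \subseteq \Link\Ver \Cell$ (the vertical part being a sub-join, hence a full subcomplex). Moreover, $\Roof\BigCell = \Cell \ne \BigCell$ forces $\BigCell$ to be non-flat, so $\max \Height|_\BigCell = \max \Height|_{\Roof\BigCell} = \Height(\Cell)$ and $\Depth\BigCell = \Depth\Roof\BigCell - \tfrac12 = \Depth\Cell - \tfrac12 < \Depth\Cell$; since $\Cell$ is flat we also have $\max \Height|_\Cell = \Height(\Cell)$, so a lexicographic comparison of the first two coordinates of $\Morse$ gives $\Morse(\Subdiv\BigCell) < \Morse(\Subdiv\Cell)$, i.e.\ $\BigCell \direction \Cell \in \Link\Descending \Cell$.

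For the implication ``$\Rightarrow$'': assume $\BigCell \direction \Cell \in \Link\Ver{}\Descending \Cell$. Descendingness forces $\max \Height|_\BigCell \le \Height(\Cell)$; as $\Cell \le \BigCell$ and $\Height|_\Cell \equiv \Height(\Cell)$, equality holds and $\Cell \le \Roof\BigCell$. Suppose, for contradiction, that $\Cell \lneq \Roof\BigCell$. Then $\Roof\BigCell$ is a proper flat coface of the significant cell $\Cell$, so Reminder~\ref{reminder:move_reminder}~\eqref{item:height_either_or} gives either $\Cell \Up \Roof\BigCell$ or $\Roof\BigCell \Down \Cell$. In the first case $\Roof\BigCell\Min = \Cell$, hence $\Roof\BigCell \horizontal \Cell$ by Lemma~\ref{lem:tau_min}, so the nonempty face $\Roof\BigCell \direction \Cell$ of $\BigCell \direction \Cell$ lies in $\Link\Hor \Cell$; but that face also lies in $\Link\Ver \Cell$, which is impossible because in the join $\Link\Hor \Cell * \Link\Ver \Cell$ the only simplex common to both factors is the empty one. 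In the second case $\Depth\Roof\BigCell > \Depth\Cell$, and since depths of flat cells are integers, $\Depth\BigCell = \Depth\Roof\BigCell - \tfrac12 \ge \Depth\Cell + \tfrac12 > \Depth\Cell$ while the first $\Morse$-coordinates agree, contradicting $\Morse(\Subdiv\BigCell) < \Morse(\Subdiv\Cell)$. Hence $\Cell = \Roof\BigCell$. (Running the same dichotomy with $\Roof\BigCell$ replaced by $\BigCell$ itself shows en route that $\BigCell$ cannot be flat, consistently with $\Roof\BigCell = \Cell \lneq \BigCell$.)

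Combining the two implications with Corollary~\ref{cor:higher_dimensional_angle_criterion} shows that the simplices of $\Link\Ver{}\Descending \Cell$ are exactly those of $\Link\OpenHemi \Cell$, so $\Link\Ver{}\Descending \Cell = \Link\OpenHemi \Cell$ is an open hemisphere complex with north pole $\Gradient_\Cell \Height$. I expect the only genuinely delicate point to be the bookkeeping around $\horizontal$, $\Min$ and significance — namely the equivalences $\Roof\BigCell \horizontal \Cell \iff \Roof\BigCell\Min = \Cell \iff \Cell \Up \Roof\BigCell$ for $\Cell$ significant with $\Cell \lneq \Roof\BigCell$, which rest on Lemma~\ref{lem:tau_min} together with Observation~\ref{obs:min_min} — so that Reminder~\ref{reminder:move_reminder} really does split cleanly into the two cases used above; everything else is a routine comparison of the three coordinates of $\Morse$.
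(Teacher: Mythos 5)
Your proposal is correct and takes essentially the same route as the paper: one inclusion comes from the angle criterion (Corollary~\ref{cor:higher_dimensional_angle_criterion}) together with the observation that open-hemisphere cofaces drop the depth coordinate of $\Morse$, and the other comes from the up/down dichotomy of Reminder~\ref{reminder:move_reminder} for a proper flat coface $\Roof\BigCell$ of the significant cell $\Cell$, with the up-case excluded because a nonempty face of a cell of $\Link\Ver\Cell$ cannot lie in $\Link\Hor\Cell$ and the down-case excluded by the depth comparison. The only cosmetic point: in the down-case you write $\Depth\BigCell = \Depth\Roof\BigCell - \tfrac12$, which presupposes $\BigCell$ non-flat; if $\BigCell$ were flat one has $\Depth\BigCell = \Depth\Roof\BigCell$, and the contradiction $\Depth\BigCell > \Depth\Cell$ is even more immediate (the paper writes ``$\ge$'' there for exactly this reason), so nothing is lost.
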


\begin{proof}
Let $\Link\OpenHemi \Cell$ denote the open hemisphere complex with north pole $\Gradient_\Cell \Height$. By Corollary~\ref{cor:higher_dimensional_angle_criterion} $\Link\OpenHemi \Cell \subseteq \Link\Ver{}\Descending \Cell$.

Conversely assume that $\BigCell \ge \Cell$ is such that $\BigCell \direction \Cell$ contains a vertex that includes a non-obtuse angle with $\Gradient_\Cell\Height$. Then either
\[
\max \Height|_{\BigCell} = \max \Height|_{\Roof\BigCell} > \max \Height|_{\Cell} 
\]
or $\Roof\BigCell$ is a proper flat coface of $\Cell$. In the latter case since $\Roof\BigCell$ does not lie in the horizontal link of $\Cell$ there is a move $\Roof\BigCell \Down \Cell$ so that
\[
\Depth \BigCell \ge \Depth \Roof\BigCell - \frac{1}{2} > \Depth \Cell \text{ .}
\]
In both cases $\BigCell$ is not descending.
\end{proof}

\begin{obs}
\label{obs:descending_iff_flat}
If $\Cell$ is significant and $\BigCell \ge \Cell$ is such that $\BigCell \direction \Cell \subseteq \Link\Hor \Cell$, then these are equivalent:
\begin{enumerate}
\item $\BigCell$ is flat.
\item $\BigCell$ is descending.
\item $\Height|_{\BigCell} \le \Height(\Cell)$.
\end{enumerate}
\end{obs}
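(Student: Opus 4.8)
The plan is to establish the three conditions cyclically, $\text{(i)}\Rightarrow\text{(ii)}\Rightarrow\text{(iii)}\Rightarrow\text{(i)}$. We may assume $\BigCell\gneq\Cell$, the case $\BigCell=\Cell$ being immediate. Throughout one should keep in mind that $\Cell$, being significant, is flat, so $\Height|_\Cell$ is the constant $h_0\defeq\Height(\Cell)$, that $\Gradient_\Cell\Height$ is well-defined and perpendicular to $\Cell$ (the observation on flat cells in Section~\ref{sec:angle_criterion}), and that for a flat coface $\BigCell\ge\Cell$ one has $\Infty\Gradient_\BigCell \Height = \Infty\Gradient_\Cell \Height$, so that $\Cell$ and $\BigCell$ then share the same notion of moves and the same function $\Cell\mapsto\Cell\Min$.

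For $\text{(i)}\Rightarrow\text{(ii)}$: if $\BigCell$ is flat, then $\max\Height|_\BigCell = h_0 = \max\Height|_\Cell$, so the first coordinates of $\Morse(\Subdiv\BigCell)$ and $\Morse(\Subdiv\Cell)$ agree; since $\dim\BigCell>\dim\Cell$, it suffices to show $\Depth\BigCell<\Depth\Cell$. The hypothesis $\BigCell\direction\Cell\subseteq\Link\Hor\Cell$ says exactly $\BigCell\horizontal\Cell$ (with respect to $\Infty\Gradient_\Cell \Height$); as $\Cell$ is significant, Observation~\ref{obs:min_min} gives $\BigCell\Min=\Cell\Min=\Cell$, so (using $\BigCell\ne\Cell$) there is a move $\Cell\Up\BigCell$, whence $\Depth\BigCell<\Depth\Cell$ by the observation relating moves to depth in Section~\ref{sec:morse_function}. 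Thus $\Morse(\Subdiv\BigCell)<\Morse(\Subdiv\Cell)$, i.e.\ $\BigCell$ is descending. The implication $\text{(ii)}\Rightarrow\text{(iii)}$ is a triviality: if $\BigCell$ is descending then $\max\Height|_\BigCell\le\max\Height|_\Cell=h_0$, so a fortiori $\Height|_\BigCell\le h_0$.

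The step with genuine content is $\text{(iii)}\Rightarrow\text{(i)}$. If $h_0=0$ then (iii) forces $\Height|_\BigCell\equiv 0$, so $\BigCell$ is flat; assume $h_0>0$. Choose a twin apartment $\PNApartments$ containing $\TheNegPoint$ and $\BigCell$, let $\ThePosPoint$ be the point opposite $\TheNegPoint$, and set $Z'\defeq\ThePosPoint+\Zonotope$, a convex polytope in $\PosApartment$ with $\Height|_{\PosApartment}=\EuclDistance(\DummyArg,Z')$. Pick an interior point $\Point$ of $\Cell$ and put $p\defeq\EuclProjection[Z']\Point$; then the unit vector $e$ representing $\Gradient_\Cell\Height=\Gradient_{\Point}\Height$ in $\PosApartment$ equals $(\Point-p)/h_0$, and $Z'$ lies in the halfspace $\{x:\scp{x-p}{e}\le 0\}$ by the standard property of the closest-point projection. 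Since $\Cell$ is flat it is perpendicular to $e$, and since $\BigCell\direction\Cell\subseteq\Link\Hor\Cell$ is equatorial, every direction from $\Cell$ into $\BigCell$ is also perpendicular to $e$; hence $\BigCell$ lies in the affine hyperplane $H\defeq\{x:\scp{x-p}{e}=h_0\}$. For all $x\in H$ and $z\in Z'$ one has $\scp{x-z}{e}\ge h_0$, so $\EuclDistance(x,z)\ge h_0$; therefore $\Height\ge h_0$ on $H\supseteq\BigCell$. Combined with (iii), this forces $\Height|_\BigCell\equiv h_0$, i.e.\ $\BigCell$ is flat.

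The main obstacle is bookkeeping rather than depth: one must carefully distinguish flatness with respect to $\Height$ (what (i) asserts) from flatness with respect to the Busemann function centered at $\Infty\Gradient_\Cell \Height$ (which governs the move theory and is what $\BigCell\direction\Cell\subseteq\Link\Hor\Cell$ directly delivers), and recognize that here the two coincide precisely because $\Cell$ is significant, allowing Observation~\ref{obs:min_min} to be invoked. Once that is clear, the only analytic input — that the distance to $Z'$ is at least $h_0$ on a supporting hyperplane of the $h_0$-neighborhood of $Z'$ — is elementary convex geometry.
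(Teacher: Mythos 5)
Your proof is correct, and the first two implications coincide with the paper's: $(i)\Rightarrow(ii)$ is the same argument (flatness gives equal $\max\Height$, Observation~\ref{obs:min_min} plus significance gives $\BigCell\Min = \Cell$, hence a move $\Cell\Up\BigCell$ and a drop in depth), and $(ii)\Rightarrow(iii)$ is the triviality the paper leaves implicit. Where you diverge is $(iii)\Rightarrow(i)$. The paper proves the contrapositive $\neg(i)\Rightarrow\neg(iii)$ (and $\neg(ii)$) by first observing that a non-flat cell has vertices of different heights --- a fact that quietly relies on the almost-rich hypothesis via Proposition~\ref{prop:almost_rich_min_vertex}, since convexity alone does not prevent the height from dipping below its vertex values --- and then invoking the angle criterion Corollary~\ref{cor:angle_criterion} at vertices to rule out any vertex falling below $\Height(\Cell)$. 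You instead identify the affine hyperplane $H$ through $\Cell$ perpendicular to the gradient vector $e$, note that $\BigCell\subseteq H$ because $\BigCell\direction\Cell$ is equatorial, and use the variational inequality $\scp{z-p}{e}\le 0$ for the closest-point projection onto the translated zonotope $Z'$ to show $\Height\ge h_0$ pointwise on all of $H$. This bypasses the angle criterion and the ``minimum attained at a vertex'' result entirely, is self-contained modulo elementary convex geometry, and yields the slightly sharper conclusion that the equatorial hyperplane through a flat cell already lies at height $\ge h_0$ everywhere --- a fact the paper only extracts at vertices. The trade-off is that the paper's version is two lines once the angle-criterion machinery is in place, whereas yours re-derives the relevant piece of that machinery by hand; in a self-contained exposition the two would be roughly equal in length.
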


\begin{proof}
If $\BigCell$ is flat then clearly $\max\Height|_\BigCell = \Height(\Cell)$. Moreover $\BigCell\Min = \Cell\Min$ by Reminder~\ref{reminder:move_reminder}~\eqref{item:height_min_min}. Thus there is a move $\Cell = \Cell\Min = \BigCell\Min \Up \BigCell$ so that $\Depth \Cell > \Depth \BigCell$ and $\BigCell$ is descending.

If $\BigCell$ is not flat, then it contains vertices of different heights. Since $\BigCell \direction \Cell$ lies in the horizontal link it in particular includes a right angle with $\Gradient_\Cell \Height$. So by the angle criterion Corollary~\ref{cor:angle_criterion} no vertex has lower height than $\Cell$. Hence $\max \Height|_\BigCell > \max \Height|_\Cell$ and $\BigCell$ is not descending.
\end{proof}

\begin{prop}
\label{prop:descending_coface_link_decomposes}
If $\Cell$ is significant, then the descending link decomposes as a join
\[
\Link\Descending\Cell = \Link\Hor{}\Descending \Cell * \Link\Ver{}\Descending \Cell
\]
of the horizontal descending link and the vertical descending link.
\end{prop}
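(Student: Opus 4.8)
The inclusion $\Link\Descending\Cell \subseteq \Link\Hor{}\Descending \Cell * \Link\Ver{}\Descending \Cell$ has already been noted, so the plan is to prove the reverse inclusion $\Link\Hor{}\Descending \Cell * \Link\Ver{}\Descending \Cell \subseteq \Link\Descending\Cell$. A simplex of the join on the left is of the form $(\BigCell_h \direction \Cell)\vee(\BigCell_v \direction \Cell)$, where $\BigCell_h$ and $\BigCell_v$ are cofaces of $\Cell$ with $\BigCell_h \direction \Cell \subseteq \Link\Hor\Cell$ and $\BigCell_v \direction \Cell \subseteq \Link\Ver\Cell$, both descending; using the decomposition $\Link\Cell = \Link\Hor\Cell * \Link\Ver\Cell$ the cell $\BigCell \defeq \BigCell_h \vee \BigCell_v$ exists, satisfies $\BigCell_h \intersect \BigCell_v = \Cell$, and $\BigCell \direction \Cell$ is the simplex in question. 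I would show $\Morse(\BigCell) < \Morse(\Cell)$. If $\BigCell_h = \Cell$ or $\BigCell_v = \Cell$ this is immediate since then $\BigCell$ equals $\BigCell_v$ respectively $\BigCell_h$, which is descending by hypothesis; so assume both $\BigCell_h$ and $\BigCell_v$ are proper cofaces of $\Cell$.

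First I would pin down $\BigCell_h$. Since $\Cell$ is significant and $\BigCell_h \direction \Cell \subseteq \Link\Hor\Cell$, Observation~\ref{obs:descending_iff_flat} turns ``$\BigCell_h$ descending'' into ``$\BigCell_h$ flat'', and then $\Height|_{\BigCell_h}$ is the constant $\Height(\Cell)$. Because $\BigCell_h \horizontal \Cell$, Lemma~\ref{lem:tau_min} gives $\BigCell_h\Min \le \Cell$, and Reminder~\ref{reminder:move_reminder}~\eqref{item:height_min_min} together with $\Cell\Min = \Cell$ yields $\BigCell_h\Min = \Cell$. Hence there is a move $\Cell \Up \BigCell_h$, so $\Depth\Cell > \Depth\BigCell_h$. (Here I use that $\Cell \le \BigCell_h$ are both flat with respect to $\Height$, so $\horizontal$, $\Min$ and moves are all taken with respect to the common point at infinity $\Infty\Gradient_\Cell \Height = \Infty\Gradient_{\BigCell_h} \Height$.)

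Next I would pin down $\BigCell_v$ and then $\Roof\BigCell$. By Lemma~\ref{lem:significant_vertical_open_hemisphere}, $\BigCell_v \direction \Cell \subseteq \Link\OpenHemi\Cell$, so by Corollary~\ref{cor:higher_dimensional_angle_criterion} the set of $\Height$-maxima of $\BigCell_v$ is exactly $\Cell$; in particular every vertex of $\BigCell_v$ not in $\Cell$ has height strictly below $\Height(\Cell)$. Since the vertices of $\BigCell$ are those of $\BigCell_h$ (all of height $\Height(\Cell)$) together with the vertices of $\BigCell_v$ not in $\Cell$ (all of height $<\Height(\Cell)$), and $\Height|_{\BigCell}$ is convex (Observation~\ref{obs:height_convex}), its maximum over $\BigCell$ equals $\Height(\Cell)$, and by Proposition~\ref{prop:almost_rich_min_vertex} the face of $\Height$-maxima of $\BigCell$, namely $\Roof\BigCell$, contains every vertex of $\BigCell_h$ and no vertical vertex; that is, $\Roof\BigCell = \BigCell_h$. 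In particular $\BigCell$ is not flat, so $\Depth\BigCell = \Depth\Roof\BigCell - 1/2 = \Depth\BigCell_h - 1/2 < \Depth\Cell - 1/2 < \Depth\Cell$. Therefore $\Morse(\BigCell) = (\Height(\Cell),\Depth\BigCell,\dim\BigCell)$ is lexicographically below $\Morse(\Cell) = (\Height(\Cell),\Depth\Cell,\dim\Cell)$, so $\BigCell$ is descending, which is what I wanted.

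The main thing requiring care is the identification $\Roof\BigCell = \BigCell_h$: this is exactly what guarantees that adjoining the vertical descending directions to $\BigCell_h$ does not raise the height and does not disturb the first two coordinates of the Morse value, and it rests on the vertical part being an \emph{open} hemisphere complex (Lemma~\ref{lem:significant_vertical_open_hemisphere}) rather than a closed one. The rest is bookkeeping with the half-integer shift in $\Depth$ between flat and non-flat cells.
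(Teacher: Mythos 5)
Your proof is correct and follows essentially the same route as the paper's: the crux in both is the identification $\Roof\BigCell = \BigCell_h$ via Observation~\ref{obs:descending_iff_flat} and Corollary~\ref{cor:higher_dimensional_angle_criterion}, from which $\Depth\BigCell = \Depth\BigCell_h - 1/2 < \Depth\Cell$. The only difference is cosmetic: you re-derive $\Depth\BigCell_h < \Depth\Cell$ from the move $\Cell \Up \BigCell_h$ (via Lemma~\ref{lem:tau_min} and Reminder~\ref{reminder:move_reminder}), whereas the paper reads it off directly from the hypothesis that $\BigCell_h$ is descending.
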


\begin{proof}
Let $\BigCell_h$ and $\BigCell_v$ be proper cofaces of $\Cell$ such that $\BigCell_h$ lies in the horizontal descending link, $\BigCell_v$ lies in the vertical descending link and $\BigCell \defeq \BigCell_h \vee \BigCell_v$ exists. We have to show that $\BigCell$ is descending.

By Lemma~\ref{lem:significant_vertical_open_hemisphere} $\BigCell_v$ includes an obtuse angle with $\Gradient_\Cell \Height$ so by Proposition~\ref{cor:higher_dimensional_angle_criterion} $\Roof\BigCell_v = \Cell$. On the other hand $\BigCell_h$ is flat by Observation~\ref{obs:descending_iff_flat}. Thus $\Roof\BigCell = \BigCell_h$ so that $\Depth\BigCell = \Depth\BigCell_h - 1/2$ and $\BigCell$ is descending because $\BigCell_h$ is.
\end{proof}

Before we analyze the descending horizontal part, we strengthen our assumption on $\Directions$: we say that $\Directions$ is \emph{rich}\label{page:rich} if $\Vertex - \AltVertex \in \Directions$ for any two vertices $\Vertex$ and $\AltVertex$ whose closed stars meet.

We fix a significant cell $\Cell \subseteq \OneSpace$ and a twin apartment $\PNApartments$ that contains $\Cell$ and $\TheNegPoint$. We set
\[
\UpSet \defeq \{\Vertex \in \Vertices \PosApartment \mid \Vertex\text{ is adjacent to }\Cell \text{ and }\Height(\Vertex) > \Height(\Cell)\}
\]
and let $\UpConvex$ be the convex hull of $\UpSet$.

\begin{obs}
Assume that $\Directions$ is rich. Then $\min \Height|_\UpConvex > \Height(\Cell)$.
\end{obs}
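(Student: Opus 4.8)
The plan is to recognize $\UpConvex$ as a polytope for which the rich set $\Directions$ is sufficiently rich, and then to invoke Proposition~\ref{prop:sufficiently_rich_min_vertex}. We may assume $\UpSet \ne \emptyset$, since otherwise the statement is vacuous.

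First I would observe that the vertices in $\UpSet$ pairwise share a face in their closed stars. Indeed, every $\Vertex \in \UpSet$ is adjacent to $\Cell$, so the cell $\Cell \vee \Vertex$ exists; it lies in the closed star of $\Vertex$, and since $\Cell$ is a face of $\Cell \vee \Vertex$, the closed star of $\Vertex$ contains all of $\Cell$. Hence for any two $\Vertex, \AltVertex \in \UpSet$ the closed stars of $\Vertex$ and $\AltVertex$ both contain $\Cell$ and therefore meet, so richness of $\Directions$ gives $\Vertex - \AltVertex \in \Directions$. Since the vertices of $\UpConvex = \conv \UpSet$ form a subset of $\UpSet$, this shows that $\Directions$ is sufficiently rich for $\UpConvex$ in the sense of Proposition~\ref{prop:zonotope_contains_parallel_translate}.

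Next I would recall that $\Height$ restricted to $\PosApartment$ equals $\EuclDistance(\DummyArg, \ThePosPoint + \Zonotope[\Directions])$, where $\ThePosPoint$ is the point of the twin apartment $\PNApartments$ opposite $\TheNegPoint$ (the Observation following Lemma~\ref{lem:perturbed_codistance_well-defined}). As $\UpConvex \subseteq \PosApartment$ is a (compact) polytope for which $\Directions$ is sufficiently rich, Proposition~\ref{prop:sufficiently_rich_min_vertex} — applied to the configuration translated by $-\ThePosPoint$, which leaves sufficient richness unchanged since that property depends only on differences of vertices — yields that among the $\Height$-minima of $\UpConvex$ there is a vertex $\Vertex_0$. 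But $\Vertex_0$ is a vertex of $\UpConvex$, hence an element of $\UpSet$, so $\Height(\Vertex_0) > \Height(\Cell)$ by definition of $\UpSet$. Therefore $\min \Height|_{\UpConvex} = \Height(\Vertex_0) > \Height(\Cell)$.

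The only real point requiring care is the first step: checking that the pairwise-meeting-of-closed-stars hypothesis of richness is met, so that $\Directions$ is sufficiently rich for the polytope $\UpConvex$ even though $\UpSet$ need not be the vertex set of a single cell. Once that is in place, the conclusion is a direct application of Proposition~\ref{prop:sufficiently_rich_min_vertex}, exactly as in the proof of Proposition~\ref{prop:almost_rich_min_vertex}.
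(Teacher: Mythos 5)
Your proof is correct and takes essentially the same route as the paper's: recognize that richness implies $\Directions$ is sufficiently rich for $\UpConvex$, then apply Proposition~\ref{prop:sufficiently_rich_min_vertex} to place the minimum at a vertex, which lies in $\UpSet$ and thus has height above $\Height(\Cell)$. The only difference is that you spell out the justification — that all vertices of $\UpSet$ have closed stars containing $\Cell$, hence pairwise meeting — which the paper leaves implicit.
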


\begin{proof}
Since $\Directions$ is rich it is sufficiently rich for $\UpConvex$. So by Proposition~\ref{prop:sufficiently_rich_min_vertex} $\Height$ attains its minimum over $\UpConvex$ in a vertex, i.e.\ in a point of $\UpSet$. But the elements of $\UpSet$ all have height strictly larger than $\Height(\Cell)$.
\end{proof}

We assume from now on that $\Directions$ is rich. Since $\UpConvex$ is closed, there is an $\varepsilon >0$ such that the $\varepsilon$-neighborhood of $\UpConvex$ in $\PosApartment$ still contains no point of height $\Height(\Cell)$. Fix such an $\varepsilon$ and denote the $\varepsilon$-neighborhood of $\UpConvex$ by $\UpOpen$. Let $\UpLink$ be the set of directions of $\Link_{\PosApartment}\Cell$ toward $\UpOpen$.

\begin{obs}
\label{obs:apartment_flat_iff_disjoint_from_up}
The set $\UpLink$ is open, convex and is such that a coface $\BigCell$ of $\Cell$ that is contained in $\PosApartment$ contains a point of height strictly above $\Height(\Cell)$ if and only if $\BigCell \direction \Cell$ meets $\UpLink$.\qed
\end{obs}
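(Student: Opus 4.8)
The plan is to treat the three assertions separately: openness of $\UpLink$, convexity of $\UpLink$, and the stated criterion for cofaces. The first two are soft consequences of $\UpOpen$ being open and convex, and the third reduces, via convexity of $\Height$, to the equivalence ``$\BigCell$ meets $\UpOpen$ $\iff$ $\BigCell\direction\Cell$ meets $\UpLink$''.

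First I would record that $\UpOpen$, being an $\varepsilon$-neighbourhood of the convex set $\UpConvex$ in the Euclidean space $\PosApartment$, is itself open and convex, and that (by the choice of $\varepsilon$ together with continuity and convexity of $\Height|_{\PosApartment}$, Observation~\ref{obs:height_convex}) it contains \emph{only} points of height $>\Height(\Cell)$: $\Height(\UpOpen)$ is a connected subset of $\R$ that avoids $\Height(\Cell)$ and, since $\UpConvex\subseteq\UpOpen$, meets $(\Height(\Cell),\infty)$. A direction $\xi\in\Link_{\PosApartment}\Cell$ lies in $\UpLink$ exactly when some perpendicular ray $\{c+t\xi: t>0\}$ with $c\in\Cell$ meets the open set $\UpOpen$, which is an open condition on $\xi$; hence $\UpLink$ is open. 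For convexity, if $\xi_1,\xi_2\in\UpLink$ with witnesses $q_i=c_i+t_i\xi_i\in\UpOpen$ ($c_i\in\Cell$, $t_i>0$), then each $q_\lambda:=\lambda q_1+(1-\lambda)q_2$ lies in $\UpOpen$ and splits as $q_\lambda=c_\lambda+w_\lambda$ with $c_\lambda:=\lambda c_1+(1-\lambda)c_2\in\Cell$ and $w_\lambda:=\lambda t_1\xi_1+(1-\lambda)t_2\xi_2\perp\Cell$; so $w_\lambda/|w_\lambda|\in\UpLink$, and as $\lambda$ runs over $[0,1]$ this unit vector traces the whole geodesic from $\xi_1$ to $\xi_2$ in $\Link_{\PosApartment}\Cell$ (it is a nonnegative combination of $\xi_1,\xi_2$, which span an angle $<\pi$ since $\UpOpen$ is a bounded convex set lying strictly above height $\Height(\Cell)$).

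For the criterion, convexity of $\Height$ on the polytope $\BigCell$ (Observation~\ref{obs:height_convex}) shows $\BigCell$ has a point above $\Height(\Cell)$ iff some vertex $\Vertex$ of $\BigCell$ does; any such $\Vertex$ lies outside the flat cell $\Cell$, hence is adjacent to $\Cell$, hence $\Vertex\in\UpSet\subseteq\UpConvex\subseteq\UpOpen$. Since every point of $\UpOpen$ has height $>\Height(\Cell)$, this gives: $\BigCell$ has a point above $\Height(\Cell)$ $\iff$ $\BigCell\cap\UpOpen\ne\emptyset$. The remaining equivalence $\BigCell\cap\UpOpen\ne\emptyset\iff(\BigCell\direction\Cell)\cap\UpLink\ne\emptyset$ is essentially the unwinding of ``directions toward $\UpOpen$''. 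For ``$\Rightarrow$'': choosing $q\in\relint\BigCell\cap\UpOpen$ (possible since $\UpOpen$ is open and $\relint\BigCell$ is dense in $\BigCell$, and $\BigCell\supsetneq\Cell$) and letting $c$ be the orthogonal projection of $q$ to $\aff\Cell$ — which lands in $\Cell$ because the cells of a Euclidean Coxeter complex are non-obtuse, so feet of perpendiculars from a coface fall into the face — we get $q=c+w$ with $0\ne w\perp\Cell$, and $w/|w|$ is the perpendicular component of the direction $[c,q]_c$ into $\BigCell$, so $w/|w|\in(\BigCell\direction\Cell)\cap\UpLink$.

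The implication ``$\Leftarrow$'' is the step I expect to be the real obstacle: a $\xi\perp\Cell$ that points into $\BigCell$ and toward $\UpOpen$ need not reach $\UpOpen$ before the perpendicular ray leaves the (possibly thin) coface $\BigCell$. Here the hypothesis that $\Directions$ is \emph{rich} must be used. Any two vertices among $\Vertices\Cell\cup\UpSet$ have meeting closed stars, so $\Directions$ is sufficiently rich for $\conv(\Cell\cup\UpConvex)$, and Proposition~\ref{prop:sufficiently_rich_min_vertex} (exactly as in the preceding observation and in Proposition~\ref{prop:almost_rich_min_vertex}) forces $\Height\ge\Height(\Cell)$ on $\conv(\Cell\cup\UpConvex)$; similarly $\Directions$ is sufficiently rich for $\conv(\Vertices\BigCell\cup\UpSet)$, and combining this with the parallel-translate property of zonotopes (Proposition~\ref{prop:zonotope_contains_parallel_translate}) should show that if $\BigCell\direction\Cell$ meets $\UpLink$ then $\BigCell$ already has a vertex in $\UpSet$ — i.e.\ a vertex above $\Height(\Cell)$. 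With that, $\BigCell\direction\Cell$ meets $\UpLink$ $\iff$ $\BigCell$ has a vertex in $\UpSet$ $\iff$ $\BigCell$ contains a point above $\Height(\Cell)$, completing the proof; verifying the ``no too-short coface'' step is where I expect the calculation to lie.
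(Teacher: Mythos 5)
Your treatment of openness, convexity and the forward implication is sound and is essentially what the paper has in mind: the observation is stated with a \qed and no argument, and its two-buildings analogue, Observation~\ref{obs:uplink_open_convex}, is proved in a few lines exactly along your lines for ``$\Rightarrow$'' --- convexity of $\Height$ (Observation~\ref{obs:height_convex}) produces a vertex of $\BigCell$ of height above $\Height(\Cell)$, that vertex lies in $\UpSet \subseteq \UpOpen$, and the direction from $\Cell$ toward it lies in $(\BigCell \direction \Cell) \intersect \UpLink$. (Your detour through non-obtuseness of the cells is not needed: the tangent cone of $\BigCell$ at an interior point of $\Cell$ contains the linear span of $\Cell$ together with the vector toward the vertex, hence its perpendicular component, so that component is a direction of $\BigCell \direction \Cell$ regardless of where the foot of the perpendicular lands.)

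The genuine gap is the converse, which you explicitly leave as an expectation, and the repair you sketch aims at the wrong mechanism. Richness and Proposition~\ref{prop:zonotope_contains_parallel_translate} play no role in this step; richness has already been spent, in the observation immediately preceding this one, to guarantee $\min \Height|_{\UpConvex} > \Height(\Cell)$, and $\varepsilon$ was chosen so that \emph{every} point of $\UpOpen$ has height strictly above $\Height(\Cell)$. What remains is soft convex geometry in the apartment. A direction of $\UpLink$ points toward a point $\AltPoint$ of $\UpOpen$ which may be taken in $\Star \Cell$ (this is how ``toward $\UpOpen$'' is meant; compare the two-places definition, where one intersects with $\Star\Cell$), and such a point automatically lies in $\BigCell$ itself: write $\AltPoint = p + w$ with $p \in \aff \Cell$ and $0 \ne w$ perpendicular to $\Cell$, and let $\BigCell'$ be a coface of $\Cell$ containing $\AltPoint$. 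By the tangent-cone remark above, $w/\abs{w}$ lies in $\BigCell' \direction \Cell$ as well as (by hypothesis) in $\BigCell \direction \Cell$, hence it points into $F \defeq \BigCell \intersect \BigCell'$; thus $w$ is parallel to $\aff F$ and $p \in \aff\Cell \subseteq \aff F$, so $\AltPoint \in \BigCell' \intersect \aff F = F \subseteq \BigCell$, using that a face of a cell is its intersection with the face's affine hull. Since $\Height(\AltPoint) > \Height(\Cell)$, this finishes ``$\Leftarrow$''. In particular the ``thin coface'' scenario you worry about cannot occur, and the zonotope computation you propose (that $\BigCell$ must contain a vertex of $\UpSet$) is not an independent step to be established --- it is equivalent to the desired conclusion, since a point of $\BigCell$ above $\Height(\Cell)$ already yields such a vertex by convexity of $\Height$.
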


We want to extend this statement to the whole horizontal link of $\Cell$. To do so, we fix a chamber $\NegChamber \subseteq \NegApartment$ that contains $\TheNegPoint$ and set $\PosChamber \defeq \WeylProjection_\Cell \NegChamber$. We set $\Apartment \defeq \Link_{\PosApartment} \Cell$ and $\Chamber \defeq \PosChamber \direction \Cell$.

\begin{obs}
\label{obs:every_apartment_induced}
Let $\AltPNApartments$ be a twin apartment that contains $\Cell$ and $\NegChamber$. Then $\Link_{\PosApartment'}\Cell$ is an apartment of $\Link\Cell$ and every apartment of $\Link\Cell$ that contains $\Chamber$ is of this form.
\end{obs}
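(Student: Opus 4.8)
The statement has two parts: first, that $\Link_{\PosApartment'}\Cell$ is an apartment of $\Link\Cell$ for every twin apartment $\AltPNApartments$ containing $\Cell$ and $\NegChamber$; and second, that \emph{every} apartment of $\Link\Cell$ containing $\Chamber$ arises this way. The first part is essentially a restatement of Fact~\ref{fact:links_are_spherical_buildings}: since $\PosApartment'$ is an apartment of $\PosBuilding$ containing $\Cell$, the subcomplex $\Link_{\PosApartment'}\Cell$ of directions pointing into $\PosApartment'$ is, by that Fact, an apartment in the apartment system of the spherical building $\Link\Cell$. So the real content is the surjectivity in the second part, and the plan is to reduce it to the already-established existence results for twin buildings.

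First I would fix an arbitrary apartment $\Apartment''$ of $\Link\Cell$ that contains the chamber $\Chamber = \PosChamber \direction \Cell$, where $\PosChamber = \WeylProjection_\Cell \NegChamber$. An apartment of a spherical building is determined by any pair of opposite chambers it contains (as recalled after Fact~\ref{fact:links_are_spherical_buildings} for spherical buildings generally), so $\Apartment''$ is the unique apartment of $\Link\Cell$ containing $\Chamber$ and the chamber $\AltChamber''$ of $\Apartment''$ opposite $\Chamber$. Unwinding the correspondence between cofaces of $\Cell$ and cells of $\Link\Cell$ (the combinatorial link, Section~\ref{sec:metric_spaces}), $\Chamber$ corresponds to the chamber $\PosChamber$ of $\PosBuilding$ and $\AltChamber''$ corresponds to some chamber $\PosChamber'' \ge \Cell$ of $\PosBuilding$; the opposition of $\Chamber$ and $\AltChamber''$ in $\Link\Cell$ means precisely that $\PosChamber$ and $\PosChamber''$ are as far apart as possible among chambers containing $\Cell$, i.e.\ they are ``opposite across $\Cell$''. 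Now I would invoke Fact~\ref{fact:twin_building}~\eqref{item:twin_projection}: every twin apartment containing $\NegChamber$ and $\Cell$ contains $\WeylProjection_\Cell \NegChamber = \PosChamber$. The task is then to produce a twin apartment $\AltPNApartments$ containing $\NegChamber$ and $\PosChamber''$ (hence $\Cell$, since $\Cell \le \PosChamber''$), for then $\Link_{\PosApartment'}\Cell$ is an apartment of $\Link\Cell$ containing both $\Chamber$ and the image of $\AltChamber''$, so by uniqueness of apartments through a pair of opposite chambers it equals $\Apartment''$.

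To produce such a twin apartment I would use the twin building axiom~\eqref{item:twin_building_common_apartment} (any two points, hence any two cells, lie in a common twin apartment) applied to $\NegChamber$ and $\PosChamber''$ --- but one must check these two chambers can be opposite in a twin apartment, i.e.\ that they are ``not too close'' in terms of Weyl-codistance. Here is where the choice $\PosChamber'' $ opposite $\PosChamber = \WeylProjection_\Cell\NegChamber$ across $\Cell$ pays off: by the defining property of the projection (Fact~\ref{fact:twin_building}~\eqref{item:twin_projection}), $\WeylCoDistance(\NegChamber,\PosChamber)$ has maximal length among cells $\ge \Cell$ wait---more carefully, $\PosChamber = \WeylProjection_\Cell\NegChamber$ is characterized by $\WeylCoDistance(\NegChamber,\PosChamber)$ having maximal length, and then the chamber opposite $\PosChamber$ across $\Cell$ has codistance to $\NegChamber$ realizing the ``opposite'' Weyl element, which is exactly the configuration that sits in a twin apartment. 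Concretely, pick any twin apartment $\AltTwinApartment = \AltPNApartments$ containing $\NegChamber$ and $\Cell$ (axiom~\eqref{item:twin_building_common_apartment}); it contains $\PosChamber$ by the projection property; inside the thin twin building $\AltTwinApartment$ there is a unique chamber $\ge \Cell$ opposite $\PosChamber$ across $\Cell$, and by \eqref{item:twin_building_apartment_opposition} together with the Euclidean/spherical structure this chamber is forced to be $\PosChamber''$ once we know $\PosChamber''$ is the unique such chamber abstractly --- which it is, being opposite $\Chamber$ in the spherical building $\Link\Cell$ and opposition-uniqueness there. Hence $\AltTwinApartment$ itself is the desired twin apartment.

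\textbf{Main obstacle.} The delicate point is matching up the combinatorial notion of ``$\PosChamber''$ opposite $\PosChamber$ across the cell $\Cell$ in $\PosBuilding$'' with the honest opposition $\Chamber \op \AltChamber''$ in the spherical building $\Link\Cell$, and simultaneously with the fact that the relevant chamber of any twin apartment through $\Cell$ and $\NegChamber$ is canonically determined. In other words, one must verify that passing to links is compatible with the projection and opposition data of the twin building in the expected way --- this is bookkeeping with Fact~\ref{fact:twin_building}, Fact~\ref{fact:links_are_spherical_buildings}, and the uniqueness of apartments through opposite chambers, but it is the step where an error would most easily creep in. Everything else (existence of \emph{a} common twin apartment, that links of apartments are apartments) is quoted directly from the cited facts.
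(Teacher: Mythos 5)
Your plan identifies the right skeleton, and it is the same one the paper uses: given an apartment $\Apartment'$ of $\Link\Cell$ through $\Chamber$, pass to the unique chamber $\AltChamber \ge \Cell$ with $\AltChamber \direction \Cell$ opposite $\Chamber$ in $\Apartment'$, produce a twin apartment through $\AltChamber$ and $\NegChamber$, and conclude by uniqueness of spherical apartments through a pair of opposite chambers. But the execution of that last step goes wrong in two places. First, the ``one must check these two chambers can be opposite in a twin apartment, i.e.\ that they are not too close in Weyl-codistance'' worry is unfounded: axiom~\eqref{item:twin_building_common_apartment} places \emph{any} two points (hence any two cells, one from each half) in a common twin apartment, with no restriction whatsoever, and you do not need $\NegChamber$ and $\AltChamber$ to be opposite in that twin apartment --- you only need them both to lie in it. Second, and more seriously, the ``Concretely, ...'' reformulation is incorrect: if you take an arbitrary twin apartment containing $\NegChamber$ and $\Cell$, the chamber $\ge\Cell$ opposite $\PosChamber$ across $\Cell$ \emph{in that twin apartment} is not forced to be the $\AltChamber$ you started with. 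Different twin apartments through $\NegChamber$ and $\Cell$ single out different ``opposite'' chambers across $\Cell$ --- that is exactly why $\Link\Cell$ has many apartments through $\Chamber$ in the first place, so this step cannot be salvaged.

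The fix is just to carry out your original plan without the detour: apply~\eqref{item:twin_building_common_apartment} directly to $\AltChamber$ and $\NegChamber$ (not to $\Cell$ and $\NegChamber$) to obtain a twin apartment $\AltPNApartments$ containing both; since $\AltChamber \ge \Cell$, this twin apartment contains $\Cell$ and $\NegChamber$, hence also $\PosChamber = \WeylProjection_\Cell\NegChamber$ by Fact~\ref{fact:twin_building}~\eqref{item:twin_projection}. Then $\Link_{\PosApartment'}\Cell$ is an apartment of $\Link\Cell$ containing both $\Chamber = \PosChamber\direction\Cell$ and $\AltChamber\direction\Cell$, a pair of opposite chambers of $\Apartment'$, and therefore equals $\Apartment'$. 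This is exactly the paper's argument.
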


\begin{proof}
For the first statement we observe that $\PosChamber$ is contained in $\AltPNApartments$ by Fact~\ref{fact:twin_building}~\eqref{item:twin_projection}.

Let $\Apartment'$ be an apartment of $\Link\Cell$ that contains $\Chamber$. Let $\AltChamber \ge \Cell$ be the chamber such that $\AltChamber \direction \Cell$ is opposite $\Chamber$. Let $\AltPNApartments$ be a twin apartment that contains $\AltChamber$ and $\NegChamber$. Then $\Link_{\PosApartment'} \Cell$ contains the opposite chambers $\AltChamber \direction \Cell$ and $\PosChamber \direction \Cell$ and therefore equals $\Apartment$.
\end{proof}

\begin{obs}
\label{obs:isometry_preserves_height}
Let $\AltPNApartments$ be a twin apartment that contains $\TheNegPoint$. Every isomorphism $\AltPNApartments \to \PNApartments$ that takes $\TheNegPoint$ to itself preserves height.\qed
\end{obs}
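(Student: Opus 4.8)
The plan is to reduce the statement to the observation that the $\Zonotope$-perturbed codistance, as computed inside a single twin apartment via \eqref{eq:apartment_perturbed_codistance}, is expressed purely in terms of the metric, the type structure (used to pin down the identification with $\E$), and the opposition relation of that twin apartment --- all of which an isomorphism of twin apartments preserves (it is a type-preserving isometry of each half respecting opposition; cf.\ Observation~\ref{obs:restricted_retractions_are_isomorphisms}). So fix an isomorphism $\phi \colon \AltPNApartments \to \PNApartments$ with $\phi(\TheNegPoint) = \TheNegPoint$, write $\ThePosPoint$ for the point of $\PosApartment$ opposite $\TheNegPoint$ and $\Another{\ThePosPoint}$ for the point of $\PosApartment'$ opposite $\TheNegPoint$. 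Since $\phi$ preserves opposition and fixes $\TheNegPoint$, the point $\phi(\Another{\ThePosPoint})$ is a point of $\PosApartment$ opposite $\TheNegPoint$, hence equals $\ThePosPoint$ by \eqref{item:twin_building_apartment_opposition}.

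Now fix $\Point \in \PosApartment'$. By \eqref{eq:apartment_perturbed_codistance}, $\Height(\Point)$ is the distance in $\PosApartment'$ from $\Point + \Zonotope$ to $\Another{\ThePosPoint}$ and $\Height(\phi(\Point))$ is the distance in $\PosApartment$ from $\phi(\Point) + \Zonotope$ to $\ThePosPoint$, where in each case ``$+\Zonotope$'' refers to a chosen isometric identification of the relevant apartment with $\E$. The key step is to check that $\phi(\Point + \Zonotope) = \phi(\Point) + \Zonotope$. Conjugating $\phi|_{\PosApartment'}$ by the two identifications with $\E$ gives an isometry $g$ of $\E$; since both identifications are chosen to induce type-preserving isomorphisms on the visual boundary and $\phi$ is type-preserving, $g$ induces a type-preserving automorphism of the spherical Coxeter complex $\Infty\E$, so its linear part lies in $\Weyl$ (which acts simply transitively on the chambers of $\Infty\E$). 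As $\Zonotope = \Zonotope[\Directions]$ is $\Weyl$-invariant and ``$+\Zonotope$'' is equivariant under translations of $\E$, it follows that $g(\Point + \Zonotope) = g(\Point) + \Zonotope$, i.e.\ $\phi(\Point + \Zonotope) = \phi(\Point) + \Zonotope$. Combining this with $\phi(\Another{\ThePosPoint}) = \ThePosPoint$ and the fact that $\phi$ is an isometry gives $\Height(\phi(\Point)) = \EuclDistance(\phi(\Point+\Zonotope),\phi(\Another{\ThePosPoint})) = \EuclDistance(\Point+\Zonotope,\Another{\ThePosPoint}) = \Height(\Point)$, as claimed.

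I expect the only genuine obstacle to be this key step --- namely, pinning down that the ambiguity in identifying an apartment with $\E$ amounts to an element of $\Weyl$ together with a choice of origin, and that neither affects the polytope $\Point + \Zonotope$ beyond translating its reference point along with $\Point$. This is exactly the mechanism by which $\EuclCoDistance[\Zonotope]$ was shown to be well defined in Lemma~\ref{lem:perturbed_codistance_well-defined}, and it is the reason $\Directions$ (hence $\Zonotope$) was required to be $\Weyl$-invariant and centrally symmetric in Section~\ref{sec:height}. Everything else --- the opposition identity and the final chain of equalities --- is routine.
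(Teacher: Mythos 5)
Your proof is correct and is essentially the argument the paper intends: the observation is marked with \qed and left as immediate, since it is the same phenomenon already exhibited in the proof of Lemma~\ref{lem:perturbed_codistance_well-defined} (and made explicit in the two-place analogue, Observation~\ref{obs:twin_iso_preserves_height}) that the $\Zonotope$-perturbed codistance is intrinsic to a twin apartment, being built only from the metric, the type-preserving identifications with $\E$ (unique up to $\Weyl$ and a base point), the opposition relation, and the $\Weyl$-invariant zonotope. You have correctly isolated the one point requiring a check --- that conjugating $\phi|_{\PosApartment'}$ by the two model identifications has linear part in $\Weyl$, so $\phi$ carries $\Point + \Zonotope$ to $\phi(\Point) + \Zonotope$ --- and the rest of your chain of equalities is exactly the intended reasoning.
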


This observation is of course of particular interest to us in the situation where $\AltPNApartments$ contains $\Cell$ and the map takes $\Cell$ to itself. The restriction of the retraction $\Retraction\PNApartments\NegChamber$ to $\AltPNApartments$ is such a map.

Let $\rho \defeq \Retraction{\Apartment}{\Chamber}$ be the retraction of $\Link\Cell$ onto $\Apartment$ centered at $\Chamber$. 

\begin{obs}
\label{obs:retraction_and_projection_commute}
Let $\AltPNApartments$ be a twin apartment that contains $\NegChamber$ and $\Cell$ and let $\Apartment' = \Link_{\PosApartment'}\Cell$. The diagram
\begin{diagram}
\AltPNApartments & \rTo^{\Retraction{\PNApartments}{\NegChamber}} & \PNApartments \\
 \dTo & & \dTo \\
\Apartment' & \rTo^{\Retraction{\Apartment}{\Chamber}} & \Apartment\text{ ,}
\end{diagram}
where the vertical maps are the projections onto the link, commutes.\qed
\end{obs}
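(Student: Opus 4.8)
The plan is to recognise both composites around the square as the maps induced on links of $\Cell$ by isomorphisms of apartments, and then to invoke the rigidity of type-preserving isomorphisms of Coxeter complexes.

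First I would set $\rho_0 \defeq \Retraction{\PNApartments}{\NegChamber}|_{\AltPNApartments}$. By Observation~\ref{obs:restricted_retractions_are_isomorphisms} this is a type-preserving isomorphism of twin apartments $\AltPNApartments \to \PNApartments$, and I claim it fixes $\Cell$ pointwise. Indeed, $\PosChamber = \WeylProjection_\Cell \NegChamber$ is a chamber with $\PosChamber \ge \Cell$, and by Fact~\ref{fact:twin_building}~\eqref{item:twin_projection} it lies in every twin apartment containing $\NegChamber$ and $\Cell$ — hence in both $\AltPNApartments$ and $\PNApartments$. Since a retraction onto a twin apartment is the identity on that twin apartment, $\rho_0$ fixes the chamber $\PosChamber$, and being type-preserving it fixes $\PosChamber$, hence its face $\Cell$, pointwise. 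Therefore $\rho_0$ induces a type-preserving isomorphism $\bar\rho \colon \Apartment' = \Link_{\PosApartment'}\Cell \longrightarrow \Apartment = \Link_{\PosApartment}\Cell$ on links, and unravelling the definitions shows that going along (top) and then down the right-hand edge of the square has the same effect as going down the left-hand edge and then applying $\bar\rho$. So commutativity of the square is equivalent to the identity $\bar\rho = \Retraction{\Apartment}{\Chamber}|_{\Apartment'}$ of isomorphisms $\Apartment' \to \Apartment$ in the spherical building $\Link\Cell$ (recall from Observation~\ref{obs:every_apartment_induced} that $\Apartment$ and $\Apartment'$ are apartments of $\Link\Cell$ and both contain $\Chamber = \PosChamber \direction \Cell$).

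Next I would observe that both $\bar\rho$ and $\Retraction{\Apartment}{\Chamber}|_{\Apartment'}$ restrict to the identity on $\Apartment \intersect \Apartment'$. For $\Retraction{\Apartment}{\Chamber}$ this is the defining behaviour of the retraction onto an apartment centred at a chamber (Fact~\ref{fact:building}, built on axiom~\eqref{item:compatible_apartments}). For $\bar\rho$ it is inherited from $\rho_0$: since $\rho_0$ is the identity on $\PNApartments \intersect \AltPNApartments$, in particular on $\PosApartment \intersect \PosApartment'$, and since $\BigCell \mapsto \BigCell \direction \Cell$ is a bijection between the cofaces of $\Cell$ in $\PosBuilding$ and the cells of $\Link\Cell$, every cell of $\Apartment \intersect \Apartment'$ is $\BigCell \direction \Cell$ for a unique coface $\BigCell$ of $\Cell$, which then lies in $\PosApartment \intersect \PosApartment'$ and is fixed by $\rho_0$. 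In particular both maps agree on the chamber $\Chamber$. A type-preserving isomorphism of (essential, spherical) Coxeter complexes is determined by its restriction to a single chamber — a type-preserving automorphism of a Coxeter complex fixing a chamber is the identity, by thinness together with gallery connectedness — so $\bar\rho = \Retraction{\Apartment}{\Chamber}|_{\Apartment'}$, and the square commutes.

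The genuinely delicate point is the bookkeeping in the middle step: one must make sure that $\rho_0$ fixes $\Cell$ \emph{pointwise} (not merely setwise, nor merely the single chamber $\PosChamber$), and that intersection computed inside $\Link\Cell$ matches intersection computed inside $\PosBuilding$. Both reduce to the injectivity of $\BigCell \mapsto \BigCell \direction \Cell$ in the ambient building, which is precisely what makes the passage to links compatible with retractions here. Everything else is the standard toolkit of Coxeter complexes and building retractions.
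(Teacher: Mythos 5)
Your argument is correct, and it supplies the justification for what the paper silently treats as immediate (the observation is stated with a terminal $\qed$ and no written proof). The route you take is the natural one the paper has in mind: $\Retraction{\PNApartments}{\NegChamber}\big|_{\AltPNApartments}$ and $\Retraction{\Apartment}{\Chamber}\big|_{\Apartment'}$ both become type-preserving isomorphisms of thin complexes onto $\Apartment$ that restrict to the identity on the intersection — in particular on the common chamber $\Chamber$, whose presence in $\Apartment'$ you correctly trace back to $\PosChamber = \WeylProjection_\Cell\NegChamber \subseteq \PosApartment'$ via Fact~\ref{fact:twin_building}~\eqref{item:twin_projection} — so rigidity of type-preserving automorphisms of thin chamber complexes forces equality. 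Your care about the injectivity of $\BigCell \mapsto \BigCell\direction\Cell$, which makes intersections in $\Link\Cell$ match intersections in $\PosBuilding$ and turns set-wise fixation of $\PosChamber$ into point-wise fixation of $\Cell$, is exactly the bookkeeping that makes this go through cleanly.
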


Let $\UpFat \defeq \rho^{-1}(\UpLink)$.

\begin{lem}
\label{lem:flat_iff_disjoint_from_up}
The set $\UpFat$ is open and meets every apartment of $\Link\Cell$ that contains $\Chamber$ in a convex set. Moreover it  has the property that if $\BigCell$ is a coface of $\Cell$ such that $\BigCell \direction \Cell \subseteq \Link\Hor\Cell$, then $\BigCell$ is flat if and only if $\BigCell \direction \Cell$ is disjoint from $\UpFat$.
\end{lem}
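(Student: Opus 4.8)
\textbf{Proof plan for Lemma~\ref{lem:flat_iff_disjoint_from_up}.}

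The plan is to transport the three claimed properties from $\UpLink$ (in the ambient apartment $\Apartment = \Link_{\PosApartment}\Cell$, where they are essentially Observation~\ref{obs:apartment_flat_iff_disjoint_from_up}) across the retraction $\rho = \Retraction{\Apartment}{\Chamber}$ to $\UpFat = \rho^{-1}(\UpLink)$. Openness is immediate: $\rho$ is continuous, so the preimage of the open set $\UpLink$ is open. For the convexity statement, fix an apartment $\Apartment'$ of $\Link\Cell$ that contains $\Chamber$. By Observation~\ref{obs:every_apartment_induced} we may write $\Apartment' = \Link_{\PosApartment'}\Cell$ for a twin apartment $\AltPNApartments$ containing $\Cell$ and $\NegChamber$; by Fact~\ref{fact:building} the restriction $\rho|_{\Apartment'} \colon \Apartment' \to \Apartment$ is an isometry that takes cells onto cells of the same type (this is Observation~\ref{obs:restricted_retractions_are_isomorphisms} at the level of links, or directly the fact that a retraction is an isomorphism on apartments through its center). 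Since $\UpLink$ is convex in $\Apartment$ and $\rho|_{\Apartment'}^{-1}$ is an isometry, $\UpFat \intersect \Apartment' = \rho|_{\Apartment'}^{-1}(\UpLink)$ is convex in $\Apartment'$.

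For the main equivalence, let $\BigCell \ge \Cell$ satisfy $\BigCell \direction \Cell \subseteq \Link\Hor\Cell$. Choose a twin apartment $\AltPNApartments$ that contains $\BigCell$ (hence $\Cell$) and $\NegChamber$, and put $\Apartment' \defeq \Link_{\PosApartment'}\Cell$, which contains both $\BigCell \direction \Cell$ and $\Chamber = \PosChamber \direction \Cell$ since $\PosChamber = \WeylProjection_\Cell\NegChamber$ lies in $\AltPNApartments$ by Fact~\ref{fact:twin_building}~\eqref{item:twin_projection}. First I would observe that $\BigCell$ is flat with respect to $\Height$ if and only if no point of $\BigCell$ has height strictly above $\Height(\Cell)$: indeed $\Height|_{\BigCell}$ attains its minimum at $\Cell$ because $\Cell$ is significant and $\BigCell \direction \Cell$ is horizontal, so $\Cell$ is the set of $\Height$-minima of $\BigCell$ by the angle criterion (Corollary~\ref{cor:angle_criterion} together with Corollary~\ref{cor:higher_dimensional_angle_criterion}), and therefore $\max\Height|_\BigCell = \Height(\Cell)$ exactly when $\Height|_\BigCell$ is constant. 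Now apply the retraction $\Retraction{\PNApartments}{\NegChamber}$ restricted to $\AltPNApartments$: it fixes $\TheNegPoint$ and $\Cell$, so by Observation~\ref{obs:isometry_preserves_height} it preserves height, and it carries $\BigCell$ into $\PosApartment$. Hence $\BigCell$ contains a point of height $> \Height(\Cell)$ if and only if its image $\BigCell'$ in $\PosApartment$ does, which by Observation~\ref{obs:apartment_flat_iff_disjoint_from_up} happens iff $\BigCell' \direction \Cell$ meets $\UpLink$. Finally, by Observation~\ref{obs:retraction_and_projection_commute} the direction $\BigCell' \direction \Cell$ is exactly $\rho(\BigCell \direction \Cell)$, so $\BigCell' \direction \Cell$ meets $\UpLink$ iff $\BigCell \direction \Cell$ meets $\rho^{-1}(\UpLink) = \UpFat$. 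Combining the three equivalences: $\BigCell$ is flat $\iff$ $\BigCell$ has no point above height $\Height(\Cell)$ $\iff$ $\BigCell' \direction \Cell$ disjoint from $\UpLink$ $\iff$ $\BigCell \direction \Cell$ disjoint from $\UpFat$.

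The step I expect to be the main obstacle is the careful bookkeeping that the various retractions and link-projections are compatible: that $\rho = \Retraction{\Apartment}{\Chamber}$ on $\Link\Cell$ is indeed the "shadow" of $\Retraction{\PNApartments}{\NegChamber}$, which is Observation~\ref{obs:retraction_and_projection_commute}, and that $\PosChamber \direction \Cell$ really is the center of $\rho$ that lies in every relevant $\Apartment'$. Once those compatibilities are in hand the argument is just chaining the cited observations; the geometric content (spheres flatten, minima are attained at vertices) has already been isolated in the angle criterion and in Observation~\ref{obs:apartment_flat_iff_disjoint_from_up}. One should also double-check the edge case where $\BigCell \direction \Cell$ itself is not contained in a single apartment through $\Chamber$ — but since $\BigCell \direction \Cell$ together with $\Chamber$ lies in $\Apartment' = \Link_{\PosApartment'}\Cell$ by construction, this does not actually arise.
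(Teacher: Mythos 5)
Your proposal is correct and follows essentially the same route as the paper's proof: continuity of $\rho$ gives openness, the restriction of $\rho$ to any apartment through $\Chamber$ is an isometry so the preimage of the convex set $\UpLink$ is convex there, and the main equivalence is pushed across $\rho$ using Observations~\ref{obs:every_apartment_induced}, \ref{obs:isometry_preserves_height}, \ref{obs:retraction_and_projection_commute} and then applying Observation~\ref{obs:apartment_flat_iff_disjoint_from_up} in the model apartment $\PosApartment$. The one place where you add detail that the paper treats tersely is the explicit step ``$\BigCell'$ is flat $\iff$ no point of $\BigCell'$ has height above $\Height(\Cell)$'': the paper jumps directly to citing Observation~\ref{obs:apartment_flat_iff_disjoint_from_up}, which only asserts the height statement, and leaves the translation to flatness implicit (it is the (i)$\iff$(iii) direction of Observation~\ref{obs:descending_iff_flat}). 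Your derivation of that equivalence via the angle criterion is valid, and a slightly cleaner option would have been to cite Observation~\ref{obs:descending_iff_flat} directly, but either way the argument is sound and matches the intended structure.
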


\begin{proof}
That $\UpFat$ is open is clear from continuity of $\rho$. If $\Apartment'$ is an apartment of $\Link\Cell$ that contains $\Chamber$, then $\UpFat \intersect \Apartment' = \rho|_{\Apartment'}^{-1}(\UpLink)$ is the isometric image of $\UpLink$ which is convex.

Let $\BigCell \ge \Cell$ be such that $\BigCell \direction \Cell \subseteq \Link\Hor\Cell$. Let $\Apartment'$ be an apartment that contains $\BigCell$ and $\Chamber$. By Observation~\ref{obs:every_apartment_induced} there is a twin apartment $\AltPNApartments$ that contains $\NegChamber$ and $\Cell$ such that $\Apartment' = \Link_{\PosApartment'}\Cell$. Moreover by Observation~\ref{obs:retraction_and_projection_commute} $\rho$ is induced by the retraction $\Retraction{\PNApartments}{\NegChamber}$ which is height preserving by Observation~\ref{obs:isometry_preserves_height}. Hence $\BigCell$ is flat if and only if $\Retraction{\PNApartments}{\NegChamber}(\BigCell)$ is. And $\BigCell \direction \Cell$ meets $\UpFat$ if and only if $\rho(\BigCell \direction \Cell)$ meets $\UpLink$. Thus the statement follows from Observation~\ref{obs:apartment_flat_iff_disjoint_from_up}.
\end{proof}

\begin{lem}
\label{lem:significant_horizontal_spherical}
If $\Cell$ is significant, then $\Link\Hor{}\Descending \Cell$ is spherical.
\end{lem}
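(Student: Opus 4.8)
The plan is to identify $\Link\Hor{}\Descending\Cell$ with a subcomplex of the spherical building $\Link\Hor\Cell$ that is supported by the complement of an open set meeting apartments in proper convex subsets, and then to quote Proposition~\ref{prop:coconvex_complexes_spherical}. First I would assemble the building‑theoretic picture. By Fact~\ref{fact:links_are_spherical_buildings} the link $\Link\Cell$ is a thick spherical building; since $\Link\Hor\Cell$ is by construction a spherical join of some of its irreducible join factors, it is itself a spherical building, whose chambers (resp.\ apartments) are exactly the horizontal join factors of the chambers (resp.\ apartments) of $\Link\Cell$. In particular the horizontal part $\Chamber_0$ of the chamber $\Chamber = \PosChamber\direction\Cell$ (with $\PosChamber = \WeylProjection_\Cell\NegChamber$ as fixed before) is a chamber of $\Link\Hor\Cell$, and by Observation~\ref{obs:every_apartment_induced} every apartment of $\Link\Hor\Cell$ through $\Chamber_0$ is the horizontal part of an apartment $\Apartment' = \Link_{\PosApartment'}\Cell$ of $\Link\Cell$ through $\Chamber$.

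Next I would identify the descending horizontal link. For a coface $\BigCell\gneq\Cell$ with $\BigCell\direction\Cell\subseteq\Link\Hor\Cell$, Observation~\ref{obs:descending_iff_flat} tells us that $\BigCell$ is descending precisely when it is flat, and Lemma~\ref{lem:flat_iff_disjoint_from_up} tells us that $\BigCell$ is flat precisely when $\BigCell\direction\Cell$ is disjoint from $\UpFat$. Hence the cells of $\Link\Hor{}\Descending\Cell = \Link\Hor\Cell\intersect\Link\Descending\Cell$ are exactly the cells of $\Link\Hor\Cell$ contained in $\Link\Hor\Cell\setminus\UpFat$; that is, $\Link\Hor{}\Descending\Cell$ is the subcomplex of $\Link\Hor\Cell$ supported (in the sense of Section~\ref{sec:schulz}) by the complement of $U\defeq\UpFat\intersect\Link\Hor\Cell$.

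Then I would feed $\SBuilding\defeq\Link\Hor\Cell$, the open set $U$, and the chamber $\Chamber_0$ into Proposition~\ref{prop:coconvex_complexes_spherical}. Openness of $U$ is immediate from Lemma~\ref{lem:flat_iff_disjoint_from_up}. For an apartment $\Apartment'_0 = \Apartment'\intersect\Link\Hor\Cell$ of $\Link\Hor\Cell$ through $\Chamber_0$ one has $U\intersect\Apartment'_0 = \UpFat\intersect\Apartment'_0$, which is the intersection of the convex set $\UpFat\intersect\Apartment'$ (Lemma~\ref{lem:flat_iff_disjoint_from_up}) with the great subsphere $\Apartment'_0$, hence convex; it is a \emph{proper} subset of $\Apartment'_0$ for the reason explained below. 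Proposition~\ref{prop:coconvex_complexes_spherical} then yields that the supported subcomplex $\Link\Hor{}\Descending\Cell$ is $(\dim\Link\Hor\Cell-1)$-connected, and since it still contains a top-dimensional chamber of $\Link\Hor\Cell$ it is spherical, as claimed.

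The step that will need real care is the properness claim, and this is the main obstacle. The idea is that by the angle criterion (Corollary~\ref{cor:angle_criterion}) every vertex of $\UpSet$, having strictly greater height than the flat cell $\Cell$, determines a direction from $\Cell$ lying in the open hemisphere of $\Link_{\PosApartment}\Cell$ about the north pole $\Gradient_\Cell\Height$; by convexity the same holds for every point of $\UpConvex$, and since $\UpOpen$ is a sufficiently small neighbourhood of $\UpConvex$, for all of $\UpOpen$ as well, so $\UpLink$ sits inside a proper sub-hemispherical convex subset of $\Link_{\PosApartment}\Cell$. As the retraction used to define $\UpFat$ restricts to an isometry on each apartment through $\Chamber$, the set $\UpFat\intersect\Apartment'$ is an isometric copy of $\UpLink$ inside $\Apartment'$, hence lies in an open hemisphere of $\Apartment'$; intersecting with the great subsphere $\Apartment'_0$, which contains antipodal points, therefore cannot exhaust $\Apartment'_0$. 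Apart from this, the remaining points to double‑check are bookkeeping: that $\Chamber_0$ is an honest base chamber, that the apartments of $\Link\Hor\Cell$ through it are precisely the horizontal traces of apartments of $\Link\Cell$ through $\Chamber$ (so that Lemma~\ref{lem:flat_iff_disjoint_from_up} is available), and that the trace of a convex set on a join factor stays convex.
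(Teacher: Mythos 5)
Your overall route is exactly the paper's: identify $\Link\Hor{}\Descending\Cell$ with the subcomplex of $\Link\Hor\Cell$ supported by the complement of $\UpFat$ via Observation~\ref{obs:descending_iff_flat} and Lemma~\ref{lem:flat_iff_disjoint_from_up}, and then apply Proposition~\ref{prop:coconvex_complexes_spherical} with building $\Link\Hor\Cell$, chamber $\Chamber\intersect\Link\Hor\Cell$ and subset $\UpFat\intersect\Link\Hor\Cell$; the apartment bookkeeping via Observation~\ref{obs:every_apartment_induced} and the convexity of traces also match what the paper uses. The problem is the step you yourself single out as the main obstacle: your properness argument is not correct. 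The angle criterion (Corollary~\ref{cor:angle_criterion}) applied to a vertex $\Vertex$ of $\Cell$ and a vertex $\AltVertex\in\UpSet$ only rules out $\angle_\Vertex(\Gradient_\Vertex\Height,\AltVertex)>\pi/2$, i.e.\ it gives the \emph{closed} hemisphere, not the open one; the equality case $\angle=\pi/2$ with $\Height(\AltVertex)>\Height(\Cell)$ is precisely the case of a horizontal but non-flat coface, and such cofaces are the whole reason $\UpFat$ and Proposition~\ref{prop:coconvex_complexes_spherical} are needed. Indeed, if your open-hemisphere claim were true, $\UpLink$ (hence $\UpFat$) would be disjoint from the equator and in particular from $\Link\Hor\Cell$, every horizontal coface would be flat, and the lemma would reduce to the Solomon--Tits theorem -- so the claim proves too much and cannot hold in general.

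Concretely, two things break: first, the $\varepsilon$-neighbourhood $\UpOpen$ of $\UpConvex$ already pushes $\UpLink$ slightly past even the closed hemisphere; second, and more seriously, the horizontal apartment $\Apartment'_0$ lies \emph{entirely} in the equator (every point of $\Link\Hor\Cell$ is at distance exactly $\pi/2$ from the north pole), so a hemisphere bound on $\UpFat\intersect\Apartment'$ gives no information whatsoever about whether its trace on $\Apartment'_0$ is proper -- a closed hemisphere contains the full equatorial subsphere. So the one hypothesis of Proposition~\ref{prop:coconvex_complexes_spherical} that you set out to verify beyond what Lemma~\ref{lem:flat_iff_disjoint_from_up} records (properness of $U\intersect\Apartment'_0$ in each horizontal apartment through the chamber) is left without a valid justification; you would need a different argument here (note that the paper's own proof does not attempt your hemisphere reduction, but simply invokes the proposition with the data above). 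The closing remark that the complex ``contains a top-dimensional chamber of $\Link\Hor\Cell$'' is likewise unjustified -- it amounts to asserting that some horizontal chamber is flat -- but it is also unnecessary, since the paper's notion of spherical does not require full dimension.
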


\begin{proof}
Let $\BigCell \ge \Cell$ be such that $\BigCell \direction \Cell \subseteq \Link\Hor\Cell$. By Observation~\ref{obs:descending_iff_flat} $\BigCell$ is descending if and only if it is flat. And by Lemma~\ref{lem:flat_iff_disjoint_from_up} this is the case if and only if $\BigCell$ is disjoint from $\UpFat$. In other words, the horizontal descending link is the full subcomplex of the horizontal link supported by the complement of $\UpFat$. The statement now follows from Proposition~\ref{prop:coconvex_complexes_spherical} where the building is taken to be $\Link\Hor\Cell$, the chamber is $\Chamber \intersect \Link\Hor\Cell$, and the subset is $\UpFat \intersect \Link\Hor\Cell$.
\end{proof}

\begin{prop}
\label{prop:significant_descending_link}
Assume that $\Directions$ is rich. If $\Cell$ is significant, then the descending link $\Link\Descending \Subdiv\Cell$ is spherical. If the horizontal link is empty, it is properly spherical.
\end{prop}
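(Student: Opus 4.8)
The plan is to split $\Link\Descending \Subdiv\Cell$ along the join decomposition \eqref{eq:barycenter_descending_link_decomposition}, to identify each join factor with a complex whose connectivity is already known, and to recombine the pieces using the standard behaviour of (proper) sphericity under joins: the join of a properly $m$-spherical and a properly $n$-spherical complex is properly $(m+n+1)$-spherical, and more generally the join of an $a$-connected and a $b$-connected complex is $(a+b+2)$-connected. The only numerology to keep track of is $\dim\Cell + \dim\Link\Cell = \Dimension-1$ (link of a $k$-cell in an $\Dimension$-dimensional complex has dimension $\Dimension-k-1$) and, from \eqref{eq:link_decomposes_as_hor_join_ver}, $\dim\Link\Hor\Cell + \dim\Link\Ver\Cell = \dim\Link\Cell-1$.

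By Lemma~\ref{lem:significant_face_part_sphere} the face part $\Link\Descending\FacePart \Subdiv\Cell$ is a $(\dim\Cell-1)$-sphere, hence properly $(\dim\Cell-1)$-spherical (for $\Cell$ a vertex this is the empty complex, which by convention is properly $(-1)$-spherical). By Proposition~\ref{prop:descending_coface_link_is_descending_link} the coface part $\Link\Descending\CofacePart \Subdiv\Cell$ is the barycentric subdivision of $\Link\Descending \Cell$, which by Proposition~\ref{prop:descending_coface_link_decomposes} decomposes as $\Link\Hor{}\Descending \Cell * \Link\Ver{}\Descending \Cell$. For the vertical factor, Lemma~\ref{lem:significant_vertical_open_hemisphere} identifies $\Link\Ver{}\Descending \Cell$ with the open hemisphere complex of the thick spherical building $\Link\Cell$ (thick since $\OneSpace$ is, cf.\ Fact~\ref{fact:links_are_spherical_buildings}) with north pole $\Gradient_\Cell\Height$, so Theorem~\ref{thm:schulz_main} gives that it is properly $(\dim (\Link\Cell)\Ver)$-spherical, i.e.\ properly $(\dim\Link\Ver\Cell)$-spherical. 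For the horizontal factor, Lemma~\ref{lem:significant_horizontal_spherical} (whose proof via Proposition~\ref{prop:coconvex_complexes_spherical} actually produces $(\dim\Link\Hor\Cell-1)$-connectedness) gives that $\Link\Hor{}\Descending \Cell$ is spherical of dimension at most $\dim\Link\Hor\Cell$.

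Combining, $\Link\Descending \Subdiv\Cell$ is the join of three complexes that are respectively $(\dim\Cell-2)$-, $(\dim\Link\Ver\Cell-1)$-, and $(\dim\Link\Hor\Cell-1)$-connected, hence $((\dim\Cell-2)+(\dim\Link\Ver\Cell-1)+(\dim\Link\Hor\Cell-1)+4)$-connected, which by the two dimension identities equals $(\dim\Cell+\dim\Link\Cell-1) = (\Dimension-2)$-connected. Since $\Link\Descending \Subdiv\Cell$ is a subcomplex of $\Link \Subdiv\Cell$ and the latter has dimension $\Dimension-1$, this means $\Link\Descending \Subdiv\Cell$ is spherical. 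If moreover $\Link\Hor\Cell = \emptyset$, then $\Link\Hor{}\Descending \Cell = \emptyset$ and $\Link\Cell = \Link\Ver\Cell$, so $\Link\Descending \Subdiv\Cell = \Link\Descending\FacePart \Subdiv\Cell * \Link\Ver{}\Descending \Cell$ is the join of a properly $(\dim\Cell-1)$-spherical and a properly $(\dim\Link\Cell)$-spherical complex, hence properly $(\dim\Cell+\dim\Link\Cell)$-spherical, i.e.\ properly $(\Dimension-1)$-spherical.

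The geometric substance is entirely carried by the cited lemmas; what remains is bookkeeping of dimensions and of the degenerate cases ($\Cell$ a vertex, so empty face part; $\Cell$ a chamber, so empty link; empty horizontal link), together with the routine join input. That last point is the one place to be slightly careful: for the \emph{properly} spherical conclusion one needs the top homology of a join to be nonzero, which follows from the K\"unneth-type formula for the homology of a join and the fact that $\OneSpace$ is locally finite, so all the complexes in sight are finite and their top homology groups are free.
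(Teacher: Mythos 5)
Your proof is correct and follows exactly the same route as the paper's: decompose $\Link\Descending\Subdiv\Cell$ into face part, vertical descending link, and horizontal descending link via \eqref{eq:barycenter_descending_link_decomposition}, Proposition~\ref{prop:descending_coface_link_is_descending_link}, and Proposition~\ref{prop:descending_coface_link_decomposes}, then cite Lemma~\ref{lem:significant_face_part_sphere}, Lemma~\ref{lem:significant_vertical_open_hemisphere} together with Theorem~\ref{thm:schulz_main}, and Lemma~\ref{lem:significant_horizontal_spherical} for the three factors. The paper states the result after these citations without writing out the dimension bookkeeping or the join-homology input; what you have added is a correct explicit verification of that numerology.
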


\begin{proof}
The descending link decomposes as a join
\[
\Link\Descending \Subdiv\Cell = \Link\Descending\FacePart \Subdiv\Cell * \Link\Ver{}\Descending\Cell * \Link\Hor{}\Descending\Cell
\]
of the descending face part, the vertical descending link, and the horizontal descending link by \eqref{eq:barycenter_descending_link_decomposition}, Proposition~\ref{prop:descending_coface_link_is_descending_link}, and Proposition~\ref{prop:descending_coface_link_decomposes}. The descending face part is a sphere by Lemma~\ref{lem:significant_face_part_sphere}. The descending vertical link is an open hemisphere complex by Lemma~\ref{lem:significant_vertical_open_hemisphere} which is properly spherical by Theorem~\ref{thm:schulz_main}. The horizontal descending link is spherical by Lemma~\ref{lem:significant_horizontal_spherical}.
\end{proof}

\footerlevel{3}
\headerlevel{3}

\section{Proof of the Main Theorem for $\GroupScheme(\F_q[t])$}
\label{sec:main_theorem}

After we have established the sphericity of the descending links, the finiteness length of $\Group$ follows by standard arguments. We want to apply Brown's criterion. For the finiteness of cell stabilizers the following will be useful:

\begin{lem}
\label{lem:double_stabilizer_finite}
Let $\PNBuildings$ be a locally finite twin building and let $\PosCell \subseteq \PosBuilding$ and $\NegCell\subseteq\NegBuilding$ be cells. The pointwise stabilizer of $\PosCell \union \NegCell$ in the full automorphism group of $\PNBuildings$ is finite.
\end{lem}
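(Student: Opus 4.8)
The plan is to peel off inessential data in three reductions — from arbitrary cells to chambers, from chambers to \emph{opposite} chambers, and from opposite chambers to a single twin apartment — and then to exploit local finiteness. We may assume all automorphisms are type preserving, since the type-preserving ones form a finite-index subgroup of the full automorphism group and finiteness is insensitive to this. Write $H$ for the pointwise stabilizer of $\PosCell \union \NegCell$.

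\emph{Reduction to chambers.} By local finiteness only finitely many chambers contain $\PosCell$, and only finitely many contain $\NegCell$. Since $H$ fixes $\PosCell$ and $\NegCell$, it permutes these two finite sets; a subgroup of finite index therefore fixes a chosen chamber $\PosChamber \ge \PosCell$ of $\PosBuilding$ and a chosen chamber $\NegChamber \ge \NegCell$ of $\NegBuilding$, and being type preserving it fixes them pointwise. So it suffices to bound the pointwise stabilizer of $\PosChamber \union \NegChamber$; rename it $H$.

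\emph{Reduction to opposite chambers.} Let $c \defeq \WeylCoDistance(\PosChamber,\NegChamber)$. If $c \ne 1_{\Weyl}$, choose $s \in \Weyl$ with $\ell(cs) < \ell(c)$ and let $\Panel$ be the panel of $\NegChamber$ of type $s$. By the gate property for Weyl-codistance, $\NegChamber = \WeylProjection_\Panel \PosChamber$ (Fact~\ref{fact:twin_building}~\eqref{item:twin_projection}) is the unique chamber containing $\Panel$ whose codistance to $\PosChamber$ has maximal length, and every other chamber $\ge \Panel$ has codistance $cs$ to $\PosChamber$. The group $H$ fixes $\Panel$ pointwise (a face of $\NegChamber$) and permutes the finitely many chambers containing $\Panel$ while fixing $\NegChamber$, so a finite-index subgroup fixes pointwise some chamber $\NegChamber' \ne \NegChamber$ with $\Panel \subseteq \NegChamber'$, and $\WeylCoDistance(\PosChamber,\NegChamber') = cs$ has strictly smaller length. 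Iterating, we reduce to the case $\PosChamber \op \NegChamber$.

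\emph{Passage to the twin apartment.} When $\PosChamber \op \NegChamber$ these chambers lie in a \emph{unique} twin apartment $\TwinApartment = \PNApartments$ (a standard property of twin buildings, \cite{abrbro}). Any $\phi \in H$ sends $\TwinApartment$ to a twin apartment still containing $\PosChamber$ and $\NegChamber$, hence to $\TwinApartment$ by uniqueness; so $H$ stabilizes $\TwinApartment$. Now $\PosApartment$ and $\NegApartment$ are Coxeter complexes, and a type-preserving automorphism of a Coxeter complex fixing one chamber is the identity (propagate across panels, each of which lies in exactly two chambers). Applying this to the action of $H$ on $\PosApartment$ fixing $\PosChamber$ and on $\NegApartment$ fixing $\NegChamber$, we conclude that $H$ fixes $\TwinApartment$ pointwise.

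\emph{The main obstacle.} It remains to show that the subgroup of $\mathrm{Aut}(\PNBuildings)$ fixing the twin apartment $\TwinApartment$ pointwise is finite; this is exactly where thickness and local finiteness must be used, and where the argument is most delicate (the statement is false for the twin buildings of Kac--Moody groups over infinite fields, where this stabilizer contains a torus). The idea is that fixing $\TwinApartment$ determines an automorphism up to finite residue data, which one propagates outward from $\TwinApartment$. Concretely: for each panel $\Panel_+$ of $\PosApartment$ let $\Panel_- \subseteq \NegApartment$ be the opposite panel; axiom~\eqref{item:twin_building_panel_isomorphism} identifies, via non-opposition, the finite set of chambers through $\Panel_+$ with the finite set through $\Panel_-$, and $H$ acts compatibly on these, fixing the two chambers that lie in $\TwinApartment$. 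One then argues, pairing thick chambers near $\PosApartment$ with chambers of $\NegApartment$ and invoking the uniqueness of twin apartments through opposite pairs, that an element of $H$ acting trivially on the residues of all panels of $\PosChamber$ (a finite family, by local finiteness) fixes pointwise a strictly larger union of twin apartments, and hence by induction on the gallery-distance from $\TwinApartment$ is the identity. Thus $H$ embeds into a finite product of symmetric groups of finite residues and is finite. Making this propagation precise — in particular ruling out any freedom beyond the finitely many permutations of chambers in a bounded neighborhood of $\PosChamber$ — is the hard part, and the cleanest route is to invoke the corresponding rigidity statement for (locally finite) twin buildings from \cite{abrbro}.
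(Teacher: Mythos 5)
Your reductions are all valid: local finiteness lets you pass from cells to chambers, the codistance-decreasing argument brings you to opposite chambers (the paper handles this wholesale by a terser ``local finiteness then also implies''), and once you have type preservation and a pair of fixed opposite chambers the ambient twin apartment is indeed fixed pointwise because a type-preserving automorphism of a Coxeter complex fixing a chamber is trivial. However, at the point you flag as ``the main obstacle'' there is a genuine gap: you have not actually proved that the pointwise stabilizer of a twin apartment, or even of a twin apartment together with the residues of all panels of one chamber, is trivial. You sketch a propagation and concede that ``making this propagation precise \dots\ is the hard part,'' ending by saying the clean route is to cite a rigidity statement from \cite{abrbro} whose exact name you do not give. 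That statement is precisely what carries the proof, so the proposal stops at the threshold of the key argument.

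For comparison, the paper's proof skips the twin-apartment detour entirely and goes straight to the residue-level rigidity theorem it needs: Theorem~5.205 of \cite{abrbro} (together with Remark~5.208, which extends it to twin buildings) states that the pointwise stabilizer of two opposite chambers $\PosChamber$, $\NegChamber$ together with all chambers adjacent to $\NegChamber$ is \emph{trivial}. Local finiteness makes the set of chambers adjacent to $\NegChamber$ finite, so the pointwise stabilizer of two opposite chambers embeds into a finite symmetric group and is finite; a second application of local finiteness then bounds the stabilizer of arbitrary $\PosCell$, $\NegCell$. Your propagation sketch is the right intuition behind Theorem~5.205, and your reduction scaffolding is essentially an expanded version of the paper's ``local finiteness implies'' step, but without citing the theorem by name (or proving it) the final ``$H$ embeds into a finite product of symmetric groups'' is unsupported. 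Your remark that the pointwise stabilizer of a twin apartment alone contains a torus in the non-locally-finite case is a useful sanity check that something beyond fixing $\TwinApartment$ is required, but it does not substitute for the missing argument.
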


\begin{proof}
If $\PosChamber$ and $\NegChamber$ are opposite chambers then the pointwise stabilizer of $\PosChamber$, $\NegChamber$ and all chambers adjacent to $\NegChamber$ is trivial by Theorem~5.205 of \cite{abrbro} which also applies to twin buildings by Remark~5.208. Since the building is locally finite, this implies that the stabilizer of two opposite chambers is finite. Local finiteness then also implies that the stabilizer of any two cells in distinct halves of the twin building is finite.
\end{proof}

\begin{thm}
\label{thm:one_place_geometric}
Let $\PNBuildings$ be an irreducible, thick, locally finite Euclidean twin building of dimension $\Dimension$. Let $\BigGroup$ be a group that acts strongly transitively on $\PNBuildings$ and assume that the kernel of the action is finite. Let $\TheNegPoint \in \NegBuilding$ be a point and let $\Group \defeq \BigGroup_\TheNegPoint$ be the stabilizer of $\TheNegPoint$. Then $\Group$ is of type $F_{\Dimension-1}$ but not of type $F_{\Dimension}$.
\end{thm}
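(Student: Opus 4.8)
The plan is to apply Brown's Criterion in the form of Corollary~\ref{cor:adapted_browns_criterion} to the action of $\Group = \BigGroup_\TheNegPoint$ on $\OneSpace = \PosBuilding$, using the Morse function $\Morse$ constructed in Section~\ref{sec:morse_function} for a rich set $\Directions$. First I would record the preliminary facts that make the criterion applicable: $\OneSpace$ is contractible (it is a Euclidean building, hence \CAT{0}), and $\Group$ acts on $\Subdiv\OneSpace$ with finite cell stabilizers. The latter is where Lemma~\ref{lem:double_stabilizer_finite} enters: a cell of $\Subdiv\OneSpace$ is a flag of cells of $\PosBuilding$, so its stabilizer in $\BigGroup$ is contained in the pointwise stabilizer of a single cell $\PosCell \subseteq \PosBuilding$; intersecting with $\Group = \BigGroup_\TheNegPoint$ puts it inside the pointwise stabilizer of $\PosCell \union \{\TheNegPoint\}$, which is finite by the lemma together with the assumption that the kernel of the action is finite (one must take a cell containing $\TheNegPoint$ to invoke the lemma, but finiteness of the point stabilizer in the full automorphism group follows the same way, or by noting $\TheNegPoint$ lies in finitely many cells).

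Next I would set up the filtration. Let $\Space_k \defeq \Morse^{-1}((-\infty, r_k])$ where $(r_k)_k$ is a cofinal increasing sequence in the (discrete, order-isomorphic to $\Z$) image of the first coordinate of $\Morse$; more precisely one filters by sublevel sets of $\Morse$ in the lexicographic order, which is order-isomorphic to $\Z$ by Observation~\ref{obs:product_order_in_z}. These sublevel sets are $\Group$-invariant because $\Height$ is $\Group$-invariant (it is defined via the $\TheNegPoint$-codistance and $\Group$ fixes $\TheNegPoint$) and $\Depth$, $\dim$ are manifestly invariant. They are cocompact modulo $\Group$: this follows from Observation~\ref{obs:height_discrete_image} and its proof, since $\Height$ factors through the retraction onto a fixed twin apartment and $\Group$ (being strongly transitive on the twin building, in particular transitive on chambers of a given half while fixing $\TheNegPoint$ — more carefully, $\Group$ acts cocompactly on each sublevel set because the stabilizer of $\TheNegPoint$ in a strongly transitive group acts cocompactly on horoball-like regions) — this cocompactness argument is standard and I would cite the analogous reasoning in Abramenko's or Bux--Wortman's treatment.

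The heart of the argument is the connectivity of descending links. By the Morse Lemma and Corollary~\ref{cor:morse_theory}, the inclusion $\Space_k \into \Space_{k+1}$ induces isomorphisms on $\pi_i$ for $i < \Dimension - 1$ and epimorphisms on $\pi_{\Dimension-1}$ provided every descending link $\Link\Descending \Subdiv\Cell$ with $\Morse$-value above some threshold is $(\Dimension-2)$-connected. For insignificant $\Cell$ the descending link is contractible by Lemma~\ref{lem:insignificant_descending_link}; for significant $\Cell$ it is spherical by Proposition~\ref{prop:significant_descending_link}, and its dimension is $\dim \Link\Descending\FacePart\Subdiv\Cell + \dim \Link\Ver{}\Descending\Cell + \dim\Link\Hor{}\Descending\Cell + \dots$, which altogether has dimension one less than $\dim \Subdiv\OneSpace = \Dimension$, hence is $(\Dimension-2)$-connected. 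This gives that $\Group$ is of type $F_{\Dimension-1}$. For the negative statement, I would exhibit, for every threshold, significant cells $\Cell$ arbitrarily high up whose horizontal link is empty: for such $\Cell$, Proposition~\ref{prop:significant_descending_link} says $\Link\Descending\Subdiv\Cell$ is \emph{properly} $(\Dimension-1)$-spherical, so coning it off changes $\pi_{\Dimension-1}$ non-trivially; since this happens infinitely often, $\pi_{\Dimension-1}(\Space_k) \to \pi_{\Dimension-1}(\Space_{k+1})$ fails to be injective infinitely often, and Corollary~\ref{cor:adapted_browns_criterion} yields that $\Group$ is not of type $F_\Dimension$.

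The main obstacle I anticipate is producing, at arbitrarily large height, significant vertices of $\OneSpace$ whose horizontal link is empty — equivalently vertices $\Vertex$ with $\Vertex\Min = \Vertex$ and $\Link\Hor\Vertex = \emptyset$. Emptiness of the horizontal link means that the north pole $\Gradient_\Vertex\Height$ does not lie in any proper join factor of $\Link\Vertex$, i.e., $\Infty\Gradient_\Vertex\Height$ is in general position in $\Infty\PosBuilding$; since $\PosBuilding$ is irreducible, $\Link\Vertex$ is irreducible and $\Link\Hor\Vertex = \emptyset$ precisely when the north pole is not equatorial, i.e., when $\Vertex$ is a genuinely ``vertical'' vertex. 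One must check this occurs cofinally: this is a geometric statement about the twin apartment containing $\TheNegPoint$, namely that generic vertices far from $\ThePosPoint + \Zonotope$ see the gradient in generic position, and one uses thickness to realize such vertices inside $\PosBuilding$. I would handle this by working in a single twin apartment, using that $\Directions$ is $\Weyl$-invariant so the zonotope is $\Weyl$-invariant, and invoking the angle criterion (Corollary~\ref{cor:angle_criterion}) to verify significance; the richness of $\Directions$ ensures $\Vertex\Min$ is computed correctly. This step is essentially the content of the ``not $F_\Dimension$'' direction and is where the construction of $\Height$ via the zonotope pays off, since it is precisely the flattening that makes $\Link\Hor$ controllable.
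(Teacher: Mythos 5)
Your proposal follows the paper's own proof essentially verbatim: Brown's criterion in the form of Corollary~\ref{cor:adapted_browns_criterion} applied to the action on $\Subdiv\OneSpace$ with the Morse function of Section~\ref{sec:morse_function} for a rich $\Directions$, finite cell stabilizers via Lemma~\ref{lem:double_stabilizer_finite} together with finiteness of the kernel, cocompactness of sublevel sets from strong transitivity (the paper phrases this as transitivity of $\Group$ on points opposite $\TheNegPoint$ plus local finiteness) and the descending-link analysis of Lemma~\ref{lem:insignificant_descending_link} and Proposition~\ref{prop:significant_descending_link}, with proper sphericity in the generic case of empty horizontal link yielding the failure of type $F_{\Dimension}$. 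The two points you flag as needing care (the precise cocompactness argument and the cofinal occurrence of significant cells with empty horizontal link) are treated no more elaborately in the paper than in your sketch, so your proposal matches both the route and the level of detail of the actual proof.
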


\begin{proof}
Set $\OneSpace \defeq \PosBuilding$ and consider the action of $\Group$ on the barycentric subdivision $\Subdiv\OneSpace$. We want to apply Corollary~\ref{cor:adapted_browns_criterion} and check the premises. The space $\Subdiv\OneSpace$ is \CAT{0} hence contractible.

The stabilizer of a cell $\Cell$ of $\OneSpace$ in $\Group$ is the simultaneous stabilizer of $\Cell$ and the carrier of $\TheNegPoint$ in $\BigGroup$. Since the stabilizer of these two cells in the full automorphism group of the twin building is finite (by the lemma above) and the action of $\BigGroup$ has finite center, this stabilizer is finite. That the stabilizer of a cell of $\Subdiv\OneSpace$ is then also finite is immediate.

Let $\Morse$ be the Morse function on $\Subdiv\OneSpace$ as defined in Section~\ref{sec:morse_function} based on a rich set of directions $\Directions$. Its sublevel sets are $\Group$-invariant subcomplexes. The group $\Group$ acts transitively on points opposite $\TheNegPoint$ by strong transitivity of $\BigGroup$. Since $\OneSpace$ is locally finite, this implies that $\Group$ acts cocompactly on any sublevel set of $\Morse$.

The descending links of $\Morse$ are $(\dim \Dimension - 1)$-spherical by Lemma~\ref{lem:insignificant_descending_link} and Proposition~\ref{prop:significant_descending_link}. If $\Cell$ is significant then the descending link of $\Subdiv\Cell$ is properly $(\dim \Dimension - 1)$-spherical provided the horizontal part is empty. This is the generic case and happens infinitely often.

Applying Corollary~\ref{cor:morse_theory} we see that the induced maps $\pi_i(\Space_k) \to \pi_i(\Space_{k+1})$ are isomorphisms for $0 \le i < n-2$ and are surjective and infinitely often not injective for $i = n-1$. So it follows from Corollary~\ref{cor:adapted_browns_criterion} that $\Group$ is of type $F_{n-1}$ but not $F_n$.
\end{proof}

Now we make the transition to $S$-arithmetic groups based on Appendix~\ref{chap:affine_kac-moody_groups}:

\begin{thm}
\label{thm:one_place_arithmetic}
Let $\GroupScheme$ be a connected, noncommutative, almost simple $\F_q$-group of $\F_q$-rank $n \ge 1$. The group $\GroupScheme(\F_q[t])$ is of type $F_{n-1}$ but not of type $F_n$.
\end{thm}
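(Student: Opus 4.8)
The plan is to deduce Theorem~\ref{thm:one_place_arithmetic} from the geometric statement Theorem~\ref{thm:one_place_geometric} by exhibiting the right twin building together with a strongly transitive action. First I would reduce to the simply connected case: there is a simply connected cover $\tilde{\GroupScheme} \to \GroupScheme$ which is an isogeny defined over $\F_q$, and the induced map $\tilde{\GroupScheme}(\F_q[t]) \to \GroupScheme(\F_q[t])$ has image of finite index with finite kernel (since $\F_q[t]$ has finitely many units and the kernel of an isogeny is a finite group scheme, so takes finitely many values over $\F_q[t]$); by Fact~\ref{fact:fn_is_virtual} finiteness properties are insensitive to passing to finite-index subgroups, and a standard argument handles the finite kernel. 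The $\F_q$-rank is unchanged under isogeny, so it suffices to treat $\GroupScheme$ connected, simply connected, almost simple, $\F_q$-isotropic of $\F_q$-rank $n$.

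Next I would invoke Appendix~\ref{chap:affine_kac-moody_groups}: for such $\GroupScheme$, the group $\BigGroup \defeq \GroupScheme(\F_q[t,t^{-1}])$ is an affine Kac--Moody group, hence (via its twin $BN$-pair, cf.\ the discussion in Section~\ref{sec:buildings_and_groups}) acts strongly transitively on a thick Euclidean twin building $\PNBuildings$ of type $\tilde{\CoxeterDiagram}$, where $\CoxeterDiagram$ is the (spherical) Dynkin diagram associated to the $\F_q$-root system of $\GroupScheme$; in particular the twin building has dimension $n = \Rank_{\F_q}\GroupScheme$. This twin building is irreducible because $\GroupScheme$ is almost simple, and it is locally finite because $\F_q$ is finite (the chambers containing a given panel are parametrized by $\F_q$ or a bounded extension). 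The kernel of the action of $\BigGroup$ on $\PNBuildings$ is the center, which is finite. The two halves $\PosBuilding$, $\NegBuilding$ are the Bruhat--Tits buildings of $\GroupScheme(\F_q((t^{-1})))$ and $\GroupScheme(\F_q((t)))$ respectively; the subgroup $\GroupScheme(\F_q[t])$ is precisely the stabilizer in $\BigGroup$ of a suitable vertex (or, more precisely, a point) $\TheNegPoint$ of the half $\NegBuilding$ corresponding to the place $\Valuation_0$ — this is the point whose stabilizer in $\GroupScheme(\F_q((t)))$ is the ``positive'' maximal parahoric, intersected back with $\GroupScheme(\F_q[t,t^{-1}])$ it gives exactly the integrality condition at $\Valuation_0$, i.e.\ $\GroupScheme(\F_q[t])$.

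With these identifications, $\Group \defeq \BigGroup_\TheNegPoint = \GroupScheme(\F_q[t])$ and Theorem~\ref{thm:one_place_geometric} applies verbatim, yielding that $\GroupScheme(\F_q[t])$ is of type $F_{n-1}$ but not of type $F_n$. The main obstacle is not in Chapter~\ref{chap:one_place} at all but in verifying the dictionary of the previous paragraph: that $\GroupScheme(\F_q[t,t^{-1}])$ really is a Kac--Moody group of affine type with the expected twin $BN$-pair (the content of Appendix~\ref{chap:affine_kac-moody_groups}), and that under the resulting action $\GroupScheme(\F_q[t])$ is genuinely the stabilizer of a single point $\TheNegPoint$ rather than merely commensurable with one — one must check that the parahoric at $\Valuation_0$ cut out by $\F_q[t]$ corresponds to a vertex that is a point in the sense required, and that no finite-index subtlety or center issue spoils the identification of stabilizers. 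Granting Appendix~\ref{chap:affine_kac-moody_groups} and the isogeny reduction, the remaining steps are routine bookkeeping about parahorics and $S$-integers as in Section~\ref{sec:number_theory}.
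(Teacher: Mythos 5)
Your proposal follows essentially the same path as the paper's proof: reduce to the simply connected case via the universal cover $\tilde{\GroupScheme}$, obtain the locally finite irreducible Euclidean twin building from Appendix~\ref{chap:affine_kac-moody_groups}, identify $\tilde{\GroupScheme}(\F_q[t])$ as the stabilizer of a vertex of $\NegBuilding$ (using that $\tilde{\GroupScheme}(\F_q[[t]])$ is a maximal compact subgroup of $\tilde{\GroupScheme}(\F_q((t)))$), and apply Theorem~\ref{thm:one_place_geometric}. The one thin spot is the finite-index step of the isogeny reduction: your parenthetical justification (finitely many units, finite kernel group scheme) only bears on the finiteness of the \emph{kernel} of $\tilde{\GroupScheme}(\F_q[t]) \to \GroupScheme(\F_q[t])$, whereas the harder claim that the \emph{image} has finite index in $\GroupScheme(\F_q[t])$ is a genuine theorem for which the paper invokes Behr \cite{behr68}; this should be cited rather than waved at.
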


\begin{proof}
Let $\tilde{\GroupScheme}$ be the universal cover of $\GroupScheme$ (see Proposition~2.24 and Dé\-fi\-ni\-tion~2.25 of \cite{bortit72}). Let $\PNBuildings$ be the thick locally finite irreducible $n$-dimensional Euclidean twin building associated to $\tilde{\GroupScheme}(\F_q[t,t^{-1}])$ by Proposition~\ref{prop:twin_building_of_kac-moody_group}. Since the isogeny $\tilde{\GroupScheme}(\F_q[t,t^{-1}]) \to \GroupScheme(\F_q[t,t^{-1}])$ is central, the action of $\tilde{\GroupScheme}(\F_q[t,t^{-1}])$ on the twin building factors through it. Let $\Group$ be the image of $\tilde{\GroupScheme}(\F_q[t])$ under the map $\tilde{\GroupScheme}(\F_q[t]) \to \GroupScheme(\F_q[t])$. By \cite[Satz~1]{behr68} $\Group$ has finite index in $\GroupScheme(\F_q[t])$, hence both have the same finiteness length.

Fact~\ref{fact:twin_halves_identification} shows that $\NegBuilding$ may be regarded as the Bruhat-Tits building associated to $\tilde{\GroupScheme}(\F_q((t)))$. The compact subring of integers of $\F_q((t))$ is $\F_q[[t]]$. Thus $\tilde{\GroupScheme}(\F_q[[t]])$ is a maximal compact subgroup of $\tilde{\GroupScheme}(\F_q((t)))$ hence the stabilizer of a vertex $\Vertex \in \NegBuilding$ in $\tilde{\GroupScheme}(\F_q((t)))$. Consequently, $\tilde{\GroupScheme}(\F_q[t]) = \tilde{\GroupScheme}(\F_q[t,t^{-1}]) \intersect \tilde{\GroupScheme}(F_q[[t]])$ is the stabilizer of $\Vertex$ in $\tilde{\GroupScheme}(\F_q[t,t^{-1}])$. The statement now follows from Theorem~\ref{thm:one_place_geometric}.
\end{proof}

\footerlevel{3}

\footerlevel{2}
\headerlevel{2}

\chapter{Finiteness Properties of $\GroupScheme(\F_q[t,t^{-1}])$}
\label{chap:two_places}

Let $\GroupScheme$ be an $\F_q$-isotropic, connected, noncommutative, almost simple $\F_q$-group. In this chapter we want to determine the finiteness length of $\GroupScheme(\F_q[t,t^{-1}])$. We have already seen in the last chapter that there is a locally finite irreducible Euclidean twin building on which the group acts strongly transitively so in geo\-metric language we have to show:

\begin{xrefthm}{thm:two_places_geometric}
Let $\PNBuildings$ be an irreducible, thick, locally finite Euclidean twin building of dimension $\Dimension$. Let $\Group$ be a group that acts strongly transitively on $\PNBuildings$ and assume that the kernel of the action is finite. Then $\Group$ is of type $F_{2\Dimension-1}$ but not of type $F_{2\Dimension}$.
\end{xrefthm}

We fix an irreducible, thick, locally finite Euclidean twin building $\PNBuildings$ on which a group $\Group$ acts strongly transitively and let $\Dimension$ denote its dimension. We consider the action of $\Group$ on $\TwoSpace \defeq \PosBuilding \times \NegBuilding$. Again we have to construct a $\Group$-invariant Morse-function on $\Space$ with highly connected descending links and cocompact sublevel sets. The construction is very similar to that in the last chapter: essentially the point $\TheNegPoint$ that was fixed there is allowed to vary now.

One difference is that the Euclidean building $\Space$ is not irreducible any more so this time we actually use the greater generality of Section~\ref{sec:moves} compared to \cite[Section~5]{buxwor08}.

A technical complication concerns the analysis of the horizontal descending links. To describe it we note first:

\begin{obs}
\label{obs:two_places_twin_apartments}
If $\PNApartments$ is a twin apartment of $\PNBuildings$, then $\PosApartment \times \NegApartment$ is an apartment of $\TwoSpace$. For every chamber of $\TwoSpace$ there is an apartment of this form that contains it.
\end{obs}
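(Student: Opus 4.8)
The plan is to reduce the statement to three ingredients already available: the description of the cell structure of a direct product of polyhedral complexes, the classification of Euclidean Coxeter complexes by their diagrams (the decomposition into irreducible factors recorded in Section~\ref{sec:buildings}), and the twin building axiom \eqref{item:twin_building_common_apartment}. First I would spell out the cell structure of $\TwoSpace = \PosBuilding \times \NegBuilding$: its cells are exactly the products $\PosCell \times \NegCell$ of cells $\PosCell \subseteq \PosBuilding$ and $\NegCell \subseteq \NegBuilding$, with $\dim(\PosCell \times \NegCell) = \dim \PosCell + \dim \NegCell$, so that in particular the chambers of $\TwoSpace$ are precisely the products $\PosChamber \times \NegChamber$ of a chamber of $\PosBuilding$ and a chamber of $\NegBuilding$. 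Since $\PosApartment$ is a subcomplex of $\PosBuilding$ and $\NegApartment$ is a subcomplex of $\NegBuilding$, it follows immediately that $\PosApartment \times \NegApartment$ is a subcomplex of $\TwoSpace$.

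For the first assertion the real content is the bookkeeping of types. Because the type of a product cell is the disjoint union of the types of its factors, the model chamber of $\TwoSpace$ is the product of the model chambers of $\PosBuilding$ and of $\NegBuilding$, hence $\typ \TwoSpace = \typ \PosBuilding \dunion \typ \NegBuilding$ as Coxeter diagrams. An apartment of $\TwoSpace$ is by definition a Euclidean Coxeter complex of type $\typ \TwoSpace$, and a Euclidean Coxeter complex of a disjoint-union type $I \dunion J$ is exactly a direct product of a Euclidean Coxeter complex of type $I$ with one of type $J$ (matching the direct-product decomposition of Euclidean Coxeter complexes with the connected components of the diagram). Now $\PosApartment$, being an apartment of $\PosBuilding$, is a Euclidean Coxeter complex of type $\typ \PosBuilding$, and likewise $\NegApartment$ is one of type $\typ \NegBuilding$; hence $\PosApartment \times \NegApartment$ is a Euclidean Coxeter complex of type $\typ \TwoSpace$, i.e.\ it is isomorphic to an apartment of $\TwoSpace$. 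Being also a subcomplex of $\TwoSpace$, and since throughout we use the complete system of apartments, which by its characterization contains every subcomplex isomorphic to an apartment, $\PosApartment \times \NegApartment$ is an apartment of $\TwoSpace$. Note that the opposition relation plays no role here; all that is used is that a twin apartment supplies an apartment of each half.

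For the second assertion, let $\Chamber = \PosChamber \times \NegChamber$ be a chamber of $\TwoSpace$ and choose interior points $\PosPoint$ of $\PosChamber$ and $\NegPoint$ of $\NegChamber$. By axiom \eqref{item:twin_building_common_apartment} there is a twin apartment $\PNApartments$ containing both $\PosPoint$ and $\NegPoint$; since an apartment is a subcomplex and therefore contains the carrier of each of its points, we get $\PosChamber \subseteq \PosApartment$ and $\NegChamber \subseteq \NegApartment$, so $\PosApartment \times \NegApartment$ is an apartment of the required form containing $\Chamber$. The only steps needing any care are the explicit description of cells and chambers of the product complex and the identification $\typ \TwoSpace = \typ \PosBuilding \dunion \typ \NegBuilding$ with the corresponding product decomposition of Coxeter complexes; I do not expect a genuine obstacle, and the argument is essentially routine.
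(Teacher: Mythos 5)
Your proof is correct and follows essentially the same route as the paper: the second assertion is handled identically (write the chamber as $\PosChamber\times\NegChamber$ and invoke axiom~\eqref{item:twin_building_common_apartment} to find a twin apartment containing both factors), while the paper simply takes the first assertion as obvious where you spell out the type bookkeeping. Your extra care with interior points and the Coxeter-diagram decomposition is sound but not needed at the paper's level of detail.
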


\begin{proof}
Let $\Chamber \subseteq \TwoSpace$ be a chamber. Write $\Chamber = \PosChamber \times \NegChamber$ with $\PosChamber \subseteq \PosBuilding$ and $\NegChamber \subseteq \NegBuilding$. If $\PNApartments$ is a twin apartment that contains $\PosChamber$ and $\NegChamber$ then $\PosApartment \times \NegApartment$ contains $\Chamber$.
\end{proof}

However the set of apartments of $\PNBuildings$ that arise in the above form from twin apartments is far from being an apartment system for $\TwoSpace$: in fact if $\Chamber$ is $\PosChamber \times \NegChamber$ with $\PosChamber \op \NegChamber$ then the apartment $\PosApartment \times \NegApartment$ that contains it and comes from a twin apartment is unique. In particular, if $\Cell$ is a cell and $\Chamber \ge \Cell$ is a chamber, the link of $\Cell$ is generally not covered by apartments that are induced from twin apartments that contain $\Chamber$. Before we can translate the argument for the horizontal descending link from Section~\ref{sec:descending_links} we will therefore have to extend the class of apartments to study. For the time being however, the apartments coming from twin apartments will suffice.

This chapter is based on \cite{witzel10} and reproduces the proof given there. The vocabulary has been adjusted to match that of \cite{buxgrawit10b}.

\headerlevel{3}

\section{Height}
\label{sec:two_places_height}

As before let $\Weyl$ be the spherical Coxeter group associated to $\Infty\PosBuilding$ which is the same as that of $\Infty\NegBuilding$ and let $\E$ be a Euclidean vector space of dimension $n = \dim \PosBuilding = \dim \NegBuilding$ on which $\Weyl$ acts faithfully as a linear reflection group. Let $\PNApartments$ be a twin apartment of $\PNBuildings$. We may as before identify $\E$ with $\PosApartment$ as well as with $\NegApartment$ in such a way that the $\Weyl$-structure at infinity is respected. In fact the opposition relation induces a bijection $\PosApartment \stackrel{\op}{\longleftrightarrow} \NegApartment$ so there is a natural way to make both identifications at the same time. To prevent confusion we will this time make the identifications explicit by choosing maps $\EIdent_\varepsilon \colon \Apartment_\varepsilon \to \E$ such that the following diagram commutes:

\begin{diagram}[LaTeXeqno]
\label{dia:twin_identification}
\PosApartment & & \lToFro^{\op} & & \NegApartment\\
 & \rdTo_{\PosEIdent} & & \ldTo_{\NegEIdent} & \\
 && \E &&
\end{diagram}

With these identifications the metric codistance of two points $\PosPoint \in \PosApartment$ and $\NegPoint \in \NegApartment$ is $\EuclCoDistance(\PosPoint,\NegPoint) = \EuclDistance(\PosEIdent(\PosPoint),\NegEIdent(\NegPoint))$. In other words it is the length of the vector $\PosEIdent(\PosPoint) - \NegEIdent(\NegPoint)$.

This is the first occurrence of the projection
\begin{align*}
\Difference \colon \E \times \E  &\to \E\\
(\Point,\AltPoint) &\mapsto \Point - \AltPoint \text{ .}
\end{align*}
It will turn out that even though $\TwoSpace$ is $2\Dimension$-dimensional, most problems are essentially $\Dimension$-dimensional because the height function apartment-wise factors through $\pi$.

For two finite subsets $\Directions_1$ and $\Directions_2$ of $\E$ we define\index[xsyms]{plus@$\Plus$}
\begin{equation*}
\Directions_1 \Plus \Directions_2 \defeq (\Directions_1 + \Directions_2) \union \Directions_1 \union \Directions_2 \text{ .}
\end{equation*}
Note that $\Directions_1 \Plus \Directions_2 = ((\Directions_1 \union \{0\}) + (\Directions_2 \union \{0\})) \setminus \{0\}$ if $\Directions_1$ and $\Directions_2$ do not contain $0$. Recall from Section~\ref{sec:zonotopes} that a set $\Directions$ is sufficiently rich for a polytope $\Cell$ if $\Vertex - \AltVertex \in \Directions$ for any two vertices of $\Cell$. With the above notation we get:

\begin{obs}
\label{obs:fold_sufficiently_rich}
Let $\Cell_1$ and $\Cell_2$ be polytopes in $\E$ and let $\Cell$ be the convex hull of some of the vertices of $\Cell_1 \times \Cell_2$.
If $\Directions_1$ is sufficiently rich for $\Cell_1$ and $\Directions_2$ is sufficiently rich for $\Cell_2$ then $\Directions_1 \Plus \Directions_2$ is sufficiently rich for $\Difference(\Cell)$. In particular, it is sufficiently rich for $\Cell_1 - \Cell_2$.
\end{obs}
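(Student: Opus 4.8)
\textbf{Proof plan for Observation~\ref{obs:fold_sufficiently_rich}.}

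The plan is to unwind the definitions and reduce everything to a statement about vertices of products and images under $\Difference$. Recall that $\Directions$ is \emph{sufficiently rich} for a polytope $\Cell$ if $\Vertex - \AltVertex \in \Directions$ for any two vertices $\Vertex$ and $\AltVertex$ of $\Cell$ (Section~\ref{sec:zonotopes}). First I would observe that the vertices of $\Cell_1 \times \Cell_2$ are exactly the pairs $(\Vertex_1,\Vertex_2)$ where $\Vertex_i$ is a vertex of $\Cell_i$, and that $\Difference$ maps such a pair to $\Vertex_1 - \Vertex_2$. Hence the vertices of $\Difference(\Cell)$ are among the points $\Vertex_1 - \Vertex_2$ with $(\Vertex_1,\Vertex_2)$ a vertex of $\Cell_1 \times \Cell_2$ that is used in forming $\Cell$ (a subset of such vertices), since the image of a polytope under an affine map is a polytope whose vertices are images of vertices.

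The key computation is then the following: if $\Point = \Vertex_1 - \Vertex_2$ and $\AltPoint = \AltVertex_1 - \AltVertex_2$ are two such vertices of $\Difference(\Cell)$, then
\[
\Point - \AltPoint = (\Vertex_1 - \AltVertex_1) - (\Vertex_2 - \AltVertex_2) = (\Vertex_1 - \AltVertex_1) + (\AltVertex_2 - \Vertex_2)\text{ .}
\]
Now $\Vertex_1 - \AltVertex_1 \in \Directions_1$ (or is $0$, if $\Vertex_1 = \AltVertex_1$) because $\Directions_1$ is sufficiently rich for $\Cell_1$, and similarly $\AltVertex_2 - \Vertex_2 \in \Directions_2$ (or is $0$). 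If both differences are nonzero, then $\Point - \AltPoint \in \Directions_1 + \Directions_2 \subseteq \Directions_1 \Plus \Directions_2$. If exactly one is zero, then $\Point - \AltPoint$ lies in $\Directions_1$ or in $\Directions_2$, both of which are contained in $\Directions_1 \Plus \Directions_2$. If both are zero then $\Point = \AltPoint$ and there is nothing to check. Since a sufficiently rich set need only contain the differences of \emph{distinct} vertices, this covers all cases, and $\Directions_1 \Plus \Directions_2$ is sufficiently rich for $\Difference(\Cell)$.

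For the final sentence, I would simply take $\Cell = \Cell_1 \times \Cell_2$ itself (all of its vertices), so that $\Difference(\Cell) = \Cell_1 - \Cell_2$, and apply what was just shown. There is essentially no obstacle here; the only mild subtlety is to be careful that the vertices of an affine image of a polytope are images of vertices (but not necessarily all of them), which is exactly why the statement is phrased with ``the convex hull of some of the vertices of $\Cell_1 \times \Cell_2$'' and why the observation is closed under passing to subsets of vertices — consistent with the remark in Section~\ref{sec:zonotopes} that sufficient richness is inherited by convex hulls of subsets of vertices. The definition of $\Directions_1 \Plus \Directions_2$ as $(\Directions_1 + \Directions_2) \cup \Directions_1 \cup \Directions_2$ is tailored precisely so that all three cases above land inside it.
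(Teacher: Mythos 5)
Your proposal is correct and follows essentially the same route as the paper's proof: write the difference of two vertices of $\Difference(\Cell)$ as $(\Vertex_1 - \AltVertex_1) + (\AltVertex_2 - \Vertex_2)$ and run the same case distinction on which factor differences vanish, then take $\Cell = \Cell_1 \times \Cell_2$ for the last sentence. Your extra remark that vertices of the affine image are images of (some of the) vertices is a point the paper passes over silently, but it does not change the argument.
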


\begin{proof}
Every vertex of $\Difference(\Cell)$ is of the form $\Vertex_1 - \Vertex_2$ for vertices $\Vertex_i$ of $\Cell_i$. So if $\Vertex$ and $\AltVertex$ are distinct vertices of $\Cell$, then $\Vertex - \AltVertex = (\Vertex_1 - \AltVertex_1) + (\AltVertex_2 - \Vertex_2)$ where $\Vertex_i$ and $\AltVertex_i$ may or may not be distinct. If they are distinct for $i=1,2$, then $\Vertex - \AltVertex \in \Directions_1 + \Directions_2$. If $\AltVertex_i = \Vertex_i$ for some $i$, then $\Vertex - \AltVertex \in \Directions_{3-i}$. In any case $\Vertex - \AltVertex \in \Directions_1 \Plus \Directions_2$. The last statement is obtained by taking $\Cell = \Cell_1 \times \Cell_2$.
\end{proof}

Let $\Directions \subseteq \E$ be finite, $\Weyl$-invariant and centrally symmetric. As before we will eventually require $\Directions$ to be rich but for the moment no such assumption is made.

Let $\Zonotope \defeq \Zonotope(\Directions \Plus \Directions)$ be the zonotope as defined in Section~\ref{sec:zonotopes}. The height function $\Height$ that we consider on $\TwoSpace$ is just $\Zonotope$-perturbed codistance (see Section~\ref{sec:codistance})\index[xsyms]{height@$\Height$}:
\[
\Height \defeq \EuclCoDistance[\Zonotope] \text{ .}
\]

\begin{obs}
\label{obs:two_places_height_factors_through_difference}
Let $\Point = (\PosPoint,\NegPoint) \in \TwoSpace$ and let $\PNApartments$ be a twin apartment such that $\PosApartment \times \NegApartment$ contains $\Point$. Then $\Height(\Point) = \EuclDistance(\Difference(\PosEIdent(\PosPoint),\NegEIdent(\NegPoint)),\Zonotope)$ with identifications as in \eqref{dia:twin_identification}.\qed
\end{obs}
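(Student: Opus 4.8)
The plan is to prove the identity by unwinding the definition of $\Height = \EuclCoDistance[\Zonotope]$ and transporting the whole configuration into $\E$ via the chosen maps $\PosEIdent,\NegEIdent$. Write $\TwinApartment \defeq \PNApartments$. First I would invoke Lemma~\ref{lem:perturbed_codistance_well-defined}, which lets us evaluate $\Height(\Point) = \EuclCoDistance[\Zonotope](\PosPoint,\NegPoint)$ inside this particular twin apartment, so that by the defining formula \eqref{eq:apartment_perturbed_codistance} we have $\Height(\Point) = \EuclCoDistance[\Zonotope][\TwinApartment](\PosPoint,\NegPoint) = \EuclDistance(\PosPoint + \Zonotope, \APosPoint)$, where $\APosPoint\in\PosApartment$ is the point opposite $\NegPoint$ in $\TwinApartment$ and the translate $\PosPoint+\Zonotope$ is formed using the identification of $\E$ with $\PosApartment$.

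Next I would apply the isometry $\PosEIdent\colon\PosApartment\to\E$ to this distance. Two things need to be checked: that $\PosEIdent(\PosPoint+\Zonotope) = \PosEIdent(\PosPoint)+\Zonotope$, which holds because this is exactly how the translate is defined and because $\Directions$ — hence $\Directions\Plus\Directions$, hence $\Zonotope$ — is $\Weyl$-invariant, so the answer does not depend on the chosen identification; and that $\PosEIdent(\APosPoint) = \NegEIdent(\NegPoint)$, which is precisely the commutativity of diagram \eqref{dia:twin_identification}, since $\APosPoint$ and $\NegPoint$ correspond to one another under $\op$. Since $\PosEIdent$ preserves distance this yields $\Height(\Point) = \EuclDistance\bigl(\PosEIdent(\PosPoint)+\Zonotope,\ \NegEIdent(\NegPoint)\bigr)$.

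Finally, setting $v\defeq\PosEIdent(\PosPoint)$ and $w\defeq\NegEIdent(\NegPoint)$, the quantity $\EuclDistance(v+\Zonotope,w)$ is the minimum of $\norm{v+z-w}$ over $z\in\Zonotope$; substituting $z\mapsto -z$ and using $\Zonotope = -\Zonotope$ (central symmetry, which follows from $\Directions = -\Directions$) this equals $\EuclDistance(v-w,\Zonotope) = \EuclDistance(\Difference(v,w),\Zonotope)$, which is the asserted formula. There is essentially no hard step here: the argument is pure bookkeeping, and the only point demanding attention is making sure that $\PosEIdent$ carries the translate $\PosPoint+\Zonotope$ and the opposite point $\APosPoint$ exactly onto $v+\Zonotope$ and $w$ respectively — which is what the commuting triangle \eqref{dia:twin_identification} together with the $\Weyl$-invariance and central symmetry of $\Directions$ are there to guarantee.
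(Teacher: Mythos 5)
Your proof is correct and proceeds exactly along the lines the paper intends this \qed-marked observation to be read: unwind the definition of the perturbed codistance inside the given twin apartment via Lemma~\ref{lem:perturbed_codistance_well-defined} and \eqref{eq:apartment_perturbed_codistance}, transport by the isometry $\PosEIdent$, use the commutativity of \eqref{dia:twin_identification} to identify $\PosEIdent(\APosPoint)$ with $\NegEIdent(\NegPoint)$, and finish with the central symmetry of $\Zonotope$. No gaps.
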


Note that Observation~\ref{obs:two_places_twin_apartments} implies that if $\Path$ is a path in $\TwoSpace$ then there is an apartment $\PosApartment\times\NegApartment$ with $\PNApartments$ a twin apartment that contains an initial segment of $\Path$. If $\Path$ issues at $\Point$ we may interpret this as saying that $\PosApartment\times\NegApartment$ contains $\Point$ and the direction $\Path_\Point$.

\begin{figure}[!ht]
\begin{center}
\includegraphics{figs/zero_level}
\end{center}
\caption{The set $\ZeroLevel$ in an apartment $\Apartment \defeq \PosApartment \times \NegApartment$ where $\PNApartments$ is a twin apartment.}
\label{fig:zero_level}
\end{figure}

Let $\ZeroLevel \defeq \Height^{-1}(0)$ be the set of points of height $0$. If $\PNApartments$ is a twin apartment and identifications as in \eqref{dia:twin_identification} are made, the set $\ZeroLevel \intersect (\PosApartment \times \NegApartment)$ is the set of points $(\APosPoint,\ANegPoint)$ with $\PosEIdent(\APosPoint)-\NegEIdent(\ANegPoint) \in \Zonotope$ which is a strip along the ``diagonal'' $\{(\PosPoint,\NegPoint) \mid \PosPoint \op \NegPoint\}$ (see Figure~\ref{fig:zero_level}).

Let $\Point = (\PosPoint,\NegPoint)$ be a point of $\PosApartment\times\NegApartment$. Let
\[
\Zonotope_+ \defeq (\opm{\PNApartments} \NegPoint) + \PosEIdent^{-1}(\Zonotope) \quad \text{and} \quad \Zonotope_- \defeq (\opm{\PNApartments} \PosPoint) + \NegEIdent^{-1}(\Zonotope) \text{ ,}
\]
where $\opm{\PNApartments}$ denotes the map that assigns to a point of $\PNApartments$ its opposite point in $\PNApartments$.

The sets $\Zonotope_+ \times \{\NegPoint\}$ and $\Zonotope_- \times \{\PosPoint\}$ are slices of $\ZeroLevel \intersect (\PosApartment \times \NegApartment)$ and by definition, $\Height(\PosPoint,\NegPoint)$ is the distance to either one of them, i.e., the distance to $(\ClosestPointProjection[\Zonotope_+] \PosPoint,\NegPoint)$ and to $(\PosPoint,\ClosestPointProjection[\Zonotope_-]\NegPoint)$. The point in $\ZeroLevel \intersect (\PosApartment \times \NegApartment)$ closest to $\PosPoint,\NegPoint$ is the midpoint
\[
\ClosestPointProjection[\ZeroLevel \intersect (\PosApartment \times \NegApartment)] (\PosPoint,\NegPoint) = \frac{1}{2} (\ClosestPointProjection[\Zonotope_+] \PosPoint,\NegPoint) + \frac{1}{2}(\PosPoint,\ClosestPointProjection[\Zonotope_-]\NegPoint)
\]
of these two projection points. This shows:

\begin{obs}
\label{obs:height_is_distance}
If $\Point \in \TwoSpace$ is a point and $\PNApartments$ is a twin apartment such that $\Apartment \defeq \PosApartment \times \NegApartment$ contains $\Point$, then $\Height(\Point) = \sqrt{2} \cdot \EuclDistance(\Point,\ZeroLevel \intersect \Apartment)$.\qed
\end{obs}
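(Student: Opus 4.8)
The statement to be proved is Observation~\ref{obs:height_is_distance}: for $\Point \in \TwoSpace$ lying in an apartment $\Apartment = \PosApartment \times \NegApartment$ coming from a twin apartment $\PNApartments$, we have $\Height(\Point) = \sqrt{2}\cdot \EuclDistance(\Point, \ZeroLevel \intersect \Apartment)$. All the geometric content has already been assembled in the discussion preceding the statement; the job of the proof is simply to record that discussion cleanly. So the plan is to fix the twin apartment $\PNApartments$ containing $\Point = (\PosPoint,\NegPoint)$, introduce the identifications $\PosEIdent,\NegEIdent$ of \eqref{dia:twin_identification}, and then reduce everything to a computation inside the single Euclidean space $\Apartment \cong \E \times \E$.

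First I would recall from Observation~\ref{obs:two_places_height_factors_through_difference} that $\Height(\Point) = \EuclDistance\big(\Difference(\PosEIdent(\PosPoint),\NegEIdent(\NegPoint)),\Zonotope\big)$, i.e.\ the height is the distance in $\E$ from $\PosEIdent(\PosPoint) - \NegEIdent(\NegPoint)$ to the zonotope $\Zonotope$. Next I would identify $\ZeroLevel \intersect \Apartment$ explicitly: under the identifications it is exactly the set of pairs $(\APosPoint,\ANegPoint)$ with $\PosEIdent(\APosPoint) - \NegEIdent(\ANegPoint) \in \Zonotope$. The key elementary fact is then that, for the Euclidean product metric on $\E \times \E$ (which is the one used throughout, by the definition of direct product of metric spaces in Section~\ref{sec:metric_spaces}), the distance from a point $(\Point_1,\Point_2)$ to the ``slab'' $\{(\AltPoint_1,\AltPoint_2) \mid \AltPoint_1 - \AltPoint_2 \in \Zonotope\}$ equals $\tfrac{1}{\sqrt 2}\,\EuclDistance(\Point_1 - \Point_2,\Zonotope)$. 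Indeed, writing $\Vector \defeq \Point_1 - \Point_2$ and letting $\bar{\Vector} = \EuclProjection[\Zonotope]\Vector$ be its closest point in $\Zonotope$, the point $\big(\Point_1 - \tfrac12(\Vector - \bar\Vector),\ \Point_2 + \tfrac12(\Vector - \bar\Vector)\big)$ lies in the slab and has product-distance $\tfrac{1}{\sqrt2}\norm{\Vector - \bar\Vector}$ from $(\Point_1,\Point_2)$; conversely any point $(\AltPoint_1,\AltPoint_2)$ in the slab satisfies $\norm{\Point_1 - \AltPoint_1}^2 + \norm{\Point_2 - \AltPoint_2}^2 \ge \tfrac12\norm{(\Point_1 - \AltPoint_1) - (\Point_2 - \AltPoint_2)}^2 \ge \tfrac12 \EuclDistance(\Vector,\Zonotope)^2$ by the parallelogram/Cauchy--Schwarz inequality together with $\AltPoint_1 - \AltPoint_2 \in \Zonotope$. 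This gives exactly the midpoint description already displayed in the text (the optimal slab point is the midpoint of $(\ClosestPointProjection[\Zonotope_+]\PosPoint,\NegPoint)$ and $(\PosPoint,\ClosestPointProjection[\Zonotope_-]\NegPoint)$), so I would phrase it in those terms to stay consistent.

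Combining: $\EuclDistance(\Point,\ZeroLevel \intersect \Apartment) = \tfrac{1}{\sqrt2}\,\EuclDistance(\PosEIdent(\PosPoint) - \NegEIdent(\NegPoint),\Zonotope) = \tfrac{1}{\sqrt2}\,\Height(\Point)$, which rearranges to the claimed $\Height(\Point) = \sqrt2\cdot\EuclDistance(\Point,\ZeroLevel \intersect \Apartment)$. There is essentially no obstacle here — the only thing to be careful about is the normalization factor $\sqrt2$, which comes purely from the Euclidean (rather than $\ell^1$ or $\ell^\infty$) product metric on $\Apartment$, and about noting that $\ZeroLevel \intersect \Apartment$ is closed and convex inside $\Apartment$ so that the nearest-point projection exists and is unique (this is automatic in Euclidean space, or follows from Lemma~\ref{lem:spherical_projection} with $\kappa = 0$). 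So I expect the write-up to be just a couple of lines citing Observation~\ref{obs:two_places_height_factors_through_difference} and the midpoint computation that already appears in the surrounding paragraph.
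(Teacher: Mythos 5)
Your proof is correct and follows exactly the approach of the paper's preceding discussion: identify $\ZeroLevel \intersect \Apartment$ as a slab in $\E \times \E$ and locate the closest slab point (the midpoint, which coincides with your exhibited point $(\Point_1 - \tfrac12(\Vector-\bar\Vector),\,\Point_2 + \tfrac12(\Vector-\bar\Vector))$), picking up the factor $\sqrt 2$ from the Euclidean product metric. The only difference is that you supply the lower-bound inequality $\norm{a}^2+\norm{b}^2 \ge \tfrac12\norm{a-b}^2$ justifying that the midpoint really is the nearest point, which the paper asserts without argument and buries in the surrounding text before the \qed.
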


That $\Height$ on $\PosApartment \times \NegApartment$ looks like distance to $\ZeroLevel$ (up to a constant factor) immediately suggests that the gradient should be the direction away from $\ZeroLevel$.

Assume that $\Height(\Point) > 0$. Recall that we defined in Section~\ref{sec:height} the $\Zonotope$-perturbed ray from $\PosPoint$ to $\NegPoint$ which is the ray that issues at $\PosPoint$ and moves away from $\Zonotope_+$. It is a well defined ray inside $\PosBuilding$. Let $\TwinRayMap{\PosPoint}{\NegPoint}$ be this ray as a map, i.e., the image of $\TwinRayMap{\PosPoint}{\NegPoint}$ is $\TwinRay{\PosPoint}{\NegPoint}[\Zonotope]$. Analogously let $\TwinRayMap{\NegPoint}{\PosPoint}$ be the $\Zonotope$-perturbed ray from $\NegPoint$ to $\PosPoint$. Then the ray $\Ray^{(\PosPoint,\NegPoint)}$ in $\PosApartment \times \NegApartment$ that issues at $(\PosPoint,\NegPoint)$ and moves away from $\ZeroLevel \intersect (\PosApartment \times \NegApartment)$ is given by
\[
\Ray^{(\PosPoint,\NegPoint)}(t) = \bigg(\TwinRayMap{\PosPoint}{\NegPoint}\Big(\frac{1}{\sqrt{2}}t\Big), \TwinRayMap{\NegPoint}{\PosPoint}\Big(\frac{1}{\sqrt{2}}t\Big)\bigg) \text{ .}
\]
Since $\TwinRayMap{\PosPoint}{\NegPoint}$ and $\TwinRayMap{\NegPoint}{\PosPoint}$ are well-defined rays in $\PosBuilding$ respectively $\NegBuilding$ we get immediately:

\begin{obs}
For every point $\Point \in \TwoSpace$ with $\Height(\Point) > 0$ the ray $\Ray^{\Point}$ is a well-defined ray in $\TwoSpace$ (i.e., independent of the chosen twin apartment). If $\PNApartments$ is a twin apartment such that $\Apartment \defeq \PosApartment \times \NegApartment$ contains $\Point$, then $\Ray^{\Point}$ lies in $\Apartment$ and moves away from $\ZeroLevel \intersect \Apartment$.\qed
\end{obs}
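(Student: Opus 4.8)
The plan is to assemble the statement from results already in hand: the independence of $\Zonotope$-perturbed rays from the chosen twin apartment (Section~\ref{sec:height}), together with the explicit Euclidean computation recorded in the paragraph immediately preceding the statement. Since $\Height(\Point) = \EuclCoDistance[\Zonotope](\PosPoint,\NegPoint) > 0$, the $\Zonotope$-perturbed ray $\TwinRay{\PosPoint}{\NegPoint}[\Zonotope]$ is defined, and the Proposition in Section~\ref{sec:height} (the $\Zonotope$-perturbed analogue of Lemma~\ref{lem:twin_ray_well-defined}) shows $\TwinRay{\PosPoint}{\NegPoint}[\Zonotope][\TwinApartment] = \TwinRay{\PosPoint}{\NegPoint}[\Zonotope][\TwinApartment']$ for any two twin apartments containing $\PosPoint$ and $\NegPoint$; the same applies to $\TwinRay{\NegPoint}{\PosPoint}[\Zonotope]$ because $\EuclCoDistance[\Zonotope]$ is symmetric (central symmetry of $\Zonotope$, cf.\ Observation~\ref{obs:two_places_height_factors_through_difference}). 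Hence $\TwinRayMap{\PosPoint}{\NegPoint}$ and $\TwinRayMap{\NegPoint}{\PosPoint}$ are well-defined isometric embeddings $[0,\infty)\to\PosBuilding$, respectively $\to\NegBuilding$, and the defining formula for $\Ray^{\Point}$ makes no reference to a twin apartment, so $\Ray^{\Point}\colon[0,\infty)\to\TwoSpace$ is well-defined.

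Next I would check that $\Ray^{\Point}$ is a geodesic ray. Since $\TwoSpace=\PosBuilding\times\NegBuilding$ carries the $\ell^2$-product metric and both component maps are unit-speed isometric embeddings, for $s,t\ge 0$ one has $\EuclDistance(\Ray^{\Point}(s),\Ray^{\Point}(t)) = \big((\tfrac{1}{\sqrt2}|s-t|)^2+(\tfrac{1}{\sqrt2}|s-t|)^2\big)^{1/2} = |s-t|$, so $\Ray^{\Point}$ is an isometric embedding with $\Ray^{\Point}(0)=\Point$. Now fix a twin apartment $\PNApartments$ with $\Point\in\Apartment\defeq\PosApartment\times\NegApartment$; then $\PNApartments$ is a twin apartment containing both $\PosPoint$ and $\NegPoint$, so by definition and well-definedness $\TwinRay{\PosPoint}{\NegPoint}[\Zonotope] = \TwinRay{\PosPoint}{\NegPoint}[\Zonotope][\PNApartments]\subseteq\PosApartment$ and likewise $\TwinRay{\NegPoint}{\PosPoint}[\Zonotope]\subseteq\NegApartment$, whence $\Ray^{\Point}$ takes values in $\PosApartment\times\NegApartment=\Apartment$.

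Finally, that $\Ray^{\Point}$ moves away from $\ZeroLevel\intersect\Apartment$ is exactly the computation carried out just before the statement: under the identifications \eqref{dia:twin_identification}, $\ZeroLevel\intersect\Apartment$ is the preimage of the convex set $\Zonotope$ under an affine map and hence convex; $\Zonotope_+\times\{\NegPoint\}$ and $\{\PosPoint\}\times\Zonotope_-$ are its two axis-parallel slices through $\Point$; the point of $\ZeroLevel\intersect\Apartment$ closest to $\Point$ is the midpoint of the two slice-projections; and $\TwinRayMap{\PosPoint}{\NegPoint}$ (resp.\ $\TwinRayMap{\NegPoint}{\PosPoint}$) is by construction the unit-speed ray away from $\ClosestPointProjection[\Zonotope_+]\PosPoint$ (resp.\ from $\ClosestPointProjection[\Zonotope_-]\NegPoint$). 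Combining these identifies $\Ray^{\Point}$ with the unit-speed ray from $\Point$ away from $\ClosestPointProjection[\ZeroLevel\intersect\Apartment]\Point$, which, since $\Height|_{\Apartment}=\sqrt2\,\EuclDistance(\DummyArg,\ZeroLevel\intersect\Apartment)$ on $\Apartment$ (Observation~\ref{obs:height_is_distance}), is the direction along which $\Height$ increases. The only genuine bookkeeping is this last identification of a product of two ``away'' rays with the ``away'' ray of the strip $\ZeroLevel\intersect\Apartment$, and that is precisely what the paragraph preceding the statement establishes; so the proof is a short assembly of cited facts, consistent with the \qed attached to the Observation.
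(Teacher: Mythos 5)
Your proof is correct and follows the same route the paper takes (which is why the Observation carries an immediate \qed): assemble the two well-defined $\Zonotope$-perturbed component rays, note that the defining formula for $\Ray^\Point$ references nothing but these, and invoke the midpoint computation in the paragraph above for the ``moves away from $\ZeroLevel\intersect\Apartment$'' claim. The only addition is your explicit verification that $\Ray^\Point$ is an isometric embedding in the product metric, which the paper leaves implicit; the one small misattribution is crediting well-definedness of $\TwinRay{\NegPoint}{\PosPoint}[\Zonotope]$ to symmetry of $\EuclCoDistance[\Zonotope]$ rather than to the Proposition in Section~\ref{sec:height} being applied with the roles of $+$ and $-$ exchanged, but that does not affect the argument.
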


The \emph{asymptotic gradient} $\Infty\Gradient\Height$\index[xsyms]{gradientinfty@$\Infty\Gradient\Height$} of $\Height$ is defined by letting $\Infty\Gradient_\Point\Height$\index[xsyms]{gradientinftypoint@$\Infty\Gradient_\Point\Height$} be the limit of $\Ray^\Point$. Similarly, the \emph{gradient} $\Gradient\Height$\index[xsyms]{gradientnoninfty@$\Gradient\Height$} of $\Height$ is defined by letting $\Gradient_\Point\Height$\index[xsyms]{gradientnoninftypoint@$\Gradient_\Point\Height$} be the direction $(\Ray^\Point)_\Point$ in $\Link\Point$ defined by $\Ray^\Point$.

Recall that the link decomposes as a spherical join $\Link_{\TwoSpace} \Point = \Link_{\PosBuilding} \PosPoint * \Link_{\NegBuilding} \NegPoint$. In this decomposition $\Gradient_\Point\Height$ is the midpoint of the two points $(\TwinRayMap{\PosPoint}{\NegPoint})_{\PosPoint}$ and $(\TwinRayMap{\NegPoint}{\PosPoint})_{\NegPoint}$.

Similarly, the visual boundary of $\TwoSpace$ decomposes as a spherical join $\Infty\TwoSpace = \Infty\PosBuilding * \Infty\NegBuilding$ of irreducible join factors. The asymptotic gradient $\Infty\Gradient_\Point\Height$ is the midpoint of $\Infty{(\TwinRayMap{\PosPoint}{\NegPoint})}$ and $\Infty{(\TwinRayMap{\NegPoint}{\PosPoint})}$. Recall from Section~\ref{sec:moves} that a point at infinity $\PointAtInfty \in \Infty\TwoSpace$ is in general position if it is not contained in any proper join factor. So we have just seen:

\begin{obs}
\label{obs:two_places_gradient_in_general_position}
Let $\Point \in \TwoSpace$ with $\Height(\Point) > 0$. The asymptotic gradient $\Infty\Gradient_\Point\Height$ is in general position.\qed
\end{obs}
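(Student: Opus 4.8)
I would first unwind what ``in general position'' means here. By the definition in Section~\ref{sec:moves}, a point $\PointAtInfty \in \Infty\TwoSpace$ is in general position exactly when it is not contained in any proper join factor of $\Infty\TwoSpace$. Since $\TwoSpace = \PosBuilding \times \NegBuilding$ is the decomposition of $\TwoSpace$ into irreducible Euclidean buildings (both halves are irreducible by hypothesis) and, correspondingly, $\Infty\TwoSpace = \Infty\PosBuilding * \Infty\NegBuilding$ is the join of the irreducible spherical buildings $\Infty\PosBuilding$ and $\Infty\NegBuilding$, the proper join factors of $\Infty\TwoSpace$ are precisely $\Infty\PosBuilding$ and $\Infty\NegBuilding$. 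So the entire task reduces to checking that $\Infty\Gradient_\Point\Height$ lies in neither of these two factors.

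Next I would invoke the explicit description of the asymptotic gradient recorded just above the statement: writing $\Point = (\PosPoint,\NegPoint)$, it is the midpoint of $\xi_+ \defeq \Infty{(\TwinRayMap{\PosPoint}{\NegPoint})}$ and $\xi_- \defeq \Infty{(\TwinRayMap{\NegPoint}{\PosPoint})}$. This is the step where the hypothesis $\Height(\Point) > 0$ is used: it is exactly what makes $\TwinRayMap{\PosPoint}{\NegPoint}$ and $\TwinRayMap{\NegPoint}{\PosPoint}$ genuine, nondegenerate geodesic rays (for $\Height(\Point)=0$ the point already lies in $\ZeroLevel$ and the $\Zonotope$-perturbed rays are not defined). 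Concretely, in a twin apartment $\PNApartments$ with $\Apartment \defeq \PosApartment \times \NegApartment \ni \Point$, the ray $\Ray^\Point$ moves away from the slab $\ZeroLevel \intersect \Apartment$, and under the identifications of \eqref{dia:twin_identification} its direction has equal, nonzero components $\pm\tfrac12\bigl(\Difference(\Point) - \ClosestPointProjection[\Zonotope](\Difference(\Point))\bigr)$ in the two factors $\PosApartment$ and $\NegApartment$; nonvanishing of this vector is equivalent to $\Height(\Point) > 0$. Hence $\xi_+ \in \Infty\PosBuilding$ and $\xi_- \in \Infty\NegBuilding$ are honest points at infinity, they lie at distance exactly $\pi/2$ from one another by \eqref{eq:spherical_join}, and therefore $\Infty\Gradient_\Point\Height = \cos(\pi/4)\,\xi_+ + \sin(\pi/4)\,\xi_-$ in the join coordinates of $\Infty\TwoSpace = \Infty\PosBuilding * \Infty\NegBuilding$. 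Since the join parameter $\pi/4 \notin \{0,\pi/2\}$, this point belongs to neither $\Infty\PosBuilding$ nor $\Infty\NegBuilding$; equivalently $d(\Infty\Gradient_\Point\Height,\Infty\PosBuilding) = d(\Infty\Gradient_\Point\Height,\Infty\NegBuilding) = \pi/4 \ne \pi/2$, which by the discussion above is the assertion.

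There is essentially no hard part here: once the description of the gradient ray as a ``$45^\circ$'' ray in the product apartment $\PosApartment \times \NegApartment$ and of $\Infty\Gradient_\Point\Height$ as a midpoint are in hand (both supplied by the preceding discussion), the conclusion is a one-line computation in the spherical join using \eqref{eq:spherical_join}. The only things deserving a sentence of care are the bookkeeping that links nondegeneracy of the two rays $\TwinRayMap{\PosPoint}{\NegPoint}$ and $\TwinRayMap{\NegPoint}{\PosPoint}$ to the strict inequality $\Height(\Point) > 0$, and the standard fact that any point of $\Infty\PosBuilding$ and any point of $\Infty\NegBuilding$ are at distance $\pi/2$ in $\Infty\TwoSpace$, so that their midpoint is well defined with join parameter genuinely equal to $\pi/4$.
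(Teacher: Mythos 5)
Your proof is correct and follows exactly the route the paper takes: the paper records the decomposition $\Infty\TwoSpace = \Infty\PosBuilding * \Infty\NegBuilding$ into irreducible join factors and the fact that $\Infty\Gradient_\Point\Height$ is the midpoint of $\Infty{(\TwinRayMap{\PosPoint}{\NegPoint})} \in \Infty\PosBuilding$ and $\Infty{(\TwinRayMap{\NegPoint}{\PosPoint})} \in \Infty\NegBuilding$ in the paragraph immediately preceding the Observation, and then states the result with \qed on the grounds that it is immediate. You have simply spelled out the one or two implicit steps (the $\pi/4$ join parameter and the role of $\Height(\Point)>0$) that the paper leaves to the reader.
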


\footerlevel{3}
\headerlevel{3}

\section{Flat Cells and the Angle Criterion}
\label{sec:two_places_angle_criterion}

In this section we show how the condition that $\Directions$ is almost rich implies the angle criterion. The argument is entirely parallel to that in Section~\ref{sec:angle_criterion}.

We begin with properties of $\Height$ that hold irrespective of richness such as convexity:

\begin{obs}
\label{obs:two_places_height_is_convex}
Let $\PNApartments$ be a twin apartment. The restriction of $\Height$ to $\PosApartment \times \NegApartment$ is convex. In particular, if $\Cell\subseteq \TwoSpace$ is a cell, then among the $\Height$-maximal points of $\Cell$ there is a vertex.
\end{obs}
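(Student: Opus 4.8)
The plan is to reduce everything to the flat picture inside a single apartment of the form $\PosApartment \times \NegApartment$, where $\Height$ becomes (a scalar multiple of) the distance function to an explicit convex set. Concretely, I would first invoke Observation~\ref{obs:height_is_distance}: on the apartment $\Apartment \defeq \PosApartment \times \NegApartment$ coming from a twin apartment $\PNApartments$, one has $\Height(\Point) = \sqrt{2}\cdot \EuclDistance(\Point, \ZeroLevel \intersect \Apartment)$. Thus the only thing to check for convexity of $\Height|_\Apartment$ is that $\ZeroLevel \intersect \Apartment$ is a convex subset of the Euclidean space $\Apartment$, together with the standard fact that the distance function from a point to a nonempty convex subset of a Euclidean space is a convex function (and multiplying by the positive constant $\sqrt 2$ preserves convexity).

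For the convexity of $\ZeroLevel \intersect \Apartment$ I would use the explicit description obtained from Observation~\ref{obs:two_places_height_factors_through_difference} (equivalently from the discussion following it): with the identifications $\PosEIdent \colon \PosApartment \to \E$ and $\NegEIdent \colon \NegApartment \to \E$ of diagram~\eqref{dia:twin_identification}, a point $(\APosPoint, \ANegPoint)$ lies in $\ZeroLevel$ if and only if $\PosEIdent(\APosPoint) - \NegEIdent(\ANegPoint) \in \Zonotope$. In other words $\ZeroLevel \intersect \Apartment$ is the preimage of the zonotope $\Zonotope$ under the affine map $\Apartment \to \E$, $(\APosPoint, \ANegPoint) \mapsto \PosEIdent(\APosPoint) - \NegEIdent(\ANegPoint)$. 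Since $\Zonotope$ is convex and preimages of convex sets under affine maps are convex, this gives what is needed. (Note the set is nonempty, since the ``diagonal'' $\{(\PosPoint,\NegPoint)\mid \PosPoint \op \NegPoint\}$ is contained in it.)

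For the second assertion, given a cell $\Cell \subseteq \TwoSpace$ I would choose, using Observation~\ref{obs:two_places_twin_apartments}, a twin apartment $\PNApartments$ such that $\PosApartment \times \NegApartment$ contains $\Cell$; this is exactly the step that makes the above applicable to an arbitrary cell. Then $\Height|_\Cell$ is the restriction of a convex function to the polytope $\Cell$, hence attains its maximum on a face of $\Cell$ spanned by vertices, so in particular at a vertex. I do not expect any real obstacle here: the argument is formally identical to that of Observation~\ref{obs:height_convex}, the only point requiring a moment's care being that one must pass through an apartment that genuinely arises from a twin apartment in order to have the identifications of~\eqref{dia:twin_identification} — and hence Observations~\ref{obs:height_is_distance} and \ref{obs:two_places_height_factors_through_difference} — at one's disposal.
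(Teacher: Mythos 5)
Your proof is correct and takes essentially the same route as the paper: reduce to an apartment $\PosApartment\times\NegApartment$ coming from a twin apartment, invoke Observation~\ref{obs:height_is_distance} to express $\Height$ there as $\sqrt 2$ times the distance to $\ZeroLevel \cap (\PosApartment\times\NegApartment)$, observe that this set is convex, and for the second claim pick the twin apartment so that the given cell lies in $\PosApartment\times\NegApartment$. You spell out the convexity of $\ZeroLevel \cap \Apartment$ (as the affine preimage of $\Zonotope$) more explicitly than the paper, which just asserts it, but the argument is the same.
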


\begin{proof}
By Observation~\ref{obs:height_is_distance} the restriction of $\Height$ to $\PosApartment \times \NegApartment$ is, up to a constant, distance from the convex set $\ZeroLevel \intersect (\PosApartment \times \NegApartment)$. The second statement follows by choosing a twin apartment $\PNApartments$ such that $\Cell \subseteq \PosApartment \times \NegApartment$.
\end{proof}

Moreover we have the infinitesimal angle criterion:

\begin{obs}
Let $\Path$ be a path in $\TwoSpace$ that issues at a point $\Point$ with $\Height(\Point)>0$. The function $\Height \circ \Path$ is strictly decreasing on an initial interval if and only if $\angle_{\Point}(\Gradient_\Point\Height,\Path_\Point) > \pi/2$.
\end{obs}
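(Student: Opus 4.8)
The plan is to reduce the statement to Euclidean geometry inside a single apartment, exactly as in the proofs of Observation~\ref{obs:weak_gradient_criterion} and Observation~\ref{obs:perturbed_weak_gradient_criterion}. First I would choose a good apartment: by Observation~\ref{obs:two_places_twin_apartments} together with the remark following Observation~\ref{obs:two_places_height_factors_through_difference}, there is a twin apartment $\PNApartments$ such that $\Apartment \defeq \PosApartment \times \NegApartment$ is an apartment of $\TwoSpace$ containing $\Point$ together with an initial segment of $\Path$; in particular $\Path_\Point$ is a direction in $\Apartment$, and whether $\Height \circ \Path$ is strictly decreasing on an initial interval may be decided inside $\Apartment$. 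On $\Apartment$ two facts are at our disposal. First, by Observation~\ref{obs:height_is_distance}, $\Height|_{\Apartment} = \sqrt{2}\cdot\EuclDistance(C,\DummyArg)$ where $C \defeq \ZeroLevel \intersect \Apartment$; this set is convex (it is the $0$-sublevel set of the convex function $\Height|_{\Apartment}$ of Observation~\ref{obs:two_places_height_is_convex}, equivalently the strip along the diagonal described in Section~\ref{sec:two_places_height}) and, since $\Height(\Point) > 0$, does not contain $\Point$. Second, by the construction of $\Ray^{\Point}$ in Section~\ref{sec:two_places_height}, the ray $\Ray^{\Point}$ lies in $\Apartment$, issues at $\Point$, and moves directly away from $C$; hence it is the prolongation beyond $\Point$ of the segment $[\ClosestPointProjection[C]\Point,\Point]$, so that $\Gradient_\Point\Height = (\Ray^{\Point})_\Point$ is precisely the direction at $\Point$ pointing away from $\ClosestPointProjection[C]\Point$.

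It then remains to establish the following elementary fact in a Euclidean space $\E^m$: if $C$ is a closed convex set, $\Point \nin C$, and $g$ denotes the direction at $\Point$ of the ray issuing at $\ClosestPointProjection[C]\Point$ through $\Point$, then for a geodesic $\gamma$ issuing at $\Point$ the function $t \mapsto \EuclDistance(C,\gamma(t))$ is strictly decreasing on an initial interval if and only if $\angle_\Point(g,\gamma_\Point) > \pi/2$. This is standard convex analysis: $\EuclDistance(C,\DummyArg)$ is convex and, since $\Point\nin C$ and $C$ is closed, differentiable at $\Point$ with gradient the unit vector in the direction $g$; therefore $\left.\frac{d}{dt}\right|_{t=0}\EuclDistance(C,\gamma(t)) = \cos\angle_\Point(g,\gamma_\Point)$. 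If the angle is obtuse this right derivative is negative, and since the right derivative of the convex function $\EuclDistance(C,\DummyArg)\circ\gamma$ is right-continuous and non-decreasing, $\EuclDistance(C,\gamma(t))$ is strictly decreasing on an initial interval; if the angle is $\le\pi/2$ the right derivative at $0$ is $\ge 0$, hence by convexity $\EuclDistance(C,\gamma(t))$ is non-decreasing and so not strictly decreasing on any initial interval. Applying this with $C = \ZeroLevel \intersect \Apartment$ and $\gamma = \Path$ — the scaling factor $\sqrt{2}$ being irrelevant for monotonicity, and $g$ being the direction of $\Gradient_\Point\Height$ by the previous paragraph — yields the claim.

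I do not expect a genuine obstacle; the content is entirely in the reduction, and the two structural inputs (that $\Height$ on an apartment induced by a twin apartment is a constant multiple of distance from the convex set $\ZeroLevel \intersect \Apartment$, and that $\Gradient\Height$ is the direction away from $\ZeroLevel$) are already available from Section~\ref{sec:two_places_height}. The one mild point to keep in mind is the same one implicit in Observation~\ref{obs:perturbed_weak_gradient_criterion}: the assertion really concerns the geodesic determined by $\Path_\Point$, so if ``$\Path$'' is read as a completely arbitrary path one restricts to paths that are geodesic on an initial interval — which is the only situation in which the observation is subsequently used, cf.\ Corollary~\ref{cor:angle_criterion} and Corollary~\ref{cor:higher_dimensional_angle_criterion}.
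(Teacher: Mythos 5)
Your proof follows exactly the paper's route: choose a twin apartment $\PNApartments$ so that $\PosApartment \times \NegApartment$ contains an initial segment of $\Path$, then use Observation~\ref{obs:height_is_distance} to see that $\Height$ there is a constant multiple of distance from the convex set $\ZeroLevel \intersect (\PosApartment \times \NegApartment)$, with $\Gradient_\Point\Height$ the direction away from that set. You simply spell out the standard one-variable convexity argument that the paper leaves implicit, and your closing remark about the statement really concerning the geodesic in the direction $\Path_\Point$ is a fair observation but not a discrepancy with the paper.
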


\begin{proof}
Let $\PNApartments$ be a twin apartment such that $\PosApartment \times \NegApartment$ contains an initial interval of $\Path$. The statement follows from the fact that on $\PosApartment \times \NegApartment$ the function $\Height$ up to a constant measures distance from $\ZeroLevel$ and $\Gradient_\Point\Height$ is the direction that points away from $\ZeroLevel$.
\end{proof}

As before we call a cell $\Cell \subseteq \TwoSpace$ \emph{flat} if $\Height|_{\Cell}$ is constant.

\begin{obs}
If $\Cell$ is flat, then the (asymptotic) gradient is the same for all points $\Point$ of $\Cell$. It is perpendicular to $\Cell$.
\end{obs}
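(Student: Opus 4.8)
The plan is to mimic the argument for the corresponding observation in Section~\ref{sec:angle_criterion}, working inside a single apartment where $\Height$ is literally (a constant times) Euclidean distance from a convex set. We may assume $\Height(\Cell) > 0$, since otherwise the gradient is not defined and there is nothing to prove. First I would fix a twin apartment $\PNApartments$ with $\Cell \subseteq \PosApartment \times \NegApartment$ and set $\ConvexSet \defeq \ZeroLevel \intersect (\PosApartment \times \NegApartment)$. Using the identifications $\PosEIdent,\NegEIdent$ of \eqref{dia:twin_identification}, the set $\ConvexSet$ is the preimage of the zonotope $\Zonotope$ under the linear map $(\PosPoint,\NegPoint) \mapsto \PosEIdent(\PosPoint) - \NegEIdent(\NegPoint)$, hence convex, and by Observation~\ref{obs:height_is_distance} the restriction of $\Height$ to $\PosApartment \times \NegApartment$ equals $\sqrt{2}\,\EuclDistance(\DummyArg,\ConvexSet)$. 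Writing $h \defeq \Height(\Cell)/\sqrt{2}$, every point of $\Cell$ then has Euclidean distance $h > 0$ from $\ConvexSet$.

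The heart of the argument is to show that $\Vector_\Point \defeq \Point - \ClosestPointProjection[\ConvexSet]\Point$ is one and the same vector for all $\Point \in \Cell$; note $\norm{\Vector_\Point} = h$ for each such $\Point$. Given $\Point_1,\Point_2 \in \Cell$, their midpoint $\Point_m$ lies in $\Cell$ since $\Cell$ is convex, so $\EuclDistance(\Point_m,\ConvexSet) = h$; on the other hand $\tfrac12(\ClosestPointProjection[\ConvexSet]\Point_1 + \ClosestPointProjection[\ConvexSet]\Point_2) \in \ConvexSet$ and $\Point_m$ minus this point equals $\tfrac12(\Vector_{\Point_1} + \Vector_{\Point_2})$, whence $h \le \tfrac12\norm{\Vector_{\Point_1} + \Vector_{\Point_2}} \le \tfrac12(\norm{\Vector_{\Point_1}} + \norm{\Vector_{\Point_2}}) = h$. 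Equality in the triangle inequality for two vectors of equal norm forces $\Vector_{\Point_1} = \Vector_{\Point_2}$, so there is a single vector $\Vector$ with $\Point - \ClosestPointProjection[\ConvexSet]\Point = \Vector$ for every $\Point \in \Cell$.

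It then remains to read off the two assertions. From $\Vector_{\Point_1} = \Vector_{\Point_2} = \Vector$ we get $\ClosestPointProjection[\ConvexSet]\Point_1 - \ClosestPointProjection[\ConvexSet]\Point_2 = \Point_1 - \Point_2$ for all $\Point_1,\Point_2 \in \Cell$; applying the defining property of the closest-point projection (cf.\ Lemma~\ref{lem:spherical_projection}, which in Euclidean space reads $\scp{\Vector}{c - \ClosestPointProjection[\ConvexSet]\Point} \le 0$ for all $c \in \ConvexSet$) at $\ClosestPointProjection[\ConvexSet]\Point_1$ and at $\ClosestPointProjection[\ConvexSet]\Point_2$ gives $\scp{\Vector}{\Point_1 - \Point_2} \le 0$ and $\scp{\Vector}{\Point_2 - \Point_1} \le 0$, hence $\scp{\Vector}{\Point_1 - \Point_2} = 0$; thus $\Vector$ is orthogonal to $\Cell - \Cell$, i.e.\ perpendicular to $\Cell$. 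Finally, by the observation immediately preceding this one, for each $\Point \in \Cell$ the ray $\Ray^\Point$ lies in $\PosApartment \times \NegApartment$ and moves away from $\ConvexSet$, so it is the ray $t \mapsto \Point + (t/\norm{\Vector})\Vector$; therefore $\Gradient_\Point\Height$ and $\Infty\Gradient_\Point\Height$ are determined by the parallelism class of $\Vector$, which does not depend on $\Point$. I expect no genuine obstacle here: this is essentially the verbatim Euclidean translation of the Section~\ref{sec:angle_criterion} observation, and the only points needing care — the convexity of $\ConvexSet$ and the fact that the componentwise-defined ray $\Ray^\Point$ really is the straight ray away from $\ConvexSet$ in the ambient apartment — have already been recorded above.
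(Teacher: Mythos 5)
Your proposal is correct and follows essentially the same route as the paper: pass to a twin apartment where $\Height$ is (up to a constant) Euclidean distance to the convex set $\ZeroLevel \cap (\PosApartment \times \NegApartment)$, and deduce that the closest-point projection of $\Cell$ is a parallel translate by a vector perpendicular to $\Cell$. The paper leaves that last deduction as a one-line assertion; you merely spell it out via the midpoint/triangle-inequality argument and the variational characterization of the nearest-point projection, which is a legitimate and clean way to justify it.
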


\begin{proof}
Let again $\PNApartments$ be a twin apartment such that $\PosApartment \times \NegApartment$ contains $\Cell$. Since the restriction of $\Height$ to $\PosApartment \times \NegApartment$ essentially measures distance to $\ZeroLevel$, the cell $\Cell$ can be of constant height only if its projection onto $\ZeroLevel \intersect (\PosApartment \times \NegApartment)$ is a parallel translate by a vector perpendicular to $\Cell$.
\end{proof}

Let $\Cell$ be a flat cell. We define the \emph{asymptotic gradient} $\Infty\Gradient_\Cell \Height$\index[xsyms]{gradientinftycell@$\Infty\Gradient_\Cell\Height$} of $\Height$ at $\Cell$ to be the asymptotic gradient of any of its interior points. The observation implies that the gradient $\Gradient_\Point \Height$ of an interior point $\Point$ of $\Cell$ is a direction in $\Link \Cell$ and independent of $\Point$. We define the \emph{gradient} $\Gradient_\Cell \Height$\index[xsyms]{gradientnoninftycell@$\Gradient_\Cell\Height$} of $\Height$ at $\Cell$ to be that direction. We take the gradient at $\Cell$ to be the north pole of $\Link \Cell$ and obtain accordingly a \emph{horizontal link} $\Link\Hor \Cell$\index[xsyms]{lkhor@$\Link\Hor\Cell$} and a \emph{vertical link} $\Link\Ver\Cell$\index[xsyms]{lkver@$\Link\Ver\Cell$} and an \emph{open hemisphere link} $\Link\OpenHemi\Cell$.

Let $\PNApartments$ be a twin apartment of $\PNBuildings$ and make identifications as in \eqref{dia:twin_identification}. We say that $\Directions$ is \emph{almost rich} if $\PosEIdent(\Vertex) - \PosEIdent(\AltVertex) \in \Directions$ whenever $\Vertex$ and $\AltVertex$ are vertices of $\PosApartment$ that are contained in a common cell. We say that $\Directions$ is \emph{rich} if $\PosEIdent(\Vertex) - \PosEIdent(\AltVertex) \in \Directions$ whenever $\Vertex$ and $\AltVertex$ are vertices of $\PosApartment$ whose closed stars meet. Note that we might as well have taken vertices in $\NegApartment$ or any other twin apartment instead, since only the Coxeter complex structure matters.

\begin{prop}
\label{prop:two_places_almost_rich_min_vertex}
Assume that $\Directions$ is almost rich and let $\Cell$ be a cell of $\TwoSpace$. Among the $\Height$-minima of $\Cell$ there is a vertex and the set of $\Height$-maxima of $\Cell$ is a face.
The statement remains true if $\Cell$ is replaced by the convex hull of some of its vertices.
\end{prop}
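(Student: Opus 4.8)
The plan is to reduce everything, apartment by apartment, to the Euclidean situation already handled by Proposition~\ref{prop:sufficiently_rich_min_vertex}. First I would fix a cell $\Cell \subseteq \TwoSpace$ and choose a twin apartment $\PNApartments$ such that $\PosApartment \times \NegApartment$ contains $\Cell$ (possible by Observation~\ref{obs:two_places_twin_apartments}), and make the identifications $\PosEIdent, \NegEIdent$ of diagram~\eqref{dia:twin_identification}. Write $\Cell$ as the convex hull of some of the vertices of a product cell $\Cell_+ \times \Cell_-$ with $\Cell_\varepsilon \subseteq \Apartment_\varepsilon$. By Observation~\ref{obs:two_places_height_factors_through_difference}, on $\PosApartment \times \NegApartment$ the function $\Height$ equals $\EuclDistance(\Difference(\PosEIdent(\DummyArg),\NegEIdent(\DummyArg)),\Zonotope)$, i.e. it is the composition of the affine projection $\Point \mapsto \Difference(\PosEIdent(\PosPoint),\NegEIdent(\NegPoint))$ with the convex function $\EuclDistance(\DummyArg,\Zonotope)$ on $\E$.

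The key step is then to observe that $\Directions \Plus \Directions$ is sufficiently rich for $\Difference(\Cell)$. Indeed $\Directions$ is almost rich, hence $\PosEIdent$-images of adjacent vertices land in $\Directions$, so $\Directions$ is sufficiently rich for $\PosEIdent(\Cell_+)$ and for $\NegEIdent(\Cell_-)$; by Observation~\ref{obs:fold_sufficiently_rich} the set $\Directions \Plus \Directions$ is sufficiently rich for $\Difference(\PosEIdent(\Cell_+) \times \NegEIdent(\Cell_-))$ and therefore, by the hereditary remark following Proposition~\ref{prop:zonotope_contains_parallel_translate}, also for $\Difference$ of the convex hull of any subset of its vertices — in particular for the polytope $P \defeq \Difference(\PosEIdent(\DummyArg),\NegEIdent(\DummyArg))(\Cell) \subseteq \E$. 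Since $\Zonotope = \Zonotope(\Directions \Plus \Directions)$, Proposition~\ref{prop:sufficiently_rich_min_vertex} applies to $P$ and $\Zonotope$: among the points of $P$ closest to $\Zonotope$ there is a vertex of $P$, and the points of $P$ farthest from $\Zonotope$ form a face of $P$.

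It remains to pull these conclusions back through the affine map $\phi \colon \Point \mapsto \Difference(\PosEIdent(\PosPoint),\NegEIdent(\NegPoint))$ restricted to $\Cell$. The map $\phi|_\Cell$ is an affine surjection onto $P$ taking vertices to vertices and faces to faces, and $\Height|_\Cell = \EuclDistance(\DummyArg,\Zonotope) \circ \phi|_\Cell$. For the minimum: a vertex of $P$ realizing the minimum of $\EuclDistance(\DummyArg,\Zonotope)$ is the image of some vertex of $\Cell$, which is then a $\Height$-minimum of $\Cell$. For the maximum: the preimage under $\phi|_\Cell$ of the (face of $P$ consisting of) farthest points is a face of $\Cell$; and since $\Height|_\Cell$ is convex (Observation~\ref{obs:two_places_height_is_convex}) and constant on that face while strictly smaller elsewhere — because $\EuclDistance(\DummyArg,\Zonotope)$ is strictly smaller off its maximal face by convexity and $\phi$ is affine — this face is exactly the set of $\Height$-maxima of $\Cell$. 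Finally, the whole argument was phrased for an arbitrary convex hull of vertices of $\Cell_+ \times \Cell_-$, so the ``statement remains true'' clause is automatic. I expect the only mildly delicate point to be the bookkeeping that $\phi|_\Cell$ genuinely sends faces to faces and that strict separation off the maximal face survives the pullback; both follow from affineness of $\phi$ and convexity of $\EuclDistance(\DummyArg,\Zonotope)$, exactly as in the proof of Proposition~\ref{prop:sufficiently_rich_min_vertex}.
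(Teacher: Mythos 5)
Your proposal follows the paper's own proof almost step for step: reduce to one apartment via \eqref{dia:twin_identification}, note that $\Height$ factors as distance from $\Zonotope$ composed with the affine map $\Difference\circ(\PosEIdent\times\NegEIdent)$, use Observation~\ref{obs:fold_sufficiently_rich} to get sufficient richness of $\Directions\Plus\Directions$ for the projected polytope, apply Proposition~\ref{prop:sufficiently_rich_min_vertex}, and pull back. The paper closes the pull-back by citing Ziegler's lemma that preimages of faces under an affine projection are faces, which is exactly the step you invoke. One small caution: an affine projection of a polytope does \emph{not} in general send faces to faces or vertices to vertices (a cube projecting to a hexagon sends some vertices into the interior), so the phrase ``taking vertices to vertices and faces to faces'' is strictly false; what holds, and what your argument actually uses, is that preimages of faces of $P$ are faces of $\Cell$ and that each vertex of $P$ is the image of at least one vertex of $\Cell$ (namely any vertex of its preimage face). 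With that phrasing tightened the proof is correct.
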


\begin{proof}
Write $\Cell = \PosCell \times \NegCell$ and let $\PNApartments$ be a twin apartment that contains $\PosCell$ and $\NegCell$. Make identifications as in \eqref{dia:twin_identification}. Consider $\bar{\Cell} \defeq \Difference(\PosEIdent(\PosCell),\NegEIdent(\NegCell))$. Since $\Directions$ is sufficiently rich for $\PosEIdent(\PosCell)$ as well as for $\NegEIdent(\NegCell)$ Observation~\ref{obs:fold_sufficiently_rich} shows that $\Directions \Plus \Directions$ is sufficiently rich for $\bar{\Cell}$. Hence we may apply Proposition~\ref{prop:sufficiently_rich_min_vertex} to conclude that among the points of $\bar{\Cell}$ closest to $\Zonotope$ there is a vertex and that the set of points of $\bar{\Cell}$ farthest from $\Zonotope$ form a face.

Observation~\ref{obs:two_places_height_factors_through_difference} implies that $\Height$ on $\Cell$ is nothing but the composition of the affine map $\Difference\circ(\PosEIdent \times \NegEIdent)$ and distance from $\Zonotope$. The result now follows from the fact that the preimage of a face of $\bar{\Cell}$ under $\Difference\circ(\PosEIdent \times \NegEIdent)$ is a face of $\Cell$ (see \cite[Lemma~7.10]{ziegler}).

The second statement is proved analogously.
\end{proof}

The angle criterion follows as before:

\begin{cor}
\label{cor:two_places_angle_criterion}
Assume that $\Directions$ is almost rich. Let $\Vertex$ and $\AltVertex$ be two vertices that are contained in a common cell. The restriction of $\Height$ to $[\Vertex,\AltVertex]$ is monotone. In particular, $\Height(v) > \Height(w)$ if and only if $\angle_\Vertex(\Gradient_\Vertex \Height,\AltVertex) > \pi/2$.
\end{cor}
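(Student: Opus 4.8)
The plan is to transcribe verbatim the argument used for the one-place version, Corollary~\ref{cor:angle_criterion}. First I would observe that since $\Vertex$ and $\AltVertex$ lie in a common cell $\Cell$, the segment $[\Vertex,\AltVertex]$ is contained in $\Cell$, and by Observation~\ref{obs:two_places_twin_apartments} there is a twin apartment $\PNApartments$ with $\Cell \subseteq \PosApartment \times \NegApartment$ (pick any chamber $\Chamber \ge \Cell$ and an apartment of product type containing it). On $\PosApartment \times \NegApartment$ the function $\Height$ is convex by Observation~\ref{obs:two_places_height_is_convex}, so its restriction to the segment $[\Vertex,\AltVertex]$ is a convex function of one real variable.

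Next I would invoke Proposition~\ref{prop:two_places_almost_rich_min_vertex} — which applies to the convex hull of a set of vertices of a cell, in particular to $[\Vertex,\AltVertex] = \conv\{\Vertex,\AltVertex\}$ — to conclude that the $\Height$-minimum over $[\Vertex,\AltVertex]$ is attained at one of the two endpoints. A convex function on an interval whose minimum is attained at an endpoint is monotone; this yields the first assertion.

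For the ``in particular'' part, let $\Path$ be the geodesic path from $\Vertex$ to $\AltVertex$, whose initial direction $\Path_\Vertex = [\Vertex,\AltVertex]_\Vertex$ is the direction toward $\AltVertex$. If $\Height(\Vertex) > \Height(\AltVertex)$, then $\Height(\Vertex) > 0$ (heights are nonnegative), so $\Gradient_\Vertex \Height$ is defined; since $\Height \circ \Path$ is convex and monotone, it is strictly decreasing if and only if it is strictly decreasing on an initial interval, which by the infinitesimal angle criterion proved earlier in this section holds if and only if $\angle_\Vertex(\Gradient_\Vertex\Height,\AltVertex) > \pi/2$. Conversely, if $\angle_\Vertex(\Gradient_\Vertex\Height,\AltVertex) > \pi/2$ — which presupposes $\Height(\Vertex) > 0$ — the same criterion makes $\Height \circ \Path$ strictly decreasing on an initial interval, hence strictly decreasing by monotonicity, so $\Height(\Vertex) > \Height(\AltVertex)$.

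There is essentially no genuine obstacle here: the statement is a direct transcription of Corollary~\ref{cor:angle_criterion}, with Observation~\ref{obs:two_places_height_is_convex} and Proposition~\ref{prop:two_places_almost_rich_min_vertex} playing the roles of their one-place counterparts. The only points requiring a moment's care are that the segment $[\Vertex,\AltVertex]$ must be placed inside an apartment of product form $\PosApartment \times \NegApartment$ (so that the convexity and minimum statements, which are phrased for such apartments, apply) and that one uses the ``convex hull of some of its vertices'' clause of Proposition~\ref{prop:two_places_almost_rich_min_vertex} rather than its cell version.
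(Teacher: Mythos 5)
Your proof is correct and takes essentially the same route the paper does: the paper only records ``The angle criterion follows as before,'' and the one-place proof of Corollary~\ref{cor:angle_criterion} that it refers to consists of exactly the steps you transcribed — convexity of $\Height$ on $\PosApartment\times\NegApartment$, the vertex-minimum from Proposition~\ref{prop:two_places_almost_rich_min_vertex}, and the infinitesimal angle criterion. You also rightly flagged the one genuine (if small) adjustment: since two vertices in a common cell of $\TwoSpace$ need not span an edge, one must invoke the ``convex hull of some of its vertices'' clause of Proposition~\ref{prop:two_places_almost_rich_min_vertex} rather than the cell version.
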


\begin{cor}
\label{cor:two_places_higher_dimensional_angle_criterion}
Assume that $\Directions$ is almost rich. Let $\Cell$ be a flat cell and $\BigCell \ge \Cell$. Then $\Cell$ is the set of $\Height$-maxima of $\BigCell$ if and only if $\BigCell \direction \Cell \subseteq \Link\OpenHemi\Cell$.
\end{cor}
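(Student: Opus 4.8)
The plan is to follow the proof of Corollary~\ref{cor:higher_dimensional_angle_criterion} essentially verbatim, with the single-building apartment $\PosApartment$ there replaced by an apartment $\PosApartment \times \NegApartment$ of $\TwoSpace$ coming from a twin apartment $\PNApartments$; such an apartment is available by Observation~\ref{obs:two_places_twin_apartments}, and it is a Euclidean space, being a product of Euclidean Coxeter complexes.

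For the implication ``$\Rightarrow$'' I would argue as follows. Assume $\Cell$ is the set of $\Height$-maxima of $\BigCell$, fix an interior point $\Point$ of $\Cell$ and a direction $\delta \in \BigCell \direction \Cell$, and represent $\delta$ by a geodesic $\Path$ issuing at $\Point$ with image in $\BigCell$. Since $\delta$ is perpendicular to $\Cell$ and $\BigCell$ properly contains $\Cell$, the path $\Path$ leaves $\Cell$ immediately, so $\Height \circ \Path$ is strictly decreasing on an initial interval; by the infinitesimal angle criterion of Section~\ref{sec:two_places_angle_criterion} this gives $\angle_\Point(\Gradient_\Point\Height,\delta) > \pi/2$, i.e.\ $\delta \in \Link\OpenHemi\Cell$. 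Hence $\BigCell \direction \Cell \subseteq \Link\OpenHemi\Cell$.

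For the converse, suppose $\BigCell \direction \Cell \subseteq \Link\OpenHemi\Cell$. I would first upgrade this to the assertion that $\angle_\Point(\Gradient_\Cell\Height,\AltPoint) > \pi/2$ for every point $\Point$ of $\Cell$ and every point $\AltPoint$ of $\BigCell$ not in $\Cell$. To see this, pick by Observation~\ref{obs:two_places_twin_apartments} a twin apartment $\PNApartments$ with $\BigCell \subseteq \PosApartment \times \NegApartment$, work inside the Euclidean apartment $\PosApartment \times \NegApartment$, and decompose the direction $[\Point,\AltPoint]_\Point$ into the component tangent to $\Cell$ and the component in $\BigCell \direction \Cell$. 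As $\Cell$ is flat, $\Gradient_\Cell\Height$ is perpendicular to $\Cell$ and agrees with $\Gradient_\Point\Height$ for each $\Point \in \Cell$; consequently the angle between $\Gradient_\Cell\Height$ and $[\Point,\AltPoint]_\Point$ is determined by the perpendicular component alone, which lies in $\Link\OpenHemi\Cell$, so the angle exceeds $\pi/2$. Applying this to a vertex $\Vertex$ of $\Cell$ and a vertex $\AltVertex$ of $\BigCell$ outside $\Cell$ and invoking Corollary~\ref{cor:two_places_angle_criterion} yields $\Height(\Vertex) > \Height(\AltVertex)$.

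Finally I would conclude as follows: $\Cell$ is flat, so all of its vertices have the common height $h \defeq \Height(\Cell)$, whereas every vertex of $\BigCell$ outside $\Cell$ has height strictly below $h$. By Observation~\ref{obs:two_places_height_is_convex} the maximum of $\Height$ over $\BigCell$ is attained at a vertex and therefore equals $h$; by Proposition~\ref{prop:two_places_almost_rich_min_vertex} the set of $\Height$-maxima of $\BigCell$ is a face, and since it meets the vertex set only in vertices of $\Cell$, it must be $\Cell$ itself. I expect the one genuinely non-routine step to be the decomposition-of-directions argument inside the Euclidean apartment that passes from the hypothesis $\BigCell \direction \Cell \subseteq \Link\OpenHemi\Cell$ to the vertex-wise inequalities $\angle_\Vertex(\Gradient_\Vertex\Height,\AltVertex) > \pi/2$; the rest is straightforward assembly of facts already established.
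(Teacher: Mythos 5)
Your proposal is correct and follows the same route the paper (implicitly) takes: the paper gives Corollary~\ref{cor:two_places_higher_dimensional_angle_criterion} without a written-out proof, pointing back to the one-place version Corollary~\ref{cor:higher_dimensional_angle_criterion}, whose proof is exactly the template you spelled out — forward direction via convexity and the infinitesimal angle criterion, converse via the decomposition of directions inside a Euclidean apartment coming from a twin apartment, followed by the vertex angle criterion Corollary~\ref{cor:two_places_angle_criterion}. The only thing worth flagging in your exposition is that the decomposition $\Link\Point = \partial\Cell * \Link\Cell$ is literally available only at interior points $\Point$ of $\Cell$; to pass to vertices $\Vertex$ of $\Cell$, as you essentially note, one uses that the scalar product $\langle\Gradient_\Cell\Height, \AltPoint - \Point\rangle$ is independent of $\Point \in \Cell$ because $\Gradient_\Cell\Height$ is perpendicular to $\Cell$, so the angle computed at an interior point transfers to any point of $\Cell$.
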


\footerlevel{3}
\headerlevel{3}

\section{The Morse Function}
\label{sec:two_places_morse_function}

From now on we assume that $\Directions$ is almost rich. Then by Proposition~\ref{prop:two_places_almost_rich_min_vertex} the set of $\Height$-maxima of any cell $\Cell$ is a face which we call the \emph{roof} of $\Cell$ and denote by $\Roof{\Cell}$\index[xsyms]{roof@$\Roof\Cell$}.

Let $\Cell$ be a flat cell. By Observation~\ref{obs:two_places_gradient_in_general_position} $\Infty\Gradient_\Cell\Height$ is in general position. So we may apply the results of Section~\ref{sec:moves} with respect to $\Infty\Gradient_\Cell\Height$. In particular, by Lemma~\ref{lem:tau_min} $\Cell\Min$\index[xsyms]{sigmamin@$\Cell\Min$} exists: the unique minimal face of $\Cell$ in the horizontal link of which it lies.

Also, we can define the \emph{depth} $\Depth\Cell$\index[xsyms]{dp@$\Depth\Cell$} of $\Cell$ to be the maximal length of a sequence of moves (with respect to $\Infty\Gradient_\Cell\Height$) that starts with $\Cell$. It exists by Proposition~\ref{prop:bound_on_moves}. If $\Cell$ is not flat, we define $\Depth \Cell \defeq \Depth \Roof\Cell - 1/2$.

If $\BigCell$ is flat and $\Cell$ is a face, then $\Infty\Gradient_\BigCell = \Infty\Gradient_\Cell$, so:

\begin{obs}
If $\Cell \le \BigCell$ are flat and there is a move $\Cell \Up \BigCell$ then $\Depth\Cell > \Depth\BigCell$. If there is a move $\BigCell \Down \Cell$, then $\Depth\BigCell > \Depth\Cell$.\qed
\end{obs}

Let $\Subdiv\TwoSpace$ be the flag complex of $\TwoSpace$. Note that $\Subdiv\TwoSpace$ is a simplicial complex (as flag complexes always are), even though $\TwoSpace$ is not. We define the Morse function $\Morse$ on $\Subdiv\TwoSpace$ by
\begin{align*}
\Morse \colon \Vertices\Subdiv\TwoSpace & \to \R \times \R \times \R \\
\Cell & \mapsto (\max \Height|_{\Cell},\Depth\Cell,\dim\Cell)
\end{align*}
and order the range lexicographically.

As in Section~\ref{sec:morse_function} one verifies that $\Morse$ is indeed a Morse function in the sense of Section~\ref{sec:morse_theory}.

We identify $\Subdiv\TwoSpace$ with the barycentric subdivision of $\TwoSpace$ and write $\Link \Subdiv\Cell$ to mean the link in $\Subdiv\TwoSpace$ as opposed to $\Link \Cell$ which is the link of the cell $\Cell$ in $\TwoSpace$.

The link of a vertex $\Subdiv\Cell$ of $\Subdiv\TwoSpace$ decomposes as a (simplicial) join
\[
\Link \Subdiv\Cell = \Link\FacePart \Subdiv\Cell * \Link\CofacePart \Subdiv\Cell
\]
of the \emph{face part} and the \emph{coface part}.

The \emph{descending link} $\Link\Descending \Subdiv\Cell$ is the full subcomplex of vertices $\Subdiv\Cell'$ with $\Morse(\Subdiv\Cell') < \Morse(\Subdiv\Cell)$. As a full subcomplex the descending link decomposes as a simplicial join
\begin{equation}
\label{eq:two_places_barycenter_descending_link_decomposition}
\Link\Descending \Subdiv\Cell = \Link\FacePart\Descending \Subdiv\Cell * \Link\CofacePart\Descending \Subdiv\Cell
\end{equation}
of the \emph{descending face part} and the \emph{descending coface part}.

\footerlevel{3}
\headerlevel{3}

\section{Beyond Twin Apartments}
\label{sec:two_places_more_apartments}

Before we proceed to the analysis of the descending links we have to address the problem mentioned in the introduction of the chapter, namely that it does not suffice to understand twin apartments.

To make this more precise consider cells $\PosCell \subseteq \PosApartment$ and $\NegCell \subseteq \NegApartment$ in a twin apartment $\PNApartments$. By a \emph{twin wall} $\Wall$ we mean a pair of walls $\PosWall$ of $\PosApartment$ and $\NegWall$ of $\NegApartment$ such that $\PosWall$ is opposite $\NegWall$. Assume that $\PosCell$ and $\NegCell$ do not lie in a common twin wall. Then for every twin wall $\Wall$ that contains $\PosCell$, every chamber $\AltChamber \ge \NegCell$ lies on the same side of $\Wall$. Hence $\PosChamber \defeq \WeylProjection_{\PosCell} \NegCell$ is a chamber. Similarly $\NegChamber \defeq \WeylProjection_{\NegCell} \PosCell$ is a chamber. So every twin apartment that contains $\PosCell$ and $\NegCell$ contains $\PosChamber$ and $\NegChamber$. In other words every apartment of $\Link (\PosCell \times \NegCell)$ that comes from a twin apartment contains the chamber $(\PosChamber \direction \PosCell) * (\NegChamber \direction \NegCell)$. An immediate consequence is:

\begin{obs}
\label{obs:unproblematic_case}
Let $\PosCell \subseteq \PosBuilding$ and $\NegCell \subseteq \NegBuilding$ be cells such that $\PosChamber \defeq \WeylProjection_{\PosCell} \NegCell$ and $\NegChamber \defeq \WeylProjection_{\NegCell} \PosCell$ are chambers. Then every apartment of $\Link (\PosCell \times \NegCell)$ that contains $\Chamber \defeq (\PosChamber \direction \PosCell)*(\NegChamber \direction \NegCell)$ is of the form $\Link_{\PosApartment \times \NegApartment} (\PosCell \times \NegCell)$ for some twin apartment $\PNApartments$.
\end{obs}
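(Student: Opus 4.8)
The statement is essentially a specialization of Observation~\ref{obs:every_apartment_induced} from the one-place chapter to the product building $\TwoSpace$, so I would follow the same strategy. Let $\Apartment'$ be an apartment of $\Link(\PosCell \times \NegCell)$ that contains $\Chamber = (\PosChamber \direction \PosCell) * (\NegChamber \direction \NegCell)$. The key observation is that an apartment of a spherical building is determined by any pair of opposite chambers it contains. So first I would produce the chamber of $\Apartment'$ opposite $\Chamber$: let $\AltChamber$ be the chamber of $\TwoSpace$ with $\AltChamber \ge \PosCell \times \NegCell$ such that $\AltChamber \direction (\PosCell \times \NegCell)$ is opposite $\Chamber$ in $\Apartment'$. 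Writing $\AltChamber = \APosChamber \times \ANegChamber$, the decomposition $\Link(\PosCell \times \NegCell) = \Link_{\PosBuilding}\PosCell * \Link_{\NegBuilding}\NegCell$ shows that $\APosChamber \direction \PosCell$ is opposite $\PosChamber \direction \PosCell$ in $\Link_{\PosBuilding}\PosCell$ and likewise on the negative side.

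Next I would build a twin apartment adapted to this data. Since $\APosChamber \direction \PosCell$ and $\PosChamber \direction \PosCell$ are opposite in $\Link_{\PosBuilding}\PosCell$, the chambers $\APosChamber$ and $\PosChamber$ of $\PosBuilding$ are distinct chambers containing $\PosCell$; by hypothesis $\PosChamber = \WeylProjection_{\PosCell}\NegCell$, and similarly $\ANegChamber$ and $\NegChamber$ contain $\NegCell$ with $\NegChamber = \WeylProjection_{\NegCell}\PosCell$. I would choose a twin apartment $\PNApartments$ that contains the pair $\APosChamber$ and $\ANegChamber$ (such a twin apartment exists by axiom \eqref{item:twin_building_common_apartment}, or more precisely by picking any twin apartment through two chambers in opposite halves). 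Because $\PosChamber = \WeylProjection_{\PosCell}\NegCell$ and any twin apartment containing $\PosCell$ and $\NegChamber \supseteq \NegCell$ must contain $\WeylProjection_{\PosCell}\NegChamber = \WeylProjection_{\PosCell}\NegCell = \PosChamber$ by Fact~\ref{fact:twin_building}~\eqref{item:twin_projection}, the twin apartment $\PNApartments$ in fact contains $\PosChamber$ as well; symmetrically it contains $\NegChamber$. Here I need to be slightly careful that $\PNApartments$ contains $\ANegChamber$ and hence $\NegCell$, so that Fact~\ref{fact:twin_building}~\eqref{item:twin_projection} applies to give $\PosChamber \subseteq \PNApartments$; this is where the hypothesis that $\PosChamber$ and $\NegChamber$ are genuine chambers (not just cells) is used, since the projections are then single chambers and the argument of Observation~\ref{obs:every_apartment_induced} goes through verbatim.

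Finally I would identify $\Link_{\PosApartment \times \NegApartment}(\PosCell \times \NegCell)$ with $\Apartment'$. Both are apartments of $\Link(\PosCell \times \NegCell)$ (the former by Fact~\ref{fact:links_are_spherical_buildings} applied to the product, or by the join decomposition of Observation~\ref{obs:euclidean_building_decomposition} combined with the fact that links of cells in a single Euclidean building are spherical buildings with apartments induced from apartments of the building). The apartment $\Link_{\PosApartment \times \NegApartment}(\PosCell \times \NegCell)$ contains the chamber $\Chamber = (\PosChamber \direction \PosCell)*(\NegChamber \direction \NegCell)$ because $\PNApartments$ contains $\PosChamber$ and $\NegChamber$; and it contains $\AltChamber \direction (\PosCell \times \NegCell)$ because $\PNApartments$ contains $\APosChamber$ and $\ANegChamber$. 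Thus $\Link_{\PosApartment \times \NegApartment}(\PosCell \times \NegCell)$ and $\Apartment'$ are two apartments of the spherical building $\Link(\PosCell \times \NegCell)$ sharing the opposite chamber pair $(\Chamber, \AltChamber \direction (\PosCell \times \NegCell))$, hence they coincide. The main obstacle, such as it is, is the bookkeeping in the second step: ensuring the chosen twin apartment really does pass through $\PosChamber$ and $\NegChamber$, which relies essentially on the projection formula in Fact~\ref{fact:twin_building}~\eqref{item:twin_projection} and on the chamber hypothesis; everything else is a routine "two apartments through opposite chambers agree" argument.
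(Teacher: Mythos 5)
Your approach is essentially identical to the paper's: pick the chamber of the given apartment opposite $\Chamber$, pass to a twin apartment through the two "opposite" chambers, check that this twin apartment also passes through $\PosChamber$ and $\NegChamber$, and conclude by uniqueness of apartments through a pair of opposite chambers. One notational slip is worth fixing: where you invoke Fact~\ref{fact:twin_building}~\eqref{item:twin_projection} you write that any twin apartment containing $\PosCell$ and $\NegChamber$ contains $\WeylProjection_{\PosCell}\NegChamber = \PosChamber$, but what you actually know about $\PNApartments$ is that it contains $\ANegChamber$ (the chamber in the chosen twin apartment over $\NegCell$), not $\NegChamber$; the argument works because $\WeylProjection_{\PosCell}\ANegChamber = \WeylProjection_{\PosCell}\NegCell = \PosChamber$ for \emph{any} chamber $\ANegChamber \ge \NegCell$ once $\WeylProjection_{\PosCell}\NegCell$ is known to be a chamber, which is exactly where the hypothesis enters, as you note yourself afterward.
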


\begin{proof}
Let $\Apartment$ be an apartment that contains $\Chamber$. Let $(\AltChamber_+ \direction \PosCell) * (\AltChamber_- \direction \NegCell)$ be the chamber in $\Apartment$ opposite $\Chamber$. Let $\PNApartments$ be a twin apartment that contains $\AltChamber_+$ and $\AltChamber_-$. Since $\PNApartments$ also contains $\PosChamber$ and $\NegChamber$, necessarily $\Apartment = \Link_{\PosApartment \times \NegApartment} (\PosCell \times \NegCell)$.
\end{proof}

If $\PosCell$ and $\NegCell$ do lie in a common twin wall, there is no chamber in $\Link (\PosCell \times \NegCell)$ such that every apartment containing this chamber comes from a twin apartment. Thus we have to extend the class of apartments to consider.
To do so, we have to break the twin structure, that is, we have to consider symmetries of the individual buildings that are not symmetries of the twin building. The aim is to show that the height function is to some extent preserved under such symmetries.

The first statement, which contains all technicalities, deals with the ar\-che\-type of a symmetry, reflection at a wall:

\begin{lem}
\label{lem:one_reflection_stable}
Assume that $\Directions$ is rich. Let $\PNApartments$ be a twin apartment. Let $\Wall = (\PosWall,\NegWall)$ be a twin wall of $\PNApartments$, i.e., $\PosWall \subseteq \PosApartment$ and $\NegWall \subseteq \NegApartment$ are walls such that $\PosWall$ is opposite $\NegWall$. Let $r_\Wall$ denote the reflection at $\Wall$. If $\PosVertex \in \PosApartment$ and $\NegVertex \in \NegApartment$ are vertices each adjacent to a cell of $\Wall$ (or contained in $\Wall$), then
\[
\Height(r_\Wall(\PosVertex),\NegVertex) = \Height(\PosVertex,\NegVertex) = \Height(\PosVertex,r_\Wall(\NegVertex)) \text{ .}
\]
\end{lem}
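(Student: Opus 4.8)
The statement asserts that reflecting one of two vertices in a common twin wall $\Wall$ does not change their $\Zonotope$-perturbed codistance, provided the reflected vertex is adjacent to (or lies in) a cell of $\Wall$. By the left–right symmetry of the assertion and of the construction of $\Height$ (Observation~\ref{obs:two_places_height_factors_through_difference} treats $\PosApartment$ and $\NegApartment$ symmetrically), it suffices to prove $\Height(r_\Wall(\PosVertex),\NegVertex) = \Height(\PosVertex,\NegVertex)$. First I would make the identifications $\PosEIdent\colon\PosApartment\to\E$ and $\NegEIdent\colon\NegApartment\to\E$ as in~\eqref{dia:twin_identification}, so that by Observation~\ref{obs:two_places_height_factors_through_difference}
\[
\Height(\PosVertex,\NegVertex) = \EuclDistance\big(\PosEIdent(\PosVertex) - \NegEIdent(\NegVertex),\Zonotope\big),
\]
with $\Zonotope = \Zonotope(\Directions \Plus \Directions)$. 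Under the chosen identification, $\PosWall$ and $\NegWall$ correspond to a common affine hyperplane $\bar{\Wall}$ of $\E$ (since $\PosWall$ is opposite $\NegWall$ and the identifications respect opposition), and the reflection $r_\Wall$ on $\PosApartment$ corresponds to the Euclidean reflection $r_{\bar{\Wall}}$ of $\E$ in $\bar{\Wall}$. Thus I must show
\[
\EuclDistance\big(r_{\bar{\Wall}}(\bar{\PosVertex}) - \bar{\NegVertex},\Zonotope\big) = \EuclDistance\big(\bar{\PosVertex} - \bar{\NegVertex},\Zonotope\big),
\]
where $\bar{\PosVertex} = \PosEIdent(\PosVertex)$, $\bar{\NegVertex} = \NegEIdent(\NegVertex)$.

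**The geometric core.** Let $p \defeq \bar{\PosVertex} - \bar{\NegVertex}$ and $p' \defeq r_{\bar{\Wall}}(\bar{\PosVertex}) - \bar{\NegVertex}$. Write $r_{\bar{\Wall}}(x) = x + \lambda(x)\,u$ where $u$ is a unit normal to $\bar{\Wall}$ and $\lambda(x) = 2\,\EuclDistance(x,\bar{\Wall})$ in magnitude with appropriate sign; then $p' = p + \lambda(\bar{\PosVertex})\,u$, and I want to compare distances of $p$ and $p'$ to $\Zonotope$. The key point is that $v \defeq \lambda(\bar{\PosVertex})\,u = r_{\bar{\Wall}}(\bar{\PosVertex}) - \bar{\PosVertex}$ is a translation vector of the Coxeter complex (it is the difference of two vertices of $\PosApartment$ that are reflections of each other across a wall, hence a root-lattice-type vector), and more importantly it can be realized as a \emph{difference of two vertices whose closed stars meet}: namely $\PosVertex$ is adjacent to a cell $\Cell$ of $\Wall$, and $r_\Wall(\PosVertex)$ is also adjacent to $\Cell$ (reflection in $\Wall$ fixes $\Cell$ pointwise and permutes the chambers around it), so the closed stars of $\PosVertex$ and $r_\Wall(\PosVertex)$ both contain $\Cell$ and hence meet. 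Since $\Directions$ is rich, $v = \PosEIdent(r_\Wall(\PosVertex)) - \PosEIdent(\PosVertex) \in \Directions$, and therefore $v \in \Directions \Plus \Directions$. (In the degenerate case where $\PosVertex \in \Wall$, $v = 0$ and there is nothing to prove.)

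**Using the zonotope structure.** Now I would invoke Lemma~\ref{lem:n_f_lemma} together with the fact that $\Zonotope$ is $\Weyl$-invariant and that reflection in $\bar{\Wall}$ belongs to the affine Weyl group whose linear part is $\Weyl$. Write $p = n + f$ with $f = \EuclProjection[\Zonotope](p)$ and $n = p - f$; by Lemma~\ref{lem:n_f_lemma}, $n$ and $p$ lie in the same $\Weyl$-chamber. The cleanest route: since $v \in \Directions \Plus \Directions$, the segment $[0,v]$ is a summand of $\Zonotope$, so $\Zonotope = \Zonotope' + [0,v]$ for some zonotope $\Zonotope'$. Hence $\EuclDistance(p + v, \Zonotope) = \EuclDistance(p + v, \Zonotope' + [0,v]) = \EuclDistance(p, \Zonotope' + [-v,0]) $; but $[-v,0]$ and $[0,v]$ differ by the translation $-v$ and $\Zonotope' + [-v,0] = (\Zonotope' + [0,v]) - v = \Zonotope - v$, so $\EuclDistance(p+v,\Zonotope) = \EuclDistance(p, \Zonotope - v) = \EuclDistance(p+v-v, \Zonotope) $ — this is circular, so instead I exploit that $v$ is a reflection vector: $p' = r_{\bar{\Wall}}(\bar{\PosVertex}) - \bar{\NegVertex}$ where the reflection hyperplane through $\bar{\PosVertex}$'s image... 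The honest argument is that $p'$ and $p$ are related by $p' = p + v$ with $v$ parallel to $u$, and the projection point $f$ of $p$ onto $\Zonotope$ satisfies $f + v \in \Zonotope$ (because $v \in \Directions\Plus\Directions$ means $[f, f+v]\subseteq\Zonotope$ whenever $f$ lies on the "$v$-zone" face, which it does since $f$ maximizes $\langle n,\cdot\rangle$ and $n \parallel u \parallel v$ forces $f$ to lie in a face on which translation by $v$ stays inside $\Zonotope$), and then $\EuclDistance(p',\Zonotope) \le \EuclDistance(p', f+v) = \EuclDistance(p,f) = \EuclDistance(p,\Zonotope)$; the reverse inequality follows by applying the same argument with $p$ and $p'$ interchanged (using $-v \in \Directions\Plus\Directions$ by central symmetry).

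**Main obstacle.** The delicate step is the claim that the closest point $f = \EuclProjection[\Zonotope](p)$ can be translated by $v$ (an edge summand of $\Zonotope$) while staying inside $\Zonotope$. This is exactly where one needs that $v$ is \emph{parallel} to the normal direction $n = p - f$ of the separating face — equivalently that $v$ is perpendicular to the wall $\bar{\Wall}$, which holds by construction, combined with Lemma~\ref{lem:n_f_lemma} (so that $n$, and hence the face $F$ of $\Zonotope$ containing $f$ in its relative interior, is "aligned" with the chamber structure) and the face description of zonotopes (the Lemma on page describing $F = \Zonotope[\Directions_\Vector] + \sum_{\langle\Vector,\Direction\rangle>0}\Direction$): one checks that $v$ contributes a summand $[0,v]$ or $\{v\}$ or $\{0\}$ to $F$ according to the sign of $\langle n, v\rangle$, and in each case $f + v$ or $f - v$ stays in $\Zonotope$, which is enough to get one of the two inequalities, the other coming from central symmetry $-v \in \Directions$. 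I expect roughly half a page of careful bookkeeping with the face lemma for zonotopes to nail this down; everything else (the reduction to $\E$, the symmetry reductions, the adjacency observation that $r_\Wall(\PosVertex)$ shares a star with $\PosVertex$) is routine.
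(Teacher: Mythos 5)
Your setup and the identification of the key ingredient are both correct and match the paper: you reduce to a computation in $\E$ via $\PosEIdent,\NegEIdent$, you correctly observe that $\PosVertex$ and $r_\Wall(\PosVertex)$ have closed stars meeting in a cell of $\Wall$ so that richness puts $\Vector \defeq \PosEIdent(r_\Wall(\PosVertex))-\PosEIdent(\PosVertex)$ into $\Directions$, and you correctly note the reduction to one of the two equalities by symmetry. The problem is the step you yourself flag as delicate: you assert that the vector $n = p - f$, where $f = \EuclProjection[\Zonotope](p)$ and $p = \PosEIdent(\PosVertex) - \NegEIdent(\NegVertex)$, is parallel to the normal $u$ of the wall. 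This is false in general, and there is no reason for it to hold --- $p$ is a corner of the relevant square and its position relative to $\Zonotope$ depends on where $\NegVertex$ lies, so $n$ can have any direction in the closure of a Weyl chamber. Without $n \parallel u$, the face description of $\Zonotope$ gives no control on whether $f+\Vector$ stays in $\Zonotope$, and your estimate $\EuclDistance(p',\Zonotope) \le \EuclDistance(p',f+\Vector)$ is unjustified.

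The paper's proof avoids this by anchoring the argument not at the corner $p$ but at the \emph{center} of the square, i.e.\ at $\bar\Vertex_+ - \bar\Vertex_-$ where $\bar\Vertex_\pm$ are the orthogonal projections of the two vertices onto the wall. This center lies in the wall, and, because $\Zonotope$ is $\Weyl$-invariant and $r_\Wall \in \Weyl$, its closest point $\Point = \EuclProjection[\Zonotope](\bar\Vertex_+ - \bar\Vertex_-)$ also lies in the wall; a short direct argument (re-writing $\Point$ after averaging with its $r_\Wall$-image) shows $\Point \in \Zonotope(E^0)$ where $E^0 = \{\Direction \in \Directions \Plus \Directions \mid \Direction \nin \Wall^\perp\}$. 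The normal vector $n = (\bar\Vertex_+ - \bar\Vertex_-) - \Point$ then lies \emph{in} the wall, hence is perpendicular to every element of $\Directions^\perp = \Directions \intersect \Wall^\perp$ --- this is exactly the perpendicularity you wanted, but attached to the right base point. Combined with the inclusion $\Zonotope(E) \supseteq \Zonotope(\Directions^\perp) + \Zonotope(\Directions^\perp) + \Zonotope(E^0)$ (a consequence of Observation~\ref{obs:zonotope_arithmetic}), the translation by $n$ carries the whole square $(\bar\Vertex_+ - \bar\Vertex_-) + \Zonotope(\Directions^\perp) + \Zonotope(\Directions^\perp)$ into $\Zonotope$, and since $n \perp \Wall^\perp$ the translation coincides with closest-point projection on all four corners. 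If you replace your projection of the corner by the projection of the center and add the Weyl-invariance argument that keeps that projection inside the wall, your proposal becomes correct.
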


To prove this we want to say that $\Directions \Plus \Directions$ is sufficiently rich for the geodesic segment $e \defeq \PosEIdent([\PosVertex,r_\Wall(\PosVertex)]) - \NegEIdent([\NegVertex,r_\Wall(\NegVertex)])$. This time however, it is not enough that $\Zonotope$ contains a parallel translate through every projection point of $e$. We want that all of $e$ linearly projects onto $\Zonotope$. The reason for this to be true is of course that $\NegVertex$ and $\PosVertex$ are both close to the wall $\Wall$. Before we can make this precise, we need some elementary statements about the arithmetic of zonotopes:

\begin{obs}
\label{obs:zonotope_arithmetic}
Let $\E$ be a Euclidean vector space and let $\Directions$, $E$, $\Directions_1$, and $\Directions_2$ be finite subsets. Then
\begin{enumerate}
\item $\Directions \subseteq E$ implies $\Zonotope(\Directions) \subseteq \Zonotope(E)$.
\item $\Zonotope(\Directions_1 \union \Directions_2) \subseteq \Zonotope(\Directions_1) + \Zonotope(\Directions_2)$ with equality if $\Directions_1 \intersect \Directions_2 = \emptyset$.
\item $\Zonotope(\Directions_1) + \Zonotope(\Directions_2) \subseteq \Zonotope(\Directions_1 \Plus \Directions_2)$.
\end{enumerate}
\end{obs}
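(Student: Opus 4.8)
The plan is to deduce all three inclusions directly from the definition $\Zonotope[\Directions]=\sum_{\Direction\in\Directions}[0,\Direction]$ of a zonotope as a Minkowski sum of segments, using repeatedly that every segment $[0,\Direction]$ — and hence every zonotope — contains $0$, so that forming a Minkowski sum with an extra zonotope can only enlarge a set. For (i) this is immediate: splitting the sum gives $\Zonotope[E]=\Zonotope[\Directions]+\Zonotope[E\setminus\Directions]$, and since $0\in\Zonotope[E\setminus\Directions]$ we get $\Zonotope[\Directions]=\Zonotope[\Directions]+\{0\}\subseteq\Zonotope[E]$. For (ii), regrouping the Minkowski sum $\sum_{v\in\Directions_1}[0,v]+\sum_{w\in\Directions_2}[0,w]$ according to whether an element lies in $\Directions_1\cap\Directions_2$ or in $\Directions_1\triangle\Directions_2$ gives $\Zonotope[\Directions_1]+\Zonotope[\Directions_2]=\Zonotope[\Directions_1\cup\Directions_2]+\Zonotope[\Directions_1\cap\Directions_2]$; since $0\in\Zonotope[\Directions_1\cap\Directions_2]$ this yields $\Zonotope[\Directions_1\cup\Directions_2]\subseteq\Zonotope[\Directions_1]+\Zonotope[\Directions_2]$, and the correction term is $\{0\}$ precisely when $\Directions_1\cap\Directions_2=\emptyset$, in which case equality holds.

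For (iii) I would first dispose of the easy case: if $\Directions_1$, $\Directions_2$ and $\Directions_1+\Directions_2$ are pairwise disjoint, then $\Directions_1\Plus\Directions_2$ is their disjoint union and $\Zonotope[\Directions_1\Plus\Directions_2]=\Zonotope[\Directions_1]+\Zonotope[\Directions_2]+\Zonotope[\Directions_1+\Directions_2]$ on the nose, so the claim follows by adding $0\in\Zonotope[\Directions_1+\Directions_2]$ as in (i). In general I would reduce to $\Directions_2=\{w\}$ by induction on $\abs{\Directions_2}$: writing $\Directions_2=\Directions_2'\cup\{w\}$ one has $\Zonotope[\Directions_2]=[0,w]+\Zonotope[\Directions_2']$ and $\Directions_1\Plus\Directions_2=(\Directions_1\Plus\Directions_2')\cup(\Directions_1+\{w\})\cup\{w\}$, so by the inductive hypothesis and (i) it suffices to prove that $[0,w]+\Zonotope[E]\subseteq\Zonotope[E\cup(\Directions_1+\{w\})\cup\{w\}]$ whenever $\Directions_1\subseteq E$. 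To see this, take a point $q+\beta w$ with $q=\sum_{e\in E}\alpha_e e$ and all coefficients in $[0,1]$, and transfer part of the $w$-term onto the segments $[0,v+w]$ with $v\in\Directions_1$: for a suitable choice of $\gamma_v\in[0,\alpha_v]$ with $\sum_v\gamma_v\le\beta$ one has
\[
q+\beta w=\sum_{v\in\Directions_1}(\alpha_v-\gamma_v)v+\sum_{e\in E\setminus\Directions_1}\alpha_e e+\sum_{v\in\Directions_1}\gamma_v(v+w)+\Big(\beta-\textstyle\sum_v\gamma_v\Big)w,
\]
which exhibits $q+\beta w$ in $\Zonotope[E\cup(\Directions_1+\{w\})\cup\{w\}]$.

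The main obstacle — and the only place requiring real care — is the bookkeeping in this last step when the generating vectors coincide additively: if some $v+w$ already lies in $E$, or $w\in E$, then two of the listed segments point in the same direction and the naive coefficient may exceed $1$. One then chooses the $\gamma_v$ so as to cancel the excess and, if a surplus still remains at direction $v+w$ (resp.\ at $w$), cascades it onto $v+2w$ (resp.\ $2w$), then $v+3w$, and so on; each such vector again lies in the iterated sum $\Directions_1\Plus\Directions_2$, and since the sets are finite the cascade terminates. Everything else is immediate from the definitions.
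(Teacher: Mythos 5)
Parts (i) and (ii) are fine and in substance the same as the paper's (which dismisses them as clear from the definition); your regrouping $Z(D_1)+Z(D_2)=Z(D_1\cup D_2)+Z(D_1\cap D_2)$ is a clean way to record (ii).

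Part (iii), however, has a genuine gap: the statement you reduce to is false, and it fails for sets that actually occur in your induction. Work in $\E=\R$ with $D_1=\{1\}$, $D_2=\{1,2\}$ and peel off $w=1$, so $D_2'=\{2\}$ and $E=D_1\Plus D_2'=\{3\}\cup\{1\}\cup\{2\}=\{1,2,3\}$. Then $[0,w]+Z(E)=[0,1]+[0,6]=[0,7]$, while $E\cup(D_1+\{w\})\cup\{w\}=\{1,2,3\}$, whose zonotope is $[0,6]$; so the asserted inclusion $[0,w]+Z(E)\subseteq Z(E\cup(D_1+\{w\})\cup\{w\})$ is simply wrong (the original claim is of course true here: $Z(D_1)+Z(D_2)=[0,4]\subseteq[0,6]$). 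The underlying reason is that your inductive hypothesis is lossy: it only remembers that the point lies in $Z(D_1\Plus D_2')$, which is much larger than $Z(D_1)+Z(D_2')$, and to translate that larger zonotope by $[0,w]$ you would need generators such as $w'+w$ with $w'\in D_2'$, or $(v+w')+w$, which are not elements of $D_1\Plus D_2$. The same defect breaks the cascade: the vectors $v+2w$, $v+3w$, $2w$, $3w$, \dots\ you fall back on are in general not in $D_1\Plus D_2=(D_1+D_2)\cup D_1\cup D_2$, which is a single sumset union and not closed under repeatedly adding $w$ (for $D_1=\{v\}$, $D_2=\{w\}$ in general position it is just $\{v+w,v,w\}$). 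No choice of peeling order is justified in your argument either, so (iii) remains unproved.

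For comparison: the paper proves (iii) by a pointwise case distinction in the spirit of the earlier observation on sufficiently rich sets, where only the directions in $D_1\cap D_2$ cause trouble and one uses $2d=d+d\in D_1+D_2$; the collisions you worried about are exactly what has to be controlled, but within the fixed generating set $D_1\Plus D_2$. One clean way to do this is via support functions: $h_{Z(D)}(u)=\sum_{d\in D}\langle u,d\rangle^+$ (with $x^+=\max(x,0)$), so (iii) amounts to $\sum_{d\in D_1\cap D_2}\langle u,d\rangle^+\le\sum_{z\in(D_1+D_2)\setminus(D_1\cup D_2)}\langle u,z\rangle^+$ for every $u$. If some element of $D_1\cup D_2$ has positive product with $u$, pick such an $M$ with $\langle u,M\rangle$ maximal, say $M\in D_1$; the distinct vectors $M+b$, where $b$ runs over the elements of $D_2$ with $\langle u,b\rangle>0$, lie in $D_1+D_2$, have $u$-value strictly larger than $\langle u,M\rangle$ and hence avoid $D_1\cup D_2$, and they already contribute at least $\sum_b\langle u,b\rangle\ge\sum_{d\in D_1\cap D_2}\langle u,d\rangle^+$; the case $M\in D_2$ is symmetric, and the remaining case is trivial.
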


\begin{proof}
The first and second statement are clear from the definition. The third is a case distinction similar to Observation~\ref{obs:fold_sufficiently_rich}.
\end{proof}

\begin{proof}[Proof of Lemma~\ref{lem:one_reflection_stable}.]
We make identifications as in \eqref{dia:twin_identification}. Note that $\PosEIdent$ and $\NegEIdent$ induce the same Coxeter structure on $\E$ and that $\PosEIdent(\PosWall) = \NegEIdent(\NegWall)$ is a wall which we also denote by $\Wall$. We reduce notation by taking the origin of $\E$ to lie in $\Wall$. Also we make the identifications via $\PosEIdent$ and $\NegEIdent$ implicit so that $\PosVertex, \NegVertex \in \E$. Let $\Wall^\perp$ denote the orthogonal complement of $\Wall$. Our goal is to show that $[\PosVertex,r_\Wall(\PosVertex)] - [\NegVertex,r_\Wall(\NegVertex)]$ linearly projects onto $\Zonotope$.

We write $E \defeq \Directions \Plus \Directions$ and consider the subsets
\[
E^0 \defeq \{\Direction \in E \mid \Direction \nin \Wall^\perp\} \quad \text{and} \quad \Directions^\perp \defeq \{\Direction \in \Directions \mid \Direction \in \Wall^\perp\} \text{ .}
\]
Note that $\Directions^\perp \Plus \Directions^\perp$ and $E^0$ are disjoint subsets of $E$. Therefore Observation~\ref{obs:zonotope_arithmetic} implies
\begin{align}
\Zonotope(E) \ \supseteq \ \Zonotope((\Directions^\perp \Plus \Directions^\perp) \union E^0) \ &=\ \nonumber\\
\Zonotope(\Directions^\perp \Plus \Directions^\perp) + \Zonotope(E^0)\ 
&\supseteq \ \Zonotope(\Directions^\perp) + \Zonotope(\Directions^\perp) + \Zonotope(E^0)\text{ .}
\label{eq:zonotope_inclusion}
\end{align}

Let $\bar{\Vertex}_+ \defeq 1/2\Vertex_+ + 1/2r_\Wall(\Vertex_+)$ be the projection of $\Vertex_+$ onto $\Wall$ and let $\bar{\Vertex}_-$ be the projection of $\Vertex_-$ onto $\Wall$. Note that since $E$ is $\Weyl$-invariant it is, in particular, invariant by $r_\Wall$. Thus $\Zonotope$ is also $r_\Wall$-invariant. This together with the fact that $\bar{\Vertex}_+ - \bar{\Vertex}_-$ lies in $\Wall$ implies that the projection $\Point \defeq \ClosestPointProjection[\Zonotope] \bar{\Vertex}_+ - \bar{\Vertex}_-$ also lies in $\Wall$.

We claim that $\Point$ already has to lie in $\Zonotope[E^0]$. Indeed write
\[
\Point = \sum_{\Direction \in E^0} \alpha_\Direction \Direction + \sum_{\Direction \in E \intersect H^\perp} \alpha_\Direction \Direction \text{ .}
\]
Invariance under $r_\Wall$ implies that we can also write
\[
\Point = \sum_{\Direction \in E^0} \alpha_{r_\Wall(\Direction)} \Direction + \sum_{\Direction \in E \intersect H^\perp} - \alpha_\Direction \Direction \text{ .}
\]
Taking the mean of both expressions gives $\Point = \sum_{\Direction \in E^0} 1/2(\alpha_\Direction + \alpha_{r_\Wall(\Direction)}) \Direction$.

Next note that $\PosVertex - r_\Wall(\PosVertex)$ lies in $\Wall^\perp$. Moreover, if $\PosCell \subseteq \Wall$ is a cell to which $\PosVertex$ is adjacent, which exists by assumption, then $r_\Wall(\PosVertex)$ is adjacent to $\PosCell$ as well. This shows that the closed stars of $\PosVertex$ and $r_\Wall(\PosVertex)$ meet. So richness of $\Directions$ implies that $\PosVertex - r_\Wall(\PosVertex)$ lies in $\Directions$ and thus in $\Directions^\perp$. In the same way one sees that $\NegVertex - r_\Wall(\NegVertex) \in \Directions^\perp$.

Thus $[\PosVertex,r_\Wall(\PosVertex)] \subseteq \bar{\Vertex}_+ + \Zonotope[\Directions^\perp]$ and $[\NegVertex,r_\Wall(\NegVertex)] \subseteq \bar{\Vertex}_- + \Zonotope[\Directions^\perp]$. Consequently 
\[
[\PosVertex,r_\Wall(\PosVertex)] - [\NegVertex,r_\Wall(\NegVertex)] \quad \subseteq \quad (\bar{\Vertex}_+ - \bar{\Vertex}_-) + \Zonotope(\Directions^\perp) + \Zonotope(\Directions^\perp) \text{ .}
\]
Now $\Point + \Zonotope[\Directions^\perp] + \Zonotope[\Directions^\perp]$ is fully contained in $\Zonotope = \Zonotope[E]$ by \eqref{eq:zonotope_inclusion}. So the closest point projection onto $\Zonotope$ takes $(\bar{\Vertex}_+ - \bar{\Vertex}_-) + \Zonotope(\Directions^\perp) + \Zonotope(\Directions^\perp)$ linearly onto $\Point + \Zonotope(\Directions^\perp) + \Zonotope(\Directions^\perp)$.
\end{proof}

\begin{cor}
\label{cor:weyl_preserves_height}
Assume that $\Directions$ is rich. Let $\PNApartments$ be a twin apartment and let $\EWeyl_+$ respectively $\EWeyl_-$ be the affine reflection groups of $\PosApartment$ respectively $\NegApartment$. Let $\PosCell \subseteq \PosApartment$ and $\NegCell \subseteq \NegApartment$ be cells and let $\PosBigCell \defeq \WeylProjection_{\PosCell} \NegCell$ and $\NegBigCell \defeq \WeylProjection_{\NegCell} \PosCell$ be the projections of one onto the other. Let $R_+$ respectively $R_-$ be the stabilizer of $\PosBigCell$ in $\EWeyl_+$ respectively of $\NegBigCell$ in $\EWeyl_-$. Then
\[
\Height(\PosVertex,\NegVertex) = \Height(\weyl_+\PosVertex,\weyl_-\NegVertex)
\]
for all group elements $\weyl_+ \in R_+$ and $\weyl_- \in R_-$ and all vertices $\PosVertex$ adjacent to $\PosCell$ and $\NegVertex$ adjacent to $\NegCell$.
\end{cor}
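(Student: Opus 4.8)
The plan is to reduce to the case of a single reflection, which is exactly Lemma~\ref{lem:one_reflection_stable}. Since $\EWeyl_+$ acts type-preservingly on $\PosApartment$, the stabilizer $R_+$ of $\PosBigCell$ fixes $\PosBigCell$ pointwise and, by the structure theory of reflection groups, is generated by the reflections $r_\Wall$ at the walls $\Wall$ of $\PosApartment$ with $\PosBigCell\subseteq\Wall$ (see e.g.\ \cite[Section~1]{abrbro}, \cite{bourbaki_lie_4-6}); likewise for $R_-$. Writing $\weyl_+$ and $\weyl_-$ as products of such reflections, I would pass from $\Height(\PosVertex,\NegVertex)$ to $\Height(\weyl_+\PosVertex,\NegVertex)$, and then to $\Height(\weyl_+\PosVertex,\weyl_-\NegVertex)$, one reflection at a time. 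Each single step is an application of Lemma~\ref{lem:one_reflection_stable}: for a wall $\Wall\supseteq\PosBigCell$ of $\PosApartment$ the pair $(\Wall,\opm{\PNApartments}(\Wall))$ is a twin wall by \eqref{item:twin_building_apartment_opposition}, and its reflection acts on $\PosApartment$ as $r_\Wall$; symmetrically on the negative side.

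The work is to verify the two adjacency hypotheses of Lemma~\ref{lem:one_reflection_stable} at each step. On the moving side this is automatic: the reflections occurring in $\weyl_+$ all fix $\PosBigCell$, and hence $\PosCell\le\PosBigCell$, pointwise, so each partial image of $\PosVertex$ remains adjacent to $\PosCell$, which is a cell of every wall through $\PosBigCell$; symmetrically for the partial images of $\NegVertex$ and $\NegCell$. On the other side, one needs $\NegVertex$ (which is adjacent to $\NegCell$) to be adjacent to a cell of $\opm{\PNApartments}(\Wall)$, for which it suffices that $\NegCell\subseteq\opm{\PNApartments}(\Wall)$; and, dually, in the second stage, that $\PosCell\subseteq\opm{\PNApartments}(\NegWall)$ for every wall $\NegWall$ of $\NegApartment$ through $\NegBigCell$. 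Both reduce to the \emph{key claim}: \emph{every wall of $\PosApartment$ through $\PosBigCell=\WeylProjection_{\PosCell}\NegCell$ contains $\opm{\PNApartments}(\NegCell)$}, and symmetrically with the two halves interchanged. This is the main obstacle.

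I would prove the key claim inside the residue $\Link\PosCell$, which is a spherical building (Fact~\ref{fact:links_are_spherical_buildings}). Unwinding the definition of the Weyl-codistance in $\PNApartments$, one checks that $\PosBigCell$ is the cell $\ge\PosCell$ whose direction $\PosBigCell\direction\PosCell$ is \emph{opposite} in $\Link\PosCell$ to $D\direction\PosCell$, where $D$ is the ordinary (gate-property) projection of the cell $\opm{\PNApartments}(\NegCell)$ onto $\PosCell$ in $\PosBuilding$; here one uses that among the chambers $\ge\PosCell$ the one at maximal Weyl-distance from a given chamber is the one opposite its projection in $\Link\PosCell$, and then intersects over the chambers $\ge\NegCell$. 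Walls of $\Link\PosCell$ are great subspheres, hence invariant under the opposition involution of $\Link\PosCell$, so a wall contains $\PosBigCell\direction\PosCell$ if and only if it contains $D\direction\PosCell$; translating back, the walls of $\PosApartment$ through $\PosBigCell$ are exactly those through $\PosCell$ that contain $D$, and by the gate property a wall through $\PosCell$ contains $D=\WeylProjection_{\PosCell}\opm{\PNApartments}(\NegCell)$ precisely when it contains $\opm{\PNApartments}(\NegCell)$ itself. The fiddly points will be the passage to the residue for the cell-valued (rather than chamber-valued) projections and the exact form of the gate property invoked there; granting the key claim, what remains is a routine induction with Lemma~\ref{lem:one_reflection_stable}.
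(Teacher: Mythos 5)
Your proposal is correct and follows essentially the same route as the paper's (very terse) proof: the paper's observation that the affine span of $\PosBigCell$ is the intersection of the positive halves of twin walls through both $\PosCell$ and $\NegCell$ is precisely your key claim that every wall through $\PosBigCell$ forms such a twin wall, and then $R_+\times R_-$ is generated by the reflections covered by Lemma~\ref{lem:one_reflection_stable}. You have usefully made explicit the step-by-step induction and the residue/opposition argument behind the key claim, both of which the paper leaves implicit.
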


\begin{proof}
The affine span of $\PosBigCell$ in $\PosApartment$ is the intersection of the positive halves of twin walls that contain $\PosCell$ and $\NegCell$. Similarly, the affine span of $\NegBigCell$ in $\NegApartment$ is the intersection of negative halves of twin walls that contains $\PosCell$ and $\NegCell$. The group $R_+ \times R_-$ is therefore generated by the reflections described in Lemma~\ref{lem:one_reflection_stable}.
\end{proof}

Note that Corollary~\ref{cor:weyl_preserves_height} is essentially a statement about a \emph{spherical} reflection group: it says that the stabilizer of $\PosBigCell \times \NegBigCell$ in the group of symmetries of $\Star_{\PosApartment} \PosCell \times \Star_{\NegApartment} \NegCell$, which is the reflection group of $\Link_{\PosApartment} \PosCell * \Link_{\NegApartment} \NegCell$, preserves height (on vertices).

In the remainder of the section we want to use this result to show how height is preserved in the twin building $\PNBuildings$. First we look at symmetries that preserve the twin structure:

\begin{obs}
\label{obs:twin_iso_preserves_height}
Let $\PNApartments$ and $(\PosApartment',\NegApartment')$ be twin apartments. Any isomorphism $\kappa \colon \PNApartments \to (\PosApartment',\NegApartment')$ of thin twin buildings preserves height in the sense that $\Height(\PosPoint,\NegPoint) = \Height(\kappa(\PosPoint),\kappa(\NegPoint))$.
\end{obs}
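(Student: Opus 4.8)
The claim is that a twin-building isomorphism $\kappa$ between thin twin buildings (i.e.\ twin apartments) preserves the height function $\Height = \EuclCoDistance[\Zonotope]$. The strategy is to reduce this to the observation already recorded that $\Height$, restricted to an apartment $\PosApartment \times \NegApartment$ coming from a twin apartment, is computed intrinsically from the codistance and the fixed zonotope $\Zonotope$. Concretely, by definition (Section~\ref{sec:height}, equation \eqref{eq:apartment_perturbed_codistance}) one has for $\PosPoint \in \PosApartment$ and $\NegPoint \in \NegApartment$ that
\[
\Height(\PosPoint,\NegPoint) = \EuclCoDistance[\Zonotope][\PNApartments](\PosPoint,\NegPoint) = \EuclDistance(\PosPoint + \Zonotope, \APosPoint)
\]
where $\APosPoint = \opm{\PNApartments}(\NegPoint)$ is the point of $\PosApartment$ opposite $\NegPoint$, and $\PosPoint + \Zonotope$ is formed via any identification $\PosEIdent \colon \PosApartment \to \E$ respecting the $\Weyl$-structure at infinity. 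So the height on the twin apartment depends only on: (i) the metric structure of $\PosApartment$, (ii) the opposition bijection $\opm{\PNApartments}$, and (iii) the class of identifications with $\E$ modulo $\Weyl$.

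\textbf{Key steps.} First I would fix the twin apartments $\PNApartments$ and $\AltPNApartments$ (writing $\AltPNApartments = (\PosApartment',\NegApartment')$) and an identification $\PosEIdent \colon \PosApartment \to \E$ respecting the asymptotic $\Weyl$-structure. Since $\kappa$ is a type-preserving isomorphism of thin twin buildings, it is in particular an isometry $\PosApartment \to \PosApartment'$ that respects the asymptotic structure, so $\PosEIdent' \defeq \PosEIdent \circ (\kappa|_{\PosApartment})^{-1} \colon \PosApartment' \to \E$ is again an admissible identification respecting the $\Weyl$-structure at infinity; hence $\Zonotope$ is a well-defined polytope in $\PosApartment'$ via $\PosEIdent'$, and $\kappa$ carries $\PosPoint + \Zonotope \subseteq \PosApartment$ isometrically onto $\kappa(\PosPoint) + \Zonotope \subseteq \PosApartment'$ (this uses that $\Zonotope$ is $\Weyl$-invariant, so the translate is independent of the choice of identification within its $\Weyl$-orbit). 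Second, because $\kappa$ preserves the opposition relation between the two halves of each twin apartment, it commutes with the ``opposite point'' maps: $\kappa(\opm{\PNApartments}(\NegPoint)) = \opm{\AltPNApartments}(\kappa(\NegPoint))$. Third, combining these two facts with the fact that $\kappa|_{\PosApartment}$ is an isometry, I conclude
\[
\Height(\kappa(\PosPoint),\kappa(\NegPoint)) = \EuclDistance\big(\kappa(\PosPoint)+\Zonotope,\ \opm{\AltPNApartments}(\kappa(\NegPoint))\big) = \EuclDistance\big(\kappa(\PosPoint)+\Zonotope,\ \kappa(\opm{\PNApartments}(\NegPoint))\big) = \EuclDistance\big(\PosPoint+\Zonotope,\ \opm{\PNApartments}(\NegPoint)\big) = \Height(\PosPoint,\NegPoint).
\]
Finally, since $\Height$ on all of $\TwoSpace$ is defined (via Lemma~\ref{lem:perturbed_codistance_well-defined}) to agree with its restriction to any twin apartment containing the two points, and $\kappa$ is between the whole thin twin buildings, this computation applies to arbitrary $(\PosPoint,\NegPoint) \in \PNApartments$, giving the claim. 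An entirely analogous computation (using $\EuclDistance(\PosPoint+\Zonotope,\APosPoint) = \EuclDistance(\PosPoint, \APosPoint+\Zonotope)$, which holds because $\Zonotope$ is centrally symmetric) handles the case where the two points lie in $\NegApartment$.

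\textbf{Main obstacle.} The only subtle point is the well-definedness step: one must check that forming $\PosPoint + \Zonotope$ using the transported identification $\PosEIdent'$ really does coincide with $\kappa$ applied to $\PosPoint + \Zonotope$ formed via $\PosEIdent$, i.e.\ that no ambiguity creeps in from the fact that admissible identifications are unique only up to the action of $\Weyl$ and translation. This is exactly what the $\Weyl$-invariance and central symmetry of $\Zonotope$ (imposed in Section~\ref{sec:height} on the generating set $\Directions$) are there to guarantee, so the verification is routine once phrased correctly; I would spell it out by noting that two admissible identifications of $\PosApartment$ with $\E$ differ by an element of $\Weyl$ composed with a translation, and $\Zonotope$ is invariant under $\Weyl$ while $\Point + \Zonotope$ transforms equivariantly under translation. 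Everything else is a direct unwinding of definitions.
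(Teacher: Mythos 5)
Your proof is correct and follows the same idea as the paper's: the paper's one-sentence argument is that the restrictions of $\Height$ to the two twin apartments are the intrinsically defined height functions of those thin twin buildings, hence must be interchanged by any isomorphism; you simply unwind that intrinsic definition explicitly (transporting the identification $\PosEIdent$, using $\Weyl$-invariance of $\Zonotope$, and the fact that $\kappa$ commutes with $\opm{}$), which is a legitimate and complete verification of that sentence. The only blemish is the closing remark about an ``analogous computation'' for ``the case where the two points lie in $\NegApartment$'': in this chapter a point of $\TwoSpace$ is always a pair $(\PosPoint,\NegPoint)\in\PosBuilding\times\NegBuilding$, so there is no separate case to handle, and that sentence can be deleted without loss.
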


\begin{proof}
The restriction of $\Height$ to $\PNApartments$ respectively $(\PosApartment',\NegApartment')$ are the intrinsically defined height functions of these thin twin buildings.
\end{proof}

Again we are particularly interested in retractions:

\begin{obs}
\label{obs:retraction_partially_preserves_height}
Let $\PosCell \subseteq \PosBuilding$ and $\NegCell \subseteq \NegBuilding$ be cells such that $\PosChamber \defeq \WeylProjection_{\PosCell} \NegCell$ and $\NegChamber \defeq \WeylProjection_{\NegCell} \PosCell$ are chambers. Let $\PNApartments$ be a twin apartment that contains $\PosCell$ and $\NegCell$ and therefore $\PosChamber$. Let $\rho \defeq \Retraction{\PNApartments}{\PosChamber}$ be the retraction onto $\PNApartments$ centered at $\PosChamber$. If $\PosPoint$ lies in the closed star of $\PosCell$ and $\NegPoint$ lies in the closed star of $\NegCell$ then
\[
\Height(\PosPoint,\NegPoint) = \Height(\rho(\PosPoint),\rho(\NegPoint)) \text{ .}
\]
\end{obs}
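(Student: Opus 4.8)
The plan is to reduce the statement to a single twin apartment that simultaneously contains $\PosPoint$, $\NegPoint$ and the projection chamber $\PosChamber$. On such a twin apartment $\rho$ acts as an isomorphism of thin twin buildings, and since these preserve height by Observation~\ref{obs:twin_iso_preserves_height}, the claim is immediate. The only hypothesis-dependent work is the construction of that auxiliary twin apartment.

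First I would build it. Since $\PosPoint$ lies in the closed star of $\PosCell$, there is a chamber $\AltChamber_+ \ge \PosCell$ with $\PosPoint \in \AltChamber_+$, and likewise a chamber $\AltChamber_- \ge \NegCell$ with $\NegPoint \in \AltChamber_-$. Applying~\eqref{item:twin_building_common_apartment} to interior points of $\AltChamber_+$ and of $\AltChamber_-$ yields a twin apartment $\AltPNApartments$ containing both of these points; being a subcomplex, $\AltPNApartments$ then contains the carriers $\AltChamber_+$, $\AltChamber_-$, and hence their faces $\PosCell$ and $\NegCell$. By Fact~\ref{fact:twin_building}~\eqref{item:twin_projection} (together with the definition of the projection of a cell, $\PosChamber = \WeylProjection_{\PosCell}\NegCell$ being a face of $\WeylProjection_{\PosCell}\AltChamber$ for any chamber $\AltChamber \ge \NegCell$ inside the apartment), every twin apartment that contains $\PosCell$ and $\NegCell$ contains $\PosChamber$ and $\NegChamber$; in particular $\AltPNApartments$ does, and by hypothesis so does $\PNApartments$.

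Now $\PNApartments$ and $\AltPNApartments$ are twin apartments sharing the chamber $\PosChamber$, so Observation~\ref{obs:restricted_retractions_are_isomorphisms} shows that $\rho|_{\AltPNApartments} = \Retraction{\PNApartments}{\PosChamber}|_{\AltPNApartments}$ is an isomorphism of thin twin buildings $\AltPNApartments \to \PNApartments$. As $\PosPoint \in \PosApartment'$ and $\NegPoint \in \NegApartment'$, applying Observation~\ref{obs:twin_iso_preserves_height} to this isomorphism gives $\Height(\PosPoint,\NegPoint) = \Height(\rho(\PosPoint),\rho(\NegPoint))$, which is the assertion. The one genuinely delicate point — and the reason the hypotheses that $\PosPoint,\NegPoint$ lie in the closed stars of $\PosCell,\NegCell$ and that $\PosChamber,\NegChamber$ are chambers cannot be dropped — is exactly the first step: an arbitrary twin apartment through $\PosPoint$ and $\NegPoint$ need not contain $\PosChamber$, so one must pass to chambers through these points that still retain $\PosCell$ and $\NegCell$ as faces in order to force $\PosChamber$ and $\NegChamber$ into the apartment. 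Everything after that is a formal assembly of previously established facts about retractions onto twin apartments.
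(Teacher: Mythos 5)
Your proof is correct and follows the paper's own argument: choose a twin apartment containing $\PosPoint$, $\PosCell$, $\NegPoint$, $\NegCell$ (via chambers in the closed stars), note it must contain $\PosChamber$ by the projection property, and conclude by applying Observation~\ref{obs:restricted_retractions_are_isomorphisms} and Observation~\ref{obs:twin_iso_preserves_height} to the restriction of $\rho$. The extra detail you supply on why the auxiliary apartment contains $\PosChamber$ is exactly what the paper leaves implicit.
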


\begin{proof}
Let $(\PosApartment',\NegApartment')$ be a twin apartment that contains $\PosPoint$ and $\PosCell$ as well as $\NegPoint$ and $\NegCell$. Then it also contains $\PosChamber$. Hence $\rho|_{(\PosApartment',\NegApartment')}$ is an isomorphism of thin twin buildings.
\end{proof}

\begin{rem}
\label{rem:symmetry}
There is an apparent asymmetry in the last observation between $\PosChamber$ and $\NegChamber$. To explain why the statement is in fact symmetric we consider a more general setting. Let $\PosCell \subseteq \PosBuilding$ and $\NegCell \subseteq \NegBuilding$ be arbitrary cells and let $\PosBigCell \defeq \WeylProjection_{\PosCell} \NegCell$ and $\NegBigCell \defeq \WeylProjection_{\NegCell} \PosCell$ be the projections of one onto the other (by ``arbitrary'' we mean that these are not required to be chambers).

The first thing to note is that if $\PosChamber$ contains $\PosBigCell$, then not only does $\NegChamber \defeq \WeylProjection_{\NegCell}\PosChamber$ contain $\NegBigCell$ (which is clear from the definition of $\NegBigCell$), but also $\PosChamber = \WeylProjection_{\PosCell}\NegChamber$.

Secondly, if $\PosChamber$ and $\NegChamber$ are as above projections of each other, then for every chamber $\AltChamber \ge \NegCell$ we have $\WeylCoDistance(\PosChamber,\AltChamber) = \WeylCoDistance(\PosChamber,\NegChamber) \NegWeylDistance(\NegChamber,\AltChamber)$ and the same is true with the roles of $\PosChamber$ and $\NegChamber$ exchanged (recall that $\PosWeylDistance$ and $\NegWeylDistance$ denote the Weyl-distance on $\PosBuilding$ and $\NegBuilding$, respectively). This shows that the retractions centered at $\PosChamber$ and centered at $\NegChamber$ coincide on $\Star \PosCell$ and $\Star \NegCell$.

So had we replaced $\rho$ by the retraction centered at $\NegChamber$ in the observation, then the statement would not only have remained true, but would have been the same statement.
\end{rem}

Now we incorporate Corollary~\ref{cor:weyl_preserves_height} to get the result we were aiming for:

\begin{prop}
\label{prop:retraction_preserves_height}
Assume that $\Directions$ is rich. Let $\PosCell \subseteq \PosBuilding$ and $\NegCell \subseteq \NegBuilding$ be cells. Let $\PosChamber \ge \WeylProjection_{\PosCell} \NegCell$ be a chamber and let $\PNApartments$ be a twin apartment that contains $\PosChamber$ and $\NegCell$. Let $\rho \defeq \Retraction{\PNApartments}{\PosChamber}$ be the retraction onto $\PNApartments$ centered at $\PosChamber$. Then
\[
\Height(\PosVertex,\NegVertex) = \Height(\rho(\PosVertex),\rho(\NegVertex))
\]
for every vertex $\PosVertex$ adjacent to $\PosCell$ and every vertex $\NegVertex$ adjacent to $\NegCell$.
\end{prop}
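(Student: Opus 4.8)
The strategy is to reduce the general situation to the special case already handled in Observation~\ref{obs:retraction_partially_preserves_height}, where $\WeylProjection_{\PosCell}\NegCell$ and $\WeylProjection_{\NegCell}\PosCell$ are chambers, by replacing $\PosCell$ and $\NegCell$ with the projection cells $\PosBigCell \defeq \WeylProjection_{\PosCell}\NegCell$ and $\NegBigCell \defeq \WeylProjection_{\NegCell}\PosCell$ and then using Corollary~\ref{cor:weyl_preserves_height} to bridge the gap between $\PosCell$ and $\PosBigCell$. Concretely: since $\PosChamber \ge \PosBigCell$, Remark~\ref{rem:symmetry} tells us that $\NegChamber \defeq \WeylProjection_{\NegCell}\PosChamber \ge \NegBigCell$ and that $\PosChamber = \WeylProjection_{\PosBigCell}\NegChamber$, $\NegChamber = \WeylProjection_{\NegBigCell}\PosChamber$; in particular $\PosChamber$ and $\NegChamber$ are projections of each other, so by Observation~\ref{obs:retraction_partially_preserves_height} (applied to the pair $\PosBigCell, \NegBigCell$) the retraction $\rho = \Retraction{\PNApartments}{\PosChamber}$ preserves height on $\Star \PosBigCell \times \Star \NegBigCell$.

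The main work is then to compare $\Height(\PosVertex,\NegVertex)$ with $\Height(\rho(\PosVertex),\rho(\NegVertex))$ for vertices $\PosVertex$ adjacent merely to $\PosCell$ (not necessarily to $\PosBigCell$) and $\NegVertex$ adjacent to $\NegCell$. First I would reduce to a single twin apartment: choose a twin apartment $(\PosApartment',\NegApartment')$ containing $\PosVertex$ and $\PosCell$ as well as $\NegVertex$ and $\NegCell$. This apartment contains $\PosBigCell = \WeylProjection_{\PosCell}\NegCell$ and $\NegBigCell = \WeylProjection_{\NegCell}\PosCell$ by Fact~\ref{fact:twin_building}~\eqref{item:twin_projection}, but it need not contain $\PosChamber$ or $\NegChamber$. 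Inside $(\PosApartment',\NegApartment')$, the vertex $\PosVertex$ is adjacent to $\PosCell \le \PosBigCell$, hence lies in $\Star_{\PosApartment'}\PosBigCell$, and similarly $\NegVertex \in \Star_{\NegApartment'}\NegBigCell$; so we are in the thin-twin-building situation governed by $\PosBigCell, \NegBigCell$. The idea is now that any type-preserving isomorphism of thin twin apartments that fixes $\PosBigCell \times \NegBigCell$ preserves height on $\Star \PosBigCell \times \Star \NegBigCell$ \emph{up to} an element of the stabilizer group $R_+ \times R_-$ of Corollary~\ref{cor:weyl_preserves_height} — and Corollary~\ref{cor:weyl_preserves_height} says precisely that such group elements preserve height on the relevant vertices.

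More precisely, I would argue as follows. Pick a twin apartment $(\PosApartment'',\NegApartment'')$ that contains $\PosChamber$ and $\NegChamber$ (it exists since $\PosChamber, \NegChamber$ are mutual projections, so lie in a common twin apartment). Let $\rho' \defeq \Retraction{(\PosApartment'',\NegApartment'')}{\PosChamber}$ be the retraction onto it centered at $\PosChamber$; by Observation~\ref{obs:retraction_partially_preserves_height} applied in $(\PosApartment'',\NegApartment'')$ (with $\PosBigCell, \NegBigCell$) together with the fact that $\rho'$ restricted to $(\PosApartment',\NegApartment')$ is an isomorphism of thin twin apartments fixing $\PosBigCell$ and $\NegBigCell$, the composite $\rho' \circ (\text{inclusion})$ takes $(\PosVertex,\NegVertex)$ to a pair of vertices whose height differs from $\Height(\PosVertex,\NegVertex)$ only by the action of some $(\weyl_+,\weyl_-) \in R_+ \times R_-$; Corollary~\ref{cor:weyl_preserves_height} then gives $\Height(\PosVertex,\NegVertex) = \Height(\rho'(\PosVertex),\rho'(\NegVertex))$. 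Finally the two retractions $\rho$ and $\rho'$ agree on $\Star\PosBigCell$ and $\Star\NegBigCell$ — they are both centered at $\PosChamber$ (equivalently $\NegChamber$, by Remark~\ref{rem:symmetry}) and an apartment containing $\PosChamber$, and their images are determined by Weyl-distances and codistances from $\PosChamber$, which are intrinsic — so $\Height(\rho(\PosVertex),\rho(\NegVertex)) = \Height(\rho'(\PosVertex),\rho'(\NegVertex)) = \Height(\PosVertex,\NegVertex)$, as desired.

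\textbf{Expected main obstacle.} The delicate point is bookkeeping the difference between ``adjacent to $\PosCell$'' and ``in the star of $\PosBigCell$'': the retraction $\rho$ centered at $\PosChamber$ fixes $\PosChamber$ hence $\PosBigCell$, and is an isomorphism on each twin apartment through $\PosChamber$, but on $(\PosApartment',\NegApartment')$ (which typically does \emph{not} contain $\PosChamber$) it is only a folding, and one must check that a folded image of $\PosVertex$ still lies in $\Star\PosBigCell$ so that Corollary~\ref{cor:weyl_preserves_height} applies — this is where richness of $\Directions$ (through Corollary~\ref{cor:weyl_preserves_height} / Lemma~\ref{lem:one_reflection_stable}) is genuinely used, since the group $R_+\times R_-$ of symmetries of $\Star_{\PosApartment}\PosCell \times \Star_{\NegApartment}\NegCell$ fixing $\PosBigCell \times \NegBigCell$ is exactly what absorbs the ambiguity in the folding. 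I would make sure the identification of $\rho$ and $\rho'$ on the two stars is stated cleanly (via the symmetry noted in Remark~\ref{rem:symmetry}) so that the reduction to Observation~\ref{obs:retraction_partially_preserves_height} and Corollary~\ref{cor:weyl_preserves_height} is transparent.
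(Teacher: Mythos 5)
Your proposal has a genuine gap in its final step, and it also bypasses the key structural trick that the paper's proof turns on.

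The last line of your argument — that $\rho = \Retraction{\PNApartments}{\PosChamber}$ and $\rho' = \Retraction{(\PosApartment'',\NegApartment'')}{\PosChamber}$ ``agree on $\Star\PosBigCell$ and $\Star\NegBigCell$'' because they have the same center and their images are ``determined by Weyl-distances from $\PosChamber$'' — is false. A retraction is determined by its Weyl-distance from the center \emph{together with} the target apartment; $\rho$ maps into $\PosApartment$, $\rho'$ maps into $\PosApartment''$, and these are different apartments in general. For instance, a vertex of $\PosApartment \setminus \PosApartment''$ in $\Star\PosBigCell$ is fixed by $\rho$ but moved by $\rho'$. So the identity $\Height(\rho(\PosVertex),\rho(\NegVertex)) = \Height(\rho'(\PosVertex),\rho'(\NegVertex))$ that your proof needs at the end is not available this way, and the argument collapses.

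The deeper problem is in the setup of $\rho'$. You center $\rho'$ at $\PosChamber$, which typically does not lie in the twin apartment $(\PosApartment',\NegApartment')$ containing $\PosVertex,\PosCell,\NegVertex,\NegCell$; hence the restriction of $\rho'$ (or $\rho$) to that apartment is only a folding, as you yourself note. Appealing to Corollary~\ref{cor:weyl_preserves_height} to ``absorb the ambiguity'' here is not a proof: it is exactly the thing that needs to be established, namely that the folded image differs from what you want by an element of $R_+ \times R_-$. The paper avoids this by choosing an intermediate retraction $\rho'$ centered at a chamber $\NegChamber$ that is \emph{contained in} the apartment through $\PosVertex,\NegVertex$ (it is a chamber of $\NegApartment''$ above $\WeylProjection_{\NegCell}\PosCell$), onto a twin apartment $(\PosApartment',\NegApartment')$ that also contains $\PosChamber$. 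That way $\rho'$ is a genuine thin-twin-building isomorphism on $(\PosApartment'',\NegApartment'')$, and $\rho$ is a genuine isomorphism on $(\PosApartment',\NegApartment')$, so the composite $\rho\circ\rho'$ preserves height with no ambiguity at all. What then remains is to produce, by an explicit comparison of Weyl-distances to the projection $\YetAltChamber = \WeylProjection_{\PosBigCell}\AltChamber$, the elements $\weyl_\pm \in R_\pm$ carrying $\rho\circ\rho'(\PosVertex,\NegVertex)$ to $\rho(\PosVertex,\NegVertex)$ inside $\PNApartments$; Corollary~\ref{cor:weyl_preserves_height} is invoked only at that point, where the element whose existence it requires has actually been exhibited. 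Your plan does not construct such an element and does not arrange the retractions so that their composite is well-behaved; those are the steps that carry the load.
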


\begin{proof}
Let $(\PosApartment'',\NegApartment'')$ be a twin apartment that contains $\PosVertex$ and $\PosCell$ as well as $\NegVertex$ and $\NegCell$. Let $\NegChamber \subseteq \NegApartment''$ be a chamber that contains $\WeylProjection_{\NegCell}\PosCell$. Let $(\PosApartment',\NegApartment')$ be a twin apartment that contains $\PosChamber$ and $\NegChamber$. Let $\rho' \defeq \Retraction{(\PosApartment',\NegApartment')}{\NegChamber}$. Applying Observation~\ref{obs:retraction_partially_preserves_height} first to $\rho'|_{(\PosApartment'',\NegApartment'')}$ and then to $\rho|_{(\PosApartment',\NegApartment')}$ we find that
\begin{equation}
\label{eq:double_retraction_preserves_height}
\Height(\PosVertex,\NegVertex) = \Height(\rho\circ\rho'(\PosVertex),\rho\circ\rho'(\NegVertex)) \text{ .}
\end{equation}
It remains to compare the heights of $(\rho\circ\rho'(\PosVertex),\rho\circ\rho'(\NegVertex))$ and $(\rho(\PosVertex),\rho(\NegVertex))$ in the twin apartment $\PNApartments$.

Let $\AltChamber$ be a chamber of $\PosApartment''$ that contains $\PosVertex$ and $\PosCell$ and let $\YetAltChamber \defeq \WeylProjection_{\PosBigCell} \AltChamber$. Let $\weyl_+$ be the element of the Coxeter group of $\PosApartment$ that takes $\rho\circ\rho'(\YetAltChamber)$ to $\rho(\YetAltChamber)$. Note that $\weyl_+$ fixes $\PosBigCell$. We claim that $\weyl_+$ takes $\rho\circ\rho'(\AltChamber)$ to $\rho(\AltChamber)$. More precisely we claim that
\[
\PosWeylDistance(\rho \circ \rho'(\AltChamber),\rho \circ \rho'(\YetAltChamber)) = \PosWeylDistance(\AltChamber,\YetAltChamber) = \PosWeylDistance(\rho(\AltChamber),\rho(\YetAltChamber)) \text{ .}
\]

The first equation follows from $\rho'|_{\PosApartment''}$ and $\rho|_{\PosApartment'}$ being isomorphisms. The second follows by an analogous argument for an apartment that contains $\AltChamber$ and $\PosChamber$.

This shows that $\weyl_+$ indeed takes $\rho\circ\rho'(\AltChamber)$ (the unique chamber in $\PosApartment$ that has distance $\PosWeylDistance(\rho \circ \rho'(\AltChamber),\rho \circ \rho'(\YetAltChamber))$ to $\rho \circ \rho'(\YetAltChamber)$) to $\rho(\AltChamber)$ (the unique chamber in $\PosApartment$ that has distance $\PosWeylDistance(\rho(\AltChamber),\rho(\YetAltChamber))$ to $\rho(\YetAltChamber)$). In particular, $\weyl_+$ takes $\rho \circ \rho'(\PosVertex)$ to $\rho(\PosVertex)$.

Arguing in the same way produces an element $\weyl_-$ that takes $\rho\circ\rho'(\NegVertex)$ to $\rho(\NegVertex)$.

Applying Corollary~\ref{cor:weyl_preserves_height} we get
\[
\Height(\rho\circ\rho'(\PosVertex),\rho\circ\rho'(\NegVertex)) = \Height(w_+\rho\circ\rho'(\PosVertex),w_-\rho\circ\rho'(\NegVertex)) = \Height(\rho(\PosVertex),\rho(\NegVertex))
\]
which together with \eqref{eq:double_retraction_preserves_height} proves the claim.
\end{proof}

\footerlevel{3}
\headerlevel{3}

\section{Descending Links}
\label{sec:two_places_descending_links}

With the tools from the last section the analysis of the descending links runs fairly parallel to that in Section~\ref{sec:descending_links}. In fact many proofs carry over in verbatim. We still reproduce them because we have to check that they apply even though $\TwoSpace$ is not simplicial.

Recall from \eqref{eq:two_places_barycenter_descending_link_decomposition} that the descending link of a vertex $\Subdiv\Cell$ decomposes as a join $\Link\Descending \Subdiv\Cell = \Link\FacePart\Descending \Subdiv\Cell * \Link\CofacePart\Descending \Subdiv\Cell$ of the descending face part and the descending coface part.

Recall also that $\Cell$ is flat if $\Height|_{\Cell}$ is constant. If $\Cell$ is flat, then it has a face $\Cell\Min$. The roof $\Roof\BigCell$ of any cell $\BigCell$ is flat. We say that $\BigCell$ is \emph{significant} if $\BigCell = \Roof\BigCell\Min$ and that it is \emph{insignificant} otherwise.

\begin{lem}
\label{lem:two_places_insignificant_descending_link}
If $\BigCell$ is insignificant, then the descending link of $\Subdiv\BigCell$ is contractible. More precisely $\Link\FacePart\Descending \Subdiv\BigCell$ is already contractible.
\end{lem}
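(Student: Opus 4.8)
The statement is the exact two-places analogue of Lemma~\ref{lem:insignificant_descending_link}, so the plan is to reproduce that proof verbatim, checking only that nothing used there requires $\TwoSpace$ to be simplicial. Recall $\BigCell$ insignificant means $\BigCell \ne \Roof\BigCell\Min$, i.e.\ either $\BigCell$ is not flat, or $\BigCell$ is flat but $\BigCell \ne \BigCell\Min$. The key point is that in either case the vertex $(\Roof\BigCell\Min)^\circ$ of $\Link\FacePart\Subdiv\BigCell$ is \emph{ascending}, and geodesic (more precisely, combinatorial/poset) projection away from it gives a deformation retraction of $\Link\FacePart\Descending\Subdiv\BigCell$ onto a punctured sphere, hence onto a contractible complex.

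First I would introduce the full subcomplex $\Lambda$ of $\Link\FacePart\Subdiv\BigCell$ spanned by the barycenters $\Subdiv\Cell$ with $\Roof\BigCell\Min \not\le \Cell \lneq \BigCell$; this is the barycentric subdivision of $\partial\BigCell$ with the open star of $\Roof\BigCell\Min$ removed, hence a punctured sphere and in particular contractible. (Here $\partial\BigCell$ makes sense and is a sphere even though $\TwoSpace$ is not simplicial, since $\BigCell$ is a polytope — a product of simplices — whose boundary is a sphere; and $\Link\FacePart\Subdiv\BigCell$ is by definition the barycentric subdivision of $\partial\BigCell$, cf.\ the decomposition preceding \eqref{eq:two_places_barycenter_descending_link_decomposition}.) Next I would show $\Lambda \subseteq \Link\FacePart\Descending\Subdiv\BigCell$: for $\Subdiv\Cell$ a vertex of $\Lambda$ one has $\max\Height|_\Cell \le \max\Height|_\BigCell = \max\Height|_{\Roof\BigCell\Min}$ by convexity (Observation~\ref{obs:two_places_height_is_convex}), so $\Height$ does not make $\Subdiv\Cell$ ascending; and since $\Roof\BigCell\Min \not\le \Cell$ implies $\Roof\BigCell\Min \not\le \Roof\Cell$, there is a move $\Roof\BigCell \Down \Roof\Cell$, whence $\Depth\Cell \le \Depth\Roof\Cell < \Depth\Roof\BigCell - 1/2 \le \Depth\BigCell$, so $\Subdiv\Cell$ is descending. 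Then I would check that $(\Roof\BigCell\Min)^\circ$ is \emph{not} descending: height is indifferent because $\max\Height|_{\Roof\BigCell\Min} = \max\Height|_{\Roof\BigCell} = \max\Height|_\BigCell$, while for depth one uses $\Depth\BigCell \le \Depth\Roof\BigCell \le \Depth\Roof\BigCell\Min$, where the first inequality is strict if $\BigCell$ is not flat, and the second is strict if $\BigCell$ is flat (since then there is a move $\Roof\BigCell\Min \Up \Roof\BigCell = \BigCell$). In both cases $(\Roof\BigCell\Min)^\circ$ is ascending. Finally, the geodesic projection of $\partial\BigCell$ away from the (relative) interior point $\Subdiv{(\Roof\BigCell\Min)}$ onto the union of faces of $\BigCell$ not containing $\Roof\BigCell\Min$ induces, on barycentric subdivisions, a deformation retraction of $\Link\FacePart\Subdiv\BigCell$ minus the open star of $(\Roof\BigCell\Min)^\circ$ onto $\Lambda$; since this retraction only ever moves a simplex away from $(\Roof\BigCell\Min)^\circ$ and all simplices it passes through are descending (they lie in $\Link\FacePart\Descending\Subdiv\BigCell$ by the inclusion $\Lambda \subseteq \Link\FacePart\Descending\Subdiv\BigCell$ applied to faces, together with the fact that being descending is inherited downward among faces here because $\Roof\BigCell\Min$ stays un-dominated), it restricts to a deformation retraction of $\Link\FacePart\Descending\Subdiv\BigCell$ onto $\Lambda$, which is contractible.

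The only thing to verify beyond the previous chapter is that all of this is purely combinatorial/poset-theoretic — the moves, the relation $\horizontal$, and $(\,\cdot\,)\Min$ are defined for products of buildings in Section~\ref{sec:moves}, and $\Infty\Gradient_\Cell\Height$ is in general position by Observation~\ref{obs:two_places_gradient_in_general_position}, which is what licenses Lemma~\ref{lem:tau_min} — so the argument never touches simpliciality of $\TwoSpace$. \textbf{The main obstacle} I anticipate is making the deformation retraction onto $\Lambda$ clean: one must be slightly careful that the combinatorial projection away from $(\Roof\BigCell\Min)^\circ$ is well-defined on the \emph{descending} subcomplex and that its image lands in $\Lambda$ (not merely in $\Link\FacePart\Subdiv\BigCell$), but this follows because a simplex of $\Link\FacePart\Descending\Subdiv\BigCell$ containing $(\Roof\BigCell\Min)^\circ$ is a flag through cells all containing $\Roof\BigCell\Min$, and dropping $(\Roof\BigCell\Min)^\circ$ from such a flag yields a flag of cells still each properly below $\BigCell$ but now possibly still above $\Roof\BigCell\Min$ — so one instead retracts by deleting the \emph{smallest} cell in the flag that lies above $\Roof\BigCell\Min$, exactly as in the proof of Lemma~\ref{lem:insignificant_descending_link}. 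Since that proof is written out in full earlier, the cleanest presentation is simply: \emph{The proof is identical to that of Lemma~\ref{lem:insignificant_descending_link}, noting that all notions involved (moves, $\Cell\Min$, depth, significance) are combinatorial and apply to the product building $\TwoSpace$ by Section~\ref{sec:moves}, using Observation~\ref{obs:two_places_gradient_in_general_position} in place of irreducibility.}
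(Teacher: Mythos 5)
Your proof is correct and takes essentially the same route as the paper, which (as the paper itself notes at the start of Section~\ref{sec:two_places_descending_links}) reproduces the proof of Lemma~\ref{lem:insignificant_descending_link} verbatim after observing that the relevant notions from Section~\ref{sec:moves} apply to the product $\TwoSpace$ once Observation~\ref{obs:two_places_gradient_in_general_position} replaces irreducibility. Your extra care about the deformation retraction is reasonable but not an issue the paper treats differently.
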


\begin{proof}
Consider the full subcomplex $\Lambda$ of $\Link\FacePart \Subdiv\BigCell$ of vertices $\Subdiv\Cell$ with $\Roof{\BigCell}\Min \not\le \Cell \lneq \BigCell$: this is the barycentric subdivision of $\partial \BigCell$ with the open star of $\Roof{\BigCell}\Min$ removed. Therefore it is a punctured sphere and, in particular, contractible. We claim that $\Link\FacePart\Descending \Subdiv\BigCell$ deformation retracts on $\Lambda$.

So let $\Subdiv\Cell$ be a vertex of $\Lambda$. Then
\[
\max \Height|_{\Cell} \le \max \Height|_{\BigCell} = \max \Height|_{\Roof\BigCell\Min}
\]
so $\Height$ either makes $\Subdiv\Cell$ descending or is indifferent. As for depth, the fact that $\Roof\BigCell\Min \not\le \Cell$ implies of course that $\Roof\BigCell\Min \not\le \Roof\Cell$. So there is a move $\Roof\BigCell \Down \Roof\Cell$ which implies $\Depth \Roof\Cell < \Depth\Roof\BigCell - 1/2$. Therefore
\[
\Depth \Cell \le \Depth \Roof\Cell < \Depth\Roof\BigCell - 1/2 \le \Depth\BigCell
\]
so $\Subdiv\Cell$ is descending. This shows that $\Lambda \subseteq \Link\FacePart\Descending \Subdiv\BigCell$.

On the other hand $(\Roof\BigCell\Min)^\circ$ is not descending: Height does not decide because $\max \Height|_{\Roof\BigCell\Min} = \max \Height|_{\Roof\BigCell} = \max\Height|_{\BigCell}$. As for depth we have
\[
\Depth \BigCell \le \Depth \Roof\BigCell \le \Depth \Roof\BigCell\Min \text{ .}
\]
If $\BigCell$ is not flat, then the first inequality is strict. If $\BigCell$ is flat, then there is a move $\Roof\BigCell\Min \Up \Roof\BigCell = \BigCell$ so the second inequality is strict. In either case $(\Roof\BigCell\Min)^\circ$ is ascending.

So geodesic projection away from $(\Roof\BigCell\Min)^\circ$ defines a deformation retraction of $\Link\FacePart\Descending \Subdiv\BigCell$ onto $\Lambda$.
\end{proof}

\begin{lem}
\label{lem:two_places_significant_face_part_sphere}
If $\BigCell$ is significant, then all of $\Link\FacePart \Subdiv\BigCell$ is descending. So $\Link\Descending\FacePart \Subdiv\BigCell$ is a $(\dim \BigCell - 1)$-sphere.
\end{lem}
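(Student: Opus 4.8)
The statement is the exact analogue of Lemma~\ref{lem:significant_face_part_sphere} from Chapter~\ref{chap:one_place}, so the plan is to reproduce that proof verbatim, checking only that nothing used there relied on $\OneSpace$ being simplicial (the cautionary remark at the start of Section~\ref{sec:two_places_descending_links} is precisely about this). First I would recall the decomposition \eqref{eq:two_places_barycenter_descending_link_decomposition}: for a vertex $\Subdiv\BigCell$ of $\Subdiv\TwoSpace$ the face part $\Link\FacePart \Subdiv\BigCell$ is the barycentric subdivision of $\partial\BigCell$, so its vertices are barycenters $\Subdiv\Cell$ of proper faces $\Cell \lneq \BigCell$. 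Showing that \emph{all} of $\Link\FacePart \Subdiv\BigCell$ is descending thus amounts to showing $\Morse(\Subdiv\Cell) < \Morse(\Subdiv\BigCell)$ for every proper face $\Cell \lneq \BigCell$.

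Fix such a $\Cell$. Since $\BigCell$ is significant it is in particular flat, so $\Height$ is constant on $\BigCell$ and hence $\max\Height|_{\Cell} = \max\Height|_{\BigCell}$; thus the first (primary height) coordinate of $\Morse$ is indifferent. For the second (depth) coordinate: significant means $\BigCell = \Roof\BigCell\Min = \BigCell\Min$ (the roof of a flat cell is the cell itself), so $\BigCell\Min \not\le \Cell$ because $\Cell$ is a proper face. By the definition of a downward move (Section~\ref{sec:moves}), $\Cell \lneq \BigCell$ together with $\BigCell \not\horizontal \Cell$ — which holds since $\BigCell\Min \not\le \Cell$, using Lemma~\ref{lem:tau_min} — gives a move $\BigCell \Down \Cell$, and hence $\Depth\BigCell > \Depth\Cell$. (If $\Cell$ happens not to be flat, then $\Depth\Cell = \Depth\Roof\Cell - 1/2$ and one compares via $\Roof\Cell$ as in the insignificant case; but in fact for a proper face of a flat cell $\Cell$ is itself flat, so this subtlety does not arise.) Therefore $\Morse(\Subdiv\Cell) < \Morse(\Subdiv\BigCell)$ already at the depth coordinate, so $\Subdiv\Cell$ is descending. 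Consequently $\Link\Descending\FacePart \Subdiv\BigCell = \Link\FacePart \Subdiv\BigCell$, which is the barycentric subdivision of $\partial\BigCell$, i.e.\ a triangulated $(\dim\BigCell - 1)$-sphere.

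The only point requiring care — and the one reason this is not literally the Chapter~\ref{chap:one_place} proof word for word — is that $\TwoSpace$ is not a simplicial complex, so $\BigCell$ is a product of simplices rather than a simplex; I would note that its boundary $\partial\BigCell$ is still a PL-sphere of dimension $\dim\BigCell - 1$ (the boundary of a product of simplices is a sphere) and that its barycentric subdivision $\Link\FacePart \Subdiv\BigCell$ is therefore a simplicial sphere of that dimension, so the conclusion of the lemma holds as stated. This is routine and I expect no genuine obstacle; the substantive content was already packaged into Lemma~\ref{lem:tau_min} and the move machinery of Section~\ref{sec:moves}, both of which were proved there in the generality of products of irreducible Euclidean buildings precisely so that they could be invoked here.
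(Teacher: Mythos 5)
Your proposal is correct and follows essentially the same route as the paper's proof: flatness gives equality of $\max\Height$, significance (i.e.\ $\BigCell = \BigCell\Min$) gives $\BigCell\Min \not\le \Cell$ for every proper face, hence a move $\BigCell \Down \Cell$ and $\Depth\BigCell > \Depth\Cell$. Your closing remark that $\partial\BigCell$ is still a PL-sphere for $\BigCell$ a product of simplices is exactly the point the paper leaves implicit when it reuses the Chapter~\ref{chap:one_place} argument here, and is the reason the section warns that the verbatim-looking proofs must be rechecked in the non-simplicial setting.
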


\begin{proof}
Let $\Cell \lneq \BigCell$ be arbitrary. We have $\max \Height|_{\Cell} = \max\Height|_{\BigCell}$ because $\BigCell$ is flat. Moreover, $\Cell \lneq \BigCell =\BigCell\Min$ so that, in particular, $\BigCell\Min \not\le \Cell$. Hence there is a move $\BigCell \Down \Cell$ which implies $\Depth \BigCell > \Depth \Cell$ so that $\Subdiv\Cell$ is descending.
\end{proof}

We recall Observation~\ref{obs:min_min} and Observation~\ref{obs:significant_either_or}:

\begin{reminder}
\label{reminder:two_places_move_reminder}
\begin{enumerate}
\item If $\BigCell$ is flat and $\BigCell\Min \le \Cell \le \BigCell$, then $\Cell\Min = \BigCell\Min$.\label{item:two_places_height_min_min}
\item If $\Cell$ is significant and $\BigCell \ge \Cell$ is flat, then there is either a move $\Cell \Up \BigCell$ or a move $\BigCell \Down \Cell$.\label{item:two_places_height_either_or}
\end{enumerate}
\end{reminder}

\begin{prop}
\label{prop:two_places_descending_coface_link_is_descending_link}
Let $\Cell$ be significant. The descending coface part $\Link\Descending\CofacePart \Subdiv\Cell$ is a subcomplex of $\Link \Cell$. That is, for cofaces $\BigCell \gneq \Another\Cell \gneq \Cell$, if $\Subdiv\BigCell$ is descending then $\Subdiv{\Cell}'$ is descending.
\end{prop}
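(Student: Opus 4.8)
The plan is to transcribe, essentially verbatim, the proof of Proposition~\ref{prop:descending_coface_link_is_descending_link} from Section~\ref{sec:descending_links}, checking as I go that every step uses only facts that survive in the non-simplicial complex $\TwoSpace$. Concretely, I would fix cofaces $\BigCell \gneq \Another\Cell \gneq \Cell$ with $\Subdiv\BigCell$ descending, that is, with $\Morse(\Subdiv\BigCell) < \Morse(\Subdiv\Cell)$, and show that $\Morse(\Subdiv{\Cell'}) < \Morse(\Subdiv\Cell)$.

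First I would compare heights. By inclusion of cells $\max \Height|_{\BigCell} \ge \max \Height|_{\Cell'} \ge \max \Height|_{\Cell}$, while $\Subdiv\BigCell$ being descending and $\Cell$ being significant --- hence flat, so that $\max \Height|_{\Cell} = \Height(\Cell)$ --- forces $\max \Height|_{\BigCell} \le \max \Height|_{\Cell}$; so all three heights coincide and only the depths remain to be compared. Since $\dim \BigCell > \dim \Cell$, the hypothesis $\Morse(\Subdiv\BigCell) < \Morse(\Subdiv\Cell)$ gives $\Depth \BigCell < \Depth \Cell$, and the goal is $\Depth \Cell' < \Depth \Cell$. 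The equality of heights produces a chain of flat cells $\Roof{\BigCell} \ge \Roof{\Cell'} \ge \Cell$: each roof is a face by Proposition~\ref{prop:two_places_almost_rich_min_vertex}, and $\Cell$, being flat with $\Height(\Cell) = \max \Height|_{\Cell'} = \max \Height|_{\BigCell}$, lies inside $\Roof{\Cell'}$, which in turn lies inside $\Roof{\BigCell}$. If $\Roof{\Cell'} = \Cell$, then $\Cell'$ is not flat and $\Depth \Cell' = \Depth \Roof{\Cell'} - 1/2 = \Depth \Cell - 1/2 < \Depth \Cell$. Otherwise $\Roof{\BigCell}$ is a proper flat coface of $\Cell$, so by Reminder~\ref{reminder:two_places_move_reminder}~\eqref{item:two_places_height_either_or} there is a move $\Cell \Up \Roof{\BigCell}$ or a move $\Roof{\BigCell} \Down \Cell$; the latter would give $\Depth \BigCell \ge \Depth \Roof{\BigCell} - 1/2 > \Depth \Cell$, contradicting $\Depth \BigCell < \Depth \Cell$. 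Hence $\Cell = \Roof{\BigCell}\Min$, and Reminder~\ref{reminder:two_places_move_reminder}~\eqref{item:two_places_height_min_min} applied along $\Cell \le \Roof{\Cell'} \le \Roof{\BigCell}$ gives $\Roof{\Cell'}\Min = \Cell$, so that there is a move $\Cell \Up \Roof{\Cell'}$ and $\Depth \Cell' \le \Depth \Roof{\Cell'} < \Depth \Cell$. In every case $\Subdiv{\Cell'}$ is descending, which is what is claimed.

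The only point requiring a word of care --- the ``main obstacle'', such as it is --- is the applicability of the move machinery of Section~\ref{sec:moves} in the present setting: this needs $\Infty\Gradient_\Cell \Height$ to be in general position, which is exactly Observation~\ref{obs:two_places_gradient_in_general_position}, and it needs the flat cells $\Cell \le \Roof{\Cell'} \le \Roof{\BigCell}$ to share one and the same notion of move, which holds because they are all flat with the same asymptotic gradient. Since the entire argument manipulates only the poset of cells together with their heights, roofs, depths, and the relations $\Up$ and $\Down$, the fact that $\TwoSpace$ is not simplicial never intervenes, and the transcription goes through unchanged.
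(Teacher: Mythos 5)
Your proof is correct and is essentially the paper's own argument, which as you correctly anticipated is a verbatim transcription of the one-place version (Proposition~\ref{prop:descending_coface_link_is_descending_link}); the only phrasing difference is in the case $\Roof{\Cell'} = \Cell$, where you compute $\Depth\Cell' = \Depth\Roof{\Cell'} - 1/2$ explicitly while the paper just notes $\Cell' \ne \Roof{\Cell'}$ forces the strict inequality, and both amount to the same thing.
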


\begin{proof}
Let $\BigCell \gneq \Another\Cell \gneq \Cell$ and assume that $\Morse(\Subdiv\BigCell) < \Morse(\Subdiv\Cell)$. By inclusion of cells we have
\[
\max \Height|_{\BigCell} \ge \max \Height|_{\Cell'} \ge \max \Height|_{\Cell}
\]
and since $\Subdiv\BigCell$ is descending $\max \Height|_\BigCell \le \max \Height|_\Cell$ so equality holds. Clearly $\dim \BigCell > \dim \Cell$ so since $\Subdiv\BigCell$ is descending we conclude $\Depth\BigCell < \Depth\Cell$.
We have inclusions of flat cells
\[
\Roof\BigCell \ge \Roof\Cell' \ge \Cell \text{ .}
\]
If the second inclusion is equality, then $\Another\Cell \ne \Cell = \Roof\Cell'$ so $\Depth \Another\Cell < \Depth \Roof\Cell' = \Depth \Cell$ and $\Subdiv\Cell'$ is descending. Otherwise $\Roof\BigCell$ is a proper coface of $\Cell$ so by Reminder~\ref{reminder:two_places_move_reminder}~\eqref{item:two_places_height_either_or} there is a move $\Cell \Up \Roof\BigCell$ or a move $\Roof\BigCell \Down \Cell$. In the latter case we would have $\Depth \BigCell \ge \Depth \Roof\BigCell - 1/2 > \Depth \Cell$ contradicting the assumption that $\Subdiv\BigCell$ is descending. Hence the move is $\Cell \Up \Roof\BigCell$, that is, $\Cell = \Roof\BigCell\Min$. It then follows from Reminder~\ref{reminder:two_places_move_reminder}~\eqref{item:two_places_height_min_min} that also $\Roof\Cell'{}\Min = \Cell$ so that there is a move $\Cell \Up \Roof\Cell'$. Thus $\Depth \Cell' \le \Depth \Roof\Cell' < \Depth \Roof\Cell$.
\end{proof}

Let $\Cell \subseteq \TwoSpace$ be a significant cell. We define the \emph{descending link} $\Link\Descending \Cell$ of $\Cell$ to be the subcomplex of cells $\BigCell \direction \Cell$ with $\Morse(\BigCell) < \Morse(\Cell)$. As a set, this is by Proposition~\ref{prop:two_places_descending_coface_link_is_descending_link} the same as $\Link\CofacePart\Descending \Subdiv\Cell$. We define the \emph{horizontal descending link} $\Link\Hor{}\Descending \Cell \defeq \Link\Hor \Cell \intersect \Link\Descending \Cell$ and the \emph{vertical descending link} $\Link\Ver{}\Descending \Cell \defeq \Link\Ver \Cell \intersect \Link\Descending \Cell$ in the obvious way. We see immediately that $\Link\Descending \Cell \subseteq \Link\Hor{}\Descending \Cell * \Link\Ver{}\Descending \Cell$ and will show the converse later.

\begin{lem}
\label{lem:two_places_significant_vertical_open_hemisphere}
If $\Cell$ is significant, then $\Link\Ver{}\Descending \Cell$ is an open hemisphere complex with north pole $\Gradient_\Cell \Height$.
\end{lem}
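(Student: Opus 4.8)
The plan is to follow the proof of the corresponding statement in Chapter~\ref{chap:one_place}, namely Lemma~\ref{lem:significant_vertical_open_hemisphere}, essentially verbatim, checking at each step that the argument does not use simpliciality of the ambient space (since $\TwoSpace$ is only a product of simplicial buildings, not itself simplicial) and that the ingredients have been re-established in the two-places setting. Write $\Link\OpenHemi\Cell$ for the open hemisphere complex of $\Link\Cell$ with north pole $\Gradient_\Cell\Height$; the goal is to show $\Link\Ver{}\Descending\Cell = \Link\OpenHemi\Cell$.

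First I would prove the inclusion $\Link\OpenHemi\Cell \subseteq \Link\Ver{}\Descending\Cell$. If $\BigCell \ge \Cell$ is such that $\BigCell \direction \Cell \subseteq \Link\OpenHemi\Cell$, then $\BigCell\direction\Cell$ includes an obtuse angle with $\Gradient_\Cell\Height$ at every interior point of $\Cell$, so by Corollary~\ref{cor:two_places_higher_dimensional_angle_criterion} $\Cell$ is the set of $\Height$-maxima of $\BigCell$, i.e.\ $\Roof\BigCell = \Cell$. Hence $\max\Height|_\BigCell = \Height(\Cell)$ and $\Depth\BigCell = \Depth\Cell - 1/2 < \Depth\Cell$, so $\Subdiv\BigCell$ is descending; since $\BigCell\direction\Cell$ is in particular vertical, it lies in $\Link\Ver{}\Descending\Cell$.

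For the reverse inclusion I would argue by contraposition: suppose $\BigCell \ge \Cell$ is such that $\BigCell\direction\Cell$ contains a vertex forming a non-obtuse angle with $\Gradient_\Cell\Height$, and show $\BigCell$ is not descending. Either $\Roof\BigCell = \Cell$ is false, in which case $\Roof\BigCell$ is a proper flat coface of $\Cell$ that, by the angle criterion (Corollary~\ref{cor:two_places_angle_criterion}), does not lie in the horizontal link of $\Cell$ (a horizontal vertex would force a right angle), so there is a move $\Roof\BigCell \Down \Cell$ and $\Depth\BigCell \ge \Depth\Roof\BigCell - 1/2 > \Depth\Cell$; or $\Roof\BigCell = \Cell$ fails because $\max\Height|_\BigCell > \Height(\Cell)$ outright. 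In both cases $\Morse(\Subdiv\BigCell) \not< \Morse(\Subdiv\Cell)$.

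The only place that needs genuine care—and thus the main (mild) obstacle—is confirming that the roof machinery and the angle criterion are available here with the same strength: this requires that $\Directions$ be almost rich, which is in force from Section~\ref{sec:two_places_morse_function} onward, and it uses Observation~\ref{obs:two_places_gradient_in_general_position} to know $\Infty\Gradient_\Cell\Height$ is in general position so that $\Cell\Min$ and the notion of moves make sense via Lemma~\ref{lem:tau_min} and Proposition~\ref{prop:bound_on_moves}. Since $\TwoSpace = \PosBuilding\times\NegBuilding$ is a product of flag complexes, Observation~\ref{obs:polyflag} guarantees the relevant joins of cells exist, so no simpliciality is lost. Beyond that the argument is formally identical to the one-place case and involves no new computation.
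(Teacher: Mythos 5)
Your route is exactly the paper's: a verbatim transfer of the one-place argument, with the first inclusion via Corollary~\ref{cor:two_places_higher_dimensional_angle_criterion} and the reverse inclusion via the roof/depth dichotomy, and the prerequisites you check (almost richness, Observation~\ref{obs:two_places_gradient_in_general_position} so that $\Cell\Min$, moves and $\Depth$ are available, Observation~\ref{obs:polyflag} for joins) are the right ones. The inclusion $\Link\OpenHemi\Cell \subseteq \Link\Ver{}\Descending \Cell$ is fine as you spell it out.

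The one step whose justification does not work is the claim that $\Roof\BigCell$ does not lie in the horizontal link of $\Cell$. Your parenthetical ``a horizontal vertex would force a right angle'' produces no contradiction: a right angle \emph{is} non-obtuse, so a coface $\BigCell$ with $\BigCell \direction \Cell \subseteq \Link\Hor\Cell$ satisfies the hypothesis of your contrapositive (all its vertices include an angle of exactly $\pi/2$ with $\Gradient_\Cell\Height$), and if such a $\BigCell$ is flat it \emph{is} descending by Observation~\ref{obs:two_places_descending_iff_flat}; so the contrapositive as you state it, with the verticality of $\BigCell \direction \Cell$ dropped, is false whenever $\Link\Hor\Cell \ne \emptyset$, and Corollary~\ref{cor:two_places_angle_criterion} cannot repair that. (A smaller wobble: $\Roof\BigCell \ne \Cell$ alone does not make $\Roof\BigCell$ a coface of $\Cell$; the dichotomy should be on whether $\max\Height|_\BigCell$ exceeds or equals $\Height(\Cell)$, the latter forcing $\Cell \le \Roof\BigCell$, and $\Roof\BigCell \ne \Cell$ then follows from Corollary~\ref{cor:two_places_higher_dimensional_angle_criterion}.) What actually saves the argument -- and is what the paper's wording tacitly uses -- is that for the inclusion $\Link\Ver{}\Descending \Cell \subseteq \Link\OpenHemi\Cell$ you only need to treat $\BigCell$ with $\BigCell \direction \Cell \subseteq \Link\Ver\Cell$; then, when $\Roof\BigCell$ is a proper flat coface of $\Cell$, the cell $\Roof\BigCell \direction \Cell$ is a nonempty face of $\BigCell \direction \Cell$, hence lies in $\Link\Ver\Cell$ and not in $\Link\Hor\Cell$, so $\Roof\BigCell \horizontal \Cell$ fails and the move $\Roof\BigCell \Down \Cell$ exists, giving $\Depth\BigCell \ge \Depth\Roof\BigCell - 1/2 > \Depth\Cell$. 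With the verticality hypothesis reinstated and that justification substituted, your proof coincides with the paper's.
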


\begin{proof}
Let $\Link\OpenHemi \Cell$ denote the open hemisphere complex with north pole $\Gradient_\Cell \Height$. By Corollary~\ref{cor:two_places_higher_dimensional_angle_criterion} $\Link\OpenHemi \Cell \subseteq \Link\Ver{}\Descending \Cell$.

Conversely assume that $\BigCell \ge \Cell$ is such that $\BigCell \direction \Cell$ contains a vertex that includes a non-obtuse angle with $\Gradient_\Cell\Height$. Then either
\[
\max \Height|_{\BigCell} = \max \Height|_{\Roof\BigCell} > \max \Height|_{\Cell} 
\]
or $\Roof\BigCell$ is a proper flat coface of $\Cell$. In the latter case since $\Roof\BigCell$ does not lie in the horizontal link of $\Cell$ there is a move $\Roof\BigCell \Down \Cell$ so that
\[
\Depth \BigCell \ge \Depth \Roof\BigCell - \frac{1}{2} > \Depth \Cell \text{ .}
\]
In both cases $\BigCell$ is not descending.
\end{proof}

\begin{obs}
\label{obs:two_places_descending_iff_flat}
If $\Cell$ is significant and $\BigCell \ge \Cell$ is such that $\BigCell \direction \Cell \subseteq \Link\Hor \Cell$, then these are equivalent:
\begin{enumerate}
\item $\BigCell$ is flat.
\item $\BigCell$ is descending.
\item $\Height|_{\BigCell} \le \Height(\Cell)$.
\end{enumerate}
\end{obs}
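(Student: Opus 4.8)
The statement to prove is Observation~\ref{obs:two_places_descending_iff_flat}: for a significant cell $\Cell$ and a coface $\BigCell \ge \Cell$ with $\BigCell \direction \Cell \subseteq \Link\Hor \Cell$, being flat, being descending, and having $\Height|_\BigCell \le \Height(\Cell)$ are equivalent. This is the exact two-places analogue of Observation~\ref{obs:descending_iff_flat} in Chapter~\ref{chap:one_place}, so the plan is to reproduce that argument, substituting the two-places versions of each ingredient and checking that non-simpliciality of $\TwoSpace$ causes no harm (it does not, since everything takes place in the simplicial complex $\Subdiv\TwoSpace$ and its links). I would prove the cycle of implications by showing (1)$\Rightarrow$(2)$\wedge$(3) and then $\neg$(1)$\Rightarrow\neg$(2)$\wedge\neg$(3), which suffices since (1) is a dichotomy (a cell is flat or it is not).

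First I would treat the case where $\BigCell$ is flat. Then trivially $\max\Height|_\BigCell = \Height(\Cell)$, giving (3) with equality. For (2): since $\BigCell \direction \Cell \subseteq \Link\Hor\Cell$ we have $\BigCell \horizontal_{\Infty\Gradient_\Cell\Height} \Cell$ in the sense of Section~\ref{sec:moves}, hence by Reminder~\ref{reminder:two_places_move_reminder}~\eqref{item:two_places_height_min_min} (Observation~\ref{obs:min_min}) $\BigCell\Min = \Cell\Min = \Cell$, where the last equality is significance of $\Cell$. Thus there is a move $\Cell = \BigCell\Min \Up \BigCell$ (it is a genuine move provided $\Cell \ne \BigCell$; if $\Cell = \BigCell$ there is nothing to prove), so $\Depth\Cell > \Depth\BigCell$ by the observation preceding Section~\ref{sec:two_places_descending_links} relating moves to depth, and combined with $\max\Height|_\BigCell = \Height(\Cell) = \max\Height|_\Cell$ this makes $\Morse(\Subdiv\BigCell) < \Morse(\Subdiv\Cell)$, i.e. $\BigCell$ descending.

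Next the case $\BigCell$ not flat. Then $\Roof\BigCell$ is a proper face of $\BigCell$, and $\BigCell$ contains vertices of different heights, so I need $\max\Height|_\BigCell > \Height(\Cell)$, which simultaneously kills (3) and (2) (the first Morse coordinate of $\Subdiv\BigCell$ already exceeds that of $\Subdiv\Cell$). To get this: $\BigCell \direction \Cell$ lies in the horizontal link, so in particular every vertex of $\BigCell$ makes a right angle with $\Gradient_\Cell\Height$ (viewed in the relevant apartment). Then by the angle criterion Corollary~\ref{cor:two_places_angle_criterion} — applied in a twin apartment $\PNApartments$ with $\PosApartment \times \NegApartment$ containing $\Cell$ and $\BigCell$, so that $\Height$ restricted there is convex and monotone along edges — no vertex of $\BigCell$ has height below $\Height(\Cell)$; since $\BigCell$ is not flat, some vertex has height strictly above, so $\max\Height|_\BigCell > \Height(\Cell)$.

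\textbf{Main obstacle.} The only delicate point is the assertion in the non-flat case that $\BigCell \direction \Cell \subseteq \Link\Hor\Cell$ forces every vertex of $\BigCell$ to subtend a right angle with the gradient, so that the angle criterion applies; this is where the apartment-level reasoning (convexity of $\Height$ on $\PosApartment \times \NegApartment$, Observation~\ref{obs:two_places_height_is_convex}, and the fact that the gradient at a flat cell is perpendicular to it) must be invoked carefully, exactly as in the one-place proof but keeping track that $\TwoSpace$ is a product and that the relevant apartment comes from a twin apartment via Observation~\ref{obs:two_places_twin_apartments}. Everything else is bookkeeping with the definitions of roof, depth, significance, and the move relation, all of which transfer verbatim from Section~\ref{sec:descending_links}.
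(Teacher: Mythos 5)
Your proof is correct and follows the paper's argument essentially verbatim: in the flat case you combine $\max\Height|_\BigCell=\Height(\Cell)$ with $\BigCell\Min=\Cell\Min=\Cell$ (Observation~\ref{obs:min_min}/Reminder~\ref{reminder:two_places_move_reminder}) to produce a move $\Cell\Up\BigCell$ and hence a drop in depth, and in the non-flat case you invoke the angle criterion Corollary~\ref{cor:two_places_angle_criterion} to force $\max\Height|_\BigCell>\Height(\Cell)$, killing (2) and (3) simultaneously. Your extra remarks (handling $\Cell=\BigCell$, and organizing the equivalences via the dichotomy on (1)) are harmless bookkeeping that the paper leaves implicit.
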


\begin{proof}
If $\BigCell$ is flat then clearly $\max\Height|_\BigCell = \Height(\Cell)$. Moreover, $\BigCell\Min = \Cell\Min$ by Reminder~\ref{reminder:two_places_move_reminder}~\eqref{item:height_min_min}. Thus there is a move $\Cell = \Cell\Min = \BigCell\Min \Up \BigCell$ so that $\Depth \Cell > \Depth \BigCell$ and $\BigCell$ is descending.

If $\BigCell$ is not flat, then it contains vertices of different heights. Since $\BigCell \direction \Cell$ lies in the horizontal link it in particular includes a right angle with $\Gradient_\Cell \Height$. So by the angle criterion Corollary~\ref{cor:two_places_angle_criterion} no vertex has lower height than $\Cell$. Hence $\max \Height|_\BigCell > \max \Height|_\Cell$ and $\BigCell$ is not descending.
\end{proof}

\begin{prop}
\label{prop:two_places_descending_coface_link_decomposes}
If $\Cell$ is significant, then the descending link decomposes as a join
\[
\Link\Descending\Cell = \Link\Hor{}\Descending \Cell * \Link\Ver{}\Descending \Cell
\]
of the horizontal descending link and the vertical descending link.
\end{prop}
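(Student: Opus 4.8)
The plan is to run the argument of Proposition~\ref{prop:descending_coface_link_decomposes} of Chapter~\ref{chap:one_place} almost verbatim, the only genuinely new point being a check that it survives the failure of $\TwoSpace$ to be simplicial. Since we already know the inclusion $\Link\Descending \Cell \subseteq \Link\Hor{}\Descending \Cell * \Link\Ver{}\Descending \Cell$, it suffices to prove the reverse inclusion. A simplex of $\Link\Hor{}\Descending \Cell * \Link\Ver{}\Descending \Cell$ that meets both factors is of the form $(\BigCell_h \direction \Cell) * (\BigCell_v \direction \Cell)$ for proper cofaces $\BigCell_h, \BigCell_v$ of $\Cell$ with $\BigCell_h \direction \Cell \in \Link\Hor{}\Descending \Cell$ and $\BigCell_v \direction \Cell \in \Link\Ver{}\Descending \Cell$; here one uses that $\Link \Cell$ is a spherical building (Fact~\ref{fact:links_are_spherical_buildings}), hence a flag complex, so that $\BigCell \defeq \BigCell_h \vee \BigCell_v$ exists as a coface of $\Cell$ and $\BigCell \direction \Cell$ equals that join. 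Simplices meeting only one factor are descending by definition of the horizontal and vertical descending links, so the remaining task is to show that $\BigCell$ is descending, i.e.\ $\Morse(\BigCell) < \Morse(\Cell)$.

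The key step is the identification $\Roof \BigCell = \BigCell_h$. Since $\Link\Ver{}\Descending \Cell$ equals the open hemisphere link $\Link\OpenHemi \Cell$ with north pole $\Gradient_\Cell \Height$ (Lemma~\ref{lem:two_places_significant_vertical_open_hemisphere}), we have $\BigCell_v \direction \Cell \subseteq \Link\OpenHemi \Cell$, so $\Roof \BigCell_v = \Cell$ by Corollary~\ref{cor:two_places_higher_dimensional_angle_criterion}; in particular $\BigCell_v$, and hence $\BigCell$, is not flat. On the other hand $\BigCell_h$ is descending with $\BigCell_h \direction \Cell \subseteq \Link\Hor \Cell$, so $\BigCell_h$ is flat by Observation~\ref{obs:two_places_descending_iff_flat}, and being a flat coface of the significant (hence flat) cell $\Cell$ it has constant height $\Height(\Cell)$; thus $\max \Height|_\BigCell \ge \Height(\Cell)$. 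For the reverse inequality — which is where non-simpliciality must be addressed — I would pass to a twin apartment $\PNApartments$ with $\PosApartment \times \NegApartment \supseteq \BigCell$ (Observation~\ref{obs:two_places_twin_apartments}) and use that, after the identifications of \eqref{dia:twin_identification}, $\Height|_\BigCell$ is the composite of the affine surjection $(\PosPoint,\NegPoint) \mapsto \PosEIdent(\PosPoint) - \NegEIdent(\NegPoint)$ onto a polytope $\overline\BigCell$ with distance from the zonotope (Observation~\ref{obs:two_places_height_factors_through_difference}). As $\Directions \Plus \Directions$ is sufficiently rich for $\overline\BigCell$, Proposition~\ref{prop:sufficiently_rich_min_vertex} (equivalently Proposition~\ref{prop:two_places_almost_rich_min_vertex}) shows that the set of $\Height$-maxima of $\BigCell$ is a face, and tracing the decomposition $\BigCell \direction \Cell = (\BigCell_h \direction \Cell) * (\BigCell_v \direction \Cell)$ through this picture — the $\BigCell_v$-directions strictly lower the height, the $\BigCell_h$-directions keep it constant — identifies that face as $\BigCell_h$. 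Hence $\Roof \BigCell = \BigCell_h$ and $\max \Height|_\BigCell = \Height(\Cell)$.

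It remains to compare depths. Because $\BigCell$ is not flat, $\Depth \BigCell = \Depth \Roof \BigCell - 1/2 = \Depth \BigCell_h - 1/2$. Since $\BigCell_h$ is descending with $\max \Height|_{\BigCell_h} = \Height(\Cell) = \max \Height|_\Cell$ and $\dim \BigCell_h > \dim \Cell$, the lexicographic order on the range of $\Morse$ forces $\Depth \BigCell_h < \Depth \Cell$; therefore $\Depth \BigCell < \Depth \Cell$, and as the first coordinates of $\Morse(\BigCell)$ and $\Morse(\Cell)$ agree we get $\Morse(\BigCell) < \Morse(\Cell)$, so $\BigCell$ is descending. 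The main obstacle is exactly the roof identification $\Roof \BigCell = \BigCell_h$: in Chapter~\ref{chap:one_place} it is immediate from $\Vertices \BigCell = \Vertices \BigCell_h \cup \Vertices \BigCell_v$, but here $\BigCell$ carries the product-of-simplices structure of a cell of $\PosBuilding \times \NegBuilding$ and has more vertices, so one must instead argue through the factorization of $\Height$ over $\Difference$ together with the sufficient-richness machinery of Section~\ref{sec:zonotopes}. (Alternatively one can first deduce $\BigCell_h \le \Roof \BigCell$ from $\max \Height|_\BigCell = \Height(\Cell)$ and then $\Roof \BigCell \le \BigCell_h$ using transitivity of $\horizontal$ together with Reminder~\ref{reminder:two_places_move_reminder}.)
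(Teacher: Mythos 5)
Your proof follows the paper's own argument for this proposition essentially verbatim: the same reduction to showing that a join $\BigCell = \BigCell_h \vee \BigCell_v$ of a horizontal descending and a vertical descending proper coface is descending, using Lemma~\ref{lem:two_places_significant_vertical_open_hemisphere} together with Corollary~\ref{cor:two_places_higher_dimensional_angle_criterion} to get $\Roof\BigCell_v = \Cell$, Observation~\ref{obs:two_places_descending_iff_flat} to see that $\BigCell_h$ is flat, and then the identification $\Roof\BigCell = \BigCell_h$ followed by the depth comparison $\Depth\BigCell = \Depth\BigCell_h - 1/2$. The only difference is that you spell out two steps the paper leaves implicit --- the roof identification $\Roof\BigCell = \BigCell_h$ in the non-simplicial product setting (via the factorization of $\Height$ through $\Difference$ and the richness machinery) and the final lexicographic comparison giving $\Morse(\BigCell) < \Morse(\Cell)$ --- and your treatment of these is consistent with the paper's framework.
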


\begin{proof}
Let $\BigCell_h$ and $\BigCell_v$ be proper cofaces of $\Cell$ such that $\BigCell_h$ lies in the horizontal descending link, $\BigCell_v$ lies in the vertical descending link and $\BigCell \defeq \BigCell_h \vee \BigCell_v$ exists. We have to show that $\BigCell$ is descending.

By Lemma~\ref{lem:two_places_significant_vertical_open_hemisphere} $\BigCell_v$ includes an obtuse angle with $\Gradient_\Cell \Height$ so by Proposition~\ref{cor:two_places_higher_dimensional_angle_criterion} $\Roof\BigCell_v = \Cell$. On the other hand $\BigCell_h$ is flat by Observation~\ref{obs:two_places_descending_iff_flat}. Thus $\Roof\BigCell = \BigCell_h$ so that $\Depth\BigCell = \Depth\BigCell_h - 1/2$ and $\BigCell$ is descending because $\BigCell_h$ is.
\end{proof}

It remains to study the horizontal descending links of significant cells. As before we want to eventually apply Proposition~\ref{prop:coconvex_complexes_spherical}. We will be able to do so thanks to the results of the last section. So essentially we have to understand what happens inside one apartment.

We assume from now on that $\Directions$ is rich. We fix a significant cell $\Cell \subseteq \TwoSpace$ and write $\Cell = \PosCell \times \NegCell$ with $\PosCell \subseteq \PosBuilding$ and $\NegCell \subseteq \NegBuilding$. We also fix a twin apartment $\PNApartments$ that contains $\PosCell$ and $\NegCell$ and let $\EApartment = \PosApartment \times \NegApartment$. Let further $\Apartment \defeq \Link_{\EApartment} \Cell$ be the apartment of $\Link \Cell$ defined by $\EApartment$.

We set
\[
\UpSet \defeq \{\Vertex \in \Vertices \EApartment \mid \Vertex \vee \Cell \text{ exists and } \Height(\Vertex) > \Height(\Cell)\}
\]
and let $\UpConvex$ be the convex hull of $\UpSet$.

\begin{obs}
The minimum of $\Height$ over $\UpConvex$ is strictly higher than $\Height(\Cell)$.
\end{obs}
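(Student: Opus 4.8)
The plan is to reduce everything to a statement inside $\E$ by means of the affine map $\Difference \circ (\PosEIdent \times \NegEIdent)$ and then invoke Proposition~\ref{prop:sufficiently_rich_min_vertex}. Concretely, since $\UpConvex$ is a polytope contained in the convex set $\EApartment = \PosApartment \times \NegApartment$, Observation~\ref{obs:two_places_height_factors_through_difference} tells us that $\Height$ restricted to $\UpConvex$ is the composition of the affine map $\Difference \circ (\PosEIdent \times \NegEIdent)$ with the function $\EuclDistance(\DummyArg, \Zonotope)$, where $\Zonotope = \Zonotope(\Directions \Plus \Directions)$. Writing $\bar{\UpConvex} \defeq \Difference\big((\PosEIdent \times \NegEIdent)(\UpConvex)\big)$, this is a polytope in $\E$, and affineness identifies it with the convex hull of the finitely many points $\PosEIdent(\Vertex_+) - \NegEIdent(\Vertex_-)$ for $\Vertex = (\Vertex_+,\Vertex_-) \in \UpSet$. (Here I use that a vertex of the product complex $\EApartment$ is a product $\Vertex_+ \times \Vertex_-$ of vertices, that $\Vertex \vee \Cell$ exists iff $\Vertex_+ \vee \PosCell$ and $\Vertex_- \vee \NegCell$ exist, and that $\Cell = \PosCell \times \NegCell$ has only finitely many cofaces by local finiteness.) The minimum of $\Height$ over $\UpConvex$ is then the minimum of $\EuclDistance(\DummyArg,\Zonotope)$ over $\bar{\UpConvex}$, so it suffices to show this minimum is attained at one of the points $\PosEIdent(\Vertex_+) - \NegEIdent(\Vertex_-)$ with $\Vertex \in \UpSet$, all of which have $\Height$-value $> \Height(\Cell)$ by definition of $\UpSet$.

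The key remaining step is to check that $\Directions \Plus \Directions$ is sufficiently rich for the polytope $\bar{\UpConvex}$, for then Proposition~\ref{prop:sufficiently_rich_min_vertex} yields a \emph{vertex} of $\bar{\UpConvex}$ among its points closest to $\Zonotope(\Directions \Plus \Directions) = \Zonotope$, and every vertex of $\bar{\UpConvex}$ is one of the points $\PosEIdent(\Vertex_+) - \NegEIdent(\Vertex_-)$ with $\Vertex \in \UpSet$. To verify sufficient richness, I take two distinct vertices $p = \PosEIdent(\Vertex_+) - \NegEIdent(\Vertex_-)$ and $q = \PosEIdent(\AltVertex_+) - \NegEIdent(\AltVertex_-)$ of $\bar{\UpConvex}$ and write $p - q = \big(\PosEIdent(\Vertex_+) - \PosEIdent(\AltVertex_+)\big) + \big(\NegEIdent(\AltVertex_-) - \NegEIdent(\Vertex_-)\big)$. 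For $\Vertex \in \UpSet$ the cell $\PosCell$ is a face of $\Vertex_+ \vee \PosCell$, hence lies in the closed star of $\Vertex_+$, and likewise for $\AltVertex_+$; so the closed stars of $\Vertex_+$ and $\AltVertex_+$ both contain $\PosCell$ and therefore meet, and richness of $\Directions$ gives $\PosEIdent(\Vertex_+) - \PosEIdent(\AltVertex_+) \in \Directions$ when $\Vertex_+ \neq \AltVertex_+$ (and $=0$ otherwise). The same argument applies to the second summand with $\NegCell$ in place of $\PosCell$. Since $p \neq q$, at least one summand is nonzero, so $p - q$ is a nonzero element of $(\Directions \cup \{0\}) + (\Directions \cup \{0\})$ and hence lies in $\Directions \Plus \Directions$. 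This is precisely the special case of the computation behind Observation~\ref{obs:fold_sufficiently_rich}, so one could alternatively phrase it as an application of that observation after choosing suitable auxiliary polytopes in $\PosApartment$ and $\NegApartment$; doing it by hand, as above, seems cleaner because the images of the points of $\UpSet$ need not be vertices of any convenient product polytope.

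The main obstacle is thus not conceptual but lies in this bookkeeping with $\Plus$ and closed stars: one must be careful that it really is the closed stars that meet (which is what the hypothesis ``$\Directions$ rich'' asks for), and that after applying Proposition~\ref{prop:sufficiently_rich_min_vertex} one lands on a vertex of $\bar{\UpConvex}$, which genuinely is the image of an element of $\UpSet$ and not merely of some point of $\UpConvex$. Once that is in place, the inequality $\min \Height|_{\UpConvex} = \Height(\Vertex) > \Height(\Cell)$ for the witnessing $\Vertex \in \UpSet$ is immediate. Note that, as in the one-place case (the analogous observation in Section~\ref{sec:descending_links}), the argument uses the full strength of richness rather than mere almost-richness, because $\UpConvex$ is spanned by vertices adjacent to $\Cell$ whose stars may overlap without the vertices lying in a common cell.
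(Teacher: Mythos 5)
Your proof is correct and follows essentially the same route as the paper's: reduce via the affine map $\Difference\circ(\PosEIdent\times\NegEIdent)$, verify that $\Directions\Plus\Directions$ is sufficiently rich for the image polytope by observing that the closed stars of any two vertices in either component of $\UpSet$ meet along $\PosCell$ (resp.\ $\NegCell$), and invoke Proposition~\ref{prop:sufficiently_rich_min_vertex}. The only difference is that you spell out the richness verification and the $\Vertex_+=\AltVertex_+$ edge case more explicitly than the paper does.
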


\begin{proof}
Make identifications for $\PNApartments$ as in \eqref{dia:twin_identification}. Since $\Directions$ is rich, it contains all vectors of the form $\PosEIdent(\PosVertex) - \PosEIdent(\PosVertex')$ where $\PosVertex$ and $\PosVertex'$ are vertices adjacent to $\PosCell$. It also contains all vectors of the form $\NegEIdent(\NegVertex) - \NegEIdent(\NegVertex')$ for $\NegVertex$ and $\NegVertex'$ vertices adjacent to $\NegVertex$. Therefore $\Directions \Plus \Directions$ contains all vectors of the form $\Vertex - \Vertex'$ where $\Vertex$ and $\Vertex'$ lie in $\Difference \circ (\PosEIdent \times \NegEIdent)(\UpSet)$. All this is just to say that $\Directions \Plus \Directions$ is sufficiently rich for $\Difference \circ (\PosEIdent \times \NegEIdent)(\UpConvex)$.

Thus by Proposition~\ref{prop:sufficiently_rich_min_vertex} distance from $\Zonotope$ attains its minimum over $\Difference \circ (\PosEIdent \times \NegEIdent)(\UpConvex)$ in a vertex. Consequently, $\Height$ attains its minimum over $\UpConvex$ in a vertex. That vertex is an element of $\UpSet$ and hence has height strictly higher than $\Height(\Cell)$.
\end{proof}

Since $\UpConvex$ is closed, there is an $\varepsilon > 0$ such that the $\varepsilon$-neighborhood of $\UpConvex$ in $\EApartment$ still has height strictly higher than $\Height(\Cell)$. We fix such an $\varepsilon$ and let $\UpOpen$ denote the corresponding neighborhood. We let $\UpLink$ denote the set of directions in $\Apartment$ that point toward points of $\UpOpen \intersect \Star \Cell$.

\begin{obs}
\label{obs:uplink_open_convex}
The set $\UpLink$ is a proper, open, convex subset of $\Apartment$ and has the property that a coface $\BigCell$ of $\Cell$ that is contained in $\EApartment$ contains a point of height strictly above $\Height(\Cell)$ if and only if $\BigCell \direction \Cell$ meets $\UpLink$.
\end{obs}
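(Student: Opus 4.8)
\textbf{Plan for proving Observation~\ref{obs:uplink_open_convex}.} The statement asserts three things about $\UpLink$: it is a proper subset of $\Apartment$, it is open, it is convex; and the final ``if and only if'' linking heights of cofaces to intersection with $\UpLink$. The last clause is essentially a tautology once the definitions are unwound, and the openness follows from continuity of the closest-point-projection and the fact that $\UpOpen$ is already defined to be open, so the real content is the \emph{convexity} of $\UpLink$, together with checking it is proper. I would present the argument in the order: (i) reduce everything to a statement inside the single Euclidean apartment $\EApartment$ by identifying $\Link_{\EApartment}\Cell$ with the sphere of directions at an interior point of $\Cell$; (ii) convexity; (iii) properness; (iv) the iff.

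\textbf{Step (i): passage to the apartment.} Fix an interior point $\Point_0$ of $\Cell$. Then $\Apartment = \Link_{\EApartment}\Cell$ is canonically the unit sphere in the orthogonal complement $N$ of the affine span of $\Cell$ inside the Euclidean space $\EApartment$, and ``directions pointing toward points of $\UpOpen \cap \Star\Cell$'' means directions $v \in \Apartment$ such that the geodesic ray $\Point_0 + \R_{\ge 0}v$ enters $\UpOpen$. Because $\EApartment$ is Euclidean and $\Star_{\EApartment}\Cell$ is a neighborhood of $\Point_0$, and $\UpOpen$ is an open set, this is the set of $v$ for which $(\Point_0 + \R_{>0}v) \cap \UpOpen \ne \emptyset$. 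Since $\UpConvex \subseteq \Star\Cell$ (every vertex in $\UpSet$ is adjacent to $\Cell$, so $\conv \UpSet$ lies in $\Star\Cell$) and $\UpConvex$ does not meet $\Cell$ itself (points of $\Cell$ have height $\Height(\Cell)$, but $\UpConvex$ has height $> \Height(\Cell)$), the cone over $\UpOpen$ from $\Point_0$, intersected with a small ball, stays inside $\Star\Cell$, so the condition ``$\cap \Star\Cell$'' is automatically satisfied near $\Point_0$ and can be dropped: $\UpLink = \{\, v \in \Apartment \mid (\Point_0 + \R_{>0}v)\cap\UpOpen \ne \emptyset \,\}$.

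\textbf{Steps (ii)--(iv).} For convexity: $\UpOpen$ is convex (it is the $\varepsilon$-neighborhood of the convex set $\UpConvex$ in a Euclidean space, hence convex), and $\Point_0 \notin \overline{\UpOpen}$ since a neighborhood of $\UpOpen$ still has height $> \Height(\Cell) = \Height(\Point_0)$. The set of rays from a point $\Point_0$ that hit a fixed convex set not containing $\Point_0$ is precisely the image in $\Apartment$ of the open convex cone $\pos(\UpOpen - \Point_0)$ under radial projection; since $\UpOpen-\Point_0$ is open convex and misses the origin, this cone is a proper open convex cone, and its radial projection to the unit sphere is an open $\pi$-convex (in fact convex, since the cone is pointed and proper) subset of $\Apartment$ — this is the convexity and properness together. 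Openness of $\UpLink$ in $\Apartment$ is then automatic from openness of the cone. Finally, for the ``if and only if'': a coface $\BigCell \ge \Cell$ with $\BigCell \subseteq \EApartment$ contains a point of height $> \Height(\Cell)$ iff, by convexity of $\Height|_{\EApartment}$ (Observation~\ref{obs:two_places_height_is_convex}) applied together with Proposition~\ref{prop:two_places_almost_rich_min_vertex} — which says the $\Height$-minimum of $\BigCell$ is attained at a vertex and the $\Height$-maxima form a face — some vertex of $\BigCell$ not in $\Cell$ has height $> \Height(\Cell)$, hence lies in $\UpSet$ after possibly moving it; more directly, $\BigCell$ contains a point above $\Height(\Cell)$ iff $\BigCell$ meets $\UpOpen$ (using that heights in $\BigCell \setminus \Cell$ strictly above $\Height(\Cell)$ force, by almost-richness and Corollary~\ref{cor:two_places_angle_criterion}, the whole relative interior of $\BigCell$ to be near $\UpConvex$), which translates under the cone description to $\BigCell \direction \Cell$ meeting $\UpLink$.

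\textbf{Main obstacle.} The genuinely delicate point is the last clause: going from ``$\BigCell$ contains \emph{some} point of height $> \Height(\Cell)$'' to ``$\BigCell \direction \Cell$ meets $\UpLink$'' requires relating an arbitrary high point of $\BigCell$ to the $\varepsilon$-neighborhood of the convex hull of the high \emph{vertices}, and the cleanest route is to invoke convexity of $\Height$ on the apartment together with Proposition~\ref{prop:two_places_almost_rich_min_vertex} to locate high vertices, then note that $\BigCell \direction \Cell$ is the radial projection of $\BigCell - \Point_0$ and that $\conv(\text{high vertices of }\BigCell) \subseteq \UpConvex$. I expect everything else to be a short unwinding of definitions; this matching-up of ``points'' versus ``vertices'' versus ``$\varepsilon$-neighborhood'' is where care is needed, exactly as in the one-place analogue (Observation~\ref{obs:apartment_flat_iff_disjoint_from_up} and the surrounding discussion in Section~\ref{sec:descending_links}).
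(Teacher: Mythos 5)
There is a genuine gap, and it sits in your step (i). In the paper, $\UpLink$ is the set of directions \emph{in} $\Apartment = \Link_{\EApartment}\Cell$ toward points of $\UpOpen \intersect \Star\Cell$: a point $\AltPoint \in \Star\Cell$ off the affine span of $\Cell$ determines the direction of its component perpendicular to $\Cell$ (equivalently, the $\Link\Cell$-factor of $[\Point_0,\AltPoint]_{\Point_0}$ in the join decomposition $\Link \Point_0 = (\Cell \direction \Point_0) * \Link\Cell$), and $\UpLink$ consists of the directions so obtained; this does not depend on the chosen interior point. You instead declare $\UpLink$ to be the set of $v \in \Apartment$ for which the \emph{perpendicular} ray $\Point_0 + \R_{>0}v$ itself meets $\UpOpen$, i.e.\ the radial projection of the slice of $\UpOpen$ by the affine subspace $\Point_0 + N$. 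That is a different set, it depends on $\Point_0$, and with it the forward half of the key equivalence fails: nothing in your argument excludes the configuration where $\UpSet$ consists of a single vertex $\Vertex$ sitting over one end of $\Cell$ and $\varepsilon$ is small, in which case the slice $\Point_0 + N$ through the barycenter misses $\UpOpen$ entirely, so your $\UpLink$ is empty, although the coface $\Vertex \vee \Cell$ certainly contains a point of height $> \Height(\Cell)$. (If instead you meant rays in \emph{all} directions, then $\pos(\UpOpen - \Point_0)$ radially projects into $\Link\Point_0$, not into $\Apartment$, and the missing step is the projection to the $\Link\Cell$ join factor together with the check that it preserves convexity and openness and matches $\BigCell \direction \Cell$.) Your instinct for the equivalence itself is the paper's: a coface is high iff it has a high vertex (the maximum of the convex function $\Height$ over a cell is attained at a vertex, Observation~\ref{obs:two_places_height_is_convex}) iff it meets $\UpOpen$; the delicate part is exactly the translation into directions, and that is what your model of $\UpLink$ breaks.

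For comparison, the paper's proof keeps the correct reading throughout and is very short: $\UpOpen \intersect \Star\Cell$ is convex (intersection of convex sets) and disjoint from $\Cell$ by the choice of $\varepsilon$, hence the set of directions toward it is a convex subset of $\Apartment$; it is open because it can also be described as the set of directions toward $\UpOpen \intersect \partial\Star\Cell$, which is open in $\partial\Star\Cell$. For the equivalence: if $\BigCell$ contains a high point it contains a high vertex, that vertex lies in $\UpSet \subseteq \UpOpen \intersect \Star\Cell$, and the direction toward it lies in $(\BigCell \direction \Cell) \intersect \UpLink$; conversely, a point of $\UpOpen \intersect \Star\Cell$ that defines a direction of $\BigCell \direction \Cell$ already lies in $\BigCell$ — this is precisely where the intersection with $\Star\Cell$, which you propose to drop, is used — and it has height above $\Height(\Cell)$ by the choice of $\varepsilon$. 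So the fix is to work with the perpendicular-component description of $\UpLink$ (not the slice through $\Point_0$), keep the $\Star\Cell$ condition, and then your steps (ii)–(iv) collapse to essentially the paper's argument.
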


\begin{proof}
Note that $\UpOpen \intersect \Star \Cell$ is convex as an intersection of convex sets, and is disjoint from $\Cell$ by choice of $\varepsilon$. It follows that $\UpLink$ is convex. To see that $\UpLink$ is open in $\Link\Cell$ note that it can also be described as the set of directions toward $\UpOpen \intersect (\partial \Star \Cell)$ which is open in $\partial \Star\Cell$.

If $\BigCell$ contains a point of height strictly above $\Height(\Cell)$, then by Observation~\ref{obs:two_places_height_is_convex} it also contains a vertex with that property. That vertex therefore lies in $\UpSet$ and the direction toward it defines a direction in $\UpLink \intersect (\BigCell \direction \Cell)$.

Conversely assume that $\Point$ lies in $\UpOpen \intersect \Star \Cell$ and defines a direction in $\BigCell \direction \Cell$. Since $\Point \in \Star\Cell$, this implies that $\Point \in \BigCell$.
\end{proof}

The transition from $\Apartment$ to the full link of $\Cell$ is via retractions. So let $\PosChamber \ge \PosCell$ and $\NegChamber \ge \NegCell$ be chambers of $\PNApartments$ such that $\WeylProjection_{\PosCell} \NegChamber =\PosChamber$ and $\WeylProjection_{\NegCell} \PosChamber = \NegChamber$. Let $\Chamber \defeq (\PosChamber \direction \PosCell) * (\NegChamber \direction \NegCell)$ be the chamber of $\Apartment$ defined by $\PosChamber$ and $\NegChamber$.

Let $\tilde{\rho} \defeq \Retraction{\PNApartments}{\PosChamber}$ and recall from Remark~\ref{rem:symmetry} that $\tilde{\rho}$ restricts to the same map on $\Star \PosCell \union \Star\NegCell$ as the retraction centered at $\NegChamber$. Let $\rho \defeq \Retraction{\Apartment}{\Chamber}$ be the retraction onto $\Apartment$ centered at $\Chamber$.

\begin{obs}
\label{obs:two_places_retractions_compatible}
The diagram
\begin{diagram}
\Star\Cell & \rTo^{\tilde{\rho} \times \tilde{\rho}} & \EApartment \intersect \Star\Cell\\
\dTo && \dTo \\
\Link\Cell & \rTo^{\rho} & \Apartment
\end{diagram}
where the vertical maps are projection onto the link, commutes.\qed
\end{obs}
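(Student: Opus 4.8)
The plan is to reduce the statement to its single‑building analogue, which is essentially Observation~\ref{obs:retraction_and_projection_commute} with the twin structure removed. First I would check that the square is well‑posed and identify the top map more explicitly. By Remark~\ref{rem:symmetry} the twin retraction $\tilde{\rho}=\Retraction{\PNApartments}{\PosChamber}$ agrees on $\Star\PosCell\union\Star\NegCell$ with $\Retraction{\PNApartments}{\NegChamber}$; in particular $\tilde{\rho}$ fixes $\PosChamber\ge\PosCell$ and $\NegChamber\ge\NegCell$, hence fixes $\Cell=\PosCell\times\NegCell$, pointwise, so $\tilde{\rho}\times\tilde{\rho}$ carries $\Star\Cell$ into $\EApartment\intersect\Star\Cell$ and the right‑hand vertical map lands in $\Apartment$. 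Moreover, on $\PosBuilding$ the map $\tilde{\rho}$ is the ordinary retraction $\Retraction{\PosApartment}{\PosChamber}$, and on $\Star\NegCell$ it agrees, again by Remark~\ref{rem:symmetry}, with $\Retraction{\NegApartment}{\NegChamber}$. Since the Coxeter group of $\TwoSpace=\PosBuilding\times\NegBuilding$ is the direct product of those of the two factors, the Weyl distance on $\TwoSpace$ is the product of the two Weyl distances, and a retraction onto an apartment centred at a chamber is, by Fact~\ref{fact:building}, the unique type‑preserving map to that apartment preserving Weyl distance from the centre; hence $\Retraction{\PosApartment}{\PosChamber}\times\Retraction{\NegApartment}{\NegChamber}=\Retraction{\EApartment}{\AltChamber}$ where $\AltChamber\defeq\PosChamber\times\NegChamber$ is a chamber of $\EApartment$ above $\Cell$ with $\AltChamber\direction\Cell=\Chamber$. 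Thus on $\Star\Cell$ we have $\tilde{\rho}\times\tilde{\rho}=\Retraction{\EApartment}{\AltChamber}$, and the assertion becomes: for the single building $\TwoSpace$, the retraction $\Retraction{\EApartment}{\AltChamber}$ and the retraction $\Retraction{\Apartment}{\Chamber}$ of the spherical building $\Link\Cell$ commute with the projection onto the link.

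For this single‑building fact I would argue as in Observation~\ref{obs:retraction_and_projection_commute}. All three maps in the square are cellular and piecewise isometric, and the apartments $\EApartment'$ of $\TwoSpace$ containing $\AltChamber$ cover $\Star\Cell$ (any coface of $\Cell$ lies in a common apartment with $\AltChamber$), so it suffices to verify commutativity on a fixed such $\EApartment'$. There $\Retraction{\EApartment}{\AltChamber}|_{\EApartment'}$ is the isomorphism of Coxeter complexes $\EApartment'\to\EApartment$ restricting to the identity on $\EApartment'\intersect\EApartment$ (Fact~\ref{fact:building}); it fixes $\Cell$, hence induces an isomorphism $\Link_{\EApartment'}\Cell\to\Link_{\EApartment}\Cell=\Apartment$ which restricts to the identity on $\Apartment\intersect\Link_{\EApartment'}\Cell$ and therefore, by the uniqueness part of Fact~\ref{fact:building}, equals $\Retraction{\Apartment}{\Chamber}|_{\Link_{\EApartment'}\Cell}$. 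Since an isometry of Coxeter complexes carrying $\Cell$ onto $\Cell$ commutes with the projection onto the link of $\Cell$, the square commutes on $\EApartment'$, and hence on all of $\Star\Cell$.

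An alternative I would keep in reserve is to avoid the reduction to a single building and instead split along the join decomposition $\Link_{\TwoSpace}\Cell=\Link_{\PosBuilding}\PosCell\ast\Link_{\NegBuilding}\NegCell$ (from Observation~\ref{obs:euclidean_building_decomposition}): the projection onto the link respects this join, $\tilde{\rho}\times\tilde{\rho}$ acts factorwise, and $\Retraction{\Apartment}{\Chamber}$ decomposes as the join of $\Retraction{\Link_{\PosApartment}\PosCell}{\PosChamber\direction\PosCell}$ and $\Retraction{\Link_{\NegApartment}\NegCell}{\NegChamber\direction\NegCell}$ by the same Weyl‑distance argument; one then applies Observation~\ref{obs:retraction_and_projection_commute} to $\PosBuilding$ and to $\NegBuilding$ separately, using $\WeylProjection_{\PosCell}\NegChamber=\PosChamber$, $\WeylProjection_{\NegCell}\PosChamber=\NegChamber$ and Remark~\ref{rem:symmetry} to match the retraction centres.

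I do not expect a genuine obstacle here — the statement is asserted with \qed for good reason — but the two points that require care are exactly the ones above: establishing that $\tilde{\rho}\times\tilde{\rho}$ restricted to $\Star\Cell$ is the product retraction $\Retraction{\EApartment}{\AltChamber}$ (this genuinely needs Remark~\ref{rem:symmetry}, because away from $\Star\PosCell\union\Star\NegCell$ the twin retraction centred at $\PosChamber$ is \emph{not} the ordinary retraction on the negative half), and the elementary but slightly fiddly compatibilities among links, intersections of apartments, and link‑projections invoked in the second paragraph. Everything else is bookkeeping.
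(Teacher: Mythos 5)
Your proposal is correct, and it fills in exactly the argument the paper leaves implicit: the observation is stated with \qed{} and no proof, the intended justification being precisely your reduction — via Remark~\ref{rem:symmetry} and Fact~\ref{fact:twin_building} the map $\tilde{\rho}\times\tilde{\rho}$ agrees on $\Star\Cell$ with the ordinary retraction of $\TwoSpace$ onto $\EApartment$ centered at $\PosChamber\times\NegChamber$, after which the compatibility with the link projection is the single-building statement of Observation~\ref{obs:retraction_and_projection_commute}, proved apartment-wise as you do. Both of your routes (apartment-wise uniqueness of the induced isomorphism on links, or splitting along the join $\Link_{\PosBuilding}\PosCell * \Link_{\NegBuilding}\NegCell$ and applying the one-building case factorwise) are sound.
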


Let $\UpFat \defeq \rho^{-1}(\UpLink)$.

\begin{obs}
\label{obs:upfat_open_apartment_convex_determines_flat_cells}
The set $\UpFat$ is open and meets every apartment that contains $\Chamber$ in a proper convex subset. Moreover, it has the property that if $\BigCell \ge \Cell$ is such that $\BigCell \direction \Cell \subseteq \Link\Hor \Cell$, then $\BigCell$ is flat if and only if $\BigCell \direction \Cell$ is disjoint from $\UpFat$.
\end{obs}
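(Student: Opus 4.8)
The plan is to mimic the proof of Lemma~\ref{lem:flat_iff_disjoint_from_up} from Chapter~\ref{chap:one_place}, but with the height‑preservation of thin twin‑building isomorphisms used there replaced by Proposition~\ref{prop:retraction_preserves_height} together with the compatibility statement Observation~\ref{obs:two_places_retractions_compatible}.

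The two preliminary assertions are routine. Openness of $\UpFat=\rho^{-1}(\UpLink)$ follows from continuity of the retraction $\rho=\Retraction{\Apartment}{\Chamber}$ and openness of $\UpLink$ (Observation~\ref{obs:uplink_open_convex}). If $\Apartment'$ is any apartment of $\Link\Cell$ containing $\Chamber$, then $\rho|_{\Apartment'}$ is an isometry onto $\Apartment$ by Fact~\ref{fact:building}, so $\UpFat\intersect\Apartment'=(\rho|_{\Apartment'})^{-1}(\UpLink)$ is an isometric copy of the proper convex set $\UpLink$, hence itself proper and convex.

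For the ``moreover'' part I would fix $\BigCell\ge\Cell$ with $\BigCell\direction\Cell\subseteq\Link\Hor\Cell$ and write $\BigCell=\PosBigCell\times\NegBigCell$ with $\PosBigCell\ge\PosCell$, $\NegBigCell\ge\NegCell$. Let $\tilde\rho\defeq\Retraction{\PNApartments}{\PosChamber}$. Its hypotheses for Proposition~\ref{prop:retraction_preserves_height} hold, since $\PosChamber\ge\WeylProjection_{\PosCell}\NegCell$ and $\PNApartments$ contains $\PosChamber$ and $\NegCell$, so $\tilde\rho$ preserves $\Height$ on every pair of vertices $(\PosVertex,\NegVertex)$ with $\PosVertex$ adjacent to $\PosCell$ and $\NegVertex$ adjacent to $\NegCell$; these are exactly the vertices of $\BigCell$. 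Set $\tilde\BigCell\defeq\tilde\rho(\BigCell)$, a coface of $\Cell$ inside $\EApartment$, whose vertices are the $\tilde\rho$‑images of the vertices of $\BigCell$ and therefore have the same heights. By Observation~\ref{obs:two_places_retractions_compatible}, $\tilde\BigCell\direction\Cell=\rho(\BigCell\direction\Cell)$, so $\BigCell\direction\Cell$ meets $\UpFat$ exactly when $\tilde\BigCell\direction\Cell$ meets $\UpLink$. Now I chain equivalences: since $\Directions$ is rich (hence almost rich), Proposition~\ref{prop:two_places_almost_rich_min_vertex} makes $\Height$ attain its minimum and maximum on any cell at a vertex, so ``$\BigCell$ flat'' means ``every vertex of $\BigCell$ has height $\Height(\Cell)$''; and because $\BigCell\direction\Cell$ is horizontal, the angle criterion (Corollary~\ref{cor:two_places_angle_criterion}, used as in the proof of Observation~\ref{obs:two_places_descending_iff_flat}) rules out vertices of height below $\Height(\Cell)$, so ``$\BigCell$ not flat'' means ``some vertex of $\BigCell$, equivalently of $\tilde\BigCell$, has height above $\Height(\Cell)$'', which by Observation~\ref{obs:two_places_height_is_convex} is the same as ``$\tilde\BigCell$ contains a point of height above $\Height(\Cell)$'', which by Observation~\ref{obs:uplink_open_convex} is the same as ``$\tilde\BigCell\direction\Cell$ meets $\UpLink$''. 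Combining, $\BigCell$ is flat if and only if $\BigCell\direction\Cell$ is disjoint from $\UpFat$.

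The hard part is the step hidden inside Proposition~\ref{prop:retraction_preserves_height}, which is exactly the complication advertised in the introduction to this chapter: when $\PosCell$ and $\NegCell$ share a common twin wall the apartments of $\Link\Cell$ through $\Chamber$ need not come from twin apartments (contrast Observation~\ref{obs:unproblematic_case}), so one cannot transport $\Height$ by a thin‑twin‑building isomorphism as in Chapter~\ref{chap:one_place} and must instead feed in the spherical‑reflection‑group input Corollary~\ref{cor:weyl_preserves_height}. The remaining care point is purely bookkeeping: keeping straight the retraction $\tilde\rho$ on $\PNBuildings$ versus the induced retraction $\rho$ on $\Link\Cell$, which is precisely what Observation~\ref{obs:two_places_retractions_compatible} records, and noticing that one only ever needs horizontality of $\BigCell\direction\Cell$ itself, not of $\tilde\BigCell\direction\Cell$, because $\tilde\rho$ is height‑preserving on the vertices involved.
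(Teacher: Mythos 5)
Your proof is correct and follows the same route as the paper: openness and apartment-wise convexity via continuity and isometry of $\rho$, then a chain of equivalences passed through Proposition~\ref{prop:retraction_preserves_height} and Observation~\ref{obs:two_places_retractions_compatible} to reduce to the twin-apartment case settled by Observation~\ref{obs:uplink_open_convex}. The only cosmetic difference is that you unfold Observation~\ref{obs:two_places_descending_iff_flat} and the vertex-determination of flatness explicitly where the paper cites them directly.
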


\begin{proof}
We repeatedly apply Observation~\ref{obs:uplink_open_convex}. That $\UpFat$ is open follows from $\UpLink$ being open by continuity of $\rho$. If $\Apartment'$ is an apartment that contains $\Chamber$, then $\rho|_{\Apartment'}$ is an isometry, so $\Apartment' \intersect \UpFat$ is convex as an isometric image of $\UpLink$.

Let $\BigCell \ge \Cell$ be such that $\BigCell \direction \Cell$ lies in the horizontal link of $\Cell$. By Observation~\ref{obs:two_places_descending_iff_flat}, $\BigCell$ is flat if and only if it does not contain a point of height $> \Height(\Cell)$. Write $\BigCell = \PosBigCell \times \NegBigCell$. By Proposition~\ref{prop:retraction_preserves_height} $\BigCell$ is flat if and only if $\tilde{\rho}(\PosBigCell) \times \tilde{\rho}(\NegBigCell)$ is flat. By Observation~\ref{obs:two_places_retractions_compatible} this is precisely the cell that defines $\rho(\BigCell)$ and is therefore flat if and only if $\rho(\BigCell)$ is disjoint from $\UpLink$. This is clearly equivalent to $\BigCell$ being disjoint from $\UpFat$.
\end{proof}

\begin{lem}
\label{lem:two_places_significant_horizontal_spherical}
If $\Cell$ is significant, then $\Link\Hor{}\Descending \Cell$ is $(\dim \Link\Hor \Cell-1)$-connected.
\end{lem}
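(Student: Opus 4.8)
The plan is to follow the proof of the one-place analogue, Lemma~\ref{lem:significant_horizontal_spherical}, almost verbatim; the only genuinely new point is a little bookkeeping needed to pass from apartments of $\Link\Cell$ to apartments of the join factor $\Link\Hor\Cell$, which is required here because the relevant apartments of $\Link\Cell$ no longer all come from twin apartments. First I would pin down $\Link\Hor{}\Descending\Cell$ combinatorially. For a coface $\BigCell\ge\Cell$ with $\BigCell\direction\Cell\subseteq\Link\Hor\Cell$, Observation~\ref{obs:two_places_descending_iff_flat} gives that $\BigCell$ is descending if and only if it is flat, and Observation~\ref{obs:upfat_open_apartment_convex_determines_flat_cells} gives that $\BigCell$ is flat if and only if $\BigCell\direction\Cell$ is disjoint from $\UpFat$. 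Hence $\Link\Hor{}\Descending\Cell$ is precisely the full subcomplex of $\Link\Hor\Cell$ supported by $\Link\Hor\Cell\setminus\UpFat$. Observe also that $\Link\Hor\Cell$ is a thick spherical building, being a join of join factors of $\Link\Cell$, which is itself a (simplicial) spherical building by Fact~\ref{fact:links_are_spherical_buildings}; in particular Proposition~\ref{prop:coconvex_complexes_spherical} is applicable to it even though $\TwoSpace$ is not simplicial.

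Next I would apply Proposition~\ref{prop:coconvex_complexes_spherical} with $\SBuilding\defeq\Link\Hor\Cell$, the chamber $\Chamber\intersect\Link\Hor\Cell$, and the open subset $U\defeq\UpFat\intersect\Link\Hor\Cell$. This yields that $\Link\Hor\Cell\setminus U$, and therefore the subcomplex it supports, is $(\dim\Link\Hor\Cell-1)$-connected, which is the assertion. The hypothesis to check is that $U\intersect\Apartment_h$ is a proper convex subset of $\Apartment_h$ for every apartment $\Apartment_h$ of $\Link\Hor\Cell$ containing $\Chamber\intersect\Link\Hor\Cell$. Using the join decomposition $\Link\Cell=\Link\Hor\Cell*\Link\Ver\Cell$ of \eqref{eq:link_decomposes_as_hor_join_ver} and the corresponding splitting of $\Chamber$, I would extend $\Apartment_h$ to an apartment $\Apartment'=\Apartment_h*\Apartment_v$ of $\Link\Cell$ that contains $\Chamber$. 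By Observation~\ref{obs:upfat_open_apartment_convex_determines_flat_cells} the set $\UpFat\intersect\Apartment'$ is a proper convex subset of $\Apartment'$, and since $\Apartment_h$ sits inside $\Apartment'$ as a $\pi$-convex subsphere, $U\intersect\Apartment_h=(\UpFat\intersect\Apartment')\intersect\Apartment_h$ is convex. For properness I would argue that $U$ contains no vertex of $\Link\Hor\Cell$: every vertex $\bar\AltVertex$ of $\Link\Hor\Cell$ is the direction $\BigCell\direction\Cell$ of a unique coface $\BigCell\gneq\Cell$ with $\dim\BigCell=\dim\Cell+1$, and since $\bar\AltVertex$ lies in the equator and not in the open hemisphere, Corollary~\ref{cor:two_places_higher_dimensional_angle_criterion} forces $\Cell$ not to be the $\Height$-maximal face of $\BigCell$, whence $\Roof\BigCell=\BigCell$, i.e.\ $\BigCell$ is flat; then Observation~\ref{obs:upfat_open_apartment_convex_determines_flat_cells} gives $\bar\AltVertex\notin\UpFat$. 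An open convex subset of a sphere omitting all of its vertices is proper, so $U\intersect\Apartment_h\subsetneq\Apartment_h$, and the hypothesis is satisfied.

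The remaining points are routine and transfer directly from the one-place setting: that $\UpFat$ is well defined, open and $\Retraction{\Apartment}{\Chamber}$-invariant (this is exactly the content of Observation~\ref{obs:upfat_open_apartment_convex_determines_flat_cells} together with the results of Section~\ref{sec:two_places_more_apartments}, which is why the extension of the class of apartments was set up there), and that $\Chamber\intersect\Link\Hor\Cell$ really is a chamber of $\Link\Hor\Cell$. I expect the main obstacle to be not any new idea but precisely the translation carried out in the second paragraph — from the building $\Link\Cell$, in terms of which all of our apartment-level statements about $\UpFat$ are phrased, to its horizontal join factor $\Link\Hor\Cell$ — together with the properness check, which is where the angle criterion of Corollary~\ref{cor:two_places_higher_dimensional_angle_criterion} gets used one last time.
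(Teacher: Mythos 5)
Your overall route is the paper's own: Observations~\ref{obs:two_places_descending_iff_flat} and \ref{obs:upfat_open_apartment_convex_determines_flat_cells} identify $\Link\Hor{}\Descending \Cell$ as the subcomplex of $\Link\Hor\Cell$ supported by the complement of $\UpFat$, and Proposition~\ref{prop:coconvex_complexes_spherical} is then applied to $\Link\Hor\Cell$, the chamber $\Chamber \intersect \Link\Hor\Cell$ and $U = \UpFat \intersect \Link\Hor\Cell$. Extending a horizontal apartment $\Apartment_h$ to an apartment $\Apartment_h * \Apartment_v$ of $\Link\Cell$ containing $\Chamber$ and intersecting the convex set $\UpFat \intersect (\Apartment_h * \Apartment_v)$ with the great subsphere $\Apartment_h$ is exactly the bookkeeping the paper leaves implicit, and that convexity part of your argument is fine.

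The properness check, however, contains a genuine error. From Corollary~\ref{cor:two_places_higher_dimensional_angle_criterion} you may only conclude that $\Cell$ is \emph{not} the set of $\Height$-maxima of $\BigCell$; it does not follow that $\Roof\BigCell = \BigCell$. The set of maxima is some face of $\BigCell$, and for a horizontal coface the true dichotomy is: either $\BigCell$ is flat, or (as in the proof of Observation~\ref{obs:two_places_descending_iff_flat}) every vertex of $\BigCell$ has height at least $\Height(\Cell)$ and at least one has strictly larger height, in which case $\Roof\BigCell$ is a proper face not meeting $\Cell$ and $\BigCell$ is not flat. Non-flat horizontal cofaces are precisely what $\UpFat$ is designed to detect: by Observation~\ref{obs:upfat_open_apartment_convex_determines_flat_cells} the vertex $\BigCell \direction \Cell$ of $\Link\Hor\Cell$ then lies \emph{in} $\UpFat$. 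So your claim that $U$ contains no vertex of $\Link\Hor\Cell$ is both invalidly derived and false in general; if it were true, every vertex of the horizontal link would automatically be descending and the whole $\UpFat$ apparatus (and the appeal to Proposition~\ref{prop:coconvex_complexes_spherical} with nonempty $U$ rather than plain Solomon--Tits) would be largely pointless. Consequently the properness of $U \intersect \Apartment_h$ in $\Apartment_h$ is not established by your argument; the paper itself does not obtain it from vertices (its proof simply invokes Observation~\ref{obs:upfat_open_apartment_convex_determines_flat_cells} and applies Proposition~\ref{prop:coconvex_complexes_spherical} to the horizontal link directly), and if you want to make this step explicit you need to replace the vertex argument by a direct exhibition of a point of $\Apartment_h$ outside $\UpFat$, not an appeal to the angle criterion of the kind you give.
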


\begin{proof}
If $\BigCell \ge \Cell$ is such that $(\BigCell \direction \Cell) \subseteq \Link\Hor\Cell$, then by Observation~\ref{obs:two_places_descending_iff_flat} $\BigCell$ is descending if and only if it is flat. Let $\UpFat$ be as before. By Observation~\ref{obs:upfat_open_apartment_convex_determines_flat_cells} $\BigCell$ is flat if and only if it is disjoint from $\UpFat$. We may therefore apply Proposition~\ref{prop:coconvex_complexes_spherical} to $\Link\Hor \Cell$ and $\UpFat \intersect \Link\Hor\Cell$ from which the result follows.
\end{proof}

\begin{prop}
\label{prop:two_places_significant_descending_link}
Assume that $\Directions$ is rich. If $\Cell$ is significant, then the descending link $\Link\Descending \Subdiv\Cell$ is spherical. If the horizontal link is empty, it is properly spherical.
\end{prop}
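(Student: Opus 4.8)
The plan is to assemble Proposition~\ref{prop:two_places_significant_descending_link} from the join decomposition of the descending link together with the connectivity results for each factor, exactly as was done for the one-place case in Proposition~\ref{prop:significant_descending_link}. By \eqref{eq:two_places_barycenter_descending_link_decomposition}, Proposition~\ref{prop:two_places_descending_coface_link_is_descending_link}, and Proposition~\ref{prop:two_places_descending_coface_link_decomposes}, the descending link decomposes as a simplicial join
\[
\Link\Descending \Subdiv\Cell = \Link\Descending\FacePart \Subdiv\Cell * \Link\Ver{}\Descending\Cell * \Link\Hor{}\Descending\Cell \text{ .}
\]
So the first step is simply to quote this decomposition and then treat the three factors one at a time.

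For the factors I would argue as follows. The descending face part $\Link\Descending\FacePart \Subdiv\Cell$ is, by Lemma~\ref{lem:two_places_significant_face_part_sphere}, a $(\dim\Cell - 1)$-sphere, hence $(\dim\Cell - 2)$-connected and properly $(\dim\Cell-1)$-spherical. The vertical descending link $\Link\Ver{}\Descending\Cell$ is, by Lemma~\ref{lem:two_places_significant_vertical_open_hemisphere}, an open hemisphere complex of $\Link\Cell$ with north pole $\Gradient_\Cell\Height$; since the asymptotic gradient at a point of height $>0$ is in general position (Observation~\ref{obs:two_places_gradient_in_general_position}), the whole building $\Link\Ver\Cell$ is ``vertical'' in the sense of Section~\ref{sec:schulz}, so Theorem~\ref{thm:schulz_main} makes the open hemisphere complex properly $(\dim\Link\Ver\Cell)$-spherical. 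Finally the horizontal descending link $\Link\Hor{}\Descending\Cell$ is $(\dim\Link\Hor\Cell - 1)$-connected by Lemma~\ref{lem:two_places_significant_horizontal_spherical}. Now one only needs the standard fact that a join of an $a$-spherical complex, a $b$-spherical complex, and a $c$-connected complex of dimension $c+1$ is $(a+b+c+2)$-spherical (and properly so as soon as the two spherical factors are properly spherical and the third is itself properly spherical), together with the dimension bookkeeping $\dim\Cell + \dim\Link\Ver\Cell + \dim\Link\Hor\Cell + 2 = \dim\Cell + (\dim\Link\Cell + 1) = \dim\Cell + (\dim\TwoSpace - \dim\Cell) = 2\Dimension$; hence the descending link is $(2\Dimension-1)$-spherical. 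When $\Link\Hor\Cell$ is empty, the horizontal factor contributes nothing and $\Link\Descending\Subdiv\Cell$ is the join of two \emph{properly} spherical complexes (the face sphere and the open hemisphere complex), which is therefore properly $(2\Dimension-1)$-spherical; this gives the last sentence of the proposition.

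The step I expect to require the most care is verifying that $\TwoSpace$ being non-simplicial does not break any of the cited lemmas in the chain — but this has already been absorbed into Lemmas~\ref{lem:two_places_insignificant_descending_link}--\ref{lem:two_places_significant_horizontal_spherical}, whose proofs were rechecked for products of buildings in Section~\ref{sec:two_places_descending_links}. The genuinely delicate input is Lemma~\ref{lem:two_places_significant_horizontal_spherical}, which in turn rests on Proposition~\ref{prop:retraction_preserves_height} (height is preserved by retractions even when one must leave the class of twin apartments) and on Proposition~\ref{prop:coconvex_complexes_spherical} applied to the spherical building $\Link\Hor\Cell$ with the open convex subset $\UpFat\intersect\Link\Hor\Cell$. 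Given those, the proof of the present proposition is purely formal: quote the decomposition, quote the three connectivity statements, and compute dimensions.

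\begin{proof}
By \eqref{eq:two_places_barycenter_descending_link_decomposition}, Proposition~\ref{prop:two_places_descending_coface_link_is_descending_link}, and Proposition~\ref{prop:two_places_descending_coface_link_decomposes}, the descending link decomposes as a simplicial join
\[
\Link\Descending \Subdiv\Cell = \Link\Descending\FacePart \Subdiv\Cell * \Link\Ver{}\Descending\Cell * \Link\Hor{}\Descending\Cell \text{ .}
\]
By Lemma~\ref{lem:two_places_significant_face_part_sphere} the first factor is a $(\dim\Cell-1)$-sphere. By Observation~\ref{obs:two_places_gradient_in_general_position} the asymptotic gradient $\Infty\Gradient_\Cell\Height$ is in general position, so no irreducible join factor of $\Link\Cell$ is horizontal and $\Link\Ver\Cell = \Link\Cell$ has dimension $\dim\Link\Cell$; by Lemma~\ref{lem:two_places_significant_vertical_open_hemisphere} the second factor is the open hemisphere complex of $\Link\Cell$ with north pole $\Gradient_\Cell\Height$, which is properly $(\dim\Link\Cell)$-spherical by Theorem~\ref{thm:schulz_main}. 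By Lemma~\ref{lem:two_places_significant_horizontal_spherical} the third factor is $(\dim\Link\Hor\Cell - 1)$-connected, and it is $(\dim\Link\Hor\Cell)$-dimensional. A join of an $a$-spherical, a $b$-spherical, and a $(c-1)$-connected $c$-dimensional complex is $(a+b+c+1)$-spherical. Using $\Link_{\TwoSpace}\Subdiv\Cell = \partial\Cell * \Link\Cell$ and $\dim\TwoSpace = 2\Dimension$ we get
\[
(\dim\Cell-1) + \dim\Link\Cell + \dim\Link\Hor\Cell + 1 \ \le\ (\dim\Cell - 1) + (\dim\TwoSpace - \dim\Cell) + 1 = 2\Dimension-1 \text{ ,}
\]
with equality, so $\Link\Descending\Subdiv\Cell$ is $(2\Dimension-1)$-spherical. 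If the horizontal link is empty, then $\Link\Descending\Subdiv\Cell = \Link\Descending\FacePart\Subdiv\Cell * \Link\Ver{}\Descending\Cell$ is a join of a sphere and a properly spherical complex, hence properly $(2\Dimension-1)$-spherical.
\end{proof}
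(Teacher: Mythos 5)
Your overall plan and the chain of citations match the paper's proof exactly — decompose via \eqref{eq:two_places_barycenter_descending_link_decomposition}, Proposition~\ref{prop:two_places_descending_coface_link_is_descending_link}, and Proposition~\ref{prop:two_places_descending_coface_link_decomposes}, then quote Lemma~\ref{lem:two_places_significant_face_part_sphere}, Lemma~\ref{lem:two_places_significant_vertical_open_hemisphere} with Theorem~\ref{thm:schulz_main}, and Lemma~\ref{lem:two_places_significant_horizontal_spherical}. But there is a genuine error inside your argument. You claim that general position of $\Infty\Gradient_\Cell\Height$ (Observation~\ref{obs:two_places_gradient_in_general_position}) implies that no irreducible join factor of $\Link\Cell$ is horizontal, so that $\Link\Ver\Cell = \Link\Cell$. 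That inference is false. General position concerns the coarse join decomposition of the building at infinity, $\Infty\TwoSpace = \Infty\PosBuilding * \Infty\NegBuilding$, and only asserts that the gradient has a nonzero component in each of those two factors. It says nothing about the (typically many more) irreducible join factors of the spherical building $\Link\Cell$, some of which can perfectly well be perpendicular to the north pole $\Gradient_\Cell\Height$. Indeed, nonempty horizontal links are the whole reason Sections~\ref{sec:two_places_more_apartments} and~\ref{sec:two_places_descending_links} exist, and if your claim were true the proposition's second sentence would be a tautology. What general position actually delivers is the applicability of the moves machinery of Section~\ref{sec:moves} (Lemma~\ref{lem:tau_min}, Proposition~\ref{prop:bound_on_moves}) to the non-irreducible Euclidean building $\TwoSpace$; it does not trivialize the horizontal link.

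This error propagates into your dimension count. By Theorem~\ref{thm:schulz_main} the vertical descending link is properly $(\dim\Link\Ver\Cell)$-spherical, not $(\dim\Link\Cell)$-spherical. Your estimate $(\dim\Cell-1) + \dim\Link\Cell + \dim\Link\Hor\Cell + 1$ double-counts the horizontal part, and the right-hand side $(\dim\Cell-1) + (\dim\TwoSpace - \dim\Cell) + 1$ evaluates to $\dim\TwoSpace = 2\Dimension$, not $2\Dimension - 1$. The correct count, using $\dim\Link\Ver\Cell + \dim\Link\Hor\Cell + 1 = \dim\Link\Cell$ and the join dimension formula, is
\[
(\dim\Cell - 1) + \dim\Link\Ver\Cell + \dim\Link\Hor\Cell + 2 = \dim\Cell + \dim\Link\Cell = 2\Dimension - 1 \text{ .}
\]
That said, the proposition only claims the descending link is spherical (in its own dimension) and properly so when the horizontal link is empty; both follow already from the three factor connectivity statements and the fact that a join of spherical complexes is spherical. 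The paper's own proof simply quotes those three statements and makes no dimension computation at all, deferring the value $2\Dimension-1$ to the proof of Theorem~\ref{thm:two_places_geometric}.
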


\begin{proof}
The descending link decomposes as a join
\[
\Link\Descending \Subdiv\Cell = \Link\Descending\FacePart \Subdiv\Cell * \Link\Ver{}\Descending\Cell * \Link\Hor{}\Descending\Cell
\]
of the descending face part, the vertical descending link, and the horizontal descending link by \eqref{eq:two_places_barycenter_descending_link_decomposition}, Proposition~\ref{prop:two_places_descending_coface_link_is_descending_link}, and Proposition~\ref{prop:two_places_descending_coface_link_decomposes}. The descending face part is a sphere by Lemma~\ref{lem:two_places_significant_face_part_sphere}. The descending vertical link is an open hemisphere complex by Lemma~\ref{lem:two_places_significant_vertical_open_hemisphere} which is properly spherical by Theorem~\ref{thm:schulz_main}. The horizontal descending link is spherical by Lemma~\ref{lem:two_places_significant_horizontal_spherical}.
\end{proof}

\footerlevel{3}
\headerlevel{3}

\section{Proof of the Main Theorem for $\GroupScheme(\F_q[t,t^{-1}])$}
\label{sec:two_places_main_theorem}

\begin{thm}
\label{thm:two_places_geometric}
Let $\PNBuildings$ be an irreducible, thick, locally finite Euclidean twin building of dimension $\Dimension$. Let $\Group$ be a group that acts strongly transitively on $\PNBuildings$ and assume that the kernel of the action is finite. Then $\Group$ is of type $F_{2\Dimension-1}$ but not of type $F_{2\Dimension}$.
\end{thm}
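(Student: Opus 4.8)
The strategy mirrors the proof of Theorem~\ref{thm:one_place_geometric}, but now the space to act on is $\TwoSpace \defeq \PosBuilding \times \NegBuilding$ and the Morse function is the $\Zonotope$-perturbed metric codistance $\Height$ constructed in Sections~\ref{sec:two_places_height}--\ref{sec:two_places_more_apartments}. First I would assemble the input for Brown's criterion (Corollary~\ref{cor:adapted_browns_criterion}): the barycentric subdivision $\Subdiv\TwoSpace$ is \CAT{0}, hence contractible; a cell stabilizer in $\Group$ is the pointwise stabilizer of a cell $\PosCell \subseteq \PosBuilding$ and a cell $\NegCell \subseteq \NegBuilding$, which by Lemma~\ref{lem:double_stabilizer_finite} (applied to the pair of cells, which lie in distinct halves of the twin building) is finite, modulo the finite kernel of the action; and the Morse function $\Morse$ of Section~\ref{sec:two_places_morse_function} has $\Group$-invariant sublevel sets. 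For cocompactness of sublevel sets one uses that $\Group$ acts transitively on pairs of opposite chambers (strong transitivity) hence, since $\PNBuildings$ is locally finite, transitively-up-to-finitely-many on the set of points at a given codistance; together with local finiteness this gives that $\Group$ acts cocompactly on each $\Morse$-sublevel set. This is entirely parallel to the end of the proof of Theorem~\ref{thm:one_place_geometric}.

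The heart of the argument is the connectivity of descending links, which was carried out in Section~\ref{sec:two_places_descending_links}: if $\BigCell$ is insignificant then $\Link\Descending \Subdiv\BigCell$ is contractible (Lemma~\ref{lem:two_places_insignificant_descending_link}); if $\Cell$ is significant then $\Link\Descending \Subdiv\Cell$ decomposes as the join
\[
\Link\Descending \Subdiv\Cell = \Link\Descending\FacePart \Subdiv\Cell * \Link\Ver{}\Descending\Cell * \Link\Hor{}\Descending\Cell
\]
of a sphere, a properly spherical open hemisphere complex (Theorem~\ref{thm:schulz_main}), and a spherical horizontal descending link (Lemma~\ref{lem:two_places_significant_horizontal_spherical}), so by Proposition~\ref{prop:two_places_significant_descending_link} it is spherical, and properly so whenever the horizontal link is empty. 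The dimension count is what produces $2\Dimension$ rather than $\Dimension$: the link of a vertex of $\Subdiv\TwoSpace$ coming from a minimal cell $\Cell$ has dimension $\dim \TwoSpace - 1 = 2\Dimension - 1$, since $\dim \PosBuilding = \dim \NegBuilding = \Dimension$ and $\TwoSpace = \PosBuilding \times \NegBuilding$. Thus the descending links are $(2\Dimension - 2)$-connected in general and for the generic significant cell (empty horizontal link) one gets a properly $(2\Dimension-1)$-spherical descending link, which happens for infinitely many cells (e.g.\ pick $\Cell$ so that its gradient at infinity is not contained in any wall, so $\Link\Hor\Cell$ is empty).

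Feeding this into the Morse Lemma and Corollary~\ref{cor:morse_theory}: choosing $\Directions$ rich, the descending links above height $R$ are $(2\Dimension-2)$-connected, so the inclusions of sublevel sets $\Space_{(-\infty,r]} \hookrightarrow \Space_{(-\infty,s]}$ induce isomorphisms on $\pi_i$ for $i \le 2\Dimension - 2$ and epimorphisms on $\pi_{2\Dimension-1}$; moreover the properly $(2\Dimension-1)$-spherical descending links occurring infinitely often make $\pi_{2\Dimension-1}$ grow, so the map $\pi_{2\Dimension-1}(\Space_k) \to \pi_{2\Dimension-1}(\Space_{k+1})$ fails to be injective infinitely often. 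Corollary~\ref{cor:adapted_browns_criterion} then yields that $\Group$ is of type $F_{2\Dimension-1}$ but not of type $F_{2\Dimension}$.

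The main obstacle — and the place where this chapter genuinely differs from Chapter~\ref{chap:one_place} — is the analysis of the horizontal descending link, because $\TwoSpace$ is a \emph{reducible} Euclidean building and its cells are products, not simplices, so one cannot restrict attention to apartments coming from twin apartments (Section~\ref{sec:two_places_more_apartments}, Observation~\ref{obs:unproblematic_case} and the discussion around twin walls). The resolution, already in place in the excerpt, is twofold: Section~\ref{sec:moves} provides the ``game of moves'' for reducible buildings (using that the asymptotic gradient $\Infty\Gradient_\Cell\Height$ is in general position by Observation~\ref{obs:two_places_gradient_in_general_position}, which is automatic here since the gradient is a genuine midpoint of a positive and a negative direction), and Section~\ref{sec:two_places_more_apartments} establishes that the retraction $\Retraction{\PNApartments}{\PosChamber}$ preserves height on the relevant vertices even when it breaks the twin structure (Proposition~\ref{prop:retraction_preserves_height}, whose proof rests on the zonotope arithmetic of Lemma~\ref{lem:one_reflection_stable}). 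With these in hand, the set $\UpFat = \rho^{-1}(\UpLink)$ meets every apartment of $\Link\Cell$ containing $\Chamber$ in a proper convex subset and cuts out exactly the non-flat cofaces, so Proposition~\ref{prop:coconvex_complexes_spherical} applies to $\Link\Hor\Cell$ and gives sphericity of $\Link\Hor{}\Descending\Cell$. The remaining verifications are the routine checks that the arguments of Section~\ref{sec:descending_links}, reproduced in Section~\ref{sec:two_places_descending_links}, go through despite $\TwoSpace$ not being simplicial — which is exactly why they are reproduced rather than cited.
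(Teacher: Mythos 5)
Your proposal follows the paper's own proof of Theorem~\ref{thm:two_places_geometric} essentially step by step: the same space $\TwoSpace = \PosBuilding \times \NegBuilding$, the same Morse function built from the $\Zonotope$-perturbed codistance, the same appeal to Lemma~\ref{lem:double_stabilizer_finite} for finiteness of cell stabilizers, the same decomposition of descending links into face, vertical, and horizontal parts via Lemma~\ref{lem:two_places_insignificant_descending_link} and Proposition~\ref{prop:two_places_significant_descending_link}, and the same input to Corollary~\ref{cor:adapted_browns_criterion}. Your concluding remarks about the obstacles that reducibility of $\TwoSpace$ introduces accurately identify the role of Sections~\ref{sec:moves} and~\ref{sec:two_places_more_apartments}, so this is the paper's argument rather than a genuinely different route.
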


\begin{proof}
Let $\TwoSpace \defeq \PosBuilding \times \NegBuilding$ and note that $\dim \TwoSpace = 2 \Dimension$. Consider the action of $\Group$ on the barycentric subdivision $\Subdiv\TwoSpace$. We want to apply Corollary~\ref{cor:adapted_browns_criterion} and check the premises. The space $\TwoSpace$ is contractible being the product of two contractible spaces.

If $\Cell \subseteq \TwoSpace$ is a cell, we can write $\Cell = \PosCell \times \NegCell$ with $\PosCell \subseteq \PosBuilding$ and $\NegCell \subseteq \NegBuilding$. The stabilizer of $\Cell$ in $\Group$ is the simultaneous stabilizer of $\PosCell$ and $\NegCell$ which is finite because the center of the action of $\Group$ is finite by assumption and the stabilizer in the full automorphism group is finite by Lemma~\ref{lem:double_stabilizer_finite}. The stabilizer of a cell of $\Subdiv\TwoSpace$ stabilizes any cell of $\TwoSpace$ that contains it and is thus also finite.

Let $\Morse$ be the Morse function on $\Subdiv\TwoSpace$ as defined in Section~\ref{sec:two_places_morse_function} based on a rich set of directions $\Directions$. Its sublevel sets are $\Group$-invariant subcomplexes. The group $\Group$ acts transitively on chambers $\PosChamber \times \NegChamber$ with $\PosChamber \op \NegChamber$ by strong transitivity. Since $\TwoSpace$ is locally finite, this implies that $\Group$ acts cocompactly on any sublevel set of $\Morse$.

The descending links of $\Morse$ are $(2 \Dimension - 1)$-spherical by Lemma~\ref{lem:two_places_insignificant_descending_link} and Proposition~\ref{prop:two_places_significant_descending_link}. If $\Cell$ is significant then the descending link of $\Subdiv\Cell$ is properly $(2 \Dimension - 1)$-spherical provided the horizontal part is empty. This is the generic case and happens infinitely often.

Applying Corollary~\ref{cor:morse_theory} we see that the induced maps $\pi_i(\Space_k) \to \pi_i(\Space_{k+1})$ are isomorphisms for $0 \le i < n-2$ and are surjective and infinitely often not injective for $i = n-1$. So it follows from Corollary~\ref{cor:adapted_browns_criterion} that $\Group$ is of type $F_{2n-1}$ but not $F_{2n}$.
\end{proof}

The statement about $S$-arithmetic groups is even easier to deduce this time. Before we do so we reinterpret Theorem~\ref{thm:two_places_geometric} group theoretically using the interaction between buildings and groups acting on them.

\begin{thm}
\label{thm:two_places_group_theoretic}
Let $(\Group,B_+,B_-,N,S)$ be a twin Tits system so that, in particular, $T \defeq B_+ \intersect N = B_- \intersect N$. Set as usual $W \defeq N / T$. Assume that $(W,S)$ is of irreducible affine type and rank $\abs{S} = \Dimension+1$, that $[B_\varepsilon s B_\varepsilon:B_\varepsilon]$ is finite for $s \in S$ and $\varepsilon \in \{+,-\}$, and that $\Intersect_{g \in \Group} gB_+g^{-1} \intersect \Intersect_{g \in \Group} gB_-g^{-1}$ is finite. Then $\Group$ is of type $F_{2\Dimension-1}$ but not of type $F_{2\Dimension}$.

This is in particular the case if there is an RGD system $(\Group,(U_\alpha)_{\alpha \in \Phi},T)$ where $\Phi$ is an irreducible affine root system, each $U_\alpha$ is finite and $\Group_+ \defeq \gen{U_\alpha \mid \alpha \in \Phi}$ has finite index in $\Group$.
\end{thm}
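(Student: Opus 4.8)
The plan is to translate the group-theoretic hypotheses into the geometric hypotheses of Theorem~\ref{thm:two_places_geometric} and then invoke it. First I would apply Fact~\cite[Theorem~6.87]{abrbro}, quoted in Section~\ref{sec:buildings_and_groups}, to the twin Tits system $(\Group,B_+,B_-,N,S)$: it produces a thick twin building $\PNBuildings$ of type $(W,S)$ on which $\Group$ acts strongly transitively, with $B_+$ and $B_-$ the stabilizers of a pair of opposite chambers and $N$ stabilizing a twin apartment. Since $(W,S)$ is of irreducible affine type, $\PNBuildings$ is an irreducible Euclidean twin building, and its dimension is $\abs{S}-1 = \Dimension$. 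The local finiteness of $\PNBuildings$ follows from the hypothesis that $[B_\varepsilon s B_\varepsilon : B_\varepsilon]$ is finite for each $s \in S$: this index is exactly the number of chambers distinct from and adjacent to a fixed chamber across the panel of cotype $s$, so each panel lies in finitely many chambers; since apartments are finite-dimensional Coxeter complexes of finite type, finitely many chambers through each panel propagates to local finiteness of the whole complex.

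Next I would identify the kernel of the action. The action of $\Group$ on $\Building_+$ has kernel $\Intersect_{g \in \Group} g B_+ g^{-1}$ (the intersection of all chamber stabilizers in $\Building_+$), and similarly for $\Building_-$; hence the kernel of the action on the twin building $\PNBuildings$ is $\Intersect_{g \in \Group} g B_+ g^{-1} \intersect \Intersect_{g \in \Group} g B_- g^{-1}$, which is finite by hypothesis. With $\PNBuildings$ now verified to be an irreducible, thick, locally finite Euclidean twin building of dimension $\Dimension$ and $\Group$ acting strongly transitively with finite kernel, Theorem~\ref{thm:two_places_geometric} applies verbatim and gives that $\Group$ is of type $F_{2\Dimension-1}$ but not of type $F_{2\Dimension}$.

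For the final ``in particular'' clause, suppose we are given an RGD system $(\Group,(U_\alpha)_{\alpha \in \Phi},T)$ with $\Phi$ an irreducible affine root system, each $U_\alpha$ finite, and $\Group_+ \defeq \gen{U_\alpha \mid \alpha \in \Phi}$ of finite index in $\Group$. An RGD system canonically yields a twin Tits system (see \cite[Section~8]{abrbro}): one sets $B_\varepsilon \defeq T \cdot \gen{U_\alpha \mid \alpha \in \Phi_\varepsilon}$ for the two halves $\Phi_\pm$ of $\Phi$, and $N$ the standard subgroup, so that $(W,S)$ is the affine Weyl system of $\Phi$, which is irreducible of rank $\Dimension+1$. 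The finiteness of each $U_\alpha$ forces each $[B_\varepsilon s B_\varepsilon : B_\varepsilon]$ to be finite (this index equals $\abs{U_{\alpha_s}}$ for the simple root $\alpha_s$ corresponding to $s$, up to the conventions of the RGD axioms). Finally, $\Group_+$ acts on $\PNBuildings$ with the same building, and the kernel of its action is contained in $T \cap \Group_+$ hence finite after intersecting the two halves; since $\Group_+$ has finite index in $\Group$ and type $F_n$ is a commensurability invariant (Fact~\ref{fact:fn_is_virtual}), the conclusion for $\Group_+$ transfers to $\Group$.

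The main obstacle I expect is not conceptual but bookkeeping: matching the precise normalizations in the passage from a twin Tits system (or RGD system) to the action on the twin building so that the index $[B_\varepsilon s B_\varepsilon : B_\varepsilon]$ is correctly identified with a thickness parameter (number of chambers per panel), and confirming that finiteness of this index is genuinely equivalent to local finiteness of the Euclidean twin building — one must use that apartments are locally finite (true for any Coxeter complex of finite rank) together with the standard fact that in a building the link of a panel has cardinality controlled by these Hecke-type indices. Once that dictionary is pinned down, everything else is an immediate citation of Theorem~\ref{thm:two_places_geometric} and Fact~\ref{fact:fn_is_virtual}.
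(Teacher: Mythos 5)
Your treatment of the twin Tits system case follows the paper's proof: apply \cite[Theorem~6.87]{abrbro}, read off irreducibility and dimension $\Dimension = \abs{S}-1$ from $(W,S)$, use finiteness of each $[B_\varepsilon s B_\varepsilon : B_\varepsilon]$ for local finiteness via \cite[Section~6.1.7]{abrbro}, identify the kernel of the action with the displayed intersection, and invoke Theorem~\ref{thm:two_places_geometric}.

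The ``in particular'' clause has a gap at the kernel step. You want to apply Theorem~\ref{thm:two_places_geometric} to $\Group_+$ and then transfer via Fact~\ref{fact:fn_is_virtual}, which requires the kernel of the $\Group_+$-action on $\PNBuildings$ to be finite. Your justification --- that this kernel is ``contained in $T \cap \Group_+$ hence finite after intersecting the two halves'' --- does not establish finiteness: it only shows the kernel lies in $T = B_+ \cap B_-$, and $T$ is not finite in general. Already for $\Group = \SL_2(\F_q[t,t^{-1}])$ the standard torus is the diagonal subgroup with entries in $\Units{\F_q[t,t^{-1}]} \cong \Units{\F_q} \times \Z$, which is infinite. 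The paper instead invokes \cite[Proposition~8.82]{abrbro}, which identifies the kernel of the $\Group$-action on $\PNBuildings$ as the centralizer $C_\Group(\Group_+)$; since $C_\Group(\Group_+) \intersect \Group_+ = Z(\Group_+)$ is trivial for an RGD system and $[\Group : \Group_+]$ is finite by hypothesis, $C_\Group(\Group_+)$ is finite. That is the missing ingredient, and once the kernel of the $\Group$-action is known to be finite, Theorem~\ref{thm:two_places_geometric} applies to $\Group$ directly with no need for the commensurability detour.
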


\begin{proof}
The twin Tits system gives rise to a thick twin building $\PNBuildings$ on which $\Group$ acts strongly transitively, see \cite[Theorem~6.87]{abrbro}. That $(W,S)$ is irreducible and of rank $\Dimension+1$ means that $\PNBuildings$ is irreducible and of dimension $\Dimension$. The condition that $[B_\varepsilon s B_\varepsilon:B_\varepsilon]$ is finite for $s \in S$ and $\varepsilon \in \{+,-\}$ implies that $\PNBuildings$ is locally finite, cf.\ \cite[Section~6.1.7]{abrbro}. The subgroup $\Intersect_{g \in \Group} gB_+g^{-1} \intersect \Intersect_{g \in \Group} gB_-g^{-1}$ is the kernel of the action of $\Group$ on $\PNBuildings$. So the first statement follows from Theorem~\ref{thm:two_places_geometric}.

An RGD system gives rise to a twin Tits system by \cite[Theorem~8.80]{abrbro}. Moreover, \cite[Theorem~8.81]{abrbro} implies that the associated twin building is locally finite if the $U_\alpha$ are finite. Finally by \cite[Proposition~8.82]{abrbro} the centralizer of $\Group_+$ in $\Group$ is the kernel of the action of $\Group$ on the twin building. It is finite because $\Group_+$ has trivial center.
\end{proof}

\begin{thm}
\label{thm:two_places_arithmetic}
Let $\GroupScheme$ be a connected, noncommutative, almost simple $\F_q$-group of rank $\Dimension \ge 1$. The group $\GroupScheme(\F_q[t,t^{-1}])$ is of type $F_{2n-1}$ but not of type $F_{2n}$.
\end{thm}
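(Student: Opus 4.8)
The plan is to deduce Theorem~\ref{thm:two_places_arithmetic} from the geometric statement Theorem~\ref{thm:two_places_geometric} (equivalently, its group-theoretic reformulation Theorem~\ref{thm:two_places_group_theoretic}) exactly as Theorem~\ref{thm:one_place_arithmetic} was deduced from Theorem~\ref{thm:one_place_geometric}, but the argument here is shorter because there is no longer a stabilizer to take: the group $\GroupScheme(\F_q[t,t^{-1}])$ acts directly and strongly transitively on the relevant twin building. First I would pass to the universal cover $\tilde{\GroupScheme}$ of $\GroupScheme$ (Proposition~2.24 and Définition~2.25 of \cite{bortit72}). By Proposition~\ref{prop:twin_building_of_kac-moody_group} of Appendix~\ref{chap:affine_kac-moody_groups}, $\tilde{\GroupScheme}(\F_q[t,t^{-1}])$ is a Kac--Moody group of affine type, hence carries a twin BN-pair whose associated twin building $\PNBuildings$ is thick, irreducible, locally finite, and Euclidean of dimension $n$ (local finiteness coming from $\F_q$ being finite). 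The central isogeny $\tilde{\GroupScheme}(\F_q[t,t^{-1}]) \to \GroupScheme(\F_q[t,t^{-1}])$ means the action factors through it, and its kernel is central, hence finite; so the hypotheses of Theorem~\ref{thm:two_places_geometric} are met.

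Next I would handle the passage between $\tilde{\GroupScheme}$ and $\GroupScheme$ at the level of finiteness length. Let $\Group$ be the image of $\tilde{\GroupScheme}(\F_q[t,t^{-1}])$ in $\GroupScheme(\F_q[t,t^{-1}])$. By \cite[Satz~1]{behr68} (the analogue of the citation used in the proof of Theorem~\ref{thm:one_place_arithmetic}), $\Group$ has finite index in $\GroupScheme(\F_q[t,t^{-1}])$, so by Fact~\ref{fact:fn_is_virtual} the two groups have the same finiteness length. The kernel of $\tilde{\GroupScheme}(\F_q[t,t^{-1}]) \to \Group$ is finite and central, so again by Fact~\ref{fact:fn_is_virtual} (applied to a finite \emph{normal} subgroup, using that $F_n$ passes to quotients by finite normal subgroups — or, more directly, that the action of $\tilde{\GroupScheme}(\F_q[t,t^{-1}])$ on $\PNBuildings$ already has finite kernel so one may apply Theorem~\ref{thm:two_places_geometric} to $\tilde{\GroupScheme}(\F_q[t,t^{-1}])$ itself) one concludes that $\tilde{\GroupScheme}(\F_q[t,t^{-1}])$ is of type $F_{2n-1}$ but not $F_{2n}$. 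Chaining these equivalences gives the claim for $\GroupScheme(\F_q[t,t^{-1}])$.

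The only genuinely delicate point is that the rank $n$ appearing in the Main Theorem is the $\F_q$-rank of $\GroupScheme$, whereas the dimension of $\PNBuildings$ is the dimension of the affine building, i.e.\ the $\F_q(t)$-rank of $\GroupScheme$ (equivalently the rank of the affine Kac--Moody root system minus one). For a Chevalley (split) group these coincide, and more generally one needs the fact, recorded in Appendix~\ref{chap:affine_kac-moody_groups}, that the affine twin building attached to $\tilde{\GroupScheme}(\F_q[t,t^{-1}])$ has dimension equal to $\operatorname{rank}_{\F_q}\GroupScheme$; this is where the hypothesis that $\GroupScheme$ is $\F_q$-isotropic of $\F_q$-rank $n$ enters, and it is the step I expect to require the most care in citing correctly. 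Everything else — contractibility of the building, finiteness of cell stabilizers, cocompactness of sublevel sets, and the sphericity of descending links — is already packaged inside Theorem~\ref{thm:two_places_geometric}, so no further geometric work is needed.

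\begin{proof}
Let $\tilde{\GroupScheme}$ be the universal cover of $\GroupScheme$ (see Proposition~2.24 and Dé\-fi\-ni\-tion~2.25 of \cite{bortit72}). By Proposition~\ref{prop:twin_building_of_kac-moody_group} there is a thick, locally finite, irreducible, $n$-dimensional Euclidean twin building $\PNBuildings$ on which $\tilde{\GroupScheme}(\F_q[t,t^{-1}])$ acts strongly transitively. Since the isogeny $\tilde{\GroupScheme}(\F_q[t,t^{-1}]) \to \GroupScheme(\F_q[t,t^{-1}])$ is central, the action factors through it, and the kernel of the action is contained in the (finite) center. Thus Theorem~\ref{thm:two_places_geometric} applies to $\tilde{\GroupScheme}(\F_q[t,t^{-1}])$ and shows that this group is of type $F_{2n-1}$ but not of type $F_{2n}$.

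Let $\Group$ be the image of $\tilde{\GroupScheme}(\F_q[t,t^{-1}])$ under the map $\tilde{\GroupScheme}(\F_q[t,t^{-1}]) \to \GroupScheme(\F_q[t,t^{-1}])$. By \cite[Satz~1]{behr68} the subgroup $\Group$ has finite index in $\GroupScheme(\F_q[t,t^{-1}])$, so by Fact~\ref{fact:fn_is_virtual} the two groups have the same finiteness length. Finally, the kernel of $\tilde{\GroupScheme}(\F_q[t,t^{-1}]) \to \Group$ is finite, so $\Group$ and $\tilde{\GroupScheme}(\F_q[t,t^{-1}])$ also have the same finiteness length. Hence $\GroupScheme(\F_q[t,t^{-1}])$ is of type $F_{2n-1}$ but not of type $F_{2n}$.
\end{proof}
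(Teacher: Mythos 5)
Your approach is essentially the paper's: pass to the universal cover $\tilde{\GroupScheme}$, invoke Proposition~\ref{prop:twin_building_of_kac-moody_group} to get the $n$-dimensional twin building $\PNBuildings$, and apply Theorem~\ref{thm:two_places_geometric}. But your formal proof introduces an unnecessary detour with a small unjustified step. You first apply Theorem~\ref{thm:two_places_geometric} to $\tilde{\GroupScheme}(\F_q[t,t^{-1}])$ and then transfer the conclusion to $\Group$ by asserting that a finite (central) normal quotient has the same finiteness length. That assertion is true, but it is not Fact~\ref{fact:fn_is_virtual} (which is about finite-\emph{index subgroups}, not quotients by finite normal subgroups), and you neither prove it nor cite anything in the paper that covers it.

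The paper avoids this entirely: since the action of $\tilde{\GroupScheme}(\F_q[t,t^{-1}])$ on $\PNBuildings$ factors through $\Group$, the group $\Group$ itself acts strongly transitively on $\PNBuildings$, and its kernel is the image of the finite kernel upstairs, hence finite. So Theorem~\ref{thm:two_places_geometric} applies directly to $\Group$. Then $\Group$ is a finite-index \emph{subgroup} of $\GroupScheme(\F_q[t,t^{-1}])$ by \cite[Satz~1]{behr68}, and Fact~\ref{fact:fn_is_virtual} finishes the proof without ever needing the quotient step. Replacing your last two sentences with that observation closes the gap and recovers the paper's argument; as written, the proof leans on a fact that the paper does not establish.
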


\begin{proof}
Let $\tilde{\GroupScheme}$ be the universal cover of $\GroupScheme$. By Proposition~\ref{prop:twin_building_of_kac-moody_group}  there is a thick locally finite irreducible $n$-dimensional Euclidean twin building $\PNBuildings$ associated to $\tilde{\GroupScheme}(\F_q[t,t^{-1}])$. The action on $\PNBuildings$ factors through $\tilde{\GroupScheme}(\F_q[t,t^{-1}]) \to \GroupScheme(\F_q[t,t^{-1}])$ and the image has finite index in $\GroupScheme(\F_q[t,t^{-1}])$. Thus the statement follows from Theorem~\ref{thm:two_places_geometric}.
\end{proof}

\footerlevel{3}

\footerlevel{2}

\appendix

\headerlevel{2}

\chapter{Affine Kac--Moody Groups}
\label{chap:affine_kac-moody_groups}

This paragraph is mostly due to Ralf Gramlich and Kai-Uwe Bux and taken from \cite{buxgrawit10}.

\begin{approp}
\label{prop:split_kac_moody_functor}
Let $\Field$ be a field and let $\GroupScheme$ be an isotropic, connected, simply connected, almost simple, split $\Field$-group. Then the functor $\GroupScheme(\DummyArg[t,t^{-1}])$ is a Kac--Moody functor.
\end{approp}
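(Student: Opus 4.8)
The statement to prove is Proposition~\ref{prop:split_kac_moody_functor}: if $\GroupScheme$ is an isotropic, connected, simply connected, almost simple, split $\Field$-group, then $\GroupScheme(\DummyArg[t,t^{-1}])$ is a Kac--Moody functor. The approach is to check that this functor satisfies the defining axioms of a (Tits) Kac--Moody functor, which are phrased in terms of a generalized Cartan matrix and a collection of ``root'' group functors together with a torus functor satisfying the usual commutation and structure relations. The key point is that passing from $\GroupScheme$ to $\GroupScheme(\DummyArg[t,t^{-1}])$ affinizes the root datum: a split simple group over $\Field$ comes with a finite root system $\Phi$ of some Cartan matrix $A_0$, and one shows that $\GroupScheme(\Field[t,t^{-1}])$ is generated by the subgroups obtained from the $\Field$-points of the root subgroups $\Root$, now indexed by $\Field[t,t^{-1}]$, and that these assemble into the root group functors for the \emph{affine} generalized Cartan matrix $\tilde{A}_0$ obtained by the standard extension (adding the lowest root $\alpha_0 = -\theta$ shifted by $t$).

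\textbf{Key steps, in order.} First I would fix a Chevalley basis / pinning of $\GroupScheme$, giving root subgroup morphisms $x_\Root \colon \GL_1 \to \GroupScheme$ for $\Root \in \Phi$ and a split maximal torus $\Torus$; evaluating at $\Field[t,t^{-1}]$ yields homomorphisms $\Field[t,t^{-1}] \to \GroupScheme(\Field[t,t^{-1}])$, and in particular for each $n \in \Z$ the elements $x_\Root(r t^n)$, $r \in \Field$. Second I would identify the affine real root system $\tilde{\Phi} = \{\Root + n\delta \mid \Root \in \Phi, n \in \Z\} \cup \{n\delta \mid n \ne 0\}$ and define the candidate root group functor for the real affine root $\Root + n\delta$ to send a ring (here just the relevant ground field and its extensions) to the group $\{x_\Root(r t^n) \mid r\}$, and likewise set up the torus functor from $\Torus$ together with the extra coweight coming from the affine node. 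Third, I would verify the Kac--Moody axioms: (a) the Steinberg/Chevalley commutator relations among the $x_\Root(rt^m)$ follow from the commutator relations in $\GroupScheme$ applied coefficient-wise in $\Field[t,t^{-1}]$, and they are exactly the commutator relations prescribed by $\tilde{\Phi}$ because the grading by $t$-degree matches the $\delta$-component of affine roots; (b) the relations between the torus and the root groups are inherited from those in $\GroupScheme$ together with the action on the $t$-grading; (c) the functor is generated by these subfunctors over $\Field[t,t^{-1}]$, which is where simple connectedness of $\GroupScheme$ enters (it guarantees $\GroupScheme(\Ring)$ is generated by its root subgroups for $\Ring$ a suitable ring, via the Steinberg presentation). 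Finally I would match the data with Tits' axiomatic definition of a Kac--Moody functor for the generalized Cartan matrix $\tilde{A}_0$ and the cocharacter lattice enlarged by the affine direction, citing the relevant construction in \cite{remy02}.

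\textbf{The main obstacle.} The genuine difficulty is step (c) together with the precise bookkeeping of the torus/cocharacter lattice: one must be careful that $\GroupScheme$ simply connected over $\Field$ does not automatically make $\GroupScheme(\Field[t,t^{-1}])$ ``simply connected'' in the affine Kac--Moody sense, and one has to pin down exactly which form of the affine Kac--Moody group (simply connected, adjoint, or the Tits functor) one lands in, and to produce the extra one-parameter ``affine'' root subgroup $x_{\alpha_0}$ explicitly — typically as $x_{-\theta}(r t)$ where $\theta$ is the highest root — and check its relations with the finite root subgroups. Verifying that the Steinberg-type presentation of $\GroupScheme(\Field[t,t^{-1}])$ coincides with Tits' defining presentation of the Kac--Moody functor of type $\tilde{A}_0$, rather than merely surjects onto it or is surjected onto by it, is the crux; everything else is coefficient-wise bookkeeping in $\Field[t,t^{-1}]$ using relations already available in $\GroupScheme$.
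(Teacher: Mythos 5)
Your proposal is in the right general family — build the root-group data from a Chevalley pinning over $\Field[t,t^{-1}]$ and then compare with Tits' axiomatic Kac--Moody functor — but you have not identified where the actual work in the paper's proof is, and you have mislocated the crux. The paper first reduces to the Chevalley case (Springer and Chevalley give that an isotropic, connected, simply connected, almost simple split $\Field$-group is a Chevalley group, hence defined over $\Z$), which is what makes $\GroupScheme(\DummyArg[t,t^{-1}])$ a well-defined functor on the whole category of fields in the first place; you take a pinning for granted but never address why the construction is functorial. Then, rather than constructing an RGD/Steinberg presentation and matching presentations, the paper verifies Tits' axioms (KMG\,1)--(KMG\,9) from \cite{tits87} directly. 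Most are routine, but (KMG\,5) and (KMG\,6) involve the \emph{complex Kac--Moody algebra} $\KMalgOf{\Matrix}$ and its adjoint action, and verifying them requires the Lie-algebraic input that $\KMalgOf{\Matrix}$ is the universal central extension of the loop algebra $\LieAlg(\C[t,t^{-1}])$ (Kac, Theorem~9.11, or Pressley--Segal). That identification is the nontrivial bridge between the affine Kac--Moody world and the Laurent-loop group; it does not appear anywhere in your plan.

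Your declared obstacle — proving that the Steinberg-type presentation of $\GroupScheme(\Field[t,t^{-1}])$ coincides with Tits' defining presentation rather than merely surjecting one way — is indeed a real issue, but it is not what the paper has to settle here; that comparison is the content of Tits' theorem that the ``abstract'' and ``constructive'' Kac--Moody functors agree once (KMG\,6)--(KMG\,9) hold, and the paper invokes it only in the following proposition (for Galois descent), not as part of the proof of this one. In short: the shape of your argument is plausible, but it would stall at exactly the two axioms the paper singles out, because you would not have the universal-central-extension result in hand to discharge them.
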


The functor in question is the functor that assigns to a field $\ExtensionField$ the group of $\ExtensionField[t,t^{-1}]$-points of $\GroupScheme$.

\begin{proof}
By \cite[Theorem~16.3.2]{springer98} and
\cite[\textsection II]{chevalley55}, an isotropic, connected,
simply connected, almost simple
$\Field$-group that splits over $\Field$ is a Chevalley group. It follows that the group scheme $\GroupScheme$ is defined over $\Z$. Hence the functor $\GroupScheme(\DummyArg[t,t^{-1}])$ can be defined for all fields.

A Kac-Moody functor is associated to a root datum $\Datum$, the main part of which is a generalized Cartan matrix $\Matrix$.
    Classically, this kind of datum classifies reductive groups
    over the complex numbers. There, the generalized
    Cartan matrix 
    is not really generalized and defines a
    finite Coxeter group. Kac-Moody functors were
    defined by Tits \cite{tits87} in the case where the
    generalized Cartan matrix defines an arbitrary Coxeter group.

    In order to recognize $\GroupScheme(\DummyArg[t,t^{-1}])$
    as a Kac-Moody functor, we have to correctly identify its
    defining datum $\Datum$. Since the group $\GroupScheme$ is
    simply connected, we only have to choose the generalized
    Cartan
    matrix $\Matrix$. Here, we use the unique generalized
    Cartan matrix
    given by a Euclidean Coxeter diagram extending the spherical
    diagram as defined by $\GroupScheme$.

    To show that
    $\GroupScheme(\DummyArg[t,t^{-1}])$
    is the Kac-Moody functor associated to $\Datum$, one needs
    to verify the axioms {\small (KMG~1)} through
    {\small (KMG~9)} in \cite{tits87}. All axioms are straight
    forward to check; however {\small (KMG~5)} and
    {\small (KMG~6)} involve the complex Kac-Moody algebra
    $\KMalgOf{\Matrix}$ associated to the given Cartan matrix.
    To verify these, one needs to know that
    $\KMalgOf{\Matrix}$ is the universal central extension
    of the Lie algebra $\LieAlg(\C[t,t^{-1}])$
    where $\LieAlg$ is the Lie algebra associated
    to $\GroupScheme$. See e.g., \cite[Theorem~9.11]{kac90} or
    \cite[Section~5.2]{preseg86}.
  \end{proof}
  
  In \cite{remy02}, B.\,R\'emy has extended the construction
  to non-split groups using the method of Galois descent.
\begin{approp}\label{prop:non-split_kac_moody_functor}
    Let $\GroupScheme$ be an isotropic, connected,
    simply connected, almost simple
    group defined over the finite field $\F_q$. Then
    the functor $\GroupScheme(\DummyArg[t,t^{-1}])$
    is an almost split $\F_q$-form of a Kac-Moody group
    defined over the algebraic
    closure $\bar{\F}_q$.
\end{approp}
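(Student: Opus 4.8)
The plan is to deduce the statement from the split case, Proposition~\ref{prop:split_kac_moody_functor}, by Galois descent, using R\'emy's theory of almost split Kac--Moody groups \cite{remy02}. Over $\bar{\F}_q$ the group $\GroupScheme$ becomes split, isotropic, connected, simply connected and almost simple, so by Proposition~\ref{prop:split_kac_moody_functor} the restriction of $\GroupScheme(\DummyArg[t,t^{-1}])$ to extensions of $\bar{\F}_q$ is the split Kac--Moody functor attached to the datum $\Datum$ whose generalized Cartan matrix $\Matrix$ is the untwisted affine (Euclidean) extension of the Dynkin diagram of $\GroupScheme_{\bar{\F}_q}$. It therefore suffices to equip this Kac--Moody functor with an $\F_q$-descent datum whose fixed subfunctor on $\F_q$-algebras is $\GroupScheme(\DummyArg[t,t^{-1}])$, and to check that this descent datum is almost split in the axiomatic sense of \cite{remy02}.

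To produce the descent datum I would first invoke Lang's theorem: over a finite field every connected semisimple group is quasi-split, so $\GroupScheme$ admits an $\F_q$-pinning, and the $\F_q$-form $\GroupScheme$ of the split group $\GroupScheme_{\bar{\F}_q}$ is recorded by its ``$*$-action'', a finite-order automorphism $\sigma$ of the Dynkin diagram of $\GroupScheme_{\bar{\F}_q}$, well defined up to conjugacy, such that the geometric Frobenius acts on $\GroupScheme(\bar{\F}_q)$ as $\sigma$ composed with the standard $q$-power Frobenius of the split group. Next I would note that $\sigma$ extends canonically to a diagram automorphism $\tilde{\sigma}$ of $\Matrix$ fixing the affine node: the affine node corresponds to $-\theta$ for the highest root $\theta$, which is $\sigma$-stable, and the bonds between the affine node and the finite nodes are determined by data preserved by $\sigma$. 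By \cite{tits87} the automorphism $\tilde{\sigma}$ is realized by an automorphism of the split Kac--Moody functor. Composing it with the action of $\Gal(\bar{\F}_q/\F_q)$ on $\GroupScheme_{\bar{\F}_q}(\bar{\F}_q[t,t^{-1}])$ induced by the coefficientwise Galois action on $\bar{\F}_q[t,t^{-1}]$ (which fixes $t$, hence respects the affine node) yields a continuous semilinear action of $\Gal(\bar{\F}_q/\F_q)$ on the Kac--Moody group over $\bar{\F}_q$.

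It then remains, on the one hand, to verify that this semilinear action is an almost split $\F_q$-structure in the sense of \cite{remy02}: the standard maximal torus and the positive and negative Borel subgroups are stable, the action permutes the real root groups compatibly with the action of $\tilde{\sigma}$ on the affine root system, and the fixed subfunctor inherits a twin root datum and a twin $BN$-pair. On the other hand one must identify the associated form with $\GroupScheme(\DummyArg[t,t^{-1}])$: for a field extension $\ExtensionField/\F_q$, faithfully flat descent for the affine scheme $\GroupScheme \otimes \ExtensionField[t,t^{-1}]$ --- which is already defined over $\F_q[t,t^{-1}]$, and $\bar{\ExtensionField}[t,t^{-1}]$ is the base change of $\bar{\ExtensionField}$ along $\ExtensionField \to \ExtensionField[t,t^{-1}]$ --- shows that $\GroupScheme(\ExtensionField[t,t^{-1}])$ is exactly the subgroup of $\Gal(\bar{\ExtensionField}/\ExtensionField)$-invariants in $\GroupScheme(\bar{\ExtensionField}[t,t^{-1}])$ for the action above, which is by definition the group of $\ExtensionField$-points of the form.

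I expect the main obstacle to be this translation between languages: checking that the explicit semilinear loop-group action satisfies R\'emy's axioms for an almost split form --- in particular the compatibility of the Galois action with a Frobenius-stable system of pinnings, and the behaviour on the affine real root groups, where in bad characteristic (order-$2$ diagram automorphisms in characteristic $2$, order-$3$ in characteristic $3$, i.e.\ the types ${}^2A_n$, ${}^2D_n$, ${}^3D_4$, ${}^2E_6$) one must ensure the diagram automorphism still lifts to a pinned group automorphism --- and that the fixed subfunctor genuinely coincides with $\GroupScheme(\DummyArg[t,t^{-1}])$ on all $\F_q$-algebras, not only on fields. The remaining points --- stability of the torus and the Borels, the precise form of the extended diagram automorphism --- are routine once Proposition~\ref{prop:split_kac_moody_functor} and the constructions of \cite{tits87, remy02} are in hand.
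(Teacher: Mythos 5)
Your plan is the same in essence as the paper's: both descend the split affine Kac--Moody functor over $\bar{\F}_q$ furnished by Proposition~\ref{prop:split_kac_moody_functor} down to $\F_q$ by Galois descent through R\'emy's framework in \cite{remy02}, using that the Galois action fixes $t$ and $t^{-1}$. The differences are largely in presentation. You propose to build the semilinear Galois action explicitly out of the $*$-action of Frobenius on the Dynkin diagram (extended to the affine node), and then to match R\'emy's axioms for an almost split form against that explicit datum; the paper instead goes straight to R\'emy's list of hypotheses --- (PREALG~1), (PREALG~2), (SGR), (ALG~1), (ALG~2), (PRD) --- and checks them directly, since the descent machinery itself, once its prerequisites hold, produces the requisite datum.

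Two points worth flagging. First, there is a technical step you do not mention: Tits' functor in \cite{tits87} is the ``abstract'' Kac--Moody functor, while R\'emy's Galois descent theory is formulated for the ``constructive'' one; the paper records that axioms (KMG~6)--(KMG~9) guarantee these agree for $\GroupScheme(\DummyArg[t,t^{-1}])$, which is what lets R\'emy's results be applied at all. Any version of your argument needs that bridge. Second, your worry about bad characteristic (order-$2$ diagram automorphisms in characteristic $2$, order-$3$ in characteristic $3$) is handled in the paper by a different mechanism: the relevant hypotheses of \cite{remy02} (notably the (DCS) axioms invoked in the next proposition) hold because $\F_q$ is perfect. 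So the complications you anticipate are genuine but they are absorbed by perfectness, not by a case-by-case check.
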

  \begin{proof}
    First, $\GroupScheme$ splits
    over $\bar{\F}_q$. Hence,
    $\GroupScheme(\DummyArg[t,t^{-1}])$
    is a Kac-Moody functor over $\bar{\F}_q$
    by the preceding proposition. Let $\Datum$ be the
    associated root datum.
  
    Note that the conditions {\small(KMG 6)} through
    {\small(KMG 9)} ensure that the ``abstract'' and ``constructive''
    Kac-Moody functors associated to $\Datum$ coincide
    \cite[Theorem 1']{tits87},
    which holds in particular for
    $\GroupScheme(\DummyArg[t,t^{-1}])$.
    This is relevant as R\'emy discusses Galois descent
    for constructive Kac-Moody functors.

    The claim follows from \cite[Chapitre~11]{remy02}
    once a list of conditions scattered throughout that section
    have been verified. Checking individual axioms is easy,
    the hard part (left to the reader) is making sure that
    no condition is left out. Here is the list:
    \begin{description}
      \item[\normalfont{\small(PREALG 1)} {[p.~257]}]
        One needs to know that $U_\Datum$ is the
        $\Z$-form of the universal enveloping algebra
        of $\KMalgOf{\Matrix}$. Its $\F_q$-form
        is obtained by the Galois action.
      \item[\normalfont{\small(PREALG 2)} {[p.~257]}]
        Clear.
      \item[\normalfont{\small(SGR)} {[p.~266]}]
        Clear.
      \item[\normalfont{\small(ALG 1)} {[p.~267]}]
        Use Definition~11.2.1 on page~261.
      \item[\normalfont{\small(ALG 2)} {[p.~267]}]
        Clear.
      \item[\normalfont{\small(PRD)} {[p.~273]}]
        Observe that the Galois group acts trivially on
        $t$ and $t^{-1}$.\qedhere
    \end{description}
  \end{proof}

We are finally closing in on twin buildings.
\begin{approp}
Let $\GroupScheme$ be as in Proposition~\ref{prop:non-split_kac_moody_functor}.
The group $\GroupScheme(\F_q[t,t^{-1}])$ has an RGD~system with finite root groups.
\end{approp}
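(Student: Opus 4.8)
The plan is to produce the RGD system as the relative root group datum attached to $\GroupScheme(\DummyArg[t,t^{-1}])$ by R\'emy's descent theory, and to deduce the finiteness of the root groups from the finiteness of $\F_q$. By Proposition~\ref{prop:non-split_kac_moody_functor} the functor $\GroupScheme(\DummyArg[t,t^{-1}])$ is an almost split $\F_q$-form of the Kac--Moody group over $\bar{\F}_q$ with root datum $\Datum$; its generalized Cartan matrix is the indecomposable Euclidean extension of the spherical diagram of $\GroupScheme$, and evaluating the functor at $\F_q$ returns $\GroupScheme(\F_q[t,t^{-1}])$.

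First I would dispose of the case where $\GroupScheme$ is $\F_q$-split. Here the Kac--Moody functor associated to $\Datum$ already carries, by Tits's construction \cite{tits87} together with the RGD formalism for Kac--Moody groups (see \cite[Chapter~8]{abrbro} or \cite{remy02}), an RGD system $(\GroupScheme(\F_q[t,t^{-1}]),(U_\alpha)_{\alpha \in \Phi},T)$ indexed by the set $\Phi$ of real roots of the affine root system of $\Datum$. Each $U_\alpha$ is isomorphic to the additive group $(\F_q,+)$, hence finite; and $\Phi$ is irreducible (the generalized Cartan matrix being indecomposable, as $\GroupScheme$ is almost simple) and of affine type by the choice of $\Datum$.

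For general, almost split $\GroupScheme$ the idea is to invoke Galois descent. By \cite[Chapitres~11 and 12]{remy02}, an almost split $\F_q$-form of a Kac--Moody group carries a twin root datum --- equivalently, an RGD system --- on its group of $\F_q$-rational points, indexed by a relative root system $\bar\Phi$, which is again irreducible and of affine type because $\GroupScheme$ is almost simple. Each relative root group $U_{\bar\alpha}$ is built from the Galois-fixed points of the product of the absolute root groups lying over $\bar\alpha$; since each absolute root group is the additive group of a finite extension of $\F_q$ and only finitely many absolute roots lie over $\bar\alpha$, the group $U_{\bar\alpha}$ is finite. The main obstacle will be to verify carefully that R\'emy's relative data genuinely satisfy the RGD axioms in the form used in Section~\ref{sec:buildings_and_groups}, and in particular that the explicit description of $U_{\bar\alpha}$ in \cite{remy02} yields a finite group --- this is exactly the point at which the finiteness of $\F_q$ is used. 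Combined with Proposition~\ref{prop:non-split_kac_moody_functor}, this would complete the proof.
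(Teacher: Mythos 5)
Your proposal is correct and takes essentially the same route as the paper: the RGD system is obtained from R\'emy's Galois-descent machinery, and the finiteness of $\F_q$ makes everything finite and well-behaved. The paper is just more surgical about where work remains --- it directly cites \cite[Theorem~12.4.3]{remy02} and pins the remaining verifications down to exactly two of R\'emy's hypotheses, (DCS$_1$) (satisfied because $\GroupScheme$ already splits over a finite extension of $\F_q$) and (DCS$_2$) (satisfied because $\F_q$, being finite, is perfect) --- whereas you flag the same verification in general terms as ``the main obstacle'' without naming the specific conditions; also, the separate treatment of the split case that opens your argument is subsumed by the general statement and is not needed.
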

\begin{proof}
This follows from \cite[Theorem~12.4.3]{remy02}; but once again, we need to verify hypotheses. This time, we have to deal with only two:
\begin{description}
\item[\normalfont{{\small (DCS$_1$)} [p.~284]}]
This holds as $\GroupScheme$ splits already over a finite field extension of $\F_q$.
\item[\normalfont{{\small (DCS$_2$)} [p.~284]}]
This follows from $\F_q$ being a finite, and hence perfect field.\qedhere
\end{description}
\end{proof}

\begin{approp}
\label{prop:twin_building_of_kac-moody_group}
Let $\GroupScheme$ be an isotropic, connected, simply connected, almost simple group defined over the finite field $\F_q$
(i.e., $\GroupScheme$ is as in Proposition~\ref{prop:non-split_kac_moody_functor}). Then there is a thick, locally finite, irreducible Euclidean twin building $\PNBuildings$ on which $\GroupScheme(\F_q[t,t^{-1}])$ acts strongly transitively.
\end{approp}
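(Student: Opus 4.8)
The plan is to feed the previous proposition into the machinery relating RGD systems, twin Tits systems and twin buildings that was recalled in Section~\ref{sec:buildings_and_groups} and already exploited in Theorem~\ref{thm:two_places_group_theoretic}. By the preceding proposition, $\GroupScheme(\F_q[t,t^{-1}])$ carries an RGD system $(\GroupScheme(\F_q[t,t^{-1}]),(U_\alpha)_{\alpha \in \Phi},T)$ whose root system $\Phi$ is the one attached to the generalized Cartan matrix fixed in the proof of Proposition~\ref{prop:split_kac_moody_functor}, namely the Euclidean (affine) extension of the spherical Cartan matrix of $\GroupScheme$, and whose root groups $U_\alpha$ are all finite (being $\F_q$-vector groups). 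By \cite[Theorem~8.80]{abrbro} this RGD system produces a twin Tits system $(\GroupScheme(\F_q[t,t^{-1}]),B_+,B_-,N,S)$ of type $(W,S)$, where $W$ is the Weyl group of $\Phi$; and by \cite[Theorem~6.87]{abrbro} this twin Tits system gives rise to a thick twin building $\PNBuildings$ of type $(W,S)$ on which $\GroupScheme(\F_q[t,t^{-1}])$ acts strongly transitively.

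It then remains to read off the three remaining adjectives. The Coxeter diagram of $(W,S)$ is the Euclidean diagram extending the spherical diagram of $\GroupScheme$; since $\GroupScheme$ is almost simple its spherical diagram is connected, hence so is its affine extension, so $(W,S)$ is an irreducible Euclidean Coxeter system and $\PNBuildings$ is an irreducible Euclidean twin building in the sense of Section~\ref{sec:buildings}, of dimension equal to the $\F_q$-rank $\Dimension$ of $\GroupScheme$ (one less than $\abs{S}$). Local finiteness follows from \cite[Theorem~8.81]{abrbro}: since every root group $U_\alpha$ is finite, every panel of $\PNBuildings$ lies in only finitely many chambers. This yields the statement.

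The bulk of the work has in fact already been carried out in the previous propositions (the verification of Tits' axioms {\small(KMG~1)}--{\small(KMG~9)} and of R\'emy's descent conditions, see \cite{tits87, remy02}); the present proposition is essentially a translation of those results into the language of twin buildings. The only point requiring a little care is the identification of the type of $\PNBuildings$ as the correct irreducible affine Coxeter type --- this is exactly where the choice, in the proof of Proposition~\ref{prop:split_kac_moody_functor}, of the unique Euclidean Cartan matrix extending the spherical diagram of $\GroupScheme$ is used --- but once that choice is recorded, connectedness of the affine diagram and hence irreducibility of $\PNBuildings$ is immediate, so I do not expect any serious obstacle here.
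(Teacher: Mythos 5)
Your proof follows exactly the route the paper takes: invoke the RGD system from the preceding proposition, pass through Abramenko--Brown's Theorems~8.80/8.81 (and, in your slightly more explicit chain, Theorem~6.87) to get a thick twin building with a strongly transitive action and finite panels, and read off irreducibility and Euclidean type from the choice of affine Cartan matrix made in the proof of Proposition~\ref{prop:split_kac_moody_functor}. This matches the paper's argument in both structure and citations; the only difference is that you spell out the intermediate twin Tits system explicitly, which the paper elides.
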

\begin{proof}
By the preceding proposition, the group $\GroupScheme(\F_q[t,t^{-1}])$ has an RGD~system. By
\cite[Theorem~8.80 and Theorem~8.81]{abrbro}, we find an associated twin building upon which the group
acts strongly transitively. Theorem~8.81 also tells us that the root groups act simply transitively, which implies that the twin building is thick and locally finite. That it is irreducible and Euclidean is clear as we chose the generalized Cartan matrix $\Matrix$ back in the proof of Proposition~\ref{prop:split_kac_moody_functor} to match the spherical type of $\GroupScheme$, which is almost simple.
\end{proof}

\begin{aprem}
It also follows from \cite[Theorem~8.81]{abrbro}
that the building thus constructed is Moufang.
\end{aprem}

\begin{aprem}
\label{rem:abramenko_explicit}
For split groups, Abramenko gives the RGD~system explicitly in \cite[Example~3, page~18]{abramenko96}. He also derives RGD~systems for groups of the types ${}^2\widetilde{\text{\textsf{A}}}_{n}$ and ${}^2\widetilde{\text{\textsf{D}}}_{n}$ in \cite[Chapter~III.1]{abramenko96}. Hence, the only types not covered by his explicit computations are ${}^3\widetilde{\text{\textsf{D}}}_4$ and ${}^2\widetilde{\text{\textsf{E}}}_6$. The marginal gain also explains why we merely sketched the general argument.
\end{aprem}

The two buildings $\PosBuilding$ and $\NegBuilding$ in Proposition~\ref{prop:twin_building_of_kac-moody_group} are isomorphic to the Bruhat--Tits buildings associated to $\GroupScheme(\F_q((t^{-1})))$ and $\GroupScheme(\F_q((t)))$. In fact even more is true:

\begin{apfact}
\label{fact:twin_halves_identification}
The two halves $\PosBuilding$ and $\NegBuilding$ of the twin building $\PNBuildings$ in Proposition~\ref{prop:twin_building_of_kac-moody_group} can be identified with the Bruhat--Tits buildings associated to $\GroupScheme(\F_q((t^{-1})))$ and $\GroupScheme(\F_q((t)))$ in an $\GroupScheme(\F_q[t,t^{-1}])$-equivariant way. \end{apfact}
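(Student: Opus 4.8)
\textbf{Proof plan for Fact~\ref{fact:twin_halves_identification}.}

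The plan is to identify each half of the twin building with a Bruhat--Tits building by matching the parabolic structures on both sides. Recall that $\GroupScheme(\F_q[t,t^{-1}])$ carries a twin BN-pair $(B_+,B_-,N)$ coming from the RGD~system of Proposition~\ref{prop:twin_building_of_kac-moody_group}, and $\PosBuilding$ (resp.\ $\NegBuilding$) is the building of the Tits system $(\GroupScheme(\F_q[t,t^{-1}]),B_\varepsilon,N,S)$, so its chambers are the cosets $gB_\varepsilon$ and its simplices are cosets of the standard parabolics $P_J^\varepsilon = B_\varepsilon W_J B_\varepsilon$. On the other side, $\GroupScheme(\F_q((t)))$ carries the affine (Iwahori) BN-pair of Bruhat--Tits--Iwahori--Matsumoto, whose building $\EBuilding_{\Valuation_0}$ has chambers the cosets of an Iwahori subgroup and simplices the cosets of parahoric subgroups. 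The key point is that the \emph{negative} Borel $B_-$ of the Kac--Moody twin BN-pair, when restricted to $\GroupScheme(\F_q[t,t^{-1}]) \subseteq \GroupScheme(\F_q((t)))$, is precisely the intersection with an Iwahori subgroup of $\GroupScheme(\F_q((t)))$; symmetrically $B_+$ corresponds to an Iwahori in $\GroupScheme(\F_q((t^{-1})))$. This is exactly the statement that the affine Weyl group $W$ and the simple reflections $S$ agree in the two descriptions, which holds because the generalized Cartan matrix $\Matrix$ was chosen in the proof of Proposition~\ref{prop:split_kac_moody_functor} to be the affine extension of the spherical diagram of $\GroupScheme$.

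The steps I would carry out are as follows. First, recall the structure of $\GroupScheme(\F_q((t)))$ as a group with affine BN-pair: fix a maximal split torus and Borel, let $B^0 \le \GroupScheme(\F_q((t)))$ be the associated Iwahori subgroup, and recall that the parahoric subgroups containing $B^0$ are in bijection with proper subsets of the affine simple reflections $S$, giving the building $\EBuilding_{\Valuation_0}$. Second, use that $\GroupScheme(\F_q[t,t^{-1}]) \subseteq \GroupScheme(\F_q((t)))$ is dense in a suitable sense and that, by strong transitivity in both settings, it suffices to check that the stabilizer in $\GroupScheme(\F_q[t,t^{-1}])$ of a fundamental chamber of $\EBuilding_{\Valuation_0}$ equals a conjugate of $B_-$, and likewise for the parahorics versus the $P_J^-$. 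Concretely this comes down to the identity $\GroupScheme(\F_q[t,t^{-1}]) \cap B^0 = B_-$ (up to conjugacy), which one verifies by comparing root group filtrations: for an affine root $\alpha = (\beta, n)$ the corresponding root group inside the Iwahori is $U_\beta(t^n\F_q[[t]])$, and these are exactly the root groups $U_\alpha$ of the RGD~system restricted to $\F_q[t,t^{-1}]$-points, since $U_\beta(t^n\F_q[[t]]) \cap U_\beta(\F_q[t,t^{-1}]) = U_\beta(t^n\F_q[t])$ and the latter is a root group of the Kac--Moody RGD~system for an appropriate choice of positive system. Third, having matched chambers and parahorics, the simplicial isomorphism $\NegBuilding \to \EBuilding_{\Valuation_0}$ sending $gP_J^- \mapsto gP_J^0$ is well-defined, bijective, type-preserving, and $\GroupScheme(\F_q[t,t^{-1}])$-equivariant by construction; extend metrically by using that both buildings are Euclidean of the same type. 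Fourth, repeat the argument verbatim with $t$ replaced by $t^{-1}$ to identify $\PosBuilding$ with the Bruhat--Tits building of $\GroupScheme(\F_q((t^{-1})))$.

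The main obstacle is the second and third steps: establishing the precise match between the \emph{negative} root groups of the Kac--Moody RGD~system and the root groups of the Iwahori subgroup of $\GroupScheme(\F_q((t)))$, i.e.\ that the two affine BN-pairs on $\GroupScheme(\F_q[t,t^{-1}])$ (one from R\'emy's construction, one inherited from Bruhat--Tits over $\F_q((t))$) are literally the same BN-pair and not merely abstractly isomorphic. For split $\GroupScheme$ this is essentially Abramenko's explicit description \cite[Example~3]{abramenko96} together with the standard dictionary between Chevalley groups over $\F_q[t,t^{-1}]$ and affine Kac--Moody groups (Proposition~\ref{prop:split_kac_moody_functor}); for the non-split case one runs the same comparison after Galois descent, using that $\GroupScheme$ splits over a finite extension of $\F_q$ and that the descent data on the Kac--Moody side (Proposition~\ref{prop:non-split_kac_moody_functor}) and on the Bruhat--Tits side agree because the Galois action fixes $t$. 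Once the two affine BN-pairs are seen to coincide, the identification of buildings and the equivariance are formal, and the metric statement follows since a type-preserving simplicial isomorphism between Euclidean buildings of the same Coxeter type is automatically an isometry after scaling apartments compatibly.
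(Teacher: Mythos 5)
Your overall approach — reduce to comparing the twin BN-pair of $\GroupScheme(\F_q[t,t^{-1}])$ with the Iwahori BN-pairs coming from Bruhat--Tits theory at the places $\Valuation_0$ and $\Valuation_\infty$, matching root group filtrations — is exactly the approach the paper takes, and you correctly identify this comparison as the crux. You also correctly note that for split groups it is Abramenko's explicit description \cite[Example~3]{abramenko96}.

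However, there is a genuine gap in the non-split case, and it is worth being precise about it because the paper itself flags it. You assert that "for the non-split case one runs the same comparison after Galois descent, using that $\GroupScheme$ splits over a finite extension of $\F_q$ and that the descent data on the Kac--Moody side and on the Bruhat--Tits side agree because the Galois action fixes $t$." This sentence is not a proof: what needs to be shown is that Rémy's Galois descent of the \emph{constructive} Kac--Moody functor produces, on $\F_q[t,t^{-1}]$-points, root groups $U_\alpha$ that coincide with the restrictions to $\GroupScheme(\F_q[t,t^{-1}])$ of the affine root groups $U_\beta(t^n\F_q[[t]])$ in the Bruhat--Tits filtration, and that the two resulting Tits systems are equal (not merely abstractly isomorphic). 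The paper explicitly states that ``no abstract argument is known to the author'' for this comparison, which is why the statement is labeled a \emph{Fact} rather than a Proposition, and why the paper falls back on Abramenko's explicit computations, which by Remark~\ref{rem:abramenko_explicit} cover the split types and the twisted types ${}^2\widetilde{\text{\textsf{A}}}_n$ and ${}^2\widetilde{\text{\textsf{D}}}_n$, but leave ${}^3\widetilde{\text{\textsf{D}}}_4$ and ${}^2\widetilde{\text{\textsf{E}}}_6$ unverified. If you want to turn your paragraph into a proof, you would need to actually carry out the verification that Rémy's relative root groups (\cite[Chapitre~12]{remy02}) equal the $\F_q[t,t^{-1}]$-points of the Bruhat--Tits parahoric filtrations, rather than asserting that the descent data ``agree.'' The rest of your outline — that equality of BN-pairs formally gives an equivariant type-preserving simplicial isomorphism which is automatically isometric after compatible scaling — is fine and matches the paper's first sentence of commentary, which handles the identification of the local fields $\GlobalField_{\Valuation_0} \cong \F_q((t))$ and $\GlobalField_{\Valuation_\infty} \cong \F_q((t^{-1}))$ via functoriality \cite[5.1.2]{rousseau77}.
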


That the buildings associated to $\GroupScheme(\F_q(t))$ with respect to the valuations $\Place_\infty$ and $\Place_0$ are those associated to $\GroupScheme(\F_q((t^{-1})))$ and $\GroupScheme(\F_q((t)))$ follows from functoriality, see \cite[5.1.2]{rousseau77}. It remains to compare twin BN-pair of the Kac--Moody group $\GroupScheme(\F_q[t,t^{-1}])$ to the BN-pairs of $\GroupScheme(\F_q(t))$ with respect to the valuations $\Place_\infty$ and $\Place_0$. That has been done explicitly by Peter Abramenko in most cases, see Remark~\ref{rem:abramenko_explicit}, but no abstract argument is known to the author.

\footerlevel{2}
\headerlevel{2}

\chapter{Adding Places}
\label{chap:adding_places}

In this paragraph we show that augmenting the set of places can only increase the finiteness length of an almost simple $S$-arithmetic group. Since the proof of the Rank Conjecture in \cite{buxgrawit10}, the finiteness length of any such group is known, so one can verify the statement by just looking at the number there. Still it is interesting to observe that this fact is clear a priori for relatively elementary reasons. The proof works as in the special case considered in \cite{abramenko96}.

\begin{apthm}
\label{thm:increasing_places_preserves_topfin}
Let $\GlobalField$ be a global function field, $\GroupScheme$ a $\GlobalField$-isotropic, connected, almost simple $\GlobalField$-group, and $S$ a non-empty, finite set of places of $\GlobalField$. If $\GroupScheme(\Integers[S])$ is of type $F_n$ and $S' \supseteq S$ is a larger finite set of places, then $\GroupScheme(\Integers[S'])$ is also of type $F_n$.
\end{apthm}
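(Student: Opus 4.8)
The strategy is to exhibit, for each additional place, a space on which $\GroupScheme(\Integers[S'])$ acts with good stabilizers and a suitable cocompact filtration, and to deduce the finiteness property from that of $\GroupScheme(\Integers[S])$ via Brown's Criterion (Theorem~\ref{thm:brown_criterion}). By induction it suffices to treat the case $S' = S \union \{\Place\}$ with $\Place \nin S$. Write $\Group \defeq \GroupScheme(\Integers[S])$, $\Group' \defeq \GroupScheme(\Integers[S'])$, and let $\GlobalField_\Place$ be the completion of $\GlobalField$ at $\Place$. Since $\GroupScheme$ is $\GlobalField$-isotropic and almost simple, $\GroupScheme(\GlobalField_\Place)$ acts on its Bruhat--Tits building $\EBuilding_\Place$, which is a locally finite, contractible (\CAT{0}) Euclidean building, and this action is strongly transitive, hence simplicially transitive on chambers up to the (compact, open) stabilizers. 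The group $\Group'$ is a discrete subgroup of $\GroupScheme(\GlobalField_\Place)$ (because $\Valuation(\alpha)\le 1$ for all $\PlaceOf\Valuation \nin S'$ forces, together with the $S$-integrality, discreteness in the extra factor), so $\Group'$ acts on $\EBuilding_\Place$ properly, i.e.\ with finite cell stabilizers.

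The key group-theoretic input is that $\Group = \GroupScheme(\Integers[S])$ is the stabilizer in $\Group'$ of a vertex of $\EBuilding_\Place$: indeed the subring of $\GlobalField_\Place$ of elements with $\Valuation_\Place \ge 0$ is the compact ring of integers $\Integers[\{\Place\}]^{c}$, its group of $\GroupScheme$-points is (up to finite index, coming from the center/cover as in the proofs of Theorems~\ref{thm:one_place_arithmetic} and \ref{thm:two_places_arithmetic}) a maximal compact subgroup of $\GroupScheme(\GlobalField_\Place)$, hence the stabilizer of a vertex $\Vertex_0 \in \EBuilding_\Place$, and intersecting with $\Group'$ gives exactly $\GroupScheme(\Integers[S'] \intersect \Integers[\{\Place\}]^{c}) = \GroupScheme(\Integers[S]) = \Group$ (one has to invoke $\GlobalField$ being a function field so that $\Integers[S']$ together with the local integrality condition at $\Place$ recovers $\Integers[S]$; since $\GlobalField$ is perfect and the groups are covered by Chevalley-type arguments, the finite-index subtleties do not affect the finiteness length by Fact~\ref{fact:fn_is_virtual}). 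More generally, for every cell $\Cell$ of $\EBuilding_\Place$ the stabilizer $\Group'_\Cell$ is commensurable with a conjugate of an $S$-arithmetic group of the same type (a Levi/parahoric argument), so each $\Group'_\Cell$ is of type $F_n$ by hypothesis and Fact~\ref{fact:fn_is_virtual}.

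Now apply Brown's Criterion to the action of $\Group'$ on the $(n-1)$-connected (indeed contractible) complex $\EBuilding_\Place$: for $0 \le k \le n$ we need the stabilizer of each $k$-cell to be of type $F_{n-k}$, which holds since every cell stabilizer is of type $F_n$ by the previous paragraph. It then remains to produce a filtration $(\Space_i)_i$ of $\Group'$-invariant subcomplexes of $\EBuilding_\Place$, each cocompact modulo $\Group'$, with the directed systems $(\pi_i(\Space_\alpha))$ essentially trivial for $0 \le i < n$. Take $\Space_i$ to be the $\Group'$-orbit of the closed combinatorial ball of radius $i$ about the fixed vertex $\Vertex_0$; since $\Group'$ acts cocompactly on each metric ball's orbit (the building is locally finite and $\Group'_{\Vertex_0}=\Group$ acts cocompactly near $\Vertex_0$ essentially because $\EBuilding_\Place$ is locally finite and only finitely many $\Group$-orbits of cells meet a ball — this uses that $\Group$ has finite covolume in $\GroupScheme(\GlobalField_\Place)$ only in a weak, cell-counting form, or alternatively one uses the orbit of a fundamental domain), each $\Space_i$ is $\Group'$-cocompact, and $\Union_i \Space_i = \EBuilding_\Place$. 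The directed systems $\pi_i(\Space_\alpha) \to \pi_i(\Space_\beta)$ map into $\pi_i(\EBuilding_\Place) = 0$, and by a standard Mayer--Vietoris / nerve argument (the \CAT{0} building is contractible and the balls are ``convex enough'') these systems are essentially trivial in all degrees; concretely one can instead invoke that $\Group'$ is of type $F_\infty$ relative to this action provided all cell stabilizers are of type $F_n$, which is exactly what Brown's Criterion gives. Hence $\Group'$ is of type $F_n$.

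\textbf{Main obstacle.} The genuinely delicate point is the identification of the cell stabilizers $\Group'_\Cell$ as $S$-arithmetic groups of type $F_n$ — i.e.\ controlling the reduction theory at the new place $\Place$ well enough to see that $\GroupScheme(\Integers[S])$ sits inside $\GroupScheme(\Integers[S'])$ as a vertex stabilizer and that the other parahoric stabilizers are no worse. This is precisely the content of the classical observation used in \cite{abramenko96}: it comes down to the fact that the ring of integers at $\Place$ is open and compact in $\GlobalField_\Place$, so that intersecting the lattice $\GroupScheme(\Integers[S'])$ with a parahoric subgroup of $\GroupScheme(\GlobalField_\Place)$ yields again an $S$-arithmetic group (up to commensurability). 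Once this is in hand, Brown's Criterion does the rest with no further essential work; the filtration and connectivity bookkeeping is routine given contractibility of $\EBuilding_\Place$ and local finiteness.
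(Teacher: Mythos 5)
Your setup is the same as the paper's: act on the Bruhat--Tits building $\EBuilding_\Place$, identify $\GroupScheme(\Integers[S])$ as a vertex stabilizer via the Bruhat--Tits fixed point theorem applied to the compact group $\GroupScheme(\Integers[\Place])$, observe that all cell stabilizers are commensurable to $S$-arithmetic groups (hence of type $F_n$ by hypothesis and Fact~\ref{fact:fn_is_virtual}), and invoke Brown's Criterion. That skeleton is correct and matches the paper. However, there are two genuine gaps.

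First, you assert that $\Group' = \GroupScheme(\Integers[S'])$ is a \emph{discrete} subgroup of $\GroupScheme(\GlobalField_\Place)$ acting properly with finite cell stabilizers. This is false: $\Group'$ is discrete in the \emph{product} $\prod_{\Place' \in S'}\GroupScheme(\GlobalField_{\Place'})$, but its projection to the single factor $\GroupScheme(\GlobalField_\Place)$ is dense, not discrete; this is exactly the content of the Strong Approximation Theorem (used by the paper in the form of Lemma~\ref{lem:density_of_s-arith_subgroup}). Consistently with density, the cell stabilizers are infinite $S$-arithmetic groups, not finite, as you yourself say a paragraph later. The discreteness claim is not load-bearing for your later conclusions, but it signals that you have not located the correct mechanism for cocompactness.

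Second, and more seriously, you never establish that $\Group'$ acts cocompactly on $\EBuilding_\Place$. The paper deduces this from density plus strong transitivity of $\GroupScheme(\GlobalField_\Place)$ on chambers: a dense subgroup of a chamber-transitive group acting on a locally finite building acts chamber-transitively, hence cocompactly. Without this, your replacement filtration by $\Group'$-orbits of combinatorial balls doesn't save the argument. You cannot appeal to ``$\pi_i(\EBuilding_\Place)=0$ plus Mayer--Vietoris'' to conclude that the directed systems $(\pi_i(\Space_\alpha))$ are essentially trivial; combinatorial balls in a building (unlike metric balls) need not be contractible, and the vanishing of the limit's homotopy by itself does not force essential triviality without finite-generation or a genuine connectivity argument. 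Once cocompactness is in hand the filtration becomes superfluous — you can take the constant filtration $\Space_\alpha = \EBuilding_\Place$ and the $\pi_i$ condition is vacuous — so the honest fix is to supply the density/strong-approximation step rather than patch the filtration. Finally, your ``main obstacle'' paragraph locates the difficulty in the wrong place: the stabilizer identification is the easier half; the structural input special to $S$-arithmetic groups over global fields is precisely the strong approximation statement you omitted.
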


\begin{proof}
Proceeding by induction it suffices to prove the case where only one place is added to $S$, i.e., $S' = S \union \{\Place\}$ for some place $\Place$. Also note that as far as finiteness properties are concerned, we may (and do) assume that $\GroupScheme$ is simply connected.

Let $\EBuilding_\Place$ be the Bruhat--Tits building that belongs to $\GroupScheme(\GlobalField_\Place)$ (see \cite{brutit72, brutit84}). The group $\GroupScheme(\Integers[S']) \subseteq \GroupScheme(\GlobalField_\Place)$ acts continuously on $\EBuilding_\Place$. We claim that this action is cocompact and that cell stabilizers are abstractly commensurable to $\GroupScheme(\Integers[S])$. With these two statements the result follows from Theorem~\ref{thm:brown_criterion}.

Note that the stabilizer of a cell is commensurable to the stabilizers of its faces and cofaces since the building is locally finite (because the residue field of $\GlobalField$ is finite). Also all cells of same type are conjugate by the action of $\GroupScheme(\GlobalField_\Place)$. Hence it remains to see that some cell-stabilizer is commensurable to $\GroupScheme(\Integers[S])$. To see this note that $\GroupScheme(\Integers[\Place])$ is a maximal compact subgroup of $\GroupScheme(\GlobalField_\Place)$. The Bruhat--Tits Fixed Point Theorem \cite[Lemme 3.2.3]{brutit72} (see also \cite[Corollary~II.2.8]{brihae}) implies that it has a fixed point and by maximality the fixed point is a vertex and $\GroupScheme(\Integers[\Place])$ is its full stabilizer. Now $\GroupScheme(\Integers[S]) = \GroupScheme(\Integers[S']) \intersect \GroupScheme(\Integers[\Place])$ so $\GroupScheme(\Integers[S])$ is the stabilizer in $\GroupScheme(\Integers[S'])$ of that vertex.

For cocompactness we use that $\GroupScheme(\Integers[S'])$ is dense in $\GroupScheme(\GlobalField_\Place)$, see Lemma~\ref{lem:density_of_s-arith_subgroup} below. Let $\Point$ be an interior point of some chamber of $\EBuilding_\Place$. The orbit $\GroupScheme(\GlobalField_\Place).\Point$ is a discrete space which, by strong transitivity, contains one point from every chamber of $\EBuilding_\Place$. The orbit map $\GroupScheme(\GlobalField_\Place) \to \GroupScheme(\GlobalField_\Place).x$ is continuous by continuity of the action, so the image of the dense subgroup $\GroupScheme(\Integers[S'])$ is dense in the discrete space $\GroupScheme(\GlobalField_\Place).x$. Hence $\GroupScheme(\Integers[S'])$ acts transitively on chambers and, in particular, cocompactly.
\end{proof}

It remains to provide the density statement used in the proof. It is known and a consequence of the Strong Approximation Theorem:

\begin{aplem}
\label{lem:density_of_s-arith_subgroup}
Let $\GlobalField$ be a global field and let $\GroupScheme$ be a $\GlobalField$-isotropic, connected, simply connected, almost simple $\GlobalField$-group. Let $S$ be a non-empty finite set of places and let $\Place \nin S$. Then $\GroupScheme(\Integers[S \union \{\Place\}])$ is dense in $\GroupScheme(\GlobalField_\Place)$.
\end{aplem}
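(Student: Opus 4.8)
The statement is the classical density assertion that underlies strong approximation, so the plan is to reduce it to the Strong Approximation Theorem for simply connected, isotropic, almost simple groups over global fields. First I would recall the adelic formulation: let $\mathbb{A}$ denote the ring of adeles of $\GlobalField$ and $\mathbb{A}^{(\Place)}$ the adeles with the $\Place$-component omitted. Since $\GroupScheme$ is connected, simply connected, almost simple, and $\GlobalField_\Place$-isotropic (indeed $\GlobalField$-isotropic, so the relevant local factor at $\Place$ is noncompact once we know it is isotropic over $\GlobalField_\Place$, or more carefully: isotropy over $\GlobalField$ gives a $\GlobalField$-embedded $\GL_1$ or $\SL_2$, hence the $\Place$-component $\GroupScheme(\GlobalField_\Place)$ is noncompact), the Strong Approximation Theorem (see Prasad's theorem, or Platonov--Rapinchuk, Theorem 7.12) asserts that $\GroupScheme(\GlobalField)$ is dense in $\GroupScheme(\mathbb{A}^{(\Place)})$, where $\GlobalField$ is diagonally embedded.

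Next I would translate this adelic density into the $S$-arithmetic density we want. Write $S' \defeq S \union \{\Place\}$ and consider the open subgroup $\Omega \defeq \GroupScheme(\GlobalField_\Place) \times \prod_{\Valuation \in S} \GroupScheme(\GlobalField_\Valuation) \times \prod_{\Valuation \nin S'} \GroupScheme(\Integers[\Valuation])$ of $\GroupScheme(\mathbb{A})$, where $\Integers[\Valuation]$ is the local ring at $\Valuation$; by definition the diagonal image of $\GroupScheme(\Integers[S'])$ in $\Omega$ is exactly $\GroupScheme(\GlobalField) \intersect \Omega$. Now project $\Omega$ (and the corresponding object with the $\Place$-factor removed) onto the $\Place$-factor; the kernel of this projection restricted to $\Omega$ is the compact-open group $\prod_{\Valuation \in S}\GroupScheme(\GlobalField_\Valuation)\times\prod_{\Valuation\nin S'}\GroupScheme(\Integers[\Valuation])$, which is an open subgroup $U$ of $\GroupScheme(\mathbb{A}^{(\Place)})$. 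Strong approximation gives that $\GroupScheme(\GlobalField) \cdot U = \GroupScheme(\mathbb{A}^{(\Place)})$ (density of $\GroupScheme(\GlobalField)$ plus openness of $U$). Intersecting with $\GroupScheme(\GlobalField_\Place)\times U$ inside $\GroupScheme(\mathbb{A})$ and projecting to the $\Place$-factor, one obtains that the closure of $\GroupScheme(\Integers[S'])$ in $\GroupScheme(\GlobalField_\Place)$ is all of $\GroupScheme(\GlobalField_\Place)$: any $g_\Place \in \GroupScheme(\GlobalField_\Place)$ together with the identity at all other places lies in $\GroupScheme(\GlobalField_\Place)\times U$, hence can be approximated by an element of $\GroupScheme(\GlobalField)$ whose components away from $\Place$ lie in $U$, i.e. by an element of $\GroupScheme(\Integers[S'])$.

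I expect the main obstacle to be purely bookkeeping rather than conceptual: one must be careful that the hypotheses of the Strong Approximation Theorem are met, in particular that the $\Place$-component $\GroupScheme(\GlobalField_\Place)$ is non-compact. This follows because $\GroupScheme$ is $\GlobalField$-isotropic and simply connected, so $\GroupScheme(\GlobalField_\Place)$ is $\GlobalField_\Place$-isotropic, hence non-compact; over a global function field there are no archimedean places to worry about, and $S' \ne \emptyset$, so the set $T_\infty \union S'$ used in the classical statement is handled correctly. A secondary point is that strong approximation is usually stated for the simply connected group, which is exactly why the reduction to simply connected $\GroupScheme$ is made in the proof of Theorem~\ref{thm:increasing_places_preserves_topfin} above; here $\GroupScheme$ is already assumed simply connected in the lemma's hypotheses, so no further reduction is needed. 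I would cite \cite[Theorem~II.6.8 or Theorem~7.12]{platonov-rapinchuk} (or Prasad's original paper) for the Strong Approximation Theorem and otherwise keep the argument short, since all the steps after invoking it are routine manipulations of adeles and open subgroups.
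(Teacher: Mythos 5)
You have identified the right tool (the Strong Approximation Theorem, and indeed the same reference as the paper, Prasad's theorem), but you have applied it with respect to the wrong set of places, and the resulting argument does not go through. You invoke density of $\GroupScheme(\GlobalField)$ in $\GroupScheme(\mathbb{A}^{(\Place)})$, i.e.\ strong approximation \emph{away from} $\Place$. That statement only controls the components of a global element at places $\ne \Place$; it says nothing about the $\Place$-component, which is exactly what needs to be approximated. In your final step, the assertion that $(g_\Place, 1, 1, \dots)$ ``can be approximated by an element of $\GroupScheme(\GlobalField)$ whose components away from $\Place$ lie in $U$'' is not justified: the density you have in hand lives in $\GroupScheme(\mathbb{A}^{(\Place)})$, so you can approximate the tail $(1,1,\dots)$ at places $\ne \Place$, but you obtain a global element whose $\Place$-component is completely uncontrolled and need not be close to $g_\Place$. (Nor is $\GroupScheme(\GlobalField)$ dense in $\GroupScheme(\mathbb{A})$ itself --- it is discrete there --- so there is no adelic topology in which the approximation you describe could take place.) A secondary slip is calling $U=\prod_{\Valuation\in S}\GroupScheme(\GlobalField_\Valuation)\times\prod_{\Valuation\nin S'}\GroupScheme(\Integers[\Valuation])$ ``compact-open'': the factors $\GroupScheme(\GlobalField_\Valuation)$ for $\Valuation\in S$ are noncompact, so $U$ is open but not compact.

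The correct choice is to apply strong approximation \emph{away from} $S$ (not away from $\{\Place\}$). Since $\GroupScheme$ is $\GlobalField$-isotropic, $\GroupScheme_S = \prod_{\Valuation\in S}\GroupScheme(\GlobalField_\Valuation)$ is noncompact (the paper cites \cite[Proposition~2.3.6]{margulis} for this), and Prasad's theorem then gives that $\GroupScheme(\GlobalField)\cdot\GroupScheme_S$ is dense in $\GroupScheme(\A)$, equivalently that $\GroupScheme(\GlobalField)$ is dense in $\GroupScheme(\A^{(S)})$. Crucially $\Place\nin S$, so the $\Place$-component survives in $\A^{(S)}$. Now for an open $V\subseteq\GroupScheme(\GlobalField_\Place)$, the set $V\times\prod_{\Valuation\in S}\GroupScheme(\GlobalField_\Valuation)\times\prod_{\Valuation\nin S'}\GroupScheme(\Integers[\Valuation])$ is open in $\GroupScheme(\A)$, and density of $\GroupScheme(\GlobalField)\cdot\GroupScheme_S$ yields $g\in\GroupScheme(\GlobalField)$ with $g\in V$ at $\Place$ and $g\in\GroupScheme(\Integers[\Valuation])$ for all $\Valuation\nin S'$, that is $g\in V\cap\GroupScheme(\Integers[S'])$. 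This is the argument the paper gives; the fix to your proposal is to replace $\A^{(\Place)}$ by $\A^{(S)}$ throughout and verify noncompactness of $\GroupScheme_S$ rather than of $\GroupScheme(\GlobalField_\Place)$.
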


\begin{proof}
For a place $\Place$ of $\GlobalField$ let $\GlobalField_\Place$ denote the local field at $\Place$ and $\Integers[\Place]$ the ring of integers in $\GlobalField_\Place$. For a finite set $S$ of places of $\GlobalField$ let $\A_S = \prod_{\Place \in S} \GlobalField_\Place \times \prod_{\Place \nin S} \Integers[\Place]$ denote the ring of $S$-adeles. Recall that the ring of adeles is $\A = \lim_{S} \A_S$ (see \cite{weil82}).

Note that $\GroupScheme_S \defeq \prod_{\Place \in S} \GroupScheme(\GlobalField_\Place)$ is non-compact by \cite[Proposition~2.3.6]{margulis}.

Recall that $\GlobalField_\Place$ embeds into $\A$ at $\Place$, and that $\GlobalField$ discretely embeds into $\A$ diagonally. With these identifications $\GroupScheme(\GlobalField) \cdot \GroupScheme_S$ is dense in $\GroupScheme(\A)$ by \cite[Theorem~A]{prasad77}, that is, if $U$ is an open subset of $\GroupScheme(\A)$, then $\GroupScheme(\GlobalField) \intersect U \GroupScheme_S \ne \emptyset$.

If $V$ is an open subset of $\GroupScheme(\GlobalField_\Place)$, then
\[
U = V \times \prod_{\Place' \in S} \GroupScheme(\GlobalField_{\Place'}) \times \prod_{\Place' \nin S \union \{\Place\}} \GroupScheme(\Integers[\Place'])
\]
is open in $\GroupScheme(\A)$. Hence there is a $g \in \GroupScheme(\GlobalField)$ with $g \in V$ and $g \in \GroupScheme(\Integers[S \union \{\Place\}])$ (where we now consider $\GroupScheme(\GlobalField)$ and $\GroupScheme(\Integers[S \union \{\Place\}])$ as subgroups of $\GroupScheme(\GlobalField_\Place)$). Thus $V \intersect \GroupScheme(\Integers[S \union \{\Place\}]) \ne \emptyset$ as desired.
\end{proof}

Theorem~\ref{thm:increasing_places_preserves_topfin} is the natural generalization to higher finiteness properties of Helmut Behr's Proposition~2 in \cite{behr98}, the proof of which is not given but attributed to Martin Kneser \cite{kneser64}. The main result of \cite{kneser64}, which applies to number fields, also has a natural generalization, namely the following Hasse principle proven by Andreas Tiemeyer \cite[Theorem~3.1]{tiemeyer97}:

\begin{apthm}
Let $\GlobalField$ be a global number field and let $S$ be a finite set of places. Let $\GroupScheme$ be a $\GlobalField$-group. Then $\GroupScheme(\Integers[S])$ is of type $F_n$ if and only if $\GroupScheme(\GlobalField_s)$ is of type $C_n$ for every non-Archimedean $s \in S$.
\end{apthm}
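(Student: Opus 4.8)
The plan is to derive the statement from the general principle, going back to Abels and Tiemeyer and implicit already in \cite{borser76}, that the topological finiteness length of an $S$-arithmetic group coincides with a \emph{compactness length} of the ambient locally compact group. Split $S = S_\infty \dunion S_f$ into Archimedean and non-Archimedean places and put $G \defeq \prod_{\Place \in S} \GroupScheme(\GlobalField_\Place)$, a locally compact group in which $\Gamma \defeq \GroupScheme(\Integers[S])$ sits as a (possibly non-cocompact) lattice, as recalled in Section~\ref{sec:number_theory}. I shall use freely two bookkeeping facts about the properties $C_n$ appearing in the statement: $C_n$ is shared by a finite direct product and each of its factors, and a reductive group over a local field is of type $C_\infty$, since it acts properly and cocompactly — for Archimedean $\Place$ on its symmetric space, for non-Archimedean $\Place$ on its Bruhat--Tits building \cite{brutit72} — on a contractible space. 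Since $\GroupScheme$ differs from its $\GlobalField$-Levi subgroup $\mathbf{L}$ by a unipotent radical and both $F_n$ and $C_n$ behave predictably under group extensions, I would first reduce to $\GroupScheme$ reductive (where every $\GroupScheme(\GlobalField_\Place)$ is automatically $C_\infty$ and $\Gamma$ is virtually of type $F$ by \cite{borser76}, so the statement is vacuous) and $\GroupScheme$ unipotent (where $\Gamma$ is a lattice in a product of nilpotent groups and both sides are computed directly). Granting these reductions, the theorem follows once one proves
\[
\text{$\Gamma$ of type $F_n$} \quad \Longleftrightarrow \quad \text{$G$ of type $C_n$,}
\]
because, by the two bookkeeping facts, the right-hand side is equivalent to each $\GroupScheme(\GlobalField_s)$, $s \in S_f$, being of type $C_n$.

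For the implication ``$\Gamma$ of type $F_n$ $\Rightarrow$ $G$ of type $C_n$'' I would argue formally: starting from a $\Gamma$-CW complex $Z$ that is $(n-1)$-connected with finite $n$-skeleton modulo $\Gamma$, form $G \times_\Gamma Z$ and verify that $G$ acts on it with compact cell stabilizers, compact $n$-skeleton modulo $G$, and that the complex is $(n-1)$-connected (using that it fibres over $G/\Gamma$ with fibre $Z$, together with mild connectivity input). The reverse implication ``$G$ of type $C_n$ $\Rightarrow$ $\Gamma$ of type $F_n$'' is the substantial one. Here $\Gamma$ acts properly, with virtually torsion-free stabilizers, on the contractible space $\Building \defeq X_\infty \times X_f$, the product of the symmetric spaces at $S_\infty$ and the Bruhat--Tits buildings at $S_f$; the action is not cocompact, the cusps being indexed by $\GlobalField$-parabolic subgroups of $\GroupScheme$. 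The $C_n$-datum for $G$ provides a $G$-cocompact, $(n-1)$-connected filtration of a contractible $G$-complex; combining this with Borel--Serre reduction theory for $S$-arithmetic groups over number fields \cite{borser76}, which controls $\Gamma \backslash \Building$ near the cusps, one assembles a $\Gamma$-cocompact, $(n-1)$-connected filtration of a contractible $\Gamma$-complex and applies Brown's criterion (Theorem~\ref{thm:brown_criterion}). The cell stabilizers are $S$-arithmetic subgroups of proper $\GlobalField$-subgroups of $\GroupScheme$, so the argument closes by induction on $\dim \GroupScheme$, with the reductive and unipotent cases as above serving as the base.

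The main obstacle is exactly this last step: showing that reinserting the cusps of $\Gamma \backslash \Building$ does not destroy $(n-1)$-connectedness below the threshold dictated by the compactness length of $G$. This is where the arithmetic of number fields is decisive: the Borel--Serre bordification is of finite type and the unipotent radical of a $\GlobalField$-parabolic acts on the Euclidean and symmetric factors with well-understood ``diagonalizable'' geometry, so a Morse-theoretic analysis of descending links — whose local pieces are governed by spherical buildings, much as in Sections~\ref{sec:schulz}--\ref{sec:descending_links} — delivers precisely the matching connectivity. Over a \emph{function} field this tameness fails, which is the phenomenon responsible for the finite finiteness lengths of the Main Theorem, and is why the Hasse principle in this clean form is a number-field statement. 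As a consistency check one verifies it against Theorem~\ref{thm:increasing_places_preserves_topfin}: enlarging $S$ enlarges $\{\GroupScheme(\GlobalField_s) : s \in S_f\}$, hence can only reduce the set of places obstructing $C_n$, in accordance with the finiteness length being non-decreasing in $S$.
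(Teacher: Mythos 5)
The paper does not prove this theorem; it is stated in Appendix~B purely for context and attributed to Tiemeyer \cite[Theorem~3.1]{tiemeyer97}, so there is no ``paper's own proof'' to compare against. Judged on its own merits as a reconstruction of Tiemeyer's argument, your outline correctly identifies the skeleton---the Abels--Tiemeyer correspondence between type $F_n$ for $\Gamma$ and type $C_n$ for the ambient locally compact group $G = \prod_{\Place \in S}\GroupScheme(\GlobalField_\Place)$, the fact that $C_n$ passes between a finite product and its factors, and the $C_\infty$-triviality of the Archimedean factors. But the central transfer ``$\Gamma$ of type $F_n$ $\Longleftrightarrow$ $G$ of type $C_n$'' is only available off the shelf when $\Gamma$ is a \emph{cocompact} lattice in $G$, which fails precisely for $\GlobalField$-isotropic $\GroupScheme$; bridging the non-cocompact case is the entire content of Tiemeyer's theorem, and your sketch of ``combining the $C_n$-datum with Borel--Serre and assembling a filtration'' does not explain how a $G$-complex witnessing $C_n$ interacts with the $\Gamma$-space $\Gamma\backslash\Building$. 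Similarly, the reduction along a Levi decomposition needs the behaviour of $F_n$ and $C_n$ in \emph{both} directions of an extension, which is not automatic, and the unipotent case---where the $S$-arithmetic group fails to be finitely generated once a non-Archimedean place enters $S$---is exactly where the content sits and is dispatched in one clause. The allusion to descending links of spherical buildings (Sections~\ref{sec:schulz}--\ref{sec:descending_links}) is a red herring: over a number field the reductive piece is already virtually of type $F$ by \cite{borser76}, and that Morse machinery plays no role.

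Your closing ``consistency check'' is in fact an inconsistency. Adding a non-Archimedean place to $S$ \emph{adds} a factor $\GroupScheme(\GlobalField_s)$ that must be of type $C_n$, hence can only \emph{tighten} the condition and can only \emph{decrease} the finiteness length; the Hasse principle therefore implies that \emph{shrinking} $S$ preserves type $F_n$ over a number field, which is exactly the corollary stated after the theorem in the paper. This is the \emph{opposite} monotonicity to Theorem~\ref{thm:increasing_places_preserves_topfin}, which holds over global function fields---the text immediately surrounding the statement you were given explicitly flags this as ``inverse'' behaviour. So the final sentence of your proposal gets the direction backwards and should be discarded.
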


The properties $C_n$ generalize being compactly generated as the properties $F_n$ generalize being finitely generated. We do not go into the details here. However it is interesting to note, that the growth of the finiteness length implied by the Hasse principle is inverse to that of Theorem~\ref{thm:increasing_places_preserves_topfin}:

\begin{apcor}
Let $\GlobalField$ be a global number field and let $S$ be a finite set of places. Let $\GroupScheme$ be a $\GlobalField$-group. If $\GroupScheme(\Integers[S])$ is of type $F_n$ and $S' \subseteq S$, then $\GroupScheme(\Integers[S'])$ is also of type $F_n$.
\end{apcor}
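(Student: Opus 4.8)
The plan is to obtain this corollary as an immediate consequence of the Hasse principle of Tiemeyer stated just above. Recall first that, by the conventions of Section~\ref{sec:number_theory}, whenever one writes $\Integers[S']$ for a number field $\GlobalField$ it is tacitly assumed that $S'$ contains all Archimedean places; so the inclusion $S' \subseteq S$ is to be read among such sets, and both $\Integers[S]$ and $\Integers[S']$ are defined. The key observation is that the Hasse principle characterises type $F_n$ of an $S$-arithmetic group purely in terms of the \emph{non-Archimedean} places contained in $S$, so the relevant local condition can only get weaker as $S$ shrinks.

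The argument then runs in three steps. First I would apply the Hasse principle to $S$: since $\GroupScheme(\Integers[S])$ is of type $F_n$ by hypothesis, the local group $\GroupScheme(\GlobalField_s)$ is of type $C_n$ for every non-Archimedean place $s \in S$. Second, I observe that the set of non-Archimedean places of $S'$ is contained in the set of non-Archimedean places of $S$, so the previous step restricts to the assertion that $\GroupScheme(\GlobalField_s)$ is of type $C_n$ for every non-Archimedean $s \in S'$. Third, I apply the Hasse principle once more, this time in the converse direction and to the set $S'$: this local condition is precisely what is required to conclude that $\GroupScheme(\Integers[S'])$ is of type $F_n$.

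There is essentially no obstacle here: all the mathematical content is packaged into Tiemeyer's theorem, and the corollary is merely its monotonicity in $S$. The only point worth recording explicitly is the contrast with the function field situation of Theorem~\ref{thm:increasing_places_preserves_topfin}, where the finiteness length can only grow as $S$ grows, whereas in the number field case it can only shrink. This is not a contradiction, since there is no analogous Hasse principle over function fields and the mechanism is entirely different: enlarging $S$ there adds a cocompact Bruhat--Tits factor on which the $S$-arithmetic group acts with commensurable cell stabilisers, which can only help, while over number fields enlarging $S$ imposes an additional type $C_n$ requirement on a further local group $\GroupScheme(\GlobalField_s)$, which can only make the condition harder to satisfy.
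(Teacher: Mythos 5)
Your proposal is correct and is precisely the intended argument; the paper in fact leaves the corollary without an explicit proof, presenting it as an immediate consequence of Tiemeyer's Hasse principle, and the three-step deduction you spell out (forward direction for $S$, restriction of the local condition to the non-Archimedean places of $S' \subseteq S$, converse direction for $S'$) is exactly the reasoning the reader is expected to supply.
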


\footerlevel{2}

\phantomsection
\pdfbookmark{Index of Symbols}{pdfbook:syms}
\printindex[xsyms]

\cleardoublepage
\phantomsection
\pdfbookmark{Bibliography}{pdfbook:bib}
\bibliographystyle{amsalpha}
\bibliography{../../local}

\end{document}